\DeclareMathOperator{\dive}{div}
\DeclareMathOperator{\osc}{osc}
\DeclareMathOperator{\e}{\varepsilon}
\DeclareMathOperator{\diam}{diam}
\DeclareMathOperator{\Lip}{Lip}
\DeclareMathOperator{\Dr}{Dr}
\DeclareMathOperator{\loc}{loc}
\newtheorem{thm}{Theorem}[section]
\newtheorem{lemma}[thm]{Lemma}
\newtheorem{prop}[thm]{Proposition}
\newtheorem{cor}[thm]{Corollary}
\newtheorem{dfn}[thm]{Definition}
\begin{document}

\title{Boundary value problems in Lipschitz domains\\for equations with drifts}
\date{}
\author{Georgios Sakellaris\thanks{Department of Mathematics, \textit{University of Chicago}, Chicago, IL, 60637, USA \newline\hspace*{21.5pt}Email: gsakell@math.uchicago.edu}}
	
\thanksmarkseries{arabic}
	
\maketitle
	
\begin{abstract}
In this work we establish solvability and uniqueness for the $D_2$ Dirichlet problem and the $R_2$ Regularity problem for second order elliptic operators $L=-\dive(A\nabla\cdot)+b\nabla\cdot$ in bounded Lipschitz domains, where $b$ is bounded, as well as their adjoint operators $L^t=-\dive(A^t\nabla\cdot)-\dive(b\,\cdot)$. The methods that we use are estimates on harmonic measure, and the method of layer potentials.
		
The nature of our techniques applied to $D_2$ for $L$ and $R_2$ for $L^t$ leads us to impose a specific size condition on $\dive b$ in order to obtain solvability. On the other hand, we show that $R_2$ for $L$ and $D_2$ for $L^t$ are uniquely solvable, assuming only that $A$ is Lipschitz continuous (and not necessarily symmetric) and $b$ is bounded. 
\end{abstract}

\tableofcontents

\section{Introduction}
In this work we will be interested in boundary value problems for the equation
\[
Lu=-\dive(A\nabla u)+b\nabla u=0
\]
as well as the adjoint equation
\[
L^tu=-\dive(A^t\nabla u)-\dive(bu)=0,
\]
in a Lipschitz domain $\Omega\subseteq\mathbb R^d$, which is open and bounded. We will always assume that $d\geq 3$.

The boundary value problems we will be interested in are the \emph{Dirichlet problem}, $D_p$:
\[
\left\{\begin{array}{c l}
Lu=0,&{\rm in}\,\,\Omega\\
u=f,&{\rm on}\,\,\partial\Omega\\
\|u^*\|_{L^p(\partial\Omega)}<\infty,
\end{array}\right.\quad\quad {\rm for}\,\,f\in L^p(\partial\Omega),
\]
and the \emph{Regularity problem}, $R_p$:
\[
\left\{\begin{array}{c l}
Lu=0,&{\rm in}\,\,\Omega\\
u=f,&{\rm on}\,\,\partial\Omega\\
\|(\nabla u)^*\|_{L^p(\partial\Omega)}<\infty,
\end{array}\right. \quad\quad{\rm for}\,\,\,f\in W^{1,p}(\partial\Omega),
\]
and similarly for the equation $L^tu=0$ in $\Omega$, where $u^*$ denotes the nontangential maximal function of $u$ on $\partial\Omega$; that is,
\[
u^*(q)=\sup_{x\in\Gamma(q)}|u(x)|,
\]
where $\Gamma(q)\subseteq\Omega$ is a family of suitably chosen cones, which are based at $q\in\partial\Omega$.

We will show that under specific regularity conditions on the coefficients, those problems are uniquely solvable in $\Omega$, with constants depending only on the relevant norms of the coefficients, the Lipschitz character of $\Omega$ and the diameter of $\Omega$.

The methods we will use will be, first, the properties of harmonic measure for $D_2$ for the operator $L$, and second, the method of layer potentials for the other problems. In order to do this, we first assume that $A$ is symmetric (which is a crucial assumption in applying the Rellich estimates), and we then pass to non-symmetric $A$ with an integration by parts argument, appearing in \cite{PipherDrifts}. This passage will require to increase the assumed regularity on $A$ in the case of $R_2$ for $L^t$, but no such assumption will be needed in the case of the equation $Lu=0$.

The theorems that will be shown are the following (also appearing in theorems \ref{DirichletForNonSymmetric} and \ref{RegularityForNonSymmetric}). The term $\dive b$ will mean the divergence of $b$, in the sense of distributions.

\begin{thm}
Let $\Omega\subseteq\mathbb R^d$ be a bounded Lipschitz domain, and let $A$ be uniformly elliptic and Lipschitz, $b\in L^{\infty}(\Omega)$ and $\dive b\in L^{p_d}(\Omega)$ in the sense of distributions. Then there exists a constant $\e>0$ such that, for any $p\in(2-\e,\infty)$, the Dirichlet problem $D_p$ for the equation
\[
-\dive(A\nabla u)+b\nabla u=0
\]
is uniquely solvable in $\Omega$, with constants depending on $d$, $p$, the ellipticity of $A$, the Lipschitz norm of $A$, $\|b\|_{\infty}$, $\|\dive b\|_{p_d}$, the diameter of $\Omega$, and the Lipschitz character of $\Omega$.

(Here, $p_d=2$ for $d=3$, and $p_d=d/2$ for $d\geq 4$.)
\end{thm}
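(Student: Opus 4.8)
\emph{Proof strategy.} \textbf{Step 1: reduction to symmetric $A$.} Split $A=A_s+W$ into its symmetric part $A_s=\tfrac12(A+A^t)$ and its antisymmetric part $W=\tfrac12(A-A^t)$, both Lipschitz. Since $W$ is antisymmetric, $\sum_{i,j}W_{ij}\partial_i\partial_j u=0$, so $\dive(W\nabla u)=\tilde b\cdot\nabla u$ with $\tilde b_j=\sum_i\partial_i W_{ij}\in L^{\infty}(\Omega)$ and, again by antisymmetry of mixed partials, $\dive\tilde b=\sum_{i,j}\partial_i\partial_j W_{ij}=0$ in $\mathcal D'(\Omega)$. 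Hence $Lu=-\dive(A_s\nabla u)+(b+\tilde b)\cdot\nabla u$, where $b+\tilde b\in L^{\infty}$ and $\dive(b+\tilde b)=\dive b\in L^{p_d}(\Omega)$ is unchanged, so in particular the size condition on $\dive b$ is untouched. Thus it suffices to prove the theorem for symmetric $A$, which we assume henceforth; this is exactly the integration-by-parts device of \cite{PipherDrifts}, and it is the reason no extra regularity of $A$ is required for the equation $Lu=0$.

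\textbf{Step 2: elliptic measure and the CFMS toolbox.} With $A_s$ symmetric, $\int_\Omega(b\nabla v)v=-\tfrac12\int_\Omega(\dive b)v^2$ for $v\in W^{1,2}_0(\Omega)$, and combining the size bound on $\|\dive b\|_{p_d}$ with Sobolev's inequality $\|v\|_{2^*}^2\lesssim\|\nabla v\|_2^2$ ($2^*=2d/(d-2)$) makes the bilinear form of $L$ coercive on $W^{1,2}_0(\Omega)$ (this is the role of the size condition on $\dive b$). Hence the Dirichlet problem with continuous data is uniquely solvable in $W^{1,2}$, the maximum principle holds, and $f\mapsto u(x)$ is represented by a family of elliptic measures $\{\omega^x\}_{x\in\Omega}$. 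Since $\dive b\in L^{p_d}$ lies in the critical class for the lower-order structure, the De Giorgi--Nash--Moser theory applies: one gets interior Harnack and boundary Hölder continuity, a Green function $G$ with $G(x,y)\lesssim|x-y|^{2-d}$ and matching lower bounds on compact subsets, and hence the full Caffarelli--Fabes--Mortola--Salsa package --- doubling of $\omega$, the comparison $\omega^{A_r}(\Delta_r(q))\approx r^{d-2}G(A_r,\cdot)$ at corkscrew points $A_r$, and boundary Harnack.

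\textbf{Step 3: Rellich estimate, $A_\infty$, and extrapolation.} For $A_s$ symmetric one runs the Rellich argument: integrating $\dive$ of a quadratic expression in $\nabla u$ against a vector field $\alpha$ transverse to $\partial\Omega$ (furnished by the Lipschitz character) yields, for a solution $u$ of $Lu=0$ with $(\nabla u)^*\in L^2(\partial\Omega)$, the comparability of $\int_{\partial\Omega}|\nabla u|^2\,d\sigma$ with $\int_{\partial\Omega}|\partial_\nu u|^2\,d\sigma$ up to solid error terms. The drift contributes the extra terms $\int_\Omega(b\nabla u)(\alpha\cdot\nabla u)$ and, after a further integration by parts, a term carrying $\dive b$; these are absorbed using Caccioppoli and square-function estimates together with the interior bounds of Step 2, the $\dive b$ contribution being absorbed into the ellipticity term precisely by the size condition on $\|\dive b\|_{p_d}$. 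Feeding in solutions built from $\omega$-densities and the Green function and invoking the CFMS comparisons upgrades this to a scale-invariant local reverse Hölder inequality $RH_2$ for the Poisson kernel $k^x=d\omega^x/d\sigma$, i.e. $\omega\in A_\infty(\sigma)$. This is equivalent to solvability of $D_2$ (for $f\in L^2(\partial\Omega)$, $u(x)=\int_{\partial\Omega}f\,d\omega^x$ solves $Lu=0$, has nontangential limit $f$ a.e., and $\|u^*\|_{L^2}\lesssim\|f\|_{L^2}$ by $L^2$-boundedness of the elliptic maximal operator under $RH_2$; uniqueness from the maximum principle plus a Fatou theorem). Finally $RH_2$ self-improves to $RH_{2+\delta}$, equivalently $D_{2-\e}$ is solvable for some $\e>0$, and solvability of $D_{p_0}$ forces solvability of $D_p$ for all $p\in[p_0,\infty)$ by interpolating the nontangential-maximal estimate between $L^{p_0}$ and the $L^{\infty}\to L^{\infty}$ bound coming from the maximum principle. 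This gives $D_p$ for all $p\in(2-\e,\infty)$, with the stated dependence of constants.

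\textbf{Main obstacle.} The crux is the Rellich step: controlling the drift-generated error terms. The $\dive b$ term sits at the scaling-critical exponent $p_d$, so it cannot be made small by shrinking a localization radius and must instead be absorbed via the \emph{global} size condition on $\|\dive b\|_{p_d}$ through Sobolev's inequality; simultaneously one must check that the entire potential-theoretic apparatus (maximum principle, Green function bounds, CFMS comparisons) genuinely persists for an operator whose lower-order term has divergence only in $L^{d/2}$, which is again where that smallness is spent.
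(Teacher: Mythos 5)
Step 1 (reduction to symmetric $A$ via the antisymmetric decomposition, producing a divergence-free correction $\tilde b$) is exactly the paper's Lemma \ref{NonSymmetricEquations} and is fine. Steps 2--3, however, contain two genuine misconceptions about where and how the hypothesis $\dive b\in L^{p_d}$ enters. First, in Step 2 you claim coercivity of the bilinear form from $\int(b\nabla v)v=-\tfrac12\int(\dive b)v^2$ combined with Sobolev. Estimating $\tfrac12\bigl|\int(\dive b)v^2\bigr|\leq C\|\dive b\|_{p_d}\|\nabla v\|_2^2$ and then claiming $\alpha(v,v)\gtrsim\|\nabla v\|_2^2$ requires $\|\dive b\|_{p_d}$ to be \emph{small} relative to the ellipticity constant — but the theorem assumes no smallness, only finiteness of the norm (which simply enters the constants). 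So your bilinear form need not be coercive. The paper circumvents this entirely: solvability and the maximum principle are established for arbitrary bounded $b$ (no condition on $\dive b$ at all) via a shift $L\mapsto L+\gamma$ plus the Fredholm alternative (Propositions \ref{GammaSolvability}--\ref{InhomogeneousSolvability} and Theorem \ref{MaximumPrinciple}); the whole CFMS toolbox in Chapter 6 is constructed without any hypothesis on $\dive b$.

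Second, in Step 3 you apply the Rellich estimate to solutions $u$ of $Lu=0$ and say the drift produces a $\dive b$ error term. But the Rellich identity for $Lu=0$ (Proposition \ref{LocalRellich}) produces \emph{no} $\dive b$ term — the drift contribution $-2\theta_0\partial_d u\, b\nabla u$ is already a solid integral of $|\nabla u|^2$ order. The $\dive b$ term arises only in the Rellich identity for the \emph{adjoint} equation $L^t u=0$ (Proposition \ref{LocalRellichForAdjoint}), where $2\partial_d u\,\dive(bu)=2b\nabla u\cdot\partial_d u + 2\dive b\cdot u\,\partial_d u$. And it is the adjoint Rellich estimate that is actually needed: by Proposition \ref{HarmonicRepresentation}, the Poisson kernel is $-\partial_\nu G^t(\cdot,x_0)\bigr|_{\partial\Omega}$, where $G^t$ is Green's function for $L^t$, so the function fed into Rellich is a solution of $L^t v=0$, not $Lu=0$. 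Moreover, the $\dive b$ term is not ``absorbed into the ellipticity term'' via smallness; as in Lemma \ref{B2Lemma}, H\"older against $\|\dive b\|_{p_d}$ plus Caccioppoli and Carleson yields a contribution of the same homogeneity $r^{d-3}(G^t(A_r(q)))^2$ as the main term, with the constant merely depending on $\|\dive b\|_{p_d}$. Your proposal would therefore need to be rerouted through the adjoint Rellich estimate and should drop the coercivity claim in favor of a Fredholm-type construction for existence and the maximum principle.
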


\begin{thm}
Let $\Omega\subseteq\mathbb R^d$ be a bounded Lipschitz domain, and let $A$ be uniformly elliptic and Lipschitz, $b\in L^{\infty}(\Omega)$. Then, the Regularity problem $R_2$ for the equation
\[
-\dive(A\nabla u)+b\nabla u=0
\]
is uniquely solvable in $\Omega$, with constants depending on $d$, the ellipticity of $A$, the Lipschitz norm of $A$, $\|b\|_{\infty}$, the diameter of $\Omega$, and the Lipschitz character of $\Omega$.
\end{thm}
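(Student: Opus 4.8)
The plan is to solve $R_2$ for $L$ by the method of layer potentials. After reducing to a symmetric matrix by the integration by parts of \cite{PipherDrifts}, the goal is to show that the single layer potential of $L$ is an isomorphism of $L^2(\partial\Omega)$ onto $W^{1,2}(\partial\Omega)$. To reduce, write $A=A_s+A_a$ with $A_s$ symmetric and $A_a$ antisymmetric; since $A$ is Lipschitz so is $A_a$, and because $\sum_{i,j}(A_a)_{ij}\partial_i\partial_j v=0$ for $v\in W^{2,2}_{\loc}$, one has $\dive(A_a\nabla v)=\tilde b\cdot\nabla v$ with $\tilde b_j=\sum_i\partial_i(A_a)_{ij}\in L^\infty(\Omega)$. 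Hence $Lv=-\dive(A_s\nabla v)+(b+\tilde b)\cdot\nabla v$ is of the same form with symmetric Lipschitz matrix and bounded drift; this is precisely where the absence of any hypothesis on $\dive b$ is decisive, since the antisymmetric part of $A$ is absorbed into the drift and the drift is never differentiated again. From now on $A$ is symmetric.

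Extending the coefficients to $\mathbb R^d$ while keeping ellipticity, the Lipschitz bound on $A$ and the bound on $b$, the operator $L$ admits a fundamental solution $\Gamma_L(x,y)$ with $|\Gamma_L(x,y)|\lesssim|x-y|^{2-d}$, $|\nabla_x\Gamma_L(x,y)|+|\nabla_y\Gamma_L(x,y)|\lesssim|x-y|^{1-d}$ and gradients Hölder continuous off the diagonal (the drift is lower order and $A$ Lipschitz makes solutions of $Lu=0$ belong to $W^{2,p}_{\loc}$), with $\Gamma_{L^t}(x,y)=\Gamma_L(y,x)$. For $\mathcal Sf(x)=\int_{\partial\Omega}\Gamma_L(x,y)f(y)\,d\sigma(y)$ one has: $L(\mathcal Sf)=0$ in $\Omega$ and in $\Omega^-:=\mathbb R^d\setminus\overline\Omega$; the nontangential maximal bounds $\|(\nabla\mathcal Sf)^*\|_{L^2(\partial\Omega)}+\|(\mathcal Sf)^*\|_{L^2(\partial\Omega)}\lesssim\|f\|_{L^2(\partial\Omega)}$; the existence of nontangential limits; the jump relations $\partial_\nu^{-}\mathcal Sf-\partial_\nu^{+}\mathcal Sf=f$ with $\partial_\nu^{\pm}\mathcal Sf=(\mp\tfrac12 I+K_L^*)f$ and $K_L^*$ bounded on $L^2(\partial\Omega)$; and a single boundary trace, yielding a bounded operator $S\colon L^2(\partial\Omega)\to W^{1,2}(\partial\Omega)$. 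All of this follows from Calderón--Zygmund theory exactly as in the driftless variable-coefficient case, since the drift is a lower order perturbation; the same applies to $L^t$.

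The heart of the matter is a Rellich estimate. Fix $\alpha\in C^1(\overline\Omega;\mathbb R^d)$. For $u$ with $Lu=0$ in $\Omega$ and $(\nabla u)^*\in L^2(\partial\Omega)$, integrating $\dive[(\alpha\cdot\nabla u)A\nabla u]-\tfrac12\dive[\alpha(A\nabla u\cdot\nabla u)]$ over $\Omega$ and using $\dive(A\nabla u)=b\cdot\nabla u$ yields
\[
\tfrac12\int_{\partial\Omega}(\alpha\cdot\nu)(A\nabla u\cdot\nabla u)\,d\sigma=\int_{\partial\Omega}(\alpha\cdot\nabla u)(A\nabla u\cdot\nu)\,d\sigma+\int_\Omega\mathcal E\,dx,
\]
where $\mathcal E$ gathers terms in $\nabla\alpha$, in $\nabla A$, and the drift term $(\alpha\cdot\nabla u)(b\cdot\nabla u)$, so that $\big|\int_\Omega\mathcal E\big|\lesssim(1+\|A\|_{\Lip}+\|b\|_\infty)\int_\Omega|\nabla u|^2\,dx$. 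Crucially $b$ is never integrated by parts, so no trace of $b$ on $\partial\Omega$ and no bound on $\dive b$ enters, while $\int_\Omega|\nabla u|^2\,dx\lesssim\diam(\Omega)\|(\nabla u)^*\|^2_{L^2(\partial\Omega)}$ by slicing the approach cones. Choosing $\alpha$ constant on each star-shaped Lipschitz piece of a boundary partition of unity removes the $\nabla\alpha$ term, and decomposing $\nabla u$ on $\partial\Omega$ into tangential and conormal parts (using ellipticity) converts the identity into the one-sided bound $\|\partial_\nu^{\pm}u\|_{L^2(\partial\Omega)}\lesssim\|\nabla_{\tan}u\|_{L^2(\partial\Omega)}+\big(\int_{\Omega^{\pm}}|\nabla u|^2\big)^{1/2}$, valid whenever $Lu=0$ in $\Omega^{\pm}$ (with decay at infinity in $\Omega^-$), and likewise for $L^t$.

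Applying the one-sided bound to $u=\mathcal Sf$ in $\Omega$ and in $\Omega^-$ and using $f=\partial_\nu^{-}\mathcal Sf-\partial_\nu^{+}\mathcal Sf$ gives
\[
\|f\|_{L^2(\partial\Omega)}^2\lesssim\|Sf\|_{W^{1,2}(\partial\Omega)}^2+\langle\mathcal Qf,f\rangle,
\]
where $\langle\mathcal Qf,f\rangle=\int_{\mathbb R^d}|\nabla\mathcal Sf|^2\,dx\ge0$ and $\mathcal Q$ has (symmetric) kernel $\int_{\mathbb R^d}\nabla_x\Gamma_L(x,y)\cdot\nabla_x\Gamma_L(x,z)\,dx\lesssim|y-z|^{2-d}$; since $2-d>-(d-1)$, this is a weakly singular kernel on the $(d-1)$-dimensional set $\partial\Omega$, so $\mathcal Q$ is compact. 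Writing $\mathcal Q=T^*T$ with $T$ compact, $S$ is bounded below modulo a compact operator, hence semi-Fredholm; running the same argument for $L^t$ shows $S^*=S_{L^t}$ is semi-Fredholm too, so $S$ is Fredholm, and the continuity method of Verchota---deforming $(A,b)$ to $(\mathrm{Id},0)$, along which all the above bounds are uniform---identifies its index with that of the single layer potential of the Laplacian, namely $0$. Injectivity is immediate: if $Sf=0$ then $\mathcal Sf$ solves $Lu=0$ with vanishing nontangential trace and $(\nabla u)^*\in L^2(\partial\Omega)$ both in $\Omega$ and in $\Omega^-$ (decaying at infinity), hence vanishes in $\Omega$ and in $\Omega^-$ by the maximum principle for $L$ (valid for $b\in L^\infty$), so $f=\partial_\nu^{-}\mathcal Sf-\partial_\nu^{+}\mathcal Sf=0$. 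Thus $S$ is an isomorphism of $L^2(\partial\Omega)$ onto $W^{1,2}(\partial\Omega)$. Given $f\in W^{1,2}(\partial\Omega)$, the function $u=\mathcal S(S^{-1}f)$ then solves $Lu=0$ with nontangential trace $f$ and $\|(\nabla u)^*\|_{L^2(\partial\Omega)}\lesssim\|S^{-1}f\|_{L^2(\partial\Omega)}\lesssim\|f\|_{W^{1,2}(\partial\Omega)}$, which is solvability of $R_2$ with the asserted dependence of the constants; uniqueness follows because a solution with $(\nabla u)^*\in L^2(\partial\Omega)$ and zero nontangential trace has $u^*\in L^2(\partial\Omega)$ with zero nontangential trace, so $u\equiv0$ by the uniqueness of $D_2$ for $L$, again a maximum-principle fact requiring only $b\in L^\infty$. (The boundary operators $\pm\tfrac12 I+K_L^*$ produced along the way also yield $D_2$ for $L^t$, as in the abstract, but this is not needed here.)

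The step I expect to be the main obstacle is the Rellich analysis: organizing the identity so that the drift enters \emph{only} through the solid integral $\int_\Omega(\alpha\cdot\nabla u)(b\cdot\nabla u)\,dx$---with no integration by parts against $b$, hence no trace of $b$ on $\partial\Omega$ and no use of $\dive b$---and then recognizing that integral, together with the $\nabla A$-error, as a genuinely compact perturbation of $S$; and, upstream of this, setting up the layer potential calculus (fundamental solution with sharp bounds, $L^2$ boundedness of the relevant singular integrals, jump relations) under the sole hypothesis $b\in L^\infty$.
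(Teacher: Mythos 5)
You correctly identify the overall strategy of the paper (reduction to symmetric $A$ via Pipher's integration-by-parts, layer potentials, Rellich estimates, continuity method), but your Rellich analysis is missing the quantitative ingredient that makes the argument work, and as a result your proof would not establish the theorem with the stated dependence of the constants.

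\textbf{The core gap.} In your one-sided Rellich bound the solid term $\int_\Omega|\nabla u|^2$ is controlled only by $\diam(\Omega)\|(\nabla u)^*\|^2_{L^2(\partial\Omega)}$, i.e.\ for $u=\mathcal Sf$ by $\|f\|_{L^2}^2$. Fed back into the jump relation this yields the vacuous $\|f\|\lesssim\|Sf\|_{W^{1,2}}+\|f\|$, and the only way to extract anything from it is to treat $\int_\Omega|\nabla \mathcal Sf|^2=\langle\mathcal Qf,f\rangle$ as a compact quadratic form, as you do. But compactness yields only \emph{Fredholm-ness} of $S$, not a quantitative lower bound, and Fredholm index $0$ plus injectivity gives an operator-dependent $\|S^{-1}\|$ with no control in terms of $d,\lambda,\mu,\|b\|_\infty,\diam(\Omega)$ and the Lipschitz character. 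The theorem asserts exactly such uniform control. The paper's fix is Lemma~\ref{SolidBound}: $\int_\Omega|\nabla u|^2\le C\int_{\partial\Omega}|\nabla_T u|^2$ with a good constant $C$. This is proved by comparing $u$ with the solution $v$ of the driftless equation $-\dive(A\nabla v)=0$ with $v=u$ on $\partial\Omega_j$ and using Proposition~\ref{GoodBoundOnSolutions} to control $\|\nabla(u-v)\|_{L^2}$ by $\|b\nabla v\|_{L^2}$; for $v$ the classical driftless Rellich estimate applies. That is the step your proposal is missing, and it is what turns the Rellich identity into the genuine a priori estimate $\|f\|_{L^2}\le C\|\nabla_T\mathcal Sf\|_{L^2}$ (Proposition~\ref{InverseInequality}) that the continuity method requires.

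\textbf{Secondary issues.} First, obtaining semi-Fredholm-ness of $S^*$ by ``running the same argument for $L^t$'' would invoke the Rellich identity for $L^t$, which contains the term $\int|\dive b||u\nabla u|$ (Proposition~\ref{LocalRellichForAdjoint}) and thus silently requires a hypothesis on $\dive b$ that is not available; this route does not go through (Fredholm-ness of $S$ is instead obtained in the paper essentially for free from the weakly singular kernel bounds $|\nabla G-\nabla G_0|\lesssim|p-q|^{3/2-d}$ of Proposition~\ref{ContinuityArgumentForB}, but that does not fix the quantitative gap above). Second, you work with a global fundamental solution $\Gamma_L$ on $\mathbb R^d$; the paper deliberately avoids this by extending $A$ periodically (Lemma~\ref{PeriodicAExtension}) and using Green's function in a large ball $B\supset\Omega$, since the existence and decay of a global fundamental solution for $L$ with merely bounded, compactly supported drift is not a priori clear. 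This is a repairable deviation, but it should be addressed.
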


For the adjoint equation we will show the following theorems (also appearing in theorems \ref{DirichletForNonSymmetricForAdjoint} and \ref{RegularityForNonSymmetricForAdjoint}).

\begin{thm}
Let $\Omega\subseteq\mathbb R^d$ be a bounded Lipschitz domain, let $A$ be uniformly elliptic and Lipschitz, and $b\in L^{\infty}(\Omega)$. Then, the Dirichlet problem $D_2$ for the equation
\[
-\dive(A^t\nabla u)-\dive(bu)=0
\]
is uniquely solvable in $\Omega$, with constants depending on $d$, the ellipticity of $A$, the Lipschitz norm of $A$, $\|b\|_{\infty}$, the diameter of $\Omega$, and the Lipschitz character of $\Omega$.
\end{thm}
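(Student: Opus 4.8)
The plan is to establish $D_2$ for the adjoint operator $L^t = -\dive(A^t\nabla\cdot) - \dive(b\,\cdot)$ by a duality argument, pairing it against the Regularity problem $R_2$ for the original operator $L = -\dive(A\nabla\cdot) + b\nabla\cdot$, which is unconditionally solvable by the second theorem of the excerpt. First I would reduce to the case where $A$ is symmetric: by the integration-by-parts device of \cite{PipherDrifts} already invoked in the introduction, if $A = A_s + A_a$ with $A_a$ antisymmetric and Lipschitz, the first-order term $\dive(A_a\nabla u)$ can be absorbed into a modified drift, so it suffices to treat symmetric $A$ where Rellich-type identities are available. For symmetric $A$ the strategy is the method of layer potentials: one constructs the single layer potential $\mathcal S^t f$ associated to $L^t$ using the fundamental solution / Green function bounds for operators with bounded drift (these should be available from the harmonic-measure estimates developed earlier for $D_2$ for $L$), and reduces solvability of $D_2$ to invertibility of $\tfrac12 I + \mathcal K$ (or the relevant boundary operator) on $L^2(\partial\Omega)$.

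The key steps, in order, would be: (1) record the basic estimates — nontangential maximal function bounds and Rellich inequalities — for solutions of $L^t u = 0$ with symmetric $A$ and bounded $b$, in the spirit of the Jerison–Kenig / Verchota machinery, using that $\dive b$ enters only through lower-order perturbative terms controlled by $\|b\|_\infty$ and the Lipschitz constant of $A$ together with Caccioppoli and the maximum principle for $L$; (2) prove the relevant Rellich-type estimate $\|\partial_\nu u\|_{L^2(\partial\Omega)} \approx \|\nabla_t u\|_{L^2(\partial\Omega)}$ for null solutions of $L^t$, where the drift contributes a term handled by Cauchy–Schwarz and the coarea/nontangential bounds; (3) deduce that the layer potential operator for $L^t$ at the boundary is invertible on $L^2$, hence $D_2$ for $L^t$ with symmetric $A$ is solvable with the stated quantitative dependence; (4) remove the symmetry assumption via the integration-by-parts reduction, noting — as the introduction flags — that \emph{no} extra regularity beyond Lipschitz on $A$ is needed here (unlike $R_2$ for $L^t$), since the antisymmetric part's contribution to the new drift is bounded precisely because $A$ is Lipschitz; (5) obtain uniqueness from the maximum principle / energy estimates for $L^t$, or dually from existence for $R_2$ for $L$.

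I expect the main obstacle to be step (2)–(3): making the Rellich identity and the resulting invertibility of the boundary layer potential genuinely uniform in the drift. The presence of $b$ (and implicitly $\dive b$ through the definition of weak solution of $L^t u = 0$, which reads $\int A^t\nabla u\cdot\nabla\varphi = \int bu\cdot\nabla\varphi$) perturbs both the fundamental solution and the jump relations, so one must show the perturbation of $\tfrac12 I + \mathcal K$ away from the drift-free case is compact, or small, or at least does not destroy injectivity — and crucially that the constants depend only on $\|b\|_\infty$ and not on $\|\dive b\|$. The cleanest route is probably to bypass layer potentials for the perturbation and instead run the duality directly: show that for $f \in L^2(\partial\Omega)$ the candidate solution $u$ of $D_2$ for $L^t$ satisfies $\int_\Omega u\, Lv = \int_{\partial\Omega} f\,\partial_{\nu}^L v$ for all $v$ solving $R_2$ for $L$ with arbitrary $W^{1,2}(\partial\Omega)$ data, and then read off $\|u^*\|_{L^2(\partial\Omega)} \lesssim \|f\|_{L^2(\partial\Omega)}$ from the $R_2$ estimate for $L$ by testing against a suitable dense family; this localizes the difficulty entirely in the already-established $R_2$ bound for $L$ and the conormal-derivative trace theory, and the drift term appears only in its bounded form.
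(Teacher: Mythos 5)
You have correctly identified the central tension — the Rellich estimate for $L^t$ sees $\dive b$, yet this theorem assumes only $b \in L^\infty$ — but your primary plan (steps 1–3) does not resolve it. Your step (2) asserts that the drift term in the Rellich identity for $L^t$ is ``handled by Cauchy--Schwarz'' with constants depending only on $\|b\|_\infty$; this is false. For a null solution of $L^t$ the Rellich identity produces a term of the form $\int |\dive b|\,|u\,\nabla u|$, and $\dive b$ is not a distributional object controlled by $\|b\|_\infty$. Likewise, your step (3) proposes to invert a boundary operator of the form $\tfrac12 I + \mathcal K$ for $L^t$; that invertibility would again have to come from the $L^t$ Rellich estimate, so the same obstruction reappears. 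The paper deliberately avoids this: the only place a Rellich estimate is applied is to $L$ (not $L^t$), and the $L$-Rellich estimate for symmetric $A$ needs nothing beyond $b \in L^\infty$.

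Your ``cleanest route'' paragraph is the correct idea, and is essentially what the paper does, but it needs to be committed to and made precise. The paper does not build a layer potential for $L^t$ or test against $R_2$ solutions via conormal traces; it observes that the single layer $\mathcal S \colon L^2(\partial\Omega) \to W^{1,2}(\partial\Omega)$ for $L$ is invertible (a consequence of the $L$-Rellich estimate and the jump relations), and therefore its Banach-space adjoint $\mathcal S^* \colon W^{-1,2}(\partial\Omega) \to L^2(\partial\Omega)$ is invertible as well, with norm a good constant. One then extends $\mathcal S^*$ into $\Omega$ by $\mathcal S^*_+ F(x) = F\bigl(G^t(x,\cdot)|_{\partial\Omega}\bigr)$, checks that this solves $L^t u = 0$ in $\Omega$ and converges nontangentially to $\mathcal S^* F$, and obtains $\|(\mathcal S^*_+ F)^*\|_{L^2(\partial\Omega)} \le C\|F\|_{W^{-1,2}(\partial\Omega)}$ from boundedness of a maximal truncation operator plus H\"older continuity of $\nabla_x G(x,q)$ in the $q$-variable — none of which touch $\dive b$. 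The solution is $u = \mathcal S^*_+\bigl((\mathcal S^*)^{-1} f\bigr)$. For uniqueness, a plain maximum principle or energy argument is not enough either; the paper's uniqueness proof for $D_2$ for $L^t$ integrates by parts against Green's function $G_y$ for $L$ and crucially uses that $(\nabla G_y)^* \in L^2(\partial\Omega)$, which is itself extracted from solvability of $R_2$ for $L$. So both existence and uniqueness ultimately rest on the $R_2$ theory for $L$, which is the precise form of the duality you gesture at.
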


\begin{thm}
Let $\Omega\subseteq\mathbb R^d$ be a bounded Lipschitz domain, and let $A$ be uniformly elliptic and $C^{1,\alpha}$, $b\in C^{\alpha}(\mathbb R^d)$ and $\dive b\in L^d(\mathbb R^d)$. Then, the Regularity problem $R_2$ for the equation
\[
-\dive(A^t\nabla u)-\dive(bu)=0
\]
is uniquely solvable in $\Omega$, with constants depending on $d$, the ellipticity of $A$, the $C^{1,\alpha}$ norm of $A$, $\|b\|_{C^{\alpha}},\|\dive b\|_d$, the Lipschitz character of $\Omega$ and the diameter of $\Omega$.

If $A$ is symmetric, it is enough to assume that it is Lipschitz and not $C^{1,\alpha}$.
\end{thm}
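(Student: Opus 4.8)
The plan is to obtain the theorem from its symmetric case, which is exactly what the last sentence asserts, by means of the integration-by-parts device of \cite{PipherDrifts}; so I would carry out that reduction first. Write $A=A_s+A_a$ with $A_s$ symmetric and $A_a$ antisymmetric, both of class $C^{1,\alpha}$ when $A$ is. For a solution $u$ of $L^tu=0$ one has $-\dive(A^t\nabla u)=-\dive(A_s\nabla u)+\dive(A_a\nabla u)$, and since $\sum_{i,j}(A_a)_{ij}\,\partial_i\partial_j u=0$ by antisymmetry, $\dive(A_a\nabla u)=\vec c\cdot\nabla u$ with $c_j=\sum_i\partial_i(A_a)_{ij}$; the same antisymmetry gives $\dive\vec c=\sum_{i,j}\partial_i\partial_j(A_a)_{ij}=0$, so $\vec c\cdot\nabla u=\dive(u\,\vec c)$. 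Thus $L^tu=0$ is equivalent to
\[
-\dive\!\big(A_s\nabla u\big)-\dive\!\big((b-\vec c)\,u\big)=0 ,
\]
an equation of exactly the same form with symmetric Lipschitz matrix $A_s$, drift $\tilde b:=b-\vec c\in C^{\alpha}(\mathbb R^d)$, and $\dive\tilde b=\dive b\in L^d(\mathbb R^d)$. This is the only place the hypothesis $A\in C^{1,\alpha}$ (rather than merely Lipschitz) is used — it is what keeps $\vec c=\dive A_a$ Hölder continuous — and since the Lipschitz character and diameter of $\Omega$ and all the relevant coefficient norms are unchanged, it suffices to treat symmetric Lipschitz $A$.

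For symmetric $A$ I would use the method of layer potentials. First construct the fundamental solution $\Gamma(\cdot,\cdot)$ of $L=-\dive(A\nabla\cdot)+b\nabla\cdot$ on all of $\mathbb R^d$, with the estimates $|\Gamma(x,y)|\lesssim|x-y|^{2-d}$, $|\nabla_x\Gamma(x,y)|\lesssim|x-y|^{1-d}$ and Hölder continuity of $\nabla_x\Gamma$ off the diagonal; this is where the \emph{global} hypotheses $b\in C^{\alpha}(\mathbb R^d)$ and $\dive b\in L^d(\mathbb R^d)$ are genuinely needed, as they are what produce a bona fide fundamental solution on $\mathbb R^d$ with the same bounds as in the drift-free case. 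Then define the single layer potential for $L^t$, $\mathcal Sf(x)=\int_{\partial\Omega}\Gamma(y,x)\,f(y)\,d\sigma(y)$, and establish the standard package: the nontangential bound $\|(\nabla\mathcal Sf)^*\|_{L^2(\partial\Omega)}\lesssim\|f\|_{L^2(\partial\Omega)}$, the mapping $\mathcal S\colon L^2(\partial\Omega)\to W^{1,2}(\partial\Omega)$, the existence of nontangential limits of $\nabla\mathcal Sf$ from both sides of $\partial\Omega$, and the jump relations for the conormal derivative $\partial^{L^t}_\nu=\big(A^t\nabla\cdot+b\,\cdot\big)\cdot\nu$, namely $\partial^{L^t}_\nu\mathcal Sf\big|_{\pm}=\big(\mp\tfrac12 I+\mathcal K^{\sharp}\big)f$ with $\mathcal K^{\sharp}$ bounded on $L^2(\partial\Omega)$.

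The heart of the argument is a Rellich identity adapted to the drift. For symmetric $A$ and $u$ solving $L^tu=0$, contracting the divergence-form equation with a vector field $\alpha$ that equals the outer unit normal near $\partial\Omega$ and integrating by parts reproduces the classical two-sided comparison among $\int_{\partial\Omega}\langle A\nu,\nu\rangle|\nabla u|^2$, $\int_{\partial\Omega}|\partial^{L^t}_\nu u|^2$ and $\int_{\partial\Omega}|\nabla_tu|^2$, plus error terms from the zeroth-order term: bulk integrals of the shape $\int_\Omega(\dive b)\,u\,(\alpha\cdot\nabla u)$ and $\int_\Omega(b\cdot\nabla u)(\ldots)$, together with boundary terms involving $b\,u$. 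These I would absorb using the interior and boundary Caccioppoli inequalities, the bound $\|b\|_\infty$, and the Sobolev embedding $W^{1,2}\hookrightarrow L^{2d/(d-2)}$; indeed $\|u\,\nabla u\|_{L^{d/(d-1)}(\Omega)}\le\|u\|_{L^{2d/(d-2)}(\Omega)}\|\nabla u\|_{L^2(\Omega)}$, so that membership $\dive b\in L^{d}(\Omega)$ is exactly what makes $\int_\Omega(\dive b)\,u\,\nabla u$ controllable — this is the size condition on $\dive b$ announced in the introduction, and it explains why $L^d$, rather than the scale-invariant $L^{d/2}$ used for $D_2$ of $L$, appears here. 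From the resulting estimate $\|\nabla_tu\|_{L^2(\partial\Omega)}\approx\|\partial^{L^t}_\nu u\|_{L^2(\partial\Omega)}$ together with the jump relations I would conclude invertibility of $\pm\tfrac12 I+\mathcal K^{\sharp}$ and of $\mathcal S\colon L^2(\partial\Omega)\to W^{1,2}(\partial\Omega)$; then $u=\mathcal S\big(\mathcal S^{-1}f\big)$ solves $R_2$ with $\|(\nabla u)^*\|_{L^2(\partial\Omega)}\lesssim\|f\|_{W^{1,2}(\partial\Omega)}$, while uniqueness follows from injectivity of the boundary single layer and a Green-type representation formula, the latter leaning on the solvability results for $L$ established earlier.

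\textbf{Main obstacle.} The hardest point is the Rellich step: a divergence-form zeroth-order term introduces error terms in the Payne--Weinberger/Jerison--Kenig identity that sit at the borderline of integrability, and closing the estimate requires balancing them against the ellipticity of $A$ using precisely the $L^\infty$- and $L^d$-norms of $b$ and $\dive b$ (and, in the intermediate steps, a smallness reduction obtained by splitting $\dive b$ into a small $L^d$ part and a bounded part). A secondary difficulty is the construction of the fundamental solution for the non-self-adjoint operator $-\dive(A\nabla\cdot)+b\nabla\cdot$ on $\mathbb R^d$ with gradient and Hölder bounds sharp enough for the layer potential calculus, together with making the integration-by-parts reduction rigorous with the regularity actually available for solutions (supplied by the interior estimates).
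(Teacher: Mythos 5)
Your reduction from non-symmetric $A$ to the symmetric part $A_s$ via the antisymmetric correction $\vec c$ is exactly the device the paper uses (Lemma \ref{NonSymmetricEquations}, whose $\tilde b$ agrees with your $-\vec c$ after unwinding signs), and you correctly pinpoint both why $A\in C^{1,\alpha}$ rather than merely Lipschitz is needed (so $\vec c\in C^\alpha$) and that $\dive\vec c=0$ leaves the divergence class of the drift unchanged. For the symmetric case you also correctly identify the main ingredients: layer potentials built from a kernel with the right gradient and H\"older bounds, a Rellich identity for $L^t$ with the extra bulk term $\int_\Omega(\dive b)\,u\,(\alpha\cdot\nabla u)$, and the estimate $\int_\Omega|\dive b||u\nabla u|\lesssim\|\dive b\|_{L^d}\|u\|_{L^{2^*}}\|\nabla u\|_{L^2}$ as the place where the global hypothesis $\dive b\in L^d$ enters; this matches the structure of Propositions \ref{LocalRellichForAdjoint}, \ref{SolidBoundForAdjoint} and \ref{GlobalRellichForAdjoint}.

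There is, however, a real gap in the final invertibility step. The two-sided Rellich comparison together with the jump relation yields only a lower bound $\|f\|_{L^2(\partial\Omega)}\lesssim\|\mathcal S^tf\|_{W^{1,2}(\partial\Omega)}$, hence injectivity and closed range of $\mathcal S^t$ and of $\pm\tfrac12 I+\mathcal K^\sharp$; on a Lipschitz boundary $\mathcal K^\sharp$ is not compact, so Fredholm theory does not automatically upgrade this to surjectivity. The paper closes exactly this gap with the method of continuity (Theorem \ref{InvertibilityOfSAdjoint}): it joins $L^t$ to the drift-free operator via $b\mapsto tb$, shows $t\mapsto\mathcal S^t_t$ is norm-continuous using the explicit difference bounds on Green's functions of Proposition \ref{ContinuityArgumentForBAdjoint}, and invokes the known invertibility at $t=0$ from \cite{KenigShen} together with the uniform lower bound. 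Two smaller departures from your sketch: the paper works with Green's function for $L^t$ on a fixed large ball containing $\Omega$, after periodic and Lipschitz extensions of the coefficients (Lemmas \ref{PeriodicAExtension}, \ref{bExtension}), rather than with a fundamental solution on all of $\mathbb R^d$; and it avoids any smallness splitting of $\dive b$ by keeping the radius $r$ free in the localized Rellich inequality and choosing it small at the end so the term $Cr\int_{\partial\Omega}|(\nabla u)^*|^2$ is absorbed (Propositions \ref{GlobalRellichForAdjoint}, \ref{InverseInequalityForAdjointForGoodB}).
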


The problems we consider are classical, and there has been much work on those for operators not involving drifts; for a comprehensive list of past results, we refer to \cite{HofmannAnalyticity} and \cite{NguyenPaper}, as well as the references therein. A couple of results for operators with drifts include solvability for some large $p>1$, as in \cite{PipherDrifts}, or solvability under specific smallness assumptions on the Lipschitz constant of the domain, as in \cite{DindosPetermichl}. We also refer to \cite{HofmannLewis} for more results on equations with drifts.

We remark that solvability of $D_2$ and $R_2$ is the endpoint result for the best range of exponents for which solvability can be obtained. As we explain, using the theory of weights we can extend the theorems on the Dirichlet problem from $p=2$ to $p\in(2-\e,\infty)$. Also, using the theory of the Hardy space $H^1$, it is expected that a similar extension can be done for the Regularity problem, but in this case the range obtained is $p\in(1,2+\e)$, as in \cite{DahlbergKenig}.  Simple counterexamples involving harmonic functions in cones \cite{KenigCBMS} show that, for Lipschitz domains, those ranges for $p$ are optimal.

It is to be noted that we are assuming H{\"o}lder continuity of $b$ in the case of the Regularity problem $L^t$, but in the other cases there is no continuity assumption on $b$. This continuity assumption is used in the continuity properties of $\nabla u$, if $L^tu=0$: if $b$ is H{\"o}lder continuous, then $\nabla u$ is continuous, but no such assumption is required in order to obtain the analogous result for solutions of $Lu=0$. For the Dirichlet problem for $L^t$ we rely on solvability of $R_2$ for $L$ first, using properties of the single layer potential that have been established without assuming continuity of $b$.

Moreover, we need a size assumption on $\dive b$ for $D_2$ for $L$, as well as $R_2$ for $L^t$. On the other hand, no such assumption is needed for $R_2$ for $L$, as well as $D_2$ for $L^t$. An assumption like this is crucial for the method we will follow in this work (using the Rellich estimates), which shows solvability of the problems discussed above for the optimal exponent $p=2$. Note that this also is consistent with the duality that is discussed in \cite{HofmannDuality}. 

\subsection{Summary}
A short summary of this work is now in order.

In chapter 2 we will start with the various definitions, and we will describe the setting for the problems. We then proceed, in chapter 3, to show various properties of solutions to the equations $Lu=0$ and $L^tu=0$: that is, continuity of $u$, and continuity of $\nabla u$. We will then turn to the Rellich estimates, which will be our main tool in approaching our problems. We show two local estimates, one for $L$ and one for $L^t$; the one for $L^t$ requiring an assumption on $\dive b$.

The next step will be, in chapter 4, the construction of solutions to the inhomogeneous equations $Lu=F$ and $L^tu=F$, for various $F$. Since the operators $L,L^t$ are not necessarily coercive, we will construct the solutions using the Fredholm alternative, following the arguments in \cite{Evans} and the estimates in \cite{Gilbarg}.  It is to be noted that we will use an adjoint operator similar to the one appearing in section 5 of \cite{Littman} in order to construct solutions for measures. However, our solutions are not expected to be continuous up to the boundary for general domains, and we will have to restrict our attention to the space of bounded and continuous functions. The main difficulty that also arises is the exact dependence of the various constants on the coefficients, and how to pass to non Lipschitz coefficients $b$, since this will be one of our assumptions in some constructions.

In chapter 5 we proceed to the construction of Green's function, which will be used later in the formula for harmonic measure, as well as the method of layer potentials. There has been some work on similar constructions, for example in \cite{RiahiGreen}, \cite{MayborodaGreen} and \cite{HofmannLewis}, but our case is not covered by the previous ones, the main difference with \cite{MayborodaGreen} being the absence of coercivity. For this reason, we will construct Green's function for the adjoint equation first, since we obtain the correct dependence on the coefficients, using arguments similar to the ones appearing in \cite{Gruter}. Using a symmetry relation between Green's function for $L$ and the corresponding for $L^t$, we will then construct Green's function for $L$. Furthermore, using Green's representation formula for solutions, we then recover the correct dependence of the constants involved in the constructions of solutions in chapter 4. We also proceed to showing pointwise bounds on the derivative of Green's function, which are crucial in the development of the method of layer potentials, as well as various estimates that will be later used in arguments involving the continuity method. 

Chapter 6 involves construction of harmonic measure, and various estimates on nontangential maximal functions, following \cite{JerisonKenigBoundary} (we also refer to \cite{KenigCBMS} for a summary of those results). We then apply those estimates in chapter 7, which treats solvability and uniqueness for the Dirichlet problem for the equation $Lu=0$, in the case where $A$ is symmetric. A crucial component in this development is the Rellich estimate for the equation $L^tu=0$ (and not the corresponding estimate for $L$), hence a regularity assumption on the divergence of $b$ is imposed. This assumption involves just the divergence of $b$, and not any specific derivatives of $b$, and this is a crucial observation in our passage to non symmetric coefficients $A$ in chapter 11.

In chapter 8 we turn our attention to the Regularity problem for the equation $Lu=0$. The first step involves uniqueness for $R_2$. We then define the single layer potential, which is a singular integral operator on $L^2$ functions defined on the boundary of a Lipschitz domain $\Omega$. We also show convergence properties relying on the similar properties of the analogous potentials for equations that do not involve drifts (as in \cite{KenigShen}, also in \cite{MitreaTaylor} and \cite{MitreaTaylorBook}), using estimates on differences of Green's functions from chapter 5. We then proceed to showing that the single layer potential of any function in $L^2(\partial\Omega)$ is a solution to the equation $Lu=0$, and its maximal function is bounded. Then, using a global analog of the Rellich estimate, we show that boundary values of the single layer potential operator span all of $W^{1,2}(\partial\Omega)$, thus showing solvability of $R_2$ for the equation $Lu=0$, for symmetric matrices $A$. In this development, no assumption on the derivatives of $b$ is required.

In chapter 9 we treat the Dirichlet problem for the equation $L^tu=0$. We first show uniqueness, and, inspired by \cite{HofmannDuality}, we use the adjoint of the single layer potential from chapter 8, to obtain existence of solutions for symmetric matrices $A$. In order to show this, we do not reduce our case to the operators without a drift, as in chapter 8; instead, we rely on boundedness of a maximal truncation operator in order to obtain boundedness of the nontangential maximal functions, and a density argument to show what are the correct boundary values for the adjoint of the single layer potential. As in chapter 8, no assumption on the derivatives of $b$ is required.

Chapter 10 treats the Regularity problem for $L^t$. Here, we assume that $b$ is H{\"o}lder continuous, in order to have a similar formula for the derivative of the single layer potential as in chapter 8. We show the global Rellich estimate for symmetric matrices $A$, which is more complicated to obtain compared to the one in chapter 8, since the Rellich estimate for $L^t$ is more involved than its analog for $L$; for this purpose, we have to show estimates that involve parts of our domain that are close to the boundary.

Finally, in chapter 11, we pass to non-symmetric coefficients, using an integration by parts argument appearing in \cite{PipherDrifts}. Specifically, we transform the matrix $A$ to a matrix with symmetric coefficients, thus reducing to an equation with a new drift, which satisfies the divergence assumptions under which the Dirichlet and the Regularity problem have been shown to be solvable. 
\section{Preliminaries}
In this chapter we will discuss the various definitions and the setting of the problems, and we will perform some preliminary constructions.

\subsection{Definitions}
We say that $A$ is uniformly elliptic with ellipticity $\lambda$, if there exists $\lambda>0$ such that, for almost all $x\in\Omega$, and all $y\in\mathbb R^d$ with $y\neq 0$,
\begin{equation}\label{eq:UniformEllipticity}
\lambda|y|^2\leq\left<A(x)y,y\right>\leq\lambda^{-1}|y|^2.
\end{equation}

In the following, we will write $A\in M_{\lambda}(\Omega)$ to denote that $A$ satisfies \eqref{eq:UniformEllipticity} in $\Omega$. If $A$ is also symmetric, we will write $A\in M_{\lambda}^s(\Omega)$.

The regularity assumption on $A$ will be Lipschitz continuity; that is, for a constant $\mu>0$, for all $x,y\in\Omega$,
\begin{equation}\label{eq:LipschitzContinuity}
|A(x)-A(y)|\leq\mu|x-y|.
\end{equation}

To make the statements of the theorems more succinct, we will write $A\in M_{\lambda,\mu}(\Omega)$, if $A\in M_{\lambda}(\Omega)$ and $A$ also satisfies \eqref{eq:LipschitzContinuity}. $M_{\lambda,\mu}^s(\Omega)$ will denote the matrices in $M_{\lambda,\mu}(\Omega)$ that are symmetric.

We will denote by $C_c^{\infty}(\Omega)$ the space of infinitely differentiable functions in $\Omega$, which are compactly supported in $\Omega$. Moreover, we will be working in the classical Sobolev space $W^{1,p}(\Omega)$, where $p\geq 1$, which consists of functions $u\in L^p(\Omega)$ such that their derivative, in the distributional sense, is an $L^p(\Omega)$ function. Also, $W^{1,p}_{{\rm loc}}(\Omega)$ will denote the space of functions $u$, such that $u\phi\in W^{1,p}(\Omega)$ for any $\phi\in C_c^{\infty}(\Omega)$.

In addition to the above, $W_0^{1,p}(\Omega)$ will denote the closure of $C_c^{\infty}(\Omega)$ in $W^{1,p}(\Omega)$. Finally, the dual space to $W^{1,p}(\Omega)$ will be denoted by $W^{-1,p}(\Omega)$.

Let $\alpha$ be the bilinear form which is defined by
\[
\alpha(u,v)=\int_{\Omega}A\nabla u\nabla v+b\nabla u\cdot v.
\]
For any element $F\in W^{-1,2}(\Omega)$, a function $u\in W^{1,1}_{{\rm loc}}(\Omega)$ is a \emph{weak solution} to the equation $Lu=F$, if $\alpha(u,\phi)=F\phi$ for all $\phi\in C_c^{\infty}(\Omega)$.

Denote the space of bounded and continuous functions in $\Omega$ by $C_b(\Omega)$, and define $\mathcal{B}(\Omega)=C_b(\Omega)^*$. Note that there exists a description of $\mathcal{B}(\Omega)$ via the space of measures on the Stone-\v{C}ech compactification of $\Omega$, but we will not need this description in this work. Then, for any $\mu\in\mathcal{B}(\Omega)$, a function $u\in W^{1,1}_{\rm loc}(\Omega)$ is a \emph{weak solution} to the equation $Lu=\mu$, if $\alpha(u,\phi)=\left<\mu,\phi\right>$ for all $\phi\in C_c^{\infty}(\Omega)$.

In order to treat the adjoint equation $L^tu=0$, we also define the bilinear form
\[
\alpha^t(u,v)=\int_{\Omega}A^t\nabla u\nabla v+b\nabla v\cdot u.
\]
For any $\mu\in\mathcal{B}(\Omega)$, a function $u\in W^{1,1}_{\rm loc}(\Omega)$ is a weak solution to the equation $L^tu=\mu$, if $\alpha^t(u,\phi)=\left<\mu,\phi\right>$ for all $\phi\in C_c^{\infty}(\Omega)$. Note also that $\alpha(u,v)=\alpha^t(v,u)$ for all $v,u\in W^{1,2}(\Omega)$.

A function $u\in W^{1,1}_{\rm loc}(\Omega)$ is a \emph{subsolution} to the equation $Lu=0$, if $\alpha(u,\phi)\leq 0$ for all $\phi\in C_c^{\infty}(\Omega)$ with $\phi\geq 0$ in $\Omega$. Similarly, we define supersolutions.

Finally, for $\alpha\in (0,1]$, we denote by $C^{\alpha}(\Omega)$ the space of H{\"o}lder continuous functions in $\Omega$; that is, functions such that the seminorm
\[
\|b\|_{C^{0,\alpha}(\Omega)}=\sup\left\{\frac{|b(x)-b(y)|}{|x-y|^{\alpha}}\,\Big{|}\,x,y\in\Omega,x\neq y\right\}
\]
is finite. We then define the norm $\|b\|_{C^{\alpha}}=\|b\|_{\infty}+\|b\|_{C^{\alpha}}$. We also define $C^{1,\alpha}$ to be functions $b$ such that $\nabla b$ lies in $C^{\alpha}$, with the norm
\[
\|b\|_{C^{1,\alpha}}=\|b\|_{\infty}+\|\nabla b\|_{C^{\alpha}}.
\]

\subsection{Three extension lemmas}
In the following we will need to extend our coefficients, which are defined in $\Omega$, to either a neighborhood of $\Omega$ or the whole space $\mathbb R^d$, controlling the various norms associated with them. We show how to achieve this in the following lemmas.

\begin{lemma}\label{AExtension}
Let $\Omega\subseteq\mathbb R^d$ be an open domain and $A\in M_{\lambda,\mu}(\Omega)$. Then, there exists $\tilde{A}\in M_{\lambda,C\mu}(\mathbb R^d)$ such that $\tilde{A}=A$ on $\Omega$, where $C$ depends only on $d$.

If $A\in M_{\lambda,\mu}^s(\Omega)$, then there exists an extension $\tilde{A}\in M_{\lambda,C\mu}^s(\mathbb R^d)$.
\end{lemma}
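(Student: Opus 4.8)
The plan is to reduce to the scalar case and extend entry-by-entry, then repair uniform ellipticity by a truncation/projection argument that does not damage the Lipschitz bound. First I would recall the standard fact that a Lipschitz function $f$ defined on an arbitrary subset $S\subseteq\mathbb R^d$ with Lipschitz constant $\mu$ extends to all of $\mathbb R^d$ with the same constant, via the classical Whitney/McShane formula $\tilde f(x)=\inf_{y\in S}\bigl(f(y)+\mu|x-y|\bigr)$ (for scalar $f$), or, for vector- or matrix-valued $f$, by applying this coordinatewise, which costs only a dimensional factor in the Lipschitz constant. Applying this to each of the $d^2$ entries $A_{ij}$ of $A$ yields a matrix-valued extension $A_1\in\mathrm{Lip}(\mathbb R^d)$ with $A_1=A$ on $\Omega$ and $|A_1(x)-A_1(y)|\le C(d)\mu|x-y|$, and with $\|A_1\|_\infty\le\lambda^{-1}$ preserved (the McShane formula does not increase the sup norm, or one truncates). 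If $A$ is symmetric, one extends only the entries $A_{ij}$ with $i\le j$ and sets $A_{ji}:=A_{ij}$, so symmetry is automatic; this handles the last sentence of the lemma with no extra work.

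The remaining issue is that $A_1$ need not satisfy the lower ellipticity bound $\langle A_1(x)y,y\rangle\ge\lambda|y|^2$ outside $\Omega$. The key step is to compose with a Lipschitz "ellipticity projection". In the symmetric case this is cleanest: write $A_1(x)=S(x)$, diagonalize, and replace each eigenvalue $t$ by $\min\bigl(\lambda^{-1},\max(\lambda,t)\bigr)$; equivalently, apply the scalar truncation $\tau(t)=\min(\lambda^{-1},\max(\lambda,t))$ in the sense of the spectral calculus. Since $\tau$ is $1$-Lipschitz on $\mathbb R$, the map $S\mapsto\tau(S)$ is $1$-Lipschitz on symmetric matrices in operator (hence Frobenius, up to $\sqrt d$) norm — this follows from standard perturbation theory for functions of self-adjoint matrices with Lipschitz $\tau$ — so the resulting $\tilde A$ is still $C(d)\mu$-Lipschitz, now lies in $M^s_{\lambda,C\mu}(\mathbb R^d)$, and agrees with $A$ on $\Omega$ because on $\Omega$ the eigenvalues of $A$ already lie in $[\lambda,\lambda^{-1}]$ so $\tau$ acts as the identity there. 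For the general (non-symmetric) case one applies the same truncation to the symmetric part while leaving the antisymmetric part $\tfrac12(A_1-A_1^t)$ untouched: the antisymmetric part contributes nothing to $\langle A\, y,y\rangle$, so ellipticity is governed entirely by the symmetric part, and its Frobenius norm is controlled by $\|A_1\|_\infty$, giving the upper bound $\lambda^{-1}$ after, if necessary, a further harmless rescaling of constants. One also checks $A_1^t$ has the same Lipschitz constant, so no new dependence appears.

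The main obstacle — really the only nontrivial point — is verifying that the spectral truncation $S\mapsto\tau(S)$ is Lipschitz on symmetric matrices with a dimension-controlled constant; everything else is bookkeeping. I would handle this either by quoting the standard result that for $1$-Lipschitz $\tau$ one has $\|\tau(S_1)-\tau(S_2)\|_{\mathrm{op}}\le C\log(1+d)\,\|S_1-S_2\|_{\mathrm{op}}$ (and the Frobenius-norm version is in fact exactly $1$-Lipschitz, a classical fact), absorbing the constant into $C(d)$, or, to keep the argument elementary, by an alternative route that avoids spectral calculus entirely: extend $A$ from $\Omega$ to a slightly larger neighborhood by the McShane formula, and on the complement of that neighborhood interpolate linearly (via a Lipschitz cutoff) between $A_1$ and the constant matrix $\lambda^{-1/2}\cdot\mathrm{something}$, er — cleaner is simply to take $\tilde A(x)=\eta(x)A_1(x)+(1-\eta(x))\,\mathrm{Id}$ where $\eta$ is a Lipschitz cutoff equal to $1$ on $\Omega$ and supported near $\Omega$, noting that any convex combination of $A_1(x)$ (which is elliptic near $\Omega$ by continuity, after first shrinking) and $\mathrm{Id}$ stays elliptic, with Lipschitz constant bounded by $\|\nabla\eta\|_\infty(\lambda^{-1}+1)+C(d)\mu$. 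This last version is self-contained and I would likely present it, at the cost of the constant depending also on how far one extends; but since the lemma only claims dependence on $d$, the spectral-truncation version is preferable and I would use that, citing the Lipschitz property of $\tau$ on symmetric matrices.
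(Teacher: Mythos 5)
Your proposal is correct in its core approach, but it takes a genuinely different and more laborious route than the paper. The paper extends $A$ entrywise via Stein's extension operator $\mathcal{E}_0 f(x)=\sum_k f(p_k)\phi_k^*(x)$, where the $\phi_k^*$ form a nonnegative partition of unity and the $p_k$ lie in $\Omega$; since $\tilde A(x)$ is then a convex combination of matrices $A(p_k)$ that already satisfy the two-sided ellipticity bound, ellipticity (and symmetry) is inherited automatically, and no repair step is needed. You instead use the McShane/Whitney formula, which does give a Lipschitz extension but has no positivity structure, and then restore ellipticity via spectral truncation of the symmetric part. That does work, but the price is the fact that $1$-Lipschitz scalar functions act as $1$-Lipschitz maps on symmetric matrices in Frobenius norm — a genuine but nontrivial ingredient (double operator integrals / Birman--Solomyak), far heavier machinery than anything Lemma~\ref{AExtension} otherwise uses. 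The paper's choice of extension operator is precisely what makes the truncation step unnecessary. A few smaller points: (i) the remark that ``its Frobenius norm is controlled by $\|A_1\|_\infty$, giving the upper bound $\lambda^{-1}$ after \dots a further harmless rescaling'' is confused and superfluous — once you truncate the eigenvalues of the symmetric part to $[\lambda,\lambda^{-1}]$, the upper ellipticity bound $\langle\tilde Ay,y\rangle\le\lambda^{-1}|y|^2$ is immediate and the constant $\lambda$ is preserved exactly, with no rescaling; (ii) for a symmetric $A$, McShane applied to all entries already gives a symmetric extension (since $A_{ij}\equiv A_{ji}$ as functions), so your ``extend $i\le j$ only'' trick is harmless but unnecessary; (iii) you are right to discard the cutoff-interpolation variant at the end, since its Lipschitz constant would depend on the size of the transition region rather than only on $d$, contradicting the stated dependence of $C$.
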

\begin{proof}
Consider the extension operator $\mathcal{E}_0:\Lip(\Omega)\to\Lip(\mathbb R^d)$, appearing in \cite[p.~172]{SteinSingular}, which is given by
\[
\mathcal{E}_0f(x)=\sum_kf(p_k)\phi_k^*(x),
\]
and for each fixed $x$, the sum is in fact finite. Set $\tilde{A}=\mathcal{E}_0A$; that is,
\[
\mathcal{E}_0a_{ij}(x)=\sum_ka_{ij}(p_k)\phi_k^*(x),
\]
for all $i,j=1,\dots d$. Then, theorem 3 in \cite[p.~174]{SteinSingular} shows that $\tilde{A}$ is $C\mu$-Lipschitz continuous on $\mathbb R^d$, with $C$ depending only on $d$. Moreover, for any $x\notin\Omega$ and $y\in\mathbb R^d$ with $y\neq 0$,
\begin{align*}
\left<\tilde{A}(x)y,y\right>&=\sum_{i,j}\tilde{a}_{ij}(x)y_iy_j=\sum_{i,j}\mathcal{E}_0a_{ij}(x)y_iy_j=\sum_{i,j}\sum_ka_{ij}(p_k)\phi_k^*(x)y_iy_j\\
&=\sum_k\left(\sum_{i,j}a_{ij}(p_k)y_iy_j\right)\phi_k^*(x)=\sum_k\left<A(p_k)y,y\right>\phi_k^*(x)\\
&\geq\sum_k\lambda|y|^2\phi_k^*(x)=\lambda|y|^2,
\end{align*}
since $(\phi_k^*)$ is a partition of unity, and all the functions $\phi_k^*$ are positive. Since $\tilde{A}$ is an extension of $A$, the same inequality holds for $x\in\Omega$, which shows that $\tilde{A}$ is $\lambda$-uniformly elliptic in $\mathbb{R}^d$.

If $A$ is symmetric, the definition of $\tilde{A}$ shows that $\tilde{A}$ is also symmetric, which completes the proof.
\end{proof}

The next lemma shows how we can extend $A$ to also be periodic; thi will be a useful property in order to consider fundamental solutions in all of $\mathbb R^d$ for the equation $-\dive(A\nabla u)$. We will follow remark 6.2 in \cite{KenigShen} for this purpose.

\begin{lemma}\label{PeriodicAExtension}
Let $\Omega\subseteq\mathbb R^d$ be an open domain, with $\diam(\Omega)<1/4$, and $0\in\Omega$. Suppose also that $A\in M_{\lambda,\mu}(\Omega)$. Then, there exists an extension $A_p$ of $A$, which is $1$- periodic, and also $A_p\in M_{\lambda,\mu_0}(\mathbb{R}^d)$, where $\mu_0$ depends on $d,\lambda$ and $\mu$.
\end{lemma}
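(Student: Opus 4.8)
The plan is to combine Lemma~\ref{AExtension} with a cutoff that interpolates between $A$ and a constant matrix, and then periodize. Since $0\in\Omega$ and $\diam(\Omega)<1/4$, every $x\in\Omega$ satisfies $|x|<1/4$, so $\overline{\Omega}\subseteq B(0,1/4)$ sits well inside the fundamental cube $Q=[-1/2,1/2]^d$ of the lattice $\mathbb Z^d$. Also, uniform ellipticity forces $\lambda\le 1$, so the identity matrix $I$ is itself $\lambda$-elliptic.

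\textbf{Step 1 (global Lipschitz extension and localization).} First apply Lemma~\ref{AExtension} to get $\tilde A\in M_{\lambda,C\mu}(\mathbb R^d)$ with $\tilde A=A$ on $\Omega$; recall $\|\tilde A\|_{\infty}$ is controlled by $\lambda$. Choose a radial $\eta\in C_c^{\infty}(\mathbb R^d)$ with $0\le\eta\le 1$, $\eta\equiv 1$ on $B(0,1/4)$, $\mathrm{supp}\,\eta\subseteq B(0,1/3)$, and $|\nabla\eta|\le C_d$, and set
\[
A_0(x)=\eta(x)\tilde A(x)+\big(1-\eta(x)\big)I.
\]
For almost every $x$ and every $y\neq 0$ we have $\langle A_0(x)y,y\rangle=\eta(x)\langle\tilde A(x)y,y\rangle+(1-\eta(x))|y|^2$, which lies between $\lambda|y|^2$ and $\lambda^{-1}|y|^2$ by ellipticity of $\tilde A$ and $\lambda\le 1$; hence $A_0\in M_{\lambda}(\mathbb R^d)$. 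Since $\nabla A_0=\nabla\eta\,(\tilde A-I)+\eta\,\nabla\tilde A$ and $\tilde A$ is bounded and $C\mu$-Lipschitz, $A_0$ is Lipschitz with a constant $\mu_0$ depending only on $d,\lambda,\mu$. Finally $A_0=A$ on $\Omega$ (where $\eta=1$ and $\tilde A=A$), and $A_0\equiv I$ on $\mathbb R^d\setminus B(0,1/3)$.

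\textbf{Step 2 (periodization).} Because $A_0\equiv I$ on $Q\setminus B(0,1/3)$ and $\overline{B(0,1/3)}\subseteq\mathrm{int}\,Q$, the assignment $A_p(x)=A_0(x-k)$ for $x\in k+Q$, $k\in\mathbb Z^d$, is unambiguous on the shared faces of adjacent translates of $Q$ (both sides equal $I$ there), defines a $1$-periodic matrix field on $\mathbb R^d$, and satisfies $A_p=A_0=A$ on $\Omega\subseteq Q$. Uniform ellipticity with constant $\lambda$ is immediate since $A_p$ is locally a translate of $A_0$. For Lipschitz continuity, around any point $A_p$ either agrees with a single translate of $A_0$ on a neighborhood, or is identically $I$ on a neighborhood, or (near a lattice face) equals $I$ on a two-sided neighborhood of that face; in all cases $A_p$ is Lipschitz there with constant $\mu_0$, so $A_p\in M_{\lambda,\mu_0}(\mathbb R^d)$, as claimed.

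\textbf{Main obstacle.} There is no serious difficulty; the only point demanding care is the bookkeeping at $\partial Q$ --- one must keep $\mathrm{supp}\,\eta$ strictly inside $Q$ so that the periodization is consistent and preserves the Lipschitz bound across the lattice faces. The ellipticity and Lipschitz estimates are routine convex-combination and product-rule computations.
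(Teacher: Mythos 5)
Your proof is correct and follows essentially the same route as the paper: apply Lemma~\ref{AExtension} to extend $A$ globally, use a cutoff supported strictly inside the unit cube $Q$ to interpolate between $\tilde A$ and a constant $\lambda$-elliptic matrix near $\partial Q$, and then periodize. The only cosmetic differences are that the paper works with nested cubes $Q_1\subseteq Q_2\subseteq Q$ instead of concentric balls for the cutoff, and interpolates to $\lambda I$ rather than $I$ (both choices are $\lambda$-elliptic since $\lambda\le 1$).
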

\begin{proof}
Since $0\in\Omega$ and $\diam(\Omega)<1/4$, we obtain that $\Omega\subseteq Q_1\subseteq Q_2\subseteq Q$, where
\[
Q_1=\left[-\frac{3}{8},\frac{3}{8}\right]^d,\quad Q_2=\left[-\frac{7}{16},\frac{7}{16}\right]^d, \quad Q=\left[-\frac{1}{2},\frac{1}{2}\right]^d.
\]
Fix now a smooth cutoff $\phi$ which is supported in $Q_2$, with $\phi\equiv 1$ in $Q_1$, and define
\[
A_p=\phi\tilde{A}+(1-\phi)\lambda I,
\]
where $I$ is the identity matrix, and $\tilde{A}$ is the extension that appears in lemma \ref{AExtension}. Since $\tilde{A}$ is $C\mu$-Lipschitz, we obtain that $A_p$ is $\mu_0$ Lipschitz, where $\mu_0$ depends on $d,\lambda$ and $\mu$, since the gradient of $\phi$ will be involved in the estimate. Moreover, since $\tilde{A}$ and $\lambda I$ are $\lambda$-uniformly elliptic, the same will be true for $A_p$.

Note now that, since $\phi\equiv 0$ in $Q_2$, $A_p=\lambda I$ in $Q_2$. Therefore, if we extend $A_p$ by $\lambda I$ in $Q\setminus Q_2$, we can then extend periodically to the rest of $\mathbb R^d$ by translations of $Q$, which completes the proof.
\end{proof}

Finally, we turn to Lipschitz extensions of drifts.

\begin{lemma}\label{bExtension}
Let $\Omega\subseteq\mathbb R^d$ be an open domain, and $b\in\Lip(\Omega)$. Then there exists an extension $\tilde{b}$ of $b$ in $\mathbb R^d$, such that $\tilde{b}\in\Lip(\mathbb R^d)$, and $\|\tilde{b}\|_{L^{\infty}(\mathbb R^d)}=\|b\|_{L^{\infty}(\Omega)}$.	
\end{lemma}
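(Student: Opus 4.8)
The plan is to first produce \emph{some} Lipschitz extension of the vector field $b=(b_1,\dots,b_d)$ to all of $\mathbb{R}^d$, ignoring the $L^\infty$ bound, and then to post-compose it with the nearest-point projection onto a ball of the correct radius in order to recover the sup-norm control without destroying Lipschitz continuity.

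First I would extend componentwise. Each coordinate $b_i\colon\Omega\to\mathbb{R}$ is Lipschitz with constant $L=\Lip(b)$, since $|b_i(x)-b_i(y)|\le|b(x)-b(y)|\le L|x-y|$. Hence McShane's formula
\[
\hat b_i(x)=\inf_{y\in\Omega}\big(b_i(y)+L|x-y|\big)
\]
defines an $L$-Lipschitz function on $\mathbb{R}^d$ which agrees with $b_i$ on $\Omega$. (If $\|b\|_{L^\infty(\Omega)}=\infty$ there is nothing further to prove, since the map $b_0=(\hat b_1,\dots,\hat b_d)$ is already a Lipschitz extension; so assume $M:=\|b\|_{L^\infty(\Omega)}<\infty$.) The vector field $b_0=(\hat b_1,\dots,\hat b_d)\colon\mathbb{R}^d\to\mathbb{R}^d$ then satisfies $b_0=b$ on $\Omega$ and $\Lip(b_0)\le\sqrt d\,L$.

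Next, let $\pi\colon\mathbb{R}^d\to\overline{B(0,M)}$ be the radial projection, $\pi(v)=v$ if $|v|\le M$ and $\pi(v)=Mv/|v|$ if $|v|>M$; being the nearest-point projection onto a closed convex set, $\pi$ is $1$-Lipschitz. Define $\tilde b=\pi\circ b_0$. Then $\tilde b$ is Lipschitz with $\Lip(\tilde b)\le\Lip(b_0)$; since $|b(x)|\le M$ for every $x\in\Omega$ we have $\pi(b_0(x))=\pi(b(x))=b(x)$ there, so $\tilde b$ extends $b$; and $|\tilde b(x)|\le M$ for every $x\in\mathbb{R}^d$ gives $\|\tilde b\|_{L^\infty(\mathbb{R}^d)}\le M$. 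The reverse inequality $\|\tilde b\|_{L^\infty(\mathbb{R}^d)}\ge\|b\|_{L^\infty(\Omega)}$ is automatic because $\tilde b=b$ on $\Omega\subseteq\mathbb{R}^d$, so $\|\tilde b\|_{L^\infty(\mathbb{R}^d)}=\|b\|_{L^\infty(\Omega)}$, as desired.

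There is no serious obstacle here; the only point worth flagging is that the sup-norm truncation must be carried out via the $1$-Lipschitz radial projection (rather than, say, rescaling the whole field by a factor depending on its global supremum) precisely so that Lipschitz continuity is preserved and the Lipschitz constant is not increased. If one wished to track the precise dependence of $\Lip(\tilde b)$ on $\Lip(b)$ and $d$, one could replace the componentwise McShane extension by Kirszbraun's theorem to remove the $\sqrt d$ factor, but this is not needed for the statement.
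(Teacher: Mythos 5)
Your proof is correct, but it takes a genuinely different route from the paper's. The paper reuses the Stein extension operator $\mathcal{E}_0$ from its Lemma \ref{AExtension}: $\tilde b(x)=\sum_k b(p_k)\phi_k^*(x)$, where $(\phi_k^*)$ is a nonnegative partition of unity. Because this is a convex combination of values $b(p_k)$, the pointwise bound $|\tilde b(x)|\le\|b\|_{L^\infty(\Omega)}$ comes out for free, so no separate truncation step is needed; this choice also keeps the two extension lemmas (for $A$ and for $b$) uniform in method, which is presumably why the author used it. Your argument instead builds \emph{some} Lipschitz extension via McShane's explicit formula applied componentwise, and then post-composes with the $1$-Lipschitz nearest-point projection onto $\overline{B(0,M)}$ to recover the $L^\infty$ control; this is more elementary (no Whitney-type partition of unity is needed) and the projection idea is a reusable device that works for any extension you might start from. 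One cosmetic remark: since $b$ is Lipschitz and hence continuous on $\Omega$, $\|b\|_{L^\infty(\Omega)}=\sup_\Omega|b|$, so the pointwise bound $|b(x)|\le M$ on $\Omega$ that you invoke to see $\pi\circ b_0=b$ on $\Omega$ holds without any almost-everywhere caveat. Your handling of the degenerate case $M=\infty$ and your observation that Kirszbraun could remove the $\sqrt d$ factor (though the statement does not require it) are both sound.
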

\begin{proof}
We will use the extension operator from lemma \ref{AExtension}, and set
\[
\tilde{b}(x)=\mathcal{E}_0b(x)=\sum_kb(p_k)\phi_k^*(x).
\]
From theorem 3 in \cite[p.~174]{SteinSingular}, $\tilde{b}$ is Lipschitz in $\mathbb R^d$, and $\tilde{b}$ extends $b$, hence $\|\tilde{b}\|_{L^{\infty}(\mathbb R^d)}\geq\|b\|_{L^{\infty}(\Omega)}$. Since now $(\phi_k^*)$ is a partition of unity and all the $\phi_k^*$ are positive, we obtain that
\[
|\tilde{b}(x)|\leq \sum_k|b(p_k)|\phi_k^*(x)\leq \sum_k\|b\|_{L^{\infty}(\Omega)}\phi_k^*(x)=\|b\|_{L^{\infty}(\Omega)},
\]
hence $\|\tilde{b}\|_{L^{\infty}(\mathbb R^d)}\leq\|b\|_{L^{\infty}(\Omega)}$.
\end{proof}

\subsection{Lipschitz domains}
For the next definitions, we will follow \cite[p.~575-577]{Verchota}, and \cite{KenigShen}.

Let $\Omega\subseteq\mathbb R^d$ be bounded. We say that $\Omega$ is a \emph{Lipschitz domain}, if for each $q\in\partial\Omega$ there exists a neighborhood $U\subseteq\mathbb R^d$ containing $q$ and a Lipschitz function $\phi_U:\mathbb R^{d-1}\to\mathbb R$ such that, after translation and rotation,
\[
U\cap\Omega=\left\{(x',t)\big{|}t>\phi_U(x')\right\}\cap\Omega.
\]
We also define a \emph{coordinate cylinder} $Z=Z(q,r)$, for $q\in\partial\Omega$, and $r>0$, to be a cylinder with radius equal to $r$, that also has the following properties. 
\begin{enumerate}[i)]
\item The bases of $Z$ are some positive distance from $\partial\Omega$.
\item There is a rectangular coordinate system for $\mathbb R^d$, $(x',t)$, with $t$- axis containing the axis of $Z_i$.
\item There is a Lipschitz function $\phi=\phi_Z:\mathbb R^{d-1}\to\mathbb R$.
\item $Z\cap\Omega=Z\cap\left\{(x',t)\big{|}t>\phi_Z(x')\right\}$
\item $q=(0,\phi_Z(0))$.
\end{enumerate}
We will call the pair $(Z,\phi)$ a \emph{coordinate pair}. 

By compactness, it is possible to cover $\partial\Omega$ by coordinate cylinders $Z_i=Z_i(q_i,r_{\Omega})$, for $q_i\in\partial\Omega$, $i=1,\dots N$ such that, for any $i$ there exists a coordinate pair $(Z_i^*,\phi_i)$ with $Z_i^*=c_{\Omega}Z_i(q,r_{\Omega})$ (the dilation with respect to $q$), and $c_{\Omega}=10\left(1+\|\nabla\phi_j\|_{\infty}\right)^{1/2}$.

Given a Lipschitz domain there exists $M>0$ such that, for any covering of coordinate cylinders, $\|\nabla\phi_j\|_{\infty}\leq M$. The smallest such number is called the Lipschitz constant for $\Omega$.

In order to quantify the results that will follow, given a Lipschitz domain with the fore mentioned properties, we will say that $\Omega\in\Pi(M,N)$.

Note now that, given any $q\in\partial\Omega$, $q$ belongs to one of the coordinate cylinders $Z_i^*=c_{\Omega}Z_i(q_i,r_{\Omega})$. Therefore, there exists a coordinate cylinder $Z(q,10r_{\Omega})$ that contains $q$, it is a subset of $Z_i^*$, with axis parallel to the axis of $Z_i^*$, and height comparable to $r_{\Omega}$.

\begin{dfn}\label{CylinderPortions}
For $q\in\partial\Omega$ and $r\in(0,10r_{\Omega})$, we define
\[
\Delta_r(q)=Z(q,r)\cap\partial\Omega,\quad T_r(q)=Z(q,r)\cap\Omega,
\]
where $Z(q,r)$ is a dilation of $Z(q,10r_{\Omega})$ as above, with respect to $q$.
\end{dfn}

A constant $C$ will be said to \emph{depend on the Lipschitz character} of $\Omega$, if $\Omega\in\Pi(M,N)$ for some $M,N$, and the constant can be made uniform for any $\Omega\in\Pi(M,N)$.

\begin{lemma}\label{r_Omega}
Let $\Omega$ be a Lipschitz domain. Then $r_{\Omega}$ is bounded above and below by constants that only depend on $d$, $\diam(\Omega)$ and the Lipschitz character of $\Omega$.
\end{lemma}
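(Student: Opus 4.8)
The plan is to prove the two inequalities separately; the upper bound is elementary, and the lower bound is where the Lipschitz character $(M,N)$ of $\Omega$ (meaning $\Omega\in\Pi(M,N)$) genuinely enters. Throughout I use the covering of $\partial\Omega$ by the coordinate cylinders $Z_i=Z_i(q_i,r_\Omega)$, $i=1,\dots,N$, with associated Lipschitz graphs $\phi_i$ of slope $\le M$.

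\emph{Upper bound.} Fix one coordinate cylinder $Z=Z(q_i,r_\Omega)$, with its graph $\phi=\phi_i$ and property iv), $Z\cap\Omega=Z\cap\{(x',t):t>\phi(x')\}$. Since $q_i=(0,\phi(0))$ lies in the interior of $Z$ and the bases of $Z$ are a positive distance from $\partial\Omega$ (property i), the graph must cross $Z$ through its lateral boundary, so one base of $Z$ lies entirely in $\{t>\phi(x')\}$. That base is a $(d-1)$-dimensional disk of radius $r_\Omega$ contained in $Z\cap\{t>\phi\}=Z\cap\Omega\subseteq\Omega$; hence $\overline\Omega$ contains a disk of radius $r_\Omega$, and therefore $2r_\Omega\le\diam(\Omega)$, i.e. $r_\Omega\le\tfrac12\diam(\Omega)$.

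\emph{Lower bound.} Since $\overline\Omega$ is compact there are $p,p'\in\overline\Omega$ with $|p-p'|=\diam(\Omega)=:D$; as $\Omega$ is open, any such extremal pair lies on $\partial\Omega$ (moving an interior point inside a small ball would strictly increase the distance). Set $U_i:=Z_i\cap\partial\Omega$, a relatively open cover of $\partial\Omega$. Two facts: first, $U_i$ is a portion of the graph of $\phi_i$ over a $(d-1)$-disk of radius $r_\Omega$, so for $x,y\in U_i$ one has $|x-y|\le(1+M)|x'-y'|\le 2(1+M)r_\Omega$, i.e. $\diam(U_i)\le 2(1+M)r_\Omega$ (note this uses only the slope bound, not any bound on the height of $Z_i$); second, each $U_i$ is connected. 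Working in the connected component of $\partial\Omega$ containing $p$ — for $\Omega$ connected the extremal pair may be taken on a single, ``outermost'' boundary component, so $p'$ lies there too — connectedness produces a chain $U_{j_1},\dots,U_{j_m}$ with $m\le N$, $p\in U_{j_1}$, $p'\in U_{j_m}$, and $U_{j_k}\cap U_{j_{k+1}}\ne\varnothing$. Choosing $x_0=p$, $x_m=p'$ and $x_k\in U_{j_k}\cap U_{j_{k+1}}$ for $1\le k\le m-1$, we get $D=|p-p'|\le\sum_{k=0}^{m-1}|x_k-x_{k+1}|\le m\cdot 2(1+M)r_\Omega\le 2(1+M)N\,r_\Omega$, so $r_\Omega\ge D/(2(1+M)N)$, which is of the required form.

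\emph{Main obstacle.} The lower bound is the substantive part: one must convert ``$N$ coordinate cylinders cover $\partial\Omega$'' into a quantitative lower bound on $r_\Omega$, and the only delicate point is the case in which $\partial\Omega$ is disconnected. The cleanest remedy is the one indicated above — run the chaining argument inside the boundary component that realizes $\diam(\Omega)$; alternatively one can argue by surface area, using $\mathcal H^{d-1}(U_i)\le C(M)r_\Omega^{d-1}$ to get $\mathcal H^{d-1}(\partial\Omega)\le C(M)N\,r_\Omega^{d-1}$ and combining with a lower bound for $\mathcal H^{d-1}(\partial\Omega)$ in terms of $D$, $M$, $N$. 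The remaining verifications (extremal points lie on $\partial\Omega$, the graph crosses each cylinder through the lateral boundary, each $U_i$ is connected, the chain exists) are routine.
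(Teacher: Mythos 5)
Your proof is correct, and it takes a genuinely different route from the paper's. The paper obtains the lower bound on $r_\Omega$ via a surface-area estimate: the covering gives $\sigma(\partial\Omega)\le NC_M r_\Omega^{d-1}$, and the isoperimetric inequality is invoked to bound $\sigma(\partial\Omega)$ from below; the upper bound is dispatched by remarking that a coordinate cylinder $Z_i^*$ cannot contain all of $\Omega$. Your argument is more elementary and more explicit: the chaining along the boundary gives a clean quantitative lower bound $r_\Omega\ge D/\bigl(2(1+M)N\bigr)$ with no appeal to isoperimetry, and the observation that a base disk of a coordinate cylinder lies in $\overline\Omega$ gives $r_\Omega\le\tfrac{1}{2}D$ directly. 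If anything your chaining is the safer of the two: the one-line appeal to the isoperimetric inequality really controls $\sigma(\partial\Omega)$ from below by $|\Omega|^{(d-1)/d}$ rather than by a function of $\diam(\Omega)$ alone, and for $d\ge 3$ a thin tube of large diameter has small surface area — what rules this out is precisely the constraint $\Omega\in\Pi(M,N)$, which your chaining argument quantifies head-on. The one point worth tightening in your write-up is the reduction to a single boundary component: rather than appealing to an ``outermost'' component heuristically, say that the diameter-realizing pair consists of extreme points of $\overline\Omega$, hence lies on the boundary of the convex hull of $\overline\Omega$, which meets only one connected component of $\partial\Omega$, and chain within that component. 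The surface-area alternative you sketch under ``Main obstacle'' is essentially the paper's route and sidesteps connectedness, at the cost of the isoperimetric input just discussed.
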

\begin{proof}
Since the coordinate cylinders $Z_i^*$ cover $\partial\Omega$, we have that
\[
\sigma(\partial\Omega)\leq\sigma\left(\bigcup_{i=1}^NZ_i^*\cap\partial\Omega\right)\leq\sum_{i=1}^N\sigma(Z(q_i,c_{\Omega}r_{\Omega})\cap\partial\Omega)=\sum_{i=1}^N\sigma(\Delta_{c_{\Omega}r_{\Omega}}(q_i))\leq NC_Mr_{\Omega}^{d-1}.
\]
From the isoperimetric inequality, $\sigma(\partial\Omega)$ is bounded below by a constant that depends on $\diam(\Omega)$ and $d$; therefore, $r_{\Omega}$ is bounded below by a constant depending only on $d$, $\diam(\Omega)$, and the Lipschitz character of $\Omega$. Since the coordinate cylinder $Z_i^*$ cannot contain all of $\Omega$, we also obtain that $r_{\Omega}$ is bounded above by a constant that depends on $\diam(\Omega)$ and $d$, which completes the proof.
\end{proof}

Note also that the definition of a Lipschitz domain shows the next lemma.

\begin{lemma}\label{InnerRadius}
For any Lipschitz domain $\Omega$, there exists a ball $B\subseteq\Omega$ which is compactly supported in $\Omega$, such that, for any $q\in\partial\Omega$,
\[
Z(q,s_{\Omega})\cap B=\varnothing,
\]
where the number $s_{\Omega}>0$ is bounded above and below by constants that depend on the diameter of $\Omega$ and the Lipschitz character of $\Omega$.
\end{lemma}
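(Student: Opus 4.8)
The plan is to pick one interior ``corkscrew'' point $P$ whose distance to $\partial\Omega$ is bounded below by a constant of the required form, let $B$ be the concentric ball of half that distance, and then choose $s_\Omega$ so small that every cylinder $Z(q,s_\Omega)$ lies in a collar of $\partial\Omega$ too thin for $B$ to reach.

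First I fix one coordinate pair, say $(Z_1^*,\phi_1)$, with boundary point $q_1=(0,\phi_1(0))$ and $\|\nabla\phi_1\|_\infty\le M$, and set $P=(0,\phi_1(0)+c\,r_\Omega)$ for a constant $c=c(d,M)$ small enough that $P$ lies well inside $Z_1(q_1,r_\Omega)$, away from its bases and its lateral boundary. Inside $Z_1$ the boundary of $\Omega$ is the graph of $\phi_1$ (property iv), and an elementary estimate — bounding $\phi_1(0)+c\,r_\Omega-\phi_1(y')\ge c\,r_\Omega-M|y'|$ and splitting according to whether $|y'|$ is small or large compared with $c\,r_\Omega/M$ — gives $\mathrm{dist}(P,\mathrm{graph}\,\phi_1)\ge c\,r_\Omega/(2\max(1,M))$; the part of $\partial\Omega$ outside $Z_1$ stays at distance $\gtrsim r_\Omega$ from $P$ because $P$ is interior to $Z_1$ and the bases of coordinate cylinders are a positive (and, after rescaling $Z_1^*$ down to $Z_1$, quantitatively $\gtrsim r_\Omega$) distance from $\partial\Omega$ by property (i). Hence $\delta:=\mathrm{dist}(P,\partial\Omega)\ge c'r_\Omega$ with $c'=c'(d,M)>0$, and $\overline{B}\subseteq\Omega$ for $B:=B(P,\delta/2)$. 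By Lemma~\ref{r_Omega}, $\delta$ is bounded above and below by constants depending only on $d$, $\diam(\Omega)$ and the Lipschitz character of $\Omega$.

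Next I record that for every $q\in\partial\Omega$ and $r\in(0,10r_\Omega)$ the cylinder $Z(q,r)$ is the dilation of $Z(q,10r_\Omega)$ about $q$ by the factor $r/(10r_\Omega)$, and since $Z(q,10r_\Omega)$ has radius $10r_\Omega$ and height comparable to $r_\Omega$ (with comparability constant depending on $c_\Omega=10(1+\|\nabla\phi_j\|_\infty)^{1/2}$, hence on $M$), we get $\diam Z(q,r)\le C(M)\,r$; as $q\in Z(q,r)\cap\partial\Omega$, every point of $Z(q,r)$ is within $C(M)r$ of $\partial\Omega$. Now set $s_\Omega:=\delta/(3C(M))$. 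Then $s_\Omega>0$, $s_\Omega<10r_\Omega$ (since $\delta\lesssim r_\Omega$), and $s_\Omega$ is bounded above and below by constants of the required type because $\delta$ is. If some $x$ belonged to $B\cap Z(q,s_\Omega)$ for some $q\in\partial\Omega$, then $\mathrm{dist}(x,\partial\Omega)\le C(M)s_\Omega=\delta/3$, while also $\mathrm{dist}(x,\partial\Omega)\ge\mathrm{dist}(P,\partial\Omega)-|x-P|\ge\delta-\delta/2=\delta/2$, a contradiction. Therefore $Z(q,s_\Omega)\cap B=\varnothing$ for all $q\in\partial\Omega$, which is the claim.

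The only genuinely delicate points — not so much obstacles as things to pin down carefully — are the two quantitative geometric facts used above: that a point at height $\sim r_\Omega$ over the defining graph is at distance $\gtrsim r_\Omega$ from \emph{all} of $\partial\Omega$ (controlling both the local graph and, via property (i), the far part of the boundary), and that $\diam Z(q,r)\le C(M)r$ uniformly in $q$, which is where the explicit form of $c_\Omega$ and the ``height comparable to $r_\Omega$'' normalization of the coordinate cylinders enter. Everything else reduces to Lemma~\ref{r_Omega} and bookkeeping.
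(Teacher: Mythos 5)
Your proof is correct. The paper states this lemma without proof (it is asserted to follow from the definition of a Lipschitz domain), so there is no proof in the source to compare against; your argument is the natural one and fills in the details the author skipped.

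The construction — take a corkscrew-type point $P$ at height $\sim r_\Omega$ over the graph in one coordinate cylinder, show $\delta=\mathrm{dist}(P,\partial\Omega)\gtrsim r_\Omega$ both by the local graph estimate and by the $\gtrsim r_\Omega$ gap to $\partial Z_1^*$, take $B=B(P,\delta/2)$, and then exploit that $Z(q,r)$ is obtained by dilating a cylinder of height $\sim r_\Omega$ by the factor $r/(10r_\Omega)$, so $\mathrm{diam}\,Z(q,r)\le C(M)r$ and every point of $Z(q,s_\Omega)$ sits within $C(M)s_\Omega$ of $\partial\Omega$ — is sound, and the contradiction with $\mathrm{dist}(x,\partial\Omega)\ge\delta/2$ for $x\in B$ closes it cleanly. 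The two ``delicate points'' you flag are exactly the right things to pin down, and you handle both: the far part of $\partial\Omega$ is kept away from $P$ because $P$ is at distance $\gtrsim r_\Omega$ from $\partial Z_1^*$, and the diameter bound uses the ``height comparable to $r_\Omega$'' normalization from Definition~\ref{CylinderPortions}. Together with Lemma~\ref{r_Omega} this gives the required two-sided bounds on $s_\Omega$ (and one should note that, as in Lemma~\ref{r_Omega}, the constants will also depend on $d$, which the statement of the lemma suppresses). No gaps.
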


\begin{dfn}\label{GoodConstant}
Given a bounded Lipschitz domain $\Omega\subseteq\mathbb R^d$, $A\in M_{\lambda,\mu}(\Omega)$ and $b\in L^{\infty}(\Omega)$, we call a constant $C$ a \emph{good constant}, if it depends on $d,\lambda,\mu,\|b\|_{\infty},\diam(\Omega)$, and the Lipschitz character of $\Omega$.
\end{dfn}

Note that the diameter of $\Omega$ is allowed in the definition above, since we are assuming that $b\in L^{\infty}(\Omega)$, which is not a scale invariant space for $b$ for the equation considered; therefore, it is to be expected that the constants will depend on the size of the domain.

Given $q\in\partial\Omega$, $\Gamma(q)$ will denote an open, circular, doubly truncated cone with two nonempty, convex components, with vertex at $q$, and one component in $\Omega$ and the other in $\mathbb R^d\setminus\Omega$. The component interior to $\Omega$ will be denoted by $\Gamma_i(q)$ and the component exterior to $\Omega$ will be denoted by $\Gamma_e(q)$. Assigning a cone $\Gamma(q)$ to each $q\in\partial\Omega$, we call the family $\{\Gamma(q)\}$ regular if there is a finite covering of $\partial\Omega$ by coordinate cylinders, as described above, such that for each $(Z(p,r),\phi)$ there are three cones, $\alpha,\beta$ and $\gamma$, each with vertex at the origin and axis along the axis of $Z$ such that
\[
\alpha\subseteq\overline{\beta}\setminus\{0\}\subseteq\gamma,
\]
and for all $(x,\phi(x))=q\in\frac{4}{5}Z^*\cap\partial\Omega$,
\begin{gather*}
\alpha+q\subseteq\Gamma(q)\subseteq\overline{\Gamma(q)}\setminus\{q\}\subseteq\beta+q,\\
(\gamma+q)_i\subseteq\Omega\cap Z^*,\quad(\gamma+q)_e\subseteq Z^*\setminus\overline{\Omega}.
\end{gather*}

We now turn to the definition of the nontangential maximal function.

\begin{dfn}\label{NonTangentialMaximal}
Let $\Omega$ be a Lipschitz domain, and let $u:\Omega\to\mathbb R$. We then define the \emph{non-tangential maximal function} of $u$, for $q\in\partial\Omega$, by
\[
u^*(q)=\sup_{x\in\Gamma_i(q)}|u(x)|.
\]
Similarly, we define the nontangential maximal function for functions defined outside $\Omega$.
\end{dfn}

Finally, we state the next theorem on approximating Lipschitz domains by domains that are smooth, which is theorem 1.12 in \cite{Verchota}.

\begin{thm}\label{ApproximationScheme}
Let $\Omega$ be a Lipschitz domain. Then,
\begin{enumerate}[i)]
\item There is a sequence of $C^{\infty}$ domains, $\Omega_j\subseteq\Omega$, and homeomorphisms $\Lambda_j:\partial\Omega\to\partial\Omega_j$  such that $\sup_{q\in\partial\Omega}|q-\Lambda_j(q)|\to 0$ as $j\to\infty$, and $\Lambda_j(q)\in\Gamma_i(q)$ for all $j\in\mathbb N$ and $q\in\partial\Omega$
\item There is a covering of $\partial\Omega$ by coordinate cylinders $Z$ so that, given a coordinate pair, $(Z,\phi)$, then $Z^*\cap\partial\Omega_j$ is given for each $j$ as the graph of a $C^{\infty}$ function $\phi_j$ such that $\phi_j\to\phi$ uniformly, $\|\nabla\phi_j\|_{\infty}\leq\|\nabla\phi\|_{\infty}$ and $\nabla\phi_j\to\nabla\phi$ pointwise almost everywhere and in every $L^q(Z^*\cap\mathbb R^{d-1})$, $1\leq q<\infty$
\item There are positive functions $\tau_j:\partial\Omega\to\mathbb R_+$, bounded away from zero and infinity uniformly in $j$, such that for any measurable set $E\subseteq\partial\Omega$, $\int_E\tau_j\,d\sigma=\int_{\Lambda_j(E)}\,d\sigma_j$, and so that $\tau_j\to 1$ pointwise almost everywhere and in every $L^q(\partial\Omega)$, $1\leq q<\infty$
\item The normal vectors to $\Omega_j$, $\nu(\Lambda_j(q))$ converge pointwise almost everywhere and in every $L^q(\partial\Omega)$, $1\leq q<\infty$, to $\nu(q)$. An analogous statement holds for locally defined tangent vectors.
\item There exists a $C^{\infty}$ vector field, $h$, in $\mathbb R^d$ such that $\left<h(\Lambda_j(q)),\nu(\Lambda_j(q))\right>\geq C>0$ for all $j$ and $q\in\partial\Omega$, for some $C$ depending only on $h$ and the Lipschitz constant for $\Omega$.
\end{enumerate}
\end{thm}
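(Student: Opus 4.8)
The plan is to reduce to the model case of a Lipschitz graph, build the approximating domains by mollifying the defining graph functions, and then glue the local constructions with a partition of unity subordinate to the covering by coordinate cylinders. First I would fix a coordinate pair $(Z,\phi)$, so that $Z^*\cap\Omega=Z^*\cap\{(x',t):t>\phi(x')\}$ with $\phi:\mathbb R^{d-1}\to\mathbb R$ Lipschitz and $\|\nabla\phi\|_\infty\le M$. Let $\eta_j(x')=j^{d-1}\eta(jx')$ be a standard mollifier and set $\phi_j=\phi*\eta_j-c_j$, where $c_j:=\|\phi-\phi*\eta_j\|_{L^\infty}\to 0$ is chosen so that $\phi_j<\phi$ everywhere; this guarantees that the local subgraph of $\phi_j$ lies strictly inside $\Omega$. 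The elementary properties of mollification give $\phi_j\in C^\infty$, $\phi_j\to\phi$ uniformly, $\|\nabla\phi_j\|_\infty\le\|\nabla\phi\|_\infty$, and $\nabla\phi_j\to\nabla\phi$ pointwise a.e.\ and in every $L^q(Z^*\cap\mathbb R^{d-1})$, $1\le q<\infty$, since $\nabla\phi\in L^\infty_{\loc}\subseteq L^q_{\loc}$. This is precisely (ii), and the bound $\|\nabla\phi_j\|_\infty\le\|\nabla\phi\|_\infty$ (rather than merely $\le M$) is what preserves the Lipschitz character uniformly in $j$.

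Next I would globalize. Using a partition of unity $\{\chi_i\}$ subordinate to the covering $\{Z_i^*\}$ of $\partial\Omega$, I glue the local perturbations $t=\phi_j^{(i)}(x')$ so that, in a collar of $\partial\Omega$, $\Omega_j$ is the region lying above all relevant glued graphs, while away from $\partial\Omega$ it coincides with $\Omega$; for $j$ large this is a bounded $C^\infty$ domain with $\Omega_j\subseteq\Omega$. I define $\Lambda_j:\partial\Omega\to\partial\Omega_j$ by starting at $q\in\partial\Omega$ and following the axial direction of a coordinate cylinder through $q$ (interpolated via $\{\chi_i\}$) until $\partial\Omega_j$ is met; because $\phi_j<\phi$ and the axial direction is transversal to the outer cone $\gamma$, this ray immediately enters $\Omega$, so $\Lambda_j(q)\in\Gamma_i(q)$, and $\sup_q|q-\Lambda_j(q)|\le C\max_i\big(c_j^{(i)}+\|\phi^{(i)}-\phi_j^{(i)}\|_\infty\big)\to 0$, which is (i). In local coordinates $\Lambda_j$ is $(x',\phi(x'))\mapsto(x',\phi_j(x'))$ up to the gluing, hence a homeomorphism.

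The remaining items are then bookkeeping about the charts. In a chart, $d\sigma=\sqrt{1+|\nabla\phi|^2}\,dx'$ and $d\sigma_j=\sqrt{1+|\nabla\phi_j|^2}\,dx'$, so the pushforward of $\sigma$ under $\Lambda_j$ has density $\tau_j=\sqrt{1+|\nabla\phi_j|^2}\big/\sqrt{1+|\nabla\phi|^2}$ with respect to $\sigma$ (combining the charts via $\{\chi_i\}$); this lies between $(1+M^2)^{-1/2}$ and $(1+M^2)^{1/2}$, so it is bounded away from $0$ and $\infty$ uniformly in $j$, and $\tau_j\to 1$ a.e.\ and in every $L^q(\partial\Omega)$ by dominated convergence, which is (iii). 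The outward unit normal is $\nu_j=(\nabla\phi_j,-1)/\sqrt{1+|\nabla\phi_j|^2}$ in the chart, so $\nu(\Lambda_j(q))\to\nu(q)$ a.e.\ and in $L^q(\partial\Omega)$, and the same computation handles tangent vectors, giving (iv). For (v) I build $h$ by gluing, with the same partition of unity, the constant inward axial directions $e_t^{(i)}$ of the cylinders $Z_i$ (and smoothing slightly); the cone condition together with $\|\nabla\phi_j\|_\infty\le M$ forces $\langle h(\Lambda_j(q)),\nu(\Lambda_j(q))\rangle\ge C(M,d)>0$ uniformly in $j$ and $q$.

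The genuinely delicate step is the globalization. The local smooth graphs must be assembled into a single $C^\infty$ domain and a single homeomorphism $\Lambda_j$, consistently over the overlaps of the coordinate cylinders, while keeping $\Omega_j\subseteq\Omega$ and all gradient bounds uniform. This forces a careful choice of the downward shifts $c_j^{(i)}$ and of the partition of unity, so that on overlaps the glued boundary is still a graph with gradient at most $M$ in each chart, and so that the ``follow the axial direction'' prescription for $\Lambda_j$ is well defined; here one uses that the axial directions of overlapping cylinders are uniformly transversal to the common cone $\gamma$, which makes the interpolation harmless. Once the gluing is arranged, all five conclusions follow from the basic convergence properties of mollification recorded in the first step.
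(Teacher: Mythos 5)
The paper does not prove this theorem: it is quoted verbatim as Theorem 1.12 from Verchota's paper \cite{Verchota}, and the paper simply cites it. So there is no in-paper proof to compare against; what follows is an assessment of your sketch on its own terms.

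Your sketch has a genuine gap precisely where you flag it, in the globalization step, and the gap is more serious than your closing paragraph suggests. The local mollifications $\phi^{(i)}_j=\phi^{(i)}*\eta_j-c_j^{(i)}$ are built in the rotated coordinate systems of the individual cylinders $Z_i$, and on an overlap $Z_{i}^*\cap Z_{i'}^*$ the mollified graphs of $\phi^{(i)}_j$ and $\phi^{(i')}_j$ describe genuinely different hypersurfaces, since they average $\partial\Omega$ over slabs whose axes are tilted relative to one another. A partition-of-unity ``convex combination'' of these graphs does not have an invariant meaning, because the functions live in different charts; and if you instead combine them after pulling back to a common chart, the resulting boundary in that chart is $\phi_j=\sum_i\chi_i\phi^{(i)}_j$ (with each $\phi^{(i)}_j$ re-expressed there), whose gradient picks up terms $(\nabla\chi_i)(\phi^{(i)}_j-\phi^{(i')}_j)$. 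Nothing in your argument shows those terms vanish, so the crucial sharp bound $\|\nabla\phi_j\|_\infty\le\|\nabla\phi\|_\infty$ in conclusion (ii), which is exactly the point that makes the Lipschitz character of $\Omega_j$ uniform in $j$, is not established after gluing. For the same reason the claim that the glued $\partial\Omega_j$ is $C^\infty$ and that ``the region above all relevant glued graphs'' is a well-defined domain requires an argument you do not supply; an envelope of smooth graphs is typically only Lipschitz.

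A second, related gap is the definition of $\Lambda_j$. You prescribe ``follow the axial direction of a coordinate cylinder through $q$, interpolated via $\{\chi_i\}$, until $\partial\Omega_j$ is met.'' Interpolating axial directions already requires the transverse vector field $h$ of conclusion (v) to be in hand, and you construct $h$ last, as a by-product; in the standard treatments (Ne\v{c}as, and Verchota's Theorem 1.12) $h$ is built first, precisely so that the approximating domains and the boundary homeomorphisms can be defined globally and consistently from a single transversal field rather than per chart. Even granting $h$, you would still need to show that the ray $q-sh(q)$, $s>0$, hits $\partial\Omega_j$ exactly once before leaving the collar, that the hitting time depends continuously on $q$, and that the resulting map is a bijection; your ``because $\phi_j<\phi$ and the axial direction is transversal'' is not enough in the overlap region where the direction is an interpolant. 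In short, the per-chart analysis (mollify, shift down, record the elementary convergence facts) is fine and matches the folklore, but the assembly of the local data into a single smooth domain and a single homeomorphism with the stated sharp gradient bound is the actual content of Verchota's theorem, and it is missing here.
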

This approximation scheme will be denoted by $\Omega_j\uparrow\Omega$.

Consider now a Lipschitz domain $\Omega$, and a large ball $B$ containing $\Omega$. A similar construction can be carried out for the Lipschitz domain $2B\setminus\Omega$, where $2B$ is the double ball of $B$, and we obtain a sequence $U_j\uparrow (2B\setminus\Omega)$. Eventually, for $j$ large, the sets $U_j$ will contain the boundary of $B$. Then, for those $j$, set $\Omega_j'=B\setminus U_j$, and note that $\Omega_j'$ is an approximation scheme similar to the above, but then the sequence $\Omega_j'$ decreases to $\Omega$ (as in definition 1.13 in \cite{Verchota}).

\subsection{Function spaces}
The definition that follows will be the setting for space of drifts for which solvability of the Dirichlet problem for the equation $Lu=0$ will be shown.

\begin{dfn}\label{DistributionalDivergence}
Let $\Omega\subseteq\mathbb R^d$ be a Lipschitz domain, and $p>1$. We define $\Dr_p(\Omega)$ to be the space of bounded vector functions $b$ on $\Omega$, such that their distributional divergence belongs to $L^p(\Omega)$; that is, there exists $C>0$ such that, for all $\phi\in C_c^{\infty}(\Omega)$,
\[
\left|\int_{\Omega}b\nabla\phi\right|\leq C\|\phi\|_{L^{\frac{p}{p-1}}(\Omega)}.
\]
We also define the $\Dr_p$-norm of $b\in\Dr(\Omega)$ to be
\[
\|b\|_{\Dr_p}=\|b\|_{L^{\infty}(\Omega)}+\|\dive b\|_{L^p(\Omega)},
\]
where $\|\dive b\|_{L^p(\Omega)}$ is the infimum of the $C$ in the inequality above, for $\phi\in C_c^{\infty}(\Omega)$.
\end{dfn}

In some cases, we will need to assume some further regularity on $b$, together with the fact that $b\in\Dr_p$. For this purpose, we give the next definition.

\begin{dfn}
Let $\Omega\subseteq\mathbb R^d$ be a Lipschitz domain, and $p>1$, $\alpha\in(0,1]$. We then define
\[
\Dr_{p,\alpha}(\Omega)=\Dr_p(\Omega)\cap C^{\alpha}(\Omega),
\]
with the norm
\[
\|b\|_{\Dr_{p,\alpha}(\Omega)}=\|b\|_{\Dr_p(\Omega)}+\|b\|_{C^{\alpha}(\Omega)}.
\]
\end{dfn}

We now turn to Lorentz spaces.

\begin{dfn}
For $p\in[1,\infty)$, the Lorentz space $L^p_*(\Omega)$ is the space of measurable functions $f:\Omega\to\mathbb R$ such that 
\[
\|f\|_{L^p_*(\Omega)}=\sup_{t>0}\left(t\lambda_f^{1/p}(t)\right)<\infty,
\]
where $\lambda_f$ is the distribution function of $f$; that is, $\lambda_f(t)=\left|\left\{x\in\Omega\big{|} |f(x)|>t\right\}\right|$.
\end{dfn}
Note that, if $f\in L^p_*(\Omega)$, Chebyshev's inequality shows that
\[
\|f\|_{L^p_*(\Omega)}\leq\|f\|_{L^p(\Omega)}.
\]
The Lorentz norm also bounds the $p$ norms of lower order: from estimate 1.12 in \cite{Gruter}, if $\Omega\subseteq\mathbb R^d$ is bounded, $p\in[1,\infty)$ and $\delta\in(0,p-1]$, then
\begin{equation}\label{eq:lorbound}
\|f\|_{L^{p-\delta}(\Omega)}\leq\left(\frac{p}{\delta}\right)^{\frac{1}{p-\delta}}|\Omega|^{\frac{\delta}{p(p-\delta)}}\|f\|_{L^p_*(\Omega)}.
\end{equation}

Next, we turn to class of Gehring weights, which are functions that satisfy the reverse H{\"o}lder inequality, the definition of which can also be found in \cite{Gehring}.

\begin{dfn}\label{Bp}
Let $\Omega$ be a Lipschitz domain and $p\geq 1$. We say that $f\in L^p(\partial\Omega)$ belongs to the class $B_p(\partial\Omega)$, if there exists a constant $C>0$ such that, for any surface ball $\Delta_r(q)\subseteq\partial\Omega$,
\[
\left(\fint_{\Delta_r(q)}|f|^p\,d\sigma\right)^{1/p}\leq C\fint_{\Delta_r(q)}|f|\,d\sigma.
\]
\end{dfn}

The main property of the $B_p$ weights we will use is their ability to self-improve. Specifically, similarly to lemma 3 in \cite{Gehring}, we obtain the next proposition.

\begin{prop}\label{BpProperty}
Let $\Omega$ be a Lipschitz domain, $p>1$, and $f\in B_p(\partial\Omega)$. Then, there exists $\e>0$, which depends only on $d,p$ and the $B_p$ constant of $f$, such that $f\in B_{p+\e}(\partial\Omega)$.
\end{prop}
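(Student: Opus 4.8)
\emph{Proof proposal.} This is the classical self-improvement of a reverse H\"older inequality, and the plan is to carry out the proof of \cite[Lemma 3]{Gehring} on the space of homogeneous type $(\partial\Omega,\sigma)$, where $\sigma$ denotes surface measure. The starting point is that, $\Omega$ being Lipschitz, $\sigma$ is a doubling measure on $\partial\Omega$, with doubling constant $C_D$ controlled by $d$ and the Lipschitz character of $\Omega$; consequently the Hardy--Littlewood maximal operator over surface balls is of weak type $(1,1)$, the Lebesgue differentiation theorem holds, and a Calder\'on--Zygmund stopping-time decomposition (via Whitney-type covering families, or a Christ dyadic lattice) is available. It is enough to fix a surface ball $\Delta_0=\Delta_{r_0}(q_0)$ and establish
\[
\Bigl(\fint_{\Delta_0}|f|^{p+\e}\,d\sigma\Bigr)^{1/(p+\e)}\le C\fint_{\Delta_0}|f|\,d\sigma ,
\]
since every ball occurring in a stopping-time decomposition of $\Delta_0$ is a sub-ball of $\Delta_0$, hence one to which the $B_p$ hypothesis (with constant $C_0$) applies. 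All constants below depend only on $d$, $p$ and $C_0$, together with $C_D$, hence also on the Lipschitz character.

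\emph{Step 1: a level-set inequality.} Set $t_0=\fint_{\Delta_0}|f|\,d\sigma$ and let $\mathcal M$ be the maximal function restricted to sub-balls of $\Delta_0$. For $t>t_0$ decompose $O_t=\{x\in\Delta_0:\mathcal M(|f|)(x)>t\}$ into pairwise disjoint maximal balls $\Delta_j$ satisfying $t<\fint_{\Delta_j}|f|\,d\sigma\le C_D\,t$, with $|f|\le t$ $\sigma$-a.e.\ on $\Delta_0\setminus O_t$. The $B_p$ inequality on $\Delta_j$ gives $\int_{\Delta_j}|f|^p\,d\sigma\le (C_0C_D)^p t^p\sigma(\Delta_j)$, while splitting $|f|=|f|\chi_{\{|f|>t/2\}}+|f|\chi_{\{|f|\le t/2\}}$ on $\Delta_j$ and using $\fint_{\Delta_j}|f|\,d\sigma>t$ yields $\sigma(\Delta_j)\le\frac{2}{t}\int_{\Delta_j\cap\{|f|>t/2\}}|f|\,d\sigma$. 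Summing over $j$, using disjointness and $\{|f|>t\}\cap\Delta_0\subseteq O_t$ modulo a null set, we obtain, for every $t>t_0$,
\[
\int_{\Delta_0\cap\{|f|>t\}}|f|^p\,d\sigma\ \le\ C_1\,t^{p-1}\!\!\int_{\Delta_0\cap\{|f|>t/2\}}\!\!|f|\,d\sigma ,
\]
with $C_1=2(C_0C_D)^p$.

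\emph{Step 2: integration and absorption.} Multiplying the inequality of Step 1 by $t^{\e-1}$ and integrating over $t\in(t_0,\infty)$, Fubini's theorem gives on the left $\frac1\e\bigl(\int_{\{|f|>t_0\}}|f|^{p+\e}\,d\sigma-t_0^\e\int_{\{|f|>t_0\}}|f|^p\,d\sigma\bigr)$ and on the right at most $\frac{C_1\,2^{p+\e-1}}{p+\e-1}\int_{\Delta_0}|f|^{p+\e}\,d\sigma$. Using $\int_{\{|f|\le t_0\}}|f|^{p+\e}\,d\sigma\le t_0^\e\int_{\Delta_0}|f|^p\,d\sigma$ and rearranging,
\[
\Bigl(\tfrac1\e-\tfrac{C_1\,2^{p+\e-1}}{p+\e-1}\Bigr)\int_{\Delta_0}|f|^{p+\e}\,d\sigma\ \le\ \tfrac{2\,t_0^\e}{\e}\int_{\Delta_0}|f|^{p}\,d\sigma .
\]
Now choose $\e>0$ depending only on $d,p,C_0$ (and $C_D$) so small that $\frac{\e\,C_1\,2^{p+\e-1}}{p+\e-1}\le\frac12$; then the left coefficient is at least $\frac1{2\e}$, whence $\int_{\Delta_0}|f|^{p+\e}\,d\sigma\le 4\,t_0^\e\int_{\Delta_0}|f|^p\,d\sigma\le 4C_0^p\,t_0^{p+\e}\sigma(\Delta_0)$ by the $B_p$ inequality on $\Delta_0$ itself. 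Dividing by $\sigma(\Delta_0)$ and taking $(p+\e)$-th roots gives the displayed reverse H\"older bound with constant $(4C_0^p)^{1/(p+\e)}$, i.e.\ $f\in B_{p+\e}(\partial\Omega)$.

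\emph{Main obstacle.} The absorption in Step 2 is legitimate only once $\int_{\Delta_0}|f|^{p+\e}\,d\sigma<\infty$ is known \emph{a priori}, and supplying this is the one genuinely technical point. It is handled in the standard way: run Steps 1--2 with $|f|$ replaced by the truncation $|f|\wedge N$, which still satisfies a reverse H\"older inequality of exponent $p$ on every surface ball with constant at most $2C_0$ (a short case analysis according to whether or not $\fint_\Delta|f|\le 2\fint_\Delta(|f|\wedge N)$) and for which $\int_{\Delta_0}(|f|\wedge N)^{p+\e}\,d\sigma\le N^\e\int_{\Delta_0}|f|^p\,d\sigma<\infty$; the resulting estimate is uniform in $N$, and letting $N\to\infty$ by monotone convergence completes the proof. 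Apart from this, only the doubling of $\sigma$ and the routine Calder\'on--Zygmund machinery on $(\partial\Omega,\sigma)$ are used.
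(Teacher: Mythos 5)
Your argument is a faithful transcription of Gehring's Lemma~3 to the space of homogeneous type $(\partial\Omega,\sigma)$, which is exactly what the paper has in mind (the paper gives no proof and simply appeals to \cite[Lemma 3]{Gehring}); Steps 1 and 2 are carried out correctly, including the right use of the doubling constant $C_D$, the correct Fubini manipulations, and the correct choice of $\e$ in the absorption.

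One small point worth tightening: the ``short case analysis'' you propose for showing that $g_N=|f|\wedge N$ satisfies the reverse H\"older inequality with constant at most $2C_0$ is not quite complete. Your Case~A ($\fint_\Delta|f|\le 2\fint_\Delta g_N$) works, giving $\bigl(\fint_\Delta g_N^p\bigr)^{1/p}\le\bigl(\fint_\Delta|f|^p\bigr)^{1/p}\le C_0\fint_\Delta|f|\le 2C_0\fint_\Delta g_N$; but in the opposite case the denominator $\fint_\Delta g_N$ is \emph{smaller} than $\tfrac12\fint_\Delta|f|$, so the same chain of inequalities does not yield the bound, and the $B_p$ hypothesis has to be invoked again in a less obvious way. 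The cleanest way to close this is to note that for fixed $\Delta$ the ratio $\bigl(\fint_\Delta g_N^p\bigr)^{1/p}\big/\fint_\Delta g_N$ is nondecreasing in $N$ (differentiate $\int_\Delta g_N^p/\bigl(\int_\Delta g_N\bigr)^p$ in $N$: one gets $p\,\sigma(\{|f|>N\})\bigl(N^{p-1}\!\int_\Delta g_N-\int_\Delta g_N^p\bigr)$ times a positive factor, which is $\ge 0$ since $g_N^p\le N^{p-1}g_N$), so in fact $g_N\in B_p$ with the \emph{same} constant $C_0$, uniformly in $N$. Alternatively, one can avoid truncating $f$ altogether and instead truncate the $t$-integration in Step~2 at a finite level $T$, absorb, and let $T\to\infty$ by monotone convergence. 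Either fix completes your proof; the rest is correct and is precisely the route the paper indicates.
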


Finally, we define weak derivatives on the boundary of a Lipschitz domain $\Omega$, and the space $W^{1,p}(\partial\Omega)$.

\begin{dfn}\label{W1pPartialOmega}
Let $\Omega$ be a Lipschitz domain. Then, we say that $f\in W^{1,p}(\partial\Omega)$ if $f\in L^p(\partial\Omega)$ and if for each coordinate pair $(Z,\phi)$ there exist $L^p(Z\cap\partial\Omega)$ functions $g_1,\dots g_{d-1}$, so that, for every $h\in C_c^{\infty}(Z\cap\mathbb R^{d-1})$,
\[
\int_{\mathbb R^{d-1}}h(x)g_i(x,\phi(x))\,dx=-\int_{\mathbb R^{d-1}}\partial_ih(x)f(x,\phi(x))\,dx.
\]
\end{dfn}
In local coordinates, we then define (as in \cite[pg.~580]{Verchota})
\[
-\nabla_Tf(p)=(g_i^1(p'),\dots g_i^{d-1}(p'),0)-\left<(g_i^1(p'),\dots g_i^{d-1}(p'),0),\nu(p)\right>\cdot\nu(p).
\]
Then $\nabla_Tf(p)$ is normal to $\nu(p)$ almost everywhere on $\partial\Omega$, and it is independent of the choice of coordinates. We also define the norm
\[
\|f\|_{W^{1,p}(\partial\Omega)}=\|f\|_{L^p(\partial\Omega)}+\|\nabla_Tf\|_{L^p(\partial\Omega)}.
\]
In the special case $p=2$, $W^{1,2}(\partial\Omega)$ becomes a Hilbert space, with the inner product
\[
\left<f,g\right>_{W^{1,2}(\partial\Omega)}=\int_{\partial\Omega}\left(f\cdot g+\nabla_Tf\cdot\nabla_Tg\right)\,d\sigma.
\]

For $p\in(1,\infty)$, we denote the dual of $W^{1,p}(\partial\Omega)$ by $W^{-1,p}(\partial\Omega)$. The fact that the dual to $L^p(\partial\Omega)$ is $L^{p'}(\partial\Omega)$,  where $p'$ is the conjugate exponent to $p$, and reflexivity of $L^p(\partial\Omega)$ for $p\in(1,\infty)$ imply the next lemma.
\begin{lemma}\label{Riesz}
$W^{-1,p}(\partial\Omega)$ is reflexive, and for every $F\in W^{-1,p}(\partial\Omega)$, there exists a unique $f\in W^{1,p'}(\partial\Omega)$ such that
\[
Fg=\int_{\partial\Omega}f\cdot g+\nabla_Tf\cdot\nabla_Tg
\]
for all $g\in W^{1,p}(\partial\Omega)$. We will then write $F=R_pf$.
\end{lemma}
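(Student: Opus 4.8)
The plan is to handle the two assertions separately: reflexivity of $W^{-1,p}(\partial\Omega)$ is soft functional analysis, while the Riesz-type representation reduces to the Hilbert case $p=2$ together with a perturbation argument.

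\textbf{Reflexivity.} First I would realize $W^{1,p}(\partial\Omega)$ as a closed subspace of a manifestly reflexive space. Define $\iota\colon W^{1,p}(\partial\Omega)\to L^p(\partial\Omega)\times L^p(\partial\Omega)^d$ by $\iota f=(f,\nabla_Tf)$; by the definition of the norm $\|\cdot\|_{W^{1,p}(\partial\Omega)}$, this is a linear isometry onto its image (using, say, the $\ell^1$-sum of the component norms on the target). The image is closed: if $\iota f_n\to(u,\vec u)$, then $f_n\to u$ and $\nabla_Tf_n\to\vec u$ in $L^p(\partial\Omega)$, and passing to the limit in the defining identities of Definition \ref{W1pPartialOmega} in each coordinate pair --- the test functions $h$ being fixed and the integral pairings $L^p$-continuous --- shows $\vec u=\nabla_Tu$, so $(u,\vec u)=\iota u$. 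Since $L^p(\partial\Omega)$ is reflexive for $p\in(1,\infty)$, so is the finite product, hence so is its closed subspace $W^{1,p}(\partial\Omega)$; and the dual of a reflexive Banach space is reflexive, which gives reflexivity of $W^{-1,p}(\partial\Omega)$.

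\textbf{The representation.} Consider the bilinear form $B(f,g)=\int_{\partial\Omega}(fg+\nabla_Tf\cdot\nabla_Tg)\,d\sigma$, which is bounded on $W^{1,p'}(\partial\Omega)\times W^{1,p}(\partial\Omega)$ by H{\"o}lder's inequality and $L^{p'}(\partial\Omega)=L^p(\partial\Omega)^*$; the claim is exactly that the induced map $R_p\colon W^{1,p'}(\partial\Omega)\to W^{-1,p}(\partial\Omega)$ is an isomorphism. The same formula induces $R_{p'}\colon W^{1,p}(\partial\Omega)\to W^{-1,p'}(\partial\Omega)$, and by the reflexivity from the first part one checks that $R_{p'}$ is the Banach-space adjoint of $R_p$; since an operator between reflexive spaces is an isomorphism iff its adjoint is, it suffices to treat $p\ge2$. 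When $p=2$, $B$ is the inner product $\langle\cdot,\cdot\rangle_{W^{1,2}(\partial\Omega)}$, and the full statement --- existence, uniqueness, and the explicit form $F=R_2f$ --- is the Riesz representation theorem. For $p>2$ I would invoke the generalized Lax--Milgram (Banach--Ne\v{c}as--Babu\v{s}ka) theorem: $W^{1,p'}(\partial\Omega)$ being reflexive, $R_p$ is an isomorphism as soon as (a) an inf--sup bound $\sup_{\|g\|_{W^{1,p}(\partial\Omega)}\le1}B(f,g)\ge\alpha\|f\|_{W^{1,p'}(\partial\Omega)}$ holds for all $f$, and (b) for each $g\neq0$ there is some $f$ with $B(f,g)\neq0$. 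Condition (b) is supplied by $f=|g|^{p-2}g$: since $p\ge2$ the map $t\mapsto|t|^{p-2}t$ is $C^1$, so $|g|^{p-2}g\in W^{1,p'}(\partial\Omega)$ (the $L^{p'}$ and tangential-gradient bounds following from H{\"o}lder), and $B(|g|^{p-2}g,g)\ge\int_{\partial\Omega}|g|^p\,d\sigma>0$. Uniqueness of $f$ then follows from (a).

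\textbf{The main obstacle.} The one genuinely nontrivial ingredient is the inf--sup estimate (a), i.e.\ the lower bound $\|R_pf\|_{W^{-1,p}(\partial\Omega)}\ge\alpha\|f\|_{W^{1,p'}(\partial\Omega)}$, which amounts to an $L^{p'}$-type solvability statement for $I-\Delta_T$ on the Lipschitz manifold $\partial\Omega$. The naive competitor $g=|f|^{p'-2}f$ is inadmissible when $p'<2$, since then its tangential gradient need not lie in $L^p(\partial\Omega)$; so (a) cannot be read off from a single explicit test function, and I would instead obtain it by perturbing the Hilbert-space case $p=2$ (a Meyers-type stability argument), which is also where any restriction on the admissible range of $p$ would enter. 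This is presumably why the statement is organized around the reflexivity of $L^p(\partial\Omega)$ with a Hilbert-space core.
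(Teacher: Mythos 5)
Your argument for reflexivity is correct and complete: the isometric embedding $\iota f=(f,\nabla_Tf)$ has closed image in $L^p(\partial\Omega)^{d+1}$, so $W^{1,p}(\partial\Omega)$ and hence its dual are reflexive. The duality identity $R_p^*=R_{p'}$ (under the identification $(W^{-1,p})^*\cong W^{1,p}$) is also right and does reduce the isomorphism claim to $p\ge 2$, and for $p>2$ the non-degeneracy test $f=|g|^{p-2}g$ is valid --- the $L^{p'}$ bound on $\nabla_Tf=(p-1)|g|^{p-2}\nabla_Tg$ does follow from H\"older exactly as you indicate, and $B(|g|^{p-2}g,g)\ge\int_{\partial\Omega}|g|^p>0$. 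So the only piece left open is the inf--sup bound, and you are right to flag it as the real difficulty.

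That bound, however, is not merely an obstacle to this particular strategy: as you observe, it is equivalent to $W^{1,p'}$-solvability of $(I-\Delta_T)u=F$ on $\partial\Omega$, and in local coordinates $(Z,\phi)$ the tangential Dirichlet form is a divergence-form operator whose coefficients are built from $\nabla\phi$, hence only $L^\infty$. For such operators Meyers gives the $W^{1,q}$-to-$W^{-1,q}$ isomorphism only for $q$ in a neighborhood of $2$ depending on $\|\nabla\phi\|_\infty$, and Meyers-type counterexamples show this range cannot be pushed to all of $(1,\infty)$; so the lemma as stated for every $p\in(1,\infty)$ over a general Lipschitz domain cannot be reached by perturbation from $p=2$, and I do not believe it is true in that generality. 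The paper's own one-sentence justification --- $L^p$-$L^{p'}$ duality plus reflexivity of $L^p$ --- has the same gap: via Hahn--Banach and the embedding $\iota$ one obtains $Fg=\int_{\partial\Omega}(f_0g+\vec f_1\cdot\nabla_Tg)$ with $f_0\in L^{p'}(\partial\Omega)$ and $\vec f_1\in L^{p'}(\partial\Omega)^d$, but nothing in that argument forces the constrained form $\vec f_1=\nabla_Tf_0$, which is precisely the elliptic solvability the lemma asserts. The places in the paper that genuinely need this representation all have $p=2$, where Riesz suffices; the one invocation with general $p$ (in the proof of lemma \ref{DensityOfL2}) can instead be replaced by the direct Hahn--Banach/reflexivity argument already used in lemma \ref{L2DensityInRd}, applied with $\partial\Omega$ in place of $B$. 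So your diagnosis is correct, and the gap you found is a gap in the lemma itself, not just in your attempt.
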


For any $p\in(1,\infty)$, consider the canonical embedding operator
\[
E_p:L^{p'}(\partial\Omega)\to W^{-1,p}(\partial\Omega),\quad (E_pf)g=\int_{\partial\Omega}fg\,d\sigma
\]
for all $g\in W^{1,p}(\partial\Omega)$. Under this embedding of $L^{p'}(\partial\Omega)$ in $W^{-1,p}(\partial\Omega)$, we will show that the image $E_p(L^{p'}(\partial\Omega))$ is dense in $W^{-1,p}(\partial\Omega)$; we first show the local analog in the setting of $\mathbb R^{d-1}$.

\begin{lemma}\label{L2DensityInRd}
Let $B$ be a ball in $\mathbb R^{d-1}$, and $p\in (1,\infty)$. Then, $E_p(L^{p'}(B))$ is dense in $W^{-1,p}(B)$, where $E_p:L^{p'}(B)\to W^{-1,p}(B)$ is the canonical embedding, and $W^{-1,p}(B)=\left(W^{1,p}(B)\right)^*$.
\end{lemma}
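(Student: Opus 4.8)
The plan is to prove the density statement for $E_p(L^{p'}(B)) \subseteq W^{-1,p}(B)$ by a Hahn--Banach argument. Since $W^{-1,p}(B) = (W^{1,p}(B))^*$ and $W^{1,p}(B)$ is reflexive for $p \in (1,\infty)$, to show that the linear subspace $E_p(L^{p'}(B))$ is dense in $W^{-1,p}(B)$ it suffices to show that any $g \in W^{1,p}(B) \cong (W^{-1,p}(B))^*$ (via reflexivity) that annihilates all of $E_p(L^{p'}(B))$ must be zero. Concretely: suppose $g \in W^{1,p}(B)$ is such that $(E_p f)g = 0$ for all $f \in L^{p'}(B)$. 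By definition of $E_p$, this means $\int_B f g \, dx = 0$ for every $f \in L^{p'}(B)$.

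**Next I would** use duality on $L^p$ spaces: since the pairing $\int_B fg$ vanishes for all $f \in L^{p'}(B)$ and $g \in L^p(B)$ (note $g \in W^{1,p}(B) \subseteq L^p(B)$), and $L^{p'}(B)$ is the dual of $L^p(B)$ for $p \in (1,\infty)$, we conclude $g = 0$ as an element of $L^p(B)$, hence $g = 0$ in $W^{1,p}(B)$. By the Hahn--Banach theorem, the closure of $E_p(L^{p'}(B))$ in $W^{-1,p}(B)$ therefore cannot be a proper closed subspace (a proper closed subspace admits a nonzero annihilating functional), so $E_p(L^{p'}(B))$ is dense. This is the cleanest route and avoids having to exhibit explicit approximants.

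**An alternative, more hands-on approach** (which one might prefer if one wants to track constants or avoid reflexivity) is to argue directly: given $F \in W^{-1,p}(B)$, by the Riesz-type representation (the local analog of Lemma \ref{Riesz}, or simply by reflexivity of $W^{1,p}$) there exists $w \in W^{1,p'}(B)$ with $Fg = \int_B (wg + \nabla w \cdot \nabla g)$ for all $g \in W^{1,p}(B)$. One then approximates $w$ in $W^{1,p'}(B)$ by smooth functions $w_k \in C^\infty(\overline{B})$ (mollification after a Sobolev extension off the ball), and for each $w_k$ one integrates by parts: $\int_B \nabla w_k \cdot \nabla g = -\int_B (\Delta w_k) g + \text{boundary terms}$. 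The boundary terms are the obstacle here — they are not of the form $\int_B f g$ for $f \in L^{p'}(B)$ — so one must instead approximate $w$ by functions compactly supported in a slightly larger ball, or observe that $F_k := E_p(w_k - \Delta w_k)$ already lies in $E_p(L^{p'}(B))$ only if one is careful about the boundary. I expect this is why the Hahn--Banach route is the intended proof: it sidesteps the boundary-term bookkeeping entirely.

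**The main obstacle**, such as it is, is essentially conceptual rather than technical: one must correctly invoke reflexivity of $W^{1,p}(B)$ so that functionals on $W^{-1,p}(B) = (W^{1,p}(B))^*$ are identified with elements of $W^{1,p}(B)$ itself, and then recognize that the condition "$g$ annihilates $E_p(L^{p'}(B))$" unwinds to "$\int_B fg = 0$ for all $f \in L^{p'}(B)$," which forces $g = 0$ by the standard $L^p$--$L^{p'}$ duality. Once this identification is in place the proof is a one-line Hahn--Banach separation argument. I would present it in that order: state the Hahn--Banach reduction, identify the annihilator using reflexivity, unwind the definition of $E_p$, and conclude $g = 0$.
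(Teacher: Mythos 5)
Your proposal is correct and follows essentially the same argument as the paper: both reduce the density claim via Hahn--Banach to showing that any functional on $W^{-1,p}(B)$ annihilating $E_p(L^{p'}(B))$ vanishes, then use reflexivity of $W^{1,p}(B)$ to identify such a functional with an element $g\in W^{1,p}(B)\subseteq L^p(B)$, and conclude $g=0$ from $L^p$--$L^{p'}$ duality. The ``hands-on'' alternative you sketch is not used in the paper, but your reasons for discarding it (boundary-term bookkeeping) match the motivation for the Hahn--Banach route.
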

\begin{proof}
Let $g\in (W^{-1,p}(B))^*$, which is such that
\[
\left<g,Pf\right>=0
\]
for all $f\in L^{p'}(B)$. From reflexivity of $W^{-1,p}(B)$, there exists $\tilde{g}\in W^{1,p}(B)$, such that
\[
\left<Pf,\tilde{g}\right>=0
\]
for all $f\in L^{p'}(B)$. This implies that, for any $f\in L^{p'}(B)$, $\int_Bf\tilde{g}=0$, therefore $\tilde{g}=0$, hence $g=0$. This shows that every element of $(W^{-1,p}(\partial\Omega))^*$ that vanishes on $E_p(L^{p'}(B))$, has to be identically zero, hence the Hahn-Banach theorem shows that $E_p(L^{p'}(B))$ is dense in $W^{-1,p}(B)$.
\end{proof}

\begin{lemma}\label{DensityOfL2}
The image of $L^{p'}(\partial\Omega)$ in $W^{-1,p}(\partial\Omega)$ under $E_p$ is dense in $W^{-1,p}(\partial\Omega)$.
\end{lemma}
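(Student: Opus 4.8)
The quickest route is a Hahn--Banach argument that upgrades the proof of Lemma~\ref{L2DensityInRd} from a ball to $\partial\Omega$. It suffices to show that any $\Phi\in\big(W^{-1,p}(\partial\Omega)\big)^\ast$ vanishing on $E_p(L^{p'}(\partial\Omega))$ is zero. Since a Banach space is reflexive exactly when its dual is, reflexivity of $W^{-1,p}(\partial\Omega)$ (Lemma~\ref{Riesz}) gives reflexivity of $W^{1,p}(\partial\Omega)$, so $\big(W^{-1,p}(\partial\Omega)\big)^\ast=W^{1,p}(\partial\Omega)$ under the evaluation map $g\mapsto\big(F\mapsto Fg\big)$. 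Hence $\Phi$ is evaluation at some $g\in W^{1,p}(\partial\Omega)\subseteq L^p(\partial\Omega)$, and the hypothesis reads $\int_{\partial\Omega}fg\,d\sigma=(E_pf)(g)=0$ for all $f\in L^{p'}(\partial\Omega)$; nondegeneracy of the $L^{p'}$--$L^p$ pairing forces $g=0$, hence $\Phi=0$, and the density follows.

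If instead one wants to genuinely reduce to the flat case---the reduction that Lemma~\ref{L2DensityInRd} sets up---I would localize. Fix a finite covering of $\partial\Omega$ by coordinate cylinders $Z_1^\ast,\dots,Z_N^\ast$ with coordinate pairs $(Z_i^\ast,\phi_i)$, and a Lipschitz partition of unity $\{\psi_i\}_{i=1}^N$ on a neighborhood of $\partial\Omega$ subordinate to it, with $\sum_i\psi_i\equiv1$ on $\partial\Omega$. Since multiplication by a Lipschitz function is bounded on $W^{1,p}(\partial\Omega)$ (product rule for $\nabla_T$), its transpose $F\mapsto\psi_iF$, defined by $(\psi_iF)(h)=F(\psi_ih)$, is bounded on $W^{-1,p}(\partial\Omega)$, and $F=\sum_{i=1}^N\psi_iF$ for every $F\in W^{-1,p}(\partial\Omega)$. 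For fixed $i$, the graph map $x'\mapsto(x',\phi_i(x'))$ carries $W^{1,p}$ on a neighbourhood of $q_i$ in $\partial\Omega$ isomorphically---with constants controlled by $\|\nabla\phi_i\|_\infty$, straight from Definition~\ref{W1pPartialOmega}---onto a subspace of $W^{1,p}(B_i)$ for a ball $B_i\subseteq\mathbb R^{d-1}$; dualizing, $\psi_iF$ corresponds to some $G_i\in W^{-1,p}(B_i)$. Lemma~\ref{L2DensityInRd} furnishes $g_i^{(n)}\in L^{p'}(B_i)$ with $E_pg_i^{(n)}\to G_i$ in $W^{-1,p}(B_i)$; transporting these back to $\partial\Omega$---extending by zero and dividing by the Jacobian $\sqrt{1+|\nabla\phi_i|^2}$ so that the surface-measure change of variables is absorbed---produces $f_i^{(n)}\in L^{p'}(\partial\Omega)$ with $E_pf_i^{(n)}\to\psi_iF$ in $W^{-1,p}(\partial\Omega)$. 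Summing over $i$ gives $E_p\big(\sum_if_i^{(n)}\big)\to\sum_i\psi_iF=F$.

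In the localization argument the only delicate point---the main obstacle, such as it is---is bookkeeping: verifying that the graph maps induce two-sided bounded identifications of the (negative-order) Sobolev spaces with constants uniform over the covering, and that the cutoffs interact well with these identifications. No new analytic input is needed beyond Definition~\ref{W1pPartialOmega} and the product rule for $\nabla_T$. In practice I would simply use the first argument, which sidesteps all of this.
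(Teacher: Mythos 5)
Both of your arguments are correct, but the first is a genuine simplification of the paper's proof and is worth highlighting. The paper's Lemma \ref{L2DensityInRd} establishes density via a Hahn--Banach/reflexivity argument on a flat ball in $\mathbb R^{d-1}$, and then the paper's proof of Lemma \ref{DensityOfL2} reduces to the flat case by first invoking the Riesz representation $F=R_pf$ from Lemma \ref{Riesz}, localizing $f$ with coordinate cylinders and a partition of unity, applying Lemma \ref{L2DensityInRd} on each local piece, and transporting the approximations back to $\partial\Omega$. Your first argument observes that the abstract Hahn--Banach/reflexivity mechanism behind Lemma \ref{L2DensityInRd} never used the flat geometry: all one needs is that $W^{1,p}(\partial\Omega)$ is reflexive (it is a closed subspace of $L^p(\partial\Omega)\times L^p(\partial\Omega)^{d}$, and the paper already records the consequence in Lemma \ref{Riesz}), so $(W^{-1,p}(\partial\Omega))^\ast=W^{1,p}(\partial\Omega)$ by evaluation; an annihilator of $E_p(L^{p'}(\partial\Omega))$ is then a $g\in W^{1,p}(\partial\Omega)\subseteq L^p(\partial\Omega)$ with $\int_{\partial\Omega}fg\,d\sigma=0$ for all $f\in L^{p'}$, hence $g=0$ by nondegeneracy of the $L^{p'}$--$L^p$ pairing. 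This bypasses Lemma \ref{L2DensityInRd}, Lemma \ref{Riesz}'s representation formula, and all the change-of-variables bookkeeping, and is strictly cleaner. Your second argument is essentially the paper's (localization plus Lemma \ref{L2DensityInRd}), differing only in that you multiply the functional $F$ directly by cutoffs rather than multiplying the Riesz representative $f$; the one point to be careful about, which you correctly flag as ``bookkeeping,'' is that the localized functional $\psi_iF$ must be extended from (the dual of) a subspace of $W^{1,p}(B_i)$ to all of $W^{-1,p}(B_i)$ before Lemma \ref{L2DensityInRd} can be applied, which is handled by the interior cutoff you insert. In short: the first argument is correct and preferable; the second is correct and parallels the paper.
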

\begin{proof}
Let $F\in W^{-1,p}(\partial\Omega)$ and $\varepsilon>0$. Then, from lemma \ref{Riesz}, $F=R_pf$ for some $f\in W^{1,p'}(\partial\Omega)$.

Consider the coordinate cylinders $(Z_j,\phi_j)$, $j=1,\dots N$ that cover $\partial\Omega$, and let $(\psi_j)$ be a partition of unity subordinate to the $Z_j$. Let also $g\in W^{1,p}(\partial\Omega)$, $B_j\subseteq\mathbb R^{d-1}$ be the basis of the cylinder $Z_j$, and define
\[
\tilde{f}_j(x)=f(x,\phi_j(x)),\quad \tilde{g}_j(x)=g(x,\phi_j(x)),\quad \tilde{\psi}_j(x)=\psi_j(x,\phi_j(x)),
\]
for $x\in B_j$. Then, since $f\in W^{1,p'}(\partial\Omega)$, we obtain that $\tilde{f}_j\tilde{\psi}_j\in W_0^{1,p'}(B_j)$.

Denote the $d-1$ dimensional gradient by $\tilde{\nabla}$. Using the definition of the tangential gradient, we compute in local coordinates in $\partial\Omega\cap Z_j$,
\[
\nabla_Tf\cdot\nabla_Tg=\tilde{\nabla}\tilde{f}_j\cdot\tilde{\nabla}\tilde{g}_j-\left<\tilde{\nabla}\tilde{f}_j,\nu_j\right>\left<\tilde{\nabla}\tilde{g}_j,\nu_j\right>,
\]
where $\nu_j$ is the unit normal in $\partial\Omega\cap Z_j$. Hence, setting $\theta_j=\sqrt{1+|\nabla\phi_j|^2}$, we compute
\begin{align*}
Fg&=\int_{\partial\Omega}\left(fg+\nabla_Tf\nabla_Tg\right)\,d\sigma=\sum_{j=1}^N\int_{\partial\Omega\cap Z_j}\left(fg+\nabla_Tf\nabla_Tg\right)\psi_j\,d\sigma\\
&=\sum_{j=1}^N\int_{B_j}\left(\tilde{f}_j\tilde{g}_j+\tilde{\nabla}\tilde{f}_j\cdot\tilde{\nabla}\tilde{g}_j-\left<\tilde{\nabla}\tilde{f}_j,\nu_j\right>\left<\tilde{\nabla}\tilde{g}_j,\nu_j\right>\right)\tilde{\psi}_j\theta_j.
\end{align*}
Consider now, for $j=1,\dots N$, the operator
\[
\tilde{F}_j\tilde{g}=\int_{B_j}\left(\tilde{f}_j\tilde{g}+\tilde{\nabla}\tilde{f}_j\cdot\tilde{\nabla}\tilde{g}-\left<\tilde{\nabla}\tilde{f}_j,\nu_j\right>\left<\tilde{\nabla}\tilde{g},\nu_j\right>\right)\tilde{\psi}_j\theta_j.
\]
Since $\tilde{f}_j\in W^{1,p'}(B_j)$ and $\tilde{\psi}_j,\theta_j$ are bounded, $\tilde{F}_j\in W^{-1,p}(B_j)$.

Let now $\e>0$. Then, from lemma \ref{L2DensityInRd}, for every $j=1,\dots N$, there exists $\tilde{h}_j\in L^{p'}$ such that
\[
\|P_j\tilde{h}_j-\tilde{F}_j\|_{W^{-1,p}(B_j)}<\e.
\]
where $\tilde{P}_j:L^{p'}(B_j)\to W^{-1,p}(B_j)$ is the canonical embedding. Also, for $q\in\partial\Omega\cap Z_j$, $q=(x,\phi_j(x))$ for some $x\in B_j$; we then define $h_j(q)=\tilde{h}_j(x)$, and we extend $h_j$ by zero on $\partial\Omega\setminus Z_j$. We then obtain that $h_j\in L^{p'}(\partial\Omega\cap Z_j)$. Define also
\[
h=\sum_{j=1}^Nh_j.
\]
Then, we compute, for $g\in W^{1,p}(\partial\Omega)$,
\[
P_ph(g)=\int_{\partial\Omega}hg\,d\sigma=\sum_{j=1}^N\int_{\partial\Omega\cap Z_j}h_jg\,d\sigma=\sum_{j=1}^N\int_{B_j}\tilde{h}_j\tilde{g}_j\theta_j,
\]
therefore
\begin{align*}
|Fg-P_ph(g)|&\leq \sum_{j=1}^N\left|\tilde{F}_j\tilde{g}_j-\tilde{P}_j\tilde{h}_j(\tilde{g}_j)\right|\leq\sum_{j=1}^N\|\tilde{P}_jh_j-\tilde{F}_j\|_{W^{-1,p}(B_j)}\|\tilde{g_j}\|_{W^{1,p}(B_j)}\\
&\leq \e\sum_{j=1}^N\|\tilde{g_j}\|_{W^{1,p}(B_j)}\leq CN\e\|g\|_{W^{1,p}(\partial\Omega)}.
\end{align*}
This shows that $\|F-P_ph\|_{W^{-1,p}(\partial\Omega)}\leq CN\e$, which completes the proof.
\end{proof}

\section{A priori estimates}
In this chapter we will discuss some a priori estimates related to solutions of the equations $Lu=0$ and $L^tu=0$.

\subsection{The Cacciopoli estimate}
We begin with the Cacciopoli estimate, which is a reverse Poincare inequality for solutions of the equation $Lu=0$.

\begin{lemma}\label{Cacciopoli}
Let $\Omega$ be a Lipschitz domain, and let $A\in M_{\lambda}(\Omega)$, $b\in L^{\infty}(\Omega)$.
\begin{enumerate}[i)]
\item Let $u\in W^{1,2}_{{\rm loc}}(\Omega)$ be a solution to $Lu=0$ in $\Omega$. Then, for all balls $B_r\subseteq\Omega$ such that $B_{2r}$ is compactly supported in $\Omega$,
\[
\int_{B_r}|\nabla u|^2\leq C\left(1+\frac{1}{r^2}\right)\int_{B_{2r}}u^2,
\]
where $C=C(d,\lambda,\|b\|_{\infty})$.
\item If $u\in W^{1,2}(\Omega)$ is a nonnegative solution in $\Omega$, that vanishes on $\Delta_{2r}(q)$ for some $q\in\partial\Omega$, then the same inequality holds in $\Delta_{2r}(q)$.
\end{enumerate}
\end{lemma}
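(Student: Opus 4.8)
The strategy is the standard one: test the equation $Lu=0$ against $\phi = \eta^2 u$, where $\eta$ is a cutoff function adapted to the ball $B_r \subseteq B_{2r}$, and exploit uniform ellipticity together with the boundedness of $b$. First I would fix a smooth cutoff $\eta \in C_c^\infty(B_{2r})$ with $\eta \equiv 1$ on $B_r$, $0 \le \eta \le 1$, and $|\nabla \eta| \le C/r$. Since $u \in W^{1,2}_{\mathrm{loc}}(\Omega)$ and $B_{2r}$ is compactly contained in $\Omega$, the function $\eta^2 u$ is a legitimate test function (after the usual approximation by $C_c^\infty$ functions, using density), so $\alpha(u, \eta^2 u) = 0$, i.e.
\[
\int_{B_{2r}} A\nabla u \cdot \nabla(\eta^2 u) + \int_{B_{2r}} (b\nabla u)\,\eta^2 u = 0.
\]

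Expanding $\nabla(\eta^2 u) = \eta^2 \nabla u + 2\eta u \nabla\eta$, the first integral splits into $\int A\nabla u\cdot\nabla u\,\eta^2 + 2\int A\nabla u\cdot\nabla\eta\,\eta u$. I would bound the main term below by $\lambda \int \eta^2 |\nabla u|^2$ using ellipticity, and estimate the cross term and the drift term from above by Cauchy--Schwarz followed by Young's inequality $ab \le \delta a^2 + \tfrac{1}{4\delta}b^2$, absorbing the $\delta \int \eta^2|\nabla u|^2$ pieces into the left-hand side by choosing $\delta$ small relative to $\lambda$. Concretely, $2|\int A\nabla u\cdot\nabla\eta\,\eta u| \le 2\lambda^{-1}\int \eta|\nabla u|\,|\nabla\eta|\,|u| \le \delta\int\eta^2|\nabla u|^2 + C_\delta\int|\nabla\eta|^2 u^2$, and $|\int(b\nabla u)\eta^2 u| \le \|b\|_\infty \int \eta|\nabla u|\,\eta|u| \le \delta\int\eta^2|\nabla u|^2 + C_\delta \|b\|_\infty^2\int\eta^2 u^2$. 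Collecting terms gives $\int_{B_r}|\nabla u|^2 \le \int\eta^2|\nabla u|^2 \le C(d,\lambda,\|b\|_\infty)\big(\tfrac{1}{r^2} + 1\big)\int_{B_{2r}} u^2$, which is the claimed inequality; the additive $1$ (rather than just $1/r^2$) is exactly what the drift term $\|b\|_\infty^2 \int \eta^2 u^2$ produces.

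For part (ii), the argument is essentially identical, the only change being the choice of cutoff: one takes $\eta$ supported in $Z_{2r}(q)$ with $\eta \equiv 1$ on $Z_r(q)$, so that $\eta^2 u$ vanishes near the portion of $\partial\Omega$ inside $Z_{2r}(q)$ because $u$ vanishes on $\Delta_{2r}(q)$. Since $u \in W^{1,2}(\Omega)$ vanishes on $\Delta_{2r}(q)$ in the trace sense, $\eta^2 u$ lies in $W_0^{1,2}(T_{2r}(q))$ and is an admissible test function for the weak formulation (its support does not reach the ``free'' part of the boundary), so the same integration by parts and absorption argument goes through verbatim. The main point to be careful about — and the only genuine subtlety — is justifying that $\eta^2 u$ is an admissible test function in each case: in (i) this is the standard density argument using $u \in W^{1,2}_{\mathrm{loc}}$ and $\operatorname{supp}\eta \Subset \Omega$, while in (ii) one needs that a $W^{1,2}(\Omega)$ function vanishing on $\Delta_{2r}(q)$ can be approximated in $W^{1,2}$ by functions compactly supported away from $\Delta_{2r}(q)$, which follows from the Lipschitz character of $\partial\Omega$. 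Everything else is routine Cauchy--Schwarz and Young's inequality bookkeeping.
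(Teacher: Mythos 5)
Your proposal is correct and follows essentially the same route as the paper: test against $\eta^2 u$ (the paper writes $\phi^2 u$), expand, use ellipticity below and Cauchy--Schwarz plus Young's inequality with a small parameter above, absorb, and in part (ii) replace $B_r \subseteq B_{2r}$ by $T_r(q) \subseteq T_{2r}(q)$ and note $\eta^2 u \in W_0^{1,2}(T_{2r}(q))$. The only addition you make is an explicit remark on admissibility of the test function via density, which the paper leaves implicit.
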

\begin{proof} 
Let $\phi$ be a smooth cutoff which is supported in $B_{2r}$, $\phi\equiv 1$ in $B_r$, and $|\nabla\phi|\leq C/r$. We then use $u\phi^2$ as a test function, to obtain that
\[
\int_{\Omega}A\nabla u\nabla (u\phi^2)+b\nabla u\cdot u\phi^2=0,
\]
which shows that
\[
\lambda\int_{\Omega}|\phi\nabla u|^2\leq\int_{\Omega}A\nabla u\nabla u\cdot\phi^2\leq-2\int_{\Omega}A\nabla u\nabla\phi\cdot u\phi-\int_{\Omega}b\nabla u\cdot u\phi^2.
\]
To bound the last two terms, we use the Cauchy inequality with $\delta$, to obtain that
\begin{align*}
-2\int_{\Omega}A\nabla u\nabla\phi\cdot u\phi-\int_{\Omega}b\nabla u\cdot u\phi^2&\leq C\|\phi\nabla u\|_2\|u\nabla \phi\|_2+\|bu\phi\|_2\|\phi\nabla u\|_2\\
&=(C\|u\nabla \phi\|_2+\|bu\phi\|_2)\|\phi\nabla u\|_2\\
&\leq\frac{1}{4\delta}(C\|u\nabla \phi\|_2+\|bu\phi\|_2)^2+\delta\|\phi\nabla u\|_2^2.
\end{align*}
Choosing $\delta=\lambda/2$, we obtain 
\[
\int_{\Omega}|\phi\nabla u|^2\leq C\|u\nabla\phi\|_2^2+C\|u\phi\|_2^2=C\int_{\Omega}(|\phi|^2+|\nabla\phi|^2)u^2,
\]
where $C$ depends on $d$, $\lambda$ and $\|b\|_{\infty}$. This shows that
\[
\int_{B_r}|\nabla u|^2\leq C\int_{\Omega}(|\phi|^2+|\nabla\phi|^2)u^2\leq C\left(1+\frac{1}{r^2}\right)\int_{B_{2r}}u^2,
\]
which is the desired estimate.

For the second estimate, we let $\phi$ be a smooth cutoff which is supported in $T_{2r}(q)$, $\phi\equiv 1$ in $T_r(q)$, and $|\nabla\phi|\leq C/r$. We then apply the same argument as above in $T_{2r}(q)$, noting that $u\phi^2\in W_0^{1,2}(T_{2r}(q))$.
\end{proof}

We now show that the same inequality holds for solutions to the equation $L^tu=\dive f$.

\begin{lemma}\label{CacciopoliForAdjoint}
Let $\Omega$ be a Lipschitz domain, and let $A\in M_{\lambda}(\Omega)$, $b\in L^{\infty}(\Omega)$, and $f\in L^2(\Omega)$.
\begin{enumerate}[i)]
\item Let $u\in W_{{\rm loc}}^{1,2}(\Omega)$ be a solution to $L^tu=\dive f$ in $\Omega$. Then, for all balls $B_r\subseteq\Omega$ such that $B_{2r}$ is compactly supported in $\Omega$,
\[
\int_{B_r}|\nabla u|^2\leq C\left(1+\frac{1}{r^2}\right)\int_{B_{2r}}u^2+\int_{B_{2r}}|f|^2,
\]
where $C=C(d,\lambda,\|b\|_{\infty})$.
\item If $u\in W^{1,2}(\Omega)$ is a nonnegative solution of $L^tu=0$ in $\Omega$ that vanishes on $\Delta_{2r}(q)$ for some $q\in\partial\Omega$, then the same inequality holds in $\Delta_{2r}(q)$.
\end{enumerate}
\end{lemma}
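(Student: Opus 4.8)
The statement is a Caccioppoli inequality for solutions of $L^t u = \dive f$, so I will mimic the proof of Lemma \ref{Cacciopoli} with the adjoint bilinear form in place of $\alpha$, keeping track of the extra contribution from $f$. Recall that $u$ solves $L^t u = \dive f$ means $\alpha^t(u,\phi) = \langle \dive f, \phi\rangle = -\int_\Omega f\cdot\nabla\phi$ for all test functions $\phi \in C_c^\infty(\Omega)$, where $\alpha^t(u,v) = \int_\Omega A^t\nabla u\,\nabla v + b\,\nabla v\cdot u$; by density this extends to $\phi = u\eta^2$ with $\eta$ a cutoff, since $u\eta^2 \in W_0^{1,2}$ of the relevant subdomain.

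\medskip

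For part (i), fix a ball $B_r \subseteq \Omega$ with $B_{2r} \Subset \Omega$, and choose a smooth cutoff $\eta$ supported in $B_{2r}$, with $\eta \equiv 1$ on $B_r$ and $|\nabla\eta| \leq C/r$. Testing against $u\eta^2$ gives
\[
\int_\Omega A^t\nabla u\,\nabla(u\eta^2) + \int_\Omega b\,\nabla(u\eta^2)\cdot u = -\int_\Omega f\cdot\nabla(u\eta^2).
\]
Expanding $\nabla(u\eta^2) = \eta^2\nabla u + 2u\eta\nabla\eta$ and using ellipticity $\lambda|\eta\nabla u|^2 \leq \langle A^t\nabla u,\nabla u\rangle\eta^2$ on the left, I isolate $\lambda\int|\eta\nabla u|^2$ and move everything else to the right: the terms are $-2\int A^t\nabla u\,\nabla\eta\, u\eta$, the two drift terms $-\int b\nabla u\cdot u\eta^2 - 2\int b\nabla\eta\cdot u^2\eta$ (note the adjoint form puts the gradient on the test function, which is harmless since it still produces terms bounded by $\|b\|_\infty(\|\eta\nabla u\|_2\|u\eta\|_2 + \|u\nabla\eta\|_2\|u\eta\|_2)$), and the right-hand side $-\int f\cdot\eta^2\nabla u - 2\int f\cdot u\eta\nabla\eta$. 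Every term is estimated by Cauchy–Schwarz and then Cauchy's inequality with a parameter $\delta$: the factors involving $\|\eta\nabla u\|_2$ and $\|f\eta\|_2$ get a $\delta$-small coefficient, absorbed into the left side by choosing $\delta = \lambda/4$ (say), while the remaining factors produce $C\|u\nabla\eta\|_2^2 + C\|u\eta\|_2^2 + C\|f\eta\|_2^2$. Since $\eta \leq 1$ and $|\nabla\eta| \leq C/r$ with support in $B_{2r}$, and $\eta \equiv 1$ on $B_r$, this yields exactly
\[
\int_{B_r}|\nabla u|^2 \leq C\Bigl(1+\tfrac{1}{r^2}\Bigr)\int_{B_{2r}}u^2 + C\int_{B_{2r}}|f|^2,
\]
with $C = C(d,\lambda,\|b\|_\infty)$. (If one insists on the stated constant $1$ in front of $\int|f|^2$, rescale the choice of $\delta$ on the $f$-terms; this is cosmetic.)

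\medskip

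For part (ii), when $u \geq 0$ solves $L^t u = 0$ in $\Omega$ and vanishes on $\Delta_{2r}(q)$, I replace the ball by the boundary cylinder portion: take $\eta$ supported in $T_{2r}(q)$, $\eta \equiv 1$ on $T_r(q)$, $|\nabla\eta|\leq C/r$. The key point, exactly as in Lemma \ref{Cacciopoli}(ii), is that $u\eta^2 \in W_0^{1,2}(T_{2r}(q))$ because $u$ vanishes on the boundary portion $\Delta_{2r}(q)$ and $\eta$ kills the part of $\partial T_{2r}(q)$ interior to $\Omega$; hence $u\eta^2$ is an admissible test function for the weak formulation and the same computation goes through verbatim. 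I expect no genuine obstacle here — the only things to be slightly careful about are the appearance of $\nabla$ on the test function in the adjoint drift term (handled above) and the justification that the weak formulation extends from $C_c^\infty$ test functions to $u\eta^2$, which follows from the standard density argument together with $u \in W^{1,2}_{\rm loc}$ (resp. $u \in W^{1,2}$ near the boundary in part (ii)).
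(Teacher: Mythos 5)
Your proof is correct and follows essentially the same route as the paper: test against $u\phi^2$ in the adjoint weak formulation, expand the gradient of the test function, use ellipticity and Cauchy's inequality with a parameter to absorb, and replace the ball by $T_{2r}(q)$ for part (ii) noting $u\phi^2\in W_0^{1,2}(T_{2r}(q))$. The only differences are cosmetic — sign convention for $\langle\dive f,\phi\rangle$ and the observation about the constant on the $f$-term, both of which you handle correctly.
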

\begin{proof} 
The proof is similar to the proof of proposition \ref{Cacciopoli}. Let $\phi$ be a smooth cutoff which is supported in $B_{2r}$, $\phi\equiv 1$ in $B_r$, and $|\nabla\phi|\leq C/r$. We then use $u\phi^2$ as a test function, to obtain that
\[
\int_{\Omega}A\nabla u\nabla (u\phi^2)+b\nabla(u\phi^2)\cdot u=\int_{\Omega}f\nabla(u\phi^2),
\]
which shows that
\begin{align*}
\int_{\Omega}A\nabla u\nabla u\cdot\phi^2&\leq-2\int_{\Omega}A\nabla u\nabla\phi\cdot u\phi-\int_{\Omega}b\nabla u\cdot u\phi^2-\int_{\Omega}2b\nabla\phi\cdot u\phi+\int_{\Omega}f\nabla(u\phi^2)\\
&\leq(2\|u\nabla \phi\|_2+\|bu\phi\|_2+\|f\phi\|_2)\|\phi\nabla u\|_2+(2\|b\phi\|_2+\|f\phi\|_2)\|u\nabla\phi\|_2,
\end{align*}
and the uniform ellipticity of $A$ shows that
\[
\lambda\|\phi\nabla u\|_2^2\leq(2\|u\nabla \phi\|_2+\|bu\phi\|_2+\|f\phi\|_2)\|\phi\nabla u\|_2+(2\|b\phi\|_2+\|f\phi\|_2)\|u\nabla\phi\|_2.
\]
Hence, using the Cauchy inequality with $\delta$, we obtain that
\[
\int_{\Omega}|\phi\nabla u|^2\leq C\|u\nabla\phi\|_2^2+C\|u\phi\|_2^2+C\|f\|_2^2=C\int_{\Omega}(|\phi|^2+|\nabla\phi|^2)u^2+\int_{\Omega}|f\phi|^2,
\]
where $C$ depends on $d$, $\lambda$ and  $\|b\|_{\infty}$. The estimate now follows.

For the second estimate, we apply the same argument in $T_{2r}(q)$ for $\phi$ vanishing outside $T_{2r}(q)$, being equal to $1$ in $T_r(q)$, and $|\nabla\phi|\leq C/r$, noting that $u\phi^2\in W_0^{1,2}(T_{2r}(q))$.
\end{proof}

\subsection{Low regularity estimates}
In this section we will use the ellipticity of the equation to show how we can gain $L^p$ regularity. We first show a local weak reverse H{\"o}lder inequality for solutions to the equation $Lu=0$. 

\begin{lemma}\label{LowRegularityEstimate}
Let $\Omega$ be a bounded domain, $A\in M_{\lambda,\mu}(\Omega)$, and $b\in L^{\infty}(\Omega)$. Suppose that a ball $B_{2r}$ is compactly supported in $\Omega$, and, for some $p\in(1,d)$, $u\in W^{1,p}(B_{2r})$ is a solution to the equation $Lu=0$ in $\Omega$. Let also $B_{\alpha r}\subseteq B_{\beta r}$ be concentric balls, with $1\leq \alpha<\beta\leq 2$. Then there exists a constant $c$ depending only on $d,p,\mu,\lambda$ such that, if $r<c$, then
\[
\|\nabla u\|_{L^{p^*}(B_{\alpha r})}\leq\frac{C}{r}\|\nabla u\|_{L^p(B_{\beta r})},
\]
where $C$ is a constant that depends on $d,p,\mu,\lambda,\|b\|_{\infty}$, and $\beta-\alpha$.
\end{lemma}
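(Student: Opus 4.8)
The plan is to obtain the higher integrability of $\nabla u$ from a Caccioppoli-type inequality combined with Sobolev–Poincaré, exactly as in the classical Meyers-type argument, but being careful about the drift term $b\nabla u$. First I would observe that since $u$ solves $Lu=0$, for any ball $B_s(x_0)\subseteq B_{2r}$ and any cutoff $\phi$ supported in $B_s(x_0)$ with $\phi\equiv 1$ on $B_{s/2}(x_0)$, $|\nabla\phi|\le C/s$, testing the equation against $(u-c)\phi^2$ for a constant $c$ (to be chosen as an average of $u$ on the annulus) gives, after using uniform ellipticity, Cauchy's inequality with $\delta$, and $\|b\|_\infty<\infty$,
\[
\int_{B_{s/2}(x_0)}|\nabla u|^2 \le \frac{C}{s^2}\int_{B_s(x_0)}|u-c|^2 + C\int_{B_s(x_0)}|u-c|\,|\nabla u|.
\]
The first right-hand term is handled by Sobolev–Poincaré: choosing $c$ to be the mean of $u$ over $B_s(x_0)$, $\|u-c\|_{L^2(B_s)}\le C s\|\nabla u\|_{L^{q}(B_s)}$ with $q=\frac{2d}{d+2}<2$ (valid since $d\ge 3$), so that term becomes $C\|\nabla u\|_{L^q(B_s)}^2$. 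The second term, coming from the drift, is the genuinely new feature: by Hölder with exponents chosen so that the $u-c$ factor lands in $L^{q^*}$ (or an intermediate Lebesgue space controlled by Poincaré) and the $\nabla u$ factor in $L^2$, together with Young's inequality, it can be absorbed — with the caveat that it contributes a term of the form $Cr^{2}\|\nabla u\|_{L^2}^2$ or similar, which is harmless precisely because we are allowed to assume $r<c$ small. This is where the smallness hypothesis $r<c$ enters, and it is the reason the statement carries that restriction.

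With such a reverse-type inequality in hand — schematically, for concentric balls,
\[
\left(\fint_{B_{s/2}(x_0)}|\nabla u|^2\right)^{1/2} \le C\left(\fint_{B_s(x_0)}|\nabla u|^q\right)^{1/q} + (\text{small drift error}),
\]
a weak reverse Hölder inequality with the ``wrong'' exponents — the next step is to invoke Gehring's lemma (the self-improvement mechanism; the paper has already recorded the boundary analog in Proposition \ref{BpProperty}, and the interior version is standard and can be cited). Gehring's lemma upgrades this to
\[
\left(\fint_{B_{\alpha r}}|\nabla u|^{2+\eta}\right)^{\frac{1}{2+\eta}} \le \frac{C}{r}\left(\fint_{B_{\beta r}}|\nabla u|^2\right)^{1/2}
\]
for some $\eta>0$ depending on $d,p,\lambda,\mu$. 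To reach the exponent $p^*$ stated in the lemma starting from an $L^p$ hypothesis with $p\in(1,d)$, I would run the same Caccioppoli/Sobolev–Poincaré scheme with $p$ in place of $2$ (replacing $q$ by $\frac{pd}{d+p}$ and $2$ by $p$ throughout), obtaining a weak reverse Hölder inequality relating $\|\nabla u\|_{L^p}$-averages on the smaller ball to $\|\nabla u\|_{L^{pd/(d+p)}}$-averages on the larger, then apply Gehring to gain the extra integrability needed to bootstrap up to $p^*=\frac{pd}{d-p}$; since $p^*>p$ this requires only finitely many steps (or a single application, depending on bookkeeping). Throughout, the dependence of $C$ on $\beta-\alpha$ comes from the $|\nabla\phi|\le C/(\beta-\alpha)r$ choices in the chain of cutoffs, and the factor $1/r$ is dimensional.

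The main obstacle I anticipate is the bookkeeping around the drift term: unlike the drift-free case, the term $\int b\nabla u\cdot(u-c)\phi^2$ is not lower-order in a scale-invariant sense (the space $L^\infty$ for $b$ is not scale-invariant for this equation, as the paper itself emphasizes), so one must be careful that the error it generates is genuinely absorbable rather than producing a term of the same strength as the left-hand side. The clean way around this is precisely the hypothesis $r<c$: after rescaling $B_{2r}$ to unit size, the drift contributes a coefficient that is a small multiple of $r$ (or $r^2$), which can be absorbed into the left side or into the ``good'' term. A secondary technical point is ensuring that the Gehring self-improvement constants, and hence the final constant $C$, depend only on the claimed quantities $d,p,\mu,\lambda,\|b\|_\infty,\beta-\alpha$ and not, say, on $u$ or on $r$ itself — this is automatic from the structure of Gehring's lemma once the weak reverse Hölder inequality has constants of the right form, but it should be checked. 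I would also need $\mu$ (the Lipschitz bound on $A$) only to the extent that the Sobolev–Poincaré/Caccioppoli argument needs $A$ merely bounded and elliptic; the appearance of $\mu$ in the hypotheses suggests it is used to control lower-order adjustments or simply carried along for uniformity, and I would confirm it is not essential to the core estimate.
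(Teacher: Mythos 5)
Your proposed mechanism cannot produce the conclusion of the lemma. Gehring's lemma upgrades a weak reverse H\"older inequality from $L^p$ only to $L^{p+\e}$ for some \emph{small} $\e>0$ depending on the data; it does not, and cannot, deliver the fixed gain $p\mapsto p^*=\frac{pd}{d-p}$ that the lemma asserts. (For $p=2$, $d=3$, you would need to get from $L^2$ all the way to $L^6$.) The remark that ``this requires only finitely many steps'' is not correct: each application of Gehring starts from a reverse H\"older inequality at the \emph{new} exponent, which you do not have; and with only bounded measurable $A$ there is no route at all from $L^p$ to $L^{p^*}$ --- this is exactly the known limitation of the Meyers/Gehring theory. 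The symptom of the gap is visible in your closing paragraph, where you conclude that $\mu$ is ``not essential to the core estimate.'' It is essential: the Lipschitz continuity of $A$ is precisely what separates the small-gain Meyers/Gehring regime from the full $W^{2,p}$ regime that the lemma is actually invoking.

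The paper's proof is a freezing-of-coefficients Calder\'on--Zygmund argument, not a Caccioppoli--Gehring one. It sets $v=u\phi$, mollifies $v_n=v*\psi_n$, and observes that $v$ solves $\dive(A\nabla v)=f$ with $\|f\|_{L^p}\le \frac{C}{r}\|\nabla u\|_{L^p}$ (the drift and cutoff terms all go into $f$, then Poincar\'e removes $u$). Estimate 9.37 of Gilbarg--Trudinger gives $\|\nabla^2 v_n\|_{L^p}\le C\|L_0v_n\|_{L^p}$ for the frozen operator $L_0=a_{ij}(x_0)\partial_{ij}$; the freezing error is bounded by $\mu r\|\nabla^2 v_n\|_{L^p}$, and this is exactly where both the Lipschitz constant $\mu$ and the smallness $r<c$ are used --- to absorb the error. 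A commutator estimate controls $a_{ij}\partial_{ij}v_n-(a_{ij}\partial_i v)*\partial_j\psi_n$ again via Lipschitz continuity of $a_{ij}$. The net result is a uniform bound on $\|\nabla^2 v_n\|_{L^p}$, and the passage to $L^{p^*}$ for the gradient is then the Sobolev embedding $W^{2,p}\hookrightarrow W^{1,p^*}$ applied to the compactly supported $v_n$, followed by Fatou. So the ``$1/r$'' factor comes from the cutoff derivatives, the smallness of $r$ comes from absorbing the coefficient-freezing error, and the jump from $p$ to $p^*$ comes from one clean application of the Sobolev inequality to the second derivatives --- none of which appear in your proposal.
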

\begin{proof}
Let $\gamma=\frac{\alpha+\beta}{2}$, and set $B_1=B_{\alpha r}$, $B_2=B_{\gamma r}$, $B_3=B_{\beta r}$. Note that, since constants are solutions to the equation, we can assume that the average of $u$ over $B_3$ is $0$. Let $\phi$ be a a smooth cutoff which is supported in $B_2$, it is equal to $1$ in $B_1$, and $|\nabla\phi|\leq\frac{C}{(\beta-\alpha)r}$, where $C=C(d)$. Let also $\psi_n$ be a sequence of mollifiers, with $|\nabla\psi_n|\leq Cn$ for all $n\in\mathbb N$, and set
\[
v=u\phi,\,\,v_n=v*\psi_n.
\]
Note then that $v_n\in C^2(\overline{B_3})$, and $v_n\equiv 0$ on $\partial B_3$ for $n$ sufficiently large. In addition, we compute
\begin{align*}
\dive(A\nabla v)&=\dive(A\nabla u\cdot\phi+A\nabla\phi\cdot u)=\dive(A\nabla u)\cdot\phi+A\nabla u\nabla\phi+A\nabla\phi\nabla u+\dive(A\nabla\phi)\cdot u\\
&=b\nabla u\cdot\phi+(A+A^t)\nabla u\nabla\phi+\dive(A\nabla\phi)\cdot u=f,
\end{align*}
where we used that $u$ solves the equation $Lu=0$ in $\Omega$. In addition,
\[
\|f\|_{L^p(B_3)}\leq\frac{C}{(\beta-\alpha)r}\|\nabla u\|_{L^p(B_3)}+\frac{C}{(\beta-\alpha)^2r^2}\|u\|_{L^p(B_3)},
\]
and since $u$ has average $0$ over $B_3$, we can apply Poincare's inequality to obtain that
\begin{equation}\label{eq:pNormOfF}
\|f\|_{L^p(B_3)}\leq\frac{C}{r}\|\nabla u\|_{L^p(B_3)},
\end{equation}
where $C$ now also depends on the difference $\beta-\alpha$ and $p$.

Let now $x\in B_3$. Since $B_3$ is compactly supported in $\Omega$, the function $y\mapsto\psi_n(x-y)$ is compactly supported in $\Omega$ for $n$ sufficiently large. Therefore, since $v$ solves the equation $\dive(A\nabla v)=f$, we obtain that
\[
\int_{\Omega}A(y)\nabla v(y)\nabla\psi_n(x-y)\,dy=-\int_{\Omega}A(y)\nabla v(y)\nabla_y(\psi_n(x-y))\,dy=\int_{\Omega}f(y)\psi_n(x-y)\,dy,
\]
which shows that
\begin{equation}\label{eq:bnablau}
(a_{ij}\partial_iv)*\partial_j\psi_n(x)=f*\psi_n(x).
\end{equation}

Consider now the constant coefficient operator $L_0=a_{ij}(x_0)\partial_{ij}$. Note that, from estimate $9.37$ in \cite{Gilbarg}, if $n$ is sufficiently large,
\begin{equation}\label{eq:SecondDerBound}
\|\nabla^2v_n\|_{L^p(B_3)}\leq C_{d,p,\lambda}\|L_0v_n\|_{L^p(B_3)}.
\end{equation}
But, we compute, for $x\in B_3$,
\begin{align*}
|L_0v_n(x)|&\leq|a_{ij}(x_0)-a_{ij}(x)||\partial_{ij}v_n(x)|+|a_{ij}(x)\partial_{ij}v_n(x)|\\
&\leq\mu\beta r|\partial_{ij}v_n(x)|+|a_{ij}(x)\partial_{ij}v_n(x)|\leq 2\mu r|\partial_{ij}v_n(x)|+|a_{ij}(x)\partial_{ij}v_n(x)|.
\end{align*}
This shows that
\[
\|\nabla^2v_n\|_{L^p(B_3)}\leq C_{d,p,\lambda}\mu r\|\partial_{ij}v_n\|_{L^p(B_3)}+\|a_{ij}\partial_{ij}v_n\|_{L_p(B_3)},
\]
and if we choose $r<2\delta(x_0)$ such that $C_{d,p,\lambda}\mu r<1/2$, we obtain that
\[
\|\nabla^2v_n\|_{L^p(B_3)}\leq C\|a_{ij}\partial_{ij}v_n\|_{L_p(B_3)}.
\]
We now compute, in $B_3$,
\begin{align*}
a_{ij}\partial_{ij}v_n&=a_{ij}\partial_i(v*\partial_j\psi_n)-(a_{ij}\partial_iv)*\partial_j\psi_n+f*\psi_n\\
&=a_{ij}(\partial_iv*\partial_j\psi_n)-(a_{ij}\partial_iv)*\partial_j\psi_n+f*\psi_n,
\end{align*}
where we also used \eqref{eq:bnablau}. But, if we set $B_n=B_{1/n}(0)$, we compute
\[
a_{ij}(x)(\partial_iv*\partial_j\psi_n)(x)-(a_{ij}\partial_iv)*\partial_j\psi_n(x)=\int_{B_n}\left(a_{ij}(x)-a_{ij}(x-y)\right)\partial_iv(x-y)\partial_j\psi_n(y)\,dy,
\]
and the last integral is bounded by
\begin{align*}
\int_{B_n}\mu|y||\partial_iv(x-y)||\partial_j\psi_n(y)|\,dy&\leq\frac{C}{n}\int_{B_{1/n}(x)}|\partial_iv(y)||\partial_j\psi_n(x-y)|\,dy\\
&\leq Cn^d\int_{B_{1/n}(x)}|\partial_iv(y)|\,dy\\
&\leq C\left(n^d\int_{B_{1/n}(x)}|\partial_iv(y)|^p\,dy\right)^{1/p}.
\end{align*}
This shows that 
\[
|a_{ij}\partial_{ij}v_n|\leq C\left(n^d\int_{B_{1/n}(x)}|\partial_iv(y)|^p\,dy\right)^{1/p}+|f*\psi_n|.
\]
If we consider the $L^p$ norm, we obtain that, for large $n$,
\begin{align*}
\int_{B_2}\left(Cn^d\int_{B_{1/n}(x)}|\partial_iv(y)|^p\,dy\right)\,dx&\leq Cn^d\int_{B_3}\int_{B_{1/n}(y)}|\partial_iv(y)|^p\,dxdy\\
&\leq C\int_{B_3}|\partial_iv(y)|^p\,dy,
\end{align*}
therefore
\[
\|a_{ij}\partial_{ij}v_n\|_{L^p(B_3)}\leq C\|\nabla u\|_{L^p(B_3)}+C\|f\|_{L^p(B_3)}.
\]
Plugging this back to \eqref{eq:SecondDerBound} and letting $n\to\infty$, we finally obtain that
\begin{equation}\label{eq:SecondByFirst}
\limsup_{n\to\infty}\|\nabla^2v_n\|_{L^p(B_3)}\leq\|f\|_{L^p(B_3)}\leq\frac{C}{r}\|\nabla u\|_{L^p(B_3)},
\end{equation}
for all $n$ sufficiently large, where we also used \eqref{eq:pNormOfF}.

(Note also that, up this point, we have only used that $p\in(1,\infty)$).

Since now $v_n$ is a mollification of $v$ in $\Omega$ and $u=v$ in $B_1$, we obtain that $\nabla v_n\to\nabla u$ in $L^p(B_1)$. Therefore, there exists a subsequence of $(v_n)$ such that $\nabla v_{k_n}\to \nabla u$ almost everywhere in $B_1$. Since now $v_n$ vanishes close to $\partial B_3$ for large $n$, Fatou's lemma and Sobolev's inequality show that
\[
\|\nabla u\|_{L^{p^*}(B_1)}\leq\limsup_{n\to\infty}\|\nabla v_{k_n}\|_{L^{p^*}(B_3)}\leq C\limsup_{n\to\infty}\|\nabla^2v_{k_n}\|_{L^p(B_3)}\leq\frac{C}{r}\|\nabla u\|_{L^p(B_3)},
\]
where we used \eqref{eq:SecondByFirst} in the last step. This completes the proof.
\end{proof}

We also turn to the analog for solutions to the adjoint equation.

\begin{lemma}\label{LowRegularityEstimateForAdjoint}
Let $\Omega$ be a bounded domain, $A\in M_{\lambda,\mu}(\Omega)$ and $b\in\Lip(\Omega)$. Suppose that, for
some $p\in(1, d)$, $u\in W^{1,p}_{{\rm loc}}(\Omega)$ is a solution to the equation $L^tu = 0$ in $\Omega$. Then, $u\in W^{1,p^*}_{{\rm loc}}(\Omega)$.
\end{lemma}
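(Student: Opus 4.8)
The plan is to reduce the adjoint equation $L^tu=0$ to a divergence-form equation with an extra divergence-type right-hand side, so that we can mimic the argument of Lemma \ref{LowRegularityEstimate} almost verbatim, and then invoke Caccioppoli-type control to close the estimate. Observe first that if $L^tu=0$ then, writing the equation out, $-\dive(A^t\nabla u)-\dive(bu)=0$, i.e. $\dive(A^t\nabla u + bu)=0$. The key point is that since $b\in\Lip(\Omega)$, the product $bu$ is handled smoothly: on a ball $B_{2r}\Subset\Omega$ we may freeze and mollify exactly as in the previous lemma. So I would fix a ball $B_{2r}\Subset\Omega$, choose concentric balls $B_{\alpha r}\subseteq B_{\beta r}\subseteq B_{2r}$ with $1\le\alpha<\beta\le2$, set $v=u\phi$ for a cutoff $\phi$ supported in $B_{\beta r}$ with $\phi\equiv1$ on $B_{\alpha r}$ and $|\nabla\phi|\le C/((\beta-\alpha)r)$, and compute $\dive(A^t\nabla v)$. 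Using $\dive(A^t\nabla u)=-\dive(bu)$ one gets $\dive(A^t\nabla v)=\dive(\text{(something)})+ (\text{lower order})$; the troublesome term $\dive(bu)\phi$ can be rewritten as $\dive(bu\phi)-bu\cdot\nabla\phi$, and since $b$ is Lipschitz (hence bounded with bounded distributional divergence on compact subsets), $\dive(bu\phi)=(\dive b)u\phi + b\cdot\nabla(u\phi)$, all terms in $L^p(B_{\beta r})$ controlled by $\|\nabla u\|_{L^p}$ after Poincaré (assuming, as we may since constants solve $L^tu=0$, that $u$ has mean zero over $B_{\beta r}$). So $v$ solves $\dive(A^t\nabla v)=f+\dive g$ with $\|f\|_{L^p(B_{\beta r})}+\|g\|_{L^p(B_{\beta r})}\le \frac{C}{r}\|\nabla u\|_{L^p(B_{\beta r})}$, where $C$ depends on $d,p,\lambda,\mu,\|b\|_\infty$, the local Lipschitz bound for $b$, and $\beta-\alpha$.

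Next, as in the proof of Lemma \ref{LowRegularityEstimate}, I would mollify: set $v_n=v*\psi_n$, note $v_n\in C^2(\overline{B_{\beta r}})$ and $v_n\equiv0$ near $\partial B_{\beta r}$ for large $n$, and write the frozen-coefficient operator $L_0=a_{ij}^t(x_0)\partial_{ij}$. The interior $W^{2,p}$ estimate (estimate 9.37 in \cite{Gilbarg}) gives $\|\nabla^2 v_n\|_{L^p(B_{\beta r})}\le C_{d,p,\lambda}\|L_0v_n\|_{L^p(B_{\beta r})}$, and then the commutator manipulations—absorbing $\|(a_{ij}^t(x_0)-a_{ij}^t(x))\partial_{ij}v_n\|_{L^p}$ for $r$ small using Lipschitz continuity of $A$, and treating $a_{ij}^t\partial_{ij}v_n - (a_{ij}^t\partial_i v)*\partial_j\psi_n$ by the same $|a_{ij}^t(x)-a_{ij}^t(x-y)|\le\mu|y|$ trick—run exactly as before, except that now the right-hand side contributes an extra convolution term coming from $\dive g$. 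For that term one has $((\dive g)*\psi_n) = \partial_j(g_j*\psi_n) = g_j * \partial_j\psi_n$, and one must check that this is still controlled in $L^p(B_{\beta r})$ by $\|g\|_{L^p}$ uniformly as $n\to\infty$ in the relevant averaged sense; this requires a little care but is the same averaging-over-small-balls estimate used for the other convolution commutators. The upshot is $\limsup_n\|\nabla^2 v_n\|_{L^p(B_{\beta r})}\le \frac{C}{r}\|\nabla u\|_{L^p(B_{\beta r})}$.

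Finally, since $\nabla v_n\to\nabla u$ in $L^p(B_{\alpha r})$ along a subsequence converging a.e., Fatou plus the Sobolev embedding $W^{1,p}\hookrightarrow L^{p^*}$ on the functions $v_{k_n}$ (which vanish near $\partial B_{\beta r}$) yield $\|\nabla u\|_{L^{p^*}(B_{\alpha r})}\le C\limsup_n\|\nabla^2 v_{k_n}\|_{L^p(B_{\beta r})}<\infty$. Since $B_{2r}\Subset\Omega$ was arbitrary, this gives $u\in W^{1,p^*}_{\loc}(\Omega)$, and in fact the quantitative bound $\|\nabla u\|_{L^{p^*}(B_{\alpha r})}\le\frac{C}{r}\|\nabla u\|_{L^p(B_{\beta r})}$ for $r$ below a threshold depending on $d,p,\lambda,\mu$. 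I expect the main obstacle to be exactly the bookkeeping of the new $\dive g$ term in the mollified identity: one must make sure the Lipschitz hypothesis on $b$ (rather than mere boundedness) is genuinely used to put $bu$ into a form where $\dive(bu)$ is an $L^p_{\loc}$ function, and then verify that this extra term does not spoil the absorption argument or the commutator bounds; everything else is a transcription of Lemma \ref{LowRegularityEstimate}. One subtlety worth flagging: the statement only asserts the qualitative conclusion $u\in W^{1,p^*}_{\loc}$, so strictly one does not even need the sharp constant, but recording it costs nothing and is presumably used later.
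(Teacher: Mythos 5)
Your overall plan is the same as the paper's: rewrite $L^tu=0$ as a statement about $\dive(A^t\nabla v)$ for a cutoff $v=u\phi$, use the Lipschitz regularity of $b$ to place the right-hand side in $L^p_{\loc}$, and then run the mollification and frozen-coefficient argument of Lemma~\ref{LowRegularityEstimate} to deduce a local $W^{2,p}$-type bound and hence $u\in W^{1,p^*}_{\loc}$. However, there is one genuine error and one unnecessary complication.

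The error is the parenthetical claim that ``constants solve $L^tu=0$,'' which you use to justify subtracting the mean of $u$ over $B_{\beta r}$ and invoking Poincar\'e to control all of $f$ and $g$ by $\|\nabla u\|_{L^p}$. For the adjoint operator $L^t u = -\dive(A^t\nabla u)-\dive(bu)$, a constant $c$ gives $L^t c = -c\,\dive b$, which is not zero in general. So you cannot normalize $u$ to have zero mean while staying in the solution class, and the quantitative estimate $\|\nabla u\|_{L^{p^*}(B_{\alpha r})}\le\frac{C}{r}\|\nabla u\|_{L^p(B_{\beta r})}$ is not obtained this way. This is exactly why the paper's statement for $L^t$ is purely qualitative ($u\in W^{1,p^*}_{\loc}$), in contrast to the quantitative inequality in Lemma~\ref{LowRegularityEstimate} for $L$, where constants are solutions. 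The qualitative conclusion does not need Poincar\'e: the right-hand side $f$ contains terms like $(\dive b)u\phi$ and $\dive(A\nabla\phi)\,u$, and these are in $L^p(V)$ simply because $u\in W^{1,p}_{\loc}\subseteq L^p_{\loc}$, $\dive b\in L^\infty_{\loc}$, and $A$ is Lipschitz; no zero-mean normalization is needed. You should drop the quantitative claim, or prove it by a different route that does not subtract constants.

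The unnecessary complication is retaining a $\dive g$ term on the right-hand side. Since $b\in\Lip(\Omega)$ and $u\in W^{1,p}_{\loc}$, one has $\dive(bu) = (\dive b)\,u + b\nabla u$ in $L^p_{\loc}$ as an honest function, so the equation for $v$ is $\dive(A^t\nabla v)=f$ with $f\in L^p(V)$ and there is no divergence-form datum to track through the mollification. This is precisely how the paper proceeds, and it avoids the ``little care'' you flag regarding the commutator bound for $\dive g$. With that simplification and the Poincar\'e step removed, your argument matches the paper's.
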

\begin{proof}
We mimic the proof of lemma \ref{LowRegularityEstimate}: let $U$ be compactly supported in $\Omega$, and consider a set $U_0$ with $U\subseteq U_0\subseteq V\subseteq\Omega$, where all inclusions are compact. Let $\phi$ be a a smooth cutoff which is supported in $U_0$ and it is equal to $1$ in $U$. Let also $\psi_n$ be a sequence of mollifiers, with $|\nabla\psi_n|\leq Cn$ for all $n\in\mathbb N$, and set
\[
v=u\phi,\,\,v_n=v*\psi_n.
\]
Note then that $v_n\in C^2(\overline{B_3})$, and $v_n\equiv 0$ on $\partial B_3$ for $n$ sufficiently large. In addition, we compute
\begin{align*}
\dive(A\nabla v)&=\dive(A\nabla u\cdot\phi+A\nabla\phi\cdot u)=\dive(A\nabla u)\cdot\phi+A\nabla u\nabla\phi+A\nabla\phi\nabla u+\dive(A\nabla\phi)\cdot u\\
&=-\dive b\cdot u\phi-b\nabla u\cdot\phi+(A+A^t)\nabla u\nabla\phi+\dive(A\nabla\phi)\cdot u=f,
\end{align*}
where we used that $u$ solves the equation $L^tu=0$ in $\Omega$. Since now $u$ is bounded and $\nabla u\in W^{1,p}(V)$, we obtain that $f\in L^p(V)$.

Let now $x\in V$. Since $V$ is compactly supported in $\Omega$, the function $y\mapsto\psi_n(x-y)$ is compactly supported in $\Omega$ for $n$ sufficiently large. Therefore, since $v$ solves the equation $\dive(A\nabla v)=f$, we obtain that
\[
\int_{\Omega}A(y)\nabla v(y)\nabla\psi_n(x-y)\,dy=-\int_{\Omega}A(y)\nabla v(y)\nabla_y(\psi_n(x-y))\,dy=\int_{\Omega}f(y)\psi_n(x-y)\,dy,
\]
which shows that
\begin{equation}\label{eq:bnablau2}
(a_{ij}\partial_iv)*\partial_j\psi_n(x)=f*\psi_n(x).
\end{equation}

Consider now the constant coefficient operator $L_0=a_{ij}(x_0)\partial_{ij}$. Note that, from estimate $9.37$ in \cite{Gilbarg}, if $n$ is sufficiently large,
\begin{equation}\label{eq:SecondDerBound2}
\|\nabla^2v_n\|_{L^p(V)}\leq C_{d,p,\lambda}\|L_0v_n\|_{L^p(V)}.
\end{equation}
But, we compute, for $x\in V$,
\begin{align*}
|L_0v_n(x)|&\leq|a_{ij}(x_0)-a_{ij}(x)||\partial_{ij}v_n(x)|+|a_{ij}(x)\partial_{ij}v_n(x)|\\
&\leq\mu\cdot\diam(V)|\partial_{ij}v_n(x)|+|a_{ij}(x)\partial_{ij}v_n(x)|\leq 2\mu r|\partial_{ij}v_n(x)|+|a_{ij}(x)\partial_{ij}v_n(x)|.
\end{align*}
This shows that
\[
\|\nabla^2v_n\|_{L^p(V)}\leq C_{d,p,\lambda}\mu\cdot\diam(V)\|\partial_{ij}v_n\|_{L^p(V)}+\|a_{ij}\partial_{ij}v_n\|_{L_p(V)},
\]
and if the diameter of $V$ is small enough, we obtain that
\[
\|\nabla^2v_n\|_{L^p(B_3)}\leq C\|a_{ij}\partial_{ij}v_n\|_{L_p(B_3)}.
\]
We now compute, in $B_3$,
\begin{align*}
a_{ij}\partial_{ij}v_n&=a_{ij}\partial_i(v*\partial_j\psi_n)-(a_{ij}\partial_iv)*\partial_j\psi_n+f*\psi_n\\
&=a_{ij}(\partial_iv*\partial_j\psi_n)-(a_{ij}\partial_iv)*\partial_j\psi_n+f*\psi_n,
\end{align*}
where we also used \eqref{eq:bnablau}. But, if we set $B_n=B_{1/n}(0)$, we compute
\[
a_{ij}(x)(\partial_iv*\partial_j\psi_n)(x)-(a_{ij}\partial_iv)*\partial_j\psi_n(x)=\int_{B_n}\left(a_{ij}(x)-a_{ij}(x-y)\right)\partial_iv(x-y)\partial_j\psi_n(y)\,dy,
\]
and the last integral is bounded by
\begin{align*}
\int_{B_n}\mu|y||\partial_iv(x-y)||\partial_j\psi_n(y)|\,dy&\leq\frac{C}{n}\int_{B_{1/n}(x)}|\partial_iv(y)||\partial_j\psi_n(x-y)|\,dy\\
&\leq Cn^d\int_{B_{1/n}(x)}|\partial_iv(y)|\,dy\\
&\leq C\left(n^d\int_{B_{1/n}(x)}|\partial_iv(y)|^p\,dy\right)^{1/p}.
\end{align*}
This shows that 
\[
|a_{ij}\partial_{ij}v_n|\leq C\left(n^d\int_{B_{1/n}(x)}|\partial_iv(y)|^p\,dy\right)^{1/p}+|f*\psi_n|.
\]
If we consider the $L^p$ norm, we obtain that, for large $n$,
\[
\int_{U_0}\left(Cn^d\int_{B_{1/n}(x)}|\partial_iv(y)|^p\,dy\right)\,dx\leq Cn^d\int_{V}\int_{B_{1/n}(y)}|\partial_iv(y)|^p\,dxdy\leq C\int_{V}|\partial_iv(y)|^p\,dy,
\]
therefore
\[
\|a_{ij}\partial_{ij}v_n\|_{L^p(U_0)}\leq C\|\nabla u\|_{L^p(U_0)}+C\|f\|_{L^p(U_0)}.
\]
Plugging this back to \eqref{eq:SecondDerBound2} and letting $n\to\infty$, we finally obtain that $\|\nabla^2v_n\|_{L^p(V)}$ is bounded.

The last estimate shows that $(\nabla v_n)$ is bounded in $W^{1,p}(V)$. From the Rellich-Kondrachov compactness theorem and almost everywhere convergence, there exists a subsequence $(\nabla v_{k_n})$ which converges to a function $w$, weakly in $W^{1,p}(V)$, and almost everywhere in $V$. Then $w\in W^{1,p}(U)$, hence $w\in L^{p^*}(U)$. But, $\nabla v_n$ converges to $\nabla u$ almost everywhere in $U$, hence $u=w\in L^{p^*}(U)$. Since also the sequence $(v_n)$ is bounded in $W^{1,p}(U)$, we obtain that $u\in L^{p^*}(U)$, hence $u\in W^{1,p^*}(U)$, which completes the proof.
\end{proof}

By iterating the previous lemmas over smaller domains, we obtain the next propositions.

\begin{prop}\label{LowRegularity}
Let $\Omega$ be a bounded domain, $A\in M_{\lambda,\mu}(\Omega)$, and $b\in L^{\infty}(\Omega)$. Suppose that, for some $p\in(1,2)$, $u\in W_{{\rm loc}}^{1,p}(\Omega)$ is a solution to the equation $Lu=0$ in $\Omega$. Then $u\in W_{{\rm loc}}^{1,2}(\Omega)$, with
\[
\|u\|_{W^{1,2}(U)}\leq C\|u\|_{W^{1,p}(U_1)}
\]
for any $U\subseteq U_1\subseteq\Omega$, where all inclusions are compact.
\end{prop}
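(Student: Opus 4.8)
The plan is to bootstrap from $W^{1,p}_{\loc}$ to $W^{1,2}_{\loc}$ by iterating Lemma \ref{LowRegularityEstimate} finitely many times, using the fact that the Sobolev conjugate exponent $p^* = \frac{dp}{d-p}$ strictly increases $p$ as long as $p<d$, and that after finitely many steps the exponent exceeds $2$ (after which ordinary H\"older's inequality on bounded sets brings us down to exactly $2$). The one subtlety is that Lemma \ref{LowRegularityEstimate} is stated for a single ball $B_{2r}$ compactly contained in $\Omega$ and requires $r$ to be below a threshold $c$ depending only on $d,p,\mu,\lambda$; so the iteration must be localized to small balls and then patched together by a covering argument.

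Concretely, I would first fix $U \subseteq U_1 \subseteq \Omega$ with compact inclusions, and choose a finite cover of $\overline{U}$ by balls $B_r(x_k)$ with $B_{2r}(x_k) \subseteq U_1$ and $r$ small enough that $r<c$ for every exponent arising in the (finite) chain $p = p_0 < p_1 < \cdots < p_m$ defined by $p_{i+1} = \min(p_i^*, 2+1)$ or similar — in practice one just takes $r$ below the minimum of the finitely many thresholds $c(d,p_i,\mu,\lambda)$. On each such ball, the hypothesis $u \in W^{1,p}_{\loc}(\Omega)$ together with Lemma \ref{LowRegularityEstimate} (applied with, say, $\alpha=1$, $\beta=3/2$, then $\alpha=3/2$, $\beta=2$ at the next stage, re-centering appropriately) gives $\nabla u \in L^{p^*}$ on a slightly smaller concentric ball, with a quantitative bound $\|\nabla u\|_{L^{p^*}(B_{\alpha r})} \leq \frac{C}{r}\|\nabla u\|_{L^p(B_{\beta r})}$. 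Since $U_1$ is bounded, $L^{p^*}$ control on each ball upgrades (by H\"older) to $L^{q}$ control for any $q \leq p^*$, in particular for the next exponent in the chain; repeating, after finitely many steps we reach an exponent $\geq 2$ on each ball of a slightly shrunk cover, and then H\"older on the bounded set gives $L^2$ control of $\nabla u$. Summing the local estimates over the finite cover yields $\|\nabla u\|_{L^2(U)} \leq C\|\nabla u\|_{L^p(U_1)}$; combined with the trivial bound $\|u\|_{L^2(U)} \leq C\|u\|_{L^{p}(U_1)}$ (again H\"older on the bounded set, after possibly one more Sobolev step to handle the case $p<2$ at the level of $u$ itself, or simply noting $p\ge$ something — actually since $u\in W^{1,p}_{\loc}$ and we have just shown $\nabla u\in L^2_{\loc}$, Poincar\'e or Sobolev on a ball gives $u\in L^2_{\loc}$ directly), we conclude $u \in W^{1,2}_{\loc}(\Omega)$ with the stated estimate.

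The main obstacle — really a bookkeeping point rather than a genuine difficulty — is making the iteration rigorous: one must verify that the radius $r$ can be chosen uniformly small enough to satisfy the smallness hypothesis $r<c$ of Lemma \ref{LowRegularityEstimate} simultaneously at \emph{every} stage of the bootstrap (the constant $c$ depends on the exponent $p_i$, but there are only finitely many stages since $p^*$ jumps by a definite multiplicative factor $\tfrac{d}{d-p}\ge \tfrac{d}{d-1}>1$ each time, so the number of steps is bounded in terms of $d$ and the initial $p$), and that the shrinking of the concentric balls at each stage does not exhaust the cover — this is handled by starting with $B_{2r}(x_k)\subseteq U_1$ and only ever shrinking from radius $2r$ down toward $r$, using a fresh pair $(\alpha,\beta)$ with $\beta-\alpha$ bounded below at each of the finitely many stages. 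A second minor point is that Lemma \ref{LowRegularityEstimate} requires $p^*<\infty$, i.e. the intermediate exponents stay below $d$; once an exponent would meet or exceed $d$ we are already past $2$ (since $d\ge 3$), so we simply stop the Sobolev iteration there and finish with H\"older.
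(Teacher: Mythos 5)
Your proposal is correct and takes essentially the same route the paper intends: the paper states Proposition \ref{LowRegularity} as an iteration of Lemma \ref{LowRegularityEstimate} over a finite chain of exponents on shrinking concentric balls, exactly as you describe, and this is carried out explicitly (for a closely related purpose) in the proof of Proposition \ref{DerivativeRegularity} with the same device of a decreasing sequence $c_1 > c_2 > \cdots > c_{k+1}$ of ball ratios. Your attention to the uniform smallness threshold $r < \min_i c(d,p_i,\mu,\lambda)$, the stopping rule once $p_i^* \ge 2$ (avoiding the degeneracy at $p = d$), and the recovery of the $L^2$ norm of $u$ itself via Poincar\'e on each ball all match the bookkeeping the paper leaves implicit.
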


\begin{prop}\label{LowRegularityForAdjoint}
Let $\Omega$ be a bounded domain, $A\in M_{\lambda,\mu}(\Omega)$, and $b\in \Lip(\Omega)$. Suppose that, for some $p\in(1,2)$, $u\in W_{{\rm loc}}^{1,p}(\Omega)$ is a solution to the equation $L^tu=0$ in $\Omega$. Then $u\in W_{{\rm loc}}^{1,2}(\Omega)$, with
\[
\|u\|_{W^{1,2}(U)}\leq C\|u\|_{W^{1,p}(U_1)}
\]
for any $U\subseteq U_1\subseteq\Omega$, where all inclusions are compact.
\end{prop}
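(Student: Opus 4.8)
\emph{Proof plan.} The proposition is a self-improvement (bootstrap) statement, and the plan is simply to apply Lemma~\ref{LowRegularityEstimateForAdjoint} a fixed finite number of times, shrinking the domain slightly at each step, and to read off the quantitative bound from its proof.

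\textbf{Exponent iteration.} Set $p_0=p$, and, as long as $p_j<2$, let $p_{j+1}=p_j^*=\frac{dp_j}{d-p_j}$. Since $\frac1{p_{j+1}}=\frac1{p_j}-\frac1d$, one has $\frac1{p_j}=\frac1p-\frac jd$, a strictly decreasing sequence, so there is a first index $N=N(d,p)$ with $p_N\ge2$. For every $j<N$ we have $1<p_j<2<d$ (using $d\ge3$), so Lemma~\ref{LowRegularityEstimateForAdjoint} applies at stage $j$ and $p_{j+1}\in(p_j,\infty)$ is well defined. Thus, starting from $u\in W^{1,p}_{{\rm loc}}(\Omega)$ and applying the lemma $N$ times produces $u\in W^{1,p_N}_{{\rm loc}}(\Omega)$, and since $p_N\ge2$ this gives $u\in W^{1,2}_{{\rm loc}}(\Omega)$ (the sets involved being bounded). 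This already settles the qualitative part.

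\textbf{Quantitative bound.} Given $U\subseteq U_1\subseteq\Omega$ with both inclusions compact, pick open sets $U=\Omega_N\subseteq\Omega_{N-1}\subseteq\dots\subseteq\Omega_0\subseteq U_1$, all inclusions compact. The proof of Lemma~\ref{LowRegularityEstimateForAdjoint} is in fact quantitative: it produces a bound for $\|\nabla^2 v_n\|_{L^p}$ in terms of $\|u\|_{W^{1,p}}$ on a slightly larger set, and then, by the compactness and weak lower semicontinuity arguments it contains, an estimate of the form
\[
\|u\|_{W^{1,p_{j+1}}(\Omega_{j+1})}\le C_j\,\|u\|_{W^{1,p_j}(\Omega_j)},
\]
where $C_j$ depends on $d$, $p_j$, $\lambda$, $\mu$, $\|b\|_\infty$, $\|\dive b\|_\infty$ and the pair $(\Omega_{j+1},\Omega_j)$. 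The one point that needs attention is that that proof freezes the coefficients of $A$ and so requires the ambient set to have small diameter; this is removed at each stage by covering $\overline{\Omega_{j+1}}$ by finitely many balls $B$ of small enough radius with $2B\subseteq\Omega_j$, applying the lemma on each ball and summing. Chaining the displayed inequality for $j=0,\dots,N-1$ yields $\|u\|_{W^{1,p_N}(U)}\le C\,\|u\|_{W^{1,p}(U_1)}$, and since $p_N\ge2$ and $U$ is bounded, H\"older's inequality upgrades this to $\|u\|_{W^{1,2}(U)}\le C\,\|u\|_{W^{1,p}(U_1)}$.

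\textbf{Main difficulty, and a shortcut.} The genuine work is only bookkeeping: Lemma~\ref{LowRegularityEstimateForAdjoint} is stated qualitatively, so one must reopen its proof both to extract explicit constants and to remove the small-diameter hypothesis by a finite covering at each of the $N(d,p)$ stages; the rest is the formal exponent arithmetic. A cleaner route to the quantitative bound, once the qualitative conclusion $u\in W^{1,2}_{{\rm loc}}(\Omega)$ is known, is the closed graph theorem: the restriction map from $\{w\in W^{1,p}(U_1):L^tw=0\ \text{in}\ U_1\}$ to $\{w\in W^{1,2}(U):L^tw=0\ \text{in}\ U\}$ is everywhere defined, both spaces are Banach since the weak formulation of $L^t\,\cdot=0$ passes to $W^{1,p}$-limits, and the graph is closed because convergence in $W^{1,p}(U_1)$ and in $W^{1,2}(U)$ each forces a.e.\ convergence along a subsequence.
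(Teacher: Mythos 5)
Your main route — iterating Lemma~\ref{LowRegularityEstimateForAdjoint} with the exponent chain $p_{j+1}=p_j^*$ and a shrinking chain of compactly nested open sets — is exactly the argument the paper intends when it writes ``By iterating the previous lemmas over smaller domains.'' Your bookkeeping is correct: for $j<N$ one has $1<p_j<2<d$, so the lemma applies at each stage and $N=N(d,p)$ is finite; you are also right that the small-diameter step in the lemma's proof (freezing $a_{ij}(x_0)$) has to be removed by a finite covering of each $\overline{\Omega_{j+1}}$ by balls $B$ with $2B\subset\Omega_j$ of radius small enough, and summing — this is implicit in the paper and worth saying aloud. The dependence of the stage constants on $\|\dive b\|_\infty$ is harmless since $b\in\Lip(\Omega)$ makes $\dive b$ bounded.

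The closed graph shortcut you offer is a genuinely different and shorter route to the displayed inequality, and it is valid as stated: the space of $W^{1,p}(U_1)$ weak solutions is closed (the bilinear form $\alpha^t$ is $W^{1,p}$-continuous against a fixed $\phi\in C_c^\infty$), the same holds in $W^{1,2}(U)$, the restriction map is everywhere defined by the qualitative bootstrap, and a.e.\ subsequential convergence forces the graph to be closed. The trade-off is that the closed graph theorem produces a constant with no quantitative control in terms of $d,\lambda,\mu,\|b\|_{\rm Lip}$ and the geometry of $(U,U_1)$ — only existence for the fixed data at hand. Since Proposition~\ref{LowRegularityForAdjoint} does not announce any such dependence, and since in the paper it is invoked only to promote solutions to $W^{1,2}_{\rm loc}$ before applying quantitative corollaries such as~\ref{BoundedDerivativeForAdjoint}, this is acceptable here; but if one later wanted the constant uniform in, say, the mollified coefficients $b_n$ of proposition~\ref{GreenDerivativeBoundsForAdjoint}, the iterative route is the safe one. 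In short: correct, same approach as the paper for the substance, plus a legitimate alternative whose only caveat is loss of explicit constant tracking.
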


\subsection{Local estimates on the gradient}
The assumption that $b\in L^{\infty}(\Omega)$ together with the fact that $A$ is Lipschitz and elliptic guarantee that solutions to the equation $Lu=0$ in $\Omega$ have gradients that are locally H{\"o}lder continuous. This will be shown in the next proposition.

\begin{prop}\label{DerivativeRegularity}
Let $\Omega$ be a bounded domain, and let $A\in M_{\lambda,\mu}(\Omega)$ and $b\in L^{\infty}(\Omega)$. Let also $B_r$ be a ball in $\Omega$, such that its double $B_{2r}$ is compactly supported in $\Omega$. Then, there exists $\alpha\in(0,1)$ such that, for any solution $u\in W^{1,2}_{{\rm loc}}(\Omega)$ of the equation $-\dive(A\nabla u)+b\nabla u=0$ in $\Omega$,
\[
|\nabla u(x)-\nabla u(y)|\leq\frac{C}{r}\left(\frac{|x-y|}{r}\right)^{\alpha}\left(\fint_{B_{2r}}|u|^2\right)^{1/2},
\]
for all $x,y\in B_r$, where $C$ depends on $d,\lambda,\mu,\|b\|_{\infty}$ and ${\rm diam}(\Omega)$; that is, $\nabla u$ is localy H{\"o}lder continuous.
\end{prop}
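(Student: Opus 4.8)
\emph{The plan.} The plan is to regard the drift term $b\nabla u$ as a right-hand side, bootstrap the integrability of $\nabla u$ past the exponent $d$, and then apply classical interior $W^{2,q}$ estimates to the resulting equation $-\dive(A\nabla u)=-b\nabla u$, organizing every estimate so that the constants depend only on the good quantities. A reduction to unit scale is what makes the last requirement possible.

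\emph{Reduction to unit scale.} First I would set $v(x)=u(rx)$, which reduces matters to $r=1$: the function $v$ solves $-\dive(\tilde A\nabla v)+\tilde b\nabla v=0$ on $B_2$, where $\tilde A(x)=A(rx)$ is $\lambda$-elliptic and $\tfrac12\mu\,\diam(\Omega)$-Lipschitz, $\tilde b(x)=r\,b(rx)$ satisfies $\|\tilde b\|_\infty\le\tfrac12\|b\|_\infty\diam(\Omega)$, and $\fint_{B_2}|v|^2=\fint_{B_{2r}}|u|^2$. Since $\nabla v(x)=r\,\nabla u(rx)$, a bound $|\nabla v(x)-\nabla v(y)|\le C|x-y|^\alpha\big(\fint_{B_2}|v|^2\big)^{1/2}$ for $x,y\in B_1$ translates back to exactly the asserted inequality on $B_r$. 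So from now on $r=1$, $B_2$ is compactly supported in $\Omega$, and the coefficients are controlled by good constants.

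\emph{Energy bound and integrability bootstrap.} By Cacciopoli's inequality (Lemma \ref{Cacciopoli}), $\|\nabla u\|_{L^2(B_{7/4})}\le C\big(\fint_{B_2}|u|^2\big)^{1/2}$. Next I would iterate Lemma \ref{LowRegularityEstimate} along a finite nested chain of balls $B_{7/4}\supset\cdots\supset B_{5/4}$ with exponents $p_0=2$, $p_{k+1}=p_k^\ast$; since $d\ge 3$ each step is admissible while $p_k<d$, and because $1/p_{k+1}=1/p_k-1/d$, after finitely many steps (a number depending only on $d$) we reach $q>d$ with $\nabla u\in L^q(B_{5/4})$ and $\|\nabla u\|_{L^q(B_{5/4})}\le C\big(\fint_{B_2}|u|^2\big)^{1/2}$. (If the Lipschitz constant is too large for the smallness hypothesis of Lemma \ref{LowRegularityEstimate} to hold at these radii, one first covers $B_{5/4}$ by a good-constant number of fixed small-radius balls and sums the estimates; and if some intermediate $p_k$ equals $d$ exactly, one applies the non-divergence step below once with $L^d$ data together with $W^{2,d}\hookrightarrow W^{1,q}$ to gain the extra integrability.) Consequently $g:=-b\nabla u\in L^q(B_{5/4})$ with the same bound, and by the Morrey--Poincare inequality $u$ is continuous on $B_{5/4}$ with $\|u\|_{L^\infty(B_{5/4})}\le C\big(\fint_{B_2}|u|^2\big)^{1/2}$.

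\emph{The $C^{1,\alpha}$ estimate and conclusion.} Since $A$ is Lipschitz, $\partial_i a_{ij}\in L^\infty$, so $u$ solves the non-divergence equation $a_{ij}\partial_{ij}u=-g-(\partial_i a_{ij})\partial_j u=:h$ with $h\in L^q(B_{5/4})$, $\|h\|_{L^q(B_{5/4})}\le C\big(\fint_{B_2}|u|^2\big)^{1/2}$, and with leading coefficients continuous and $\lambda$-elliptic. The interior $W^{2,q}$ estimate (\cite[Theorem~9.11]{Gilbarg}) then gives $\|u\|_{W^{2,q}(B_1)}\le C\big(\|h\|_{L^q(B_{5/4})}+\|u\|_{L^q(B_{5/4})}\big)\le C\big(\fint_{B_2}|u|^2\big)^{1/2}$, and Morrey's embedding $W^{2,q}(B_1)\hookrightarrow C^{1,\alpha}(B_1)$ with $\alpha=1-d/q$ yields $|\nabla u(x)-\nabla u(y)|\le C|x-y|^\alpha\big(\fint_{B_2}|u|^2\big)^{1/2}$ for $x,y\in B_1$. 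Undoing the rescaling of the first step completes the proof. The analysis here is classical; the genuine work is organizational --- running the finite integrability bootstrap and making sure every constant is a good constant, which is precisely what the reduction to unit scale secures. An alternative to the last step is a Schauder-type perturbation argument, freezing the coefficients and absorbing $|A(x)-A(x_0)|\le\mu|x-x_0|$ exactly as in the proof of Lemma \ref{LowRegularityEstimate}.
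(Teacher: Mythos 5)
The overall architecture of your argument is sound, and the rescaling to $r=1$ is a clean streamlining the paper does not perform (the paper instead tracks the $r$-dependence through the iteration). Both proofs share the same integrability bootstrap via Lemma \ref{LowRegularityEstimate}. The divergence occurs in the final step: the paper applies Morrey's inequality to the smooth mollifications $v_n$ constructed inside the proof of Lemma \ref{LowRegularityEstimate}, using the uniform bound $\limsup_n\|\nabla^2 v_n\|_{L^{p_{k+1}}}\le\frac{C}{r}\|\nabla u\|_{L^{p_{k+1}}}$ and then passing to the limit; you instead want to write $a_{ij}\partial_{ij}u=h$ and invoke the interior $W^{2,q}$ estimate \cite[Thm.~9.11]{Gilbarg} directly on $u$.

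This is where there is a genuine gap. Theorem 9.11 of \cite{Gilbarg} is an a priori estimate: its hypothesis is $u\in W^{2,q}_{\loc}$, not $u\in W^{1,2}_{\loc}$. Likewise, the pointwise identity $\dive(A\nabla u)=a_{ij}\partial_{ij}u+(\partial_i a_{ij})\partial_j u$ already presupposes second weak derivatives. Nothing in your proposal establishes $u\in W^{2,q}_{\loc}$. One can close this --- for instance, first obtain $u\in W^{2,2}_{\loc}$ from the divergence-form regularity theory \cite[Thm.~8.8]{Gilbarg}, then upgrade to $W^{2,q}_{\loc}$ by solving the Dirichlet problem $a_{ij}\partial_{ij}\tilde u=h$, $\tilde u=u$ on $\partial B$, in $W^{2,q}(B)$ on a small ball via \cite[Thm.~9.15]{Gilbarg} and identifying $u=\tilde u$ through uniqueness of weak solutions of the divergence-form equation (the difference $w=u-\tilde u\in W^{2,2}(B)\cap W^{1,2}_0(B)$ is a weak solution of $-\dive(A\nabla w)+\tilde c\cdot\nabla w=0$ with $\tilde c_j=\partial_i a_{ij}$, so $w\equiv 0$ by the maximum principle) --- but as written the step is asserted, not proved. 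The paper's mollification route avoids the issue entirely by never needing $u$ itself to lie in $W^{2,q}$: the second-derivative estimates are proved for the smooth $v_n$ and survive the limit. Your mentioned ``Schauder-type perturbation'' alternative is essentially the paper's argument and is the more self-contained of your two proposals.

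A second, smaller point: your bootstrap starts at $p_0=2$, and for even $d$ the chain $1/p_k=1/2-k/d$ hits $p_k=d$ exactly after $k=d/2-1$ steps, where $p^\ast$ is undefined. Your proposed patch (``apply the non-divergence step once with $L^d$ data'') inherits the same missing a priori regularity. The paper sidesteps this cleanly by starting the chain at an \emph{irrational} $s\in(1,3/2)$ (permissible since $\nabla u\in L^2_{\loc}\subset L^s_{\loc}$), which guarantees $p_k\ne d$ for every $k$; the same trick works in your rescaled setting and is simpler than the detour you sketch.
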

\begin{proof}
Suppose first that $r<c$, where $c$ is the constant that appears in the proof of lemma \ref{LowRegularityEstimate}. Fix an irrational number $s\in(1,3/2)$. Then, H{\"o}lder's inequality shows that
\begin{equation}\label{eq:HolderForS}
\|\nabla u\|_{L^s(B_{31r/16})}\leq r^{d/s-d/2}\|\nabla u\|_{L^2(B_{31r/16})}.
\end{equation}
We now set $k=\left[\frac{d}{s}\right]$, and we note that $k$ is the first integer such that
\[
\frac{1}{s}-\frac{k}{d}<\frac{1}{d},\,\,\frac{1}{s}-\frac{k-1}{d}>\frac{1}{d};
\]
the fact that $s<3/2$ guarantees that such a $k\geq 2$ exists, with the inequalities being strict since $s$ is irrational. We also consider a sequence
\[
\frac{15}{8}=c_1>c_2>\dots>c_k>c_{k+1}=\frac{3}{2},
\]
and we set $p_1=s$, and $p_{m+1}=p_m^*$ for $m=1,\dots k$; then, for $m=1,\dots k+1$, 
\[
\frac{1}{p_m}=\frac{1}{s}-\frac{m-1}{d}.
\]
We then apply lemma \ref{LowRegularityEstimate} for $\alpha=c_{m+1}$ and $\beta=c_m$, $m=1,\dots k$, to obtain that
\[
\|\nabla u\|_{L^{p_{m+1}}(B_{c_{m+1}r})}\leq\frac{C}{r}\|\nabla u\|_{L^{p_m}(B_{c_mr})},
\]
where $C$ also depends on the difference $c_m-c_{m+1}$. This will show that
\[
\|\nabla u\|_{L^{p_{k+1}}(B_{c_{k+1}}r)}\leq\frac{C}{r^k}\|\nabla u\|_{L^s(B_{31r/16})}\leq Cr^{d/s-d/2-k}\|\nabla u\|_{L^2(B_{31r/16})},
\]
where $C$ is also depends on the sequence $(c_m)$, and where we used \eqref{eq:HolderForS} in the last step.

Recall now the definition of $v_n$ from the proof of lemma \ref{LowRegularityEstimate}, where the construction takes place for $\alpha=3/2$ and $\beta=c_k$. Since $v_n$ is a mollification of $v$ and $u=v$ in $B_{3r/2}$, there exists a subsequence $v_{k_n}$ such that $\nabla v_{k_n}\to \nabla u$ almost everywhere in $B_{3r/2}$. But, estimate \eqref{eq:SecondByFirst} in the proof of lemma \ref{LowRegularityEstimate} shows that
\[
\limsup_{n\to\infty}\|\nabla^2v_n\|_{{L^{p_{k+1}}(B_{3r/2})}}\leq\frac{C}{r}\|\nabla u\|_{L^{p_{k+1}}(B_{c_kr})}.
\]
Since $p_{k+1}>d$, Morrey's inequality shows that, for almost every $x,y\in B_r$,
\begin{align*}
|\nabla u(x)-\nabla u(y)|&\leq\limsup_{n\to\infty}|\nabla v_n(x)-\nabla v_n(y)|\\
&\leq\limsup_{n\to\infty}C|x-y|^{1-d/p_{k+1}}\|\nabla^2v_n\|_{{L^{p_{k+1}}(B_{3r/2})}}\\
&\leq C|x-y|^{1-d/p_{k+1}}r^{d/s-d/2-k-1}\|\nabla u\|_{L^2(B_{31r/16})}\\
&\leq C\left(\frac{|x-y|}{r}\right)^{\alpha}r^{d/s-d/2+\alpha-k-1}\|\nabla u\|_{L^2(B_{31r/16})}\\
&\leq C\left(\frac{|x-y|}{r}\right)^{\alpha}r^{d/s+\alpha-k-1}\left(\fint_{B_{31r/16}}|\nabla u|^2\right)^{1/2},
\end{align*}
where $\alpha=1-d/p_{k+1}\in(0,1)$. But, we compute
\[
\frac{d}{s}+\alpha-k=\frac{d}{s}+1-\frac{d}{p_{k+1}}-k-1=\frac{d}{s}+1-d\left(\frac{1}{s}-\frac{k}{d}\right)-k-1=0,
\]
which shows that
\[
|\nabla u(x)-\nabla u(y)|\leq C\left(\frac{|x-y|}{r}\right)^{\alpha}\left(\fint_{B_{31r/16}}|\nabla u|^2\right)^{1/2},
\]
But, from Cacciopoli's inequality,
\[
\left(\fint_{B_{31r/16}}|\nabla u|^2\right)^{1/2}\leq\frac{C}{r}\left(\fint_{B_{2r}}|u|^2\right)^{1/2},
\]
and this completes the proof.
\end{proof}

The last proposition leads to the following corollary.

\begin{cor}\label{LocalBoundOnGradientOfSolutions}
Let $\Omega$ be a bounded domain, and let $A\in M_{\lambda,\mu}(\Omega)$ and $b\in L^{\infty}(\Omega)$. Let also $B_r$ be a ball in $\Omega$, such that its double $B_{2r}$ is compactly supported in $\Omega$. Then, for any solution $u$ of the equation $-\dive(A\nabla u)+b\nabla u=0$ in $\Omega$,
\[
\|\nabla u\|_{L^{\infty}(B_r)}\leq\frac{C}{r}\left(\fint_{B_{2r}}|u|^2\right)^{1/2},
\]
where $C$ depends on $d,\lambda,\mu,\|b\|_{\infty}$ and $\diam(\Omega)$. Hence, $u$ is locally Lipschitz in $\Omega$, with
\[
|u(x)-u(y)|\leq C\frac{|x-y|}{r}\left(\fint_{B_{2r}}|u|^2\right)^{1/2}
\]
for any $x,y\in B_r$.
\end{cor}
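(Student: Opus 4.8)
The plan is to read this off directly from Proposition~\ref{DerivativeRegularity} together with the Cacciopoli inequality; no new ideas are needed.

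First I would fix $x\in B_r$ and work with the continuous representative of $\nabla u$ furnished by Proposition~\ref{DerivativeRegularity}. Since $B_{2r}$ is compactly contained in $\Omega$, that proposition applies to the ball $B_r$ and gives, for every $y\in B_r$,
\[
|\nabla u(x)|\le |\nabla u(y)|+\frac{C}{r}\Big(\frac{|x-y|}{r}\Big)^{\alpha}\Big(\fint_{B_{2r}}|u|^2\Big)^{1/2}\le |\nabla u(y)|+\frac{C}{r}\Big(\fint_{B_{2r}}|u|^2\Big)^{1/2},
\]
where I used $|x-y|\le 2r$ and $2^{\alpha}\le 2$. Averaging this in $y$ over $B_r$ and applying the Cauchy--Schwarz inequality to the term $\fint_{B_r}|\nabla u|$ yields
\[
|\nabla u(x)|\le \Big(\fint_{B_r}|\nabla u|^2\Big)^{1/2}+\frac{C}{r}\Big(\fint_{B_{2r}}|u|^2\Big)^{1/2}.
\]

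Next I would control the remaining average of $|\nabla u|^2$ by the Cacciopoli inequality (Lemma~\ref{Cacciopoli}~i)) on $B_r$, whose double $B_{2r}$ is compactly contained in $\Omega$:
\[
\fint_{B_r}|\nabla u|^2\le C\Big(1+\frac{1}{r^{2}}\Big)\frac{|B_{2r}|}{|B_r|}\fint_{B_{2r}}|u|^2\le C\Big(1+\frac1r\Big)^{2}\fint_{B_{2r}}|u|^2 .
\]
Since $B_{2r}\subseteq\Omega$ forces $2r\le\diam(\Omega)$, one has $1+\tfrac1r\le\big(1+\tfrac12\diam(\Omega)\big)\tfrac1r$, so $\big(\fint_{B_r}|\nabla u|^2\big)^{1/2}\le\tfrac{C}{r}\big(\fint_{B_{2r}}|u|^2\big)^{1/2}$ with $C$ now also depending on $\diam(\Omega)$. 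Substituting into the previous display and taking the supremum over $x\in B_r$ gives the asserted $L^\infty$ bound. Finally, since $\nabla u$ is continuous and $B_r$ is convex, for $x,y\in B_r$ the segment $[x,y]$ stays in $B_r$, so the fundamental theorem of calculus gives $|u(x)-u(y)|\le|x-y|\,\|\nabla u\|_{L^\infty(B_r)}$, and the $L^\infty$ bound just proved finishes the Lipschitz estimate. If $u$ is a priori only known to lie in $W^{1,p}_{\loc}(\Omega)$ for some $p\in(1,2)$, I would first invoke Proposition~\ref{LowRegularity} to place $u$ in $W^{1,2}_{\loc}(\Omega)$ so that Proposition~\ref{DerivativeRegularity} is applicable.

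I do not expect any real obstacle here: the only bookkeeping that requires attention is converting the Cacciopoli factor $1+r^{-2}$ into $C r^{-1}$, which uses $2r\le\diam(\Omega)$ and is precisely why the constant is permitted to depend on $\diam(\Omega)$; everything else is a direct application of the preceding results.
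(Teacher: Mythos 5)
Your proof is correct and follows the paper's argument exactly: fix $x\in B_r$, invoke Proposition~\ref{DerivativeRegularity} to compare $|\nabla u(x)|$ with $|\nabla u(y)|$, average in $y$, apply Cauchy--Schwarz, and close with Cacciopoli. The only additions beyond the paper's terser write-up — tracking the $\diam(\Omega)$ dependence when converting $1+r^{-2}$ into $Cr^{-2}$, and the FTC step for the Lipschitz bound — are sound but merely spell out what the paper leaves implicit.
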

\begin{proof}
Fix $x\in B_r$. Then, for any $y\in B_r$, proposition \ref{DerivativeRegularity} shows that
\[
|\nabla u(x)|\leq |\nabla u(y)|+|\nabla u(x)-\nabla u(y)|\leq|\nabla u(y)|+\frac{C}{r}\left(\fint_{B_{2r}}|u|^2\right)^{1/2}.
\]
We now integrate for $y\in B_r$ and we apply the Cauchy-Schwartz inequality, to obtain that
\[
|\nabla u(x)|\leq\fint_{B_r}|\nabla u|+\frac{C}{r}\left(\fint_{B_{2r}}|u|^2\right)^{1/2}\leq\left(\fint_{B_r}|\nabla u|^2\right)^{1/2}+\frac{C}{r}\left(\fint_{B_{2r}}|u|^2\right)^{1/2}.
\]
We now apply Cacciopoli's inequality, and this completes the proof.
\end{proof}

We also obtain the next qualitative corollary, by combining the last estimate with the low regularity estimates from the previous section.

\begin{cor}\label{GreenContinuity}
Let $\Omega$ be a bounded domain, $A\in M_{\lambda,\mu}(\Omega)$ and $b\in L^{\infty}(\Omega)$. Suppose that $U\subseteq\Omega$ is compactly supported, and $u\in W_{\loc}^{1,p}(\Omega)$ is a solution to $Lu=0$ in $\Omega$, for some $p>1$. Then $u$ is continuously differentiable in $U$.
\end{cor}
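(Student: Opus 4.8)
The plan is to first upgrade the integrability of $u$ so that the pointwise gradient estimates of the previous subsection apply, and then to patch together the resulting local H\"older bounds on $\nabla u$ over a finite cover of $\overline{U}$.

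First I would reduce to the case $u\in W^{1,2}_{\loc}(\Omega)$. If $p\geq 2$ this is automatic since $u\in W^{1,p}_{\loc}(\Omega)$. If $1<p<2$, then proposition \ref{LowRegularity} applies directly (the hypotheses $A\in M_{\lambda,\mu}(\Omega)$, $b\in L^\infty(\Omega)$ are exactly those of that proposition) and yields $u\in W^{1,2}_{\loc}(\Omega)$. In either case we may henceforth assume $u$ is a $W^{1,2}_{\loc}(\Omega)$ solution of $Lu=0$ in $\Omega$.

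Next, since $U$ is compactly contained in $\Omega$, we have $\delta:=\operatorname{dist}(\overline{U},\partial\Omega)>0$. For each $x\in\overline{U}$ pick $r_x\in(0,\delta/4)$, so that $B_{2r_x}(x)$ is compactly contained in $\Omega$. By compactness of $\overline{U}$ finitely many balls $B_{r_{x_1}}(x_1),\dots,B_{r_{x_m}}(x_m)$ cover $\overline{U}$. On each $B_{r_{x_i}}(x_i)$, proposition \ref{DerivativeRegularity} gives that $\nabla u$ agrees almost everywhere with a function that is H\"older continuous on $B_{r_{x_i}}(x_i)$, and corollary \ref{LocalBoundOnGradientOfSolutions} gives that $u$ agrees almost everywhere with a locally Lipschitz (in particular continuous) function there. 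On the overlaps $B_{r_{x_i}}(x_i)\cap B_{r_{x_j}}(x_j)$ these two continuous representatives coincide almost everywhere, hence everywhere by continuity; thus they glue to a single function $\tilde u\in C^1$ on $\bigcup_i B_{r_{x_i}}(x_i)\supseteq U$ with $\tilde u=u$ and $\nabla\tilde u=\nabla u$ almost everywhere. Therefore $u$ is continuously differentiable in $U$.

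There is essentially no serious obstacle here: this is a purely qualitative statement and the two quantitative inputs (proposition \ref{DerivativeRegularity} and corollary \ref{LocalBoundOnGradientOfSolutions}) do all the work. The only points requiring a little care are the preliminary bootstrap from $W^{1,p}_{\loc}$ to $W^{1,2}_{\loc}$ when $p<2$, which is handled by proposition \ref{LowRegularity}, and the observation that the locally defined continuous representatives of $u$ and $\nabla u$ are consistent on overlaps, which follows because two continuous functions agreeing almost everywhere on a connected open set agree everywhere.
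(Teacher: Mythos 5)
Your proof is correct and follows essentially the same route as the paper: bootstrap to $W^{1,2}_{\loc}$ via proposition \ref{LowRegularity} (trivially when $p\geq 2$), then cover $\overline{U}$ by balls whose doubles lie in $\Omega$ and invoke proposition \ref{DerivativeRegularity} on each. The only cosmetic difference is that you invoke corollary \ref{LocalBoundOnGradientOfSolutions} and spell out the gluing of local continuous representatives, steps the paper leaves implicit.
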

\begin{proof}
Let $U\subseteq U_0\subseteq \Omega$, where all inclusions are compact. From proposition \ref{LowRegularity}, we then obtain that $u\in W^{1,2}(U_0)$. We then cover $\overline{U_0}$ by balls such that their doubles are contained in $\Omega$ and we apply corollary proposition \ref{DerivativeRegularity} to each one of them to obtain that $u$ is continuously differentiable in $U$, which finishes the proof.
\end{proof}

\subsection{Local estimates on the gradient for $L^t$}
We now turn to showing that the gradient of a solution to the equation $L^tu=0$ is locally H{\"o}lder continuous, provided that $b$ is H{\"o}lder continuous. We begin with a lemma.

\begin{prop}\label{DerivativeRegularityForAdjoint}
Let $\Omega$ be a bounded domain, $A\in M_{\lambda,\mu}(\Omega)$ and $b\in C^{\alpha}(\Omega)$ for some $\alpha\in(0,1]$. Consider a ball $B$ with radius $r$, such that $16B\subseteq\Omega$, and let $g\in C^{\alpha}(16B)$. Let also $u$ be an $W^{1,2}(\Omega)$ solution of the equation $L^tu=\dive g$ in $\Omega$. Then,
\[
\|\nabla u\|_{C^{0,\alpha}(B)}\leq\frac{C}{r^{1+\alpha}}\left(\fint_{4B}|u|^2\right)^{1/2}+Cr^{-\alpha}\|g\|_{L^{\infty}(2B)}+C\|g\|_{C^{0,\alpha}(2B)},
\]
where $C$ depends on $d,\lambda,\mu,\|b\|_{C^{0,\alpha}}$ and $\diam(\Omega)$.
\end{prop}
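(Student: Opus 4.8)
The plan is to mimic the Schauder estimate for constant coefficient operators, exactly as in the proof of Lemma~\ref{LowRegularityEstimate}, but now freezing coefficients and absorbing the lower order terms into a modified right hand side $f$. First I would localize: let $v=u\phi$ where $\phi$ is a smooth cutoff, supported in $2B$, equal to $1$ on $B$, with $|\nabla\phi|\le C/r$ and $|\nabla^2\phi|\le C/r^2$. Computing $\dive(A\nabla v)$ as in Lemma~\ref{LowRegularityEstimateForAdjoint}, and using $L^tu=\dive g$, one finds
\[
\dive(A\nabla v)=-\dive b\cdot u\phi-b\nabla u\cdot\phi+(A+A^t)\nabla u\nabla\phi+\dive(A\nabla\phi)\cdot u+\dive(g\phi)-g\nabla\phi.
\]
The subtle point is that $\dive b$ need only be a distribution, so I would instead keep the term $\dive(bu\phi)$ in divergence form: write $\dive(A\nabla v)=\dive(g\phi-bu\phi)+h$, where $h=-b\nabla\phi\cdot u-g\nabla\phi+(A+A^t)\nabla u\nabla\phi+\dive(A\nabla\phi)\cdot u - $ (the $b\nabla u\cdot\phi$ term also should be grouped, using $b\nabla u\,\phi = \dive(b u\phi)-\dive b\, u\phi - bu\nabla\phi$; since $b\in C^\alpha$ here, $\dive b$ may not be controlled, so the cleanest route is to leave everything that touches $b$ inside a single divergence $\dive((g-bu)\phi)$, at the cost of needing $bu\in C^\alpha(2B)$, which holds because $b\in C^\alpha$ and $u$ is Lipschitz on $2B$ by Corollary~\ref{LocalBoundOnGradientOfSolutions} applied to... wait, $u$ solves $L^tu=\dive g$, not $Lu=0$, so instead I use that $u\in W^{1,2}$ and the already-proven interior $C^\alpha$ bound for $\nabla u$ bootstrapped from lower exponents). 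So: set $G=(g-bu)\phi\in C^\alpha(2B)$ and $h\in L^\infty$, so that $v$ solves $\dive(A\nabla v)=\dive G+h$, with
\[
\|G\|_{C^\alpha(2B)}\le C\|g\|_{C^\alpha(2B)}+C\|b\|_{C^\alpha}\big(\|u\|_{L^\infty(2B)}+r\|\nabla u\|_{L^\infty(2B)}\big),\qquad \|h\|_{L^\infty}\le \tfrac{C}{r}\|g\|_{L^\infty}+\tfrac{C}{r}\|\nabla u\|_{L^\infty(2B)}+\tfrac{C}{r^2}\|u\|_{L^\infty(2B)}.
\]

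Next I would freeze the leading coefficient at the center $x_0$ of $B$, writing $a_{ij}(x_0)\partial_{ij}v = \dive G + h + \dive((A(x_0)-A)\nabla v)$. The mollification device from Lemma~\ref{LowRegularityEstimate} (convolving with $\psi_n$, using estimate 9.37-type Schauder estimates in \cite{Gilbarg} for the constant coefficient operator $L_0$, then commuting $a_{ij}$ past the convolution) carries over verbatim, now in its $C^\alpha$ version: the interior Schauder estimate for $L_0$ gives $\|\nabla^2 v_n\|_{C^\alpha}\lesssim \|L_0 v_n\|_{C^\alpha}$ over slightly shrunk balls. The term $\dive((A(x_0)-A)\nabla v)$ contributes $\mu\,\diam(2B)\|\nabla^2 v\|_{C^\alpha}+\mu\|\nabla v\|_{C^\alpha}$ type quantities; choosing $r$ small (depending on $d,\lambda,\mu$) absorbs the top-order piece into the left side, and the lower-order $\|\nabla v\|_{C^\alpha}$ is handled by interpolation $\|\nabla v\|_{C^\alpha}\le \e\|\nabla^2 v\|_{C^\alpha}+C_\e\|\nabla v\|_{L^\infty}$ and again absorption. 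If $r$ is not small, one rescales / covers $2B$ by finitely many balls of the critical radius; the extra powers of $\diam(\Omega)$ then appear in $C$, consistent with the statement allowing dependence on $\diam(\Omega)$. Passing $n\to\infty$ (Fatou, plus $\nabla v_n\to\nabla u$ a.e.\ on $B$ along a subsequence) yields
\[
\|\nabla u\|_{C^\alpha(B)}\le C\Big(\|G\|_{C^\alpha(2B)}+\|h\|_{L^\infty(2B)}+\|\nabla u\|_{L^\infty(2B)}\Big).
\]

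Finally I would convert the right hand side into the stated form. The quantities $\|u\|_{L^\infty(2B)}$ and $\|\nabla u\|_{L^\infty(2B)}$ on $2B$ must be bounded by $(\fint_{4B}|u|^2)^{1/2}$ and the $g$-terms. For this I need the analogue of Corollary~\ref{LocalBoundOnGradientOfSolutions} for solutions of $L^tu=\dive g$: De Giorgi--Nash--Moser gives $\|u\|_{L^\infty(2B)}\le C(\fint_{4B}|u|^2)^{1/2}+Cr^{\gamma}\|g\|_{L^\infty}$ (an $L^\infty$ bound for the inhomogeneous adjoint equation, standard since $b\in L^\infty$), and then a Caccioppoli/reverse-Hölder plus the low-regularity estimates of Lemmas~\ref{LowRegularityEstimateForAdjoint} and \ref{LowRegularityForAdjoint} upgrade $\nabla u$ from $L^2$ to $L^\infty$ on an intermediate ball with $\|\nabla u\|_{L^\infty}\le \tfrac{C}{r}(\fint_{4B}|u|^2)^{1/2}+C\|g\|_{L^\infty}+Cr^{1-\alpha}\|g\|_{C^\alpha}$. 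Collecting the powers of $r$ (the $u$-terms carry $r^{-1-\alpha}$, the $g$-terms $r^{-\alpha}$ and $r^0$) gives exactly the claimed bound. The main obstacle is the bookkeeping around $b$: because $b$ is only $C^\alpha$ (not Lipschitz), one cannot afford to differentiate the $bu$ product, so it is essential to keep it inside the divergence-form right side $G=(g-bu)\phi$, which forces the preliminary interior Lipschitz bound on $u$ and a careful tracking that no derivative of $b$ ever appears — everything only sees $\|b\|_{C^\alpha}$. The rest is a routine, if lengthy, transcription of the mollification-and-freezing argument already executed in Lemma~\ref{LowRegularityEstimate}, upgraded from $L^p$ to $C^\alpha$ Schauder theory.
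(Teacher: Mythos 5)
Your plan is essentially sound and conceptually matches the paper's proof in its key insight: since $b$ is only $C^\alpha$ and $\dive b$ cannot be controlled, the term $bu$ must be kept inside the divergence, rewriting the equation as $-\dive(A\nabla u)=\dive(bu+g)$ (a no-drift equation with a $C^\alpha$ divergence-form right side), so that no derivative of $b$ ever appears. Both arguments then face the same circularity that you flag — to put $bu\in C^\alpha$ one first needs $u$ Lipschitz — and both resolve it by bootstrapping. The genuine difference is in how the Schauder machinery is invoked. The paper cites off-the-shelf estimates: it first applies De Giorgi--Nash (Gilbarg--Trudinger Thm.~8.22) to get $u\in C^\beta$, then uses the divergence-form Schauder estimate from Kenig--Shen (their estimate~2.2) with exponent $\gamma=\min\{\alpha,\beta\}$, gets $\nabla u\in C^\gamma$ hence $u$ Lipschitz hence $bu\in C^\alpha$, applies Kenig--Shen 2.2 again with exponent $\alpha$, and finally invokes Gilbarg--Trudinger Thm.~8.32 (a Schauder estimate for the full adjoint operator with Hölder lower-order coefficients) for the quantitative constant on the shrunk ball, together with Thm.~8.17 to replace $\|u\|_{L^\infty}$ by the $L^2$ average. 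You instead propose to re-derive the Schauder estimate by hand through the freezing-and-mollification scheme of Lemma~\ref{LowRegularityEstimate}, upgraded from $L^p$ to $C^\alpha$. That route is valid but substantially longer, and it replicates work that standard references already package; it also requires the interpolation-and-absorption device and a small-radius/covering argument you would need to spell out. One point you leave sketchier than it deserves: the preliminary Lipschitz bound on $u$. You acknowledge that Corollary~\ref{LocalBoundOnGradientOfSolutions} does not apply (wrong equation) and say "bootstrapped from lower exponents" — this is exactly the two-stage $\gamma=\min\{\alpha,\beta\}\rightsquigarrow\alpha$ bootstrap, and it should be stated explicitly rather than gestured at, since without it the definition of $G=(g-bu)\phi\in C^\alpha$ is circular. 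Your power-of-$r$ bookkeeping, however, is consistent with the stated estimate, since the paper simply normalizes $r=1$ and rescales.
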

\begin{proof}
We can assume that $r=1$; the general case can be then recovered after a dilation.

First, note that theorem 8.22 in \cite{Gilbarg} shows that $u\in C^{\beta}(8B)$ for some $\beta\in(0,1)$. Hence, if $\gamma=\min\{\alpha,\beta\}$, the function $f=bu$ belongs to $C^{\gamma}(8B)$. Note also that $u$ solves the equation
\[
-\dive(A\nabla u)=\dive(bu)+\dive g=\dive(f+g),
\]
and $f+g\in C^{\gamma}(8B)$. Hence we can apply estimate 2.2 in \cite{KenigShen} to obtain that
\[
\|\nabla u\|_{C^{0,\gamma}(4B)}\leq C\left(\fint_{8B}|u|^2\right)^{1/2}+C\|f+g\|_{C^{0,\gamma}(8B)},
\]
therefore $\nabla u\in C^{\gamma}(4B)$. In particular, $\nabla u$ is bounded in $4B$, therefore $u$ is Lipschitz in $4B$, hence $f=bu\in C^{\alpha}(4B)$. Then, again from estimate 2.2 in \cite{KenigShen},
\[
\|\nabla u\|_{C^{0,\alpha}(2B)}\leq C\left(\fint_{4B}|u|^2\right)^{1/2}+C\|f+g\|_{C^{0,\alpha}(4B)},
\]
therefore $u\in C^{\alpha}(2B)$.

We can now apply theorem $8.32$ in \cite{Gilbarg}, for the domains $B\subseteq 2B$, to obtain that
\begin{align*}
\|\nabla u\|_{C^{0,\alpha}(B)}&\leq C\|u\|_{L^{\infty}(2B)}+\|g\|_{L^{\infty}(2B)}+\|g\|_{C^{0,\alpha}(2B)}\\
&\leq C\left(\fint_{4B}|u|^2\right)^{1/2}+\|g\|_{L^{\infty}(2B)}+\|g\|_{C^{0,\alpha}(2B)},
\end{align*}
where we also used theorem 8.17 in \cite{Gilbarg}. This completes the proof.
\end{proof}

We also obtain local Lipschitz continuity of solutions to the adjoint equation, as the next corollary shows.

\begin{cor}\label{BoundedDerivativeForAdjoint}
Let $\Omega$ be a bounded domain, $A\in M_{\lambda,\mu}(\Omega)$ and $b\in C^{\alpha}(\Omega)$. Consider a ball $B$ with radius $r$, such that $16B\subseteq\Omega$, and let $g\in C^{\alpha}(16B)$. Let also $u$ be an $W^{1,2}(\Omega)$ solution of the equation $L^tu=\dive g$ in $\Omega$. Then,
\[
\|\nabla u\|_{L^{\infty}(B_r)}\leq\frac{C}{r}\left(\fint_{B_{4r}}|u|^2\right)^{1/2}+C\|g\|_{L^{\infty}(B_{2r})}+Cr^{\alpha}\|g\|_{C^{0,\alpha}(B_{2r})},
\]
where $C$ depends on $d,\lambda,\mu,\|b\|_{C^{0,\alpha}}$ and $\diam(\Omega)$.
\end{cor}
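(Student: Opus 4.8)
The plan is to deduce Corollary \ref{BoundedDerivativeForAdjoint} from Proposition \ref{DerivativeRegularityForAdjoint} in exactly the way that Corollary \ref{LocalBoundOnGradientOfSolutions} was deduced from Proposition \ref{DerivativeRegularity}. By a dilation we may assume $r=1$, so that $16B\subseteq\Omega$ with $B$ of radius $1$; all the $r$-powers in the claimed estimate then appear automatically after rescaling. The hypotheses of Proposition \ref{DerivativeRegularityForAdjoint} are met, so $\nabla u$ is $C^{0,\alpha}$ on $B$, hence in particular $\nabla u$ is a genuine continuous function there and the pointwise evaluations below make sense.

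First I would fix an arbitrary $x\in B$ and, for every $y\in B$, write $|\nabla u(x)|\leq|\nabla u(y)|+|\nabla u(x)-\nabla u(y)|$. Applying Proposition \ref{DerivativeRegularityForAdjoint} to bound the H{\"o}lder difference by its seminorm times $|x-y|^{\alpha}\leq(2)^{\alpha}$ (a bounded constant, since $B$ has radius $1$), we get
\[
|\nabla u(x)|\leq |\nabla u(y)|+C\left(\fint_{4B}|u|^2\right)^{1/2}+C\|g\|_{L^{\infty}(2B)}+C\|g\|_{C^{0,\alpha}(2B)}.
\]
Next I would average this inequality over $y\in B$ and estimate $\fint_B|\nabla u|\leq(\fint_B|\nabla u|^2)^{1/2}$ by Cauchy--Schwarz, obtaining
\[
|\nabla u(x)|\leq\left(\fint_{B}|\nabla u|^2\right)^{1/2}+C\left(\fint_{4B}|u|^2\right)^{1/2}+C\|g\|_{L^{\infty}(2B)}+C\|g\|_{C^{0,\alpha}(2B)}.
\]

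The remaining point is to control $\left(\fint_{B}|\nabla u|^2\right)^{1/2}$. Here I would invoke the Cacciopoli estimate for the adjoint equation, Lemma \ref{CacciopoliForAdjoint}, applied with the datum $f$ replaced by $g$ on a slightly larger ball (say between $B$ and $2B$), which yields $\fint_{B}|\nabla u|^2\leq C\fint_{2B}|u|^2+C\fint_{2B}|g|^2\leq C\fint_{4B}|u|^2+C\|g\|_{L^\infty(2B)}^2$. Substituting this in and taking the supremum over $x\in B$ gives the estimate with $r=1$; undoing the dilation restores the factors $r^{-1}$, $r^0$, $r^{\alpha}$ in front of the three terms, which is precisely the claimed inequality. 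I do not expect any real obstacle: the only thing to be slightly careful about is that the rescaled function solves $L^t u = \operatorname{div} g$ with a rescaled $b$ whose $C^{0,\alpha}$ norm is controlled (the $\operatorname{diam}(\Omega)$ dependence already absorbs this), and that the ball inclusions ($16B\subseteq\Omega$) survive the scaling so that Proposition \ref{DerivativeRegularityForAdjoint} and Lemma \ref{CacciopoliForAdjoint} both apply on the dilated configuration.
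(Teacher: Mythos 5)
Your proposal is correct and follows essentially the same argument as the paper: fix $x$, bound $|\nabla u(x)-\nabla u(y)|$ by the $C^{0,\alpha}$ seminorm from Proposition~\ref{DerivativeRegularityForAdjoint}, average over $y\in B$, apply Cauchy--Schwarz, and control $\fint_B|\nabla u|^2$ by Cacciopoli (Lemma~\ref{CacciopoliForAdjoint}). The only cosmetic deviation is that you normalize $r=1$ by dilation at the outset, whereas the paper works with general $r$ directly since Proposition~\ref{DerivativeRegularityForAdjoint} already carries the $r$-powers; and you are in fact slightly more careful than the paper in retaining the $\|g\|_{L^\infty(2B)}$ term that Cacciopoli produces, which the paper silently drops (harmlessly, since it is already present on the right-hand side).
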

\begin{proof}
Let $x\in B$. Then, for any $y\in B$,
\begin{align*}
|\nabla u(x)|&\leq|\nabla u(x)-\nabla u(y)|+|\nabla u(y)|\leq \|\nabla u\|_{C^{0,\alpha}(B_r)}|x-y|^{\alpha}+|\nabla u(y)|\\
&\leq \|\nabla u\|_{C^{0,\alpha}(B_r)}r^{\alpha}+|\nabla u(y)|,
\end{align*}
and after integrating for $y\in B$, we obtain that
\begin{align*}
|\nabla u(x)|&\leq\|\nabla u\|_{C^{0,\alpha}(B_r)}r^{\alpha}+\fint_{B_r}|\nabla u|\leq \|\nabla u\|_{C^{0,\alpha}(B_r)}r^{\alpha}+\left(\fint_{B_r}|\nabla u|^2\right)^{1/2}\\
&\leq \|\nabla u\|_{C^{0,\alpha}(B_r)}r^{\alpha}+\frac{C}{r}\left(\fint_{B_{2r}}|u|^2\right)^{1/2},
\end{align*}
where we also used Cacciopoli's inequality (lemma \ref{CacciopoliForAdjoint}). We then use proposition \ref{DerivativeRegularityForAdjoint} to bound $\|\nabla u\|_{C^{0,\alpha}(B_r)}$ and we consider the supremum for $x\in B_r$ to conclude the proof.
\end{proof}

\subsection{Global estimates}
In this section we will show two global results. The first will be an $L^2$ control of the gradient of a solution from the solution itself, and the second is the maximum principle.

\begin{lemma}\label{L2BoundOnGradient}
Let $\Omega$ be a bounded domain, and suppose that $A\in M_{\lambda}(\Omega)$, $b\in L^{\infty}(\Omega)$. Let also $u\in W_0^{1,2}(\Omega)$ be a solution to the equation $Lu=f$, for $f\in L^2(\Omega)$. Then
\[
\|\nabla u\|_2^2\leq C\|u\|_2^2+C\|f\|_2^2,
\]
where $C$ depends on $\lambda$ and $\|b\|_{\infty}$.
\end{lemma}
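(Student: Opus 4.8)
The plan is to test the equation $Lu = f$ against $u$ itself, which is legitimate since $u \in W_0^{1,2}(\Omega)$, so $\alpha(u,u) = \int_\Omega f u$. Writing this out gives
\[
\int_\Omega A\nabla u \cdot \nabla u + \int_\Omega b\nabla u \cdot u = \int_\Omega f u.
\]
First I would use uniform ellipticity on the left to bound $\lambda \|\nabla u\|_2^2 \le \int_\Omega A\nabla u\cdot\nabla u$. Then I would move the drift term and the right-hand side to the other side and estimate them by Cauchy--Schwarz: $\left|\int_\Omega b\nabla u\cdot u\right| \le \|b\|_\infty \|\nabla u\|_2 \|u\|_2$ and $\left|\int_\Omega fu\right| \le \|f\|_2\|u\|_2$.

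The key point is then to absorb the $\|\nabla u\|_2$ that appears linearly on the right into the quadratic term on the left. For this I would apply Cauchy's inequality with a parameter $\delta$: $\|b\|_\infty\|\nabla u\|_2\|u\|_2 \le \delta \|\nabla u\|_2^2 + \frac{\|b\|_\infty^2}{4\delta}\|u\|_2^2$, and choosing $\delta = \lambda/2$ lets the $\delta\|\nabla u\|_2^2$ term be absorbed into $\lambda\|\nabla u\|_2^2$. The term $\|f\|_2\|u\|_2$ is handled by a trivial $ab \le \frac12 a^2 + \frac12 b^2$ (or just $\|f\|_2\|u\|_2 \le \frac12\|f\|_2^2 + \frac12\|u\|_2^2$), contributing only to the $\|u\|_2^2$ and $\|f\|_2^2$ terms on the right. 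Rearranging yields $\frac{\lambda}{2}\|\nabla u\|_2^2 \le C(\lambda,\|b\|_\infty)\|u\|_2^2 + C\|f\|_2^2$, and dividing through by $\lambda/2$ gives the claim with $C$ depending only on $\lambda$ and $\|b\|_\infty$.

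There is essentially no serious obstacle here — this is a standard energy estimate. The only mild subtlety is the justification that $u$ itself is an admissible test function: the weak formulation is stated for $\phi \in C_c^\infty(\Omega)$, but since $u \in W_0^{1,2}(\Omega)$ and $f \in L^2(\Omega)$, and the coefficients $A, b$ are bounded, the bilinear form $\alpha(u,\cdot)$ extends continuously to $W_0^{1,2}(\Omega)$, so density of $C_c^\infty(\Omega)$ in $W_0^{1,2}(\Omega)$ gives $\alpha(u,u) = \int_\Omega fu$. I would state this briefly and then carry out the three-line computation above.
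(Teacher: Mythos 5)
Your proof is correct and follows essentially the same route as the paper: test against $u$, use ellipticity on the left, Cauchy--Schwarz on the drift and forcing terms, and Cauchy's inequality with a small parameter to absorb $\|\nabla u\|_2$ back into the left-hand side. The only cosmetic difference is the exact placement of $\|b\|_\infty$ in the absorption step (the paper takes $\delta = \lambda/(2\|b\|_\infty+1)$ where you take $\delta=\lambda/2$ with $\|b\|_\infty^2$ appearing in the remainder), which is immaterial, and your remark about why $u$ is an admissible test function is a sound point that the paper leaves implicit.
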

\begin{proof}
We use the definition of solution with $u$ as a test function, to obtain that
\[
\int_{\Omega}A\nabla u\nabla u+b\nabla u\cdot u=\int_{\Omega}fu,
\]
which shows that, for any $\delta>0$,
\begin{align*}
\lambda\|\nabla u\|^2&\leq\int_{\Omega}A\nabla u\nabla u=-\int_{\Omega}b\nabla u\cdot u+\int_{\Omega}fu\leq\|b\|_{\infty}\|\nabla u\|_2\|u\|_2+\|f\|_2\|u\|_2\\
&\leq\delta\|b\|_{\infty}\|\nabla u\|_2^2+\frac{1}{4\delta}\|b\|_{\infty}\|u\|_2^2+\|f\|_2\|u\|_2
\end{align*}
Choosing $\delta=\frac{\lambda}{2\|b\|_{\infty}+1}$, we then obtain the desired inequality.
\end{proof}

We now turn to the maximum principle for subsolutions of the equation $Lu=0$ in $\Omega$. We will need a notion of inequality on the boundary of $\Omega$ for Sobolev functions; for this purpose, we will use the supremum in the $W^{1,2}$ sense (definition in section $8.1$ in \cite{Gilbarg}).

To show the next proposition, we will follow the proof of theorem $8.1$ in \cite{Gilbarg}.

\begin{prop}\label{MaxForSubsolutions}
Let $A\in M_{\lambda}(\Omega)$, $b\in L^{\infty}(\Omega)$, and let $u\in W^{1,2}(\Omega)$ be a subsolution of $Lu=0$. Then,
\[
\sup_{\Omega}u\leq\sup_{\partial\Omega}u,
\]
and the $\sup_{\partial\Omega}$ is considered in the $W^{1,2}$-sense.
\end{prop}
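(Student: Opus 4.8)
The plan is to follow the proof of Theorem 8.1 in \cite{Gilbarg}, using that $L$ has no zeroth-order term, so that constants are solutions and the relevant sign condition there is satisfied trivially. Write $M=\sup_{\partial\Omega}u$ in the $W^{1,2}$-sense, i.e. $(u-M)^+\in W_0^{1,2}(\Omega)$; we may assume $M<\infty$, since otherwise there is nothing to prove, and the goal becomes $\operatorname{ess\,sup}_\Omega u\le M$. For $k\ge M$ set $v=(u-k)^+$, so that $v\ge 0$, $v\in W_0^{1,2}(\Omega)$ (this is where $k\ge M$ is used), and $\nabla v=\nabla u\,\mathbf 1_{\{u>k\}}$ a.e. First I would check that $v$ is an admissible test function: the inequality $\alpha(u,\phi)\le 0$ extends from $\phi\in C_c^\infty(\Omega)$, $\phi\ge 0$, to all nonnegative $\phi\in W_0^{1,2}(\Omega)$ by approximation. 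Inserting $v$ and using that $v$ and $\nabla v$ vanish outside $\{u>k\}$, where $\nabla v=\nabla u$, gives $\int_\Omega A\nabla v\cdot\nabla v\le-\int_\Omega b\nabla v\cdot v$; ellipticity and the Cauchy--Schwarz inequality then yield $\lambda\|\nabla v\|_2^2\le\|b\|_\infty\|\nabla v\|_2\|v\|_2$, hence $\|\nabla v\|_2\le\lambda^{-1}\|b\|_\infty\|v\|_2$.

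Next I would apply the Sobolev inequality $\|v\|_{L^{2^*}(\Omega)}\le C_d\|\nabla v\|_{L^2(\Omega)}$ for $v\in W_0^{1,2}(\Omega)$ (valid since $d\ge 3$, with $2^*=2d/(d-2)$), together with Hölder's inequality $\|v\|_{L^2(\Omega)}\le|\{u>k\}|^{1/d}\|v\|_{L^{2^*}(\Omega)}$ on the support of $v$. Combining these gives $\|v\|_{2^*}\le C_d\lambda^{-1}\|b\|_\infty\,|\{u>k\}|^{1/d}\,\|v\|_{2^*}$, so that for each $k\ge M$ either $v\equiv 0$, i.e. $u\le k$ a.e., or $|\{u>k\}|\ge\delta$, where $\delta=(C_d\lambda^{-1}\|b\|_\infty)^{-d}>0$ depends only on $d$, $\lambda$ and $\|b\|_\infty$. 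Since $u\in L^2(\Omega)$ and $\Omega$ is bounded, $|\{u>k\}|\to 0$ as $k\to\infty$, so the second alternative fails for $k$ large and therefore $L:=\operatorname{ess\,sup}_\Omega u<\infty$. Finally, $|\{u>k\}|\to 0$ as $k\uparrow L$, so if $L>M$ then choosing $k\in(M,L)$ close enough to $L$ makes $0<|\{u>k\}|<\delta$, contradicting the dichotomy; hence $L\le M$, which is the claim.

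The step I expect to be the most delicate is this last one: extracting a contradiction when $u$ is close to, or attains, its essential supremum. The energy--Sobolev estimate only produces the dichotomy ``$u\le k$ a.e. or $|\{u>k\}|\ge\delta$'', which rules out $L>M$ cleanly once one knows $|\{u>k\}|\downarrow 0$ as $k\uparrow L$; should $u$ attain the value $L$ on a set of positive measure this requires an extra argument, e.g. the strong maximum principle for supersolutions (via the weak Harnack inequality) applied to $L-\max(u,M)$, or the continuity of subsolutions coming from the De Giorgi--Nash--Moser estimates of the earlier sections. The only other point requiring (routine) care is the density argument legitimising $(u-k)^+$ as a test function in the subsolution inequality.
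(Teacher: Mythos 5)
Your dichotomy argument is correct and matches the first half of the paper's proof, but the concluding step contains a genuine gap, which you yourself flag. The assertion that $|\{u>k\}|\to 0$ as $k\uparrow L$ with $L=\operatorname{ess\,sup}_\Omega u$ is false here: by downward continuity of measure, $|\{u>k\}|\to|\{u\geq L\}|=|\{u=L\}|$, and the dichotomy you just established \emph{forces} $|\{u=L\}|\geq\delta>0$ (take $k\uparrow L$ in the alternative $|\{u>k\}|\geq\delta$, using that $v_k\not\equiv 0$ for $k<L$). So there is no $k$ close to $L$ with $0<|\{u>k\}|<\delta$, and the intended contradiction evaporates. In other words, the dichotomy alone cannot finish the proof; it tells you that if $L>M$ then $u$ attains its essential supremum on a set of measure at least $\delta$, and you need a separate mechanism to rule that out.

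The paper supplies exactly that mechanism, following Theorem 8.1 of Gilbarg--Trudinger: set $v=(u-k_0)^+$ with $k_0=\sup_{\partial\Omega}u$ and $l=\sup_\Omega v=k_1-k_0>0$, and test the subsolution inequality against $v/(l-v+\varepsilon)$. This yields a uniform-in-$\varepsilon$ bound on $\|\nabla w_\varepsilon\|_2$ where $w_\varepsilon=\log\frac{l+\varepsilon}{l-v+\varepsilon}$, hence via Sobolev a uniform bound on $\|w_\varepsilon\|_{2^*}$. Letting $\varepsilon\to 0$ shows $\log l-\log(l-v)$ is integrable, so $\{v=l\}=\{u=k_1\}$ has measure zero --- contradicting the positive-measure conclusion from the dichotomy. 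You should carry out this logarithmic test-function step (or an equivalent one); the strong maximum principle route you gesture at would also work but is heavier machinery than needed, and you would still have to verify it applies in the present low-regularity setting. The point requiring ``routine care'' you mention --- extending $\alpha(u,\phi)\le 0$ to nonnegative $\phi\in W_0^{1,2}(\Omega)$ --- is indeed routine and the paper uses it without comment, so that part is fine.
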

\begin{proof}
Note first that, since $u$ is a subsolution, for any $v\in W_0^{1,2}(\Omega)$ with $v\geq 0$,
\[
\int_{\Omega}A\nabla u\nabla v+b\nabla u\cdot v\leq 0\Rightarrow\int_{\Omega}A\nabla u\nabla v\leq-\int_{\Omega}b\nabla u\cdot v.
\]
If $\sup_{\partial\Omega}u=\infty$, then the inequality is valid. Suppose now that $k_0=\sup_{\partial\Omega}u<\infty$, and suppose that $k_0<\sup_{\Omega}u=k_1$. Let $k\in[k_0,k_1)$, and define $v_k=(u-k)^+\in W_0^{1,2}(\Omega)$. We then have that $\partial_iv_k=\partial_iu\cdot\chi_{[u>k]}$, therefore 
\[
\int_{\Omega}A\nabla v_k\cdot\nabla v_k\leq-\int_{\Omega}b\nabla v_k\cdot v_k,
\]
and the ellipticity of $A$ and the Sobolev inequality show that
\begin{equation}\label{eq:k_0}
\lambda\int_{\Omega}|\nabla v_k|^2\leq \|b\|_{\infty}\|v_k\|_{L^{2^*}}\|\nabla v_k\|_{L^2}|{\rm supp}(v_k)|^{1/d}\leq C_d\|b\|_{\infty}\|\nabla v_k\|_{L^2}^2|{\rm supp}(v_k)|^{1/d},
\end{equation}
where ${\rm supp}(v_k)$ denotes the support of $v_k$.

If $\|\nabla v_k\|_{L^2(\Omega)}=0$, then $v_k$ is a constant. Since $v_k\in W_0^{1,2}(\Omega)$, this constant has to be zero, therefore $(u-k)^+=0$ in $\Omega$. Therefore, $u(x)\leq k$ for all $x\in \Omega$, so
\[
k_1=\sup_{\Omega}u\leq k,
\]
which is a contradiction. Therefore $\|\nabla v_k\|_{L^2(\Omega)}>0$, which shows that $\|b\|_{\infty}\neq 0$. Hence, \eqref{eq:k_0} shows that
\[
|{\rm supp}(v_k)|\geq\lambda^dC_d^{-d}\|b\|_{\infty}^{-d}.
\]
Since this inequality does not depend on $k$, it should be true as $k\to\sup_{\Omega}u$; therefore
\[
|{\rm supp}(v_{k_1})|\geq C_d^{-1/n},
\]
which shows that $u$ attains its supremum at a set of positive measure. Since also $v$ is integrable, this supremum should be finite, therefore $k_1<\infty$.

Let now $v=(u-k_0)^+$. Let also
\[
l=\sup_{\Omega}v=\sup_{\Omega}(u-k_0)^+=k_1-k_0>0,
\]
which is also finite, since $k_1<\infty$. Then, for any $\e>0$, if we use $\frac{v}{l-v+\e}$ as a test function, we obtain that
\[
\int_{\Omega}\frac{A\nabla v\cdot\nabla v}{(l-v+\e)^2}\leq-\int_{\Omega}\frac{b\nabla u\cdot v}{l-v+\e}=-\int_{\Omega}\frac{b\nabla v\cdot v}{l-v+\e}.
\]
Set now
\[
w_{\e}=\log(l+\e)-\log(l-v+\e)=\log\frac{l+\e}{l-v+\e}.
\]
Since $l+\e>l-v+\e$ and $l-v+\e>0$, we obtain that $w_{\e}\in W_0^{1,2}(\Omega)$, therefore
\begin{align*}
\lambda\int_{\Omega}|\nabla w_{\e}|^2&\leq\int_{\Omega}A\nabla w_{\e}\nabla w_{\e}=\int_{\Omega}\frac{A\nabla v\cdot\nabla v}{(l-v+\e)^2}=-\int_{\Omega}\frac{b\nabla v\cdot v}{l-v+\e}\\
&=-\int_{\Omega}b\nabla w_{\e}\cdot v\leq\|b\|_{\infty}\|\nabla w_{\e}\|_2\|v\|_2\leq l\|b\|_{\infty}\|\nabla w_{\e}\|_2|\Omega|^{1/2},
\end{align*}
since $0\leq v\leq l$. This shows that, for any $\e>0$,
\[
\|\nabla w_{\e}\|_2\leq l\lambda^{-1}\|b\|_{\infty}|\Omega|^{1/2},
\]
and Sobolev's inequality shows that
\[
\|w_{\e}\|_{2^*}\leq C_d\|\nabla w_{\e}\|_2\leq l\lambda^{-1}\|b\|_{\infty}|\Omega|^{1/2}.
\]
Letting $\e\to 0$, we obtain that $w_0=\log l-\log(l-v)$ is integrable, therefore $v=l$ only on a set of measure zero. But, if $x\in\Omega$ with $u(x)=k_1$,
\[
u(x)-k_0=k_1-k_0=l>0\Rightarrow v(x)=(u-k_0)^+(x)=l,
\]
so $u$ achieves its supremum only on a set of measure $0$. But this is a contradiction, which completes the proof.
\end{proof}

We also obtain the next analog for supersolutions.

\begin{prop}\label{MinForSupersolutions}
Let $A\in M_{\lambda}(\Omega)$, $b\in L^{\infty}(\Omega)$, and let $u\in W^{1,2}(\Omega)$ be a supersolution of $Lu=0$. Then,
\[
\inf_{\Omega}u\geq\inf_{\partial\Omega}u.
\]
\end{prop}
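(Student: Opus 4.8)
The plan is to reduce immediately to Proposition \ref{MaxForSubsolutions} by passing from $u$ to $-u$. The key observation is that if $u$ is a supersolution of $Lu=0$, then $v:=-u$ is a \emph{subsolution} of the \emph{same} equation $Lu=0$. Indeed, $v\in W^{1,2}(\Omega)$, and since the bilinear form $\alpha$ is linear in its first argument, for every $\phi\in C_c^\infty(\Omega)$ with $\phi\geq 0$ we have
\[
\alpha(v,\phi)=\alpha(-u,\phi)=-\alpha(u,\phi)\leq 0,
\]
the last inequality being exactly the supersolution property of $u$. So the first (and essentially only) step is to record this sign flip.

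Next, I would apply Proposition \ref{MaxForSubsolutions} to $v=-u$, which gives
\[
\sup_{\Omega}(-u)\leq\sup_{\partial\Omega}(-u),
\]
where the boundary supremum is interpreted in the $W^{1,2}$ sense. To conclude, I would use that taking suprema/infima over $\Omega$ commutes with negation, i.e. $\sup_{\Omega}(-u)=-\inf_{\Omega}u$, and that the $W^{1,2}$-sense boundary infimum is by definition $\inf_{\partial\Omega}u=-\sup_{\partial\Omega}(-u)$ (this is how $\inf$ on $\partial\Omega$ for Sobolev functions is set up in section 8.1 of \cite{Gilbarg}). Combining these identities with the displayed inequality yields $-\inf_{\Omega}u\leq-\inf_{\partial\Omega}u$, that is, $\inf_{\Omega}u\geq\inf_{\partial\Omega}u$, as desired.

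There is no real obstacle here: the content is entirely in Proposition \ref{MaxForSubsolutions}, and the argument above is just a duality/sign-change bookkeeping. The only point that warrants a line of care is checking that the definition of $\inf_{\partial\Omega}$ in the $W^{1,2}$ sense is compatible with negation, so that the reduction is legitimate; this is immediate from the definition via $(\cdot)^{\pm}$ used in the proof of Proposition \ref{MaxForSubsolutions}.
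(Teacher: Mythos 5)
Your argument is exactly the paper's proof: apply Proposition \ref{MaxForSubsolutions} to $-u$, which is a $W^{1,2}(\Omega)$ subsolution of $Lu=0$, and unwind the sign. The extra bookkeeping about $\sup/\inf$ commuting with negation in the $W^{1,2}$ sense is correct and harmless, just more explicit than the paper's one-line proof.
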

\begin{proof}
We apply proposition \ref{MaxForSubsolutions} to $-u$, which is an $W^{1,2}(\Omega)$ subsolution of $Lu=0$.
\end{proof}

We are thus led to the maximum principle for solutions.

\begin{thm}\label{MaximumPrinciple}
Let $A\in M_{\lambda}(\Omega)$, $b\in L^{\infty}(\Omega)$, and let $u\in W^{1,2}(\Omega)$ be a solution of $Lu=0$. Then, for almost all $x\in\Omega$,
\[
\inf_{\partial\Omega}u\leq u(x)\leq\sup_{\partial\Omega}u.
\]
\end{thm}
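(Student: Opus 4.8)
The plan is to observe that the maximum principle for solutions is an immediate consequence of the one-sided estimates already established for subsolutions and supersolutions, so the entire argument reduces to a short reduction step together with a sign bookkeeping.

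First I would note that if $u\in W^{1,2}(\Omega)$ solves $Lu=0$, then by definition $\alpha(u,\phi)=0$ for all $\phi\in C_c^\infty(\Omega)$, and in particular $\alpha(u,\phi)\le 0$ for all such $\phi\ge 0$; thus $u$ is a subsolution in the sense of the definitions in Chapter~2. By the same token $\alpha(u,\phi)\ge 0$ for all $\phi\ge 0$, so $u$ is also a supersolution. (Strictly, to say ``subsolution'' one needs the test inequality for all nonnegative $\phi\in C_c^\infty(\Omega)$, and this is exactly what the equality $\alpha(u,\phi)=0$ gives.)

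Next I would apply Proposition~\ref{MaxForSubsolutions} to $u$, which yields
\[
\sup_\Omega u\le\sup_{\partial\Omega}u,
\]
with the boundary supremum understood in the $W^{1,2}$ sense. Then I would apply Proposition~\ref{MinForSupersolutions} to $u$ (which in turn reduces, via that proposition's proof, to applying Proposition~\ref{MaxForSubsolutions} to $-u$), obtaining
\[
\inf_\Omega u\ge\inf_{\partial\Omega}u.
\]
Combining the two displayed inequalities and recalling that $\sup_\Omega u$ and $\inf_\Omega u$ are the essential supremum and infimum gives, for almost every $x\in\Omega$,
\[
\inf_{\partial\Omega}u\le u(x)\le\sup_{\partial\Omega}u,
\]
which is the claim.

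There is no genuine obstacle here: the only thing to be slightly careful about is that $b\in L^\infty(\Omega)$ and $A\in M_\lambda(\Omega)$ are precisely the hypotheses under which Propositions~\ref{MaxForSubsolutions} and \ref{MinForSupersolutions} were proved, so no new estimate is needed; and that all the boundary quantities are interpreted in the $W^{1,2}$ sense, consistently with those propositions. Hence the proof is essentially a one-line citation of the two preceding results applied to $u$ and to $-u$.
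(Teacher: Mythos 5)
Your proof is correct and is precisely the argument the paper intends: the theorem is stated without proof immediately after Propositions~\ref{MaxForSubsolutions} and \ref{MinForSupersolutions}, and combining them by viewing the solution as simultaneously a subsolution and a supersolution (equivalently, applying the subsolution bound to $u$ and to $-u$) is exactly what is expected.
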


We will also need a version of the maximum principle that will hold for the inhomogeneous equation $Lu=f$.

\begin{thm}\label{SupremumBound}
Let $\Omega$ be a bounded domain, and let $A\in M_{\lambda}(\Omega)$, $b\in L^{\infty}(\Omega)$. Let $p\in\left[1,\frac{d}{d-1}\right)$, and $F\in W^{-1,p}(\Omega)$. If $u\in W_0^{1,2}(\Omega)$ is a solution to the equation $Lu=F$ in $\Omega$, then
\[
\|u\|_{L^{\infty}(\Omega)}\leq C\|F\|_{W^{-1,p}(\Omega)},
\]
where $C$ depends on $d,p,\lambda,\|b\|_{\infty}$ and ${\rm diam}(\Omega)$.
\end{thm}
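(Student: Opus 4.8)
The plan is to establish the bound $\|u\|_{L^\infty(\Omega)}\le C\|F\|_{W^{-1,p}(\Omega)}$ by an $L^\infty$ estimate for the inhomogeneous equation that treats the drift term as a lower-order perturbation. The strategy I would follow is to reduce to the case $b=0$, where the estimate $\|u\|_{L^\infty(\Omega)}\le C\|F\|_{W^{-1,p}(\Omega)}$ for solutions to $-\dive(A\nabla u)=F$ with $F\in W^{-1,p}(\Omega)$, $p<d/(d-1)$, is classical: it follows from De Giorgi--Nash--Moser-type iteration, or equivalently from estimates like Theorem 8.16 in \cite{Gilbarg} combined with the embedding $W^{-1,p}(\Omega)\hookrightarrow$ the relevant negative Sobolev/Morrey-type space (note $p<d/(d-1)$ is exactly the condition that makes $W^{-1,p}$ control $L^\infty$ via the Green operator; for $b=0$ one may also write $F=f_0+\dive\mathbf f$ with suitable integrability and invoke the standard global boundedness result). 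To handle the drift, rewrite the equation $-\dive(A\nabla u)+b\nabla u=F$ as $-\dive(A\nabla u)=F-b\nabla u=:\tilde F$, and try to absorb $b\nabla u$ into the right-hand side.

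The key steps, in order, would be: (1) First apply Lemma \ref{L2BoundOnGradient} to get $\|\nabla u\|_{L^2(\Omega)}^2\le C\|u\|_{L^2(\Omega)}^2+C\|F\|_{L^2}^2$—but since $F$ is only in $W^{-1,p}$, not $L^2$, instead I would use the definition of weak solution with test function $u\in W_0^{1,2}(\Omega)$ to get $\lambda\|\nabla u\|_2^2\le \|b\|_\infty\|\nabla u\|_2\|u\|_2+\|F\|_{W^{-1,2}}\|u\|_{W^{1,2}}$; here $\|F\|_{W^{-1,2}(\Omega)}\le C\|F\|_{W^{-1,p}(\Omega)}$ since $p<2$ and $\Omega$ is bounded (the embedding goes the right way). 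Combined with the Poincaré inequality on $W_0^{1,2}(\Omega)$ this yields $\|u\|_{W^{1,2}(\Omega)}\le C\|F\|_{W^{-1,p}(\Omega)}$. (2) Then $b\nabla u\in L^2(\Omega)$ with $\|b\nabla u\|_{L^2}\le \|b\|_\infty\|\nabla u\|_{L^2}\le C\|F\|_{W^{-1,p}}$, and since $\Omega$ is bounded, $L^2(\Omega)\hookrightarrow W^{-1,p}(\Omega)$ for $p<d/(d-1)$ (indeed $L^2\hookrightarrow L^{(p^*)'}\hookrightarrow W^{-1,p}$ when $p^*\le 2$, which holds in the stated range at least for $d\ge 3$; if not, one more bootstrap iteration using the $W^{1,2}$ bound and Sobolev embedding closes the gap). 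Hence $\tilde F=F-b\nabla u\in W^{-1,p}(\Omega)$ with $\|\tilde F\|_{W^{-1,p}}\le C\|F\|_{W^{-1,p}}$. (3) Apply the drift-free $L^\infty$ estimate to $u$ as a solution of $-\dive(A\nabla u)=\tilde F$ to conclude $\|u\|_{L^\infty(\Omega)}\le C\|\tilde F\|_{W^{-1,p}(\Omega)}\le C\|F\|_{W^{-1,p}(\Omega)}$, with $C$ depending only on $d,p,\lambda,\|b\|_\infty$ and $\diam(\Omega)$.

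The main obstacle I anticipate is keeping the dependence of constants correct and making sure the embedding $L^2(\Omega)\hookrightarrow W^{-1,p}(\Omega)$ (equivalently, controlling $\int b\nabla u\,\phi$ by $\|F\|_{W^{-1,p}}\|\phi\|_{W^{1,p'}}$) is valid throughout the stated range $p\in[1,d/(d-1))$ with $d\ge 3$; a possible subtlety is that the exponent arithmetic may force a short intermediate bootstrap (first improving $\nabla u\in L^2$ to $\nabla u\in L^q$ for some $q>2$ via the drift-free Meyers-type / reverse Hölder estimate, or via Proposition \ref{LowRegularity}) before the absorption step applies cleanly. A secondary, more bookkeeping-type issue is citing the correct drift-free global boundedness result; if a directly quotable statement is not available one falls back on the De Giorgi iteration carried out directly for $L$, which is essentially the computation in the proof of Proposition \ref{MaxForSubsolutions} but with an inhomogeneous term, truncating at levels $k$ and estimating $|\{u>k\}|$.
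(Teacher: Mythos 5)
Your main proposed route has a genuine gap, located precisely in step (1). The estimate obtained by testing the equation against $u$ reads
\[
\lambda\|\nabla u\|_2^2\le\|b\|_\infty\|\nabla u\|_2\|u\|_2+\|F\|_{W^{-1,2}}\|\nabla u\|_2,
\]
and after Young's inequality and Poincar\'e ($\|u\|_2\le C_P\|\nabla u\|_2$) the drift term contributes $\tfrac{\|b\|_\infty^2 C_P^2}{2\lambda}\|\nabla u\|_2^2$ on the right-hand side. Absorbing this into the left-hand side requires $\lambda>\|b\|_\infty C_P$, a smallness condition on $\|b\|_\infty$ relative to the Poincar\'e constant (i.e.\ the domain size) that the theorem does not assume. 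This is exactly the non-coercivity issue the paper is built to circumvent: compare Lemma~\ref{L2BoundOnGradient}, which deliberately only bounds $\|\nabla u\|_2$ by $\|u\|_2$ (plus data), not by the data alone, and note that the bound $\|u\|_{W_0^{1,2}}\le C\|F\|_{W^{-1,2}}$ you want is Proposition~\ref{GoodBoundOnSolutions}, whose proof relies on Green's function estimates, which in turn rely on Theorem~\ref{SupremumBound} — so invoking it here would be circular. Consequently the absorption $F-b\nabla u\in W^{-1,p}$ with norm controlled by $\|F\|_{W^{-1,p}}$ cannot be justified at this stage, and the reduction to the drift-free case breaks down.

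Your very last sentence contains the correct fix: carry out the De Giorgi truncation/iteration \emph{directly} for the operator $L$ including the drift term, rather than reducing to $b=0$. That is what the paper does, by citing \cite{Gilbarg}, Theorem 8.16, which proves the global $L^\infty$ bound for operators of the form $-\dive(A\nabla u)+b\nabla u=F$ with $F\in W^{-1,p}$, $p<\tfrac{d}{d-1}$, under no smallness condition on $b$; the only modification needed is to observe that $\sup_{\partial\Omega}u=0$ for $u\in W_0^{1,2}(\Omega)$. I recommend promoting that fallback to the main argument and dropping the reduction to the drift-free case, since the latter cannot be made to work without a smallness hypothesis.
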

\begin{proof}
The proof can be found in \cite{Gilbarg}, theorem 8.16. Note that the suprema on the boundary in this proof are equal to $0$, since we are assuming that $u\in W_0^{1,2}(\Omega)$.
\end{proof}

\subsection{Local estimates}
For matrices $A$ that are just uniformly elliptic (and not Lipschitz continuous), we will need the local regularity estimates which appear in \cite{Gilbarg}. We begin with theorem $8.20$, which is Harnack's inequality.

\begin{prop}\label{Harnack}
Let $\Omega$ be a bounded domain, $A\in M_{\lambda}(\Omega)$ and $b\in L^{\infty}(\Omega)$. Suppose that $u\in W^{1,2}(\Omega)$ is a nonnegative solution to the equation $Lu=0$, or $L^tu=0$ in $\Omega$. Let also $B_r$ be a ball, such that $B_{4r}\subseteq\Omega$. Then,
\[
\sup_{B_r}u\leq C\inf_{B_r}u,
\]
where $C$ depends on $d,\lambda,\mu,\|b\|_{\infty}$ and $\diam(\Omega)$.
\end{prop}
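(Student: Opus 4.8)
The plan is to reduce to the standard Harnack inequality for divergence-form operators with lower-order terms, as in \cite[Theorem 8.20]{Gilbarg}. The key point is that the drift term $b\nabla u$ (respectively the term $\dive(bu)$ in $L^t u = 0$) fits into the general framework of \cite{Gilbarg}, since $b \in L^\infty(\Omega)$ is certainly in the requisite $L^q$ space with $q > d$; the only subtlety is tracking that the resulting constant depends only on $d,\lambda,\mu,\|b\|_\infty$ and $\diam(\Omega)$, and not on $r$ or the location of the ball, which is where we must be slightly careful because $\|b\|_\infty$ is not scale-invariant.

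First I would treat the case $Lu = 0$. Here $u \in W^{1,2}(\Omega)$ is a nonnegative weak solution of $-\dive(A\nabla u) + b\nabla u = 0$, which is precisely an equation of the form considered in \cite[Chapter 8]{Gilbarg} with principal coefficients $A \in M_\lambda(\Omega)$ (uniformly elliptic, measurable) and first-order coefficient $b \in L^\infty(\Omega)$. Theorem 8.20 in \cite{Gilbarg} then gives, for any ball $B_{4r} \subseteq \Omega$,
\[
\sup_{B_r} u \leq C \inf_{B_r} u,
\]
with $C$ depending on $d$, $\lambda$, and the quantity $\nu = \lambda^{-1}\big(\|b\|_\infty^2 \cdot r^2 \big)^{1/2}$-type scale-invariant combination of the coefficient norms and the radius. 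Since $B_{4r} \subseteq \Omega$ forces $r \leq \diam(\Omega)/4$, this scale-invariant combination is bounded in terms of $\|b\|_\infty$ and $\diam(\Omega)$ alone, so $C$ can be taken to depend only on $d,\lambda,\|b\|_\infty$ and $\diam(\Omega)$ (the dependence on $\mu$ is harmless and simply not needed in this case).

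For the adjoint equation $L^t u = -\dive(A^t\nabla u) - \dive(bu) = 0$, the term $\dive(bu)$ is a zeroth-order coefficient in divergence form, which again falls under the scope of \cite[Theorem 8.20]{Gilbarg} (in the notation there, this corresponds to a nonzero coefficient $d^i = -b^i \in L^\infty$). The uniform ellipticity of $A^t$ is the same as that of $A$, so the same theorem applies and yields the same conclusion with a constant of the same dependence, after again using $B_{4r} \subseteq \Omega$ to control the scale-invariant norm of $b$ by $\|b\|_\infty$ and $\diam(\Omega)$. The main (minor) obstacle is bookkeeping: one must verify that the structural constants entering the De Giorgi--Nash--Moser iteration in \cite{Gilbarg} depend on $b$ only through the combination $\|b\|_\infty \cdot \diam(\Omega)$ (and $d,\lambda$), so that the final constant is genuinely a ``good constant'' in the sense of Definition \ref{GoodConstant}; this is a direct inspection of the proof in \cite{Gilbarg} and requires no new ideas.
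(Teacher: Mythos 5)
Your proposal is correct and takes essentially the same approach as the paper, which states this proposition by directly citing Theorem~8.20 in \cite{Gilbarg} and leaves the routine verification implicit. Your additional observations are accurate: the constant $C(d,\Lambda/\lambda,\nu R)$ in Gilbarg–Trudinger depends on $b$ only through the scale-invariant quantity $\nu r$, which is controlled once $B_{4r}\subseteq\Omega$ forces $r\lesssim\diam(\Omega)$, and the listed dependence on $\mu$ is indeed vacuous since the hypothesis is only $A\in M_\lambda(\Omega)$; the one tiny slip is that the coefficient absorbing $\dive(bu)$ in the notation of \cite{Gilbarg} is $b^i$, not $d^i$.
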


We also refer to the next continuity results, which are theorems 8.22 and 8.27 in \cite{Gilbarg}, respectively.

\begin{prop}\label{HolderInside}
Let $\Omega$ be a bounded domain, $A\in M_{\lambda}(\Omega)$ and $b\in L^{\infty}(\Omega)$. Suppose that $u\in W^{1,2}(\Omega)$ is a solution to the equation $Lu=F$, or $L^tu=F$ in $\Omega$, where $F\in W^{-1,p}(\Omega)$, $p\in\left[1,\frac{d}{d-1}\right)$. Let also $B_R(x)\subseteq\Omega$ be a ball. Then, for all $r\in(0,R)$,
\[
\osc_{B_r(x)}u\leq Cr^{\alpha}\left(R^{-\alpha}\sup_{B_R(x)}|u|+\|F\|_{W^{-1,p}(\Omega)}\right),
\]
where $C$ depends on $d,\lambda,\mu,\|b\|_{\infty},\diam(\Omega)$ and $p$.
\end{prop}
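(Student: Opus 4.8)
The plan is to reduce both statements to the classical interior H\"older estimate for divergence-form operators with bounded lower-order coefficients (the De Giorgi--Nash--Moser theory, Chapter~8 of \cite{Gilbarg}), once the right-hand side $F$ is rewritten in the form of data used there. First I would record that $L$ and $L^t$ have the required structure: writing $Lu=-\dive(A\nabla u)+b\cdot\nabla u$ and $L^tu=-\dive(A^t\nabla u+bu)$, these are operators of the form $-\dive(a^{ij}D_ju+b^iu)+c^iD_iu$ with $a=A$ (resp.\ $A^t$), the drift carried by $b$ (as $c^i$ for $L$, as $b^i$ for $L^t$), and no zeroth-order term; since $b\in L^\infty(\Omega)$ and $\Omega$ is bounded, the structural hypotheses of \cite{Gilbarg} hold with constants controlled by $\lambda$, $\|b\|_\infty$ and $\diam(\Omega)$.

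The key identification concerns $F$. In the convention of this paper $W^{-1,p}(\Omega)=(W^{1,p}(\Omega))^*$, and the map $u\mapsto(u,\nabla u)$ embeds $W^{1,p}(\Omega)$ isometrically onto a closed subspace of $L^p(\Omega)^{d+1}$; since $p\in[1,d/(d-1))$ gives $p'>d$, Hahn--Banach together with the Riesz description of $(L^p)^*=L^{p'}$ produces $h_0\in L^{p'}(\Omega)$ and $\mathbf h\in L^{p'}(\Omega;\mathbb R^d)$ with $F=h_0-\dive\mathbf h$ and $\|h_0\|_{L^{p'}}+\|\mathbf h\|_{L^{p'}}\le\|F\|_{W^{-1,p}(\Omega)}$; boundedness of $\Omega$ also gives $\|h_0\|_{L^{p'/2}}\le C(d,\diam\Omega)\|F\|_{W^{-1,p}(\Omega)}$. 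Hence $Lu=F$ takes the form $Lu=g+D_if^i$ with $g=h_0\in L^{q/2}(\Omega)$ and $f^i=-\mathbf h^i\in L^q(\Omega)$, $q=p'>d$, which is exactly the data class of Theorem~8.22 in \cite{Gilbarg}. For $L^t$ one first moves the lower-order term to the right, $-\dive(A^t\nabla u)=h_0+\dive(bu-\mathbf h)$, and since $u$ is bounded on the ball under consideration and $b\in L^\infty$ one has $bu-\mathbf h\in L^{p'}(\Omega)$; this reduces to a pure second-order divergence operator with $L^q$, $q>d$, data, handled by Theorem~8.27 in \cite{Gilbarg}.

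Finally I would invoke the interior H\"older estimate from \cite{Gilbarg}: for $B_R(x)\subseteq\Omega$ and $0<r<R$ it gives $\osc_{B_r(x)}u\le C(r/R)^\alpha\sup_{B_R(x)}|u|+Cr^\alpha\big(\|\mathbf h\|_{L^{p'}(\Omega)}+\|h_0\|_{L^{p'/2}(\Omega)}\big)$, with $\alpha\in(0,1)$ and $C$ depending on $d,\lambda$, the structural constant and $q=p'$, the positive powers of $R$ attached to the data term being bounded by powers of $\diam(\Omega)$ and absorbed into $C$; the range $R/2\le r<R$ is trivial since then $\osc_{B_r(x)}u\le 2\sup_{B_R(x)}|u|\le 2^{1+\alpha}(r/R)^\alpha\sup_{B_R(x)}|u|$, so only $r\le R/2$ needs the Moser argument. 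Combining this with the bound on the decomposition of $F$ yields the claimed inequality, with $\alpha$ and $C$ depending on $d,\lambda,\|b\|_\infty,\diam(\Omega)$ and $p$.

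The Moser iteration underlying Theorems~8.22 and 8.27 is classical and is not the obstacle; the points that require care are the identification of $W^{-1,p}(\Omega)$ with $p<d/(d-1)$ as $L^{p'}$-data with $p'>d$ (so that one lands in the regime where De Giorgi--Nash--Moser yields H\"older continuity and not merely continuity), the absorption of $\dive(bu)$ into the data in the adjoint case using only $b\in L^\infty$ together with the \emph{local} boundedness of $u$, and checking that every scale factor in $R$ arising along the way is harmless because $R\le\diam(\Omega)$.
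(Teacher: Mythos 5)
Your proposal matches the paper's implicit proof: the paper simply states that Proposition~\ref{HolderInside} follows from Theorem~8.22 in \cite{Gilbarg}, and your argument fills in the details of that citation, chiefly the Hahn--Banach representation of $F\in W^{-1,p}(\Omega)$ as $h_0-\dive\mathbf h$ with $h_0,\mathbf h\in L^{p'}(\Omega)$, $p'>d$, and the reduction to Gilbarg's divergence-form data class. The ideas you flag as requiring care are indeed the right ones.

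Two remarks. First, you invoke Theorem~8.27 for the $L^t$ case, but that is the \emph{boundary} H\"older estimate; the interior estimate you want for both $L$ and $L^t$ is Theorem~8.22 (the paper's ``respectively'' attaches~8.27 to the companion boundary statement, Proposition~\ref{HolderOnTheBoundary}). Second, moving $\dive(bu)$ to the right-hand side for $L^t$ is an unnecessary detour: Gilbarg's operator~(8.3) already has the form $D_i(a^{ij}D_ju+b^iu)+c^iD_iu+du$, so $L^tu=-\dive(A^t\nabla u+bu)$ fits that template with the drift carried by $b^i$ inside the divergence, and the constants in Theorem~8.22 already depend on $\|b\|_\infty$ through the structural condition~(8.6). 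Your workaround does go through---using local boundedness (Theorem~8.17 in \cite{Gilbarg}) to place $bu$ in $L^{p'}$ on the relevant ball and then absorbing $\|b\|_\infty\sup_{B_R}|u|\,R^{d/p'}$ into $CR^{-\alpha}\sup_{B_R}|u|$ via $R\le\diam\Omega$, exactly as you indicate---but it is circuitous compared to applying Theorem~8.22 directly to $L^t$.
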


For the next proposition to hold, note that we need some regularity on the boundary; in our case, we will assume that our domain is Lipschitz.

\begin{prop}\label{HolderOnTheBoundary}
Let $\Omega$ be a Lipschitz domain, $A\in M_{\lambda}(\Omega)$ and $b\in L^{\infty}(\Omega)$. Let also $q\in\partial\Omega$, and $R>0$. Suppose that $u\in W^{1,2}(\Omega)$ is a solution to the equation $Lu=F$, or $L^tu=F$ in $\Omega$, which vanishes on $\Delta_{2R}(q))$, and where $F\in W^{-1,p}(\Omega)$, $p\in\left[1,\frac{d}{d-1}\right)$. Then, for all $r\in(0,R)$,
\[
\osc_{T_r(x)}u\leq Cr^{\alpha}\left(R^{-\alpha}\sup_{T_R(x)}|u|+\|F\|_{W^{-1,p}(\Omega)}\right),
\]
where $C$ is a good constant that also depends on $p$.
\end{prop}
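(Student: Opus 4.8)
The plan is to obtain this from theorem 8.27 of \cite{Gilbarg}, the boundary analogue of proposition \ref{HolderInside}: that theorem is precisely an oscillation decay estimate at a boundary point, and the only hypothesis to verify is the quantitative exterior condition it assumes. As in proposition \ref{HolderInside}, both $L$ and $L^t$ are uniformly elliptic divergence-form operators with bounded lower order coefficients --- for $L^t$ one rewrites the equation as $-\dive(A^t\nabla u-bu)=F$ --- so their structure constants are good constants and the interior local maximum principle, the weak Harnack inequality, and the local boundedness estimates of chapter 8 of \cite{Gilbarg} are available; the datum $F\in W^{-1,p}(\Omega)$ with $p<\frac{d}{d-1}$ enters by pairing against test functions in $W^{1,p'}_0$ with $p'>d$, exactly as in theorem \ref{SupremumBound} and proposition \ref{HolderInside}.

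First I would record the geometric input, a \emph{uniform exterior measure-density condition} (the ``exterior cone condition'' of \cite{Gilbarg}): there is $c_0\in(0,1)$, depending only on the Lipschitz character of $\Omega$, with $|Z(q',\rho)\setminus\overline\Omega|\ge c_0\,|Z(q',\rho)|$ for every $q'\in\partial\Omega$ and every admissible $\rho$. This is immediate from the definition of a coordinate cylinder: in the coordinates of $Z(q',\rho)$ the complement of $\overline\Omega$ contains the subgraph $\{(x',t):t<\phi(x')\}$, which fills a fraction of $Z(q',\rho)$ bounded below in terms of $\|\nabla\phi\|_\infty\le M$. Feeding this bound into the proof of theorem 8.27 of \cite{Gilbarg} in place of the exterior cone hypothesis, applying the conclusion to $u$ and to $-u$, and using that $u=0$ on $\Delta_{2R}(q)$ so that the oscillation of the boundary values over $\Delta_{2R}(q)$ vanishes, yields the stated estimate with a good constant that also depends on $p$; here $\sup_{T_R(q)}|u|$ is finite by the interior and boundary local boundedness estimates of \cite{Gilbarg}.

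If a self-contained argument is wanted, the steps would be: (i) extend $u$ by $0$ across $\Delta_{2R}(q)$ --- the extension lies in $W^{1,2}$ of the full cylinder since $u=0$ there in the $W^{1,2}$ sense; (ii) use the exterior measure-density bound to run the interior weak Harnack inequality of \cite{Gilbarg} on full cylinders centered on $\partial\Omega$, obtaining a boundary weak Harnack inequality for nonnegative supersolutions that vanish on a boundary patch; (iii) apply it to $\sup_{T_{4\rho}(q)}u-u$ and to $u-\inf_{T_{4\rho}(q)}u$ to get the decay $\osc_{T_\rho(q)}u\le\gamma\,\osc_{T_{4\rho}(q)}u+C\rho^\delta\|F\|_{W^{-1,p}(\Omega)}$ with $\gamma=\gamma(c_0,d,\lambda,\|b\|_\infty)\in(0,1)$; and (iv) iterate over the dyadic scales $4^{-k}R$. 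The hard part is (ii)--(iii): one must carefully check that the zero extension of a supersolution vanishing on the patch stays controlled near $\partial\Omega$ --- for the homogeneous operator this is classical, and the inhomogeneous term contributes only the harmless error $C\rho^\delta\|F\|_{W^{-1,p}(\Omega)}$ --- and that no constant depends on $\Omega$ except through $M$ and $N$, which is exactly what the exterior measure-density bound provides. The adjoint case needs no extra work, as $\dive(bu)$ is a bounded lower-order divergence-form perturbation, treated just like the drift $b\nabla u$ for $L$.
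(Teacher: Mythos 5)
Your proposal is correct and matches the paper's own approach exactly: the paper does not give an independent proof of this proposition, but states immediately beforehand that propositions \ref{HolderInside} and \ref{HolderOnTheBoundary} are theorems 8.22 and 8.27 in \cite{Gilbarg}, respectively, so the content is precisely the citation you invoke. What you add — and what the paper leaves implicit — is the verification of the hypotheses of theorem 8.27: that a Lipschitz domain satisfies a uniform exterior measure-density condition with constant depending only on the Lipschitz character, that both $L$ and $L^t$ are uniformly elliptic divergence-form operators whose structure constants are good constants (rewriting $L^t u = F$ as $-\dive(A^t\nabla u - bu)=F$), and that $F\in W^{-1,p}(\Omega)$ with $p<\frac{d}{d-1}$ fits the allowed class of inhomogeneities (pairing with $W_0^{1,p'}$, $p'>d$). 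The self-contained oscillation-decay argument you sketch in the last paragraph is also a valid alternative but is not what the paper does; since the paper's route is simply to cite \cite{Gilbarg}, your first paragraph already suffices.
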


Finally, we will need the next equicontinuity result, which follows from theorem $8.24$.

\begin{prop}\label{Equicontinuity}
Let $\Omega$ be a bounded domain, $A_n\in M_{\lambda}(\Omega)$ and $b_n\in L^{\infty}(\Omega)$, with $\|b_n\|_{\infty}\leq M$ for some $M>0$. Let $K\subseteq\Omega$ be compact, and suppose that $u_n\in W^{1,2}(\Omega)$ are solutions to the equations $Lu_n=0$, or $L^tu_n=0$ in $\Omega$ which are uniformly bounded in $K$. Then, $(u_n)$ is equicontinuous in $K$.
\end{prop}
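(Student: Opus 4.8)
\textbf{Proof proposal for Proposition \ref{Equicontinuity}.}

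The plan is to deduce equicontinuity from the interior oscillation estimate of Proposition \ref{HolderInside} (theorem 8.24 in \cite{Gilbarg}), the only issue being that the H\"older exponent and the constant produced there must be shown to be uniform over the sequence $(A_n,b_n)$. First I would fix a compact set $K\subseteq\Omega$ and choose $\delta_0>0$ with $\delta_0<\frac14\,\mathrm{dist}(K,\partial\Omega)$, so that for every $x\in K$ the ball $B_{2\delta_0}(x)$ is compactly contained in $\Omega$. For each $n$, the function $u_n$ solves $L_nu_n=0$ (or $L_n^tu_n=0$) with $0\in W^{-1,p}(\Omega)$, so Proposition \ref{HolderInside} applied on $B_{R}(x)$ with $R=2\delta_0$ gives, for all $r\in(0,2\delta_0)$,
\[
\osc_{B_r(x)}u_n\leq Cr^{\alpha}\Big(R^{-\alpha}\sup_{B_R(x)}|u_n|\Big),
\]
where — and this is the point — the constants $C$ and $\alpha\in(0,1)$ depend only on $d,\lambda,\mathrm{diam}(\Omega),p$ and on an upper bound for $\|b_n\|_\infty$, which is exactly $M$ by hypothesis. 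Thus $C$ and $\alpha$ can be taken the same for every $n$.

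Next I would invoke the uniform boundedness of $(u_n)$ on $K$: by assumption there is $M_0$ with $|u_n|\leq M_0$ on $K$. A mild enlargement argument is needed because the right-hand side above involves $\sup_{B_R(x)}|u_n|$ rather than $\sup_K|u_n|$; so instead I fix a slightly larger compact set $K'$ with $K\subseteq K'\subseteq\Omega$ and $\mathrm{dist}(K,\partial K')>0$, apply the hypothesis form of uniform boundedness on $K'$ (or simply note that the statement's hypothesis ``uniformly bounded in $K$'' is invoked with $K$ replaced by such a $K'$; alternatively combine with the maximum principle or an interior $L^\infty$ bound), and shrink $\delta_0$ so that $B_{2\delta_0}(x)\subseteq K'$ for all $x\in K$. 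Then $\sup_{B_R(x)}|u_n|\leq M_0'$ uniformly in $n$ and $x\in K$, giving
\[
\osc_{B_r(x)}u_n\leq C\,M_0'\,(2\delta_0)^{-\alpha}\,r^{\alpha}=:C_1\,r^{\alpha}
\]
for all $x\in K$, all $r\in(0,2\delta_0)$, and all $n$, with $C_1$ independent of $n$.

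Finally, equicontinuity is immediate: given $\varepsilon>0$ choose $\rho\in(0,2\delta_0)$ with $C_1\rho^{\alpha}<\varepsilon$; then for any $x,y\in K$ with $|x-y|<\rho$ we have $y\in B_\rho(x)$ (and the segment stays in $\Omega$ once $\rho$ is small relative to $\mathrm{dist}(K,\partial\Omega)$), so
\[
|u_n(x)-u_n(y)|\leq\osc_{B_\rho(x)}u_n\leq C_1\rho^{\alpha}<\varepsilon
\]
for every $n$, which is precisely equicontinuity of $(u_n)$ on $K$. The main obstacle, and the only thing requiring care, is the bookkeeping that makes the constants in Proposition \ref{HolderInside} uniform in $n$ — this rests entirely on the fact that that estimate depends on $b_n$ only through $\|b_n\|_\infty$, which is bounded by $M$, and on handling the passage from $\sup_{B_R(x)}|u_n|$ to the given uniform bound by a harmless shrinking of radii and enlargement of the compact set.
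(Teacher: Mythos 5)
Your proof is correct and matches the paper's own treatment, which consists of a single citation to theorem 8.24 (equivalently 8.22) in Gilbarg--Trudinger: one applies the interior oscillation/H\"older estimate of Proposition \ref{HolderInside} with $F=0$, noting that the constants there depend on $b_n$ only through $\|b_n\|_\infty\le M$ (the $\mu$ in the paper's dependency list for Proposition \ref{HolderInside} is vestigial, as that proposition assumes only $A\in M_\lambda$). You also correctly flag the one genuine subtlety: the oscillation bound on $B_R(x)$ uses $\sup_{B_R(x)}|u_n|$, which exits $K$, so the hypothesis must be read as locally uniform boundedness (or applied with $K$ replaced by a slightly larger compact $K'$); in every place the paper invokes this proposition the uniform bound is in fact established on a strictly larger set, so your resolution is exactly what is intended. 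The two alternatives you mention in passing (maximum principle, interior $L^\infty$ bound) would not by themselves close that gap, since both again require control on a larger set; the enlargement of $K$ is the correct fix.
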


\subsection{The Rellich estimate for $L$}

We now turn our attention to the Rellich estimate. This is the main estimate that relates tangential with normal derivatives of solutions on the boundary of a domain, and it will be the basis of our approach to the Dirichlet and Regularity problems.

Let $\Omega$ be a smooth domain with Lipschitz constant $M$. Let also $A\in M_{\lambda,\mu}^s(\Omega)$ (the symmetry assumption will be crucial here) and $b\in L^{\infty}(\Omega)$, and let $q\in\partial\Omega$. Suppose also that $r>0$ is given, such that $r<r_{\Omega}$, where $r_{\Omega}$ is as in section $1.1$.
 
Suppose now that $u:\overline{\Omega}\to\mathbb R$ is a $C^1(\overline{\Omega})\cap W^{2,2}_{\loc}(\Omega)$ solution of $Lu=0$ in $\Omega$. Consider a smooth cutoff $\theta_0$ in $\mathbb R^d$, with $\theta_0\equiv 1$ in $T_r(q)$ (from definition \ref{CylinderPortions}), $\theta_0$ supported in $T_{2r}(q)$, $0\leq\theta_0\leq 1$ and $|\nabla\theta|\leq C/r$. Then, if $e_d$ is the unit vector field in the direction of the axis of $T_r(q)$ and $\partial_du$ denotes $\left<\nabla u,e_d\right>$, we compute
\begin{align*}
\dive
(\left<A\nabla u,\nabla u\right>e_d)-2\dive(\partial_duA\nabla u)&=\partial_d(\left<A\nabla u,\nabla u\right>)-2\partial_du\dive(A\nabla u)-2\left<\nabla\partial_d u,A\nabla u\right>\\
&=\left<\partial_dA\cdot\nabla u,\nabla u\right>-2\partial_d u\cdot b\nabla u,
\end{align*}
since $A$ is symmetric. Therefore, after multiplying with $\theta_0$, we obtain that
\begin{multline*}
\dive(\theta_0\left<A\nabla u,\nabla u\right>e_d)-2\dive(\theta_0\partial_du\cdot A\nabla u)=\\
\theta_0\left<\partial_dA\cdot\nabla u,\nabla u\right>-2\theta_0\partial_du\cdot b\nabla u+\partial_d\theta_0\left<A\nabla u,\nabla u\right>-2\partial_du\left<A\nabla u,\nabla\theta_0\right>.
\end{multline*}

Note now that the domain $T_{2r}(q)$ is Lipschitz. Therefore, using the divergence theorem in a domain slightly smaller than $T_{2r}(q)$ (such that the solution is twice differentiable there), an approximation argument and the support properties of $\theta_0$, we obtain the identity
\begin{multline}\label{eq:FirstRellichIdentity}
\int_{\Delta_{2r}(q)}\theta_0\left(\left<A\nabla u,\nabla u\right>\left<e_d,\nu\right>-2\left<\nabla u,e_d\right>\left<A\nabla u,\nu\right>\right)\,d\sigma=\\
\int_{T_{2r}(q)}\left(\theta_0\left<\partial_dA\cdot\nabla u,\nabla u\right>-2\theta_0\partial_du\cdot b\nabla u+\partial_d\theta_0\left<A\nabla u,\nabla u\right>-2\partial_du\left<A\nabla u,\nabla\theta_0\right>\right)\,dx.
\end{multline}
We now treat the left hand side, and we compute
\begin{align*}
\left<A\nabla u,\nabla u\right>\left<e_d,\nu\right>&=\left<A\nabla u,\nabla_T u\right>\left<e_d,\nu\right>+\partial_{\nu}^0u\left<A\nabla u,\nu\right>\left<e_d,\nu\right>\\
&=\left(\left<A\nabla u,\nabla_T u\right>-\partial_{\nu}^0u\left<A\nabla u,\nu\right>\right)\left<e_d,\nu\right>+2\left<A\nabla u,\nu\right>\left<e_d,\partial_{\nu}^0u\cdot\nu\right>,
\end{align*}
where $\partial_{\nu}^0u$ is the normal derivative of $u$ on $\partial\Omega$. Therefore, since
\begin{align*}
\left<A\nabla u,\nu\right>\left<e_d,\partial_{\nu}^0u\cdot\nu\right>&=\left<A\nabla u,\nu\right>\left<e_d,\nabla u-\nabla_Tu\right>\\
&=
\left<A\nabla u,\nu\right>\left<e_d,\nabla u\right>-\left<A\nabla u,\nu\right>\left<e_d,\nabla_Tu\right>,
\end{align*}
we obtain that
\begin{multline*}
\left<A\nabla u,\nabla u\right>\left<e_d,\nu\right>-2\left<\nabla u,e_d\right>\left<A\nabla u,\nu\right>=\\
\left(\left<A\nabla u,\nabla_T u\right>-\partial_{\nu}^0u\left<A\nabla u,\nu\right>\right)\left<e_d,\nu\right>-2\left<A\nabla u,\nu\right>\left<\nabla_Tu,e_d\right>.
\end{multline*}
For the term in the parenthesis, we write
\begin{align*}
\left<A\nabla u,\nabla_T u\right>-\partial_{\nu}^0u\left<A\nabla u,\nu\right>&=\left<A\nabla u,\nabla_T u-\partial_{\nu}^0u\cdot\nu\right>\\
&=\left<A(\nabla_T u+\partial_{\nu}^0u\cdot\nu),\nabla_T u-\partial_{\nu}^0u\cdot\nu\right>\\
&=\left<A\nabla_T u,\nabla_Tu\right>-\left<A\nu,\nu\right>|\partial_{\nu}^0u|^2,
\end{align*}
where we used that $A$ is symmetric on the last step. Plugging in \eqref{eq:FirstRellichIdentity}, we obtain the \textit{Rellich identity}
\begin{multline}\label{SecondRellichIdentity}
\int_{\Delta_{2r}(q)}\theta_0\left(\left<A\nabla_T u,\nabla_Tu\right>\left<e_d,\nu\right>-\left<A\nu,\nu\right>|\partial_{\nu}^0u|^2\left<e_d,\nu\right>-2\left<A\nabla u,\nu\right>\left<\nabla_Tu,e_d\right>\right)\,d\sigma=\\
\int_{T_{2r}(q)}\left(\theta_0\left<\partial_dA\cdot\nabla u,\nabla u\right>-2\theta_0\partial_dub\nabla u+\partial_d\theta_0\left<A\nabla u,\nabla u\right>-2\partial_du\left<A\nabla u,\nabla\theta_0\right>\right)\,dx.
\end{multline}

We arrange the terms so that the $|\partial_{\nu}^0u|^2$ stays on the left hand side. Since $0\leq\theta_0\leq 1$ and $|e_d|=1$, we obtain that
\begin{multline*}
\int_{\Delta_{2r}(q)}\theta_0\left<e_d,\nu\right>\left<A\nu,\nu\right>|\partial_{\nu}^0u|^2\,d\sigma\leq\\
\int_{\Delta_{2r}(q)}\theta_0|\nabla_Tu||\left<A\nabla u,\nu\right>|\left<e_d,\nu\right>\,d\sigma+\int_{\Delta_{2r}(q)}\theta_0|\left<A\nabla_Tu,\nabla_Tu\right>|^2\left<e_d,\nu\right>\,d\sigma+\\
\int_{T_{2r}(q)}\left(\left|\left<\partial_dA\cdot\nabla u,\nabla u\right>\right|+2\left|\partial_dub\nabla u\right|+\left|\partial_d\theta_0\left<A\nabla u,\nabla u\right>\right|+2\left|\partial_du\left<A\nabla u,\nabla\theta_0\right>\right|\right)\,dx.
\end{multline*}

Now, $\left<e_d,\nu\right>\leq 1$, and also
\[
\left<e_d,\nu\right>=\left<e_d,\frac{(-\nabla\phi,1)}{|(-\nabla\phi,1)|}\right>=\frac{1}{\sqrt{|\nabla\phi|^2+1}}\geq\frac{1}{\sqrt{dM^2+1}}.
\]
In addition, $|\nabla\theta_0|\leq C/r$ and $\left<A\nu,\nu\right>\geq\lambda|\nu|^2=\lambda$, from the ellipticity of $A$, therefore
\begin{multline*}
\int_{\Delta_{2r}(q)}\theta_0|\partial_{\nu}^0u|^2\,d\sigma\leq C\int_{\Delta_{2r}(q)}\theta_0|\nabla_Tu||\left<A\nabla u,\nu\right>|\,d\sigma+
C\int_{\Delta_{2r}(q)}\theta_0|\nabla_Tu|^2\,d\sigma\\
+C\int_{T_{2r}(q)}\left(|\nabla A|+\|b\|_{\infty}\right)|\nabla u|^2+\frac{C}{r}\int_{T_{2r}(q)}|\nabla u|^2\,dx,
\end{multline*}
where $C$ depends on $d,\lambda$ and $M$.

We now add the term $\int_{\Delta_{2r}(q)}\theta_0|\nabla_Tu|^2$ to both sides, to obtain that
\begin{multline*}
\int_{\Delta_{2r}(q)}\theta_0|\nabla u|^2\,d\sigma\leq C\int_{\Delta_{2r}(q)}\theta_0|\nabla_Tu||\partial_{\nu}u|\,d\sigma+
C\int_{\Delta_{2r}(q)}\theta_0|\nabla_Tu|^2\,d\sigma\\
+C\int_{T_{2r}(q)}\left(\mu+\|b\|_{\infty}\right)|\nabla u|^2+\frac{C}{r}\int_{T_{2r}(q)}|\nabla u|^2\,dx,
\end{multline*}
and since
\[
\int_{\Delta_{2r}(q)}\theta_0|\partial_{\nu}u|^2\,d\sigma=\int_{\Delta_{2r}(q)}\theta_0|\left<A\nabla u,\nu\right>|^2\,d\sigma\leq C\int_{\Delta_{2r}(q)}\theta_0|\nabla u|^2\,d\sigma,
\]
we obtain 
\begin{multline*}
\int_{\Delta_{2r}(q)}\theta_0|\partial_{\nu}u|^2\,d\sigma\leq C\int_{\Delta_{2r}(q)}\theta_0|\nabla_Tu||\partial_{\nu}u|\\
+C\int_{\Delta_{2r}(q)}\theta_0|\nabla_Tu|^2\,d\sigma+C\left(1+r^{-1}\right)\int_{T_{2r}(q)}|\nabla u|^2\,dx.
\end{multline*}
Finally, we apply the Cauchy-Schwartz inequality on the first term on the right hand side, and the Cauchy inequality with $\delta$, to obtain that
\[
\int_{\Delta_r(q)}\left|\left<A\nabla u,\nu\right>\right|^2\,d\sigma\leq C\int_{\Delta_{2r}(q)}|\nabla_Tu|^2\,d\sigma+\frac{C}{r}\int_{T_{2r}(q)}|\nabla u|^2\,dx,
\]
where $C$ depends on $d,\lambda,\mu,\|b\|_{\infty},M$ and $r_{\Omega}$, since $1<r_{\Omega}/r$. Therefore, we are led to the next proposition.

\begin{prop}\label{LocalRellich}[First Local Rellich estimate]
Let $\Omega$ be a smooth domain with Lipschitz constant $M$. Let also $A\in M_{\lambda,\mu}^s(\Omega)$, $b\in L^{\infty}(\Omega)$, and suppose that $u:\overline{\Omega}\to\mathbb R$ is a $C^1(\overline{\Omega})\cap W^{2,2}_{\loc}(\Omega)$ solution of $Lu=0$ in $\Omega$. Then, for every $q\in\partial\Omega$ and $r<r_{\Omega}$,
\[
\int_{\Delta_r(q)}\left|\partial_{\nu}u\right|^2\,d\sigma\leq C\int_{\Delta_{2r}(q)}|\nabla_Tu|^2\,d\sigma+
\frac{C}{r}\int_{T_{2r}(q)}|\nabla u|^2,
\]
where $\partial_{\nu}u=\left<A\nabla u,\nu\right>$, $C$ depends on $d,\lambda,\mu,\|b\|_{\infty},M$ and $r_{\Omega}$, and where $r_{\Omega}$ is defined in section $1.1$.
\end{prop}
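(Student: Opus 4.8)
The plan is to exploit the Rellich identity \eqref{SecondRellichIdentity} derived above and then run a standard absorption argument. First I would record the pointwise divergence identity for the $W^{2,2}_{\loc}$ solution $u$ of $Lu=0$: using the symmetry of $A$ (so that $2\langle\nabla\partial_du,A\nabla u\rangle=\partial_d\langle A\nabla u,\nabla u\rangle-\langle\partial_dA\cdot\nabla u,\nabla u\rangle$) together with $\dive(A\nabla u)=b\nabla u$, one gets
\[
\dive(\langle A\nabla u,\nabla u\rangle e_d)-2\dive(\partial_du\,A\nabla u)=\langle\partial_dA\cdot\nabla u,\nabla u\rangle-2\partial_du\,b\nabla u.
\]
Multiplying by the cutoff $\theta_0$ and integrating, the divergence theorem produces the boundary identity \eqref{eq:FirstRellichIdentity}; decomposing $\nabla u=\nabla_Tu+\partial_\nu^0u\,\nu$ on $\partial\Omega$ and using symmetry once more to simplify the quadratic form isolates the coercive term $\langle A\nu,\nu\rangle|\partial_\nu^0u|^2\langle e_d,\nu\rangle$, which is exactly \eqref{SecondRellichIdentity}.

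Next I would carry out the absorption. Keeping $\langle A\nu,\nu\rangle|\partial_\nu^0u|^2$ on the left, I would use ellipticity, $\langle A\nu,\nu\rangle\geq\lambda$, and the pointwise lower bound $\langle e_d,\nu\rangle=(1+|\nabla\phi|^2)^{-1/2}\geq(1+dM^2)^{-1/2}$, to replace the left side by a positive multiple of $\int_{\Delta_{2r}(q)}\theta_0|\partial_\nu^0u|^2\,d\sigma$; on the right, $|\nabla\theta_0|\leq C/r$ and $|\nabla A|\leq\mu$ bound the solid integrals by $C(1+r^{-1})\int_{T_{2r}(q)}|\nabla u|^2$, which is $\leq\frac{C}{r}\int_{T_{2r}(q)}|\nabla u|^2$ because $r<r_\Omega$ with $r_\Omega$ bounded below by Lemma \ref{r_Omega}. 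Adding $\int_{\Delta_{2r}(q)}\theta_0|\nabla_Tu|^2$ to both sides upgrades the left side to $\int\theta_0|\nabla u|^2$ via the orthogonal decomposition $|\nabla u|^2=|\nabla_Tu|^2+|\partial_\nu^0u|^2$. The one boundary cross term $\int_{\Delta_{2r}(q)}\theta_0|\nabla_Tu|\,|\langle A\nabla u,\nu\rangle|$ I would control via $|\langle A\nabla u,\nu\rangle|\leq\lambda^{-1}|\nabla u|$, Cauchy--Schwarz, and Young's inequality with a small $\delta$, absorbing $\delta\int\theta_0|\nabla u|^2$ into the left. Finally, $|\langle A\nabla u,\nu\rangle|\leq\lambda^{-1}|\nabla u|$ together with $\theta_0\equiv1$ on $T_r(q)$ yields the asserted bound on $\int_{\Delta_r(q)}|\langle A\nabla u,\nu\rangle|^2\,d\sigma$.

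The step I expect to be the main obstacle is making \eqref{eq:FirstRellichIdentity} rigorous: the divergence identity is only valid in $W^{2,2}_{\loc}(\Omega)$, so one cannot integrate it directly over $T_{2r}(q)$, whose closure meets $\partial\Omega$. The remedy is to exhaust $\Omega$ by slightly smaller smooth subdomains (in each coordinate chart, $\{t>\phi(x')+\e\}$ suitably smoothed), apply the divergence theorem there where $u$ is genuinely $W^{2,2}$, and let $\e\to0$. The solid integrals in the final identity converge because their integrands involve only $\nabla u$ and $\nabla A$, which are locally in $L^2$ and $L^\infty$ respectively; the boundary integrals converge because $u\in C^1(\overline\Omega)$, so $\nabla u$, hence $\nabla_Tu$ and $\langle A\nabla u,\nu\rangle$, extend continuously to $\partial\Omega$, while the approximating boundaries converge to $\partial\Omega$ together with their surface measures and normals. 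Since $\Omega$ is already smooth in this proposition this approximation is routine, and the role of the $C^1(\overline\Omega)$ hypothesis is precisely to let the trace terms pass to the limit.
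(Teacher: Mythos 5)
Your proposal is correct and follows essentially the same route as the paper: you derive the Rellich identity from the divergence identity via symmetry of $A$ and the equation, decompose $\nabla u$ into tangential and normal parts on $\partial\Omega$, bound $\langle e_d,\nu\rangle$ and $\langle A\nu,\nu\rangle$ from below, add $\int\theta_0|\nabla_Tu|^2$ to both sides, and absorb the cross term via Cauchy--Schwarz and Young. Your more explicit treatment of the exhaustion-by-subdomains step is a helpful elaboration of what the paper only summarizes as ``an approximation argument,'' but the underlying argument is the same.
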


\subsection{The Rellich estimate for $L^t$}
We now show the local Rellich estimate for the equation $L^tu=0$; that is, $u$ solves the equation
\[
-\dive(A^t\nabla u)-\dive(bu)=0.
\]

Let $\Omega$ be a smooth domain with Lipschitz constant $M$. Let also $A\in M_{\lambda,\mu}^s(\Omega)$ (as in the proof of the Rellich estimate for $L$, the symmetry assumption will be crucial here), $b\in\Lip(\Omega)$, and let $q\in\partial\Omega$. Suppose also that $r>0$ is given, such that $r<r_{\Omega}$, where $r_{\Omega}$ is as in section $1.1$.
 
Suppose now that $u:\overline{\Omega}\to\mathbb R$ is a $C^1(\Omega)\cap W_{\loc}^{2,2}(\Omega)$ solution of $Lu=0$ in $\Omega$. Consider a cutoff $\theta_0$, with $\theta_0\equiv 1$ in $T_r(q)$ (from definition \ref{CylinderPortions}), $\theta_0$ supported in $T_{2r}(q)$, $0\leq\theta_0\leq 1$ and $|\nabla\theta|\leq C/r$. Then, if $e_d$ is the unit vector field in the direction of the $t$ axis and $\partial_du$ denotes $\left<\nabla u,e_d\right>$, we compute
\begin{align*}
\dive
(\left<A\nabla u,\nabla u\right>e_d)-2\dive(\partial_duA\nabla u)&=\partial_d(\left<A\nabla u,\nabla u\right>)-2\partial_du\dive(A\nabla u)-2\left<\nabla\partial_d u,A\nabla u\right>\\
&=\left<\partial_dA\cdot\nabla u,\nabla u\right>+2\partial_d u\dive(bu)\\
&=\left<\partial_dA\cdot\nabla u,\nabla u\right>+2b\nabla u\cdot\partial_du+2\dive b\cdot u\partial_du,
\end{align*}
since $A$ is symmetric. Therefore, after multiplying with $\theta_0$, we obtain that
\begin{multline*}
\dive(\theta_0\left<A\nabla u,\nabla u\right>e_d)-2\dive(\theta_0\partial_duA\nabla u)=\\
\theta_0\left<\partial_dA\nabla u,\nabla u\right>+2\theta_0\partial_dub\nabla u+2\theta_0\dive b\cdot u\partial_du+\partial_d\theta_0\left<A\nabla u,\nabla u\right>-2\partial_du\left<A\nabla u,\nabla\theta_0\right>.
\end{multline*}
So, as in the proof of the Rellich estimate for $L$, the divergence theorem and the support properties of $\theta_0$ show that
\begin{multline*}
\int_{\Delta_{2r}(q)}\theta_0\left(\left<A\nabla u,\nabla u\right>\left<e_d,\nu\right>-2\left<\nabla u,e_d\right>\left<A\nabla u,\nu\right>\right)\,d\sigma\leq\\
\int_{T_{2r}(q)}\mu|\nabla u|^2+|b||\nabla u|^2+2|\dive b||u\nabla u|+\frac{C}{r}|\nabla u|^2.
\end{multline*}
We now treat the left hand side exactly as in the proof of the Rellich estimate for the equation $Lu=0$, to finally obtain that
\begin{multline*}
\int_{\Delta_{2r}(q)}\theta_0|\partial_{\nu}u|^2\,d\sigma\leq C\int_{\Delta_{2r}(q)}|\nabla_Tu|^2\,d\sigma\\
+C
\int_{T_{2r}(q)}\mu|\nabla u|^2+|b||\nabla u|^2+2|\dive b||u\nabla u|+\frac{C}{r}|\nabla u|^2.
\end{multline*}
We are then led to the following estimate for solutions to the adjoint equation.

\begin{prop}\label{LocalRellichForAdjoint}[Local Rellich estimate for the adjoint]
Let $\Omega$ be a smooth domain with Lipschitz constant $M$. Let also $A\in M_{\lambda,\mu}^s(\Omega)$, $b\in\Lip(\Omega)$, and suppose that $u:\overline{\Omega}\to\mathbb R$ is a $C^1(\overline{\Omega})\cap W_{\loc}^{2,2}(\Omega)$ solution of $L^tu=0$ in $\Omega$. Then, for every $q\in\partial\Omega$ and $r<r_{\Omega}$,
\[
\int_{\Delta_r(q)}\left|\partial_{\nu}u\right|^2\,d\sigma\leq C\int_{T_{2r}(q)}|\nabla_Tu|^2\,d\sigma+C\int_{T_{2r}(q)}|\dive b||u\nabla u|+ \frac{C}{r}\int_{T_{2r}(q)}|\nabla u|^2,
\]
where $C$ depends on $d,\lambda,\mu,\|b\|_{\infty},M$ and $r_{\Omega}$, and where $r_{\Omega}$ is defined in section $1.1$.
\end{prop}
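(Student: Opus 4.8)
The plan is to follow the derivation of the first local Rellich estimate (Proposition~\ref{LocalRellich}) essentially line by line, keeping careful track of the extra terms that the drift $\dive(bu)$ contributes. First I would start from the pointwise identity $\dive(\langle A\nabla u,\nabla u\rangle e_d)-2\dive(\partial_d u\,A\nabla u)=\langle\partial_d A\,\nabla u,\nabla u\rangle-2\partial_d u\,\dive(A\nabla u)-2\langle\nabla\partial_d u,A\nabla u\rangle$, where the symmetry of $A$ is what allows the last two terms to combine. Substituting $\dive(A\nabla u)=-\dive(bu)$ (valid since $A^t=A$ and $u$ solves $L^tu=0$) and expanding $\dive(bu)=b\cdot\nabla u+(\dive b)u$ turns the right-hand side into $\langle\partial_d A\,\nabla u,\nabla u\rangle+2b\nabla u\,\partial_d u+2(\dive b)\,u\,\partial_d u$. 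The first two summands have exact analogues in the $L$ case and are bounded pointwise by $(\mu+\|b\|_\infty)|\nabla u|^2$; the third is genuinely new.

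Next I would multiply by the cutoff $\theta_0$ and integrate over $T_{2r}(q)$. Since $T_{2r}(q)$ is Lipschitz, applying the divergence theorem on a slightly smaller subdomain where $u\in W^{2,2}$, then approximating, and using that $\theta_0$ is supported in $T_{2r}(q)$ and vanishes on $\partial T_{2r}(q)\setminus\partial\Omega$, one obtains a boundary identity whose interior right-hand side, after bounding $|\partial_d\theta_0|\le C/r$ and the $A$-terms by ellipticity and Lipschitz continuity of $A$, is controlled by $C\int_{T_{2r}(q)}(\mu+\|b\|_\infty)|\nabla u|^2+\tfrac Cr\int_{T_{2r}(q)}|\nabla u|^2+C\int_{T_{2r}(q)}|\dive b|\,|u\nabla u|$. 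The term $\int|\dive b|\,|u\nabla u|$ cannot be integrated by parts the way the others are, since $\dive b$ is here only in $L^\infty$, so I simply carry it along unchanged.

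For the boundary integral on the left I would repeat the computation from the $L$ case verbatim: decompose $\nabla u=\nabla_T u+(\partial_\nu^0 u)\nu$ on $\partial\Omega$, use the symmetry of $A$ to rewrite $\langle A\nabla u,\nabla u\rangle\langle e_d,\nu\rangle-2\langle\nabla u,e_d\rangle\langle A\nabla u,\nu\rangle$ as $\big(\langle A\nabla_T u,\nabla_T u\rangle-\langle A\nu,\nu\rangle|\partial_\nu^0 u|^2\big)\langle e_d,\nu\rangle-2\langle A\nabla u,\nu\rangle\langle\nabla_T u,e_d\rangle$, move the $|\partial_\nu^0 u|^2$ term to the left, and use $\langle e_d,\nu\rangle\ge(dM^2+1)^{-1/2}$ together with $\langle A\nu,\nu\rangle\ge\lambda$. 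Adding $\int_{\Delta_{2r}(q)}\theta_0|\nabla_T u|^2$ to both sides reconstitutes $\theta_0|\nabla u|^2$ on $\Delta_{2r}(q)$, and the pointwise bound $|\partial_\nu u|^2=|\langle A\nabla u,\nu\rangle|^2\le C|\nabla u|^2$ lets me convert $|\nabla u|$ back to $|\partial_\nu u|$ on the boundary. Finally, Cauchy--Schwarz on the cross term $\int\theta_0|\nabla_T u|\,|\partial_\nu u|$ followed by the Cauchy inequality with a small parameter $\delta$ absorbs $\delta\int\theta_0|\partial_\nu u|^2$ into the left-hand side; using $\theta_0\equiv1$ on $T_r(q)$ and $r<r_\Omega$ to absorb powers of $r^{-1}$ into the constant gives the stated estimate.

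The main obstacle is the term $\int_{T_{2r}(q)}|\dive b|\,|u\nabla u|$: for the operator $L$ every contribution of the drift is bounded pointwise by $\|b\|_\infty|\nabla u|^2$, but for $L^t$ the integration by parts that produces the Rellich identity unavoidably generates a term involving $\dive b$ and $u$ (not merely $\nabla u$). In this \emph{local} statement the difficulty is only cosmetic — one keeps the term as is — but it is precisely this term that, in the global Rellich estimate for $L^t$ (Chapter~10), forces the size/integrability assumption on $\dive b$. A secondary point worth stressing is that the symmetry of $A$ is used twice and is essential: once to collapse $\partial_d(\langle A\nabla u,\nabla u\rangle)-2\langle\nabla\partial_d u,A\nabla u\rangle$ into $\langle\partial_d A\,\nabla u,\nabla u\rangle$, and once in the boundary decomposition that isolates $|\partial_\nu^0 u|^2$.
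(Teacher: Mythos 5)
Your proposal is correct and matches the paper's own derivation in every essential step: the same pointwise identity, the same substitution $\dive(A\nabla u)=-\dive(bu)$ using the symmetry of $A$ and $L^tu=0$, the same expansion producing the extra $2(\dive b)\,u\,\partial_d u$ term that is then carried through unchanged, the same cutoff and divergence-theorem argument, and the same boundary decomposition borrowed verbatim from Proposition~\ref{LocalRellich}. Your remarks on where symmetry is used and on why the $\dive b$ term later necessitates the integrability hypothesis are also consistent with the paper's discussion.
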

\section{Solvability in various spaces}
\subsection{Solvability in $W^{-1,2}(\Omega)$}
The goal of this section is to treat solvability of the equations $Lu=F$ and $L^tu=F$ when $F\in W^{-1,2}(\Omega)$. For this purpose, we will follow the arguments in \cite{Evans}: we will change the equation so that the bilinear form that it defines is coercive, and we will pass to solvability for the original equation using the Fredholm alternative.

In the following, for $\gamma\in\mathbb R$, we will need to consider the bilinear form
\[
\alpha_{\gamma}(u,v)=\int_{\Omega}A\nabla u\nabla v+b\nabla u\cdot v+\gamma uv,
\]
as well as its adjoint form
\[
\alpha_{\gamma}^t(u,v)=\alpha_{\gamma}(v,u).
\]
We will then say that $u\in W^{1,1}_{\loc}(\Omega)$ is a solution to the equation $Lu+\gamma u=0$, if $\alpha_{\gamma}(u,\phi)=0$ for all $\phi\in C_c^{\infty}(\Omega)$. Similarly, we will say that $u\in W^{1,1}_{\loc}(\Omega)$ is a solution to the equation $L^tu+\gamma u=0$, if $\alpha^t_{\gamma}(u,\phi)$=0 for all $\phi\in C_c^{\infty}(\Omega)$.

\begin{prop}\label{GammaSolvability}
Let $\Omega\subseteq\mathbb R^d$ be a bounded domain, and $A\in M_{\lambda}(\Omega)$, with $b\in L^{\infty}(\Omega)$. Then there exists a constant $\gamma>0$ depending only on $\lambda$ and $\|b\|_{\infty}$ such that, for any $F\in W^{-1,2}(\Omega)$, the equation
\[
-\dive(A\nabla u)+b\nabla u+\gamma u=F
\]
has a unique weak solution $u\in W_0^{1,2}(\Omega)$. If we denote $u=g_{\gamma}F$, then the operator $g_{\gamma}:W^{-1,2}(\Omega)\to W_0^{1,2}(\Omega)$ is bounded and onto, and also
\[
\|g_{\gamma}F\|_{W_0^{1,2}(\Omega)}\leq C\|F\|_{W^{-1,2}(\Omega)},
\]
where $C$ depends on $d$, $\lambda$ and $\|b\|_{\infty}$. 
\end{prop}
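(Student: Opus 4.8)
The plan is to apply the Lax--Milgram theorem to the bilinear form $\alpha_\gamma$ on the Hilbert space $W_0^{1,2}(\Omega)$, with $\gamma$ chosen large enough (depending only on $\lambda$ and $\|b\|_\infty$) that $\alpha_\gamma$ is coercive. This is the one step in the Fredholm scheme outlined above where the modified form is genuinely coercive, so no compactness argument is needed here.

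First I would check that $\alpha_\gamma$ is a bounded bilinear form on $W_0^{1,2}(\Omega)$: for $u,v \in W_0^{1,2}(\Omega)$, the upper bound in \eqref{eq:UniformEllipticity} gives $\int_\Omega|\langle A\nabla u,\nabla v\rangle|\le\lambda^{-1}\|\nabla u\|_2\|\nabla v\|_2$, while $\int_\Omega|b\nabla u\cdot v|\le\|b\|_\infty\|\nabla u\|_2\|v\|_2$ and $\int_\Omega|\gamma uv|\le\gamma\|u\|_2\|v\|_2$, so $|\alpha_\gamma(u,v)|\le C(\lambda,\|b\|_\infty,\gamma)\|u\|_{W_0^{1,2}(\Omega)}\|v\|_{W_0^{1,2}(\Omega)}$ after Cauchy--Schwarz. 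Since $F\in W^{-1,2}(\Omega)$ is by definition a bounded functional on $W_0^{1,2}(\Omega)$, and $C_c^\infty(\Omega)$ is dense in $W_0^{1,2}(\Omega)$, a function $u\in W_0^{1,2}(\Omega)$ is a weak solution of $Lu+\gamma u=F$ (i.e.\ $\alpha_\gamma(u,\phi)=F\phi$ for all $\phi\in C_c^\infty(\Omega)$) if and only if $\alpha_\gamma(u,v)=Fv$ for every $v\in W_0^{1,2}(\Omega)$.

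Next comes coercivity. For $u\in W_0^{1,2}(\Omega)$, the lower bound in \eqref{eq:UniformEllipticity} gives
\[
\alpha_\gamma(u,u)\ge\lambda\|\nabla u\|_2^2+\gamma\|u\|_2^2-\|b\|_\infty\|\nabla u\|_2\|u\|_2,
\]
and applying the Cauchy inequality with $\delta=\lambda/2$ to the last term yields $\alpha_\gamma(u,u)\ge\frac{\lambda}{2}\|\nabla u\|_2^2+\big(\gamma-\frac{\|b\|_\infty^2}{2\lambda}\big)\|u\|_2^2$. Choosing
\[
\gamma=\frac{\lambda}{2}+\frac{\|b\|_\infty^2}{2\lambda},
\]
which depends only on $\lambda$ and $\|b\|_\infty$, we obtain $\alpha_\gamma(u,u)\ge\frac{\lambda}{2}\|u\|_{W_0^{1,2}(\Omega)}^2$. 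The Lax--Milgram theorem then produces, for each $F\in W^{-1,2}(\Omega)$, a unique $u=g_\gamma F\in W_0^{1,2}(\Omega)$ with $\alpha_\gamma(u,v)=Fv$ for all $v$, and uniqueness of the representation makes $g_\gamma$ linear. Testing with $u$ itself, $\frac{\lambda}{2}\|u\|_{W_0^{1,2}(\Omega)}^2\le\alpha_\gamma(u,u)=Fu\le\|F\|_{W^{-1,2}(\Omega)}\|u\|_{W_0^{1,2}(\Omega)}$, which gives the asserted bound with $C=2/\lambda$.

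It remains to see that $g_\gamma$ is onto $W_0^{1,2}(\Omega)$: given $v\in W_0^{1,2}(\Omega)$, the boundedness established above shows that $\phi\mapsto\alpha_\gamma(v,\phi)$ defines an element $F_v\in W^{-1,2}(\Omega)$, and then $g_\gamma F_v=v$ by the uniqueness part of Lax--Milgram. I do not expect a genuine obstacle in this proposition; the only points requiring care are that $\gamma$ be taken independent of $\Omega$ — automatic here, since the coercivity computation uses no Poincar\'e inequality — and the equivalence of the two notions of weak solution, which follows from the density of $C_c^\infty(\Omega)$ in $W_0^{1,2}(\Omega)$ together with the continuity of $\alpha_\gamma$ and of $F$.
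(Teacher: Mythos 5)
Your proof is correct and follows the same Lax--Milgram route as the paper's. The one small improvement is in the coercivity step: by choosing $\gamma=\tfrac{\lambda}{2}+\tfrac{\|b\|_\infty^2}{2\lambda}$ so that a positive multiple of $\|u\|_2^2$ survives, you get $\alpha_\gamma(u,u)\ge\tfrac{\lambda}{2}\bigl(\|\nabla u\|_2^2+\|u\|_2^2\bigr)$ directly and avoid the Sobolev step the paper invokes after discarding its $\|u\|_2^2$ term, so your $C=2/\lambda$ is genuinely $\Omega$-independent --- a point you correctly flag; the surjectivity and weak-solution-equivalence steps match the paper.
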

\begin{proof}
Note first that the bilinear form $\alpha_{\gamma}$ is continuous on $W_0^{1,2}(\Omega)$, since, for $u,v\in W_0^{1,2}(\Omega)$,
\begin{align*}
|\alpha_{\gamma}(u,v)|&\leq\int_{\Omega}|A\nabla u||\nabla v|+|b\nabla uv|+\gamma |uv|\\
&\leq C\|\nabla u\|_2\|\nabla v\|_2+\|b\|_{\infty}\|\nabla u\|_2\|v\|_{2}+\gamma\|u\|_2\|v\|_2\leq C\|u\|_{W_0^{1,2}(\Omega)}\|v\|_{W_0^{1,2}(\Omega)},
\end{align*}
where we used the Cauchy-Schwartz inequality.

For coercivity of $\alpha_{\gamma}$, note that since $A$ is uniformly elliptic, then for every $u\in W_0^{1,2}(\Omega)$,
\begin{align*}
\alpha_{\gamma}(u,u)&=\int_{\Omega}A\nabla u\nabla u+\int_{\Omega}b\nabla u\cdot u+\int_{\Omega}\gamma u^2\\
&\geq\lambda\|\nabla u\|_2^2-\|b\|_{\infty}\|\nabla u\|_2\|u\|_2+\gamma\|u\|_2^2\\
&\geq\lambda\|\nabla u\|_2^2-\|b\|_{\infty}\delta\|\nabla u\|_2^2-\frac{\|b\|_{\infty}}{4\delta}\|u\|_2^2+\gamma\|u\|_2^2,
\end{align*}
for any $\delta>0$, from Cauchy's inequality. Let now
\[
\delta=\frac{\lambda}{2(\|b\|_{\infty}+1)},\quad\gamma=\frac{\|b\|_{\infty}+1}{4\delta}>0.
\]
Note that $\gamma$ depends only on $\lambda$ and $\|b\|_{\infty}$. Moreover, $\lambda-\|b\|_{\infty}\delta\geq\frac{\lambda}{2}$, therefore
\begin{equation}\label{eq:GammaCoercivity}
\alpha_{\gamma}(u,u)\geq\frac{\lambda}{2}\|\nabla u\|_2^2-\frac{\|b\|_{\infty}}{4\delta}\|u\|_2^2+\gamma\|u\|_2^2 \geq\frac{\lambda}{2}\|\nabla u\|_2^2\geq C\|u\|_{W_0^{1,2}(\Omega)},
\end{equation}
where we also used the Sobolev inequality. Therefore, for this $\gamma$, $\alpha_{\gamma}$ is continuous and coercive, where the coercivity constant and the continuity constant depend only on $d$ (from the Sobolev inequality), $\lambda$, and $\|b\|_{\infty}$. Therefore, the Lax-Milgram theorem shows that, for any $F\in W^{-1,2}(\Omega)$, there exists a unique $u\in W_0^{1,2}(\Omega)$ such that
\[
\int_{\Omega}A\nabla u\nabla\phi+b\nabla u\cdot \phi+\gamma u\phi=F\phi,
\]
for any $\phi\in C_c^{\infty}(\Omega)$. We now write $u=g_{\gamma}F$, and we apply the definition of solution for $u$ as a test function; then, coercivity of $\alpha_{\gamma}$ shows that
\[
C\|u\|_{W_0^{1,2}(\Omega)}^2\leq\alpha_{\gamma}(u,u)=Fu\leq\|F\|_{W^{-1,2}(\Omega)}\|u\|_{W_0^{1,2}(\Omega)},
\]
therefore the operator $g_{\gamma}:W^{-1,2}(\Omega)\to W_0^{1,2}(\Omega)$ is bounded, with the bound depending only on $d$, $\lambda$ and $\|b\|_{\infty}$.

Finally, to show that $g_{\gamma}$ is onto, consider $u\in W_0^{1,2}(\Omega)$, and set
\[
Fv=\int_{\Omega}A\nabla u \nabla v+b\nabla u\cdot v,
\]
for $v\in W_0^{1,2}(\Omega)$. We then have that $F\in W^{-1,2}(\Omega)$, and also $g_{\gamma}F=u$. This completes the proof.
\end{proof}

To pass to the original equation $Lu=0$, we use the Fredholm alternative, as the next proposition shows.

\begin{prop}\label{InhomogeneousSolvability}
Let $\Omega$ be a bounded domain, $A\in M_{\lambda}(\Omega)$, and $b\in L^{\infty}(\Omega)$. Then, for any $F\in W^{-1,2}(\Omega)$, the equation
\[
-\dive(A\nabla u)+b\nabla u=F
\]
has a unique weak solution $u\in W_0^{1,2}(\Omega)$.
\end{prop}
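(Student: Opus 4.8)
The plan is to deduce this from Proposition \ref{GammaSolvability} via the Fredholm alternative, in the spirit of the treatment of non-coercive operators in \cite{Evans}. Proposition \ref{GammaSolvability} furnishes a constant $\gamma>0$ (depending only on $\lambda$ and $\|b\|_\infty$) for which $L+\gamma$ is boundedly invertible from $W^{-1,2}(\Omega)$ onto $W_0^{1,2}(\Omega)$, with inverse $g_\gamma$. Rewriting $Lu=F$ as $(L+\gamma)u=F+\gamma u$ and applying $g_\gamma$, a function $u\in W_0^{1,2}(\Omega)$ solves $Lu=F$ if and only if
\[
u-\gamma\,g_\gamma u = g_\gamma F,
\]
where on the left $u$ is viewed as an element of $W^{-1,2}(\Omega)$ through the embeddings $W_0^{1,2}(\Omega)\hookrightarrow L^2(\Omega)\hookrightarrow W^{-1,2}(\Omega)$ (the second via $v\mapsto\left(\phi\mapsto\int_\Omega v\phi\right)$). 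The equivalence is genuine because $g_\gamma$ is a bijection with $g_\gamma^{-1}=L+\gamma$: applying $L+\gamma$ to the displayed identity returns $Lu=F$, and conversely.

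Next I would check that $T:=\gamma\,g_\gamma\colon W_0^{1,2}(\Omega)\to W_0^{1,2}(\Omega)$ is compact. It factors as $W_0^{1,2}(\Omega)\xrightarrow{\ \iota\ }L^2(\Omega)\hookrightarrow W^{-1,2}(\Omega)\xrightarrow{\ \gamma g_\gamma\ }W_0^{1,2}(\Omega)$, where $\iota$ is the Rellich--Kondrachov embedding, which is compact since $\Omega$ is bounded, while the remaining two maps are bounded (the last one by Proposition \ref{GammaSolvability}); hence $T$ is compact. Thus $I-T$ is a compact perturbation of the identity on the Hilbert space $W_0^{1,2}(\Omega)$, and the Fredholm alternative applies: $I-T$ is surjective if and only if it is injective.

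It then remains to prove injectivity of $I-T$, i.e. uniqueness: if $u\in W_0^{1,2}(\Omega)$ solves $Lu=0$, then $u=0$. This is immediate from the maximum principle, Theorem \ref{MaximumPrinciple}: since $u\in W_0^{1,2}(\Omega)$ its boundary trace vanishes in the $W^{1,2}$-sense, so $0=\inf_{\partial\Omega}u\le u(x)\le\sup_{\partial\Omega}u=0$ for almost every $x\in\Omega$; alternatively one may invoke Theorem \ref{SupremumBound} with $F=0$. With injectivity established, the Fredholm alternative gives that $I-T$ is a bijection of $W_0^{1,2}(\Omega)$, so for every $F\in W^{-1,2}(\Omega)$ the equation $u-Tu=g_\gamma F$ has a unique solution $u\in W_0^{1,2}(\Omega)$, which is precisely the unique weak solution of $Lu=F$. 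The only non-mechanical ingredient is the uniqueness step, and there the work has already been carried out in Theorem \ref{MaximumPrinciple}, so no serious obstacle is expected.
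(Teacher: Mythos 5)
Your proposal is correct and follows essentially the same route as the paper: add $\gamma$ to make the form coercive (Proposition \ref{GammaSolvability}), set up a compact perturbation of the identity via the Rellich--Kondrachov embedding, invoke the Fredholm alternative, and establish injectivity from the maximum principle (Theorem \ref{MaximumPrinciple}). The only cosmetic difference is that you conjugate by $g_\gamma$ and place the compact operator on $W_0^{1,2}(\Omega)$ whereas the paper works with the conjugate operator $K=T\circ\gamma g_\gamma$ on $W^{-1,2}(\Omega)$; the two formulations are equivalent.
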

\begin{proof}
Consider first the number $\gamma$ that appears in proposition \ref{GammaSolvability}, and also the operator $g_{\gamma}:W^{-1,2}(\Omega)\to W_0^{1,2}(\Omega)$. Consider also the operator $T:L^2(\Omega)\to W^{-1,2}(\Omega)$, with
\[
Tf(v)=\int_{\Omega}fv
\]
for all $v\in W_0^{1,2}(\Omega)$. Since the embedding $W_0^{1,2}(\Omega)\hookrightarrow L^2(\Omega)$ is compact, the operator
\[
K=T\circ \gamma g_{\gamma}:W^{-1,2}(\Omega)\to W^{-1,2}(\Omega)
\]
is compact.

Suppose now that $F,G\in W^{-1,2}(\Omega)$, and $G=KG+F$. Then, for any $v\in W_0^{1,2}(\Omega)$,
\begin{align*}
\alpha(g_{\gamma}G,v)&=\alpha_{\gamma}(g_{\gamma}G,v)-\int_{\Omega}\gamma g_{\gamma}G\cdot v=\left<G,v\right>-\int_{\Omega}\gamma g_{\gamma}G\cdot v\\
&=\left<KG+F,v\right>-\int_{\Omega}\gamma g_{\gamma}G\cdot v=\left<F,v\right>+\left<KG,v\right>-\int_{\Omega}\gamma g_{\gamma}G\cdot v\\
&=\left<F,v\right>,
\end{align*}
therefore $u=g_{\gamma}G\in W_0^{1,2}(\Omega)$ solves the equation $Lu=F$. Hence, if $G-KG=0$, then $u=g_{\gamma}G$ solves the equation $Lu=0$, therefore the maximum principle (theorem \ref{MaximumPrinciple}) shows that $g_{\gamma}G=0$. Since $g_{\gamma}$ is injective, this implies that $G=0$; therefore, the operator $I-K$ is injective.

We now use compactness of $K$, and we apply the Fredholm alternative to obtain that $I-K:W^{-1,2}(\Omega)\to W^{-1,2}(\Omega)$ is bijective. The open mapping theorem shows then that $I-K$ is invertible. Therefore the operator
\[
g=g_{\gamma}\circ(I-K)^{-1}:W^{-1,2}(\Omega)\to W_0^{1,2}(\Omega)
\]
is bounded. So, if $F\in W^{-1,2}(\Omega)$ and $u=gF$, then 
\[
g_{\gamma}^{-1}u-Kg_{\gamma}^{-1}u=(I-K)(g_{\gamma}^{-1}u)=F,
\]
therefore $u=g_{\gamma}(g_{\gamma}^{-1}u)\in W_0^{1,2}(\Omega)$ solves the equation $Lu=F$. Uniqueness now follows from the maximum principle. 
\end{proof}

By considering the adjoint operator $g^t$, we can show solvability of the equation $L^tu=F$.

\begin{prop}\label{InhomogeneousSolvabilityForAdjoint}
Under the same conditions as in proposition \ref{InhomogeneousSolvability}, for every $F\in W^{-1,2}(\Omega)$, the equation
\[
-\dive(A^t\nabla u)-\dive(bu)=F
\]
has a unique solution $u\in W_0^{1,2}(\Omega)$.
\end{prop}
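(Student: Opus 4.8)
The plan is to realise the solution operator for $L^t$ as the Banach-space adjoint of the solution operator for $L$ constructed in Proposition \ref{InhomogeneousSolvability}. Write $V=W_0^{1,2}(\Omega)$; this is a reflexive (Hilbert) space, $C_c^\infty(\Omega)$ is dense in it, and both $\alpha$ and $\alpha^t$ are bounded bilinear forms on $V\times V$ with $\alpha^t(u,v)=\alpha(v,u)$. Since $\alpha(u,\cdot)$ and $\alpha^t(u,\cdot)$ are continuous on $V$, a function $u\in V$ is a weak solution of $Lu=F$ (resp. $L^tu=F$) if and only if $\alpha(u,v)=\langle F,v\rangle$ (resp. $\alpha^t(u,v)=\langle F,v\rangle$) for every $v\in V$. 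Moreover a weak solution only feels $F$ through its restriction to $C_c^\infty(\Omega)\subseteq V$, and every element of $V^*$ is, by Hahn--Banach, the restriction of some element of $W^{-1,2}(\Omega)=(W^{1,2}(\Omega))^*$; hence it suffices to treat $F\in V^*$.

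Next I would define the bounded operator $T\colon V\to V^*$ by $\langle Tu,v\rangle=\alpha(u,v)$. Proposition \ref{InhomogeneousSolvability} asserts exactly that $T$ is a bijection — surjectivity being existence, injectivity being uniqueness — so by the open mapping theorem $T^{-1}\colon V^*\to V$ is bounded; in the notation of that proof, $T^{-1}=g$. Consider the Banach adjoint $T^*\colon V^{**}\to V^*$ and use the canonical isomorphism $V^{**}\cong V$ afforded by reflexivity. Then for $u,v\in V$,
\[
\langle T^*u,v\rangle=\langle Tv,u\rangle=\alpha(v,u)=\alpha^t(u,v),
\]
so $T^*$ is precisely the operator attached to the form $\alpha^t$; combined with the previous paragraph, $u$ solves $L^tu=F$ weakly if and only if $T^*u=F$.

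Since $T$ is a topological isomorphism, so is $T^*$, with bounded inverse $(T^*)^{-1}=(T^{-1})^*=g^*$. Therefore, given $F\in W^{-1,2}(\Omega)$ regarded as an element of $V^*$, the function $u=g^*F\in W_0^{1,2}(\Omega)$ satisfies $\alpha^t(u,v)=\langle F,v\rangle$ for all $v\in V$, in particular for all $\phi\in C_c^\infty(\Omega)$, so it is a weak solution of $-\dive(A^t\nabla u)-\dive(bu)=F$. For uniqueness in $W_0^{1,2}(\Omega)$: if $u\in W_0^{1,2}(\Omega)$ is any weak solution, then by density of $C_c^\infty(\Omega)$ in $V$ and continuity of $\alpha^t(u,\cdot)$ one gets $T^*u=F$, and injectivity of $T^*$ forces $u=g^*F$. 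The one point that requires care is the bookkeeping of the dual pairings: one must check, using reflexivity of $W_0^{1,2}(\Omega)$, that the Banach adjoint of the operator attached to $\alpha$ is the operator attached to $\alpha^t$ — and in particular not something twisted by the Hilbert-space Riesz identification of $V$ with $V^*$ — and that testing against $C_c^\infty(\Omega)$ is equivalent to testing against all of $W_0^{1,2}(\Omega)$. Everything else is a formal consequence of Proposition \ref{InhomogeneousSolvability} together with the standard fact that adjoints of isomorphisms are isomorphisms.
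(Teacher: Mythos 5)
Your proof is correct and takes essentially the same route as the paper: the paper forms the bounded bijective solution operator $g\colon W^{-1,2}(\Omega)\to W_0^{1,2}(\Omega)$ for $L$ and checks by unwinding definitions (using $\alpha^t(u,\phi)=\alpha(\phi,u)$, surjectivity of $g$, and reflexivity) that $g^t$ solves $L^t u=F$, which is exactly the adjoint-of-an-isomorphism argument you phrase via the form operator $T$ with $T^{-1}=g$ and $T^*=T_{\alpha^t}$. The only difference is packaging — you state the abstract principle up front, while the paper verifies the identity $\alpha^t(g^tF,\phi)=\langle F,\phi\rangle$ by a short chain of equalities — and your explicit warning about not confusing the Banach adjoint with the Riesz/Hilbert adjoint is a sound precaution that the paper handles implicitly.
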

\begin{proof}
Consider the $\gamma$ that appears in proposition \ref{GammaSolvability}, and the operator
\[
g=g_{\gamma}\circ(I-K)^{-1}:W^{-1,2}(\Omega)\to W_0^{1,2}(\Omega)
\]
that appears in the proof of proposition \ref{InhomogeneousSolvability}. Note first that $g$ is a composition of two bijective operators, therefore it is bijective.

Suppose now that $F\in W^{-1,2}(\Omega)$, and set $u=g^tF$. By identifying $W^{-1,2}(\Omega)^*$ with $W_0^{1,2}(\Omega)$, we can consider that $g^t:W^{-1,2}(\Omega)\to W_0^{1,2}(\Omega)$. Then, if $\phi\in W_0^{1,2}(\Omega)$, there exists $G\in W^{-1,2}(\Omega)$ such that $\phi=gG$, and then
\[
\alpha^t(u,\phi)=\alpha(\phi,u)=\alpha(gG,u)=\left<G,u\right>=\left<G,g^tF\right>=\left<F,gG\right>=\left<F,u\right>,
\]
therefore $u=g^tF$ solves the equation $L^tu=F$.

For uniqueness, suppose that $u\in W_0^{1,2}(\Omega)$ solves the equation $L^tu=0$. Then, for any $G\in W^{-1,2}(\Omega)$,
\[
\left<G,u\right>=\alpha(gG,u)=\alpha^t(u,gG)=0,
\]
therefore $u=0$, and the proof is complete.
\end{proof}

\subsection{Solvability for measures}
In chapter 2 we defined the spaces $C_b(\Omega)$ and $\mathcal{B}(\Omega)$. In this section we will show solvability of the equations $Lu=\mu$ and $L^tu=\mu$, where $\mu\in\mathcal{B}(\Omega)$. We will later apply the theorems in this section for Dirac masses, in order to construct Green's function. 

In order to construct those solutions, we will consider the operators from the previous section and we will restrict their domains so that their images are contained in $C_b(\Omega)$; this procedure will require pointwise estimates on the solutions. We will then consider their adjoint operators, which will be defined on $\mathcal{B}(\Omega)$.

We first show solvability for the adjoint equation.

\begin{prop}\label{MeasureSolvabilityForAdjoint}
Let $\Omega$ be a bounded domain, $A\in M_{\lambda}(\Omega)$, and $b\in L^{\infty}(\Omega)$. For every $\mu\in\mathcal{B}(\Omega)$, the equation
\[
-\dive(A^t\nabla u)-\dive(bu)=\mu
\]
has a unique weak solution, which belongs to $W_0^{1,p}(\Omega)$ for every $p\in\left(1,\frac{d}{d-1}\right)$, and also satisfies the inequality
\[
\|u\|_{W_0^{1,p}(\Omega)}\leq C_p\|\mu\|_{\mathcal{B}(\Omega)},
\]
where $C_p$ is a constant that depends on $d,\lambda,\|b\|_{\infty},\diam(\Omega)$, and $p$.
\end{prop}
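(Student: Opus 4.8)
\emph{Proof sketch.} The plan is to build $u$ by duality against the solution operator of $L$, exploiting that solutions of $Lw=F$ with $F$ of negative order are bounded, together with the interior De Giorgi--Nash estimate. Fix $p\in\bigl(1,\tfrac{d}{d-1}\bigr)$, with conjugate exponent $p'$. For $f\in L^{p'}(\Omega)$ we have $f\in W^{-1,2}(\Omega)$ (as $p<2$), so proposition \ref{InhomogeneousSolvability} produces a unique $w=Sf\in W_0^{1,2}(\Omega)$ with $Lw=f$; moreover $\|f\|_{W^{-1,p}(\Omega)}\leq C\|f\|_{L^{p'}(\Omega)}$, so since $p\in\bigl[1,\tfrac{d}{d-1}\bigr)$ theorem \ref{SupremumBound} gives $\|Sf\|_{L^\infty(\Omega)}\leq C\|f\|_{L^{p'}(\Omega)}$, while proposition \ref{HolderInside} shows $Sf\in C(\Omega)$. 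Hence $S\colon L^{p'}(\Omega)\to C_b(\Omega)$ is bounded, and I set $u=u_\mu:=S^*\mu$; under the identification $(L^{p'}(\Omega))^*=L^p(\Omega)$ this means $\int_\Omega u_\mu f=\langle\mu,Sf\rangle$ for all $f\in L^{p'}(\Omega)$, and $\|u_\mu\|_{L^p(\Omega)}\leq C_p\|\mu\|_{\mathcal B(\Omega)}$.

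To estimate $\nabla u_\mu$ I would test against divergences. For $\mathbf G\in C_c^\infty(\Omega;\mathbb R^d)$ one has $\dive\mathbf G\in W^{-1,p}(\Omega)$ with $\|\dive\mathbf G\|_{W^{-1,p}(\Omega)}\leq\|\mathbf G\|_{L^{p'}(\Omega)}$, so $w:=S(\dive\mathbf G)$ satisfies $\|w\|_{L^\infty(\Omega)}\leq C\|\mathbf G\|_{L^{p'}(\Omega)}$ by theorem \ref{SupremumBound}, whence
\[
\Bigl|\int_\Omega u_\mu\,\dive\mathbf G\Bigr|=|\langle\mu,w\rangle|\leq\|\mu\|_{\mathcal B(\Omega)}\,\|w\|_{L^\infty(\Omega)}\leq C\|\mathbf G\|_{L^{p'}(\Omega)}\|\mu\|_{\mathcal B(\Omega)}.
\]
Since such $\mathbf G$ are dense in $L^{p'}(\Omega;\mathbb R^d)$, the distributional gradient of $u_\mu$ is represented by an $L^p$ vector field with $\|\nabla u_\mu\|_{L^p(\Omega)}\leq C_p\|\mu\|_{\mathcal B(\Omega)}$; thus $u_\mu\in W^{1,p}(\Omega)$, with the asserted bound, for every $p<\tfrac{d}{d-1}$.

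To identify the equation, fix $\phi\in C_c^\infty(\Omega)$. Since $A$ is Lipschitz, $L\phi=-\dive(A\nabla\phi)+b\nabla\phi$ lies in $L^\infty(\Omega)$ with compact support, hence in $L^{p'}(\Omega)$, and $\phi$ is the $W_0^{1,2}$ solution of $Lw=L\phi$, so $S(L\phi)=\phi$ by the uniqueness in proposition \ref{InhomogeneousSolvability}. Integrating by parts (legitimate as $u_\mu\in W^{1,p}_{\loc}(\Omega)$) gives $\alpha^t(u_\mu,\phi)=\int_\Omega u_\mu\,L\phi=\langle\mu,S(L\phi)\rangle=\langle\mu,\phi\rangle$, so $u_\mu$ is a weak solution of $L^tu_\mu=\mu$. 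For the boundary condition I would use that a continuous function on $\Omega$ vanishing a.e.\ is identically zero, so $L^2(\Omega)$ is weak-$*$ dense in $\mathcal B(\Omega)$; a convexity argument reducing to point masses (approximated by mollifiers) then furnishes a net $g_\alpha\in L^2(\Omega)$ with $g_\alpha\rightharpoonup^*\mu$ and $\sup_\alpha\|g_\alpha\|_{\mathcal B(\Omega)}=\sup_\alpha\|g_\alpha\|_{L^1(\Omega)}\leq\|\mu\|_{\mathcal B(\Omega)}$. Comparing definitions, for $g\in L^2(\Omega)$ the element $S^*g$ is exactly the $W_0^{1,2}$ solution of $L^tv=g$ furnished by proposition \ref{InhomogeneousSolvabilityForAdjoint}, so $u_{g_\alpha}\in W_0^{1,2}(\Omega)\subseteq W_0^{1,p}(\Omega)$; the uniform $W^{1,p}$ bound, weak compactness in the reflexive space $W^{1,p}(\Omega)$, and the identification of the weak limit with $u_\mu$ (from $\int_\Omega u_{g_\alpha}f=\langle g_\alpha,Sf\rangle\to\langle\mu,Sf\rangle$) then force $u_\mu\in W_0^{1,p}(\Omega)$, this subspace being weakly closed. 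Uniqueness would follow similarly, by testing a solution of $L^tv=0$ lying in $\bigcap_pW_0^{1,p}(\Omega)$ against $L\phi$, $\phi\in C_c^\infty(\Omega)$, and invoking the density of such $L\phi$ in $L^{p'}(\Omega)$.

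The main obstacle is the passage from $W^{1,p}(\Omega)$ to $W_0^{1,p}(\Omega)$: the duality yields the interior regularity and the norm bounds at once, but the vanishing trace must be extracted by approximating a general element of the large, non-separable space $\mathcal B(\Omega)=C_b(\Omega)^*$ by a \emph{bounded} net of $L^2$ densities converging weak-$*$, and then combined with weak compactness in $W^{1,p}$; making this precise, together with the companion uniqueness statement in this low-regularity class, is where the care is required. Throughout one also has to verify that every constant produced depends only on the quantities allowed in definition \ref{GoodConstant} together with $p$, and that the exponent bookkeeping is consistent --- everything rests on the single restriction $p<\tfrac{d}{d-1}$, which is exactly the range in which theorem \ref{SupremumBound} applies.
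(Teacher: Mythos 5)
Your overall strategy — building $u$ by taking the adjoint of the solution operator for $L$, and using the De~Giorgi--Nash interior continuity together with the $L^\infty$ bound of theorem~\ref{SupremumBound} to land the range in $C_b(\Omega)$ — is the same as the paper's, and the duality bound $\|u_\mu\|_{W^{1,p}}\leq C_p\|\mu\|_{\mathcal B(\Omega)}$ comes out the same way. But there is a genuine gap at the step identifying the equation: you write that $L\phi=-\dive(A\nabla\phi)+b\nabla\phi\in L^\infty(\Omega)$ ``since $A$ is Lipschitz,'' yet the proposition only assumes $A\in M_\lambda(\Omega)$, i.e.\ bounded and uniformly elliptic. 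For such $A$ the distribution $\dive(A\nabla\phi)$ need not be a function, so $L\phi\notin L^{p'}(\Omega)$ and your operator $S$ (defined only on $L^{p'}$) cannot be applied to it. The paper sidesteps this by never treating $L\phi$ as a function: it defines the functional $F_\phi v=\int_\Omega A\nabla\phi\nabla v+b\nabla\phi\cdot v$, which does lie in $W^{-1,p}(\Omega)$ under the stated hypotheses, and works with the solution operator $g$ on $W^{-1,p}(\Omega)$ rather than on $L^{p'}(\Omega)$. Replacing $S\colon L^{p'}\to C_b$ by the restriction $g\colon W^{-1,p}\to C_b$ throughout would repair this.

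The uniqueness sketch has a related problem: density of $\{L\phi:\phi\in C_c^\infty(\Omega)\}$ in $L^{p'}(\Omega)$ would, by duality, say that the only $v\in L^p(\Omega)$ with $\int_\Omega vL\phi=0$ for all $\phi$ is $v=0$, i.e.\ that $L^tv=0$ has no nontrivial distributional $L^p$ solution in $\Omega$ — which is false, because no boundary condition is encoded in that pairing. What is actually needed (and what the paper invokes) is density of the functionals $F_\phi$ in $W^{-1,p}(\Omega)$, paired against $v$ in a Sobolev dual pairing where the membership $v\in W_0^{1,p}$ enters. On the other hand, you are right to flag the $W_0^{1,p}$ membership of $u_\mu$ as the delicate point, and your route — approximate $\mu$ weak-$*$ by a norm-bounded net $T_{g_\alpha}$ with $g_\alpha\in L^2(\Omega)$, note $S^*(T_{g_\alpha})\in W_0^{1,2}(\Omega)\subseteq W_0^{1,p}(\Omega)$ by proposition~\ref{InhomogeneousSolvabilityForAdjoint}, and pass to the limit using weak closedness of $W_0^{1,p}(\Omega)$ — is sound; a short Hahn--Banach separation argument shows that the unit ball of $L^1(\Omega)\hookrightarrow\mathcal B(\Omega)$ is weak-$*$ dense in the unit ball of $\mathcal B(\Omega)$, so the needed bounded net does exist. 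That part of your outline is, if anything, spelled out more carefully than in the paper.
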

\begin{proof}
Consider the operator $g:W^{-1,2}(\Omega)\to W_0^{1,2}(\Omega)$ that appears in proposition \ref{InhomogeneousSolvability}, which maps $F\in W^{-1,2}(\Omega)$ to the solution $u\in W_0^{1,2}(\Omega)$ of the equation
\[
-\dive(A\nabla u)+b\nabla u=F.
\]
Using proposition \ref{HolderInside} we conclude that $u\in C(\Omega)$. Moreover, from theorem \ref{SupremumBound}, this operator maps $W^{-1,p}(\Omega)$ to $C_b(\Omega)$, and its norm is a constant $C_p$ which depends on $d,\lambda,\|b\|_{\infty}$, $\diam(\Omega)$, and $p$. This shows that $g^t$ maps $\mathcal{B}(\Omega)=C_b(\Omega)^*$ to $W^{-1,p}(\Omega)^*$, which we identify with $W_0^{1,p}(\Omega)$; hence, for any $\mu\in\mathcal{B}(\Omega)$, we obtain that
\[
\|g^t\mu\|_{W_0^{1,p}(\Omega)}\leq C_p\|\mu\|_{\mathcal{B}(\Omega)}.
\]

Given $\phi\in C_c^{\infty}(\Omega)$, and for $v\in W_0^{1,p}(\Omega)$, set
\[
F_{\phi}v=\int_{\Omega}A\nabla\phi\nabla v+b\nabla\phi\cdot v.
\]
Then $F_{\phi}\in W^{-1,p}(\Omega)$ for all $\phi\in C_c^{\infty}(\Omega)$. Since $\Omega$ is bounded, $W^{-1,p}(\Omega)\subseteq W^{-1,2}(\Omega)$, therefore $F_{\phi}\in W^{-1,2}(\Omega)$. Also, $g$ is injective on $W^{-1,2}(\Omega)$, so we obtain that $gF_{\phi}=\phi$, hence $C_c^{\infty}(\Omega)$ is contained in the image of $g:W^{-1,p}(\Omega)\to C_b(\Omega)$.

We will now show uniqueness: set $X$ to be the subspace of $W^{-1,p}(\Omega)$ that contains all the $F_{\phi}$. Note that $X$ is dense in $W^{-1,2}(\Omega)$, since $g:W^{-1,2}(\Omega)\to W_0^{1,2}(\Omega)$ is an isomorphism, and $X=g^{-1}(C_c^{\infty}(\Omega))$. Since the inclusion $W^{-1,p}(\Omega)\hookrightarrow W^{-1,2}(\Omega)$ is continuous, this will mean that $X$ is dense in $W^{-1,p}(\Omega)$. Therefore, if $u\in W_0^{1,p}(\Omega)$ solves the equation $-\dive(A^t\nabla u)-\dive(bu)=0$, then for every $\phi\in C_c^{\infty}(\Omega)$,
\[
0=\alpha^t(u,\phi)=\alpha(\phi,u)=\alpha(gF_{\phi},u)=\left<F_{\phi},u\right>,
\]
therefore $Gu=0$ for all $G\in X$. But $X$ is dense in $W^{-1,p}(\Omega)$, therefore $u=0$.

Let now $\mu\in\mathcal{B}(\Omega)$, and set $u=g^t\mu\in W_0^{1,p}(\Omega)$. Let also $\phi\in C_c^{\infty}(\Omega)$. Then, we compute
\[
\alpha^t(u,\phi)=\alpha(\phi,u)=\alpha(gF_{\phi},u)=\left<F,u\right>=\left<F,g^t\mu\right>=\left<\mu,gF_{\phi}\right>.
\]
Therefore $u=g^t\mu$ solves the equation $L^tu=\mu$, and also
\[
\|u\|_{W_0^{1,p}(\Omega)}=\|g^t\mu\|_{W_0^{1,p}(\Omega)}\leq C_p\|\mu\|_{\mathcal{B}(\Omega)}.
\]
\end{proof}

We now proceed to show the analogous result for the equation $Lu=\mu$. To do this, we would need pointwise bounds for solutions to the equation $L^tu=0$, which might not hold. For this reason, we will transform our operator to a coercive operator, and then use the Fredholm alternative. We first show the next lemma.

\begin{lemma}\label{MeasureSolvabilityWithGamma}
Suppose that $b\in\Lip(\Omega)$, and $\gamma>0$ is sufficiently large. Then, for every $\mu\in\mathcal{B}(\Omega)$, the equation
\[
-\dive(A\nabla u)+b\nabla u+\gamma u=\mu
\]
has a weak solution $u\in W_0^{1,p}(\Omega)$, for any $p\in\left(1,\frac{d}{d-1}\right)$.
\end{lemma}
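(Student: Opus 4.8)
The plan is to follow the proof of Proposition \ref{MeasureSolvabilityForAdjoint}, but with the transpose operator $L^t+\gamma$ playing the role that $L$ played there. First I would fix $\gamma>0$ large enough to serve two purposes at once: that the bilinear form $\alpha_\gamma$, and hence its adjoint $\alpha_\gamma^t$, is coercive on $W_0^{1,2}(\Omega)$ (as in the proof of Proposition \ref{GammaSolvability}), and that $\gamma\geq\|\dive b\|_{L^\infty(\Omega)}$, which is possible precisely because $b\in\Lip(\Omega)$ forces $\dive b\in L^\infty(\Omega)$. Since $\alpha_\gamma^t$ is then bounded and coercive, the Lax--Milgram theorem produces a bounded solution operator $h:W^{-1,2}(\Omega)\to W_0^{1,2}(\Omega)$ for the equation $-\dive(A^t\nabla u)-\dive(bu)+\gamma u=F$, characterized by $\alpha_\gamma^t(hF,\phi)=\langle F,\phi\rangle$, equivalently $\alpha_\gamma(\phi,hF)=\langle F,\phi\rangle$, for all $\phi\in C_c^\infty(\Omega)$; moreover this $W_0^{1,2}(\Omega)$ solution is unique.

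The main point --- and the step I expect to be the real obstacle --- is to upgrade $h$ to a bounded operator $W^{-1,p}(\Omega)\to C_b(\Omega)$ for every $p\in(1,\frac{d}{d-1})$. Rewriting $L^tu+\gamma u$ in the form $-\dive(A^t\nabla u+bu)+(\gamma-\dive b)u$, the choice of $\gamma$ makes the effective zeroth order coefficient $\gamma-\dive b$ nonnegative, which is exactly the sign condition needed to run the maximum-principle argument behind Theorem \ref{SupremumBound} (theorem 8.16 in \cite{Gilbarg}) for this operator; this gives $\|hF\|_{L^\infty(\Omega)}\leq C_p\|F\|_{W^{-1,p}(\Omega)}$, with $C_p$ depending on $d,p,\lambda,\|b\|_\infty,\|\dive b\|_\infty,\diam(\Omega)$ and the now-fixed $\gamma$. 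Interior continuity of $hF$ follows from the De Giorgi--Nash--Moser estimates of \cite[Ch.~8]{Gilbarg} applied to the same operator, whose lower order coefficients are bounded, exactly as in Proposition \ref{HolderInside}. Hence $hF\in C_b(\Omega)$ and $h:W^{-1,p}(\Omega)\to C_b(\Omega)$ is bounded. It is worth noting that this is precisely where the two hypotheses of the lemma enter ($b\in\Lip$, so that $\dive b\in L^\infty$, and $\gamma$ large), in contrast with Proposition \ref{MeasureSolvabilityForAdjoint}, where the first order drift of $L$ itself causes no trouble for maximum-principle arguments and no divergence assumption is needed.

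Finally I would take adjoints. Since $h:W^{-1,p}(\Omega)\to C_b(\Omega)$ is bounded, $h^t$ maps $\mathcal{B}(\Omega)=C_b(\Omega)^*$ into $W^{-1,p}(\Omega)^*$, which we identify with $W_0^{1,p}(\Omega)$ as in Proposition \ref{MeasureSolvabilityForAdjoint}, so that $\|h^t\mu\|_{W_0^{1,p}(\Omega)}\leq C_p\|\mu\|_{\mathcal{B}(\Omega)}$. For $\phi\in C_c^\infty(\Omega)$, define $F_\phi\in W^{-1,p}(\Omega)$ by $F_\phi v=\alpha_\gamma(v,\phi)$; this is bounded on $W^{1,p}(\Omega)$ because $\phi$ is smooth and compactly supported, and it also lies in $W^{-1,2}(\Omega)$ since $\Omega$ is bounded. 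Since $\phi$ is itself the unique $W_0^{1,2}(\Omega)$ solution of $L^tu+\gamma u=F_\phi$, we get $hF_\phi=\phi$. Setting $u=h^t\mu\in W_0^{1,p}(\Omega)$, for every $\phi\in C_c^\infty(\Omega)$ we then have
\[
\alpha_\gamma(u,\phi)=\langle F_\phi,u\rangle=\langle F_\phi,h^t\mu\rangle=\langle hF_\phi,\mu\rangle=\langle\mu,\phi\rangle,
\]
so $u$ is the desired weak solution of $-\dive(A\nabla u)+b\nabla u+\gamma u=\mu$ in $W_0^{1,p}(\Omega)$, with the quantitative estimate coming from the displayed bound on $\|h^t\mu\|_{W_0^{1,p}(\Omega)}$. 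The routine verifications here (boundedness of $F_\phi$, the identification of duals, and the equality $hF_\phi=\phi$) are exactly parallel to those in the proof of Proposition \ref{MeasureSolvabilityForAdjoint}.
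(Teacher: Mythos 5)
Your proof is correct and takes essentially the same route as the paper. You construct the solution operator for $L^t+\gamma$ directly via Lax--Milgram applied to $\alpha_\gamma^t$ (which is coercive since $\alpha_\gamma^t(u,u)=\alpha_\gamma(u,u)$), whereas the paper obtains the same operator $g_\gamma^t$ as the Banach-space adjoint of $g_\gamma$ from Proposition \ref{GammaSolvability}; this is only a cosmetic difference, and the remaining steps (rewriting $L^t u + \gamma u$ so that the zeroth-order coefficient $\gamma-\dive b$ is nonnegative, invoking theorems 8.16 and 8.22 of \cite{Gilbarg} to land in $C_b(\Omega)$, taking the adjoint into $\mathcal{B}(\Omega)^*\to W_0^{1,p}(\Omega)$, and verifying the weak formulation via $hF_\phi=\phi$) mirror the paper's argument precisely.
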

\begin{proof}
First, fix $p\in\left(1,\frac{d}{d-1}\right)$. Consider also a constant $\gamma>0$ which is larger than the one appearing in proposition \ref{GammaSolvability}, and also
\[
\gamma\geq\dive b.
\]
Consider now the operator $g_{\gamma}^t:W^{1,2}(\Omega)\to W_0^{1,2}(\Omega)$, which is the adjoint to $g_{\gamma}:W^{1,2}(\Omega)\to W_0^{1,2}(\Omega)$, and where the last operator appears in proposition \ref{GammaSolvability}. Let $F\in W^{-1,2}(\Omega)$ and $v\in W_0^{1,2}(\Omega)$. Since $g_{\gamma}$ is onto $W_0^{1,2}(\Omega)$, there exists $G\in W^{-1,2}(\Omega)$ such that $gG=v$. Therefore,
\[
\alpha_{\gamma}^t(g^tF,v)=\alpha_{\gamma}(\phi, g^tF)=\alpha_{\gamma}(g_{\gamma}G,g^tF)=\left<G,g^tF\right>=\left<F,gG\right>=\left<F,v\right>.
\]
Hence, $g_{\gamma}$ maps $F\in W^{-1,2}(\Omega)$ to $u\in W_0^{1,2}(\Omega)$, such that
\[
-\dive(A\nabla u)-\dive(bu)+\gamma u=0,
\]
and, since $b$ is almost everywhere differentiable, this is equivalent to
\[
-\dive(A\nabla u)-b\nabla u+(\gamma-\dive b)u=0.
\]
We now use proposition 8.22 in \cite{Gilbarg} to conclude that $g_{\gamma}^t$ maps $W^{-1,p}(\Omega)$ to $C(\Omega)$. Our choice of $\gamma$ shows now that proposition 8.16 in \cite{Gilbarg}, is applicable for the equation $L^tu+\gamma u=0$, hence there exists a constant $C>0$, such that
\[
\|g_{\gamma}^tF\|_{L^{\infty}(\Omega)}\leq C\|F\|_{W^{-1,p}(\Omega)},
\]
so $g_{\gamma}^t:W^{-1,p}(\Omega)\to C_b(\Omega)$ is bounded.

Denote by $g_{\gamma}^{tT}$ the adjoint of $g_{\gamma}^t:W^{-1,p}(\Omega)\to C_b(\Omega)$; then $g_{\gamma}^{tT}:\mathcal{B}(\Omega)\to W_0^{1,p}(\Omega)$ is bounded. Let $\mu\in\mathcal{B}(\Omega)$ and set $u=g_{\gamma}^{tT}\mu\in W_0^{1,p}(\Omega)$. As in the proof of proposition \ref{MeasureSolvabilityForAdjoint}, if $\phi\in C_c^{\infty}(\Omega)$, there exists $F\in W^{-1,p}(\Omega)$ such that $g_{\gamma}^tF=\phi$. Then, we compute
\[
a_{\gamma}(u,\phi)=\alpha_{\gamma}^t(\phi,u)=\alpha_{\gamma}^t(g_{\gamma}^tF,u)=\left<F,u\right>=\left<F,g_{\gamma}^{tT}\mu\right>=\left<\mu,g_{\gamma}^tF\right>,
\]
therefore $g_{\gamma}^{tT}\mu=u\in W_0^{1,p}(\Omega)$ is a solution to the equation
\[
-\dive(A\nabla u)+b\nabla u+\gamma u=\mu.
\]
This completes the proof.
\end{proof}

To pass to the equation $Lu=\mu$, we use the Fredholm alternative, as the next proposition shows. 

\begin{prop}\label{MeasureSolvability}
Suppose that $b\in\Lip(\Omega)$, and let $\mu\in\mathcal{B}(\Omega)$. Then, there exists a unique weak solution $u$ of the equation
\[
-\dive(A\nabla u)+b\nabla u=\mu
\]
in $\Omega$, which also belongs to $W_0^{1,p}(\Omega)$ for every $p\in\left(1,\frac{d}{d-1}\right)$.
\end{prop}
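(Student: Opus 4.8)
The plan is to mirror the Fredholm argument used for Proposition~\ref{InhomogeneousSolvability}, starting from the solution operator for the modified equation $Lu+\gamma u=\mu$ furnished by Lemma~\ref{MeasureSolvabilityWithGamma}, and then removing the zeroth order term.

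Fix $p\in\left(1,\frac{d}{d-1}\right)$, let $\gamma>0$ be as in Lemma~\ref{MeasureSolvabilityWithGamma}, and write $G_\gamma=g_\gamma^{tT}:\mathcal B(\Omega)\to W_0^{1,p}(\Omega)$ for the bounded operator constructed there, so that $G_\gamma\mu$ solves $-\dive(A\nabla u)+b\nabla u+\gamma u=\mu$ and $\alpha_\gamma(G_\gamma\mu,\phi)=\langle\mu,\phi\rangle$ for every $\phi\in C_c^\infty(\Omega)$. Since $p<d$ and $p^*=\frac{dp}{d-p}>1$, the Rellich--Kondrachov theorem gives a compact embedding $W_0^{1,p}(\Omega)\hookrightarrow L^1(\Omega)$, and $L^1(\Omega)$ embeds boundedly into $\mathcal B(\Omega)=C_b(\Omega)^*$ via $f\mapsto\left(\phi\mapsto\int_\Omega f\phi\right)$; let $\iota:W_0^{1,p}(\Omega)\to\mathcal B(\Omega)$ denote the resulting compact embedding. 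Then $K:=\gamma\,\iota\circ G_\gamma:\mathcal B(\Omega)\to\mathcal B(\Omega)$ is compact (equivalently, one may work with the compact operator $\gamma\,G_\gamma\circ\iota$ on $W_0^{1,p}(\Omega)$).

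The first step is the identity: if $\nu,\mu\in\mathcal B(\Omega)$ satisfy $\nu=K\nu+\mu$, then $u:=G_\gamma\nu\in W_0^{1,p}(\Omega)$ solves $Lu=\mu$. Indeed, for $\phi\in C_c^\infty(\Omega)$,
\[
\alpha(u,\phi)=\alpha_\gamma(u,\phi)-\gamma\int_\Omega u\phi=\langle\nu,\phi\rangle-\gamma\int_\Omega u\phi=\langle K\nu,\phi\rangle+\langle\mu,\phi\rangle-\gamma\int_\Omega u\phi,
\]
and $\langle K\nu,\phi\rangle=\gamma\langle\iota(G_\gamma\nu),\phi\rangle=\gamma\int_\Omega u\phi$, so $\alpha(u,\phi)=\langle\mu,\phi\rangle$. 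Taking $\mu=0$: if $\nu\in\ker(I-K)$ then $u=G_\gamma\nu$ solves $Lu=0$, and if we know $u=0$ then $K\nu=\gamma\iota(u)=0$, hence $\nu=0$; thus $I-K$ is injective once we know that the only solution of $Lu=0$ in $W_0^{1,p}(\Omega)$ is $u\equiv 0$. Granting this, the Fredholm alternative makes $I-K$ bijective, the open mapping theorem makes $(I-K)^{-1}$ bounded, and $\mu\mapsto G_\gamma\big((I-K)^{-1}\mu\big)$ is a bounded solution operator $\mathcal B(\Omega)\to W_0^{1,p}(\Omega)$ for $Lu=\mu$. Since this construction can be carried out for every $p\in\left(1,\frac{d}{d-1}\right)$ and a solution in $W_0^{1,p_2}(\Omega)$ with $p_1<p_2$ is in particular a solution in $W_0^{1,p_1}(\Omega)$, uniqueness identifies all of these, so $u\in W_0^{1,p}(\Omega)$ for every such $p$; and uniqueness in $W_0^{1,p}(\Omega)$ is again the assertion that $Lw=0$, $w\in W_0^{1,p}(\Omega)$, forces $w\equiv 0$.

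The main obstacle is precisely this last implication. The natural route is to apply the low regularity estimate (Proposition~\ref{LowRegularity}) to get $w\in W_{\loc}^{1,2}(\Omega)$, then upgrade this to $w\in W_0^{1,2}(\Omega)$ via a boundary Caccioppoli/reverse-H\"older estimate controlling $\nabla w$ in $L^2$ up to $\partial\Omega$, and finally conclude $w\equiv 0$ from the maximum principle (Theorem~\ref{MaximumPrinciple}). The delicate point is the passage from $W_0^{1,p}$ to $W_0^{1,2}$, equivalently global $L^2$ control of $\nabla w$ near $\partial\Omega$, since $p$ may be close to $1$, so that membership in $W_0^{1,p}$ carries little integrability; an alternative is a duality argument against solutions $v\in W_0^{1,2}(\Omega)$ of $L^tv=\psi$, $\psi\in C_c^\infty(\Omega)$, paralleling the uniqueness part of Proposition~\ref{MeasureSolvabilityForAdjoint}, where one must likewise be careful with the function spaces since the dual exponent $p'$ is large.
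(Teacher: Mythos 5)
Your Fredholm framework is correct and matches the paper's in spirit: you construct the solution operator $G_\gamma$ for the shifted equation, build a compact $K$ on $\mathcal B(\Omega)$, and show that $\nu=K\nu+\mu$ yields a solution $u=G_\gamma\nu$ of $Lu=\mu$. The difficulty you flag at the end is, however, a genuine gap, and your proposal does not close it: you reduce injectivity of $I-K$ to the assertion that the only $W_0^{1,p}(\Omega)$ solution of $Lw=0$ (with $p$ close to $1$) is $w\equiv 0$, and then merely sketch routes to this without carrying any of them through. As you note yourself, the boundary Caccioppoli upgrade from $W_0^{1,p}$ to $W_0^{1,2}$ is not available, and the direct duality pairing $\alpha(w,v)$ of a $W_0^{1,p}$ solution $w$ against a $W_0^{1,2}$ test solution $v$ of $L^tv=\psi$ does not converge when $1/p+1/2>1$. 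The proposal stops exactly at the point where the real work lies.

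The paper sidesteps the $W_0^{1,p}$ uniqueness question entirely, via a Fredholm-duality trick you may want to internalize. One writes $T_0:L^p(\Omega)\to\mathcal B(\Omega)$ as the adjoint $T_0=\tilde T^{\,t}$ of $\tilde T:C_b(\Omega)\to L^q(\Omega)$ (with $q=p'>d$), and builds the operator $\tilde K=\gamma\,g_\gamma^t\circ i^t\circ\tilde T:C_b(\Omega)\to C_b(\Omega)$ so that $\tilde K^{\,t}=K_0$. Since $K_0$ is compact, so is $\tilde K$, and by Fredholm theory $\dim\ker(I-K_0)=\dim\ker(I-\tilde K)$. Thus injectivity of $I-K_0$ reduces to injectivity of $I-\tilde K$ on $C_b(\Omega)$. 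If $f=\tilde Kf$, then $f=\gamma g_\gamma^t(i^t\tilde T f)$, which places $f$ in $W_0^{1,2}(\Omega)$ (because $g_\gamma^t$ also maps $W^{-1,2}(\Omega)\to W_0^{1,2}(\Omega)$), and a direct computation shows $\alpha^t(f,\phi)=0$ for all $\phi\in W_0^{1,2}(\Omega)$. Now Proposition~\ref{InhomogeneousSolvabilityForAdjoint} (uniqueness for $W_0^{1,2}$ solutions of $L^tu=0$) gives $f=0$. In short, the passage to the pre-adjoint converts a uniqueness question in $W_0^{1,p}$ with $p<2$, where no maximum principle is available, into a uniqueness question in $W_0^{1,2}$, where one is. To complete your proposal you would need to supply this device (or a genuine proof of $W_0^{1,p}$ uniqueness, which the paper explicitly remarks it wishes to avoid).
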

\begin{proof}
The proof is similar to the proof of proposition \ref{InhomogeneousSolvability}; we begin by fixing a $p\in\left(1,\frac{d}{d-1}\right)$. Consider the $\gamma$ that appears in lemma \ref{MeasureSolvabilityWithGamma}, and the operators
\[
g_{\gamma}^t:W^{-1,p}(\Omega)\to C_b(\Omega),\quad g_0=g_{\gamma}^{tT}:\mathcal{B}(\Omega)\to W_0^{1,p}(\Omega).
\]
Consider also the operator $T_0:L^p(\Omega)\to\mathcal{B}(\Omega)$, with
\[
\left<T_0f,\phi\right>=\int_{\Omega}f\phi
\]
for every $\phi\in C_b(\Omega)$. Since now the embedding $i:W_0^{1,p}(\Omega)\hookrightarrow L^p(\Omega)$ is compact, the operator
\[
K_0=T_0\circ i\circ\gamma g_0:\mathcal{B}(\Omega)\to\mathcal{B}(\Omega)
\]
is compact.

Suppose now that $\mu,\nu\in\mathcal{B}(\Omega)$, and $\nu=K_0\nu+\mu$. If $\phi\in C_c^{\infty}(\Omega)$, note that there exists $F\in W^{-1,p}(\Omega)$ such that $g_{\gamma}^tF=\phi$. We then compute
\begin{align*}
\alpha(g_0\nu,\phi)&=\alpha^t(\phi,g_0\nu)=\alpha_{\gamma}^t(g_{\gamma}^tF,g_0\nu)-\int_{\Omega}\gamma\phi\cdot g_0\nu=\left<F,g_0\nu\right>-\int_{\Omega}\gamma\phi\cdot g_0\nu\\
&=\left<F,g_{\gamma}^{tT}\nu\right>-\int_{\Omega}\gamma\phi\cdot g_0\nu=\left<g_{\gamma}^tF,\nu\right>-\int_{\Omega}\gamma\phi\cdot g_0\nu\\
&=\left<\phi,K_0\nu+\mu\right>-\int_{\Omega}\gamma\phi\cdot g_0\nu=\left<\phi,\mu\right>+\left<\phi,K_0\nu\right>-\int_{\Omega}\gamma\phi\cdot g_0\nu\\
&=\left<\phi,\mu\right>+\left<\phi,T_0(i(\gamma g_0\nu))\right>-\int_{\Omega}\gamma\phi\cdot g_0\nu\\
&=\left<\phi,\mu\right>,
\end{align*}
therefore $u=g_0\nu$ solves the equation $Lu=\mu$.

Let now $q>d$ be the conjugate exponent to $p$. Since we have assumed that $p>1$, we have that $q<\infty$. We now consider the operator
\[
\tilde{T}:C_b(\Omega)\to L^q(\Omega),
\]
such that $\tilde{T}f=f$ for all $f\in C_b(\Omega)$. Note then that, if $f\in C_b(\Omega)$ and $g\in L^p(\Omega)$,
\[
\left<f,\tilde{T}^tg\right>=\left<\tilde{T}f,g\right>=\int_{\Omega}fg=\left<T_0g,f\right>.
\]
After identifying $L^q(\Omega)^*$ with $L^p(\Omega)$ (which is possible, since $p\in(d,\infty)$), the last equality shows that $T_0$ is equal to $\tilde{T}^t$, the adjoint of $\tilde{T}:C_b(\Omega)\to L^q(\Omega)$. Therefore, if we consider the adjoint $i^t:L^q(\Omega)\to W^{-1,p}(\Omega)$, and we set
\[
\tilde{K}=\gamma g_{\gamma}^t\circ i^t\circ\tilde{T}:C_b(\Omega)\to C_b(\Omega),
\]
we obtain that
\[
\tilde{K}^t=\tilde{T}^t\circ (i^t)^t\circ (\gamma g_{\gamma}^t)^t=T_0\circ i\circ\gamma g_0=K_0.
\]
Since $K_0$ is compact, we obtain that $\tilde{K}$ is also compact.

We now show that $I-K_0$ is injective. To do this, it is enough to show that $I-\tilde{K}$ is injective, from compactness of $\tilde{K}$. For this purpose, suppose that $f\in C_b(\Omega)$ is such that $f=\tilde{K}f$. Set $F=i^t(\tilde{T}f)\in W^{-1,p}(\Omega)$, then $F$ belongs also to $W^{-1,2}(\Omega)$, since $\Omega$ is bounded. Then,
\[
\gamma g_{\gamma}^tF=\gamma(g_{\gamma}^t\circ i^t\circ \tilde{T})f=\tilde{K}f=f,
\]
and $g_{\gamma}^t$ maps $W^{-1,2}(\Omega)$ to $W_0^{1,2}(\Omega)$, therefore $f\in W_0^{1,2}(\Omega)$. Hence, for every $\phi\in W_0^{1,2}(\Omega)$,
\[
\alpha^t(f,\phi)=\alpha_{\gamma}^t(\gamma g_{\gamma}^tf,\phi)-\int_{\Omega}\gamma f\phi=\left<\gamma f,\phi\right>-\int_{\Omega}\gamma f\phi=0.
\]
But then, proposition \ref{InhomogeneousSolvabilityForAdjoint} shows that $f=0$, hence $I-\tilde{K}$ is injective, therefore $I-K_0$ is injective as well.

We now apply the Fredholm alternative to obtain that $I-K_0:\mathcal{B}(\Omega)\to\mathcal{B}(\Omega)$ is bijective, therefore its inverse is also bijective, hence the operator
\[
\tilde{g_0}=g_0\circ(I-K_0)^{-1}:\mathcal{B}(\Omega)\to W_0^{1,p}(\Omega)
\]
is bounded. Hence, if $\mu\in\mathcal{B}(\Omega)$ and we set $u=\tilde{g_0}\mu$, we obtain that
\[
g_0^{-1}u-K_0g_0^{-1}u=(I-K_0)g_0^{-1}u=\mu,
\]
therefore $u$ solves the equation $Lu=\mu$. Uniqueness follows from the fact that $I-K_0$ is injective, and this concludes the proof.
\end{proof}
In this last proof, we could show injectivity of $I-K_0$ without passing through injectivity of $I-\tilde{K}$ by using a uniqueness result for $W_0^{1,p}(\Omega)$ solutions to the equation $Lu=0$, where $p$ here is strictly smaller than $2$. We avoid using a result like this by showing first that $I-\tilde{K}$ is injective, since this is reduced to uniqueness for $W_0^{1,2}(\Omega)$ solutions to the equation $L^tu=0$.

\section{Green's function}
\subsection{Preliminary constructions}
In this section we will apply the results of the previous chapter to specific measures, in order to construct Green's functions for $L$ and $L^t$ in the case where $b\in\Lip(\Omega)$.

Suppose that $\Omega$ is a bounded domain, and set $\delta_x$ to be the Dirac mass at $x\in\Omega$. Then, we have the following lemma.

\begin{lemma}\label{WhatIsInB}
Let $\Omega$ be a bounded domain, and $x\in\Omega$. Then, $\delta_x\in\mathcal{B}(\Omega)$, and $\|\delta_x\|_{\mathcal{B}(\Omega)}=1$. Moreover, if $g\in L^1(\Omega)$, then the functional $T_g$ which maps $f$ to $\int_{\Omega}fg$ belongs to $\mathcal{B}(\Omega)$, with $\|T_g\|_{\mathcal{B}(\Omega)}=\|g\|_{L^1(\Omega)}$.
\end{lemma}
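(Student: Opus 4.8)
The plan is to verify directly that each functional is bounded on $C_b(\Omega)$ and to compute its norm; the only non-trivial point is the lower bound $\|T_g\|_{\mathcal{B}(\Omega)}\geq\|g\|_{L^1(\Omega)}$.

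First I would dispose of $\delta_x$. For $f\in C_b(\Omega)$ one has $|\delta_x(f)|=|f(x)|\leq\|f\|_{C_b(\Omega)}$, so $\delta_x$ is a bounded functional with $\|\delta_x\|_{\mathcal{B}(\Omega)}\leq 1$; testing against the constant function $f\equiv 1\in C_b(\Omega)$, which satisfies $\|f\|_{C_b(\Omega)}=1$ and $\delta_x(f)=1$, gives the reverse inequality, hence $\|\delta_x\|_{\mathcal{B}(\Omega)}=1$. For $g\in L^1(\Omega)$, the estimate $|T_g(f)|=\left|\int_{\Omega}fg\right|\leq\|f\|_{C_b(\Omega)}\|g\|_{L^1(\Omega)}$ shows that $T_g\in\mathcal{B}(\Omega)$ with $\|T_g\|_{\mathcal{B}(\Omega)}\leq\|g\|_{L^1(\Omega)}$.

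The content is the opposite inequality. The natural test function $\operatorname{sgn}g$ is not continuous, so I would approximate: choose $g_n\in C_c(\Omega)$ with $g_n\to g$ in $L^1(\Omega)$ (using density of compactly supported continuous functions, valid since $|\Omega|<\infty$), and set $f_n=g_n/\sqrt{g_n^2+1/n}\in C_c(\Omega)\subseteq C_b(\Omega)$, which satisfies $\|f_n\|_{C_b(\Omega)}\leq 1$. Writing $g=g_n+(g-g_n)$ and bounding $\left|\int_{\Omega}f_n(g-g_n)\right|\leq\|g-g_n\|_{L^1(\Omega)}$ via $|f_n|\leq 1$, the problem reduces to showing $\int_{\Omega}f_ng_n\,dx\to\|g\|_{L^1(\Omega)}$. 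This follows from the identity $f_ng_n=\sqrt{g_n^2+1/n}-(1/n)/\sqrt{g_n^2+1/n}$ together with the pointwise bounds $|g_n|\leq\sqrt{g_n^2+1/n}\leq|g_n|+n^{-1/2}$ and $(1/n)/\sqrt{g_n^2+1/n}\leq n^{-1/2}$, the finiteness of $|\Omega|$, and $\|g_n\|_{L^1(\Omega)}\to\|g\|_{L^1(\Omega)}$. Consequently $\|T_g\|_{\mathcal{B}(\Omega)}\geq\lim_n T_g(f_n)=\|g\|_{L^1(\Omega)}$, and combined with the upper bound this gives equality.

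The main (and essentially only) obstacle is this density/regularization step producing admissible test functions that nearly realize the $L^1$ norm of $g$. An alternative would be to invoke Lusin's theorem to replace $\operatorname{sgn}g$ by a continuous function bounded by $1$ that differs from it on a set of arbitrarily small measure, and then conclude by absolute continuity of the integral; the explicit mollified-sign construction above has the advantage of being self-contained. The only facts about the domain that enter are that $\Omega$, being open and bounded in $\mathbb{R}^d$, has finite Lebesgue measure and that $C_c(\Omega)$ is dense in $L^1(\Omega)$.
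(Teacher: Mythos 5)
Your argument is correct, and the upper bounds and the $\delta_x$ computation coincide with the paper's. The only place where the two proofs diverge is the lower bound $\|T_g\|_{\mathcal{B}(\Omega)}\geq\|g\|_{L^1(\Omega)}$: the paper disposes of it in one line by applying Lusin's theorem to $\operatorname{sgn}(g)$, while you construct an explicit near-maximizing sequence of admissible test functions. Both routes rest on the same two ingredients, density of continuous functions in $L^1(\Omega)$ and finiteness of $|\Omega|$, but your version is more self-contained: you first approximate $g$ in $L^1$ by $g_n\in C_c(\Omega)$ and then take the smoothed sign $f_n=g_n/\sqrt{g_n^2+1/n}$, whose pairing with $g_n$ you compute directly via $f_ng_n=\sqrt{g_n^2+1/n}-(1/n)/\sqrt{g_n^2+1/n}$ and the elementary bounds $|g_n|\leq\sqrt{g_n^2+1/n}\leq|g_n|+n^{-1/2}$. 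This avoids citing Lusin's theorem at the cost of a few lines of algebra; the paper's invocation of Lusin's theorem is shorter but leaves the regularization implicit. You correctly note both options yourself, so the comparison is mostly a matter of taste.
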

\begin{proof}
For any $f\in C_b(\Omega)$, we compute
\[
\left|\left<\delta_x,f\right>\right|=|f(x)|\leq \|f\|_{L^{\infty}(\Omega)}=\|f\|_{C_b(\Omega)},
\]
hence $\delta_x\in\mathcal{B}(\Omega)$, and $\|\delta_x\|_{\mathcal{B}(\Omega)}\leq 1$. To show the reverse inequality, we test against the function $h\equiv 1$, and we compute \[
1=|g(x)|=\left|\left<\delta_x,f\right>\right|\leq\|\delta_x\|_{\mathcal{B}(\Omega)}\|g\|_{C_b(\Omega)}=\|\delta_x\|_{\mathcal{B}(\Omega)}.
\]
The proof for the second claim is similar. Indeed, if $f\in C_b(\Omega)$, we compute
\[
|T_gf|=\left|\int_{\Omega}fg\right|\leq\|f\|_{L^{\infty}(\Omega)}\|g\|_{L^1(\Omega)}\leq \|g\|_{L^1(\Omega)}\|f\|_{C_b(\Omega)},
\]
therefore $T_g\in\mathcal{B}(\Omega)$, and $\|T_g\|_{\mathcal{B}(\Omega)}\leq\|g\|_{L^1(\Omega)}$. The reverse inequality follows from applying Lusin's theorem to the function ${\rm sgn}(g)$, which completes the proof.
\end{proof}

Given $A$ uniformly elliptic and bounded, $b\in\Lip(\Omega)$ and $y\in\Omega$, we apply proposition \ref{MeasureSolvability} and the previous lemma to obtain that there exists a solution $G_y$ to the equation $LG_y=\delta_y$, where $\delta_y$ is the Dirac delta at $y\in\Omega$. The same proposition also shows that $G_y\in W_0^{1,p}(\Omega)$ for all $p\in\left[1,\frac{d}{d-1}\right)$. We then set $G(z,y)=G_y(z)$ for $z\in\Omega$, and we note that
\[
\int_{\Omega}\left(A(z)\nabla_zG(z,y)\nabla\phi(z)+b(z)\nabla_zG(z,y)\cdot\phi(z)\right)\,dz=\phi(y),
\]
for all $\phi\in C_c^{\infty}(\Omega)$. We call $G$ \emph{Green's function} for the equation $Lu=0$ in $\Omega$.

For the adjoint equation, consider $A$ to be uniformly elliptic and bounded, $b\in L^{\infty}(\Omega)$ and $x\in\Omega$. We then apply proposition \ref{MeasureSolvabilityForAdjoint} to obtain that there exists a solution $G_x^t$ to the equation $L^tG_x^t=\delta_x$. Since $\|\delta_x\|_{\mathcal{B}(\Omega)}=1$, the same proposition also shows that, for any $p\in\left[1,\frac{d}{d-1}\right)$,
\[
\|G_x^t\|_{W_0^{1,p}(\Omega)}\leq C_p,
\]
where $C_p$ is a constant that depends on $d,\lambda,\|b\|_{\infty},\diam(\Omega)$, and $p$. We then set $G^t(z,x)=G_x^t(z)$ for $z\in\Omega$, and we note that
\[
\int_{\Omega}\left(A^t(z)\nabla_zG^t(z,x)\nabla\phi(z)+b(z)\nabla\phi(z)\cdot G^t(z,x)\right)\,dz=\phi(x),
\]
for all $\phi\in C_c^{\infty}(\Omega)$. We call $G^t$ \emph{Green's function} for the equation $L^tu=0$ in $\Omega$.

Note that we have established existence of Green's function for the equation $Lu=0$ only for $b$ that are Lipschitz. Moreover, it is not clear at this point how the $W_0^{1,p}(\Omega)$ norms of $G$ relate to the given quantities. In the following, we will establish existence of Green's function for $b\in L^{\infty}(\Omega)$, and show good pointwise and $L^p$ estimates on Green's function and its derivative. 

To show those properties, we will follow the arguments appearing in the Gr{\"u}ter-Widman paper on Green's function \cite{Gruter}, but first we need to show the symmetry relation between $G$ and $G^t$, in proposition \ref{SymmetryWithAdjoint}. For the proof of this proposition (as well as other arguments that will appear later, for example in the proof of proposition \ref{GreenFunctionEstimatesForAdjoint}) we need to construct specific families of approximations to $G,G^t$ and show that they satisfy various boundedness and continuity properties. This is done in the next lemma.

\begin{lemma}\label{LimitGreenConstruction}
Let $\Omega$ be a bounded domain, $A\in M_{\lambda}(\Omega)$ and $b\in\Lip(\Omega)$. Fix $x,y\in\Omega$. Then, for $n,m\in\mathbb N$ large enough, there exist $G_n,G_m^t\in W_0^{1,2}(\Omega)$, such that
\[
\int_{\Omega}A\nabla G_n\nabla v+b\nabla G_n\cdot v=\frac{1}{|B_{1/n}(y)|}\int_{B_{1/n}(y)}w,\quad\forall w\in W_0^{1,2}(\Omega),
\]
and also
\[
\int_{\Omega}A^t\nabla G_m^t\nabla w+b\nabla w\cdot G_m^t=\frac{1}{|B_{1/m}(x)|}\int_{B_{1/m}(x)}w,\quad\forall w\in W_0^{1,2}(\Omega).
\]
In addition,  the following properties hold.
\begin{enumerate}[i)]
\item For any $n\in\mathbb N$ and $z\in\Omega$, $G_n(z)\geq 0$
\item For any $p\in\left[1,\frac{d}{d-1}\right)$, $\|G_m^t\|_{W_0^{1,p}(\Omega)}\leq C_p$, uniformly in $y$ and $m$, and $G_n\in W_0^{1,p}(\Omega)$
\item There exist subsequences $(G_{k_n})$, $(G^t_{l_m})$, such that $G_{k_n}\to G(\cdot,y)$ and $G_{l_m}^t\to G^t(\cdot,x)$ weakly in every $W_0^{1,p}(\Omega)$, strongly in $L^p(\Omega)$, and almost everywhere in $\Omega$, where $p\in\left[1,\frac{d}{d-1}\right)$
\item $\|G_m^t\|_{L^{\frac{d}{d-2}}_*}\leq C$ uniformly in $y$ and $m$, and $G_n\in L^{\frac{d}{d-2}}_*(\Omega)$
\item For any compact $K\subseteq \Omega$ with $y\notin K$, $(G_n)$ is uniformly bounded and equicontinuous in $K$.
\end{enumerate}
In the above, $C$ is a constant that depends on $d,\lambda,\|b\|_{\infty}$ and $\diam(\Omega)$, and $C_p$ also depends on $p$.
\end{lemma}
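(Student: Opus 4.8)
The plan is to treat the two equations asymmetrically: on the $L^{t}$ side every estimate will be made uniform in the pole and in the approximation parameter, while on the $L$ side only qualitative information is needed at this stage, the uniform bounds on $G(\cdot,y)$ being recovered later from the symmetry relation between $G$ and $G^{t}$ together with the $L^{t}$-estimates. Throughout, for $n$ (resp.\ $m$) large enough that $B_{1/n}(y)$ (resp.\ $B_{1/m}(x)$) is compactly contained in $\Omega$, put $\mu_{n}^{y}=|B_{1/n}(y)|^{-1}\chi_{B_{1/n}(y)}$ and $\mu_{m}^{x}=|B_{1/m}(x)|^{-1}\chi_{B_{1/m}(x)}$. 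These are nonnegative $L^{\infty}(\Omega)$ functions, so their images under the embedding $T$ lie in $W^{-1,2}(\Omega)$, and by Lemma~\ref{WhatIsInB} their $\mathcal{B}(\Omega)$-norms equal $\|\mu_{n}^{y}\|_{L^{1}(\Omega)}=1$ (and likewise for $\mu_{m}^{x}$). Propositions~\ref{InhomogeneousSolvability} and~\ref{InhomogeneousSolvabilityForAdjoint} then produce unique $G_{n},G_{m}^{t}\in W_{0}^{1,2}(\Omega)$ satisfying the two stated weak identities, the identities being extended from $C_{c}^{\infty}(\Omega)$ to $W_{0}^{1,2}(\Omega)$ by density.

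The elementary assertions are quick. For (i), $LG_{n}=\mu_{n}^{y}\ge 0$ makes $G_{n}$ a supersolution of $Lu=0$ in $\Omega$ with $\inf_{\partial\Omega}G_{n}=0$ in the $W^{1,2}$-sense, so Proposition~\ref{MinForSupersolutions} gives $G_{n}\ge 0$. For (ii), Proposition~\ref{MeasureSolvabilityForAdjoint} applied to $\mu_{m}^{x}$ yields $\|G_{m}^{t}\|_{W_{0}^{1,p}(\Omega)}\le C_{p}\|\mu_{m}^{x}\|_{\mathcal{B}(\Omega)}=C_{p}$ for every $p\in[1,\tfrac{d}{d-1})$, uniformly in the pole and in $m$; and by the uniqueness in Proposition~\ref{MeasureSolvability}, $G_{n}$ coincides with $\tilde{g}_{0}\mu_{n}^{y}$, where $\tilde{g}_{0}:\mathcal{B}(\Omega)\to W_{0}^{1,p}(\Omega)$ is the bounded operator built in that proof, so $G_{n}\in W_{0}^{1,p}(\Omega)$ with an $n$-independent bound (not necessarily a good constant, which suffices here). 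The $G_{n}$-part of (iv) is then immediate, since $G_{n}\in W_{0}^{1,2}(\Omega)\hookrightarrow L^{2^{*}}(\Omega)\subseteq L^{\frac{d}{d-2}}(\Omega)\subseteq L^{\frac{d}{d-2}}_{*}(\Omega)$ on the bounded set $\Omega$.

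For (iii), fix $p_{j}\uparrow\tfrac{d}{d-1}$. The bounds just established make $(G_{m}^{t})_{m}$ and $(G_{n})_{n}$ bounded in each $W_{0}^{1,p_{j}}(\Omega)$, so by reflexivity, the Rellich-Kondrachov embedding $W^{1,p_{j}}(\Omega)\hookrightarrow L^{p_{j}}(\Omega)$, and a diagonal extraction we pass to subsequences, still written $G_{l_{m}}^{t}$ and $G_{k_{n}}$, converging for every $p<\tfrac{d}{d-1}$ weakly in $W_{0}^{1,p}(\Omega)$, strongly in $L^{p}(\Omega)$, and a.e.\ in $\Omega$. If $u$ is the limit of $(G_{k_{n}})$, then for $\phi\in C_{c}^{\infty}(\Omega)$ weak convergence of $\nabla G_{k_{n}}$ tested against $A\nabla\phi,\,b\phi\in L^{\infty}(\Omega)\subseteq L^{p'}(\Omega)$ gives $\alpha(G_{k_{n}},\phi)\to\alpha(u,\phi)$, while $\alpha(G_{k_{n}},\phi)=\fint_{B_{1/k_{n}}(y)}\phi\to\phi(y)$ by continuity of $\phi$; hence $Lu=\delta_{y}$ weakly with $u\in W_{0}^{1,p}(\Omega)$, and $u=G(\cdot,y)$ by the uniqueness in Proposition~\ref{MeasureSolvability}. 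The same argument, using Proposition~\ref{MeasureSolvabilityForAdjoint}, identifies $\lim G_{l_{m}}^{t}$ with $G^{t}(\cdot,x)$.

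It remains to deal with (v) and the hard part of (iv). For (v), let $w\in W_{0}^{1,2}(\Omega)$ solve $L^{t}w=1$ (Proposition~\ref{InhomogeneousSolvabilityForAdjoint}); then $\|w\|_{L^{\infty}(\Omega)}$ is a good constant by Theorem~\ref{SupremumBound}, whose proof applies verbatim to $L^{t}$, and since $\alpha(G_{n},w)=\alpha^{t}(w,G_{n})$ we get $\int_{\Omega}G_{n}=\alpha(G_{n},w)=\fint_{B_{1/n}(y)}w\le\|w\|_{L^{\infty}(\Omega)}$ using $G_{n}\ge 0$, i.e.\ a uniform $L^{1}(\Omega)$ bound on $G_{n}$. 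Consequently, on any compact $K\subseteq\Omega$ with $y\notin K$, once $n$ is large enough that $G_{n}$ solves $LG_{n}=0$ on a fixed neighbourhood of $K$ disjoint from $y$, Harnack's inequality (Proposition~\ref{Harnack}) bounds $\sup_{K}G_{n}$ by a fixed multiple of averages $\fint_{B}G_{n}$ over suitable interior balls, hence by a constant depending only on $K$; Proposition~\ref{Equicontinuity} then yields equicontinuity of $(G_{n})$ on $K$. The genuinely hard point, and the main obstacle, is the uniform bound $\|G_{m}^{t}\|_{L^{\frac{d}{d-2}}_{*}}\le C$: here I would follow the Stampacchia truncation scheme of \cite{Gruter}, testing $L^{t}G_{m}^{t}=\mu_{m}^{x}$ against $\min\bigl((G_{m}^{t}-k)^{+},\,j-k\bigr)\in W_{0}^{1,2}(\Omega)$ over dyadic levels $k<j$; the source contributes at most $j-k$ (the test function being bounded by $j-k$), ellipticity controls $\int_{\{k<G_{m}^{t}<j\}}|\nabla G_{m}^{t}|^{2}$, the Sobolev inequality converts this into the decay $|\{G_{m}^{t}>k\}|\le Ck^{-d/(d-2)}$, and the drift contributions are absorbed using $b\in L^{\infty}(\Omega)$ together with the smallness of $|\{G_{m}^{t}>k\}|$ for large $k$, which is already quantified by the $W^{1,p}$ bound. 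That decay is precisely $\|G_{m}^{t}\|_{L^{\frac{d}{d-2}}_{*}}\le C$, uniformly in the pole and in $m$, which completes the proof.
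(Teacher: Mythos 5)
Your treatment of parts (i)--(iii) matches the paper: the $G_n,G_m^t$ are produced by Propositions~\ref{InhomogeneousSolvability} and~\ref{InhomogeneousSolvabilityForAdjoint}, positivity of $G_n$ comes from Proposition~\ref{MinForSupersolutions}, the $W_0^{1,p}$ statements are pulled from Propositions~\ref{MeasureSolvabilityForAdjoint} and~\ref{MeasureSolvability}, and the identification of the weak limits with $G(\cdot,y)$, $G^t(\cdot,x)$ via the defining property and uniqueness is the same argument. You do add a useful observation that the paper only uses implicitly: that $G_n=\tilde g_0\mu_n^y$ gives an $n$-independent (though not good) $W_0^{1,p}$ bound on $G_n$.

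For (iv) and (v) you take genuinely different routes. For the qualitative part of (iv) the embedding chain $W_0^{1,2}\hookrightarrow L^{2^*}\subseteq L^{d/(d-2)}\subseteq L^{d/(d-2)}_*$ is a clean shortcut; the paper instead reruns the test-function argument for $G_n$ with a possibly bad constant. For the uniform Lorentz bound on $G_m^t$ you propose Stampacchia truncations $v=\min((G_m^t-k)^+,j-k)$ rather than the Gr\"uter--Widman choice $(1/s-1/G_m^t)^+$ that the paper uses. Your scheme does close: with $j=2k$, Young's inequality absorbs half the gradient term and leaves a remainder $\lesssim k^2\phi(k)$, and the uniform $L^1$ bound (implied by your $W^{1,p}$ bound) gives $\phi(k)\lesssim k^{-1}$, so the remainder is $\lesssim k$ and Sobolev delivers $\phi(2k)\lesssim k^{-d/(d-2)}$. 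This is a valid alternative; the paper's choice is designed so that the drift contribution carries a factor $(G_m^t)^{-1}\le s^{-1}$ and is controlled directly by the $W_0^{1,1}$ bound, so no absorption is needed. The trade-off is that your version is the more familiar device but requires the extra Young step (which you only gesture at and should be written out), while the paper's avoids absorption at the cost of the less standard test function.

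There is one step that, as stated, is wrong. For (v) you bound $\int_\Omega G_n=\fint_{B_{1/n}(y)}w$ by $\|w\|_{L^\infty(\Omega)}$ where $L^tw=1$, and you claim $\|w\|_{L^\infty(\Omega)}$ is a good constant because ``Theorem~\ref{SupremumBound} applies verbatim to $L^t$.'' Theorem~\ref{SupremumBound} rests on Theorem~8.16 of \cite{Gilbarg}, which requires the structural sign condition (8.8); for $L^tu=-\dive(A^t\nabla u)-\dive(bu)$ that condition reads $\dive b\le 0$ (in the distributional sense), which is not assumed here. (Indeed, the paper is careful about exactly this issue in the proof of Lemma~\ref{MeasureSolvabilityWithGamma}, where $\gamma\ge\dive b$ is imposed to make 8.16 applicable to $L^t u+\gamma u$.) The fix is easy and already in your hands: you established in (ii) that $\|G_n\|_{W_0^{1,p}(\Omega)}$ is bounded uniformly in $n$ (by the bounded operator $\tilde g_0$), and Poincar\'e then gives a uniform $L^1(\Omega)$ bound on $G_n$ without any appeal to $w$. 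Alternatively, since only a local bound on $w$ near $y$ is needed to control $\fint_{B_{1/n}(y)}w$, the interior estimate (Theorem~8.17 in \cite{Gilbarg}, which does not require (8.8)) also suffices, though the constant then depends on $\delta(y)$. After this $L^1$ bound, your Harnack argument gives $\sup_K G_n\le C(K)$, and Proposition~\ref{Equicontinuity} finishes (v); the paper instead reaches the $L^\infty(K)$ bound via Harnack plus the Lorentz estimate from (iv), with an additional Cacciopoli step that is actually superfluous since Proposition~\ref{Equicontinuity} only needs the uniform $L^\infty$ bound on the compact set.
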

\begin{proof}
Let $N,M\in\mathbb N$ such that $B_{1/N}(y),B_{1/M}(x)\subseteq\Omega$, and for $n\geq N,m\geq M$ we define $\e_n=1/n,\delta_m=1/m$, and
\[
f_n=|B_{\e_n}(y)|^{-1}\chi_{B_{\e_n}(y)},\quad g_m=|B_{\delta_m}(x)|^{-1}\chi_{B_{\delta_m}(x)}.
\]
Note that $f_n,g_m\in L^2(\Omega)$; hence, propositions \ref{InhomogeneousSolvability} and \ref{InhomogeneousSolvabilityForAdjoint} show that there exist unique solutions $G_n,G_m^t\in W_0^{1,2}(\Omega)$ to the equations $Lu=f_n,L^tG_m^t=g_m$, therefore
\[
\int_{\Omega}A\nabla G_n\nabla v+b\nabla G_n\cdot v=\frac{1}{|B_{\e_n}(y)|}\int_{B_{\e_n}(y)}w,\quad\forall w\in W_0^{1,2}(\Omega),
\]
and also
\[
\int_{\Omega}A^t\nabla G_m^t\nabla w+b\nabla w\cdot G_m^t=\frac{1}{|B_{\delta_m}(x)|}\int_{B_{\delta_m}(x)}w,\,\,\,\,\forall w\in W_0^{1,2}(\Omega).
\]
For positivity of $G_n$, note that $G_n\in W_0^{1,2}(\Omega)$ is a supersolution to the equation $Lu=0$, therefore the maximum principle (proposition \ref{MinForSupersolutions}) shows that $G_n\geq 0$ in $\Omega$.

Consider now the measures $d\mu_n=f_n\,dx$, $d\nu_m=g_m\,dx$, then lemma \ref{WhatIsInB} shows that $\mu_n,\nu_m\in\mathcal{B}(\Omega)$, with norm $1$. Moreover, we see that $LG_n=\mu_n$ and $LG_m^t=\nu_m$; therefore, proposition \ref{MeasureSolvability} shows that, for any $p\in\left[1,\frac{d}{d-1}\right)$, $G_n\in W_0^{1,p}(\Omega)$. In addition, from proposition \ref{MeasureSolvabilityForAdjoint}, there exists a constant $C_p$ depending only on $d,\lambda,\|b\|_{\infty}$ and $\diam(\Omega)$, such that $\|G_m^t\|_{W_0^{1,p}(\Omega)}\leq C_p$. Therefore (ii) is proved.

For (iii) consider any $p\in\left(1,\frac{d}{d-1}\right)$. From part (ii), $(G_n)$ and $(G_m^t)$ are bounded in $W_0^{1,p}(\Omega)$; therefore, there exist subsequences $G_{k_n}, G_{l_m}^t$, such that
\[
G_{k_n}\to g_y,\quad G_{l_m}^t\to g^t_x,
\]
weakly in $W_0^{1,p}(\Omega)$, for some $g_y,g_x^t\in W_0^{1,p}(\Omega)$. Then, for every $\phi\in C_c^{\infty}(\Omega)$,
\begin{align*}
\int_{\Omega}A\nabla g_y\nabla\phi+b\nabla g_y\cdot\phi&=\lim_{n\to\infty}\int_{\Omega}A\nabla G_{k_n}\nabla\phi+b\nabla G_{k_n}\cdot\phi\\
&=\lim_{n\to\infty}\frac{1}{|B_{1/k_n}(y)|}\int_{B_{k_n}(y)}\phi\\
&=\phi(y),
\end{align*}
therefore $g_y\in W_0^{1,p}(\Omega)$ solves the equation $Lg_y=\delta_y$. Uniqueness of solutions in proposition  \ref{MeasureSolvability} shows that $g_y=G_y$ (Green's function for $L$ at $y$), and similarly, uniqueness in proposition \ref{MeasureSolvabilityForAdjoint} shows that $g_x^t=G_x^t$ (Green's function for $L^t$ at $x$, therefore 
\[
G_{k_n}\to G_y,\quad G_{l_m}^t\to G^t_x,
\]
weakly in $W_0^{1,p}(\Omega)$. Therefore, from the Rellich-Kondrachov compactness theorem, there exist further subsequences, still denoted by $(G_{k_n})$, $(G_{l_m}^t)$, which converge to $G_y$ and $G_x^t$ in $L^{p_0}(\Omega)$ and almost everywhere in $\Omega$.

We now come to the $L^{\frac{d}{d-2}}_*$ bounds for $G_m^t$. For this purpose fix $s>0$, and set
\[
\Omega_s^m=\{z\in\Omega|G_m^t(z)>s\}.
\]
Consider also the positive part
\[
w(z)=\left(\frac{1}{s}-\frac{1}{G_m^t(z)}\right)^+\in W_0^{1,2}(\Omega).
\]
Then, $w\equiv 0$ outside $\Omega_s^m$, and $0\leq w\leq 1/s$, therefore, using $w$ as a test function we obtain
\[
\int_{\Omega_s^m}A^t\nabla G_m^t\nabla G_m^t\cdot(G_m^t)^{-2}+\int_{\Omega_s^m}b\nabla G_m^t\cdot (G_m^t)^{-1}=\frac{1}{|B_{\delta_m}(x)|}\int_{B_{\delta_m}(x)}v\leq\frac{1}{s},
\]
which implies that
\begin{align*}
\int_{\Omega_s^m}A^t\nabla G_m^t\nabla G_m^t\cdot(G_m^t)^{-2}&\leq\frac{1}{s}-\int_{\Omega_s^m}b\nabla G_m^t\cdot(G_m^t)^{-1}\\
&\leq\frac{1}{s}+\|b\|_{\infty}\int_{\Omega_s^m}|\nabla G_m^t|(G_m^t)^{-1}\\
&\leq\left(1+\|b\|_{\infty}\int_{\Omega_s^m}|\nabla G_m^t|\right)s^{-1}\leq Cs^{-1},
\end{align*}
where $C$ depends on $d,\lambda,\|b\|_{\infty}$ and $\diam(\Omega)$, since $(G_m^t)^{-1}\leq 1/s$ in $\Omega_s^m$, and where we also used (i) for $p=1$. Therefore, if we set $w_0(x)=(\log G_m^t-\log s)^+$, the last estimate shows that
\[
\int_{\Omega_s^m}|\nabla w_0|^2\leq\lambda^{-1}\int_{\Omega_s^m}A\nabla w_0\nabla w_0\leq\frac{C}{s},
\]
and Sobolev's inequality shows that 
\[
\left(\int_{\Omega_{2s}^m}\left(\log\frac{G_m^t}{s}\right)^{\frac{2d}{d-2}}\right)^{\frac{d-2}{d}}\leq C(d)\left(\int_{\Omega_s^m}w_0^{\frac{2d}{d-2}}\right)^{\frac{d-2}{d}}\leq C(d)\lambda^{-1}\int_{\Omega_s^m}A\nabla w_0\nabla w_0\leq\frac{C}{s}.
\]
But, $G_m^s\geq 2s$ in $\Omega_{2s}^m$, so we obtain that $s|\Omega_s^m|^{\frac{d-2}{d}}\leq C$ for all $s>0$, which shows that $\|G_m^t\|_{L^{\frac{d}{d-2}}_*}\leq C$.

To show the $L_*^{\frac{d}{d-2}}(\Omega)$ bound on $G_n$, we follow the same procedure: fix $r>0$, and set
\[
\Omega_r^n=\{z\in\Omega|G_n(z)>r\}.
\]
Consider also the positive part
\[
v(z)=\left(\frac{1}{r}-\frac{1}{G_n(z)}\right)^+\in W_0^{1,2}(\Omega).
\]
Then, $v\equiv 0$ outside $\Omega_t^n$, and $0\leq v\leq 1/r$, therefore, using $v$ as a test function we obtain
\[
\int_{\Omega_r^n}A\nabla G_n\nabla G_n\cdot(G_n)^{-2}+\int_{\Omega_r^n}b\nabla G_n\cdot v=\frac{1}{|B_{\e_n}(y)|}\int_{B_{\e_n}(y)}v\leq\frac{1}{r},
\]
which implies that
\begin{align*}
\int_{\Omega_r^n}A\nabla G_n\nabla G_n\cdot(G_n)^{-2}&\leq\frac{1}{r}-\int_{\Omega_r^n}b\nabla G_n\cdot v\\
&\leq\frac{1}{r}+\|b\|_{\infty}\int_{\Omega_r^n}|\nabla G_n|v\\
&\leq\left(1+\|b\|_{\infty}\int_{\Omega_r^n}|\nabla G_n|\right)r^{-1}\leq \tilde{C}r^{-1},
\end{align*}
since $G_n\in W^{1,1}(\Omega)$, from part (i); the only difference here being that $\tilde{C}$ might depend on the derivatives of $b$. Therefore, if we set $v_0(x)=(\log G_n-\log r)^+$, the last estimate shows that
\[
\int_{\Omega_r^n}|\nabla v_0|^2\leq\lambda^{-1}\int_{\Omega_r^n}A\nabla v_0\nabla v_0\leq\frac{\tilde{C}}{r},
\]
and Sobolev's inequality shows that 
\[
\left(\int_{\Omega_{2r}^n}\left(\log\frac{G_n}{r}\right)^{\frac{2d}{d-2}}\right)^{\frac{d-2}{d}}\leq \tilde{C}\left(\int_{\Omega_r^n}v_0^{\frac{2d}{d-2}}\right)^{\frac{d-2}{d}}\leq \tilde{C}\int_{\Omega_r^n}A\nabla v_0\nabla v_0\leq\frac{C}{r}.
\]
But, $G_n\geq 2r$ in $\Omega_{2r}^n$, so we obtain that $r|\Omega_r^n|^{\frac{d-2}{d}}\leq\tilde{C}$ for all $s>0$, which shows that $G_n\in L^{\frac{d}{d-2}}_*$.

For equicontinuity of $(G_n)$, consider $K\subseteq U\subseteq \Omega\setminus\{y\}$, where all inclusions are compact, and consider a covering of $U$ by balls $B_i$, such that their doubles $4B_i$ are compactly supported in $\Omega\setminus\{y\}$. Let $r_i$ be the radius of $B_i$, set $\e_i=\delta(4B_i,\partial\Omega_x)$, and let $\e$ be the minimum of the $\e_i$, and $r$ be the minimum of the $r_i$. Then, for $n>\e^{-1}$, $G_n\in W^{1,2}(4B_i)$ and it is a positive solution of the equation $Lu=0$ in $4B_i$. Consequently, from the Cacciopoli inequality (lemma \ref{Cacciopoli}), we obtain that
\begin{equation}\label{eq:TowardsNablaGnBound}
\fint_{B_i}|\nabla G_n|^2\leq\frac{C}{r_i^2}\fint_{2B_i}G_n^2\leq\frac{C}{r_i^2}\sup\{G_n(z)^2\big{|}z\in 2B_i\}\leq\frac{C}{r_i^2}\left(\fint_{2B_i}G_n\right)^2,
\end{equation}
where $C$ depends on $d,\lambda$ and $\|b\|_{\infty}$, and where the last inequality follows from Harnack's inequality (proposition \ref{Harnack}) to $G_n$ in $4B_i$.

Now, we use estimate \eqref{eq:lorbound} with $p=\frac{d}{d-2}$ and $\delta=p-1$; since $\frac{\delta}{p}=\frac{2}{d}$, we obtain
\begin{align*}
\fint_{B_{2B_i}}G_n&=Cr_i^{-d}\int_{2B_i}G_n\leq Cr_i^{-d}|2B_i|^{\frac{\delta}{p}}\|G_n\|_{L^p_*(2B_i)}\\
&= Cr_i^{-d}|2B_i|^{\frac{2}{d}}\|G_n\|_{L^p_*(2B_i)}\leq \tilde{C}r_i^{(2-d)},
\end{align*}
where we used part (iii) in the last step. Harnack's inequality now shows that $(G_n)$ is uniformly bounded in $2B_i$. In addition, combining with \eqref{eq:TowardsNablaGnBound}, we obtain that 
\[
\fint_{B_i}|\nabla G_n|^2\leq\frac{C}{r_i^2}\left(\tilde{C}r_i^{2-d}\right)^2=\tilde{C}r_i^{2-2d}\leq\tilde{C}r^{2-2d}.
\]
This shows that $(G_n)$, with respect to $n$, is bounded in $W^{1,2}(2B_i)$; hence $(G_n)$ is bounded in $W^{1,2}(U)$. We then apply proposition \ref{Equicontinuity} to obtain that $(G_n)$ is equicontinuous in $K$, which finishes the proof.
\end{proof}

We can now use the previous approximations as test functions, to show the symmetry relation $G(x,y)=G^t(y,x)$ for all $x,y\in\Omega$ with $x\neq y$.

\begin{prop}\label{SymmetryWithAdjoint}
Let $\Omega$ be a bounded domain, $A\in M_{\lambda}(\Omega)$, and $b\in\Lip(\Omega)$. If $G$, $G^t$ are Green's functions for the equations $Lu=0$, $L^tu=0$ in $\Omega$ respectively, then 
\[
G(x,y)=G^t(y,x)
\]
for every $x\in\Omega$ and almost every $y\in\Omega$, with $x\neq y$.
\end{prop}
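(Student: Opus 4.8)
The plan is to use the approximating families $(G_n)$ and $(G_m^t)$ of Lemma \ref{LimitGreenConstruction} as mutual test functions and then pass to the limit. Fixing $x\neq y$ in $\Omega$, I would insert $w=G_m^t\in W_0^{1,2}(\Omega)$ into the defining equation for $G_n$ and $w=G_n\in W_0^{1,2}(\Omega)$ into the one for $G_m^t$; since $\int_{\Omega}A^t\nabla G_m^t\cdot\nabla G_n=\int_{\Omega}A\nabla G_n\cdot\nabla G_m^t$ by transposition and the two drift terms $\int_{\Omega}b\nabla G_n\cdot G_m^t$ coincide, subtracting the two identities leaves
\[
\fint_{B_{1/n}(y)}G_m^t=\fint_{B_{1/m}(x)}G_n
\]
for all large $n,m$. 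It then remains to send $m\to\infty$ and $n\to\infty$.

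First I would fix $n$ and let $m\to\infty$ along the subsequence $(l_m)$ from Lemma \ref{LimitGreenConstruction}(iii): strong $L^1(\Omega)$ convergence $G^t_{l_m}\to G^t(\cdot,x)$ sends the left-hand side to $\fint_{B_{1/n}(y)}G^t(\cdot,x)$, while on the right, for $n$ large the ball $B_{1/n}(y)$ avoids $x$, so $G_n$ solves $Lu=0$ near $x$ and is continuous there by the interior estimates of Section 3 (Corollary \ref{GreenContinuity}, or Proposition \ref{HolderInside} with $F=0$), giving $\fint_{B_{1/m}(x)}G_n\to G_n(x)$. This yields the intermediate identity $G_n(x)=\fint_{B_{1/n}(y)}G^t(\cdot,x)$ for all large $n$. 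Next I would let $n\to\infty$: since $x\neq y$, $G^t(\cdot,x)$ is a solution of $L^tu=0$ near $y$ and hence continuous at $y$ (by Proposition \ref{LowRegularityForAdjoint} to upgrade it to $W^{1,2}_{\loc}$ and then Proposition \ref{HolderInside}, using $b\in\Lip(\Omega)$), so the left-hand side tends to $G^t(y,x)$; in particular $(G_n(x))_n$ converges. To identify the limit I would pass to the subsequence $(G_{k_n})$ of Lemma \ref{LimitGreenConstruction}(iii),(v), which on a compact neighbourhood $K\ni x$ with $y\notin K$ is uniformly bounded and equicontinuous, hence converges uniformly along a further subsequence to a continuous function; by the almost everywhere convergence $G_{k_n}\to G(\cdot,y)$ and continuity of $G(\cdot,y)$ in the interior of $K$ (Corollary \ref{GreenContinuity}), this limit is $G(\cdot,y)$ there, so $G_{k_n}(x)\to G(x,y)$. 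Since the full sequence $(G_n(x))$ already converges, we conclude $G(x,y)=G^t(y,x)$, which in fact holds for every pair $x\neq y$.

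The main obstacle I anticipate is not the algebra but the handling of pointwise values: $G(\cdot,y)$ and $G^t(\cdot,x)$ are a priori only elements of $W_0^{1,p}$ with $p<2$, so one must carefully justify evaluating them at the off-diagonal points $x$ and $y$ — legitimate exactly because $x\neq y$, via interior regularity away from the pole — and, in tandem, justify commuting the shrinking averages $\fint_{B_{1/n}(\cdot)}$ with the limits. Once the correct continuous representatives near $x$ and near $y$ are pinned down, everything else is routine bookkeeping with the convergence properties collected in Lemma \ref{LimitGreenConstruction}.
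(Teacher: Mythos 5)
Your algebraic setup and your $m\to\infty$ limit coincide with the paper's: you pair $G_n$ and $G_m^t$ as mutual test functions to get $\fint_{B_{1/n}(y)}G_m^t=\fint_{B_{1/m}(x)}G_n$, let $m\to\infty$ along $(l_m)$ using $L^1$ convergence on the left and interior continuity of the $W^{1,2}$-solution $G_n$ near $x$ on the right (correctly via De Giorgi--Nash, Proposition \ref{HolderInside}, which only needs $A\in M_\lambda$), and arrive at $G_n(x)=\fint_{B_{1/n}(y)}G^t(\cdot,x)$ for large $n$. The identification of $\lim_nG_{k_n}(x)$ with $G(x,y)$ via uniform convergence on a compactum away from $y$ is also the paper's; you do not actually need Corollary \ref{GreenContinuity} there, since the uniform limit is automatically a continuous representative of $G(\cdot,y)$.

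The gap is in the $n\to\infty$ step on the left-hand side. You invoke Proposition \ref{LowRegularityForAdjoint} to upgrade $G^t(\cdot,x)$ from $W^{1,p}_{\loc}$, $p<\frac{d}{d-1}$, to $W^{1,2}_{\loc}$ near $y$, so as to get a continuous representative and conclude $\fint_{B_{1/n}(y)}G^t(\cdot,x)\to G^t(y,x)$. But Proposition \ref{LowRegularityForAdjoint} requires $A\in M_{\lambda,\mu}(\Omega)$, i.e.\ Lipschitz $A$, whereas the present proposition is stated for $A\in M_{\lambda}(\Omega)$ only. This is not an innocuous strengthening: Proposition \ref{SymmetryWithAdjoint} is used downstream (in the proofs of Proposition \ref{GreenFunctionEstimatesForAdjoint} and Lemma \ref{BoundOnGreenGradient}) precisely under the assumption that $A$ is merely uniformly elliptic, so the symmetry relation must be available there. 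The low-regularity estimates of Section 3 are the only tools in the paper for promoting a $W^{1,p}$-solution with $p<2$ to $W^{1,2}_{\loc}$, and they all hinge on the Lipschitz hypothesis for $A$ (via the comparison to a frozen-coefficient operator); without it one cannot, within this framework, conclude that $G^t(\cdot,x)$ has a continuous representative near $y$. The paper sidesteps this entirely by replacing your continuity argument with Lebesgue's differentiation theorem: since $G^t(\cdot,x)\in L^1(\Omega)$, the shrinking averages $\fint_{B_{\e_{k_n}}(y)}G^t(\cdot,x)$ converge to $G^t(y,x)$ for almost every $y$, which is exactly why the proposition only claims the identity for a.e.\ $y$ rather than for every $y\neq x$. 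If you replace your Proposition \ref{LowRegularityForAdjoint}/\ref{HolderInside} step by this Lebesgue-point argument (and relabel the conclusion as ``a.e.\ $y$''), your proof matches the paper's and is correct under the stated hypotheses.
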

\begin{proof}
Fix $x,y\in\Omega$ with $x\neq y$, and consider the construction that appears in lemma \ref{LimitGreenConstruction}. Since $G_n,G_m^t\in W_0^{1,2}(\Omega)$, we can use them as test functions: set $v=G_m^t$ and $w=G_n$, to obtain that
\[
\int_{\Omega}A\nabla G_n\nabla G_m^t+b\nabla G_n\cdot G_m^t=\int_{B_{1/n}(y)}G_m^t,
\]
and also
\[
\int_{\Omega}A^t\nabla G_m^t\nabla G_n+b\nabla G_n\cdot G_m^t=\frac{1}{|B_{1/m}(x)|}\int_{B_{1/m}(x)}G_n.
\]
Since the integrals on the left hand sides of the two equations above coincide, we obtain that for all $n,m\in\mathbb N$ that are large enough (in the notation of lemma \ref{LimitGreenConstruction}),
\begin{equation}\label{eq:GreenWithAdjointGreen}
\frac{1}{|B_{1/k_n}(y)|}\int_{B_{1/k_n}(y)}G_{l_m}^t=\frac{1}{|B_{1/l_m}(x)|}\int_{B_{1/l_m}(x)}G_{k_n}.
\end{equation}

Consider now a small closed ball $B$ centered at $x$, which is far from $\partial\Omega$ and $y$, and fix $n$. From lemma \ref{LimitGreenConstruction}, every $G_n$ is continuous in $B$, hence letting $m\to\infty$ in \eqref{eq:GreenWithAdjointGreen} and using that $G_{l_m}^t$ converges to $G_x^t$ in $L^1(B_{1/k_n}(y))$, we obtain that 
\[
\frac{1}{|B_{k_n}(y)|}\int_{B_{k_n}(y)}G_x^t=G_{k_n}(x,y).
\]
Moreover, from lemma \ref{LimitGreenConstruction}, $(G_n)$ is uniformly bounded and equicontinuous in $K$, therefore there exists a subsequence $(G_{k_n})$ that converges uniformly to a continuous function in $K$. Since $G_{k_n}$ converges to $G_y$ almost everywhere, we obtain that $G_{k_n}\to G_y$ uniformly in $B$; therefore, from Lebesgue's differentiation theorem, for almost every $y\in\Omega$ with $y\notin B$,
\[
G^t(y,x)=G_x^t(y)=\lim_{n\to\infty}\frac{1}{|B_{\e_{k_n}}(y)|}\int_{B_{\e_{k_n}}(y)}G_x^t=\lim_{n\to\infty}G_{k_n}(x,y)=G(x,y).
\]
By considering smaller balls $B$, we obtain the equality for all $x\in\Omega$ and almost every $y\in\Omega$, whenever $y\neq x$; this completes the proof.
\end{proof}

\subsection{The pointwise estimates}

In this section we will drop the assumption on differentiability of $b$ and show pointwise estimates on $G$ and $G^t$. We first show the size bounds and the pointwise estimates for Green's function for the adjoint equation $L^tu=0$.

\begin{prop}\label{GreenFunctionEstimatesForAdjoint}
Let $\Omega\subseteq\mathbb R^d$ be a bounded domain, and let $A\in M_{\lambda}(\Omega)$, $b\in\Lip(\Omega)$. There exists a function $G^t:(\Omega\times\Omega)\setminus\{(x,x)|x\in\Omega\}\to\mathbb[0,\infty)$ that satisfies the following properties.
\begin{enumerate}[i)]
\item For any $x\in\Omega$, and any $p\in\left[1,\frac{d}{d-1}\right)$, $\|G^t(-,x)\|_{W_0^{1,p}(\Omega)}\leq C_p$, uniformly in $x$.
\item For all $\phi\in C_c^{\infty}(\Omega)$, $\alpha^t(G^t(-,x),\phi(-))=\phi(x)$: that is,
\[
\int_{\Omega}\left(A^t(z)\nabla_zG^t(z,x)\nabla\phi(z)+b(z)\nabla\phi(z)\cdot G^t(z,x)\right)\,dz=\phi(x).
\]
\item For all $x,y\in\Omega$, $G^t(y,x)\leq C|x-y|^{2-d}$.
\item $\|G^t(-,x)\|_{L^{\frac{d}{d-2}}_*(\Omega)}\leq C$, uniformly in $x$.
\end{enumerate}
All the constants $C$ depend on $d,\lambda$, $\|b\|_{\infty}$ and $\diam(\Omega)$, and $C_p$ also depends on $p$. In particular, the constants do not depend on the derivatives of $b$.
\end{prop}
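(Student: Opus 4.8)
The function $G^t$ is essentially produced already by Lemma~\ref{LimitGreenConstruction}: for fixed $x\in\Omega$ it supplies approximations $G_m^t\in W_0^{1,2}(\Omega)$ with $L^tG_m^t=g_m$, $g_m=|B_{\delta_m}(x)|^{-1}\chi_{B_{\delta_m}(x)}$, and a subsequence $G_{l_m}^t\to G^t(\cdot,x)$ weakly in every $W_0^{1,p}(\Omega)$, $p\in[1,\tfrac{d}{d-1})$, strongly in $L^p(\Omega)$, and a.e.; I would define $G^t(\cdot,x)$ to be this limit. Property (ii) is then the limiting form of the identity for $G_{l_m}^t$ tested against $\phi\in C_c^\infty(\Omega)$, the computation already recorded in the preliminary construction of $G^t$. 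Property (i) follows from weak lower semicontinuity of the $W_0^{1,p}$-norm for $p\in(1,\tfrac{d}{d-1})$ together with the uniform bound $\|G_m^t\|_{W_0^{1,p}(\Omega)}\le C_p$ of Lemma~\ref{LimitGreenConstruction}(ii), and the case $p=1$ then follows from $|\Omega|<\infty$. Property (iv) follows from Lemma~\ref{LimitGreenConstruction}(iv) because the weak-$L^{d/(d-2)}$ quasi-norm is lower semicontinuous under a.e. convergence. Every constant above depends only on $d,\lambda,\|b\|_\infty,\diam(\Omega)$ (and $p$), since the Lemma's constants do; no derivative of $b$ enters.

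The real content is (iii), and here I would run the Gr{\"u}ter--Widman argument, using the uniform weak-$L^{d/(d-2)}$ bound of Lemma~\ref{LimitGreenConstruction}(iv) as a scale-invariant replacement for an $L^\infty$ bound. Fix $x\ne y$, set $\rho=|x-y|$, $R=\rho/4$, and take $u=G_m^t(\cdot,x)$ with $m$ large enough that $\delta_m<R$. Then $u\ge 0$, $u$ solves $L^tu=0$ in $\Omega\cap B_{2R}(y)$, and $u=0$ on $\partial\Omega\cap B_{2R}(y)$; extending $u$ by $0$ outside $\Omega$ and the coefficients of $L^t$ suitably produces a nonnegative solution of an equation of the form $-\dive(\tilde A\nabla u)-\dive(\tilde b\,u)=0$ with bounded measurable coefficients on the full ball $B_{2R}(y)$. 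The local boundedness estimate (Moser iteration, cf.\ Theorem~8.17 in \cite{Gilbarg}, whose constant depends only on $d,\lambda$ and $\|b\|_\infty R\le\|b\|_\infty\diam(\Omega)$) gives
\[
u(y)\le\sup_{B_R(y)}u\le C\fint_{B_{2R}(y)}u,
\]
while the weak-type H{\"o}lder inequality and Lemma~\ref{LimitGreenConstruction}(iv) give
\[
\int_{B_{2R}(y)}u=\int_{B_{2R}(y)\cap\Omega}u\le C\,\|u\|_{L^{d/(d-2)}_*(\Omega)}\,|B_{2R}(y)|^{2/d}\le C\,R^2.
\]
Hence $\fint_{B_{2R}(y)}u\le CR^{2-d}$ and $u(y)\le CR^{2-d}=C'\rho^{2-d}$, with $C'$ independent of $m$, $x$, $y$ and the derivatives of $b$. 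Letting $m\to\infty$ along the subsequence and using a.e.\ convergence yields $G^t(y,x)\le C'|x-y|^{2-d}$ for a.e.\ $y$; since the $G_{l_m}^t$ are, by the bound just obtained, locally uniformly bounded on $\Omega\setminus\{x\}$ and solve $L^tu=0$ there, Proposition~\ref{Equicontinuity} makes them equicontinuous on compacta, so $G^t(\cdot,x)$ has a continuous representative away from $x$ and the estimate holds for every $y\ne x$.

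I expect the only real friction to be bookkeeping: (a) keeping the local boundedness constant genuinely uniform, which is why I would zero-extend across $\partial\Omega$ for $y$ near the boundary (so that no boundary regularity of $\Omega$ beyond boundedness is needed) and why the only dimensionless scale entering is $\|b\|_\infty R$; and (b) verifying that all inputs imported from Lemma~\ref{LimitGreenConstruction} — in particular the weak-$L^{d/(d-2)}$ bound, whose proof there uses $\|b\|_\infty$ but not $\|\nabla b\|_\infty$ — indeed carry constants free of the derivatives of $b$, which is the whole point of the statement and what will let it be applied, later, to merely bounded $b$.
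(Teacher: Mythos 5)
Your plan is correct, and your treatment of (i), (ii), and (iv) is essentially the same as the paper's (weak limits from Lemma~\ref{LimitGreenConstruction}, lower semicontinuity of the Lorentz quasi-norm). For (iii), however, you take a genuinely different route. The paper splits into two cases: when $z$ (your $y$) is well inside $\Omega$ it applies Harnack's inequality directly; when not, it \emph{enlarges the domain} to some $\tilde\Omega\supseteq\Omega$ with $B_{r/2}(x)\subseteq\tilde\Omega$, constructs $\tilde G^t$ on $\tilde\Omega$, proves the estimate there by the first case, and then transfers it back via the maximum principle on the differences $G_{k_n}-\tilde G_{k_n}$ (each solves $Lu=0$ in $\Omega$ with nonnegative boundary data). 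You instead \emph{always zero-extend} across $\partial\Omega$ and apply Moser's local boundedness on the full ball $B_{2R}(y)$. Both are valid, and your route has the pleasant feature of treating the interior and near-boundary cases uniformly and of making explicit that the only scale-invariant quantity entering the constant is $\|b\|_\infty\diam(\Omega)$.

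One point you should tighten: the zero extension of a nonnegative solution vanishing on $\partial\Omega$ is in general a \emph{subsolution}, not a solution, of the extended equation, because of the kink at $\partial\Omega$. Your proposal claims a ``nonnegative solution'' on the full ball; that is too strong. What is true, and what you actually use, is the subsolution property: for any $\phi\in C_c^\infty(B_{2R}(y))$ with $\phi\ge 0$, testing the equation for $G_m^t\in W_0^{1,2}(\Omega)$ against $v_\epsilon=\phi\cdot G_m^t/(G_m^t+\epsilon)\in W_0^{1,2}(\Omega)$, expanding, dropping the manifestly nonnegative term $\int \tilde A\nabla G_m^t\nabla G_m^t\,\epsilon\phi/(G_m^t+\epsilon)^2\ge 0$, and letting $\epsilon\to 0$ gives $\int_\Omega A^t\nabla G_m^t\nabla\phi + b\nabla\phi\cdot G_m^t\le 0$, i.e.\ $\alpha^t(\widetilde{G_m^t},\phi)\le 0$. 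This works for a general bounded $\Omega$ precisely because $G_m^t\in W_0^{1,2}(\Omega)$ (so $v_\epsilon$ is an admissible test function) and $G_m^t\ge 0$. Since Theorem~8.17 in \cite{Gilbarg} is stated for subsolutions, the rest of your argument is unaffected once you make this correction; note also that GT 8.17 requires $p>1$, so you should pick $1<p<\frac{d}{d-2}$ and bound $\|G_m^t\|_{L^p(B_{2R})}$ by the Lorentz norm via estimate \eqref{eq:lorbound}, rather than use $p=1$ directly.
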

\begin{proof}
First, note that positivity of $G_x^t$ follows from combining proposition \ref{SymmetryWithAdjoint} with lemma \ref{LimitGreenConstruction}.
	
Fix $x\in\Omega$, let $\varepsilon>0$, so that $B_{\e}=B_{\e}(x)\subseteq\Omega$. Consider the construction in lemma \ref{LimitGreenConstruction}; it is shown there that, for any $p\in\left[1,\frac{d}{d-1}\right)$, there exists a subsequence such that $G^t_{l_m}\to G(-,x)$ weakly in $W_0^{1,p}(\Omega)$, strongly in $L^{p}(\Omega)$, and pointwise in $\Omega$. Moreover, it is also shown that
\[
\|G_m^t\|_{L^{\frac{d}{d-2}}_*}\leq C,
\]
where $C$ depends on $d,\lambda,\|b\|_{\infty}$ and $\diam(\Omega)$. Set now
\[
\Omega_s^m=\{z\in\Omega|G_{l_m}^t(z)>s\},\quad\Omega_s=\{z\in\Omega|G_x^t(z)>s\}.
\]
Since $G_m^t\to G_x^t$ almost everywhere, we obtain that for any fixed $s$, $\chi_{\Omega_s^m}\to\chi_{\Omega_s}$ almost everywhere, as $m\to\infty$. Therefore, the dominated convergence theorem shows that
\[
s|\Omega_s|^{\frac{d-2}{d}}\leq\lim_{m\to\infty}s|\Omega_s^m|^{\frac{d-2}{d}}\leq C,
\]
which shows the uniform $L^{\frac{d}{d-2}}_*$ bound on $G_x^t$; that is,
\begin{equation}\label{eq:*boundOnG}
\|G_x^t\|_{L^{\frac{d}{d-2}}_*}= \sup_{s>0}\left(s|\Omega_s|^{\frac{d-2}{d}}\right)\leq C,
\end{equation}
where $C$ depends on $d,\lambda,\|b\|_{\infty}$ and $\diam(\Omega)$.

We now turn to the pointwise upper bound for $G_x^t$. We fix $z\neq x\in\Omega$, and set $r=|z-x|$. We will consider the following cases: $B_{r/2}(x)\subseteq\Omega$, and $B_{r/2}(x)\not\subseteq\Omega$.

In the first case, $B_{r/4}(z)\subseteq\Omega\setminus B_{3r/4}(x)$, and $G_x^t$ is a positive solution to the equation $L^tu=0$ in $B_{r/4}(z)$. We now apply Harnack's inequality (proposition \ref{Harnack}), to obtain that
\[
G_x^t(z)\leq\sup\{G_x^t(y)|y\in B_{r/4}(z)\}\leq C\fint_{B_{r/4}(z)}G_x^t,
\]
where $C$ depends on $d,\lambda,\|b\|_{\infty}$ and $\diam(\Omega)$. Now, we use estimate \eqref{eq:lorbound} with $p=\frac{d}{d-2}$ and $\delta=p-1$; since $\frac{\delta}{p}=\frac{2}{d}$, we obtain
\begin{align*}
\fint_{B_{r/4}(z)}G_x^t&=Cr^{-d}\int_{B_{r/4}(z)}G_x^t\\
&\leq Cr^{-d}|B_{r/4}(z)|^{\frac{\delta}{p}}\|G_x^t\|_{L^p_*(B_{r/4}(x))}\\
&\leq Cr^{-d}|B_{r/4}(z)|^{\frac{2}{d}}\leq Cr^{(2-d)},
\end{align*}
where we also used \eqref{eq:*boundOnG}. Therefore, in this case, $G_x^t(z)\leq Cr^{2-d}=C|x-z|^{2-d}$.

If, now, $B_{r/2}(x)\not\subseteq\Omega$: in this case, consider a larger domain $\tilde{\Omega}$ such that $B_{r/2}(x)\subseteq\tilde{\Omega}$, and extend the operator $L$ to $\tilde{L}$ on $\tilde{\Omega}$. In $\tilde{\Omega}$ we consider Green's function $\tilde{G}^t$, for the operator $\tilde{L}^t$. Then, the estimate above shows that $\tilde{G}^t(z,x)\leq Cr^{2-d}$. If $\tilde{G}$ is Green's function for the operator $\tilde{L}$ in $\tilde{\Omega}$, lemma \ref{LimitGreenConstruction} shows that
\[
\tilde{G}(x,z)=\tilde{G}^t(z,x)\leq Cr^{2-d}.
\]
Note now that, if $\tilde{G}_n(x,z)$ is the approximation of $\tilde{G}(x,z)$ constructed in proposition \ref{SymmetryWithAdjoint}, then, for fixed $z$ and $n\in\mathbb N$,
\[
G_{k_n}(-,z)-\tilde{G}_{k_n}(-,z)\in W^{1,2}(\Omega)
\]
is a solution to $Lu=0$ in $\Omega$, which is nonnegative on $\partial\Omega$. Then, the maximum principle (proposition \ref{MaxForSubsolutions}) shows that, for all $z\in\Omega$,
\[
G_{k_n}(x,z)-\tilde{G}_{k_n}(x,z)\leq 0.
\]
Therefore
\[
G_x^t(z)=G(x,z)=\lim_{n\to\infty}G_{k_n}(x,z)\leq\lim_{n\to\infty}\tilde{G}_{k_n}(x,z)=\tilde{G}(x,z)\leq Cr^{2-d},
\]
which implies that, in all cases, we have that $G^t(z,x)\leq C|x-y|^{2-d}$, where $C$ depends on $d,\lambda, \|b\|_{\infty}$ and $\diam(\Omega)$.
\end{proof}

Note that in the previous proposition, none of the constants depend on the derivatives of $b$. This fact leads us to the next theorem, in which the differentiability assumption on $b$ is dropped.

\begin{thm}\label{GoodGreenFunctionEstimatesForAdjoint}
Let $\Omega\subseteq\mathbb R^d$ be a bounded domain, and let $A\in M_{\lambda}(\Omega)$, $b\in L^{\infty}(\Omega)$. There exists a function $G^t:(\Omega\times\Omega)\setminus\{(x,x)|x\in\Omega\}\to\mathbb[0,\infty)$ that satisfies the following properties.
\begin{enumerate}[i)]
\item For any $x\in\Omega$, and any $p\in\left[1,\frac{d}{d-1}\right)$, $\|G^t(-,x)\|_{W_0^{1,p}(\Omega)}\leq C_p$, uniformly in $x$.
\item For all $\phi\in C_c^{\infty}(\Omega)$, $\alpha^t(G^t(-,x),\phi(-))=\phi(x)$: that is,
\[
\int_{\Omega}\left(A^t(z)\nabla_zG^t(z,x)\nabla\phi(z)+b(z)\nabla\phi(z)\cdot G^t(z,x)\right)\,dz=\phi(x).
\]
\item For all $x,y\in\Omega$, $G^t(y,x)\leq C|x-y|^{2-d}$.
\item $\|G^t(-,x)\|_{L^{\frac{d}{d-2}}_*(\Omega)}\leq C$, uniformly in $x$.
\end{enumerate}
All the constants $C$ depend on $d,\lambda$, $\|b\|_{\infty}$ and $\diam(\Omega)$, and $C_p$ also depends on $p$. 
\end{thm}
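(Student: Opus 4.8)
The strategy exploits the fact, emphasized right after Proposition \ref{GreenFunctionEstimatesForAdjoint}, that none of the constants there depend on the Lipschitz seminorm of $b$. So the plan is to approximate $b\in L^{\infty}(\Omega)$ by Lipschitz drifts, apply the already established estimates uniformly, and pass to the limit, using Proposition \ref{MeasureSolvabilityForAdjoint} (which is valid for $b\in L^{\infty}$) to identify the limit. Concretely, I would first extend $b$ to $\mathbb R^d$ by $0$ outside $\Omega$, so that $\|\tilde b\|_{L^{\infty}(\mathbb R^d)}=\|b\|_{L^{\infty}(\Omega)}$, and set $b_k=\tilde b*\psi_k$ for a standard mollifier; then $b_k\in\Lip(\Omega)$, $\|b_k\|_{\infty}\le\|b\|_{\infty}$, and $b_k\to b$ almost everywhere in $\Omega$ and in every $L^q_{\loc}(\Omega)$. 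Let $L_k^t$ be the corresponding adjoint operators and $G_k^t$ the Green's functions produced by Proposition \ref{GreenFunctionEstimatesForAdjoint}; because $\|b_k\|_{\infty}\le\|b\|_{\infty}$, properties (i)--(iv) hold for each $G_k^t$ with constants uniform in $k$.

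Next, I would fix $x\in\Omega$ and $p\in\left(1,\frac{d}{d-1}\right)$. By the uniform bound in (i), $\big(G_k^t(\cdot,x)\big)_k$ is bounded in the reflexive space $W_0^{1,p}(\Omega)$, so a subsequence converges weakly in $W_0^{1,p}(\Omega)$, strongly in $L^p(\Omega)$ by Rellich--Kondrachov, and almost everywhere in $\Omega$, to some $G^t(\cdot,x)\in W_0^{1,p}(\Omega)$. Passing to the limit in the weak formulation (ii) for $G_k^t$ is routine: the second order term converges since $\nabla G_k^t(\cdot,x)\rightharpoonup\nabla G^t(\cdot,x)$ in $L^p$ while $A^t\nabla\phi\in L^{p'}$, and the drift term converges because $b_k\nabla\phi\to b\nabla\phi$ in $L^{p'}$ by dominated convergence (using $|b_k|\le\|b\|_{\infty}$ on the compact support of $\phi$) while $G_k^t(\cdot,x)\to G^t(\cdot,x)$ in $L^p$. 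Hence $\alpha^t(G^t(\cdot,x),\phi)=\phi(x)$ for all $\phi\in C_c^{\infty}(\Omega)$, so by the uniqueness part of Proposition \ref{MeasureSolvabilityForAdjoint} the function $G^t(\cdot,x)$ is \emph{the} $W_0^{1,p}$ solution of $L^tu=\delta_x$; in particular (i) (for $p\in[1,\frac{d}{d-1})$, the case $p=1$ following by H\"older since $\Omega$ is bounded) and (ii) hold, the limit is independent of the subsequence, and nonnegativity is inherited from $G_k^t\ge 0$. The value $G^t(x,x)$ is left undefined.

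It remains to transfer the size estimates. For (iv): fixing $s>0$ and setting $\Omega_s^k=\{G_k^t(\cdot,x)>s\}$, $\Omega_s=\{G^t(\cdot,x)>s\}$, almost everywhere convergence gives $\chi_{\Omega_s^k}\to\chi_{\Omega_s}$ a.e.\ whenever $|\{G^t(\cdot,x)=s\}|=0$, which fails for at most countably many $s$; Fatou's lemma then yields $s|\Omega_s|^{\frac{d-2}{d}}\le\liminf_k s|\Omega_s^k|^{\frac{d-2}{d}}\le C$ for all such $s$, and the monotonicity of the distribution function promotes this to every $s>0$, i.e.\ $\|G^t(\cdot,x)\|_{L^{d/(d-2)}_*(\Omega)}\le C$. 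For (iii), from $G_k^t(y,x)\le C|x-y|^{2-d}$ and a.e.\ convergence I get $G^t(y,x)\le C|x-y|^{2-d}$ for almost every $y$; to upgrade this to every $y\ne x$, fix a compact $K\subseteq\Omega\setminus\{x\}$, note that each $G_k^t(\cdot,x)$ lies in $W^{1,2}_{\loc}(\Omega\setminus\{x\})$ (Proposition \ref{LowRegularityForAdjoint}, using that $b_k$ is Lipschitz), solves $L_k^tu=0$ there, and is uniformly bounded on $K$ by the estimate just used; Proposition \ref{Equicontinuity} then gives equicontinuity on $K$, and Arzel\`a--Ascoli (plus identification of the limit with $G^t(\cdot,x)$ a.e.) shows $G^t(\cdot,x)$ has a representative continuous on $\Omega\setminus\{x\}$, for which the pointwise bound holds everywhere by density. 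This last step — promoting the almost-everywhere bound to an everywhere bound through the interior regularity/equicontinuity machinery of Chapter 3, while keeping all constants uniform in $k$ and in $x$ — is the only genuinely delicate point; everything else is a bookkeeping of weak limits.
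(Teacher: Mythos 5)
Your overall strategy — mollify $b$, apply Proposition \ref{GreenFunctionEstimatesForAdjoint} with constants uniform in the mollification, extract a weak limit in $W_0^{1,p}(\Omega)$, pass to the limit in the weak formulation, and identify the limit via the uniqueness in Proposition \ref{MeasureSolvabilityForAdjoint} — is exactly the route the paper takes, and you are in fact more explicit than the paper's very terse proof about two points it glosses over (subsequence-independence via Proposition \ref{MeasureSolvabilityForAdjoint}, and the Fatou argument transferring the Lorentz bound). However, there is one genuine flaw in your "upgrade" of the pointwise bound (iii) from a.e.\ $y$ to every $y\ne x$: you invoke Proposition \ref{LowRegularityForAdjoint} to place $G_k^t(\cdot,x)$ in $W^{1,2}_{\loc}(\Omega\setminus\{x\})$, but that proposition requires $A\in M_{\lambda,\mu}(\Omega)$, i.e.\ Lipschitz $A$, whereas the theorem you are proving only assumes $A\in M_{\lambda}(\Omega)$ (bounded measurable). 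This is not a cosmetic issue, since Proposition \ref{LowRegularityForAdjoint} is proved via freezing-coefficient Calder\'on--Zygmund estimates, which are simply unavailable for merely bounded $A$.

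The fix stays within the paper's toolbox and does not change your strategy: rather than using $G_k^t(\cdot,x)$ directly, use the $W_0^{1,2}(\Omega)$ approximations $G_m^t$ from Lemma \ref{LimitGreenConstruction} for the operator $L_k^t$. These are, by construction, $W^{1,2}_0(\Omega)$ solutions of $L_k^tu=0$ on any compact $K\subseteq\Omega\setminus\{x\}$ for $m$ large; the uniform $L_*^{d/(d-2)}$ bound, H\"older's inequality \eqref{eq:lorbound}, and Harnack's inequality (Proposition \ref{Harnack}) give a uniform $L^{\infty}(K)$ bound, Cacciopoli (Lemma \ref{CacciopoliForAdjoint}) then a uniform $W^{1,2}$ bound near $K$, and Proposition \ref{Equicontinuity} gives equicontinuity on $K$ — all of which are valid for $A\in M_{\lambda}$ only. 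This yields a continuous-in-$K$ representative of $G_k^t(\cdot,x)$, and a diagonal argument over $k$ then gives a continuous representative of $G^t(\cdot,x)$ on $\Omega\setminus\{x\}$ for which (iii) holds pointwise. With that substitution your argument is complete and agrees with the paper's proof.
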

\begin{proof}
Let $p\in\left[1,\frac{d}{d-1}\right)$. Consider a mollification of $b$: that is, $b_n\in\Lip(\Omega)$, $\|b_n\|_{\infty}\leq \|b\|_{\infty}$ for all $n\in\mathbb N$, and $b_n\to b$ in $L^d(\Omega)$. From proposition \ref{GreenFunctionEstimatesForAdjoint}, we can construct Green's function $G_x^{t,n}$, for every point $x\in\Omega$, such that
\[
\int_{\Omega}A^t\nabla G_x^{t,n}\nabla\phi+b\nabla\phi\cdot G_x^{t,n}=\phi(x)\quad\forall\phi\in C_c^{\infty}(\Omega),
\]
where $G_x^{t,n}$ also satisfies the estimates
\[
|G_x^{t,n}(y)|\leq C|x-y|^{2-d},\quad\|G_x^{t,n}\|_{W_0^{1,p}(\Omega)}\leq C_p,
\]
where $C$ is a constant that depends $d,\lambda$, $\|b\|_{\infty},\diam(\Omega)$, and $C_p$ also depends on $p$. Hence, there exists a subsequence $G_x^{t,k_n}$ which converges to a function $G_x^t\in W_0^{1,p}$ weakly in $W_0^{1,p}$, strongly in $L^p(\Omega)$, and almost everywhere in $\Omega$. In particular, this function does not depend on $p$. Then, for any $\phi\in C_c^{\infty}(\Omega)$, we compute
\[
\int_{\Omega}A^t\nabla G_x^t\nabla\phi+b\nabla\phi\cdot G_x^t=\lim_{n\to\infty}\int_{\Omega}A^t\nabla G_x^{t,k_n}\nabla\phi+b\nabla\phi\cdot G_x^{t,k_n}=\phi(x).
\]
From almost everywhere convergence of $G_x^{t,k_n}$ to $G_x^t$, we obtain the pointwise bound (iii) and the Lorentz bound (iv), which completes the proof.
\end{proof}

The final step involves the construction of Green's function without any differentiability assumption on $b$. Since we now have a pointwise bound on Green's function, we will bound its derivative using an analog of Cacciopoli's inequality.

\begin{lemma}\label{BoundOnGreenGradient}
Let $\Omega$ be a bounded domain, and let $A\in M_{\lambda}(\Omega)$, $b\in\Lip(\Omega)$. If $G_y$ is Green's function for $L$ at $y$, then for every $p\in\left[1,\frac{d}{d-1}\right)$ there exists a constant $C_p$, depending on $d,\lambda,\|b\|_{\infty},p$ and $\diam\Omega$, such that $\|G_y\|_{W_0^{1,p}(\Omega)}\leq C_p$, uniformly in $y$.
\end{lemma}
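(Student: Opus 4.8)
The plan is to combine the pointwise upper bound on $G_y$, which is already available with constants that do not see the derivatives of $b$, with Caccioppoli's inequality applied on dyadic annuli centered at $y$, and then to sum a geometric series whose convergence is equivalent to the restriction $p<\frac{d}{d-1}$.

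\emph{Step 1: the pointwise bound.} By Proposition \ref{SymmetryWithAdjoint}, $G_y(z)=G(z,y)=G^t(y,z)$ for a.e.\ $z\in\Omega$, and Theorem \ref{GoodGreenFunctionEstimatesForAdjoint}(iii) gives $G^t(y,z)\le C|z-y|^{2-d}$ with $C$ a good constant. Hence $0\le G_y(z)\le C|z-y|^{2-d}$ a.e.\ in $\Omega$, nonnegativity following from Proposition \ref{SymmetryWithAdjoint} together with Lemma \ref{LimitGreenConstruction}(i). In particular $\|G_y\|_{L^p(\Omega)}\le C$ for every $p<\frac{d}{d-2}$ — which includes every $p<\frac{d}{d-1}$ — so only the gradient remains to be estimated. (Alternatively one may use the weak-$L^{d/(d-2)}$ bound of Theorem \ref{GoodGreenFunctionEstimatesForAdjoint}(iv) and \eqref{eq:lorbound}.)

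\emph{Step 2: annular Caccioppoli estimate.} Set $R_0=\diam(\Omega)$, $r_k=2^{-k}R_0$ and $A_k=\{z\in\Omega:r_{k+1}\le|z-y|<r_k\}$ for $k\ge0$; these annuli exhaust $\Omega\setminus\{y\}$ up to a null set. Away from $y$ the function $G_y$ is a nonnegative weak solution of $Lu=0$, hence lies in $W^{1,2}_{\loc}(\Omega\setminus\{y\})$ by Proposition \ref{LowRegularity}. Covering $A_k$ by boundedly many balls of radius comparable to $r_k$ whose doubles lie in $\Omega\setminus\{y\}$, and by coordinate cylinders (on whose boundary portion $G_y$ vanishes) where $A_k$ meets $\partial\Omega$, Lemma \ref{Cacciopoli} gives
\[
\int_{A_k}|\nabla G_y|^2\le\frac{C}{r_k^{2}}\int_{\widetilde A_k}G_y^2\le\frac{C}{r_k^{2}}\,\big(C r_k^{2-d}\big)^2\,|\widetilde A_k|\le C\,r_k^{\,2-d},
\]
where $\widetilde A_k\subseteq\Omega$ is a slightly enlarged annulus, and we used $G_y\le Cr_k^{2-d}$ on $\widetilde A_k$ together with $|\widetilde A_k|\le Cr_k^{d}$; here $C$ is a good constant (it depends on $\diam(\Omega)$, which absorbs the factor $1+r_k^{-2}$ in Lemma \ref{Cacciopoli}).

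\emph{Step 3: Hölder and summation.} Since $p<\frac{d}{d-1}\le\frac32<2$, Hölder's inequality gives
\[
\int_{A_k}|\nabla G_y|^p\le\Big(\int_{A_k}|\nabla G_y|^2\Big)^{p/2}|A_k|^{1-p/2}\le C\,r_k^{(2-d)p/2}\,r_k^{d(1-p/2)}=C\,r_k^{\,p+d-dp}.
\]
The exponent $p+d-dp$ is positive precisely when $p<\frac{d}{d-1}$, so $\sum_{k\ge0}\int_{A_k}|\nabla G_y|^p$ is a convergent geometric series, whence $\|\nabla G_y\|_{L^p(\Omega)}\le C_p$ with $C_p$ depending only on $d,\lambda,\|b\|_\infty,p$ and $\diam(\Omega)$. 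Together with Step 1 this yields $\|G_y\|_{W_0^{1,p}(\Omega)}\le C_p$ uniformly in $y$.

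\emph{Main obstacle.} The only real work is the bookkeeping in Step 2: organizing the cover of each annulus so that the correct version of Caccioppoli (interior on balls, boundary on cylinders, Lemma \ref{Cacciopoli}) applies with a good constant, and then checking that the exponent arithmetic closes up. It is exactly the condition $p<\frac{d}{d-1}$ that makes the resulting series converge, which is why the lemma is confined to this range; everything else is routine.
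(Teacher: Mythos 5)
Your overall strategy — decompose $\Omega\setminus\{y\}$ into dyadic annuli, apply Caccioppoli, use Hölder, and sum a geometric series converging exactly for $p<\frac{d}{d-1}$ — is the same as the paper's. However, Step~2 has a genuine gap that the paper's proof is specifically designed to circumvent.

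The lemma only assumes $\Omega$ is a bounded domain, not a Lipschitz domain, so the boundary portion of Lemma~\ref{Cacciopoli} (which is stated for Lipschitz $\Omega$) and the coordinate cylinders you invoke are simply not available. Even if $\Omega$ were Lipschitz, the boundary Caccioppoli in Lemma~\ref{Cacciopoli}(ii) requires $u\in W^{1,2}(\Omega)$, and at this stage one only knows $G_y\in W_0^{1,p}(\Omega)$ for $p<\frac{d}{d-1}<2$; Proposition~\ref{LowRegularity} upgrades this to $W^{1,2}$ only on compact subsets of $\Omega\setminus\{y\}$, not up to the boundary. (The paper proves $G_y\in W^{1,2}(T_{2r}(q))$ only later, in Lemma~\ref{ToMaximalBoundForG}, and its proof again goes through the approximations.) So for annuli that meet $\partial\Omega$, your Caccioppoli application is unjustified, and these are exactly the annuli that contribute when $y$ is close to the boundary, which is where uniformity in $y$ is at stake.

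The paper's argument avoids both obstacles with two devices. First, rather than covering the annulus, it uses one smooth cutoff $\phi$ supported in $B_{3r}(y)$ and tests the equation with $G_n\phi^2$, where $G_n$ are the $W_0^{1,2}(\Omega)$ approximants of Lemma~\ref{LimitGreenConstruction}. Since $G_n$ already has zero trace on $\partial\Omega$, the product $G_n\phi^2$ lies in $W_0^{1,2}(\Omega)$ regardless of whether $B_{3r}(y)$ crosses $\partial\Omega$ and regardless of any boundary regularity of $\Omega$; this supplies the Caccioppoli-type inequality globally. Second, to bound $\int_{B_{3r}\setminus B_{r/2}}|G_n|^2$, the paper extends $L$ to a larger ball $B_2(y)\supseteq\Omega$, compares $G_n\le\tilde G_n$ by the maximum principle, and uses that $\tilde G_{k_n}\to\tilde G_y$ uniformly on compact subsets of $B_2(y)\setminus\{y\}$ — a set that contains all of $\overline\Omega\setminus\{y\}$ — to pass to the limit. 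Your shortcut of directly invoking the pointwise bound on $G_y$ does not by itself justify passing from the weak formulation (which must be tested against $G_n$, not $G_y$) to a statement about $\nabla G_y$ on pieces touching $\partial\Omega$. This limiting step is the real content of Step~2, not bookkeeping.
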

\begin{proof} 
Without loss of generality, assume that $\diam(\Omega)<1$.

Let $r>0$ and $y\in\Omega$, and consider a smooth cutoff $\phi$ which is equal to $1$ in $B_{2r}(y)\setminus B_r(y)$, it is equal to $0$ in $B_{r/2}(y)$ and outside $B_{3r}(y)$, and $|\nabla\phi|\leq C/r$. Consider also the functions $G_n\in W_0^{1,2}(\Omega)$ that appear in the proof of lemma \ref{LimitGreenConstruction}. Using $G_n\phi^2\in W_0^{1,2}(\Omega)$ as a test function, we obtain that
\[
\int_{\Omega}A\nabla G_n\cdot\nabla (G_n\phi^2)+b\nabla G_n\cdot G_n\phi^2=0,
\]
which implies that
\begin{align*}
\lambda\int_{\Omega}|\nabla G_n|^2\phi^2&\leq\int_{\Omega}A\nabla G_n\nabla G_n\cdot\phi^2=
-\int_{\Omega}2A\nabla G_n\nabla\phi\cdot G_n\phi+b\nabla G_n\cdot G_n\phi^2\\
&\leq C\left(\int_{\Omega}|\nabla G_n|^2\phi^2\right)^{1/2}\left(\left(\int_{\Omega}|\nabla\phi|^2|G_n|^2\right)^{1/2}+\|b\|_{\infty}\left(\int_{\Omega}G_n^2\phi^2\right)^{1/2}\right),
\end{align*}
and the last estimate shows that
\[
\int_{\Omega}|\nabla G_n|^2\phi^2\leq C\int_{\Omega}|\nabla\phi|^2|G_n|^2+C\int_{\Omega}G_n^2\phi^2,
\]
where $C$ depends on $\lambda$ and $\|b\|_{\infty}$. Considering the support properties of $\phi$, using the previous estimate we obtain
\begin{align*}
\int_{B_{2r}(y)\setminus B_r(y)}|\nabla G_n|^2&=\int_{B_{2r}(y)\setminus B_r(y)}|\nabla G_n|^2\phi^2\leq \int_{\Omega}|\nabla G_n|^2\phi^2\\
&\leq C\int_{\Omega}|\nabla\phi|^2|G_n|^2+C\int_{\Omega}G_n^2\phi^2\\
&\leq\frac{C}{r^2}\int_{B_{3r}\setminus B_{r/2}}|G_n|^2+C\int_{B_{3r}\setminus B_{r/2}(y)}|G_n|^2\\
&=C\left(\frac{1}{r^2}+1\right)\int_{B_{3r}\setminus B_{r/2}(y)}|G_n|^2.
\end{align*}
If, now, $p\in\left[1,\frac{d}{d-1}\right)$, then, from H\"{o}lder's inequality,
\begin{align*}
\int_{B_{2r}(y)\setminus B_r(y)}|\nabla G_n|^p&\leq\left(\int_{B_{2r}(y)\setminus B_r(y)}|\nabla G_n|^2\right)^{p/2}|B_{2r}(y)|^{1-p/2}\\
&\leq C_p\left(\frac{1}{r^2}+1\right)^{p/2}\left(\int_{B_{3r}\setminus B_{r/2}(y)}|G_n|^2\right)^{p/2}r^{d-pd/2}.
\end{align*}
Hence, if $r<1$, we obtain that $1<\diam(\Omega)/r$, so
\[
\int_{B_{2r}(y)\setminus B_r(y)}|\nabla G_n|^p\leq C_p\left(\int_{B_{3r}\setminus B_{r/2}(y)}|G_n|^2\right)^{p/2}r^{d-pd/2-p}.
\]
If $r\geq 1$ the same inequality holds, since then $(B_{2r}(y)\setminus B_r(y))\cap\Omega=\emptyset$, because we have assumed that $\diam(\Omega)<1$.

We now extend the operator $L$ to an operator $\tilde{L}$ on $B_2(y)$, and let $\tilde{G}_y$ be Green's function for $\tilde{L}$ at $y$, and $\tilde{G}_n$ be the sequence constructed in lemma \ref{LimitGreenConstruction}. Note that $G_n$ and $\tilde{G}_n$ are in $W^{1,2}(\Omega)$, $G_n-\tilde{G}_n$ is a solution to $\tilde{L}u=0$ in $\Omega$, and, from proposition \ref{GreenFunctionEstimatesForAdjoint}, $\tilde{G}_n\geq 0$ on $\partial\Omega$. Therefore, from the maximum principle, $G_n\leq\tilde{G}_n$ in $\Omega$, hence 
\[
\int_{B_{2r}(y)\setminus B_r(y)}|\nabla G_n|^p\leq C_p\left(\int_{B_{3r}\setminus B_{r/2}(y)}|\tilde{G}_n|^2\right)^{p/2}r^{d-pd/2-p}.
\]
Let now $\tilde{K}$ be a compactly supported subset of $B_2(y)\setminus\{y\}$. Then, lemma \ref{LimitGreenConstruction} shows that $\tilde{G}_n$ is uniformly bounded and continuous in $K$. Hence, for some subsequence, the same lemma shows that $\tilde{G}_{k_n}\to \tilde{G}_y$ uniformly. Hence, for every $r\in(0,1)$, the dominated convergence theorem shows that
\begin{align*}
\limsup_{n\to\infty}\int_{B_{2r}(y)\setminus B_r(y)}|\nabla G_{k_n}|^p&\leq C_p\left(\limsup_{n\to\infty}\int_{B_{3r}\setminus B_{r/2}(y)}|\tilde{G}_{k_n}|^2\right)^{p/2}r^{d-pd/2-p}\\
&\leq C_p\left(\int_{B_{3r}\setminus B_{r/2}(y)}\limsup_{n\to\infty}|\tilde{G}_{k_n}|^2\right)^{p/2}r^{d-pd/2-p}\\
&\leq C_p\left(\int_{B_{3r}\setminus B_{r/2}(y)}|\tilde{G}_y(x)|^2\right)^{p/2}r^{d-pd/2-p}.
\end{align*}
We now use the pointwise bound on $\tilde{G}^t$ from proposition \ref{GreenFunctionEstimatesForAdjoint}, and the symmetry relation from proposition \ref{SymmetryWithAdjoint}, to conclude that
\begin{align*}
\limsup_{n\to\infty}\int_{B_{2r}(y)\setminus B_r(y)}|\nabla G_{k_n}|^p&\leq C_p\left(\int_{B_{3r}\setminus B_{r/2}(y)}|y-x|^{2(2-d)}\right)^{p/2}r^{d-pd/2-p}\\
&\leq C_p\left(\int_{B_{3r}\setminus B_{r/2}(y)}(r/2)^{2(2-d)}\right)^{p/2}r^{d-pd/2-p}\\
&=C_pr^{d+p-dp}=C_pr^{\alpha},
\end{align*}
where $\alpha=p+d-dp$; since $p<\frac{d}{d-1}$, we obtain then that $\alpha>0$. Therefore, since a subsequence of $G_{k_n}$ converges to $G_y$ in $W_0^{1,p}$, we obtain that
\[
\int_{B_{2r}(y)\setminus B_r(y)}|\nabla G_y|^p\leq C_pr^{\alpha},
\]
where $C_p$ is a constant that depends on $d,\lambda,\|b\|_{\infty}$ and $p$.

Finally, we apply this inequality for $r=2^{-j}$, $j\in\mathbb N$ and we add the resulting terms, to finally conclude
\[
\int_{\Omega}|\nabla G_y|^p=\sum_{j=0}^{\infty}\int_{B_{2^{1-j}(y)}\setminus B_{2^{-j}}(y)}|\nabla G_y|^p\leq \sum_{j=0}^{\infty}C_p2^{-j\alpha}\leq C_p,
\]
since $\alpha>0$, which implies that the series converges, and where $C_p$ depends on $d,\lambda,\|b\|_{\infty}$ and $p$. To bound the $L^p$ norm of $G_y$ we use the pointwise bound from proposition \ref{GreenFunctionEstimatesForAdjoint}, and the symmetry relation from proposition \ref{SymmetryWithAdjoint}. This completes the proof.
\end{proof}

We are now in position to construct Green's function for bounded drifts $b$.

\begin{thm}\label{GoodGreenFunctionEstimates}
Let $\Omega\subseteq\mathbb R^d$ be a bounded domain, and suppose that $A\in M_{\lambda}(\Omega)$, $b\in L^{\infty}(\Omega)$. There exists a function $G:(\Omega\times\Omega)\setminus\{(x,x)|x\in\Omega\}\to\mathbb[0,\infty)$ that satisfies the following properties.
\begin{enumerate}[i)]
\item For any $y\in\Omega$, and any $p\in\left[1,\frac{d}{d-1}\right)$, $\|G(-,y)\|_{W_0^{1,p}(\Omega)}\leq C_p$, uniformly in $p$
\item For all $\phi\in C_c^{\infty}(\Omega)$, $\alpha(G(-,y),\phi(-))=\phi(y)$: that is,
\[
\int_{\Omega}\left(A(z)\nabla_zG(z,y)\nabla\phi(z)+b(z)\nabla_zG(z,y)\cdot\phi(z)\right)\,dz=\phi(y)
\]
\item For all $x,y\in\Omega$, $G(x,y)\leq C|x-y|^{2-d}$.
\end{enumerate}
In the above, $C$ depends on $d,\lambda,\|b\|_{\infty}$ and $\diam(\Omega)$, and $C_p$ also depends on $p$.
\end{thm}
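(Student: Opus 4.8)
The plan is to mimic the proof of Theorem \ref{GoodGreenFunctionEstimatesForAdjoint}: obtain $G$ as a limit of Green's functions for operators with mollified (hence Lipschitz) drifts, using the uniform bound of Lemma \ref{BoundOnGreenGradient} and the uniform pointwise bound that comes from Proposition \ref{GreenFunctionEstimatesForAdjoint} together with the symmetry relation of Proposition \ref{SymmetryWithAdjoint}. Fix $y\in\Omega$. Extend $b$ boundedly to $\mathbb R^d$ without increasing its sup norm (e.g.\ by zero) and mollify to get $b_n\in\Lip(\Omega)$ with $\|b_n\|_{L^\infty(\Omega)}\leq\|b\|_{L^\infty(\Omega)}$ and $b_n\to b$ in every $L^q(\Omega)$, $q<\infty$ (by dominated convergence on the bounded set $\Omega$). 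Let $L_n=-\dive(A\nabla\cdot)+b_n\nabla\cdot$. Since $b_n$ is Lipschitz, Proposition \ref{MeasureSolvability} together with Lemma \ref{WhatIsInB} produces Green's function $G^n(\cdot,y)\in W_0^{1,p}(\Omega)$ for $L_n$ at $y$ for every $p\in\left[1,\frac{d}{d-1}\right)$, and Lemma \ref{BoundOnGreenGradient} gives $\|G^n(\cdot,y)\|_{W_0^{1,p}(\Omega)}\leq C_p$, where $C_p$ depends only on $d,\lambda,\|b\|_\infty,p,\diam(\Omega)$, hence is uniform in $n$ and $y$. Moreover, if $L_n^t$ denotes the adjoint operator with drift $b_n$, Proposition \ref{GreenFunctionEstimatesForAdjoint}(iii) gives $(G^n)^t(y,x)\leq C|x-y|^{2-d}$ with $C$ depending on $b_n$ only through $\|b_n\|_\infty\leq\|b\|_\infty$, and Proposition \ref{SymmetryWithAdjoint} gives $G^n(x,y)=(G^n)^t(y,x)$; hence $G^n(x,y)\leq C|x-y|^{2-d}$ uniformly in $n$.

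Next, fix any $p_0\in\left(1,\frac{d}{d-1}\right)$. The sequence $(G^n(\cdot,y))$ is bounded in $W_0^{1,p_0}(\Omega)$, so by reflexivity and the Rellich--Kondrachov theorem there is a subsequence $G^{k_n}(\cdot,y)\to G(\cdot,y)$ weakly in $W_0^{1,p_0}(\Omega)$, strongly in $L^{p_0}(\Omega)$, and a.e.\ in $\Omega$, for some $G(\cdot,y)\in W_0^{1,p_0}(\Omega)$. Since for every $p\in\left[1,\frac{d}{d-1}\right)$ one has $\|\nabla G^{k_n}(\cdot,y)\|_{L^p}\leq C_p$ and $G^{k_n}(\cdot,y)\to G(\cdot,y)$ in $L^1(\Omega)$, the distributional gradient $\nabla G(\cdot,y)$ is the weak $L^p$ limit of $\nabla G^{k_n}(\cdot,y)$ (along a further subsequence), so $\|\nabla G(\cdot,y)\|_{L^p}\leq C_p$; as $W_0^{1,p}(\Omega)$ is weakly closed, this gives property (i). Property (iii) follows from the a.e.\ convergence together with the uniform pointwise bound above.

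It remains to pass to the limit in the weak formulation. For $\phi\in C_c^\infty(\Omega)$,
\[
\int_\Omega A\nabla G^{k_n}(\cdot,y)\cdot\nabla\phi+b_{k_n}\nabla G^{k_n}(\cdot,y)\cdot\phi=\phi(y).
\]
Since $A\nabla\phi\in L^{p_0'}(\Omega)$ and $\nabla G^{k_n}(\cdot,y)\rightharpoonup\nabla G(\cdot,y)$ in $L^{p_0}(\Omega)$, the first term converges to $\int_\Omega A\nabla G(\cdot,y)\cdot\nabla\phi$. For the second term, note $p_0'<\infty$, $\phi$ is bounded, and $b_{k_n}\to b$ in $L^{p_0'}(\Omega)$, so $b_{k_n}\phi\to b\phi$ strongly in $L^{p_0'}(\Omega)$; the product of the strongly convergent sequence $b_{k_n}\phi$ with the weakly convergent (hence norm-bounded) sequence $\nabla G^{k_n}(\cdot,y)$ then converges to $\int_\Omega b\phi\cdot\nabla G(\cdot,y)$. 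Hence $\alpha(G(\cdot,y),\phi)=\phi(y)$, which is property (ii), and the theorem follows.

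The main point to watch is purely a matter of bookkeeping: every constant entering through Lemma \ref{BoundOnGreenGradient} and Proposition \ref{GreenFunctionEstimatesForAdjoint} must depend on $b$ only through $\|b\|_\infty$, so that the estimates are stable under the mollification $b_n\to b$; this has been arranged in those statements. Beyond that there is no genuine analytic obstacle --- the passage to the limit is the standard strong-times-weak argument, and no derivative of $b$ ever appears in the final constants.
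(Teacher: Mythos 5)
Your proposal is correct and follows essentially the same route as the paper: mollify $b$ to a Lipschitz $b_n$ with $\|b_n\|_\infty\le\|b\|_\infty$, invoke Lemma~\ref{BoundOnGreenGradient} for the uniform $W_0^{1,p}$ bound and Proposition~\ref{GreenFunctionEstimatesForAdjoint} together with Proposition~\ref{SymmetryWithAdjoint} for the uniform pointwise bound, extract a subsequence converging weakly in $W_0^{1,p_0}$, strongly in $L^{p_0}$, and a.e., then pass to the limit in the weak formulation via the standard weak--strong pairing. The one point you spell out more carefully than the paper --- that the fixed a.e.\ limit lies in $W_0^{1,p}$ for every admissible $p$ because further subsequences of the gradients converge weakly in each $L^p$ to the same distributional limit --- is exactly the bookkeeping needed to make ``this function does not depend on $p$'' rigorous, and you identify correctly that the convergence $b_{k_n}\phi\to b\phi$ in $L^{p_0'}$ (with $p_0'<\infty$) handles the drift term.
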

\begin{proof}
The proof is identical to the proof of theorem \ref{GoodGreenFunctionEstimatesForAdjoint}. For the good bound on the norm $\|G_y\|_{W_0^{1,p}(\Omega)}$ we apply lemma \ref{BoundOnGreenGradient}. In addition, for the pointwise bound we use the pointwise bound from proposition \ref{GreenFunctionEstimatesForAdjoint}, and the symmetry relation from proposition \ref{SymmetryWithAdjoint}. This completes the proof.
\end{proof}

We conclude this section with the following estimates on $W_0^{1,2}(\Omega)$ solutions to the equations $Lu=F$ and $L^tu=F$ in $\Omega$.

\begin{prop}\label{GoodBoundOnSolutions}
Let $\Omega$ be a bounded domain, $A\in M_{\lambda}(\Omega)$, and $b\in L^{\infty}(\Omega)$. Then, for every $F\in W^{-1,2}(\Omega)$, there exists a unique solution $u\in W_0^{1,2}(\Omega)$ of the equation $-\dive(A\nabla u)+b\nabla u=F$ in $\Omega$, and also
\[
\|u\|_{W_0^{1,2}(\Omega)}\leq C\|F\|_{W^{-1,2}(\Omega)},
\]
where $C$ depends on $d,\lambda,\|b\|_{\infty}$ and $\diam(\Omega)$.
\end{prop}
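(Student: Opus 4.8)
The existence and uniqueness of a solution $u\in W_0^{1,2}(\Omega)$ are already contained in Proposition \ref{InhomogeneousSolvability}, so the only new point is the quantitative estimate, and the difficulty is to get a constant that depends on the coefficients only through $\|b\|_\infty$ (together with $\lambda,d,\diam(\Omega)$). Testing the equation directly with $u$ does not suffice, because $L$ is not coercive and nothing forbids $\|b\|_\infty$ or $\diam(\Omega)$ from being large; the plan is instead to route the argument through the maximum-principle estimate of Theorem \ref{SupremumBound}, which does hold with the right dependence.

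First I would improve the regularity of the datum in the scale of negative Sobolev spaces. Fix once and for all an exponent $p\in\bigl(1,\tfrac{d}{d-1}\bigr)$ depending only on $d$, and let $p'$ be its conjugate, so that $p'>d\ge 3>2$. Since $\Omega$ is bounded, H\"older's inequality gives a continuous inclusion $W^{1,p'}(\Omega)\hookrightarrow W^{1,2}(\Omega)$ with norm at most $|\Omega|^{1/2-1/p'}$; dualizing, $W^{-1,2}(\Omega)\hookrightarrow W^{-1,p}(\Omega)$ with $\|F\|_{W^{-1,p}(\Omega)}\le C\|F\|_{W^{-1,2}(\Omega)}$, where $C$ depends only on $d$ and $\diam(\Omega)$. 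Viewing $F$ as an element of $W^{-1,p}(\Omega)$ in this way, $u$ is still a solution of $Lu=F$ in the sense of Theorem \ref{SupremumBound}, and that theorem gives
\[
\|u\|_{L^\infty(\Omega)}\le C\|F\|_{W^{-1,p}(\Omega)}\le C\|F\|_{W^{-1,2}(\Omega)},
\]
with $C$ a good constant; boundedness of $\Omega$ then yields $\|u\|_{L^2(\Omega)}\le|\Omega|^{1/2}\|u\|_{L^\infty(\Omega)}\le C\|F\|_{W^{-1,2}(\Omega)}$.

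It remains to estimate $\|\nabla u\|_{L^2(\Omega)}$, and here a direct test-function computation now works precisely because the $L^2$ norm of $u$ is already under control. Using $u\in W_0^{1,2}(\Omega)$ as a test function and the uniform ellipticity of $A$,
\[
\lambda\|\nabla u\|_{L^2(\Omega)}^2\le\int_\Omega A\nabla u\cdot\nabla u=F(u)-\int_\Omega b\nabla u\cdot u\le\|F\|_{W^{-1,2}(\Omega)}\|u\|_{W^{1,2}(\Omega)}+\|b\|_\infty\|\nabla u\|_{L^2(\Omega)}\|u\|_{L^2(\Omega)}.
\]
Bounding $\|u\|_{W^{1,2}(\Omega)}\le\|u\|_{L^2(\Omega)}+\|\nabla u\|_{L^2(\Omega)}$, substituting the bound $\|u\|_{L^2(\Omega)}\le C\|F\|_{W^{-1,2}(\Omega)}$ just obtained, and applying Cauchy's inequality with a sufficiently small parameter $\delta$ to absorb the terms containing $\|\nabla u\|_{L^2(\Omega)}$ into the left-hand side, I arrive at $\|\nabla u\|_{L^2(\Omega)}\le C\|F\|_{W^{-1,2}(\Omega)}$, hence $\|u\|_{W_0^{1,2}(\Omega)}\le C\|F\|_{W^{-1,2}(\Omega)}$ with $C$ depending only on $d,\lambda,\|b\|_\infty$ and $\diam(\Omega)$. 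The step I expect to need the most care is the passage $W^{-1,2}(\Omega)\hookrightarrow W^{-1,p}(\Omega)$ with $p<\tfrac{d}{d-1}$: this is exactly what makes Theorem \ref{SupremumBound} applicable, and it is the only place where boundedness of $\Omega$ and $d\ge 3$ are genuinely used. (A duality argument, estimating $\|u\|_{L^2}$ by pairing against solutions of $L^tv=h$ with $h\in L^2$, is also tempting, but it would require an $L^\infty$ bound for the adjoint equation, which is not available without a sign condition on $\dive b$, so the route above is cleaner.)
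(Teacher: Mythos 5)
The key step in your argument is the claim that $W^{-1,2}(\Omega)\hookrightarrow W^{-1,p}(\Omega)$ for some $p<\tfrac{d}{d-1}$, so that Theorem \ref{SupremumBound} applies to a generic $F\in W^{-1,2}(\Omega)$. This inclusion goes the wrong way. In the paper's convention $W^{-1,q}(\Omega)=(W^{1,q}(\Omega))^*$, and on a bounded domain $W^{1,2}(\Omega)\hookrightarrow W^{1,p}(\Omega)$ for $p<2$, so dualizing gives $W^{-1,p}(\Omega)\hookrightarrow W^{-1,2}(\Omega)$ — not the reverse. Concretely, $F\in W^{-1,2}$ is typically of the form $g+\dive f$ with $f\in L^2$ only, whereas membership in $W^{-1,p}$ with $p<\tfrac{d}{d-1}$ requires $f\in L^{p'}$ with $p'>d$. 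There is no way to upgrade the integrability of $f$ for free, and the $L^\infty$ bound you are aiming for is in fact false: already for $L=-\Delta$ on a ball in $d\ge 3$, pick any unbounded $u\in W_0^{1,2}$, set $F=-\Delta u\in W^{-1,2}$, and you would contradict $\|u\|_{L^\infty}\le C\|F\|_{W^{-1,2}}$. So the route through Theorem \ref{SupremumBound} cannot work for general $F\in W^{-1,2}$.

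The paper avoids this by never trying to bound $\|u\|_{L^\infty}$. Instead it first solves the coercive shifted problem $Lu_0+\gamma u_0=F$ (Proposition \ref{GammaSolvability}), which yields $\|u_0\|_{W_0^{1,2}}\le C\|F\|_{W^{-1,2}}$ with a good constant directly from Lax–Milgram, and then corrects by a function $v$ solving $Lv=\gamma u_0$. Since $\gamma u_0\in L^2(\Omega)$ and the pointwise Green's function bound $G(x,y)\le C|x-y|^{2-d}$ is available (Theorem \ref{GoodGreenFunctionEstimates}), Young's inequality controls $\|v\|_{L^2}\le C\|u_0\|_{L^2}$, and then Lemma \ref{L2BoundOnGradient} controls $\|\nabla v\|_{L^2}$ by $\|v\|_{L^2}+\|\gamma u_0\|_{L^2}$. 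Adding $u_0$ and $v$ produces the solution of $Lu=F$ with the desired estimate. The moral is the same one you identified — you need an a priori $L^2$ bound on $u$ to close the energy estimate — but the source of that bound is the Green's function representation, not a maximum-principle $L^\infty$ estimate, which simply isn't available at the $W^{-1,2}$ level of regularity of $F$.
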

\begin{proof}
Existence and uniqueness follows from proposition \ref{InhomogeneousSolvability}. To show the estimate, consider the $\gamma$ that appears in proposition \ref{GammaSolvability}, and let $u_0\in W_0^{1,2}(\Omega)$ be the solution to
\[
-\dive(A\nabla u_0)+b\nabla u_0+\gamma u_0=F
\]
in $\Omega$, which exists from proposition \ref{GammaSolvability}. Then, the same proposition shows that
\[
\|u_0\|_{W_0^{1,2}(\Omega)}\leq C\|F\|_{W^{-1,2}(\Omega)},
\]
where $C$ is a good constant. Let now
\[
v(x)=\gamma\int_{\Omega}G(x,y)u_0(y)\,dy,
\]
then we compute that $v\in W_0^{1,2}(\Omega)$ is the solution to $-\dive(A\nabla v)+b\nabla v=\gamma u_0$ in $\Omega$. From the pointwise estimates on Green's function in theorem \ref{GoodGreenFunctionEstimates} and lemma \ref{L2BoundOnGradient}, we obtain
\[
\|v\|_{W_0^{1,2}(\Omega)}\leq C\|v\|_{L^2(\Omega)}+C\|\gamma u_0\|_{L^2(\Omega)}\leq C\|u_0\|_{L^2(\Omega)}\leq C\|F\|_{W^{-1,2}(\Omega)},
\]
where $C$ is a good constant, since $\gamma$ is a good constant. If we now set $w=u_0-v$, we compute
\[
-\dive(A\nabla w)+b\nabla w=-\dive(A\nabla u_0)+b\nabla u_0-\dive(A\nabla v)+b\nabla v=F,
\]
therefore $w=u$. Hence
\[
\|u\|_{W_0^{1,2}(\Omega)}\leq \|u_0\|_{W_0^{1,2}(\Omega)}+\|v\|_{W_0^{1,2}(\Omega)}\leq C\|F\|_{W^{-1,2}(\Omega)},
\]
where $C$ is a good constant, which completes the proof.
\end{proof}

We also show the analog of the last proposition for the adjoint equation $L^tu=0$.

\begin{prop}\label{GoodBoundOnSolutionsForAdjoint}
Let $\Omega$ be a bounded domain, $A\in M_{\lambda}(\Omega)$, and $b\in L^{\infty}(\Omega)$. Then, for every $F\in W^{-1,2}(\Omega)$, there exists a unique solution $u\in W_0^{1,2}(\Omega)$ of the equation $-\dive(A\nabla u)-\dive(bu)=0$ in $\Omega$, and also
\[
\|u\|_{W_0^{1,2}(\Omega)}\leq C\|F\|_{W^{-1,2}(\Omega)},
\]
where $C$ depends on $d,\lambda,\|b\|_{\infty}$ and ${\rm diam}(\Omega)$.
\end{prop}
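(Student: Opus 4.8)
The plan is to run the proof of Proposition \ref{GoodBoundOnSolutions} with $L$ replaced by $L^t$ throughout, using the Green's function $G^t$ and its bounds from Theorem \ref{GoodGreenFunctionEstimatesForAdjoint} in place of $G$ and Theorem \ref{GoodGreenFunctionEstimates}. Existence and uniqueness of a solution $u\in W_0^{1,2}(\Omega)$ of $L^tu=F$ (that is, $-\dive(A^t\nabla u)-\dive(bu)=F$) are already provided by Proposition \ref{InhomogeneousSolvabilityForAdjoint}, so the only content to establish is the quantitative bound $\|u\|_{W_0^{1,2}(\Omega)}\le C\|F\|_{W^{-1,2}(\Omega)}$ with $C$ a good constant, and the route is to split $u$ into a piece coming from a coercive (shifted) problem, which carries a good bound, plus a correction that removes the resulting lower order term, estimated through the pointwise bound on $G^t$.

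Concretely: I would let $\gamma>0$ be the constant of Proposition \ref{GammaSolvability}, which depends only on $\lambda$ and $\|b\|_\infty$. Since the form $\alpha_\gamma^t$ has exactly the same coercivity and continuity constants as $\alpha_\gamma$ (indeed $\alpha_\gamma^t(w,w)=\alpha_\gamma(w,w)$), Lax--Milgram applied as in Proposition \ref{GammaSolvability} gives a unique $u_0\in W_0^{1,2}(\Omega)$ with $L^tu_0+\gamma u_0=F$ and $\|u_0\|_{W_0^{1,2}(\Omega)}\le C\|F\|_{W^{-1,2}(\Omega)}$, $C=C(d,\lambda,\|b\|_\infty)$. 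Next I would set
\[
v(z)=-\gamma\int_\Omega G^t(z,y)\,u_0(y)\,dy,
\]
which — by the defining property of $G^t(\cdot,y)$ together with Fubini — is the $W_0^{1,2}(\Omega)$ solution of $L^tv=-\gamma u_0$. The pointwise bound $G^t(z,y)\le C|z-y|^{2-d}$ from Theorem \ref{GoodGreenFunctionEstimatesForAdjoint}(iii) gives $|v(z)|\le C\gamma\int_\Omega|z-y|^{2-d}|u_0(y)|\,dy$, and since $|z|^{2-d}$ is integrable over a ball of radius $\diam(\Omega)$ (as $d\ge 3$), Young's convolution inequality yields $\|v\|_{L^2(\Omega)}\le C\|u_0\|_{L^2(\Omega)}$ with $C$ a good constant. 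Testing the equation $L^tv=-\gamma u_0$ against $v$ — exactly the computation of Lemma \ref{L2BoundOnGradient}, which uses only ellipticity and is insensitive to replacing $A$ by $A^t$ — then gives $\|\nabla v\|_{L^2(\Omega)}^2\le C\|v\|_{L^2(\Omega)}^2+C\|u_0\|_{L^2(\Omega)}^2\le C\|F\|_{W^{-1,2}(\Omega)}^2$, hence $\|v\|_{W_0^{1,2}(\Omega)}\le C\|F\|_{W^{-1,2}(\Omega)}$ with $C$ good.

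Finally, $w:=u_0-v$ satisfies $L^tw=L^tu_0-L^tv=(F-\gamma u_0)-(-\gamma u_0)=F$, so $w=u$ by the uniqueness in Proposition \ref{InhomogeneousSolvabilityForAdjoint}, and the triangle inequality gives $\|u\|_{W_0^{1,2}(\Omega)}\le\|u_0\|_{W_0^{1,2}(\Omega)}+\|v\|_{W_0^{1,2}(\Omega)}\le C\|F\|_{W^{-1,2}(\Omega)}$ with $C$ a good constant. The only step requiring genuine care — and it is identical to the corresponding point in the proof of Proposition \ref{GoodBoundOnSolutions} — is checking that the Green potential $v$ indeed lies in $W_0^{1,2}(\Omega)$ and solves $L^tv=-\gamma u_0$ weakly; I would obtain this from the uniform $W_0^{1,p}$ bounds on $G^t(\cdot,y)$ for $p\in[1,\tfrac{d}{d-1})$ in Theorem \ref{GoodGreenFunctionEstimatesForAdjoint}(i) (via Minkowski's integral inequality, giving $v\in W_0^{1,p}$) together with $-\gamma u_0\in L^2(\Omega)$ and the uniqueness in Proposition \ref{InhomogeneousSolvabilityForAdjoint} to identify $v$ with the $W_0^{1,2}$ solution. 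Everything else is a direct transcription of the argument for $L$, since all the ingredients — coercive shifted solvability, the good pointwise bound on $G^t$, and the $L^2$ control of the gradient — are available for the adjoint operator.
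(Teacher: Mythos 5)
Your argument is correct but takes a genuinely different route from the paper. The paper's own proof is a two-line duality argument: Proposition \ref{GoodBoundOnSolutions} gives a good bound on the operator norm of the solution operator $g:W^{-1,2}(\Omega)\to W_0^{1,2}(\Omega)$ for $L$, adjoints preserve operator norm, and Proposition \ref{InhomogeneousSolvabilityForAdjoint} already identifies $g^t$ as the solution operator for $L^t$ --- done. Your proposal instead reruns the constructive argument of Proposition \ref{GoodBoundOnSolutions} from scratch on the adjoint side: the coercive shifted form $\alpha_\gamma^t$ (you correctly note $\alpha_\gamma^t(w,w)=\alpha_\gamma(w,w)$, so Lax--Milgram yields the same good constant), a Green-potential correction using the pointwise bound on $G^t$ from Theorem \ref{GoodGreenFunctionEstimatesForAdjoint}, and a gradient estimate mirroring Lemma \ref{L2BoundOnGradient}; all of these ingredients are indeed available for $L^t$, and your sign bookkeeping ($v=-\gamma\int_\Omega G^t(\cdot,y)u_0(y)\,dy$ so that $L^tv=-\gamma u_0$, then $w=u_0-v$ solves $L^tw=F$) is internally consistent. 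The paper's route is shorter and avoids reconstructing the decomposition; yours is longer but more self-contained and does not rely on unwinding what $g^t$ does. One small technical point worth flagging: when you identify the Green potential $v$ with the $W_0^{1,2}(\Omega)$ solution, Minkowski's integral inequality only gives $v\in W_0^{1,p}(\Omega)$ for $p<\frac{d}{d-1}<2$, while uniqueness in Proposition \ref{InhomogeneousSolvabilityForAdjoint} is stated only for $W_0^{1,2}$ solutions; the correct reference is the $W_0^{1,p}$ uniqueness established in the proof of Proposition \ref{MeasureSolvabilityForAdjoint}. The paper's proof of Proposition \ref{GoodBoundOnSolutions}, which you are faithfully mirroring, glosses over this same point, so it is not a new gap --- but the duality argument that the paper actually uses for the adjoint proposition sidesteps it entirely.
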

\begin{proof}
Recall the definition of the operator $g:W^{-1,2}(\Omega)\to W_0^{1,2}(\Omega)$ from proposition \ref{InhomogeneousSolvability}, that sends $F\in W^{-1,2}(\Omega)$ to $u=gF\in W_0^{1,2}(\Omega)$, which is the unique solution of $Lu=F$ in $\Omega$. Then, proposition \ref{GoodBoundOnSolutions} shows that
\[
\|g\|_{W^{-1,2}\to W_0^{1,2}}\leq C
\]
for some $C$ that depends only on $d,\lambda,\|b\|_{\infty}$ and $\diam(\Omega)$. This shows that $\|g^t\|_{W^{-1,2}\to W_0^{1,2}}\leq C$. But, from proposition \ref{InhomogeneousSolvabilityForAdjoint}, $g^t:W^{-1,2}(\Omega)\to W_0^{1,2}(\Omega)$ sends $F\in W^{-1,2}(\Omega)$ to the unique $W_0^{1,2}(\Omega)$ solution of $L^tu=F$ in $\Omega$, which completes the proof.
\end{proof}

\subsection{Estimates on the gradients of $G,G^t$}
In this section we will assume that $A$ is Lipschitz continuous, to obtain pointwise bounds and H\"{o}lder continuity on the derivative of Green's function and its adjoint.

\begin{prop}\label{GreenDerivativeBounds}
Let $B$ be a ball of radius $2\rho$, and $A\in M_{\lambda,\mu}(B)$, $b\in L^{\infty}(B)$. Then, for any $x,y\in B_{\rho}$,
\[
|\nabla_xG(x,y)|\leq C|x-y|^{1-d},
\]
where $C$ depends on $d,\lambda,\mu,\|b\|_{\infty}$ and $\rho$.
\end{prop}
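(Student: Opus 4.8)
The plan is to derive the gradient bound from the interior gradient estimate for solutions of $Lu=0$ (Corollary \ref{LocalBoundOnGradientOfSolutions}) together with the pointwise size bound $G(z,y)\le C|z-y|^{2-d}$ (Theorem \ref{GoodGreenFunctionEstimates}), after promoting $G(\cdot,y)$ to a $W^{1,2}_{\loc}$ solution away from its pole. So I would fix $x,y\in B_\rho$ with $x\ne y$ (the diagonal is excluded from the domain of $G$), set $u=G(\cdot,y)$ and $r=|x-y|\in(0,2\rho)$, and take $G$ to be Green's function for $L$ in $B=B_{2\rho}$ from Theorem \ref{GoodGreenFunctionEstimates}. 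By part (ii) of that theorem $u$ is a weak solution of $Lu=0$ in the bounded domain $B\setminus\{y\}$ (connected since $d\ge 3$), and by part (i) $u\in W^{1,p}_0(B)\subseteq W^{1,p}_{\loc}(B\setminus\{y\})$ for every $p\in(1,d/(d-1))$. Since $d\ge 3$ forces $d/(d-1)<2$, Proposition \ref{LowRegularity} — applicable because $A$ is Lipschitz — then upgrades this to $u\in W^{1,2}_{\loc}(B\setminus\{y\})$.

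Next I would localize near $x$. The concentric balls $B_{r/8}(x)\subseteq B_{r/4}(x)$ satisfy $\overline{B_{r/4}(x)}\Subset B\setminus\{y\}$: any $z$ with $|z-x|\le r/4$ has $|z|\le|x|+r/4<\rho+\rho/2<2\rho$, while $|x-y|=r>r/4$ keeps $y$ outside $\overline{B_{r/4}(x)}$. Applying Corollary \ref{LocalBoundOnGradientOfSolutions} to $u$ on the domain $B\setminus\{y\}$, with the ball $B_{r/8}(x)$ (whose double is $B_{r/4}(x)$), yields
\[
|\nabla_xG(x,y)|=|\nabla u(x)|\le\|\nabla u\|_{L^\infty(B_{r/8}(x))}\le\frac{C}{r}\left(\fint_{B_{r/4}(x)}|u|^2\right)^{1/2},
\]
where $C$ depends only on $d,\lambda,\mu,\|b\|_\infty$ and $\diam(B\setminus\{y\})=4\rho$.

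To finish, I would bound the right-hand side using the pointwise estimate. For $z\in B_{r/4}(x)$ the triangle inequality gives $|z-y|\ge|x-y|-|x-z|\ge\tfrac34 r$, so Theorem \ref{GoodGreenFunctionEstimates}(iii) gives $|u(z)|=G(z,y)\le C|z-y|^{2-d}\le C\,r^{2-d}$; hence $\bigl(\fint_{B_{r/4}(x)}|u|^2\bigr)^{1/2}\le C\,r^{2-d}$. Substituting into the previous display gives $|\nabla_xG(x,y)|\le C\,r^{1-d}=C|x-y|^{1-d}$ with $C$ depending on $d,\lambda,\mu,\|b\|_\infty$ and $\rho$, which is the claim.

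I do not expect a serious obstacle; the one genuine point is the regularity reduction in the first step, since a priori $G(\cdot,y)$ is only known to lie in $W^{1,p}_0(B)$ for $p<d/(d-1)$, whereas Corollary \ref{LocalBoundOnGradientOfSolutions} (through Proposition \ref{DerivativeRegularity}) needs a $W^{1,2}_{\loc}$ solution — this is exactly what the bootstrap of Proposition \ref{LowRegularity} supplies. A secondary bookkeeping point is to run Corollary \ref{LocalBoundOnGradientOfSolutions} on the fixed domain $B\setminus\{y\}$ rather than on a ball of radius comparable to $r$, so that the constant only sees the scale $\rho$ and does not degenerate as $r\to 0$.
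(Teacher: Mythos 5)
Your proposal is correct and follows essentially the same route as the paper's own proof: promote $G(\cdot,y)$ from $W^{1,p}_{\loc}$ to $W^{1,2}_{\loc}$ away from the pole via Proposition~\ref{LowRegularity}, apply the interior gradient bound of Corollary~\ref{LocalBoundOnGradientOfSolutions} on a ball of radius comparable to $|x-y|$, and finish with the pointwise size estimate on $G$. The only cosmetic differences are the choice of constants in the concentric balls ($r/8,r/4$ versus the paper's $r/2,r$ with $r=|x-y|/16$) and your explicit remark about running the estimates on the fixed ambient domain so the constant sees only $\rho$ — a point the paper handles implicitly through the explicit $1/r$ factor and $\diam$-dependence in Corollary~\ref{LocalBoundOnGradientOfSolutions}.
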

\begin{proof}
Let $r=|x-y|/16$, then $B_{2r}(x)\subseteq B_{\rho}$, and also $y\notin B_{2r}(x)$. Set now $u(x)=G(x,y)$, then $u$ is a solution of the equation $L^tu=0$ in $B_{2r}(x)$. From theorem \ref{GoodGreenFunctionEstimates}, $u\in W_0^{1,p}(B_{2r}(y))$ for $p=\frac{d}{2(d-1)}$, therefore proposition \ref{LowRegularity} shows that $u\in W^{1,2}(B_r(y))$. Hence, corollary \ref{LocalBoundOnGradientOfSolutions} shows that
\[
\|\nabla u\|_{L^{\infty}(B_{r/2}(x))}\leq\frac{C}{r}\left(\fint_{B_r(x)}u^2\right)^{1/2}\leq\frac{C}{r}\left(\fint_{B_r(x)}|z-y|^{2-d}\right)^{1/2},
\]
where we used the pointwise estimates on $G$, from theorem \ref{GoodGreenFunctionEstimates}. But, for $z\in B_r(x)$,
\[
|z-y|\geq|x-y|-|x-z|>|x-y|-r=15r,
\]
therefore
\[
|\nabla_xG(x,y)|\leq\|\nabla u\|_{L^{\infty}(B_{r/2}(x))}\leq\frac{C}{r}\left(\fint_{B_r(x)}r^{2-d}\right)^{1/2}=Cr^{1-d}=C|x-y|^{1-d},
\]
which completes the proof.
\end{proof}

We also obtain local H{\"o}lder continuity of the gradient of Green's function. 

\begin{prop}\label{HolderContinuityOfGreen}
Let $B$ be a ball of radius $2\rho$, and suppose that $A\in M_{\lambda,\mu}(B)$, $b\in L^{\infty}(B)$. Let also $G$ be Green's function for $Lu=0$ in $B$. Then there exists $\alpha\in(0,1)$ such that, for all $x_1,x_2\in B_{\rho}$, $y\in B_{2\rho}$,
\[
|\nabla_xG(x_1,y)-\nabla_xG(x_2,y)|\leq C\left(|x_1-y|^{1-d-\alpha}+|x_2-y|^{1-d-\alpha}\right)|x_1-x_2|^{\alpha},
\]
where $C$ depends on $d,\lambda,\mu,\|b\|_{\infty}$ and $\rho$.
\end{prop}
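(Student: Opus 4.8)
The plan is to reduce the statement to three results already available: the interior H\"older estimate for $\nabla u$ when $Lu=0$ (Proposition \ref{DerivativeRegularity}), the pointwise size bound $G(x,y)\le C|x-y|^{2-d}$ (Theorem \ref{GoodGreenFunctionEstimates}), and the pointwise gradient bound $|\nabla_xG(x,y)|\le C|x-y|^{1-d}$ (Proposition \ref{GreenDerivativeBounds}), and to combine them by a two-scale case split. Fix $y\in B_{2\rho}$ and set $u(x)=G(x,y)$; since $G(\cdot,y)$ solves $Lu=0$ weakly in $B\setminus\{y\}$ and lies in $W^{1,2}_{\loc}(B\setminus\{y\})$ by Proposition \ref{LowRegularity}, $\nabla u$ is continuous there and all of the above estimates apply. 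Because the claimed right-hand side is symmetric in $x_1,x_2$, relabel so that $d_1:=|x_1-y|\le|x_2-y|=:d_2$; we may also assume $x_1,x_2\neq y$, otherwise the inequality is vacuous. Put $s=\tfrac14\min(d_1,\rho)$, and note that, since $2s\le d_1/2$ and $2s\le\rho/2$, any ball $B_\tau(x_1)$ with $\tau\le 2s$ satisfies $\operatorname{dist}\bigl(B_\tau(x_1),y\bigr)\ge d_1/2$ and $\operatorname{dist}\bigl(B_\tau(x_1),\partial B\bigr)\ge\rho/2$, so $\overline{B_\tau(x_1)}$ is a compact subset of $B\setminus\{y\}$.

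\emph{First case: $|x_1-x_2|\ge s$.} Here I would use the triangle inequality together with Proposition \ref{GreenDerivativeBounds}, whose proof goes through unchanged for $x_1,x_2\in B_\rho$ and $y\in B_{2\rho}$ (one still has $|x_i-y|\le 3\rho$, which leaves room for a ball of radius $|x_i-y|/8$ about $x_i$ inside $B_{2\rho}$), to get $|\nabla u(x_1)-\nabla u(x_2)|\le C(d_1^{1-d}+d_2^{1-d})$. Writing $d_i^{1-d}=d_i^{1-d-\alpha}d_i^\alpha$, it remains to verify $d_i\le C|x_1-x_2|$ for $i=1,2$: if $d_1\le\rho$ then $|x_1-x_2|\ge d_1/4$ and $d_2\le d_1+|x_1-x_2|\le 5|x_1-x_2|$; if $d_1>\rho$ then $|x_1-x_2|\ge\rho/4$ while $d_i\le 3\rho$. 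This yields the desired bound in this case.

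\emph{Second case: $|x_1-x_2|<s$.} Then $x_1,x_2\in B_s(x_1)$ and $\overline{B_{2s}(x_1)}$ is a compact subset of $B\setminus\{y\}$, so Proposition \ref{DerivativeRegularity} applied on the ball $B_s(x_1)$ gives
\[
|\nabla u(x_1)-\nabla u(x_2)|\le\frac{C}{s}\Bigl(\frac{|x_1-x_2|}{s}\Bigr)^{\alpha}\Bigl(\fint_{B_{2s}(x_1)}|u|^2\Bigr)^{1/2}.
\]
For $z\in B_{2s}(x_1)$ one has $|z-y|\ge d_1-2s\ge d_1/2$, so Theorem \ref{GoodGreenFunctionEstimates} gives $\bigl(\fint_{B_{2s}(x_1)}|u|^2\bigr)^{1/2}\le Cd_1^{2-d}$, hence
\[
|\nabla u(x_1)-\nabla u(x_2)|\le C\,s^{-1-\alpha}\,d_1^{2-d}\,|x_1-x_2|^{\alpha}.
\]
Substituting $s=\tfrac14\min(d_1,\rho)$: if $d_1\le\rho$ then $s^{-1-\alpha}d_1^{2-d}\le Cd_1^{1-d-\alpha}$, and if $d_1>\rho$ then $s^{-1-\alpha}d_1^{2-d}\le C\rho^{-1-\alpha}\rho^{2-d}=C\rho^{1-d-\alpha}\le Cd_1^{1-d-\alpha}$, using $\rho<d_1\le 3\rho$. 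In either subcase $|\nabla u(x_1)-\nabla u(x_2)|\le Cd_1^{1-d-\alpha}|x_1-x_2|^{\alpha}$, which is bounded by $C\bigl(|x_1-y|^{1-d-\alpha}+|x_2-y|^{1-d-\alpha}\bigr)|x_1-x_2|^{\alpha}$ since $d_1=|x_1-y|$. Taking $\alpha$ to be the exponent furnished by Proposition \ref{DerivativeRegularity} completes the proof, with all constants depending only on $d,\lambda,\mu,\|b\|_\infty$ and $\rho$.

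The only point I expect to require care is the bookkeeping in the second case: one must track simultaneously the distance $d_1$ to the pole and the distance $\rho$ to $\partial B$, so that the negative power of $s$ coming from the H\"older estimate cancels against the $d_1^{2-d}$ coming from the sup bound on $G$ to leave exactly the exponent $1-d-\alpha$. Everything else is a direct application of estimates already proved in this chapter.
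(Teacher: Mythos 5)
Your proof is correct and follows essentially the same strategy as the paper's: a two-scale case split on $|x_1-x_2|$ against a threshold of size $\min(\operatorname{dist}(\cdot,y),\rho)$, combining the interior H\"older estimate from Proposition \ref{DerivativeRegularity} with the pointwise bounds on $G$ and $\nabla G$. The only (cosmetic) difference is that you center the ball at $x_1$ and build the threshold from $|x_1-y|$, the nearer distance, whereas the paper centers at $x_2$ and uses $|x_2-y|$, the farther distance; both work identically since the two distances are comparable in the small-separation case.
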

\begin{proof}
For simplicity, assume that $B$ is centered at $0$. Without loss of generality, assume that $|y-x_1|\leq|y-x_2|$. Set $G_y(x)=G(x,y)$, and define
\[
R=\frac{1}{5}\min\{|y-x_2|,\rho\}.
\]
First, suppose that $|x_1-x_2|\geq R$. Consider two cases: if $|y-x_2|\leq\rho$, the estimate in proposition \ref{GreenDerivativeBounds} shows that
\begin{align*}
|\nabla G_y(x_1)-\nabla G_y(x_2)|&\leq C|x_1-y|^{1-d}+C|x_2-y|^{1-d}\\
&=C|x_1-y|^{\alpha}|x_1-y|^{1-d-\alpha}+C|x_2-y|^{\alpha}|x_2-y|^{1-d-\alpha}\\
&\leq C|x_2-y|^{\alpha}\left(|x_1-y|^{1-d-\alpha}+|x_2-y|^{1-d-\alpha}\right)\\
&\leq C(5R)^{\alpha}\left(|x_1-y|^{1-d-\alpha}+|x_2-y|^{1-d-\alpha}\right)\\
&\leq C5^{\alpha}|x_1-x_1|^{\alpha}\left(|x_1-y|^{1-d-\alpha}+|x_2-y|^{1-d-\alpha}\right).
\end{align*}
If, now, $|y-x_2|>\rho$, we have that
\[
|y-x_2|^{\alpha}\leq\left(|y|+|x_2|\right)^{\alpha}\leq(2\rho+\rho)^{\alpha}=3^{\alpha}\rho^{\alpha}=3^{\alpha}5^{\alpha}R^{\alpha},
\]
since $x_2\in B_{\rho}$. Therefore, as above, we obtain
\begin{align*}
|\nabla G_y(x_1)-\nabla G_y(x_2)|&\leq C|x_1-y|^{1-d}+C|x_2-y|^{1-d}\\
&\leq C|x_2-y|^{\alpha}\left(|x_1-y|^{1-d-\alpha}+|x_2-y|^{1-d-\alpha}\right)\\
&\leq C\cdot15^{\alpha}R^{\alpha}\left(|x_1-y|^{1-d-\alpha}+|x_2-y|^{1-d-\alpha}\right)\\
&\leq C\cdot15^{\alpha}|x_1-x_1|^{\alpha}\left(|x_1-y|^{1-d-\alpha}+|x_2-y|^{1-d-\alpha}\right),
\end{align*}
which shows the estimate in all cases when $|x_1-x_2|\geq R$.

Suppose now that $|x_1-x_2|<R$. Then $x_1\in B_R(x_2)$, and, if $x\in B_{2R}(x_2)$, we obtain that
\[
|x-y|\geq|x_2-y|-|x-x_2|\geq|x_2-y|-2R\geq|x_2-y|-\frac{2}{5}|x_2-y|=\frac{3}{5}|x_2-y|,
\]
and also
\[
|x|\leq|x-x_2|+|x_2|\leq 2\rho,
\]
therefore $B_{2R}(x_2)\subseteq B_{2\rho}$. Therefore, $x\mapsto G(x,y)$ is a solution of $Lu=0$ in $B_{2R}(x_2)$, hence proposition \ref{DerivativeRegularity} shows that
\begin{align*}
|\nabla_xG(x_1,y)-\nabla_xG(x_2,y)|&\leq\frac{C}{R}\left(\frac{|x_1-x_2|}{R}\right)^{\alpha}\left(\fint_{B_{2R}(x_2)}|G(x,y)|^2\right)^{1/2}\\
&\leq C|x_1-x_2|^{\alpha}R^{-1-\alpha}|x_2-y|^{2-d}\\
&=C|x_1-x_2|^{\alpha}|x_2-y|^{1-d-\alpha}\left(\frac{|x_2-y|}{R}\right)^{1+\alpha}.
\end{align*}
If, now, $R=\frac{1}{5}|x_2-y|$, we obtain the required bound. On the other hand, if $R=\frac{1}{5}\rho$, then
\[
\left(\frac{|x_2-y|}{R}\right)^{1+\alpha}\leq \left(\frac{2\rho}{\rho/5}\right)^{1+\alpha}=10^{1+\alpha}
\]
which shows that the bound also holds in this case, and completes the proof.
\end{proof}

The same argument as above, using corollary \ref{LocalBoundOnGradientOfSolutions} instead of proposition \ref{DerivativeRegularity}, shows the next estimate.

\begin{prop}\label{HolderContinuityOfGreenAsIs}
Let $B$ be a ball of radius $2\rho$, and suppose that $A\in M_{\lambda,\mu}(B)$, $b\in L^{\infty}(B)$. Let also $G$ be Green's function for $Lu=0$ in $B$. Then for all $x_1,x_2\in B_{\rho}$, $y\in B_{2\rho}$,
\[
|G(x_1,y)-G(x_2,y)|\leq C\left(|x_1-y|^{1-d}+|x_2-y|^{1-d}\right)|x_1-x_2|,
\]
where $C$ depends on $d,\lambda,\mu,\|b\|_{\infty}$ and $\rho$.
\end{prop}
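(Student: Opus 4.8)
The plan is to transcribe the proof of Proposition~\ref{HolderContinuityOfGreen} almost verbatim, making two substitutions: replace the interior H\"older estimate for $\nabla u$ (Proposition~\ref{DerivativeRegularity}) by the interior Lipschitz estimate for $u$ itself (Corollary~\ref{LocalBoundOnGradientOfSolutions}), and replace the gradient bound $|\nabla_xG(x,y)|\le C|x-y|^{1-d}$ by the size bound $G(x,y)\le C|x-y|^{2-d}$ from Theorem~\ref{GoodGreenFunctionEstimates}. So assume $B$ is centered at $0$, assume without loss of generality that $|y-x_1|\le|y-x_2|$, write $G_y(x)=G(x,y)$, and set $R=\tfrac15\min\{|y-x_2|,\rho\}$. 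I treat the cases $|x_1-x_2|\ge R$ and $|x_1-x_2|<R$ separately.

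When $|x_1-x_2|\ge R$, estimate crudely
\[
|G_y(x_1)-G_y(x_2)|\le |G_y(x_1)|+|G_y(x_2)|\le C\big(|x_1-y|^{2-d}+|x_2-y|^{2-d}\big)
\]
by Theorem~\ref{GoodGreenFunctionEstimates}, then write $|x_i-y|^{2-d}=|x_i-y|\,|x_i-y|^{1-d}$ and use $|x_1-y|\le|x_2-y|$ to factor out $|x_2-y|$, so it suffices to bound $|x_2-y|$ by a fixed multiple of $|x_1-x_2|$. If $|y-x_2|\le\rho$ then $|x_2-y|=5R\le 5|x_1-x_2|$; if $|y-x_2|>\rho$ then $R=\rho/5$ and $|x_2-y|\le|y|+|x_2|<3\rho=15R\le 15|x_1-x_2|$, since $x_2\in B_\rho$, $y\in B_{2\rho}$. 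In either case the claimed estimate follows.

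When $|x_1-x_2|<R$, one has $x_1\in B_R(x_2)$, and for $x\in B_{2R}(x_2)$ the triangle inequality together with $2R\le\tfrac25|x_2-y|$ and $x_2\in B_\rho$ gives $|x-y|\ge\tfrac35|x_2-y|$ and $|x|<2\rho$; hence $\overline{B_{2R}(x_2)}\subseteq B$ and stays away from $y$, so $x\mapsto G(x,y)$ solves $Lu=0$ on $B_{2R}(x_2)$ and, by Proposition~\ref{LowRegularity}, lies in $W^{1,2}_{\loc}$ there. Corollary~\ref{LocalBoundOnGradientOfSolutions} applied to $G_y$ on $B_R(x_2)\subseteq B_{2R}(x_2)$ then yields
\[
|G_y(x_1)-G_y(x_2)|\le C\,\frac{|x_1-x_2|}{R}\Big(\fint_{B_{2R}(x_2)}|G_y|^2\Big)^{1/2},
\]
and inserting $|G_y(x)|\le C|x-y|^{2-d}\le C\big(\tfrac35|x_2-y|\big)^{2-d}$ for $x\in B_{2R}(x_2)$ gives
\[
|G_y(x_1)-G_y(x_2)|\le C\,|x_1-x_2|\,|x_2-y|^{1-d}\,\frac{|x_2-y|}{R}.
\]
If $R=\tfrac15|x_2-y|$ then $|x_2-y|/R=5$; if $R=\tfrac15\rho$ then $|x_2-y|<3\rho$ forces $|x_2-y|/R<15$. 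Either way $|G_y(x_1)-G_y(x_2)|\le C|x_1-x_2|\big(|x_1-y|^{1-d}+|x_2-y|^{1-d}\big)$, which is the assertion.

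Since this is purely a corollary of the argument for Proposition~\ref{HolderContinuityOfGreen}, there is no genuine obstacle. The only point requiring attention is the same one handled in the proof of Proposition~\ref{GreenDerivativeBounds}: that $G_y$ is a bona fide $W^{1,2}$ solution on the small ball away from its pole, which follows from $G_y\in W^{1,p}_0(B)$ for some $p\in(1,\tfrac{d}{d-1})$ (Theorem~\ref{GoodGreenFunctionEstimates}) together with the low-regularity improvement of Proposition~\ref{LowRegularity}. Tracking the numerical constants through the two sub-cases and verifying the geometric inclusions $\overline{B_{2R}(x_2)}\subseteq B$ and $y\notin B_{2R}(x_2)$ is routine.
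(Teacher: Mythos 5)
Your proof is correct and is exactly the argument the paper intends: the paper gives only the one-line remark that one repeats the proof of Proposition~\ref{HolderContinuityOfGreen} with Corollary~\ref{LocalBoundOnGradientOfSolutions} in place of Proposition~\ref{DerivativeRegularity}, and you have spelled out precisely that substitution (together with replacing the gradient bound of Proposition~\ref{GreenDerivativeBounds} by the size bound of Theorem~\ref{GoodGreenFunctionEstimates} in the far case), verified the geometric inclusions, and handled the $W^{1,2}_{\loc}$ regularity of $G_y$ away from the pole via Theorem~\ref{GoodGreenFunctionEstimates} and Proposition~\ref{LowRegularity}, all as the paper does implicitly.
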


We now turn to the analogous estimates for the gradient of $G^t$. We first show the pointwise estimate.

\begin{prop}\label{GreenDerivativeBoundsForAdjoint}
Let $B$ be a ball of radius $2\rho$, and $A\in M_{\lambda,\mu}(B)$, $b\in C^{\alpha}(B)$, for some $\alpha\in(0,1]$. Then, for any $x,y\in B_{\rho}$,
\[
|\nabla_yG^t(y,x)|\leq C|x-y|^{1-d},
\]
where $C$ depends on $d,\lambda,\mu,\|b\|_{C^{\alpha}}$ and $\rho$.
\end{prop}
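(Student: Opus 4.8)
The plan is to mimic the proof of Proposition \ref{GreenDerivativeBounds}, replacing the interior gradient bound of Corollary \ref{LocalBoundOnGradientOfSolutions} by that of Corollary \ref{BoundedDerivativeForAdjoint} (used with $g\equiv 0$), and the pointwise size bound for $G$ from Theorem \ref{GoodGreenFunctionEstimates} by the one for $G^t$ from Theorem \ref{GoodGreenFunctionEstimatesForAdjoint}. Fix $x,y\in B_\rho$ with $x\ne y$ and set $r=|x-y|/16$; then $B_{4r}(y)\subseteq B$ and $x\notin B_{4r}(y)$, so $v(z):=G^t(z,x)$ is a weak solution of $L^tv=0$ in $B_{4r}(y)$. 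Applying Corollary \ref{BoundedDerivativeForAdjoint} to $v$ on the ball $B_{r/4}(y)$ (so that the dilate $16\,B_{r/4}(y)=B_{4r}(y)$ lies in the region where the equation holds) with $g\equiv 0$, and then using $|v(z)|=G^t(z,x)\le C|z-x|^{2-d}\le Cr^{2-d}$ for $z\in B_r(y)$ (where $|z-x|\ge 15r$), one gets
\[
|\nabla_yG^t(y,x)|\le\|\nabla v\|_{L^\infty(B_{r/4}(y))}\le\frac{C}{r}\Big(\fint_{B_r(y)}|v|^2\Big)^{1/2}\le\frac{C}{r}\cdot Cr^{2-d}=Cr^{1-d}=C|x-y|^{1-d},
\]
with $C=C(d,\lambda,\mu,\|b\|_{C^{\alpha}},\rho)$ since $\diam B=4\rho$.

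The one delicate point is that Corollary \ref{BoundedDerivativeForAdjoint} (like Proposition \ref{DerivativeRegularityForAdjoint}) is stated for $W^{1,2}$-solutions, whereas Theorem \ref{GoodGreenFunctionEstimatesForAdjoint} only gives $G^t(\cdot,x)\in W_0^{1,p}(B)$ for $p<\tfrac{d}{d-1}<2$, and the upgrade Proposition \ref{LowRegularityForAdjoint} requires $b\in\Lip$, while here $b$ is merely $C^{\alpha}$. To handle this I would argue by approximation: take mollifications $b_n$ of $b$, which are smooth (hence Lipschitz), satisfy $\|b_n\|_{C^{\alpha}(B)}\le C\|b\|_{C^{\alpha}(B)}$ and $\|b_n\|_\infty\le\|b\|_\infty$, and converge to $b$ in $L^d(B)$. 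For each $n$, Proposition \ref{GreenFunctionEstimatesForAdjoint} produces $G_n^t(\cdot,x)$ with $\|G_n^t(\cdot,x)\|_{W_0^{1,p}(B)}\le C_p$ and $G_n^t(z,x)\le C|z-x|^{2-d}$, with constants independent of $n$; and since $b_n\in\Lip$, Proposition \ref{LowRegularityForAdjoint} gives $G_n^t(\cdot,x)\in W^{1,2}_{\loc}(B\setminus\{x\})$, so the computation of the previous paragraph applies verbatim to $v_n=G_n^t(\cdot,x)$ and yields $\|\nabla v_n\|_{L^\infty(B_{r/4}(y))}\le C|x-y|^{1-d}$ with $C=C(d,\lambda,\mu,\|b\|_{C^{\alpha}},\rho)$ uniform in $n$ — here it is essential that Corollary \ref{BoundedDerivativeForAdjoint} depends on $b$ only through $\|b\|_{C^{\alpha}}$.

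It then remains to pass to the limit. Exactly as in the proof of Theorem \ref{GoodGreenFunctionEstimatesForAdjoint}, a subsequence of $G_n^t(\cdot,x)$ converges to $G^t(\cdot,x)$ weakly in $W_0^{1,p}(B)$, strongly in $L^p(B)$ and almost everywhere, so $\nabla v_n\to\nabla v$ in $\mathcal D'(B_{r/4}(y))$; since the $\nabla v_n$ are uniformly bounded in $L^\infty(B_{r/4}(y))$ by $C|x-y|^{1-d}$, weak-$*$ compactness identifies the limit with $\nabla v$ and gives $\|\nabla v\|_{L^\infty(B_{r/4}(y))}\le C|x-y|^{1-d}$. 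In particular $v\in W^{1,2}(B_{r/4}(y))$, so Proposition \ref{DerivativeRegularityForAdjoint} now applies to $v$ itself and shows $\nabla v\in C^{0,\alpha}$ near $y$; hence $\nabla_yG^t(y,x)$ is defined in the pointwise sense and satisfies the asserted bound. The main obstacle is precisely this $W^{1,2}_{\loc}$-regularity of $G^t(\cdot,x)$ away from the pole under the weaker hypothesis $b\in C^{\alpha}$, and the bookkeeping needed to keep the constants uniform along the approximation; once that is in place the estimate is the exact adjoint analogue of Proposition \ref{GreenDerivativeBounds}.
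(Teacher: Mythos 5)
Your proof is correct and follows essentially the same approach as the paper: mollify $b$ to Lipschitz $b_n$ with controlled $C^{\alpha}$ norm, apply Corollary \ref{BoundedDerivativeForAdjoint} (after upgrading via Proposition \ref{LowRegularityForAdjoint}) to obtain uniform interior gradient bounds on the approximating Green functions $G_n^t(\cdot,x)$, and pass to the limit. The only (cosmetic) difference is the final limit step: the paper extracts a $C^1$-convergent subsequence from equicontinuity of $(\nabla G_n^t(\cdot,x))$ via Proposition \ref{DerivativeRegularityForAdjoint} and Arzel\`a--Ascoli, whereas you use weak-$*$ compactness in $L^{\infty}$ together with distributional convergence of the gradients and then reapply Proposition \ref{DerivativeRegularityForAdjoint} a posteriori; both are valid.
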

\begin{proof}
Let $(b_n)$ be a mollification of $b$, where all the $b_n\in\Lip(\Omega)$, and $b_n\to b$ in $L^d$. Consider also the operator $L_n^t=-\dive(A\nabla u)-\dive(b_nu)$, and set $u_n=(y)=G_n^t(y,x)$ to be Green's function for $L_t^n$ in $B$, centered at $x$.

Let $r=|x-y|/16$, then $B_{2r}(y)\subseteq B_{\rho}$, and also $x\notin B_{2r}(y)$. Then $u$ is a solution of the equation $L_n^tu=0$ in $B_{2r}(y)$. From proposition \ref{GreenFunctionEstimatesForAdjoint}, $u_n\in W_0^{1,p}(B_{2r}(y))$ for $p=\frac{d}{2(d-1)}$, therefore proposition \ref{LowRegularityForAdjoint} shows that $u_n\in W^{1,2}(B_r(y))$. Hence, corollary
\ref{BoundedDerivativeForAdjoint} shows that
\[
\|\nabla u_n\|_{L^{\infty}(B_{r/2}(y))}\leq\frac{C}{r}\left(\fint_{B_r(y)}u_n^2\right)^{1/2}\leq\frac{C}{r}\left(\fint_{B_r(y)}|z-x|^{2-d}\right)^{1/2},
\]
where $C$ depends on $d,\lambda,\mu$ and $\|b\|_{C^{\alpha}}$, and where we used the pointwise estimates on $G^t_n$, from proposition \ref{GreenFunctionEstimatesForAdjoint}. But, for $z\in B_r(y)$,
\[
|x-z|\geq|x-y|-|y-z|>|x-y|-r=15r,
\]
therefore
\[
\|\nabla u_n\|_{L^{\infty}(B_{r/2}(y))}\leq\left(\fint_{B_r(y)}r^{2-d}\right)^{1/2}\leq Cr^{1-d},
\]
where $C$ depends on $d,\lambda,\mu$ and $\|b\|_{C^{\alpha}}$.

Note now that, from proposition \ref{GreenFunctionEstimatesForAdjoint}, $(u_n)$ is uniformly bounded in $B_r(y)$; hence, proposition \ref{DerivativeRegularityForAdjoint} shows that $(\nabla u_n)$ is equicontinuous in $B_r(y)$. Hence, there exists a subsequence $(u_{k_n})$ which converges to some $u$ in $C^1(\overline{B_r(y)})$. But, as in theorem \ref{GoodGreenFunctionEstimatesForAdjoint}, a subsequence of $(u_{k_n})$ converges weakly to $G^t(\cdot,x)$ almost everywhere in $B_r(y)$. This shows that $G^t(\cdot,x)$ is continuously differentiable in $B_r(y)$, and also
\begin{align*}
|\nabla_yG^t(y,x)&|\leq \|\nabla_yG^t(\cdot,x)\|_{L^{\infty}(B_r(y))}\leq\limsup_{n\to\infty}\|\nabla u_n\|_{L^{\infty}(B_{r/2}(y))}\leq Cr^{1-d},
\end{align*}
which completes the proof.
\end{proof}

We also show the H{\"o}lder estimate on the gradient of $\nabla G^t$.

\begin{prop}\label{HolderContinuityOfGreenForAdjoint}
Let $B$ be a ball of radius $2\rho$, and suppose that $A\in M_{\lambda,\mu}(B)$, $b\in C^{\alpha}(B)$, for some $\alpha\in(0,1]$. Let also $G^t$ be Green's function for the equation $L^tu=0$ in $B$. Then, for all $y_1,y_2\in B_{\rho}$, $x\in B_{2\rho}$,
\[
|\nabla_yG^t(y_1,x)-\nabla_yG^t(y_2,x)|\leq C\left(|y_1-x|^{1-d-\alpha}+|y_2-x|^{1-d-\alpha}\right)|y_1-y_2|^{\alpha},
\]
where $C$ depends on $d,\lambda,\mu, \|b\|_{C^{\alpha}}$ and $\rho$.
\end{prop}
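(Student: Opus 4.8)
The plan is to run the argument of proposition \ref{HolderContinuityOfGreen} essentially verbatim, replacing the pointwise gradient estimate of proposition \ref{GreenDerivativeBounds} by its adjoint analogue proposition \ref{GreenDerivativeBoundsForAdjoint}, and the interior gradient H\"older estimate of proposition \ref{DerivativeRegularity} by proposition \ref{DerivativeRegularityForAdjoint} used with $g\equiv 0$. Before starting I would record that, away from the pole, $G^t(\cdot,x)$ is a weak solution of $L^tu=0$, hence lies in $W^{1,2}_{\loc}(B\setminus\{x\})$ by proposition \ref{LowRegularityForAdjoint} and is continuously differentiable there by proposition \ref{GreenDerivativeBoundsForAdjoint}; this is what makes both local estimates applicable to it. I would then assume $B$ is centered at the origin, relabel if necessary so that $|y_1-x|\le|y_2-x|$, and set $R=c\min\{|y_2-x|,\rho\}$ for a small dimensional constant $c$, chosen so that the $16$-fold dilate of the ball on which proposition \ref{DerivativeRegularityForAdjoint} is applied still lies inside $B\setminus\{x\}$. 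Then I would split into the cases $|y_1-y_2|\ge R$ and $|y_1-y_2|<R$.

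In the case $|y_1-y_2|\ge R$, the triangle inequality together with $|\nabla_yG^t(y_i,x)|\le C|y_i-x|^{1-d}$ from proposition \ref{GreenDerivativeBoundsForAdjoint}, and the splitting $|y_i-x|^{1-d}=|y_i-x|^\alpha|y_i-x|^{1-d-\alpha}$ (using $|y_1-x|\le|y_2-x|$), reduces the claim to the elementary bound $|y_2-x|^\alpha\le C|y_1-y_2|^\alpha$, which follows from $|y_1-y_2|\ge R=c\min\{|y_2-x|,\rho\}$ together with $|y_2-x|\le|y_2|+|x|\le 3\rho$. In the case $|y_1-y_2|<R$ one has $y_1\in B_R(y_2)$, and $y\mapsto G^t(y,x)$ is a $W^{1,2}$ solution of $L^tu=0$ on the $16$-fold dilate of $B_R(y_2)$, which by the choice of $c$ is contained in $B\setminus\{x\}$; proposition \ref{DerivativeRegularityForAdjoint} with $g\equiv 0$ then gives
\[
|\nabla_yG^t(y_1,x)-\nabla_yG^t(y_2,x)|\le \frac{C}{R^{1+\alpha}}\left(\fint_{B_{4R}(y_2)}|G^t(z,x)|^2\,dz\right)^{1/2}|y_1-y_2|^\alpha .
\]
Since $4R$ is a small fraction of $|y_2-x|$, every $z\in B_{4R}(y_2)$ has $|z-x|\ge\tfrac12|y_2-x|$, so the pointwise bound $G^t(z,x)\le C|z-x|^{2-d}$ of proposition \ref{GreenFunctionEstimatesForAdjoint} turns the right-hand side into $C|y_1-y_2|^\alpha|y_2-x|^{1-d-\alpha}\left(|y_2-x|/R\right)^{1+\alpha}$, and $|y_2-x|/R$ is bounded by a dimensional constant in both subcases $R=c|y_2-x|$ and $R=c\rho$ (the latter using $|y_2-x|\le 3\rho$). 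Combining the two cases and symmetrising in $y_1,y_2$ yields the asserted estimate.

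I expect the only genuine work to be the bookkeeping of radii. Unlike proposition \ref{GreenDerivativeBounds}, the adjoint interior estimates (proposition \ref{DerivativeRegularityForAdjoint} and corollary \ref{BoundedDerivativeForAdjoint}) require a $16$-fold enlargement of the working ball, so both the parameter $R$ above and the auxiliary radius used in the proof of proposition \ref{GreenDerivativeBoundsForAdjoint} must be taken small enough that all enlarged balls stay inside $B\setminus\{x\}$, while still keeping $R$ comparable to $\min\{|y_2-x|,\rho\}$ so that every power of $R$ cancels against a matching power of $|y_2-x|$ up to dimensional constants. No estimate beyond the cited propositions is needed; in particular the hypothesis $b\in C^\alpha(B)$ enters only through propositions \ref{DerivativeRegularityForAdjoint} and \ref{GreenDerivativeBoundsForAdjoint}.
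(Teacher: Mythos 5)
Your proposal follows essentially the same two-case argument as the paper's proof (split on $|y_1-y_2|\gtrless R$, use the pointwise gradient bound of proposition \ref{GreenDerivativeBoundsForAdjoint} in the large-separation case and the local gradient H\"older estimate of proposition \ref{DerivativeRegularityForAdjoint} in the small-separation case), so the approach is the same. Two small remarks on bookkeeping: first, you invoke proposition \ref{LowRegularityForAdjoint} to get $G^t(\cdot,x)\in W^{1,2}_{\loc}$, but that proposition is stated for $b\in\Lip$, whereas here $b$ is only $C^\alpha$; the paper handles this with an explicit mollification (and you could alternatively skip \ref{LowRegularityForAdjoint} entirely, since the $C^1$ regularity already furnished by \ref{GreenDerivativeBoundsForAdjoint} gives $W^{1,2}$ on compacta for free). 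Second, your caution about choosing the constant in $R$ small enough for the $16$-fold enlargement demanded by \ref{DerivativeRegularityForAdjoint} is warranted and is in fact stricter than the paper's choice $R=\tfrac15\min\{|x-y_2|,\rho\}$, which does not obviously satisfy the $16B\subseteq B_{2\rho}\setminus\{x\}$ hypothesis without a further shrinking; your version is the more careful one.
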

\begin{proof}
For simplicity, assume that $B$ is centered at $0$. After applying a mollification argument similar to the proof of proposition \ref{GreenDerivativeBoundsForAdjoint}, it is enough to assume that $b\in\Lip(B)$.

Without loss of generality, assume that $|x-y_1|\leq|x-y_2|$. Set $G_x^t(y)=G^t(y,x)$, and define
\[
R=\frac{1}{5}\min\{|x-y_2|,\rho\}.
\]
First, suppose that $|y_1-y_2|\geq R$. Consider two cases: if $|x-y_2|\leq\rho$, then proposition \ref{GreenDerivativeBoundsForAdjoint} shows that
\begin{align*}
|\nabla G_x^t(y_1)-\nabla G_x^t(y_2)|&\leq C|y_1-x|^{1-d}+C|y_2-x|^{1-d}\\
&=C|y_1-x|^{\alpha}|y_1-x|^{1-d-\alpha}+C|y_2-x|^{\alpha}|y_2-x|^{1-d-\alpha}\\
&\leq C|y_2-x|^{\alpha}\left(|y_1-x|^{1-d-\alpha}+|y_2-x|^{1-d-\alpha}\right)\\
&\leq C(5R)^{\alpha}\left(|y_1-x|^{1-d-\alpha}+|y_2-x|^{1-d-\alpha}\right)\\
&\leq C5^{\alpha}|y_1-y_2|^{\alpha}\left(|y_1-x|^{1-d-\alpha}+|y_2-x|^{1-d-\alpha}\right).
\end{align*}
If, now, $|x-y_2|>\rho$, we have that
\[
|x-y_2|^{\alpha}\leq\left(|x|+|y_2|\right)^{\alpha}\leq(2\rho+\rho)^{\alpha}=3^{\alpha}\rho^{\alpha}=3^{\alpha}5^{\alpha}R^{\alpha},
\]
since $y_2\in B_{\rho}$. Therefore, as above, we obtain
\begin{align*}
|\nabla G_x^t(y_1)-\nabla G_x^t(y_2)|&\leq C|y_1-x|^{1-d}+C|y_2-x|^{1-d}\\
&\leq C|y_2-x|^{\alpha}\left(|y_1-x|^{1-d-\alpha}+|y_2-x|^{1-d-\alpha}\right)\\
&\leq C\cdot15^{\alpha}R^{\alpha}\left(|y_1-x|^{1-d-\alpha}+|y_2-x|^{1-d-\alpha}\right)\\
&\leq C\cdot15^{\alpha}|y_1-y_2|^{\alpha}\left(|y_1-x|^{1-d-\alpha}+|y_2-x|^{1-d-\alpha}\right),
\end{align*}
which shows the estimate in all cases when $|y_1-y_2|\geq R$.

Suppose now that $|y_1-y_2|<R$. Then $y_1\in B_R(y_2)$, and, if $z\in B_{2R}(y_2)$, we obtain that
\[
|z-x|\geq|y_2-x|-|z-y_2|\geq|y_2-x|-2R\geq|y_2-x|-\frac{2}{5}|y_2-x|=\frac{3}{5}|y_2-x|,
\]
and also
\[
|z|\leq|z-y_2|+|y_2|\leq 2\rho,
\]
therefore $B_{2R}(y_2)\subseteq B_{2\rho}$. Therefore, $z\mapsto G^t(z,x)$ is a solution of $L^tu=0$ in $B_{2R}(y_2)$, hence proposition \ref{DerivativeRegularityForAdjoint} shows that
\begin{align*}
|\nabla_yG^t(y_1,x)-\nabla_yG^t(y_2,x)|&\leq\frac{C}{R}\left(\frac{|y_1-y_2|}{R}\right)^{\alpha}\left(\fint_{B_{2R}(y_2)}|G^t(z,x)|^2\right)^{1/2}\\
&\leq C|y_1-y_2|^{\alpha}R^{-1-\alpha}|y_2-x|^{2-d}\\
&=C|y_1-y_2|^{\alpha}|y_2-x|^{1-d-\alpha}\left(\frac{|y_2-x|}{R}\right)^{1+\alpha}.
\end{align*}
If, now, $R=\frac{1}{5}|y_2-x|$, we obtain the required bound. On the other hand, if $R=\frac{1}{5}\rho$, then
\[
\left(\frac{|y_2-x|}{R}\right)^{1+\alpha}\leq \left(\frac{2\rho}{\rho/5}\right)^{1+\alpha}=10^{1+\alpha}
\]
which shows that the bound also holds in this case, and completes the proof.
\end{proof}

The same argument as above, using corollary \ref{BoundedDerivativeForAdjoint} instead of proposition \ref{DerivativeRegularityForAdjoint}, shows the next estimate.

\begin{prop}\label{HolderContinuityOfGreenAsIsForAdjoint}
Let $B$ be a ball of radius $2\rho$, and suppose that $A\in M_{\lambda,\mu}(B)$, $b\in C^{\alpha}(B)$. Let also $G^t$ be Green's function for $L^tu=0$ in $B$. Then for all $y_1,y_2\in B_{\rho}$, $x\in B_{2\rho}$,
\[
|G(y_1,x)-G(y_2,x)|\leq C\left(|y_1-x|^{1-d}+|y_2-x|^{1-d}\right)|y_1-y_2|,
\]
where $C$ depends on $d,\lambda,\mu,\|b\|_{C^{\alpha}(B)}$ and $\rho$.
\end{prop}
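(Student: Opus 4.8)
The plan is to follow the proof of Proposition \ref{HolderContinuityOfGreenForAdjoint} almost verbatim, making two substitutions: the H\"older gradient estimate of Proposition \ref{DerivativeRegularityForAdjoint} is replaced by the $L^\infty$ gradient estimate of Corollary \ref{BoundedDerivativeForAdjoint}, and the pointwise gradient bound of Proposition \ref{GreenDerivativeBoundsForAdjoint} is replaced by the pointwise size bound $G^t(z,x)\leq C|z-x|^{2-d}$ of Theorem \ref{GoodGreenFunctionEstimatesForAdjoint}. As there, one may assume $B$ is centered at $0$ and, via the mollification argument from the proof of Proposition \ref{GreenDerivativeBoundsForAdjoint}, reduce to $b\in\Lip(B)$; fix $x\in B_{2\rho}$, write $G_x^t(y)=G^t(y,x)$, relabel so that $|x-y_1|\leq|x-y_2|$, and set $R=\frac15\min\{|x-y_2|,\rho\}$. (We may assume $y_1\neq y_2$ and $x\notin\{y_1,y_2\}$.)

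For $|y_1-y_2|\geq R$ one uses $|G_x^t(y_i)|\leq C|y_i-x|^{2-d}=C|y_i-x|\,|y_i-x|^{1-d}$ and checks that $|y_i-x|\leq C|y_1-y_2|$ in the two subcases $|x-y_2|\leq\rho$ (so $|y_i-x|\leq 5R\leq 5|y_1-y_2|$) and $|x-y_2|>\rho$ (so $R=\rho/5$ and $|y_i-x|\leq|y_i|+|x|\leq 3\rho=15R$, since $y_i\in B_\rho$), which gives the desired estimate. For $|y_1-y_2|<R$ one notes that for $z\in B_{2R}(y_2)$ one has $|z-x|\geq|y_2-x|-2R\geq\frac35|y_2-x|$ and $|z|\leq|y_2|+2R<\frac75\rho$, so $\overline{B_{2R}(y_2)}$ is a compact subset of $B_{2\rho}\setminus\{x\}$; hence $z\mapsto G^t(z,x)$ solves $L^tu=0$ on $B_{2R}(y_2)$ and, by Proposition \ref{LowRegularityForAdjoint} together with the $W_0^{1,p}$ bound, lies in $W^{1,2}$ there. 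Applying Corollary \ref{BoundedDerivativeForAdjoint} with $g=0$ on $B_{R/8}(y_2)$ (whose $16$-dilate is $B_{2R}(y_2)$) and the pointwise bound on $B_{R/2}(y_2)$ yields $\|\nabla_yG_x^t\|_{L^\infty(B_{R/8}(y_2))}\leq\frac{C}{R}|y_2-x|^{2-d}$. The transitional range $R/8\leq|y_1-y_2|<R$ is handled exactly as in the first case (there $|y_2-x|\leq CR\leq C|y_1-y_2|$), while for $|y_1-y_2|<R/8$ one has $y_1\in B_{R/8}(y_2)$ and integrates the gradient bound along the segment from $y_1$ to $y_2$, obtaining $|G_x^t(y_1)-G_x^t(y_2)|\leq\frac{C}{R}|y_2-x|^{2-d}|y_1-y_2|=C|y_1-y_2|\,|y_2-x|^{1-d}\cdot\frac{|y_2-x|}{R}$; since $\frac{|y_2-x|}{R}$ is bounded by an absolute constant in both possibilities for $R$, this is $\leq C|y_1-y_2|(|y_1-x|^{1-d}+|y_2-x|^{1-d})$, as required.

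I do not expect a genuinely new difficulty: all the analytic ingredients — interior Lipschitz regularity for solutions of $L^tu=0$, the pointwise size bound for $G^t$, and the reduction to Lipschitz $b$ — are already available. The only point requiring attention is the geometric bookkeeping with the radius $R$: keeping every ball on which an interior estimate is invoked compactly inside $B_{2\rho}\setminus\{x\}$, and treating the transitional range $|y_1-y_2|\sim R$ so that $y_1$ actually lies in the ball where the gradient bound holds. This is precisely the mechanism by which Corollary \ref{LocalBoundOnGradientOfSolutions}, and the proof of Proposition \ref{HolderContinuityOfGreenForAdjoint}, pass from an $L^\infty$ bound on the gradient to a Lipschitz bound.
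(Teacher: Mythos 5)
Your proof is correct and takes precisely the paper's stated route: repeat the argument of Proposition \ref{HolderContinuityOfGreenForAdjoint} with Corollary \ref{BoundedDerivativeForAdjoint} replacing Proposition \ref{DerivativeRegularityForAdjoint} in the small-separation case, and the pointwise size bound of Theorem \ref{GoodGreenFunctionEstimatesForAdjoint} replacing the gradient size bound in the large-separation case. Your explicit handling of the transitional range $R/8\le|y_1-y_2|<R$, forced by the sixteen-fold dilate required in Corollary \ref{BoundedDerivativeForAdjoint}, is a careful bit of bookkeeping that the paper leaves implicit (and that is also quietly needed in the proof of Proposition \ref{HolderContinuityOfGreenForAdjoint} itself); it folds into the existing case split rather than constituting a different argument.
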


\subsection{Mixed derivatives}
We now turn our attention to properties of the function $\nabla_xG(x,y)$, as a function of $y$. We first show the next lemma.

\begin{lemma}\label{MixedGreenIsASolution}
Let $B$ be a ball of radius $\rho$, and suppose that $A\in M_{\lambda,\mu}(B)$, $b\in L^{\infty}(B)$. Fix also $x\in B$. Then, for any $i\in\{1,\dots d\}$, the function
\[
u(y)=\partial_i^xG(x,y)
\]
is a $W^{1,2}_{{\rm loc}}(B\setminus\{x\})$ solution to the equation $L^tu=0$ in $B\setminus\{x\}$, where $\partial_i^x$ denotes the $i$-th partial derivative with respect to $x$. 
\end{lemma}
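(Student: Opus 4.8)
The plan is to establish that $u(y) = \partial_i^x G(x,y)$ is a weak solution of $L^t u = 0$ in $B \setminus \{x\}$ by exploiting the symmetry relation $G(x,y) = G^t(y,x)$ together with the defining property of Green's function and a difference-quotient argument in the $x$-variable. First I would fix $x \in B$ and a compactly supported neighborhood $V$ of $x$ with $\overline V \subseteq B$, and work on the open set $B \setminus \{x\}$. For a test function $\phi \in C_c^\infty(B \setminus \{x\})$, the goal is to show $\alpha^t(u, \phi) = 0$, i.e.
\[
\int_B A^t(y)\nabla_y\big(\partial_i^x G(x,y)\big)\cdot\nabla\phi(y) + b(y)\cdot\nabla\phi(y)\,\partial_i^x G(x,y)\,dy = 0.
\]
The natural route is to first write down, for a fixed direction $e_i$ and small $h \neq 0$, the difference quotient $D_i^h G(x,y) = \frac{1}{h}\big(G(x + he_i, y) - G(x,y)\big)$, and to observe that since $\phi$ is supported away from $x$, for $|h|$ small the points $x, x+he_i$ all lie outside $\operatorname{supp}\phi$, so that $y \mapsto \phi(y)$ is an admissible test function for both Green's functions $G(x + he_i, \cdot)$ and $G(x, \cdot)$ — more precisely, for the corresponding adjoint Green's functions $G^t(\cdot, x+he_i)$ and $G^t(\cdot, x)$, using the symmetry from Proposition \ref{SymmetryWithAdjoint}.

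The key step is then: by the defining property (ii) of $G^t$ in Theorem \ref{GoodGreenFunctionEstimatesForAdjoint}, for any $\psi \in C_c^\infty(B)$ we have $\alpha^t(G^t(\cdot, z), \psi) = \psi(z)$; taking $\psi = \phi$ supported away from $x$ and $x + he_i$ gives $\alpha^t(G^t(\cdot, z), \phi) = 0$ for $z \in \{x, x+he_i\}$. Subtracting and dividing by $h$, and using the symmetry $G^t(y,z) = G(z,y)$, yields
\[
\int_B A^t(y)\nabla_y\big(D_i^h G(x,y)\big)\cdot\nabla\phi(y) + b(y)\cdot\nabla\phi(y)\, D_i^h G(x,y)\,dy = 0
\]
for all small $h$. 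It then remains to pass to the limit $h \to 0$. On $\operatorname{supp}\phi$, which is a compact subset of $B \setminus \{x\}$, I would use the interior gradient and Hölder-gradient estimates for Green's function — Propositions \ref{GreenDerivativeBounds} and \ref{HolderContinuityOfGreen}, applied on balls whose doubles avoid $x$ — to conclude that $D_i^h G(x,y) \to \partial_i^x G(x,y)$ and $\nabla_y D_i^h G(x,y) \to \nabla_y \partial_i^x G(x,y)$, uniformly (or at least in $L^2$) on $\operatorname{supp}\phi$ as $h \to 0$; here one uses that $\nabla_x G$ is itself locally Hölder in $x$ with locally bounded $y$-gradient, so that second-order mixed difference quotients converge. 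Combined with the uniform bounds, dominated convergence finishes the passage to the limit and gives $\alpha^t(u,\phi) = 0$.

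Finally, to see $u \in W^{1,2}_{\mathrm{loc}}(B \setminus \{x\})$: since $u$ is, on each ball $B_{2r}(y_0)$ with $y_0 \neq x$ and $x \notin B_{4r}(y_0)$, a weak solution of a uniformly elliptic equation $L^t u = 0$ with bounded drift, once we know $u \in W^{1,p}_{\mathrm{loc}}$ for some $p > 1$ — which follows from the pointwise bound $|\nabla_x G(x,y)| \le C|x-y|^{1-d}$ of Proposition \ref{GreenDerivativeBounds} applied together with the Caccioppoli-type control, or directly from Proposition \ref{LowRegularity}/\ref{LowRegularityForAdjoint} bootstrapping — we upgrade to $W^{1,2}_{\mathrm{loc}}$. (In fact the Hölder-gradient estimate already shows $\nabla_y u$ is locally bounded away from $x$.) I expect the main obstacle to be the rigorous justification of the limit $h \to 0$ inside the bilinear form: one must ensure the difference quotients of $\nabla_y G(x,\cdot)$ converge, which requires the joint regularity of $G$ in both variables; this is precisely what Propositions \ref{GreenDerivativeBounds} and \ref{HolderContinuityOfGreen} are designed to supply, so the argument should go through, but care is needed that the constants there are uniform as the base points range over $\operatorname{supp}\phi$, which is legitimate since $\operatorname{supp}\phi$ stays a fixed positive distance from $x$.
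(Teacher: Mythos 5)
Your overall plan (difference quotients in the $x$-variable, using the defining property of Green's function away from the poles) is the same as the paper's, and the identity $\alpha^t\bigl(D_i^h G(x,\cdot),\phi\bigr)=0$ for $\phi$ supported away from $x$ and $x+he_i$ is a correct starting point. But the step that carries the limit $h\to 0$ through the bilinear form has a genuine gap.

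You want to show that $\nabla_y D_i^h G(x,y)$ converges (uniformly or in $L^2$) to $\nabla_y \partial_i^x G(x,y)$ on $\operatorname{supp}\phi$, and you cite Propositions \ref{GreenDerivativeBounds} and \ref{HolderContinuityOfGreen} for this. Those propositions control $\nabla_x G(x,y)$: a pointwise bound $|\nabla_x G|\leq C|x-y|^{1-d}$ and H\"older continuity of $x\mapsto\nabla_x G(x,y)$. They say nothing about $\nabla_y\nabla_x G$, which is what your convergence claim implicitly requires. Nothing in the paper gives an a priori bound on the mixed second derivative $\nabla_y\partial_i^x G$; indeed establishing that $\partial_i^x G(x,\cdot)\in W^{1,2}_{\rm loc}(B\setminus\{x\})$ is precisely the content of the lemma. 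The later H\"older estimate in the $y$-variable (Proposition \ref{HolderContinuityOfGreenForAdjointOnAdjoint}) is proved \emph{using} this lemma, so it is unavailable here. Your last paragraph has the same circularity: claiming ``the H\"older-gradient estimate already shows $\nabla_y u$ is locally bounded'' invokes the conclusion to reach the conclusion, and Propositions \ref{LowRegularity}/\ref{LowRegularityForAdjoint} do not apply either (the former is for $L$, not $L^t$; the latter requires $b$ Lipschitz).

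The way out, which is what the paper does, is to avoid strong convergence of the gradients entirely. One observes that $g_h = D_i^h G(x,\cdot)$ is, for each small $h$, a uniformly bounded weak solution of $L^t u=0$ on a compactly supported neighborhood $V$ of $\operatorname{supp}\phi$ (uniform boundedness follows from the mean value theorem and $|\nabla_x G|\leq C|x-y|^{1-d}$, Proposition \ref{GreenDerivativeBounds}). To get $g_h\in W^{1,2}(V)$ one needs $b\in\Lip$ and Proposition \ref{LowRegularityForAdjoint}, so the Lipschitz case must be done first with a mollification argument afterwards for $b\in L^\infty$ --- a structural point your proposal does not address. Then the Caccioppoli inequality for the adjoint (Lemma \ref{CacciopoliForAdjoint}) converts the uniform $L^\infty$ bound on $g_h$ into a uniform $W^{1,2}$ bound, and weak compactness yields a subsequential weak limit $g_0\in W^{1,2}(U)$ which, as a weak limit of solutions, is itself a solution. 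Identifying $g_0$ with $\partial_i^x G(x,\cdot)$ is done via a.e.\ convergence and the $C^1$ regularity of $G(\cdot,y)$ near $x$. This circumvents the need for any mixed second-derivative estimate, which is the missing idea in your proposal.
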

\begin{proof}
Assume first that $b\in\Lip(\Omega)$.
	
Let $U\subseteq B\setminus\{x\}$ be compactly supported, and consider a set $V$ with $U\subseteq V\subseteq B\setminus\{x\}$, where all inclusions are compact. Then there exists $\e_0>0$ such that $B_{2\e_0}(x)\cap V=\emptyset$. Let also $|h|<\e_0$, fix $i\in\{1,\dots d\}$, and consider the function
\[
g_h(y)=\frac{G(x+he_i,y)-G(x,y)}{h}.
\]
Note first that, from proposition \ref{SymmetryWithAdjoint} and theorem \ref{GoodGreenFunctionEstimatesForAdjoint}, $g_h$ is a $W_0^{1,\frac{d}{2(d-1)}}(B)$ solution of $L^tu=0$ in $V$; hence, since $b$ is Lipschitz, proposition \ref{LowRegularityForAdjoint} shows that $g_h\in W^{1,2}(V)$. In addition, for $y\in V$, we have that $y\notin B_{2\e_0}(x)$, hence $G(\cdot,y)$ is a solution of $Lu=0$ in $B_{2\e_0}(x)$, therefore it is continuously differentiable in $B_{\e_0}(x)$, from proposition \ref{DerivativeRegularity}. Hence, the mean value theorem shows that
\[
|g_h(y)|=\left|\frac{G(x+he_i,y)-G(x,y)}{h}\right|\leq|\nabla_zG(z,y)|\leq C|z-y|^{1-d},
\]
for some $z$ lying on the segment $[x,x+he_i]$, where we also used proposition \ref{GreenDerivativeBounds}. But then $z\in B_{\e_0}(x)$, and since $y\notin B_{2\e_0}(x)$, we obtain that
\[
|g_h(y)|\leq C|z-y|^{1-d}\leq C\e_0^{1-d}=C_0.
\]
This shows that, for $|h|<\e_0$, $g_h$ is a uniformly bounded solution of $L^tu=0$ in $V$, with respect to $h$.

Consider now a covering of $U$ by a finite number of balls $B_i=B_{r_i}(x_i)$, $i=1,\dots N$, such that $4B_i\subseteq V$. Then, Cacciopoli's inequality shows that
\begin{equation}\label{eq:g_hUniform}
\int_{B_i}|\nabla g_h|^2\leq\frac{C}{r_i}^2\int_{2B_i}|g_h|^2\leq \frac{CC_0^2|2B_i|}{r_i^2},
\end{equation}
hence $\nabla g_h\in L^2(B_i)$, with a uniform bound on its norm, for $|h|<\e_0$, where this bound depends on $d,\lambda,\|b\|_{\infty}$ and $\diam(\Omega)$. Therefore $\nabla g_h\in L^2(U)$ uniformly, hence $(g_h)$ is uniformly bounded in $W^{1,2}(U)$, with respect to $h$.

From weak compactness, we obtain the existence of a function $g_0\in W^{1,2}(U)$ such that, for a sequence $h_n\to 0$,
\[
g_{h_n}\xrightarrow[n\to\infty]{}g_0,\quad{\rm weakly}\,\,{\rm in}\,\,W^{1,2}(U).
\]
From the definition of weak solution, we have that $g_0$ is a weak solution of $L^tu=0$ in $U$. In addition, the Rellich compactness theorem and almost everywhere convergence show that there exists a subsequence $g_{t_n}$, with $t_n\to 0$, such that
\[
g_{t_n}\xrightarrow[n\to\infty]{}g_0,\quad{\rm almost}\,\,{\rm everywhere}\,\,{\rm in}\,\,U.
\]
Pick a $y\in U$ such that this convergence holds. Then, we obtain that
\[
g_0(y)=\lim_{n\to\infty}g_{t_n}(y)=\lim_{n\to\infty}\frac{G(x+t_ne_i,y)-G(x,y)}{t_n}=\partial_i^xG(x,y),
\]
since $G(\cdot,y)$ is continuously differentiable in $B_{\e_0}(x)$, and $y\notin B_{2\e_0}(x)$. But, $g_0\in W^{1,2}(U)$ is a solution to $L^tu=0$ in $U$, therefore $\partial_i^xG(x,y)$ is a solution to $L^tu=0$ in $U$, at least when $b\in\Lip(B)$.

In order to pass to non differentiable drifts, let $(b_n)$ be a mollification of $b$, consider the operator $Lu=-\dive(A\nabla u)+b_n\nabla u$, let $G_n$ be Green's function for this operator in $B$, and set
\[
g_h^n(y)=\frac{G_n(x+he_i,y)-G_n(x,y)}{h}
\]
as above. Since $\partial_i^xG_n(x,y)$ is the weak $W^{1,2}(V)$ limit of a subsequence $(g_{h_m}^n)$, as $n\to\infty$, \eqref{eq:g_hUniform} shows that $(\partial_i^xG_n(x,\cdot))$ is bounded in $W^{1,2}(V)$. Therefore, the sequence $(G_n(x,\cdot))$ is bounded in $W^{2,2}(V)$. In addition, a subsequence $(\partial_i^xG_{k_n}(x,\cdot))$ converges weakly in $W^{1,2}(V)$ to a solution $v$ of $L^tu=0$. Moreover, a subsequence of $G_{k_n}(x,\cdot)$ converges almost everywhere to $G(x,\cdot)$ in $V$, and the derivatives with respect to $x$ of this subsequence converge almost everywhere in $V$; this shows that $\partial_i^xG(x,\cdot)=v$ is a $W^{1,2}(B)$ solution to $L^tu=0$ in $V$, which completes the proof.
\end{proof}

We can now show estimates for the adjoint variable of the derivative of Green's function.

\begin{prop}\label{HolderContinuityOfGreenForAdjointOnAdjoint}
Let $B$ be a ball of radius $2\rho$, and suppose that $A\in M_{\lambda,\mu}(B)$, $b\in L^{\infty}(B)$. Let also $G$ be Green's function for the equation $Lu=0$ in $B$. Then, for all $y_1,y_2\in B_{\rho}$, $x\in B_{2\rho}$,
\[
|\nabla_xG(x,y_1)-\nabla_xG(x,y_2)|\leq C|y_1-y_2|^{\alpha}\left(|y_1-x|^{1-d-\alpha}+|y_2-x|^{1-d-\alpha}\right),
\]
where $\alpha\in(0,1)$ and $C$ depend on $d,\lambda,\mu,\|b\|_{\infty}$ and $\rho$.
\end{prop}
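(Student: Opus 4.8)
The plan is to reduce the statement to Lemma \ref{MixedGreenIsASolution}: for each fixed $i\in\{1,\dots,d\}$ the function $u_i(y)=\partial_i^xG(x,y)$ is a $W^{1,2}_{\loc}(B\setminus\{x\})$ solution of $L^tu=0$ in $B\setminus\{x\}$. Since solutions of $L^tu=0$ with $b\in L^{\infty}$ are locally Hölder continuous (Proposition \ref{HolderInside}), the pointwise values $u_i(y)$ make sense for $y\neq x$, and it suffices to bound $|u_i(y_1)-u_i(y_2)|$ for each $i$ and then sum over $i$. I will use throughout the pointwise bound $|u_i(y)|\leq|\nabla_xG(x,y)|\leq C|x-y|^{1-d}$ from Proposition \ref{GreenDerivativeBounds}. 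The argument is the same geometric dichotomy already used in the proof of Proposition \ref{HolderContinuityOfGreenForAdjoint}, the only change being that here the interior \emph{oscillation} estimate for $L^t$-solutions (Proposition \ref{HolderInside}, which needs only $b\in L^{\infty}$) replaces the gradient Hölder estimate used there; this is exactly why no continuity assumption on $b$ is required. Granted Lemma \ref{MixedGreenIsASolution}, the rest is routine, and I do not expect a serious obstacle.

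Assume without loss of generality that $|x-y_1|\leq|x-y_2|$ and (after translating $B$) that $B$ is centered at the origin; set $R=\tfrac15\min\{|x-y_2|,\rho\}$. \textbf{Case 1: $|y_1-y_2|\geq R$.} Here I estimate $|u_i(y_1)-u_i(y_2)|\leq|u_i(y_1)|+|u_i(y_2)|\leq C\big(|x-y_1|^{1-d}+|x-y_2|^{1-d}\big)$ by Proposition \ref{GreenDerivativeBounds}, write $|x-y_j|^{1-d}=|x-y_j|^{\alpha}|x-y_j|^{1-d-\alpha}$, and use $|x-y_1|\leq|x-y_2|$ to replace both factors $|x-y_j|^{\alpha}$ by $|x-y_2|^{\alpha}$. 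If $|x-y_2|\leq\rho$ then $R=\tfrac15|x-y_2|$, so $|x-y_2|=5R\leq5|y_1-y_2|$; if $|x-y_2|>\rho$ then $R=\tfrac15\rho$ and, since $x\in B_{2\rho}$ and $y_2\in B_{\rho}$, we have $|x-y_2|\leq3\rho=15R\leq15|y_1-y_2|$. In either subcase $|x-y_2|^{\alpha}\leq C|y_1-y_2|^{\alpha}$, which gives the claimed bound in Case 1 (with one of the two summands on the right-hand side).

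\textbf{Case 2: $|y_1-y_2|<R$.} First I note that the closed ball $\{z:|z-y_2|\leq\tfrac52R\}$ is contained in the open set $B\setminus\{x\}$: for such $z$ one has $|z-x|\geq|y_2-x|-\tfrac52R\geq\tfrac12|y_2-x|>0$ and $|z|\leq\tfrac52R+\rho<2\rho$. Hence $u_i\in W^{1,2}(B_{5R/2}(y_2))$ solves $L^tu=0$ there, and Proposition \ref{HolderInside} (with $F=0$), applied on the ball $B_{2R}(y_2)$, yields, for some $\alpha\in(0,1)$,
\[
\osc_{B_{2|y_1-y_2|}(y_2)}u_i\leq C\Big(\frac{|y_1-y_2|}{R}\Big)^{\alpha}\sup_{B_{2R}(y_2)}|u_i|.
\]
On $B_{2R}(y_2)$ we have $|z-x|\geq\tfrac35|y_2-x|$, so Proposition \ref{GreenDerivativeBounds} gives $\sup_{B_{2R}(y_2)}|u_i|\leq C|y_2-x|^{1-d}$; since $y_1,y_2\in B_{2|y_1-y_2|}(y_2)$, this produces
\[
|u_i(y_1)-u_i(y_2)|\leq C|y_1-y_2|^{\alpha}|y_2-x|^{1-d-\alpha}\Big(\frac{|y_2-x|}{R}\Big)^{\alpha}.
\]
Finally, if $R=\tfrac15|y_2-x|$ the last factor equals $5^{\alpha}$; if $R=\tfrac15\rho$ then $|y_2-x|\leq3\rho$ forces $|y_2-x|/R\leq15$, so the last factor is at most $15^{\alpha}$. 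Thus the bound holds in Case 2 as well, and summing over $i=1,\dots,d$ completes the proof.
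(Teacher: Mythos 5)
Your proof is correct and follows essentially the same route as the paper's own: the core step is Lemma \ref{MixedGreenIsASolution} (each $\partial_i^x G(x,\cdot)$ is a local $W^{1,2}$ solution of $L^t u=0$ away from $x$), and the dichotomy $|y_1-y_2|\lessgtr R$ with $R=\tfrac15\min\{|x-y_2|,\rho\}$ is exactly the one the paper uses (borrowed from Proposition \ref{HolderContinuityOfGreen}). The only superficial differences: in the near regime you apply Proposition \ref{HolderInside} directly in the $\sup$-norm form, whereas the paper passes through a local $L^2$ average of $g_i$ (invoking Gilbarg--Trudinger 8.20/8.22) before feeding in the pointwise bound $C|y_2-x|^{1-d}$ — your version is a hair more streamlined but lands on the same estimate. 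Your preliminary check that $\overline{B_{5R/2}(y_2)}\subset B\setminus\{x\}$, ensuring that $u_i$ is a bona fide $W^{1,2}$ solution on the ball you use, is a welcome bit of care that the paper leaves implicit. One small phrasing point: in Case 1 your final inequality already produces \emph{both} summands $|x-y_1|^{1-d-\alpha}$ and $|x-y_2|^{1-d-\alpha}$, so the parenthetical ``with one of the two summands'' is unnecessary and slightly misleading — the bound is complete as written.
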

\begin{proof}
For simplicity, assume that $B$ is centered at $0$. Without loss of generality, assume that $|y_1-x|\leq|y_2-x|$. Set $G_y(x)=G(x,y)$, and define
\[
R=\frac{1}{5}\min\{|x-y_2|,\rho\}.
\]
First, suppose that $|y_1-y_2|<R$. Fix $x\in B_{2\rho}$, $i\in\{1,\dots d\}$, and set $g_i(y)=\partial_i^xG(x,y)$. Then $y_1\in B_R(y_2)$, and if $y\in B_{2R}(y_2)$, then
\[
|y-x|\geq |y_2-x|-|y_2-y|\geq |y_2-x|-2R\geq |y_2-x|-\frac{2}{5}|y_2-x|=\frac{3}{5}|y_2-x|,
\]
hence the pointwise bounds on $\nabla G_y$ (theorem \ref{GoodGreenFunctionEstimates}) show that $g_i$ is bounded in $B_{2R}(y_2)$, with
\[
|g_i(y)|\leq C|y_2-x|^{1-d}.
\]
In addition, lemma \ref{MixedGreenIsASolution} shows that $g_i\in W^{1,2}(B_{2R}(y_2))$ is a solution to the equation $L^tu=0$, therefore theorems 8.20 and 8.22 in \cite{Gilbarg} show that
\begin{align*}
|g_i(y_1)-g_i(y_2)|&\leq C\left(\frac{|y_1-y_2|}{R}\right)^{\alpha}\left(\fint_{B_{2R}(y_2)}|g_i|^2\right)^{1/2}\leq C|y_1-y_2|^{\alpha}R^{-\alpha}|y_2-x|^{1-d}\\
&=C|y_1-y_2|^{\alpha}|y_2-x|^{1-d-\alpha}\left(\frac{|y_2-x|}{R}\right)^{\alpha},
\end{align*}
where $\alpha$ is a good constant. If now $R=|y_2-x|/5$ we obtain the estimate. On the other hand, if $R=\rho/5$, then
\[
\left(\frac{|y_2-x|}{R}\right)^{\alpha}\leq\left(\frac{\rho}{\rho/5}\right)^{\alpha}=5^{\alpha},
\]
which shows the bound in this case as well.

For the case $|y_1-y_2|\geq R$, we follow the first part of the proof of proposition \ref{HolderContinuityOfGreen}, which only uses the pointwise bounds on the gradient of $G$, to obtain the inequality.
\end{proof}

\subsection{Continuity arguments estimates}
Fix a uniformly elliptic matrix $A$, and denote the operator $-\dive(A\nabla u)+b\nabla u$ by $L_b$, and also denote Green's function for the equation $L_bu=0$ by $G_b(x,y)$. In what follows, we will need to estimate the difference between $G_{b_1}$ and $G_{b_2}$, as well as $\nabla G_{b_1}$ and $\nabla G_{b_2}$.

To accomplish this, we first show a lemma.

\begin{lemma}\label{ForContinuity}
Let $r\geq 1$, and consider two numbers $p_1,p_2$ with
\[
p_1,p_2>\frac{r-1}{r}d,\,\,r(p_1+p_2-2d)+d<0.
\]
Then, for every $x,y\in\mathbb R^d$,
\[
\int_{\mathbb R^d}\frac{dz}{|x-z|^{r(d-p_1)}|y-z|^{r(d-p_2)}}\leq C_d|x-y|^{r(p_1+p_2-2d)+d}.
\]
\end{lemma}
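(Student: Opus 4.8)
The plan is to reduce the estimate, by a scaling argument, to the classical convergence of a Riesz‑type convolution. Write $a=r(d-p_1)$ and $b=r(d-p_2)$, so the integrand is $|x-z|^{-a}|y-z|^{-b}$. The hypothesis $p_i>\tfrac{r-1}{r}d$ is equivalent to $a<d$ and $b<d$, the hypothesis $r(p_1+p_2-2d)+d<0$ is equivalent to $a+b>d$, and one checks directly that $d-a-b=r(p_1+p_2-2d)+d$, which is exactly the exponent appearing on the right hand side. We may assume $x\neq y$, since otherwise both sides are $+\infty$.

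The first step is the scaling reduction: substituting $z=x+|x-y|\,w$ gives
\[
\int_{\mathbb R^d}\frac{dz}{|x-z|^{a}|y-z|^{b}}=|x-y|^{\,d-a-b}\int_{\mathbb R^d}\frac{dw}{|w|^{a}\,|e-w|^{b}},
\]
where $e=(y-x)/|x-y|$ is a unit vector; by rotational invariance the remaining integral $I_0$ is independent of $e$, so it suffices to show $I_0<\infty$ and to set $C_d=I_0$ (which then depends on $d$, and through $a,b$ on $r,p_1,p_2$).

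The second step is to estimate $I_0$ by decomposing $\mathbb R^d$ into the four regions $R_1=\{|w|\le\tfrac12\}$, $R_2=\{|e-w|\le\tfrac12\}$, $R_3=\{|w|\ge 2\}$ and $R_4=\{\tfrac12<|w|<2,\ |e-w|>\tfrac12\}$, which cover $\mathbb R^d$. On $R_1$ we have $\tfrac12\le|e-w|\le\tfrac32$, so $|e-w|^{-b}$ is bounded by a constant depending only on $b$, whence $\int_{R_1}\le C\int_{|w|\le1/2}|w|^{-a}\,dw<\infty$ because $a<d$; symmetrically, on $R_2$ the factor $|w|^{-a}$ is bounded and $\int_{R_2}\le C\int_{|e-w|\le1/2}|e-w|^{-b}\,dw<\infty$ because $b<d$. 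On $R_3$ we have $|e-w|\ge|w|-1\ge|w|/2$, whence the integrand is at most $C|w|^{-a-b}$ and $\int_{R_3}<\infty$ because $a+b>d$. Finally on $R_4$ the integrand is bounded on a set of finite measure. Summing the four bounds yields $I_0<\infty$, and combining with the scaling identity gives the lemma.

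The computations are routine; the only point requiring attention is that $a$ or $b$ may be negative — this happens precisely when $p_i>d$ — so the pointwise bounds used on $R_1$, $R_2$ and $R_3$ must be phrased so as to hold regardless of the sign of the exponents. This is a minor bookkeeping matter rather than a genuine obstacle.
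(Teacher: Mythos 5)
Your proof is correct, and it takes a cleaner route than the paper's. The paper substitutes $z=w+x$ and then directly splits $\mathbb R^d$ into $U_1=\{|w|<|a-w|\}$, $U_2=\{|w|>|a-w|\}$, each further split by whether $|w|$ is larger or smaller than $|a|/2$; the exponent $r(p_1+p_2-2d)+d$ then emerges from evaluating radial integrals over regions scaled by $|a|=|x-y|$. You instead rescale by $|x-y|$ first, so the power of $|x-y|$ falls out immediately from dimensional analysis, and the problem reduces to verifying that the fixed integral $I_0=\int|w|^{-a}|e-w|^{-b}\,dw$ converges, which you do with a four-region split. This buys two things: the exponent on the right-hand side requires no computation, and the sign issues when $p_i>d$ (so $a$ or $b$ is negative) are handled uniformly, since on each of $R_1,R_2,R_3$ the "other" factor is comparable to a power of the radial variable regardless of the sign of the exponent. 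The paper's $I_3$ bound as literally written — $|a-w|^{-b}\le|w|^{-b}$ on $U_1$ — actually requires $b\ge 0$, and needs a comparability argument with a constant when $b<0$; your remark at the end correctly flags this, and your decomposition avoids it. Both approaches yield a constant depending on $d$ and the exponents $a,b$; the lemma's notation $C_d$ suppresses that dependence in the paper as well, so this is consistent.
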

\begin{proof}
Let $x-y=a$, set $z=w+x$, and write
\[
\int_{\mathbb R^d}\frac{dz}{|x-z|^{r(d-p_1)}|y-z|^{r(d-p_2)}}=\int_{\mathbb R^d}\frac{dw}{|w|^{r(d-p_1)}|a-w|^{r(d-p_2)}}=I_1+I_2.
\]
where for $I_1$ we integrate over $U_1=\{|w|<|a-w|\}$, and for $I_2$ we integrate over $U_2=\{|w|>|a-w|\}$.

We will bound $I_1$, the estimate for $I_2$ being similar. We split $I_1$ as
\[
\int_{U_1\cap\left\{|w|>\frac{|a|}{2}\right\}}\frac{dw}{|w|^{r(d-p_1)}|a-w|^{r(d-p_2)}}+\int_{U_1\cap\left\{|w|>\frac{|a|}{2}\right\}}\frac{dw}{|w|^{r(d-p_1)}|a-w|^{r(d-p_2)}}=I_3+I_4.
\]
We then estimate
\[
I_3\leq\int_{|w|>\frac{|a|}{2}}\frac{dw}{|w|^{r(2d-p_1-p_2)}}=C_d\int_{|a|/2}^{\infty}\rho^{d-1}\rho^{r(p_1+p_2-2d)}\,d\rho=C_d|a|^{r(p_1+p_2-2d)+d},
\]
and, for $I_4$, since $|w|<|a|/2$, we have that $|w-a|\geq|a|/2$, so
\begin{align*}
I_4&\leq C_d|a|^{r(p_2-d)}\int_{|w|<\frac{|a|}{2}}\frac{dw}{|w|^{r(d-p_1)}}\\
&\leq C_d|a|^{r(p_2-d)}\int_0^{|a|/2}\rho^{d-1}\rho^{r(p_1-d)}\,d\rho\\
&=C_d|a|^{r(p_1+p_2-2d)+d},
\end{align*}
since the hypotheses imply that $r(p_1-d)+d>0$. This shows the bound for $I_1$, and the proof is complete.
\end{proof}

We then have the following estimates.

\begin{prop}\label{ContinuityArgumentForB}
Let $B$ be a ball with radius $\rho$, and $A\in M_{\lambda,\mu}(\Omega)$. Suppose also that $b_1,b_2\in L^{\infty}(B)$. Then, there exists $C=C(d,p,\lambda,\mu,\|b_1\|_{\infty},\|b_2\|_{\infty},\rho)$ such that, for every $x,y\in B$ with $x\neq y$,
\[
|G_{b_1}(x,y)-G_{b_2}(x,y)|\leq C\|b_1-b_2\|_{L^{2d}(B)}|x-y|^{5/2-d}.
\]
and also
\[
|\nabla_xG_{b_1}(x,y)-\nabla_xG_{b_2}(x,y)|\leq C\|b_1-b_2\|_{L^{2d}(B)}|x-y|^{3/2-d}.
\]
\end{prop}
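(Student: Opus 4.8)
The idea is to compare the two Green's functions by writing their difference through a Green representation formula for $L_{b_1}$ and then estimating the resulting double integral with the pointwise bounds already established, the product of the two singularities being handled by Lemma \ref{ForContinuity}. Concretely, set $w:=G_{b_1}(\cdot,y)-G_{b_2}(\cdot,y)$. Since $L_{b_1}G_{b_1}(\cdot,y)=L_{b_2}G_{b_2}(\cdot,y)=\delta_y$ and $L_{b_1}-L_{b_2}=(b_1-b_2)\nabla\cdot$, the $\delta_y$'s cancel and $w$ is a weak solution in $B$, with zero boundary values, of
\[
-\dive(A\nabla w)+b_1\nabla w=-(b_1-b_2)\nabla_zG_{b_2}(\cdot,y)=:F,
\]
and $F\in L^p(B)$ for every $p\in[1,\tfrac{d}{d-1})$ because $b_1-b_2\in L^\infty$ and $|\nabla_zG_{b_2}(z,y)|\le C|z-y|^{1-d}$ by Proposition \ref{GreenDerivativeBounds}; correspondingly $w\in W_0^{1,p}(B)$ for such $p$ by Theorem \ref{GoodGreenFunctionEstimates}. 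Testing this equation against the adjoint Green's function $G_{b_1}^t(\cdot,x)$ and using the symmetry relation $G_{b_1}^t(z,x)=G_{b_1}(x,z)$ (Proposition \ref{SymmetryWithAdjoint}), one obtains, for $x\neq y$,
\[
w(x)=\int_BG_{b_1}(x,z)F(z)\,dz=-\int_BG_{b_1}(x,z)\,(b_1(z)-b_2(z))\,\nabla_zG_{b_2}(z,y)\,dz .
\]
The integral converges absolutely since the singularity of $G_{b_1}(x,\cdot)$ at $x$ and that of $\nabla_zG_{b_2}(\cdot,y)$ at $y$ are separated; to make the identity rigorous it is cleanest first to take $b_1,b_2$ Lipschitz, where the $W_0^{1,2}$-theory and the Green representation of the solution operator apply directly to the approximations $G_{b_i,n}$ of Lemma \ref{LimitGreenConstruction}, and then to pass to the limit, using that all the pointwise bounds of Theorem \ref{GoodGreenFunctionEstimates} and Proposition \ref{GreenDerivativeBounds} are independent of the derivatives of $b$.

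\textbf{The size estimate.} Granting the representation formula, Theorem \ref{GoodGreenFunctionEstimates} and Proposition \ref{GreenDerivativeBounds} give
\[
|w(x)|\le C\int_B|x-z|^{2-d}\,|b_1(z)-b_2(z)|\,|z-y|^{1-d}\,dz ,
\]
and H\"older's inequality with exponent $2d$ on $b_1-b_2$ and conjugate exponent $r=\tfrac{2d}{2d-1}$ on the remaining factor yields
\[
|w(x)|\le C\|b_1-b_2\|_{L^{2d}(B)}\left(\int_B|x-z|^{r(2-d)}|z-y|^{r(1-d)}\,dz\right)^{1/r}.
\]
Now I would apply Lemma \ref{ForContinuity} with $p_1=2$, $p_2=1$: one checks $r\ge1$, that $p_1,p_2>\tfrac{r-1}{r}d=\tfrac12$, and that $r(p_1+p_2-2d)+d=\tfrac{d(5-2d)}{2d-1}<0$ for $d\ge3$, so the integral is at most $C|x-y|^{d(5-2d)/(2d-1)}$; raising to the power $1/r=\tfrac{2d-1}{2d}$ gives exactly $C|x-y|^{5/2-d}$, which is the first claimed inequality.

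\textbf{The gradient estimate.} For the second inequality I would differentiate the representation formula in $x$ under the integral sign — legitimate because $|\nabla_xG_{b_1}(x,z)|\le C|x-z|^{1-d}$ is locally integrable in $z$ and the difference quotients in $x$ are dominated using this bound together with the H\"older continuity of $\nabla_xG_{b_1}$ in its first variable from Proposition \ref{HolderContinuityOfGreen} — obtaining
\[
\nabla_xw(x)=-\int_B\nabla_xG_{b_1}(x,z)\,(b_1(z)-b_2(z))\,\nabla_zG_{b_2}(z,y)\,dz .
\]
Estimating with $|\nabla_xG_{b_1}(x,z)|\le C|x-z|^{1-d}$ and $|\nabla_zG_{b_2}(z,y)|\le C|z-y|^{1-d}$, applying H\"older as before, and then Lemma \ref{ForContinuity} with $p_1=p_2=1$ (for which $r(p_1+p_2-2d)+d=\tfrac{d(3-2d)}{2d-1}<0$ when $d\ge3$), the integral is at most $C|x-y|^{d(3-2d)/(2d-1)}$, and after raising to the power $1/r$ this becomes $C|x-y|^{3/2-d}$, as required.

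\textbf{Main obstacle.} Everything after the representation formula is a routine application of H\"older's inequality and Lemma \ref{ForContinuity}; the delicate point is the rigorous justification of the representation formula and of differentiating it under the integral, since the Green's functions are globally only in $W_0^{1,p}$ with $p<\tfrac{d}{d-1}<2$, so one cannot simply pair $w$ with $G_{b_1}^t(\cdot,x)$ in $W_0^{1,2}$. The route I would take is the Lipschitz-approximation argument sketched above: the $W_0^{1,2}$-Green-representation is available for the regularized drifts, and the passage to the limit is controlled precisely because the relevant pointwise bounds do not involve $\nabla b$.
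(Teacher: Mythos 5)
Your proof is correct and follows the same strategy as the paper: express the difference $G_{b_1}(x,y)-G_{b_2}(x,y)$ as an integral whose kernel is the product of a Green function and the gradient of another against $b_1-b_2$, then estimate with the pointwise bounds, H\"older with exponent $2d$, and Lemma~\ref{ForContinuity}; the paper arrives at the (equivalent, up to relabeling $b_1\leftrightarrow b_2$ and a sign typo) representation
\[
G_{b_1}(x,y)-G_{b_2}(x,y)=\int_B(b_1-b_2)(z)\,\nabla_zG_{b_1}(z,y)\,G_{b_2}^t(z,x)\,dz
\]
by pairing $G_{b_1}(\cdot,y)$ against $G_{b_2}^t(\cdot,x)$ in the two bilinear forms and subtracting, whereas you derive it from the resolvent identity $L_{b_1}(G_{b_1}-G_{b_2})=-(b_1-b_2)\nabla G_{b_2}$ plus the Green representation of the solution operator. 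The subsequent exponent bookkeeping (H\"older with $r=\tfrac{2d}{2d-1}$, Lemma~\ref{ForContinuity} with $\{p_1,p_2\}=\{1,2\}$ for the size estimate and $p_1=p_2=1$ for the gradient, arriving at $|x-y|^{5/2-d}$ and $|x-y|^{3/2-d}$) and the mollification to bounded drifts match the paper's argument exactly.
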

\begin{proof}
Without loss of generality, assume that $b\in\Lip(B)$; we can then recover the case $b\in L^{\infty}(B)$ using a mollification argument.

Let $x\neq y$ in $B$, and let $G_{b_2}^t$ be Green's function for the adjoint equation $L_{b_2}^tu=0$. We then obtain that
\begin{equation}\label{eq:FirstGreen}
\int_B\left(A(z)\nabla_zG_{b_1}(z,y)\nabla_zG_{b_2}^t(z,x)+b_1(z)\nabla_zG_{b_1}(z,y)G_{b_2}^t(z,x)\right)\,dz=G_{b_2}^t(y,x),
\end{equation}
since the poles of $G_{b_1}(\cdot,x),\nabla G_{b_1}(\cdot,x)$, and $G_{b_2}^t(\cdot,y),\nabla G_{b_1}(\cdot,x)$ occur at different points, from the pointwise bounds. In addition,
\[
\int_B\left(A^t(z)\nabla_zG_{b_2}^t(z,x)\nabla_zG_{b_1}(z,y)+b_2(z)\nabla_zG_{b_1}(z,y) G_{b_2}^t(z,x)\right)\,dz=G_{b_1}(x,y).
\]
We now subtract the identities above, to obtain
\begin{equation}\label{eq:Diff}
\int_B(b_1(z)-b_2(z))\nabla_zG_{b_1}(z,y)G_{b_2}^t(z,x)\,dz=G_{b_1}(x,y)-G_{b_2}^t(y,x)=G_{b_1}(x,y)-G_{b_2}(x,y),
\end{equation}
where we also used proposition \ref{SymmetryWithAdjoint}. Therefore, from the pointwise bounds on Green's function and its derivative, if $r=\frac{2d}{2d-1}$ is the conjugate exponent to $2d$, then
\begin{align*}
|G_{b_1}(x,y)-G_{b_2}(x,y)|&\leq\|b_1-b_2\|_{L^{2d}(B)}\left(\int_B\left|\nabla_zG_{b_1}(z,y)G_{b_2}^t(z,x)\right|^r\,dz\right)^{1/r}\\
&\leq \|b_1-b_2\|_{L^{2d}(B)}\left(\int_{\mathbb R^d}\frac{dz}{|z-y|^{r(d-1)}|x-z|^{r(d-2)}}\right)^{1/r}
\end{align*}
Set now $p_1=1$, $p_2=2$ in the previous lemma. Since $r<\frac{d}{d-1}$, we obtain that $\frac{r-1}{r}d<1=p_1<2=p_2$, and also $r>1$, therefore
\[
r(p_1+p_2-2d)+d<p_1+p_2-d=3-d\leq 0,
\]
therefore the hypotheses of lemma \ref{ForContinuity} are satisfied. Hence, we obtain that
\begin{align*}
|G_{b_1}(x,y)-G_{b_2}(x,y)|&\leq\|b_1-b_2\|_{L^{2d}(B)}\left(C|x-y|^{r(3-2d)+d}\right)^{1/r}\\
&=C\|b_1-b_2\|_{L^{2d}(B)}|x-y|^{3-d(2-1/r)},
\end{align*}
which shows the first estimate.

Fix now $y\in B$, and set $v(x)$ to be the left hand side of \eqref{eq:Diff}. We can then compute that $v$ is weakly differentiable in $B$, and
\[
\nabla v(x)=\int_B(b_1(z)-b_2(z))\nabla_zG_{b_1}(z,y)\nabla_xG_{b_2}^t(z,x)\,dz,
\]
therefore we obtain that
\[
\nabla_xG_{b_1}(x,y)-\nabla_xG_{b_2}(x,y)=\int_B(b_1(z)-b_2(z))\nabla_zG_{b_1}(z,y)\nabla_xG_{b_2}(x,z)\,dz.
\]
This shows that, as before,
\[
|\nabla_xG_{b_1}(x,y)-\nabla_xG_{b_2}(x,y)|\leq\|b_1-b_2\|_{L^{2d}(B)}\left(\int_B\frac{dz}{|z-y|^{r(d-1)}|x-z|^{r(d-1)}}\right)^{1/r},
\]
and lemma \ref{ForContinuity} shows the second estimate.
\end{proof}

We also treat the derivative of Green's function with respect to the adjoint variable.

\begin{prop}\label{ContinuityArgumentForBAdjoint}
Let $B$ be a ball with radius $\rho$, and $A\in M_{\lambda,\mu}(\Omega)$. Suppose also that $b_1,b_2\in C^{\alpha}(B)$. Then, there exists $C=C(d,p,\lambda,\mu,\|b_1\|_{C^{\alpha}},\|b_2\|_{C^{\alpha}},\rho)$ such that
\[
|\nabla_yG_{b_1}^t(y,x)-\nabla_yG_{b_2}^t(y,x)|\leq C\|b_1-b_2\|_{C^{\alpha}(B_{2r})}|x-y|^{3/2-d},
\]
for every $x,y\in B$ with $x\neq y$.
\end{prop}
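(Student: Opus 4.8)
The plan is to follow the proof of Proposition \ref{ContinuityArgumentForB}, but to replace the step that differentiates an explicit integral identity by an appeal to the interior gradient estimate for $L^t$ (Corollary \ref{BoundedDerivativeForAdjoint}). The reason for this change is also the point I expect to be the main obstacle: if one writes $G_{b_1}(x,y)-G_{b_2}(x,y)$ via the identity \eqref{eq:Diff} and then differentiates it in the \emph{pole} variable $y$, the factor $\nabla_zG_{b_1}(z,y)$ turns into the mixed second derivative $\nabla_y\nabla_zG_{b_1}(z,y)$, which is of size $|z-y|^{-d}$ and hence fails to be locally integrable against the $|z-x|^{2-d}$ decay of $G^t_{b_2}(\cdot,x)$, so the Lemma \ref{ForContinuity} computation that works for the $x$-derivative is unavailable here. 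Instead, first I would reduce (by mollifying $b_1,b_2$ to Lipschitz drifts and passing to the limit at the end via equicontinuity of the approximate gradients, exactly as in the proof of Proposition \ref{GreenDerivativeBoundsForAdjoint}) to the case $b_1,b_2\in\Lip(B)$. Fixing $x\in B$ and putting $w=G^t_{b_1}(\cdot,x)-G^t_{b_2}(\cdot,x)$, the identities $L^t_{b_i}G^t_{b_i}(\cdot,x)=\delta_x$ together with $L^t_bu=-\dive(A^t\nabla u)-\dive(bu)$ give, for every $\phi\in C_c^\infty(B)$,
\[
\alpha^t_{b_1}(w,\phi)=-\int_B\big((b_1-b_2)(z)\,G^t_{b_2}(z,x)\big)\cdot\nabla\phi(z)\,dz,
\]
i.e. $L^t_{b_1}w=\dive g$ in $B$ with $g=(b_1-b_2)\,G^t_{b_2}(\cdot,x)$; moreover $w\in W^{1,p}_0(B)$ for $p<\frac{d}{d-1}$, and away from $x$ the functions $G^t_{b_i}(\cdot,x)$ are $W^{1,2}_{\loc}$ by Proposition \ref{LowRegularityForAdjoint}, hence so is $w$.

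The next step is to collect the quantitative inputs for Corollary \ref{BoundedDerivativeForAdjoint}. For the size of $w$ itself, the symmetry relation $G^t(a,b)=G(b,a)$ (Proposition \ref{SymmetryWithAdjoint}) gives $w(y)=G_{b_1}(x,y)-G_{b_2}(x,y)$, so the first estimate of Proposition \ref{ContinuityArgumentForB} combined with $\|b_1-b_2\|_{L^{2d}(B)}\le C(d,\rho)\|b_1-b_2\|_{C^\alpha(B)}$ yields $|w(y)|\le C\|b_1-b_2\|_{C^\alpha(B)}\,|x-y|^{5/2-d}$. Now fix $y\ne x$ in $B$ and set $r=c\,|x-y|$ for a small constant $c$ chosen so that $B_{16r}(y)$ lies inside $B$ and avoids $x$ (the bookkeeping of radii near $\partial B$ is handled as in the other continuity-argument proofs, by letting constants depend on $\rho$ and, if needed, extending the operators to a slightly larger ball). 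On $B_{2r}(y)$ one has $|z-x|\sim|x-y|$, so the pointwise Green's-function bound (Theorem \ref{GoodGreenFunctionEstimatesForAdjoint}) gives $\|g\|_{L^\infty(B_{2r}(y))}\le C\|b_1-b_2\|_{C^\alpha(B)}\,|x-y|^{2-d}$, and the Hölder product rule $[fh]_{C^{0,\alpha}}\le\|f\|_\infty[h]_{C^{0,\alpha}}+[f]_{C^{0,\alpha}}\|h\|_\infty$ together with the gradient bound $\|\nabla G^t_{b_2}(\cdot,x)\|_{L^\infty(B_{2r}(y))}\le C|x-y|^{1-d}$ (Proposition \ref{GreenDerivativeBoundsForAdjoint}) and $\diam B_{2r}(y)\sim|x-y|$ gives $\|g\|_{C^{0,\alpha}(B_{2r}(y))}\le C\|b_1-b_2\|_{C^\alpha(B)}\,|x-y|^{2-\alpha-d}$.

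Finally, applying Corollary \ref{BoundedDerivativeForAdjoint} to $w$ on the ball $B_r(y)$,
\[
|\nabla w(y)|\le\frac{C}{r}\Big(\fint_{B_{4r}(y)}|w|^2\Big)^{1/2}+C\|g\|_{L^\infty(B_{2r}(y))}+Cr^\alpha\|g\|_{C^{0,\alpha}(B_{2r}(y))},
\]
and plugging in the three bounds above with $r\sim|x-y|$: the first term is $\lesssim\|b_1-b_2\|_{C^\alpha(B)}\,|x-y|^{-1}\,|x-y|^{5/2-d}=\|b_1-b_2\|_{C^\alpha(B)}\,|x-y|^{3/2-d}$, and the remaining two are $\lesssim\|b_1-b_2\|_{C^\alpha(B)}\,|x-y|^{2-d}$, which is $\le C_\rho\|b_1-b_2\|_{C^\alpha(B)}\,|x-y|^{3/2-d}$ since $|x-y|\le2\rho$. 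Because $\nabla w(y)=\nabla_yG^t_{b_1}(y,x)-\nabla_yG^t_{b_2}(y,x)$, this is exactly the claimed estimate, and the mollification reduction is undone as in Proposition \ref{GreenDerivativeBoundsForAdjoint}. Besides the obstruction already flagged, the only point requiring care is checking that the local estimate of Corollary \ref{BoundedDerivativeForAdjoint} genuinely applies to $w$ on $B_{16r}(y)$ — which it does, since that ball avoids the pole $x$, so $w$ is $W^{1,2}$ there and $g$ is Hölder there.
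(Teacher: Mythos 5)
Your argument is essentially the same as the paper's: both set $w=G^t_{b_1}(\cdot,x)-G^t_{b_2}(\cdot,x)$, show $L^t_{b_1}w=\dive\big((b_1-b_2)G^t_{b_2}(\cdot,x)\big)$ away from the pole, and apply Corollary \ref{BoundedDerivativeForAdjoint} together with the pointwise and gradient bounds on $G^t_{b_2}$ and the estimate from Proposition \ref{ContinuityArgumentForB} (via the symmetry relation) to control each term. The preliminary mollification to Lipschitz drifts is an extra precaution not present in the paper's proof, but it is harmless and consistent with how Proposition \ref{GreenDerivativeBoundsForAdjoint} handles the same technicality.
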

\begin{proof}
Let $r=|x-y|/32$, and set $u(z)=G_{b_1}^t(z,x)-G_{b_2}^t(z,x)$, for $z\in B_{16r}=B_{16r}(y)$. We then compute, in $B_{2r}$,
\begin{align*}
-\dive(A\nabla u)-\dive(b_1u)&=-\dive(A\nabla G_{b_1}^t)-\dive(b_1G_{b_1}^t)+\dive(A\nabla G_{b_2}^t)+\dive(b_1G_{b_2}^t)\\
&=\dive((b_1-b_2)G_{b_2}^t)=\dive g.
\end{align*}
We now apply corollary \ref{BoundedDerivativeForAdjoint}, to obtain that
\[
\|\nabla u\|_{L^{\infty}(B_r)}\leq\frac{C}{r}\left(\fint_{B_{2r}}|u|^2\right)^{1/2}+C\|g\|_{L^{\infty}(2B)}+Cr^{\alpha}\|g\|_{C^{0,\alpha}(B_{2r})}.
\]
For the last term, we compute
\begin{align*}
\|g\|_{C^{0,\alpha}(B_{2r})}&=\|(b_1-b_2)G_{b_2}^t\|_{C^{0,\alpha}(B_{2r})}\\
&\leq\|b_1-b_2\|_{L^{\infty}(B_{2r})}\|G_{b_2}^t\|_{C^{0,\alpha}(B_{2r})}+\|b_1-b_2\|_{C^{0,\alpha}(B_{2r})}\|G_{b_2}^t\|_{L^{\infty}(B_{2r})}\\
&\leq \|b_1-b_2\|_{C^{\alpha}(B_{2r})}\left(\|\nabla G_{b_2}^t\|_{L^{\infty}(B_{2r})}r^{1-\alpha}+\|G_{b_2}^t\|_{L^{\infty}(B_{2r})}\right).
\end{align*}
Using proposition \ref{GreenDerivativeBoundsForAdjoint} we then obtain that
\begin{align*}
\|g\|_{C^{0,\alpha}(B_{2r})}&\leq\|b_1-b_2\|_{C^{\alpha}(B_{2r})}\left(Cr^{1-d}r^{1-\alpha}+r^{2-d}\right)\\
&\leq C\|b_1-b_2\|_{C^{\alpha}(B_{2r})}r^{2-d-\alpha}.
\end{align*}
In addition,
\[
\|g\|_{L^{\infty}(2B)}\leq\|b_1-b_2\|_{L^{\infty}(B_{2r})}\|G_{b_2}^t\|_{L^{\infty}(B_{2r})}\leq C\|b_1-b_2\|_{L^{\infty}(B_{2r})}r^{2-d}.
\]
Moreover, using propositions \ref{SymmetryWithAdjoint} and \ref{ContinuityArgumentForB},
\begin{align*}
\frac{C}{r}\left(\fint_{B_{2r}}|u|^2\right)^{1/2}&=\frac{C}{r}\left(\fint_{B_{2r}}|G_{b_1}^t(z,x)-G_{b_2}^t(z,x)|^2\,dz\right)^{1/2}\\
&=\frac{C}{r}\left(\fint_{B_{2r}}|G_{b_1}(x,z)-G_{b_2}^t(x,z)|^2\,dz\right)^{1/2}\\
&\leq\frac{C}{r}\|b_1-b_2\|_{L^{2d}}\left(\fint_{B_{2r}}|x-z|^{5-2d}\,dz\right)^{1/2}\\
&\leq\frac{C}{r}\|b_1-b_2\|_{L^{2d}}r^{5/2-d}=C\|b_1-b_2\|_{L^{2d}}r^{3/2-d},
\end{align*}
therefore
\begin{align*}
\|\nabla u\|_{L^{\infty}(B_r)}&\leq C\|b_1-b_2\|_{L^{2d}}r^{3/2-d}+C\|b_1-b_2\|_{L^{\infty}(B_{2r})}r^{2-d}+C\|b_1-b_2\|_{C^{\alpha}(B_{2r})}r^{2-d}\\
&\leq C\|b_1-b_2\|_{C^{\alpha}(B_{2r})}r^{3/2-d}=C\|b_1-b_2\|_{C^{\alpha}(B_{2r})}|x-y|^{3/2-d}.
\end{align*}
The last estimate completes the proof.
\end{proof}
\section{Harmonic measure}

In this chapter we will be concerned with the classical Dirichlet problem, and we will define harmonic measure for the equation $Lu=0$ in a Lipschitz domain $\Omega$. We will then show how the harmonic measure relates to Green's function, and we will show estimates analogous to the ones appearing in \cite{KenigCBMS}; for a more comprehensive treatment, we also refer to \cite{JerisonKenigBoundary}.

\subsection{The classical Dirichlet problem}
We turn our attention to the Dirichlet problem for the equation $Lu=0$ with boundary data $f\in C(\partial\Omega)$. We first give the following definition.

\begin{dfn}\label{WeakSolutionForDirichlet}
Let $\Omega$ be a Lipschitz domain, $A\in M_{\lambda}(\Omega)$, and $b\in L^{\infty}(\Omega)$. Given $f\in C(\partial\Omega)$, we say that $u\in W^{1,2}_{{\rm loc}}(\Omega)\cap C(\overline{\Omega})$ is a weak solution to the Dirichlet problem with data $f$,
\[
\left\{\begin{array}{c l}
Lu=0,&{\rm in}\,\,\Omega\\
u=f,&{\rm on}\,\,\partial\Omega,
\end{array}\right.
\]
if $u$ is a weak solution of $Lu=0$ in $\Omega$, and $u=f$ on $\partial\Omega$.
\end{dfn}

It is the case that we can always solve the Dirichlet problem with boundary values in $C(\partial\Omega)$, only assuming that $A$ is bounded and uniformly elliptic and $b$ is bounded. In order to show this, we first treat the case of $f$ being Lipschitz.

\begin{prop}\label{WeakSolvabilityForLipschitz}
Let $\Omega$ be a Lipschitz domain, $A\in M_{\lambda}(\Omega)$, and $b\in L^{\infty}(\Omega)$. Then there exists $\alpha\in(0,1)$ such that, for every $f\in{\rm Lip}(\partial\Omega)$, the Dirichlet problem
\[
\left\{
\begin{array}{c l}
-\dive(A\nabla u)+b\nabla u=0,&{\rm in}\,\,\Omega\\
u=f,&{\rm on}\,\,\partial\Omega
\end{array}
\right.
\]
has a unique weak solution in $W^{1,2}(\Omega)\cap C^{\alpha}(\overline{\Omega})$.
\end{prop}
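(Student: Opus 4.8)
The plan is to reduce to the already-established solvability in $W_0^{1,2}(\Omega)$ for the inhomogeneous equation (Proposition \ref{InhomogeneousSolvability}) by subtracting off a suitable extension of the boundary data, and then to recover boundary continuity using the oscillation estimates at the boundary for Lipschitz domains. First I would extend $f\in\Lip(\partial\Omega)$ to a function $F\in\Lip(\mathbb{R}^d)$ with $\|F\|_{\Lip}\leq C\|f\|_{\Lip(\partial\Omega)}$ (e.g. a McShane--Whitney or Stein extension), and restrict it to $\overline{\Omega}$; in particular $F\in W^{1,2}(\Omega)\cap C(\overline{\Omega})$. Then I would seek $u=v+F$ where $v\in W_0^{1,2}(\Omega)$ solves $Lv=-LF$ in the weak sense. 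The point is that $LF=-\dive(A\nabla F)+b\nabla F$ defines an element of $W^{-1,2}(\Omega)$, since $A\nabla F\in L^\infty(\Omega)\subseteq L^2(\Omega)$ and $b\nabla F\in L^\infty(\Omega)\subseteq L^2(\Omega)$, both bounded by a good constant times $\|f\|_{\Lip(\partial\Omega)}$. Applying Proposition \ref{InhomogeneousSolvability} to $-LF\in W^{-1,2}(\Omega)$ yields a unique $v\in W_0^{1,2}(\Omega)$, and $u=v+F\in W^{1,2}(\Omega)$ is then a weak solution of $Lu=0$ in $\Omega$.

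Next I would address the boundary behaviour and Hölder continuity. Since $F$ is globally Lipschitz, it suffices to show $v\in C^\alpha(\overline\Omega)$ with $v=0$ on $\partial\Omega$ in the pointwise sense. Interior Hölder continuity of $v$ (indeed of $u$) is immediate from Proposition \ref{HolderInside}. For the boundary, the key tool is Proposition \ref{HolderOnTheBoundary}: because $v\in W_0^{1,2}(\Omega)$, $v$ vanishes on $\partial\Omega$ in the $W^{1,2}$ trace sense, so for each $q\in\partial\Omega$ and each $R<10r_\Omega$, $v$ (more precisely $u-F$, to which we may apply the estimate with a localized boundary datum, or $v$ directly viewing $LF$ as the right-hand side $F_0\in W^{-1,p}(\Omega)$ for $p\in[1,d/(d-1))$ which it is, being an $L^2$-divergence-form object) satisfies the oscillation bound $\osc_{T_r(q)}v\leq Cr^\alpha(R^{-\alpha}\sup_{T_R(q)}|v|+\|F_0\|_{W^{-1,p}})$ for $r\in(0,R)$. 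Combined with the interior estimate and a standard covering/patching argument over a finite cover of $\partial\Omega$ by coordinate cylinders, this gives a uniform modulus of continuity for $v$ up to $\partial\Omega$, hence $v\in C^\alpha(\overline\Omega)$ and $v\to0$ at every boundary point; therefore $u\in C^\alpha(\overline\Omega)$ and $u=f$ on $\partial\Omega$, so $u$ is a weak solution of the Dirichlet problem in the sense of Definition \ref{WeakSolutionForDirichlet}.

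Uniqueness is the easiest part: if $u_1,u_2$ are two solutions in $W^{1,2}(\Omega)\cap C(\overline\Omega)$ with the same boundary data, then $w=u_1-u_2\in W^{1,2}(\Omega)$ solves $Lw=0$ and $w=0$ continuously on $\partial\Omega$; the maximum principle (Theorem \ref{MaximumPrinciple}), with the boundary suprema interpreted in the $W^{1,2}$ sense and matching the continuous boundary values, forces $\sup_\Omega w\leq 0$ and $\inf_\Omega w\geq 0$, so $w\equiv0$.

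The main obstacle I anticipate is the boundary regularity step: one must verify carefully that $LF$, as a functional, lands in $W^{-1,p}(\Omega)$ for some $p<d/(d-1)$ so that Propositions \ref{HolderInside} and \ref{HolderOnTheBoundary} apply with the right dependence of constants, and then run the covering argument transferring the local oscillation decay in each coordinate cylinder $T_r(q)$ into a genuine Hölder modulus on all of $\overline\Omega$ — keeping the exponent $\alpha$ uniform (it is the minimum of the finitely many exponents coming from the cylinders and the interior estimate) and the constant a good constant. Everything else (the extension of $f$, the application of Proposition \ref{InhomogeneousSolvability}, uniqueness via the maximum principle) is routine given the machinery already developed.
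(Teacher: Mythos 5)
Your proposal follows essentially the same route as the paper: extend $f$ to a Lipschitz function on $\mathbb{R}^d$, write the candidate solution as the extension plus a $W_0^{1,2}(\Omega)$ correction obtained from Proposition \ref{InhomogeneousSolvability} applied to the functional $v\mapsto\alpha(f,v)$, establish Hölder continuity up to the boundary via Propositions \ref{HolderInside} and \ref{HolderOnTheBoundary}, and get uniqueness from the maximum principle. The concerns you flag (that $LF$ lies in $W^{-1,p}(\Omega)$ for $p<d/(d-1)$, and the finite-cover patching of the local oscillation estimates into a global Hölder modulus) are legitimate but routine — $A\nabla F,\,b\nabla F\in L^\infty(\Omega)$, so the functional is in $W^{-1,p}(\Omega)$ for every admissible $p$ on the bounded domain $\Omega$, and the covering argument is standard; the paper itself asserts these steps without further elaboration, so you have if anything been more careful than the source.
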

\begin{proof}
Uniqueness follows from the maximum principle (theorem \ref{MaximumPrinciple}). For existence, let $f\in{\rm Lip}(\partial\Omega)$, and extend $f$ to a Lipschitz function $f\in{\rm Lip}(\mathbb R^d)$. For $v\in W_0^{1,2}(\Omega)$, define
\[
Fv=\alpha(f,v)=\int_{\Omega}A\nabla f\nabla v+b\nabla f\cdot v.
\]
Since $f$ is Lipschitz, we obtain that $F\in W^{-1,2}(\Omega)$,
therefore proposition \ref{InhomogeneousSolvability} shows that there exists $u_0\in W_0^{1,2}(\Omega)$ such that, for all $v\in W_0^{1,2}(\Omega)$, $\alpha(u_0,v)=Fv$. Note that, from propositions \ref{HolderInside} and \ref{HolderOnTheBoundary}, $u_0\in C^{\alpha}(\overline{\Omega})$.

Set $u=f-u_0\in W^{1,2}(\Omega)$. Since $f$ is Lipschitz, we obtain that $u\in C^{\alpha}(\overline{\Omega})$. In addition, for every $v\in W_0^{1,2}(\Omega)$, we compute
\[
\alpha(u,v)=\alpha(f,v)-\alpha(u_0,v)=0,
\]
which shows that $u$ is a solution of $Lu=0$ in $\Omega$. Since $u_0$ has trace $0$ on $\partial\Omega$, this shows that $u$ has trace $f$ on $\partial\Omega$, and continuity of $u$ shows that $u=f$ on $\partial\Omega$.
\end{proof}

By a density argument, we can show solvability of the Dirichlet problem for all $f\in W^{1,2}_{\loc}(\Omega)\cap C(\partial\Omega)$.

\begin{thm}\label{WeakSolvability}
Under the same assumptions as in proposition \ref{WeakSolvabilityForLipschitz}, for any $f\in C(\partial\Omega)$, the Dirichlet problem
\[
\left\{\begin{array}{c l}
-\dive(A\nabla u)+b\nabla u=0,&{\rm in}\,\,\Omega\\
u=f,&{\rm on}\,\,\partial\Omega,
\end{array}\right.
\]
has a unique weak solution $u\in W^{1,2}_{\loc}\cap C(\overline{\Omega})$.
\end{thm}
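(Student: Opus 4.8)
The plan is to upgrade the $\mathrm{Lip}(\partial\Omega)$ solvability of Proposition \ref{WeakSolvabilityForLipschitz} to continuous data by a density argument, using the maximum principle (Theorem \ref{MaximumPrinciple}) to obtain uniform control of the approximating solutions, and then to recover the interior $W^{1,2}_{\loc}$ regularity and the equation itself from the Caccioppoli inequality together with weak compactness.

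For uniqueness, suppose $u_1,u_2\in W^{1,2}_{\loc}(\Omega)\cap C(\overline\Omega)$ both solve the problem; then $w=u_1-u_2$ is a weak solution of $Lw=0$ that vanishes on $\partial\Omega$. Here Theorem \ref{MaximumPrinciple} does not apply verbatim, since $w$ is not assumed to lie in $W^{1,2}(\Omega)$. However, $w$ is continuous on $\overline\Omega$ and vanishes on $\partial\Omega$, so for every $\varepsilon>0$ the superlevel set $\{w>\varepsilon\}$ has closure disjoint from $\partial\Omega$, hence is compactly contained in $\Omega$; consequently $(w-\varepsilon)^+\in W^{1,2}_0(\Omega)$, and similarly for $-w$. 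Rerunning the argument of Proposition \ref{MaxForSubsolutions} with these truncations (the only place $W^{1,2}(\Omega)$ was used there was to make $(u-k)^+$ an admissible test function, which now holds for $k\geq\varepsilon$) and letting $\varepsilon\to 0$ forces $w\equiv 0$.

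For existence, choose $f_n\in{\rm Lip}(\partial\Omega)$ with $f_n\to f$ uniformly on $\partial\Omega$ (for instance by Stone--Weierstrass, or by extending $f$ continuously to $\mathbb R^d$ and mollifying), and let $u_n\in W^{1,2}(\Omega)\cap C^{\alpha}(\overline\Omega)$ be the solution with data $f_n$ furnished by Proposition \ref{WeakSolvabilityForLipschitz}. Since $u_n-u_m\in W^{1,2}(\Omega)$ solves $L(u_n-u_m)=0$ with boundary data $f_n-f_m$, Theorem \ref{MaximumPrinciple} gives $\|u_n-u_m\|_{L^\infty(\Omega)}\leq\|f_n-f_m\|_{L^\infty(\partial\Omega)}$, so $(u_n)$ is Cauchy in $C(\overline\Omega)$; let $u\in C(\overline\Omega)$ be its uniform limit, so that $u=f$ on $\partial\Omega$ and $\sup_\Omega|u_n|\leq M$ with $M$ independent of $n$. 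Now fix $\phi\in C_c^\infty(\Omega)$, put $K=\mathrm{supp}\,\phi$, pick an open $U$ with $K\subseteq U$ and $\overline U\subseteq\Omega$ compact, and cover $\overline U$ by finitely many balls $B_r$ with $\overline{B_{2r}}\subseteq\Omega$; applying the Caccioppoli inequality (Lemma \ref{Cacciopoli}) to each solution $u_n$ on these balls and using $\sup|u_n|\leq M$ shows that $(u_n)$ is bounded in $W^{1,2}(U)$. A subsequence converges weakly in $W^{1,2}(U)$, and since $u_n\to u$ uniformly the weak limit must be $u$; hence $u\in W^{1,2}_{\loc}(\Omega)$ and $\nabla u_n\rightharpoonup\nabla u$ in $L^2(U)$ along the full sequence (by uniqueness of the limit). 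Passing to the limit in $\int_\Omega A\nabla u_n\nabla\phi+b\nabla u_n\cdot\phi=0$, which is legitimate because $A\nabla\phi,\ b\phi\in L^2(U)$, yields $\alpha(u,\phi)=0$; since $\phi$ is arbitrary, $u$ is a weak solution of $Lu=0$ in $\Omega$.

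The main obstacle is the uniqueness step just sketched: the available maximum principle is stated for $W^{1,2}(\Omega)$ functions, so one must reduce a merely $W^{1,2}_{\loc}$ competitor to that setting, which works because the truncations $(w-\varepsilon)^+$ of a continuous solution vanishing on $\partial\Omega$ have compact support in $\Omega$. The remaining ingredients---the uniform approximation of $f$ by Lipschitz functions, the contraction estimate from the maximum principle, the Caccioppoli bound, and the weak-limit passage in the bilinear form---are routine.
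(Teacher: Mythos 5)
Your existence argument is essentially the paper's: approximate $f$ uniformly by Lipschitz functions, invoke Proposition \ref{WeakSolvabilityForLipschitz} for the approximants, use the maximum principle (Theorem \ref{MaximumPrinciple}) to get Cauchy-ness in $C(\overline{\Omega})$, then Caccioppoli plus weak compactness to land in $W^{1,2}_{\loc}(\Omega)$ and pass to the limit in the bilinear form. The only cosmetic difference is that you observe the whole gradient sequence converges weakly (by the subsequence trick plus uniqueness of the limit), whereas the paper keeps extracting subsequences; both are fine.

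Where you genuinely depart from the paper is in the uniqueness step, and your version is more careful. The paper simply writes ``$u-v\in W_0^{1,2}(\Omega)$ solves $Lu=0$'' and invokes Theorem \ref{MaximumPrinciple}, but the solutions in this theorem (per Definition \ref{WeakSolutionForDirichlet}) are only assumed to lie in $W^{1,2}_{\loc}(\Omega)\cap C(\overline{\Omega})$, and a continuous function on $\overline{\Omega}$ vanishing on $\partial\Omega$ whose gradient is merely locally square integrable need not belong to $W_0^{1,2}(\Omega)$, so this step is unjustified as written. Your truncation fix is correct: since $w=u_1-u_2$ is continuous on $\overline{\Omega}$ and vanishes on $\partial\Omega$, the set $\{w>\varepsilon\}$ is compactly contained in $\Omega$ for every $\varepsilon>0$, so $(w-\varepsilon)^+$ has compact support, lies in $W_0^{1,2}(\Omega)$, and is legitimate as a test function because the subsolution inequality for $w$ need only be extended from $C_c^\infty$ to $W^{1,2}_0$ functions with compact support (which is covered by $w\in W^{1,2}_{\loc}$). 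Rerunning the argument of Proposition \ref{MaxForSubsolutions} with levels $k\geq\varepsilon$ — where both $(w-k)^+$ and the logarithmic test function built from $(w-\varepsilon)^+$ remain compactly supported — gives $w\leq\varepsilon$, and sending $\varepsilon\to 0$ together with the same argument for $-w$ yields $w\equiv 0$. In short, your proposal matches the paper on existence and repairs a genuine gap in the paper's uniqueness argument.
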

\begin{proof}
For uniqueness, consider two solutions $u,v$ of the Dirichlet problem with data $f$. Then, $u-v\in W_0^{1,2}(\Omega)$ solves $Lu=0$, therefore the maximum principle (theorem \ref{MaximumPrinciple}) shows that $u-v=0$ in $\Omega$, hence $u=v$.

For existence, consider a sequence $(f_n)$ of Lipschitz functions which converge to $f$ in $(C(\partial\Omega),\|\cdot\|_{\infty})$. Let also $u_n\in W^{1,2}(\Omega)\cap C(\overline{\Omega})$ be the weak solutions with trace $f_n$ to the equation, whose existence is guaranteed by proposition \ref{WeakSolvabilityForLipschitz}. Then, the maximum principle (theorem \ref{MaximumPrinciple}) shows that
\[
\|u_n-u_m\|_{L^{\infty}(\Omega)}\leq\|f_n-f_m\|_{L^{\infty}(\partial\Omega)}.
\]
The supremum is the usual in this case, since the $u_n$ are continuous, and the $f_n$ are Lipschitz functions. This shows that the sequence $(u_n)$ is Cauchy in $(C(\Omega),\|\cdot\|_{\infty})$, therefore a convergent subsequence of $(u_n)$, still denoted by $(u_n)$, converges uniformly to some $u\in C(\overline{\Omega})$. Then, $u=f$ on $\partial\Omega$.

To show that $u\in W_{\loc}^{1,2}(\Omega)$, let $U\subseteq\Omega$ be compactly supported. Cover $U$ by $N$ balls $B_1,\cdots B_N$ of radius $\delta$, where $2\delta$ is the distance from $K$ to $\partial\Omega$. Then, Cacciopoli's inequality shows that $(u_n)$ is uniformly bounded in $W^{1,2}(B_k)$, so it has a weakly convergent subsequence in $W^{1,2}(B_k)$. By repeating this process for all of the $B_k$, we have that a subsequence of $(u_n)$, still denoted by $(u_n)$, converges to some $v\in W^{1,2}(U)$ weakly in $W^{1,2}(U)$ and strongly in $L^2(U)$. Hence, a further subsequence converges to $v$ almost everywhere in $U$. Since a further subsequence converges uniformly to $u$, this shows that $u\in W^{1,2}(U)$, therefore $u\in W_{\loc}^{1,2}(\Omega)$.

We now show that $u$ is a solution in $\Omega$: let $\phi\in C_c^{\infty}(\Omega)$ and $K={\rm supp}\phi$. Consider also a subsequence $(u_n)$ as above, which converges to $u$ weakly in $W^{1,2}(K)$. Then,
\[
\int_{\Omega}A\nabla u\nabla\phi+b\nabla u\cdot\phi=\lim_{n\to\infty}\int_{\Omega}A\nabla u_n\nabla\phi+b\nabla u_n\cdot\phi=0,
\]
since the $u_n$ are solutions to the equation. Hence $u$ is a solution in $\Omega$.
\end{proof}

\subsection{Construction of harmonic measure}
Consider a Lipschitz domain $\Omega$, let $A\in M_{\lambda}(\Omega)$, and $b\in L^{\infty}(\Omega)$. For a fixed $x_0\in\Omega$, we consider the functional 
\[
T_{x_0}:C(\partial\Omega)\to\mathbb R,\quad T_{x_0}f=u(x_0),
\]
where $u$ is the continuous weak solution of $Lu=0$ in $\Omega$ with boundary data $f$, which exists from  theorem \ref{WeakSolvability}. The maximum principle shows that $T_{x_0}$ is a positive linear functional on $C(\partial\Omega)$, therefore there exists a positive Borel measure $\omega^{x_0}$ such that
\[
u(x_0)=\int_{\partial\Omega}f(q)\,d\omega^{x_0}(q).
\]
The choice $f\equiv 1$ also shows that $\omega^{x_0}$ is a probability measure on $\partial\Omega$. $\omega^{x_0}$ will be called the \emph{harmonic measure} for the equation $Lu=0$ in $\Omega$, centered at $x_0$.

Two basic properties of harmonic measure are the following.

\begin{prop}\label{HarmonicMeasureProperties}
\begin{enumerate}[i)]
\item If $x_1, x_2\in\Omega$, then $\omega^{x_1}<<\omega^{x_2}$.
\item If $E\subseteq\partial\Omega$ is a Borel set, then $u(x)=\omega^x(E)$ is a solution in $\Omega$, with boundary values $\chi_E$ on $\partial\Omega$, in the sense of $W^{1,2}(\Omega)$ (as in the definition in section $8.1$ in \cite{Gilbarg}).
\end{enumerate}
\end{prop}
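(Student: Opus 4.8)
The plan for (i) is a Harnack chain argument reduced to continuous data, then pushed to Borel sets by regularity. First I would fix $f\in C(\partial\Omega)$ with $f\geq 0$ and let $u$ be the solution from Theorem \ref{WeakSolvability}, so $u\geq 0$ by the maximum principle. Given $x_1,x_2\in\Omega$, I connect them inside $\Omega$ by a finite chain of balls whose quadruples lie in $\Omega$, with consecutive balls overlapping; the number of balls and their radii depend only on $x_1,x_2,\diam\Omega$ and the Lipschitz character of $\Omega$. Iterating Harnack's inequality (Proposition \ref{Harnack}) along the chain gives $u(x_1)\leq C(x_1,x_2)u(x_2)$, i.e. $\int_{\partial\Omega}f\,d\omega^{x_1}\leq C\int_{\partial\Omega}f\,d\omega^{x_2}$ for all nonnegative $f\in C(\partial\Omega)$. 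Given a Borel set $E$, outer regularity of $\omega^{x_2}$ provides open sets $U_k\supseteq E$ with $\omega^{x_2}(U_k)\to\omega^{x_2}(E)$, and Urysohn's lemma gives $f_k\in C(\partial\Omega)$ with $\chi_E\leq f_k\leq\chi_{U_k}$; then $\omega^{x_1}(E)\leq\int f_k\,d\omega^{x_1}\leq C\int f_k\,d\omega^{x_2}\leq C\omega^{x_2}(U_k)\to C\omega^{x_2}(E)$. Hence $\omega^{x_1}(E)\leq C(x_1,x_2)\,\omega^{x_2}(E)$ for all Borel $E$, which gives $\omega^{x_1}\ll\omega^{x_2}$ and, by symmetry, mutual absolute continuity.

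For (ii) the plan is a monotone approximation argument, establishing the solution property first for open sets and then for all Borel sets. If $U\subseteq\partial\Omega$ is open, choose $f_n\in C(\partial\Omega)$ with $0\leq f_n\uparrow\chi_U$ pointwise; the corresponding solutions $u_n=u_{f_n}$ are nondecreasing (maximum principle) and bounded by $1$, and $u_n(x)\uparrow\omega^x(U)$ by monotone convergence. On any ball $B$ with $2B\subseteq\Omega$, Cacciopoli's inequality (Lemma \ref{Cacciopoli}) bounds $\|\nabla u_n\|_{L^2(B)}$ uniformly in $n$, while Proposition \ref{Equicontinuity} makes $(u_n)$ equicontinuous on compact subsets; hence $u_n\to\omega^{\cdot}(U)$ locally uniformly and, along a subsequence, weakly in $W^{1,2}_{\loc}(\Omega)$, so the limit solves $Lu=0$ weakly. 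For a general Borel $E$, I fix a reference point $x_0$, use outer regularity to pick decreasing open $U_k\supseteq E$ with $\omega^{x_0}(U_k\setminus E)\to 0$ and inner regularity to pick increasing compact $K_j\subseteq E$ with $\omega^{x_0}(E\setminus K_j)\to 0$; by part (i) these limits hold with $\omega^{x_0}$ replaced by $\omega^x$ for every $x\in\Omega$. The solutions $\omega^{\cdot}(U_k)$ and $\omega^{\cdot}(K_j)$ (the latter obtained as decreasing limits of solutions with continuous data $\downarrow\chi_{K_j}$) are solutions by the same Cacciopoli/equicontinuity reasoning, they are monotone in $k$ and $j$, and they squeeze $\omega^{\cdot}(E)$ since $\omega^x(K_j)\leq\omega^x(E)\leq\omega^x(U_k)$; passing to the limit shows $\omega^{\cdot}(E)$ is a weak solution. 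The boundary-value assertion then follows from the same sandwiching: for each continuous $f$ with $f\leq\chi_E$ one has $u_f\leq\omega^{\cdot}(E)$ pointwise in $\Omega$, and for each continuous $g\geq\chi_E$ one has $\omega^{\cdot}(E)\leq u_g$, which, together with $u_f,u_g$ attaining $f,g$ continuously on $\partial\Omega$ and the inner/outer regularity of $\omega$, is precisely the statement that $\omega^{\cdot}(E)$ has boundary values $\chi_E$ in the $W^{1,2}$ sense of \cite{Gilbarg}.

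The main obstacle I expect is the passage to the limit in the solution property: justifying that a bounded monotone (in particular, a decreasing) sequence of weak solutions again converges to a weak solution for sets that are merely Borel, which is where the uniform interior $W^{1,2}$ bounds from Cacciopoli and the equicontinuity from Proposition \ref{Equicontinuity} are essential, and — to a lesser extent — pinning down the exact sense, via the sandwich by continuous boundary data, in which the discontinuous function $\chi_E$ is attained as the boundary datum.
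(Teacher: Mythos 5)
Your proposal is correct and follows essentially the same route as the paper: regularity of the measure plus approximation by continuous data, the maximum principle, and equicontinuity/compactness of solutions (via Cacciopoli and Proposition \ref{Equicontinuity}). Two minor points of comparison are worth noting. For (i), you prove the stronger quantitative estimate $\omega^{x_1}(E)\leq C(x_1,x_2)\,\omega^{x_2}(E)$ for all Borel $E$ by applying the Harnack chain directly to the nonnegative solution $u_f$ and then passing to Borel sets via Urysohn functions; the paper instead only proves that $\omega^{x_2}(E)=0$ forces $\omega^{x_1}(E)=0$, working with $u+\tfrac1n$ to have a strictly positive solution before applying Harnack. Since the paper's Harnack inequality (Proposition \ref{Harnack}) is stated for nonnegative solutions, your more direct version is fine, and the quantitative estimate is in fact what you need later in (ii) to transfer the regularity limits $\omega^{x_0}(U_k\setminus E)\to 0$ and $\omega^{x_0}(E\setminus K_j)\to 0$ to every $\omega^x$, so your two parts fit together cleanly. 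For (ii), you handle a general Borel set by a two-sided squeeze with increasing compacts and decreasing opens and argue each approximating family is a solution; the paper instead writes $E=\bigcap_j U_j\cup N$ with $\omega^x(N)=0$ and reduces to open sets, which is terser and leans implicitly on (i). Both are correct; your version is somewhat more explicit about why the passage from open sets to Borel sets preserves the solution property and the boundary-value assertion.
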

\begin{proof}
For the first part, suppose that $E\subseteq\partial\Omega$ with $\omega^{x_2}(E)=0$. From regularity of the harmonic measure, there exists a sequence of open sets $(E_n)$ in $\partial\Omega$ that contain $E$, such that $\omega^{x_2}(E_n)\leq 1/n$.

Let now $f\in C(\partial\Omega)$ be a nonnegative function which is supported in $E_n$ with $f\equiv 1$ on $E$, and let $u$ be the solution of $Lu=0$ with data $f$. Then $u\geq 0$ from the maximum principle, so $u+\frac{1}{n}>0$ in $\Omega$. Therefore, Harnack's inequality (proposition \ref{Harnack}) shows that, for some $C=C_{x_1,x_2}$,
\begin{align*}
\omega^{x_1}(E)&=\int_Ef\,d\omega^{x_1}\leq\int_{\partial\Omega}f\,d\omega^{x_1}=u(x_1)=u(x_1)+\frac{1}{n}-\frac{1}{n}\\
&\leq C\left(u(x_2)+\frac{1}{n}\right)-\frac{1}{n}=C\int_{\partial\Omega}f\,d\omega^{x_2}+\frac{C-1}{n}\\
&\leq C\int_{E_n}f\,d\omega^{x_2}+\frac{C-1}{n}=C\omega^{x_2}(E_n)+\frac{C-1}{n}\\
&\leq\frac{C}{n}+\frac{C-1}{n}.
\end{align*}
Letting $n\to\infty$ shows then that $\omega^{x_1}(E)=0$.

For the second part, write $E=\bigcap_{j\in\mathbb N}U_j\cup N$, where $\omega^x(N)=0$ and $(U_j)$ is a decreasing sequence of open subsets of $\partial\Omega$; it is then enough to show that $\omega^x(U)$ is a solution, for all $U\subseteq\partial\Omega$ which are open. For this purpose, let $K_i\subseteq\partial\Omega$ be an increasing sequence of compact sets, with $\bigcup_{i\in\mathbb N}K_i=U$. Let also $g_i$ be a continuous function which satisfies $\chi_{K_i}\leq g_i\leq\chi_U$, and let $u_i$ be the solution to Dirichlet's problem, with data $g_i$, which exists from theorem \ref{WeakSolvability}. From the maximum principle (theorem \ref{MaximumPrinciple}), $0\leq u_i\leq 1$ throughout $\Omega$. Therefore, from compactness of solutions (proposition \ref{Equicontinuity}), there exists a subsequence $u_{i_k}$ which converges to a solution $u_0$ in $\Omega$, uniformly in compact subsets of $\Omega$. But, for $x\in\Omega$,
\[
u_{i_k}(x)=\int_{\partial\Omega}g_{i_k}(q)d\omega^x(q)\xrightarrow[k\to\infty]{}\int_{\partial\Omega}\chi_U(q)d\omega^x(q)=\omega^x(U),
\]
from the dominated convergence theorem. Hence $\omega^x(U)$ is a solution in $\Omega$. The previous convergence, as well as the convergence of the boundary values $(g_{i_k})$ to $\chi_U$ also shows that $\omega^x(U)$ is equal to $\chi_U$ in the sense of $W^{1,2}(\Omega)$ on $\partial\Omega$, which completes the proof.
\end{proof}

Of particular importance is the following representation formula for the harmonic measure, which holds in smooth domains.

\begin{prop}\label{HarmonicRepresentation}
Suppose that $\Omega$ is smooth, and $A\in M_{\lambda,\mu}(\Omega)$, $b\in\Lip(\Omega)$. Then
\[
d\omega^{x_0}(q)=-\partial_{\nu_{A^t}}^qG(x_0,q)\,d\sigma(q)\,\,\forall x_0\in\Omega,
\]
where $\sigma$ is the surface measure on $\partial\Omega$, $G$ is Green's function for the equation
\[
Lu=-\dive(A\nabla u)+b\nabla u=0
\]
in $\Omega$, and $\nu_{A^t}$ denotes the conormal derivative associated with $L^t$.
\end{prop}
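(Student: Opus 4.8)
The plan is to use Green's representation formula for the solution of the Dirichlet problem in the smooth domain $\Omega$, together with the symmetry relation between $G$ and $G^t$ established in proposition \ref{SymmetryWithAdjoint}. Fix $x_0 \in \Omega$ and $f \in C(\partial\Omega)$; since $\Omega$ is smooth and $f$ can be approximated by Lipschitz (indeed smooth) functions, by proposition \ref{WeakSolvability} (and a density argument using the maximum principle) it suffices to establish the formula for smooth $f$, in which case the weak solution $u$ of $Lu=0$ with $u=f$ on $\partial\Omega$ is smooth up to the boundary. I would then apply the divergence theorem on $\Omega$ to the vector field obtained from $u$ and $G(x_0,\cdot)$: since $u$ solves $Lu=0$ and $G(x_0,\cdot)$ (equivalently $G^t(\cdot,x_0)$) solves $L^t v = \delta_{x_0}$, pairing the two equations and integrating by parts produces boundary terms on $\partial\Omega$ involving $f$, $\partial_\nu u$, $G(x_0,\cdot)|_{\partial\Omega}$ and the conormal derivative $\partial_{\nu_{A^t}}^q G^t(q,x_0)$. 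Because $G(x_0,\cdot) \in W_0^{1,p}(\Omega)$ vanishes on $\partial\Omega$, the terms involving $\partial_\nu u$ and $G$ on the boundary drop out, and one is left with
\[
u(x_0) = -\int_{\partial\Omega} f(q)\,\partial_{\nu_{A^t}}^q G^t(q,x_0)\,d\sigma(q).
\]
Using the symmetry $G^t(q,x_0) = G(x_0,q)$ from proposition \ref{SymmetryWithAdjoint} rewrites this as $u(x_0) = -\int_{\partial\Omega} f(q)\,\partial_{\nu_{A^t}}^q G(x_0,q)\,d\sigma(q)$, and comparing with the defining property $u(x_0) = \int_{\partial\Omega} f\,d\omega^{x_0}$ of harmonic measure gives the claim, since $f$ ranges over a dense subset of $C(\partial\Omega)$ and both sides are continuous functionals of $f$.

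To make the integration by parts rigorous I would carry it out carefully away from the pole: let $\varepsilon>0$ be small so that $B_\varepsilon(x_0) \Subset \Omega$, work on $\Omega \setminus \overline{B_\varepsilon(x_0)}$ where both $u$ and $G(x_0,\cdot)$ are smooth (interior smoothness of $G$ away from its pole follows from corollary \ref{GreenContinuity}, once $b$ is Lipschitz as assumed here; boundary regularity of $G$ uses that it lies in $W_0^{1,p}$ and solves the homogeneous equation near $\partial\Omega$, together with proposition \ref{HolderOnTheBoundary}), apply the divergence theorem to
\[
\dive\big(G(x_0,\cdot)\,A^t\nabla u - u\,A\nabla G(x_0,\cdot)\big)
\]
(modulo the first-order drift terms, which combine using $\int (b\nabla u) G + \int (b\nabla G)u = \int \dive(buG) $ in the appropriate weak sense), pick up the spherical boundary integral over $\partial B_\varepsilon(x_0)$, and let $\varepsilon \to 0$. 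The key point is that the $\partial B_\varepsilon$ integral converges to $u(x_0)$: this is exactly the statement that $G(x_0,\cdot)$ is a fundamental-type solution with the correct normalization, which is encoded in property (ii) of theorem \ref{GoodGreenFunctionEstimates} (that $\alpha(G(\cdot,y),\phi) = \phi(y)$), combined with the pointwise bounds $|G(x_0,z)| \le C|x_0-z|^{2-d}$ and $|\nabla_z G(x_0,z)| \le C|x_0-z|^{1-d}$ from theorem \ref{GoodGreenFunctionEstimates} and proposition \ref{GreenDerivativeBounds}, which control the error terms.

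The main obstacle I anticipate is bookkeeping the boundary terms correctly in the presence of the non-symmetric matrix $A$ and the drift $b$, so that the conormal derivative that appears is genuinely $\partial_{\nu_{A^t}}^q G$ and not $\partial_{\nu_A}^q G$ or some hybrid; this is a matter of being scrupulous about which operator ($L$ or $L^t$) each function solves and writing $\partial_{\nu_{A^t}} = \langle A^t \nabla\,\cdot\,, \nu\rangle$ consistently. A secondary technical point is justifying that $G(x_0,\cdot)$ really does vanish continuously on $\partial\Omega$ and has an $L^1(\partial\Omega)$ (indeed continuous) conormal derivative there in the smooth setting — for this I would invoke the boundary Hölder estimate of proposition \ref{HolderOnTheBoundary} applied to $G(x_0,\cdot)$ (which is a solution of $Lu=0$ near $\partial\Omega$ vanishing on $\partial\Omega$) together with standard elliptic boundary regularity in the smooth domain, noting that $A$ Lipschitz and $b$ Lipschitz suffice for $C^{1,\alpha}$ regularity up to the boundary. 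Once these regularity points and the sign conventions are nailed down, the identity follows by letting $\varepsilon \to 0$ and then appealing to density of Lipschitz functions in $C(\partial\Omega)$ and the representation of $u(x_0)$ via $\omega^{x_0}$.
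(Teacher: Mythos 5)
Your overall plan (a Green's identity applied on $\Omega\setminus\overline{B_\varepsilon(x_0)}$, then $\varepsilon\to 0$) is reasonable, but the explicit vector field you wrote down has the transposes on the wrong factors and would produce exactly the conormal derivative you worried about, namely $\partial_{\nu_A}$ rather than $\partial_{\nu_{A^t}}$. With $u$ solving $Lu=-\dive(A\nabla u)+b\nabla u=0$ and $G:=G(x_0,\cdot)=G^t(\cdot,x_0)$ solving $L^tG=-\dive(A^t\nabla G)-\dive(bG)=0$ away from $x_0$, the divergence-free vector field is
\[
V=G\,A\nabla u-u\,A^t\nabla G-bGu,
\]
since $\nabla G\cdot A\nabla u=\nabla u\cdot A^t\nabla G$ and the two equations combine precisely so that $\dive V=0$. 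In particular, $A$ is paired with $\nabla u$ and $A^t$ with $\nabla G$; your formula $GA^t\nabla u-uA\nabla G$ reverses this, and its divergence cannot be simplified using either $Lu=0$ or $L^tG=0$. The $-bGu$ correction is what absorbs the drift, and it contributes nothing on $\partial\Omega$ (where $G=0$) and is $O(\varepsilon)$ on $\partial B_\varepsilon$, so it is harmless but should be there.

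A second point is that the statement that the $\partial B_\varepsilon(x_0)$ integral converges to $u(x_0)$ is the nontrivial normalization step, and quoting the weak identity $\alpha^t(G^t(\cdot,x_0),\phi)=\phi(x_0)$ together with the pointwise upper bounds on $G$ and $\nabla G$ does not by itself deliver it: upper bounds control the error terms but do not give the value of $\lim_\varepsilon\int_{\partial B_\varepsilon}\langle A^t\nabla G,\nu\rangle$. One needs an argument of the type carried out in lemma~\ref{IntegratesTo1}, which derives the $-1$ normalization from the weak formulation by a cutoff and a divergence-theorem computation; note that you cannot invoke lemma~\ref{IntegratesTo1ForAdjoint} (which gives exactly the $L^t$ conormal version you need) because its proof in the paper cites proposition~\ref{HarmonicRepresentation} itself. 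This is why the paper takes a genuinely different route: it plugs the specific test function $u-F$ into the weak formulation of $G^t$, with the Lipschitz extension $F$ chosen to vanish identically in a fixed ball $B_0$ around $x_0$; after $Lu=0$ kills the $u$-part, the surviving terms involve only $\nabla F$ and $F$, which are supported away from $x_0$, so the divergence theorem is applied once on $\Omega\setminus B_0$ and no $\varepsilon\to 0$ limit (and hence no normalization lemma) is ever needed.
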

\begin{proof}
Set $G_{x_0}^t(x)=G^t(x,x_0)$, and note that then $G_{x_0}^t$ is Green's function for the adjoint equation $L^tu=0$ in $\Omega$, with pole at $x_0$. Let $f\in{\rm Lip}(\partial\Omega)$, and consider $F\in{\rm Lip}(\mathbb R^d)$ which is a Lipschitz extension of $f$, with $F\equiv 0$ and $\nabla F\equiv 0$ in a neighborhood $B_0=B_{\e_0}(x_0)$ of $x_0$. Then $u-F\in W_0^{1,2}(\Omega)$, and since $\nabla u$ is bounded close $x_0$, the defining property of Green's function shows that
\[
\int_{\Omega}A^t\nabla G_{x_0}^t\nabla(u-F)+b(x)\nabla(u-F)\cdot G_{x_0}^t\,dx=u(x_0)-F(x_0)=u(x_0),
\]
which implies that
\[
\int_{\Omega}A\nabla(u-F)\nabla G_{x_0}^t+b(x)\nabla(u-F)\cdot G_{x_0}^t\,dx=u(x_0).
\]
Since now $u$ is a solution of $Lu=0$ in $\Omega$, $\nabla u$ is bounded close to $x_0$ and $G_{x_0}^t$ is bounded away from $x_0$, and after approximating $G_{x_0}^t$ with $C_c^{\infty}(\Omega)$ functions, the last identity shows that
\[
\int_{\Omega}A\nabla F\nabla G_{x_0}^t+b\nabla F\cdot G_{x_0}^t\,dx=-u(x_0).
\]
But, $A,b$ and $\Omega$ are smooth, hence theorem $8.12$ in \cite{Gilbarg} shows that $G_{x_0}^t$ is smooth away from $x_0$. Hence, $L^tG_{x_0}^t=0$ pointwise, away from $x_0$. In addition, the support properties of $F$ show that
\begin{align*}
-u(x_0)&=\int_{\Omega\setminus B_0}A^t\nabla G_{x_0}^t\nabla F+b\nabla F\cdot G_{x_0}^t\\
&=\int_{\Omega\setminus B_0}\dive(F\cdot A^t\nabla G_{x_0}^t)-F\dive(A^t\nabla G_{x_0}^t)+\dive(bG_{x_0}^tF)-\dive(bG_{x_0}^t)F\\
&=\int_{\partial(\Omega\setminus B_0)}F\left<A^t(q)\nabla G_{x_0}^t(q),\nu(q)\right>\,d\sigma+\int_{\Omega\setminus B_0}F(-\dive(A^t\nabla G_{x_0}^t)-\dive(bG_{x_0}^t))\\
&=\int_{\partial\Omega}f(q)\cdot\partial_{\nu_{A^t}}^qG^t(q,x_0)\,d\sigma(q),
\end{align*}
from the divergence theorem, the fact that $L^tG_{x_0}^t=0$ pointwise away from $x_0$, and the support properties of $F$. This concludes the proof.
\end{proof}

\subsection{Estimates on harmonic measure}
We now turn to the basic estimates on harmonic measure. Throughout this section we will assume that $\Omega$ is a Lipschitz domain, $A\in M_{\lambda}(\Omega)$, and $b\in L^{\infty}(\Omega)$. In order to show our estimates, we will follow the method that is outlined in \cite{KenigCBMS}.

Consider the number $r_{\Omega}$ that appears in the definition of the Lipschitz character; then, given any point $q\in\partial\Omega$ and $0<r<r_{\Omega}$, the ball $B_{10r}(q)$ lies in a coordinate cylinder. Also, for any point $q\in\partial\Omega$ and $0<r<r_{\Omega}$, there exists a point $A_r(q)\in\Omega$ such that
\[
r\leq|A_r(q)-q|,\quad\delta(A_r(q))\leq c_0r,
\]
with the constant $c_0$ only depending on the Lipschitz constant of $\Omega$. The points $A_r(q)$ will be the analogs of the similar points in the definition of an NTA domain.

The next lemma is a consequence of the Harnack inequality, after applying a Harnack chain argument \cite{KenigCBMS}.

\begin{lemma}\label{ChainInequality}
Let $\Omega$ be a Lipschitz domain, $A\in M_{\lambda}(\Omega)$ and $b\in L^{\infty}(\Omega)$. Let also $x_1,x_2\in\Omega$, with $\delta(x_1),\delta(x_2)\geq\e$ and $|x_1-x_2|<2^k\e$. Then, for every positive solution $u$ to $Lu=0$ or $L^tu=0$ in $\Omega$, we have that
\[
C^{-k}u(x_2)\leq u(x_1)\leq C^k u(x_2),
\]
where $C$ depends on $d,\lambda,\|b\|_{\infty}$ and $\diam(\Omega)$.
\end{lemma}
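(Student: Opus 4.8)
The statement is a standard Harnack-chain consequence of the interior Harnack inequality (Proposition \ref{Harnack}), so the proof is essentially a covering/chaining argument, and the only real work is bookkeeping the geometry so that the exponent of the constant comes out linear in $k$. First I would reduce to a single inequality: it suffices to prove $u(x_1)\leq C^k u(x_2)$ for every positive solution $u$ of $Lu=0$ or $L^tu=0$, since the reverse inequality follows by swapping the roles of $x_1$ and $x_2$ (and $k$ is symmetric in them). Fix such a $u$ and such points $x_1,x_2$ with $\delta(x_1),\delta(x_2)\geq\varepsilon$ and $|x_1-x_2|<2^k\varepsilon$.

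The core construction is a chain of points $z_0=x_1,z_1,\dots,z_m=x_2$ in $\Omega$, with $m\leq Ck$ for a dimensional-type constant $C$, such that consecutive points are ``Harnack-close'': there are balls $B_j$ with $z_{j-1},z_j\in B_j$ and $4B_j\subseteq\Omega$ (so that Proposition \ref{Harnack} applies on $B_j$, giving $\sup_{B_j}u\leq C_H\inf_{B_j}u$ with $C_H$ a good constant). Concretely, I would place the points along (a thickening of) the segment $[x_1,x_2]$, spaced at distance comparable to the local value of $\delta$. Since $\delta$ is $1$-Lipschitz and $\delta\geq\varepsilon$ at both endpoints, along the segment $\delta$ can only dip down near the middle and one controls the number of balls needed by a dyadic decomposition of the segment according to the size of $\delta$: there are $O(1)$ balls at each scale $2^j\varepsilon$ for $j=0,1,\dots,O(k)$, hence $m=O(k)$ total. (If $|x_1-x_2|\leq \tfrac14\min(\delta(x_1),\delta(x_2))$ one ball suffices; otherwise one chains through $O(k)$ of them.) Applying the Harnack inequality on each $B_j$ and telescoping,
\[
u(x_1)=u(z_0)\leq C_H u(z_1)\leq\cdots\leq C_H^{m}u(z_m)=C_H^{m}u(x_2)\leq C_H^{Ck}u(x_2),
\]
and setting the constant in the statement to be $C_H^{C}$ (still a good constant, depending on $d,\lambda,\|b\|_\infty,\diam(\Omega)$) gives $u(x_1)\leq C^{k}u(x_2)$, and symmetrically the lower bound.

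The main obstacle — really the only nontrivial point — is verifying that the chain can be built with length $m\leq Ck$, i.e.\ that the Harnack-chain condition holds in a Lipschitz domain with the quantitative bound advertised. This is where the Lipschitz structure of $\partial\Omega$ enters: near any boundary point the domain contains a cone of fixed aperture, so the ``width'' of $\Omega$ available for chaining does not degenerate too fast, and a segment joining $x_1$ to $x_2$ can be replaced, if necessary, by a polygonal path of comparable length staying in $\{\delta\geq c|x_1-x_2|2^{-k}\}$; this is exactly the Harnack chain property recorded for NTA domains in \cite{KenigCBMS}, and the points $A_r(q)$ introduced just before the lemma are the standard device for realizing it. I would cite \cite{KenigCBMS} for this geometric input rather than reprove it, and note that the constant $C$ produced depends only on the Lipschitz character of $\Omega$ (through the cone aperture) together with $d$, so that combined with the good constant $C_H$ from Proposition \ref{Harnack} the final constant is a good constant, as claimed.
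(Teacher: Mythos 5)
Your proposal is correct and is precisely the argument the paper is invoking: the paper does not spell out a proof of this lemma at all, stating only that it ``is a consequence of the Harnack inequality, after applying a Harnack chain argument \cite{KenigCBMS}.'' Your write-up fills in exactly the chain-construction and telescoping that the paper leaves to the cited reference, and your identification of the single nontrivial point --- that the chain length can be taken $O(k)$, using the Lipschitz (hence NTA) geometry --- is the right thing to highlight and handle by citation.
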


The main estimates that we will show connect $L$ harmonic measure to Green's function for the equation $Lu=0.$ We will first need the following lemma, originally due to Carleson (lemma 4.4 in \cite{JerisonKenigBoundary}).

\begin{lemma}\label{CarlesonEstimate}
Let $\Omega\subseteq\mathbb R^d$ be a Lipschitz domain, and $A\in M_{\lambda}(\Omega)$, $b\in L^{\infty}(\Omega)$. For any nonnegative solution $u$ to $Lu=0$ or $L^tu=0$ in $\Omega$ which vanishes continuously on $\Delta_{2r}(q)$, and any $r<r_{\Omega}$, we have that
\[
\forall x\in T_r(q),\quad u(x)\leq C u(A_r(q)),
\]
where $C$ depends on $d,\lambda,\|b\|_{\infty},\diam(\Omega)$ and the Lipschitz constant of $\Omega$.
\end{lemma}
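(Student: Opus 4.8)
The proof follows the classical argument of Carleson (as in Lemma 4.4 of \cite{JerisonKenigBoundary}), adapted to our setting where the only new ingredients are that $L$ and $L^t$ involve a bounded drift. The key tools we already have are the Harnack inequality (Proposition \ref{Harnack}), the boundary Hölder estimate (Proposition \ref{HolderOnTheBoundary}) applied with $F=0$, the Harnack chain inequality (Lemma \ref{ChainInequality}), and the maximum principle (Theorem \ref{MaximumPrinciple}). Since the drift only enters through $\|b\|_\infty$ and $\diam(\Omega)$, all constants produced below will be good constants that additionally depend on the Lipschitz character, exactly as claimed.

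First I would reduce to the case $x\in T_{r}(q)$ with $\delta(x)$ small, say $\delta(x)<\eta r$ for a small dimensional/Lipschitz constant $\eta$ to be chosen; if $\delta(x)\geq \eta r$, then $x$ and $A_r(q)$ are both at distance $\gtrsim \eta r$ from $\partial\Omega$ and at mutual distance $\lesssim r$, so Lemma \ref{ChainInequality} with a bounded number of steps (depending only on $\eta$ and the Lipschitz character) gives $u(x)\leq C u(A_r(q))$ directly. For the remaining case, let $q'\in\partial\Omega$ be a boundary point with $|x-q'|=\delta(x)$, so $q'\in \Delta_{2r}(q)$ once $\eta$ is small, and $u$ vanishes continuously on a surface ball around $q'$. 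The heart of the argument is a stopping-time / iteration scheme: starting from $x$, one uses the boundary oscillation estimate (Proposition \ref{HolderOnTheBoundary}) to say that on the dilated cylinder $T_{s}(q')$ the supremum of $u$ decays like a power of the radius unless $u$ is large somewhere on a definite proportion of the "Carleson region". Concretely, set $M_j=\sup_{T_{2^{-j}\rho}(q')} u$ for a suitable starting radius $\rho$ comparable to $r$; using Proposition \ref{HolderOnTheBoundary} together with the interior Harnack inequality applied at the corkscrew point $A_{2^{-j}\rho}(q')$, I would establish a dichotomy: either $M_{j+1}\leq \tfrac12 M_j$, or $u(A_{2^{-j}\rho}(q'))\geq c\, M_j$ for a uniform $c>0$. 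Iterating this dichotomy, if the second alternative ever occurs we are done after one more Harnack-chain step connecting $A_{2^{-j}\rho}(q')$ to $A_r(q)$ (their distances to the boundary and mutual distance are all comparable up to the Lipschitz character); if the first alternative always occurs, then $M_j\to 0$, forcing $u(x)=0\leq C u(A_r(q))$ trivially.

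More precisely, the dichotomy is extracted as follows. On $T_{2^{-j}\rho}(q')$, since $u$ vanishes on the boundary portion $\Delta_{2^{-j+1}\rho}(q')$, Proposition \ref{HolderOnTheBoundary} (with $F=0$) gives, for points $z\in T_{2^{-j-1}\rho}(q')$ within a thin boundary layer, $|u(z)|\leq C 2^{-\alpha}\, M_j$; combining with the interior estimate Corollary-style bound and Harnack's inequality at the corkscrew $A_{2^{-j-1}\rho}(q')$, one gets that either the sup on the next cylinder already dropped by a fixed factor, or the bulk of the mass of $u$ "lives" near the corkscrew point, giving $u(A_{2^{-j-1}\rho}(q'))\gtrsim M_j$. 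This is exactly the place where one must be careful to use the versions of the oscillation and Harnack estimates valid for both $Lu=0$ and $L^tu=0$, which is why Propositions \ref{Harnack}, \ref{HolderInside}, \ref{HolderOnTheBoundary} were all stated for the two equations simultaneously. The main obstacle is organizing this dichotomy cleanly so that the constants never degenerate after finitely many iterations — i.e. making sure the number of Harnack-chain steps needed to join the relevant corkscrew point back to $A_r(q)$ is bounded uniformly in $j$ (it is, because once the second alternative triggers at step $j$, the point $x$ is already at distance $\gtrsim 2^{-j}\rho$ from the boundary by the geometry, so $j$ is bounded in terms of $\delta(x)/r$ and the chain length is absorbed). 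Assembling these pieces yields $u(x)\leq C u(A_r(q))$ with $C$ a good constant also depending on the Lipschitz constant of $\Omega$, which is the assertion of Lemma \ref{CarlesonEstimate}.
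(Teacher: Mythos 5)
Your iteration runs in the wrong direction, and this creates a genuine gap. You set $M_j=\sup_{T_{2^{-j}\rho}(q')}u$ and prove a dichotomy: for each $j$, either $M_{j+1}\leq\frac12 M_j$ or $u(A_{2^{-j}\rho}(q'))\geq c\,M_j$. The dichotomy itself is fine, but the bound it yields is not uniform. If the first alternative holds for $j=0,\dots,J-1$ and the second first triggers at $j=J$, then unwinding gives $M_0\leq 2^J M_J\leq\frac{2^J}{c}u(A_{2^{-J}\rho}(q'))$, and the Harnack chain from $A_{2^{-J}\rho}(q')$ (which sits at depth $\sim 2^{-J}\rho$) back to $A_r(q)$ (depth $\sim r$) costs $\sim J$ steps, hence a factor $C^J$ by Lemma \ref{ChainInequality}. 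Nothing in the dichotomy caps $J$: you attempt to salvage this by asserting that $J$ is bounded in terms of $\delta(x)/r$, but $\delta(x)$ is allowed to be arbitrarily small in the very regime you reduced to, so the resulting constant $(2C)^J$ is not the uniform constant the lemma requires. If the first alternative holds for every $j$, $M_j\to0$ says nothing about $u(x)\leq M_0$, which is what you actually need to control.

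The paper avoids this by running the argument in the opposite direction, by contradiction: normalize $u(A_r(q))=1$, fix the halving estimate from Proposition \ref{HolderOnTheBoundary} (Gilbarg--Trudinger 8.27), and observe via Lemma \ref{ChainInequality} that any $y\in T_r(q)$ with $u(y)>c_2^h$ must lie within distance $c_1^{-h}r$ of the boundary (otherwise the Harnack chain back to $A_r(q)$ is short and forces $u(y)\lesssim 1$). Starting from a hypothetical $y_0$ with $u(y_0)>c_3$, the halving estimate then produces $y_1$ in a slightly larger surface cylinder with $u(y_1)\geq c_2^{h+1}$, hence even closer to $\partial\Omega$; iterating builds a sequence $y_k\in T_{2r}(q)$ accumulating on $\Delta_{2r}(q)$ along which $u(y_k)\geq c_2^{h+k}\to\infty$, contradicting the continuous vanishing of $u$ on $\Delta_{2r}(q)$. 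In this ``ascent'' form the constant is fixed once and for all by the choice of $h$ and $N$; no chain of length depending on $\delta(x)$ ever appears. To repair your version you would need an a priori bound on the stopping index $J$ that is independent of $x$, and that is precisely the content the paper's contradiction argument supplies for free.
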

\begin{proof}
After normalizing, we can suppose that $u(A_r(q))=1$. From theorem 8.27 in \cite{Gilbarg}, there exists a constant $c_1$, only depending on $d,\lambda,\|b\|_{\infty}$, the Lipschitz constant of $\Omega$ and $\diam(\Omega)$, such that for all $p\in\partial\Omega$ and $s<r_{\Omega}$,
\begin{equation}\label{eq:Sup}
\sup\left\{u(x)|x\in T_{c_1s}(p)\right\}\leq\frac{1}{2}\sup\left\{u(x)|x\in T_s(p)\right\}.
\end{equation}
Now, by lemma \ref{ChainInequality}, there exists a constant $c_2$ such that, if $u(y)>c_2^h$ and $y\in T_r(q)$, then $\delta(y)\leq c_1^{-h}r$. Set $c_3=c_2^h$, where $h=N+3$, and where $N$ is such that $2^N>c_2$.

Suppose now that, for some $y_0\in T_r(q)$, $u(y_0)>c_3=c_2^h$. Then, $\delta(y_0)\leq c_1^{-h}r$. So, if the distance $\delta(y_0)$ is achieved at $p\in\partial\Omega$, 
\[
|q-p|\leq|q-y|+|y-p|<r+c_1^{-h}r<\frac{3}{2}r.
\]
Therefore, from \eqref{eq:Sup}, we obtain that
\[
\sup\{u(x)|x\in T_{c_1^{-h+N}r}(p)\}\geq 2^N\sup\{u(x)|x\in T_{c_1^{-h}r}(p)\}\geq 2^Nu(y_0)\geq c_2^{h+1}.
\]
Therefore, there exists $y_1\in T_{c_1^{-h+N}r}(p)$ such that $u(y_1)\geq c_2^{h+1}$. Let the distance $\delta(y_1)$ be achieved at $q_1\in\partial\Omega$. Inductively, we construct two sequences $(y_n),(q_n)$ such that
\[
u(y_k)\geq c_2^{h+k},\,\,\delta(y_k)=|y_k-q_k|<c_2^{-h-k}r,\,\,y_k\in T_{c_1^{-h-k-N}r}(q_{k-1}).
\]
But,
\[
|y_k-q|\leq |y_k-q_k|+|q_k-y_{k-1}|+|y_{k-1}-q|\leq(c_1^{-h-k}+c_1^{-h-k-n})r+|y_{k-1}-q|,
\]
and, since $|y_0-q|<r$, we obtain that
\[
|y_k-q|\leq r+\sum_{i=1}^k(c_1^{-h-i}+c_1^{-h-i-N})r<2r,
\]
from the choice of $N$. Hence, $y_k\in T_{2r}(q)$, and $y_k$ contains a subsequence that converges to $\Delta_{2r}(q)$, while $u(y_k)$ does not converge to $0$, and this is a contradiction. Hence, for all $y_0\in T_r(q)$, $u(y_0)\leq c_3$.
\end{proof}

We are now in position to prove the estimates that connect harmonic measure with Green's function.

\begin{lemma}\label{BelowForHarmonic}
Let $r<r_{\Omega}$, and $q\in\partial\Omega$. Then there exists a constant $C$ depending on $d,\lambda,\|b\|_{\infty}$ and the Lipschitz constant of $\Omega$, such that
\[
\forall x\in B_{r/2}(A_r(q)),\quad\omega^x(\Delta_r(q))\geq C.
\]
\end{lemma}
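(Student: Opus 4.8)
The plan is the classical two–step argument: first produce a definite lower bound for $\omega^x(\Delta_r(q))$ at one well-chosen interior point $\xi$ at height $\sim r$ above $\Delta_r(q)$, using a boundary continuity (barrier) estimate, and then spread this bound over all of $B_{r/2}(A_r(q))$ by a Harnack chain. To set up the first step, choose a Lipschitz function $g$ on $\partial\Omega$ with $0\le g\le 1$, $g\equiv 1$ on $\Delta_{r/2}(q)$ and $g$ supported in $\Delta_r(q)$ (such $g$ exists since $\overline{\Delta_{r/2}(q)}\subseteq\Delta_r(q)$), and let $u$ be the solution of $Lu=0$ with boundary data $g$; by Proposition \ref{WeakSolvabilityForLipschitz}, $u\in W^{1,2}(\Omega)\cap C^{\alpha}(\overline\Omega)$. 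Since constants are solutions of $Lu=0$, the function $w:=1-u$ is the $W^{1,2}(\Omega)$ solution of $Lw=0$ with data $1-g$; by Theorem \ref{MaximumPrinciple} we have $0\le w\le 1$, and $w$ vanishes continuously on $\Delta_{r/2}(q)$. Because $u(x)=\int_{\partial\Omega}g\,d\omega^x\le\omega^x(\Delta_r(q))$, it suffices to bound $w$ away from $1$ on a suitable set.

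\textbf{The boundary oscillation estimate.} Apply Proposition \ref{HolderOnTheBoundary} to $w$ with $R=r/4$ and $F=0$ (so that $w$ vanishes on $\Delta_{2R}(q)=\Delta_{r/2}(q)$): for every $\rho\in(0,r/4)$,
\[
\osc_{T_\rho(q)}w\le C\Big(\frac{\rho}{r}\Big)^{\alpha}\sup_{T_{r/4}(q)}|w|\le C\Big(\frac{\rho}{r}\Big)^{\alpha}.
\]
Since $w\ge 0$ and $w$ extends continuously to $\overline{T_\rho(q)}$ with $w\equiv 0$ on $\Delta_\rho(q)$, one has $\inf_{T_\rho(q)}w=0$, hence $\sup_{T_\rho(q)}w=\osc_{T_\rho(q)}w\le C(\rho/r)^{\alpha}$. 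Fix $\varepsilon\in(0,1/8)$ small enough that $C\varepsilon^{\alpha}\le 1/2$; then $u=1-w\ge 1/2$ on $T_{\varepsilon r}(q)$, and in particular $\omega^x(\Delta_r(q))\ge 1/2$ for all $x\in T_{\varepsilon r}(q)$. The interior cone condition of the Lipschitz domain provides a point $\xi\in T_{\varepsilon r}(q)$ with $\delta(\xi)\ge c\varepsilon r$, where $c$ depends only on the Lipschitz character; thus $\omega^\xi(\Delta_r(q))\ge 1/2$.

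\textbf{The Harnack chain.} By Proposition \ref{HarmonicMeasureProperties}(ii), $v(x):=\omega^x(\Delta_r(q))$ is a nonnegative solution of $Lv=0$ in $\Omega$, and $v(\xi)\ge 1/2>0$, so $v>0$ throughout $\Omega$. For $x\in B_{r/2}(A_r(q))$ one has $\delta(x)\ge\delta(A_r(q))-r/2\gtrsim r$ and $|x-\xi|\le |x-A_r(q)|+|A_r(q)-q|+|q-\xi|\lesssim r$, the implicit constants depending only on the Lipschitz character (we use here the standard properties of the corkscrew point, $\delta(A_r(q))\sim r$ and $|A_r(q)-q|\sim r$). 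Hence $\xi$ and $x$ both lie at distance $\gtrsim\varepsilon r$ from $\partial\Omega$ and $|x-\xi|\le 2^{k}\varepsilon r$ for some $k$ depending only on the Lipschitz character; Lemma \ref{ChainInequality} then gives
\[
\omega^x(\Delta_r(q))\ge C^{-k}\,\omega^\xi(\Delta_r(q))\ge \tfrac12\,C^{-k}=:C_0,
\]
which is the asserted bound, with $C_0$ of the stated (good-constant) dependence.

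\textbf{Main obstacle.} The only genuinely delicate point is the second paragraph: one must make sure Proposition \ref{HolderOnTheBoundary} is actually applicable, which is why we pass to \emph{Lipschitz} boundary data so that the solution $w$ lies in $W^{1,2}(\Omega)$ globally, and one must check that the oscillation bound yields a \emph{sup} bound — this works precisely because $w$ is nonnegative and vanishes continuously on the surface ball, so $\inf_{T_\rho(q)}w=0$. The remaining steps (choice of $g$, the maximum principle comparison, and the Harnack chain bookkeeping for the corkscrew point) are routine.
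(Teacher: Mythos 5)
Your proof is correct and follows essentially the same two-step strategy as the paper: (1) build a comparison subsolution $u$ with Lipschitz boundary data $g$ sandwiched by $\chi_{\Delta_{r/2}(q)}\le g\le\chi_{\Delta_r(q)}$ so that $\omega^x(\Delta_r(q))\ge u(x)$, apply the boundary H\"older estimate to $1-u$ (which vanishes on $\Delta_{r/2}(q)$) to obtain a definite lower bound at an interior point near $q$ at height $\sim\varepsilon r$; (2) propagate that lower bound to all of $B_{r/2}(A_r(q))$ by a Harnack chain via Lemma \ref{ChainInequality}. Your extra care in choosing \emph{Lipschitz} data (to guarantee the $W^{1,2}(\Omega)$ membership required by Proposition \ref{HolderOnTheBoundary}) and your observation that $\inf_{T_\rho(q)}w=0$ converts the oscillation bound to a sup bound are correct and fill in details the paper leaves implicit.
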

\begin{proof}
Let $\phi$ be a cutoff function which is equal to $1$ in $T_{r/2}(q)$, it is supported in $T_r(q)$, and also satisfies that $0\leq\phi\leq 1$. Let $u$ be the solution to the classical Dirichlet problem for $L$, with data $\phi$. Since $\phi\leq\chi_{\Delta_r(q)}$, and $\omega^x(\Delta_r(q))$ is a solution with data $\chi_{\Delta_r(q)}$ on $\partial\Omega$ from proposition \ref{HarmonicMeasureProperties}, the maximum principle shows that
\[
x\in\Omega\Rightarrow\omega^x(\Delta_r(q))\geq u(x).
\]
Now, set $v=1-u$, then $v$ is a solution in $\Omega$ that vanishes on $\Delta_r(q)$. From the maximum principle, $0\leq v\leq 1$ in $\Omega$, therefore, from theorem 8.27 in \cite{Gilbarg}, we obtain that
\[
v(x)\leq C\left(\frac{|x-q|}{r}\right)^{\beta}\,\,\forall x\in T_{r/2}(q).
\]
Now, since $|A_{r/2C_0}(q)-q|\leq C_0\frac{r}{2C_0}=r/2$, we can apply this inequality to $A_{r/2C_0}(q)$, to obtain that $v(A_{r/2C_0}(q))\leq C2^{-\beta}$, therefore
\[
\omega^{A_{r/2C_0}(q)}(\Delta_r(q))\geq 1-C2^{-\beta}.
\]
Since now $\omega^x(\Delta_r(q))$ is a positive solution in $\Omega$, lemma \ref{ChainInequality} completes the proof.
\end{proof}

\begin{lemma}\label{GFromAbove}
Let $r<r_{\Omega}$. Then, for all $q\in\partial\Omega$,
\[
\forall x\in\Omega\setminus B_{r/2}(A_r(q)),\quad r^{d-2}G(x,A_r(q))\leq C\omega^x(\Delta_r(q)),
\]
where $C$ depends on $d,\lambda, \|b\|_{\infty}$ and the Lipschitz constant of $\Omega$.
\end{lemma}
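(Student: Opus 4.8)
The plan is to compare the functions $v(x):=r^{d-2}G(x,A_r)$ and $u(x):=\omega^x(\Delta_r(q))$, where $A_r:=A_r(q)$, by means of the maximum principle on $D:=\Omega\setminus\overline{B_{r/4}(A_r)}$. Since $\delta(A_r(q))$ is comparable to $r$ (as for corkscrew points), $B_{r/2}(A_r)$ is compactly contained in $\Omega$, so $D$ is a bounded open set with $\partial D=\partial\Omega\cup\partial B_{r/4}(A_r)$; and as $D\supseteq\Omega\setminus B_{r/2}(A_r)$, it suffices to prove $v\le Cu$ on $D$. Both $u$ and $v$ are nonnegative solutions of $Lu=0$ in $D$: $u$ by Proposition \ref{HarmonicMeasureProperties}, and $v$ by the defining property of Green's function in Theorem \ref{GoodGreenFunctionEstimates} together with the fact that the pole $A_r$ lies outside $\overline D$.

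First I would carry out the boundary comparison on $\partial D$. On $\partial B_{r/4}(A_r)$ the pointwise bound of Theorem \ref{GoodGreenFunctionEstimates} gives $G(x,A_r)\le C|x-A_r|^{2-d}=C(r/4)^{2-d}$, hence $v\le C_1$ with $C_1$ depending only on $d,\lambda,\|b\|_\infty,\diam\Omega$; and since $\partial B_{r/4}(A_r)\subseteq B_{r/2}(A_r)$, Lemma \ref{BelowForHarmonic} gives $u\ge c_2>0$ there, with $c_2$ depending only on $d,\lambda,\|b\|_\infty$ and the Lipschitz character of $\Omega$. On $\partial\Omega$ the function $v$ vanishes continuously: $G(\cdot,A_r)$ is a nonnegative solution of $Lu=0$ near $\partial\Omega$ lying in $W_0^{1,p}(\Omega)$, so Cacciopoli's inequality at the boundary (Lemma \ref{Cacciopoli}(ii)) makes it $W^{1,2}$ in a neighbourhood of $\partial\Omega$ with zero trace, whence Proposition \ref{HolderOnTheBoundary} yields continuity up to $\partial\Omega$ with boundary value $0$; and $u\ge 0$ everywhere. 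Fixing $C:=2C_1/c_2$, this shows $Cu-v\ge 0$ on $\partial D$.

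Finally, the maximum principle (Proposition \ref{MinForSupersolutions}) applied to the bounded $L$-solution $Cu-v$ on $D$ would give $Cu-v\ge 0$ in $D$, i.e. $r^{d-2}G(x,A_r)\le C\,\omega^x(\Delta_r(q))$ for $x\in D$. The one genuine obstacle is that $u=\omega^\cdot(\Delta_r(q))$ is not continuous up to $\partial\Omega$ and need not even lie in $W^{1,2}(\Omega)$, so Proposition \ref{MinForSupersolutions} cannot be applied to $Cu-v$ directly. To get around this I would replace $u$ by the monotone continuous approximants from the proof of Proposition \ref{HarmonicMeasureProperties}: choose compact sets $K_i\uparrow\Delta_r(q)$ and continuous $g_i$ with $\chi_{K_i}\le g_i\le\chi_{\Delta_r(q)}$, and let $u_i\in W^{1,2}(\Omega)\cap C(\overline\Omega)$ solve the Dirichlet problem with data $g_i$; then $0\le u_i\uparrow u$ pointwise in $\Omega$, and $u_i\to u$ uniformly on the compact set $\partial B_{r/4}(A_r)$ by Dini's theorem. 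For $i$ large one has $u_i\ge c_2/2$ on $\partial B_{r/4}(A_r)$ and $u_i=g_i\ge 0$ on $\partial\Omega$, so $w_i:=Cu_i-v$ is an $L$-solution in $W^{1,2}(D)$ with $w_i\ge 0$ on $\partial D$ in the $W^{1,2}$ sense; Proposition \ref{MinForSupersolutions} then yields $w_i\ge 0$ in $D$, and letting $i\to\infty$ gives $Cu-v\ge 0$ in $D$. Once this reduction to continuous sub-data is set up, the rest is the classical Green's-function/harmonic-measure comparison.
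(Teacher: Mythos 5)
Your proof follows the same overall strategy as the paper — boundary comparison on an annular region followed by the maximum principle — but you are noticeably more careful than the paper's own (very terse) argument. The paper simply applies ``the maximum principle'' to $r^{d-2}G(\cdot,A_r(q))-C\omega^\cdot(\Delta_r(q))$, glossing over the fact that $\omega^\cdot(\Delta_r(q))$ is neither continuous up to $\partial\Omega$ nor a priori in $W^{1,2}$; you correctly identify this as the genuine obstacle and repair it by passing through the monotone continuous approximants $u_i$ from the proof of Proposition \ref{HarmonicMeasureProperties}(ii), which is the right fix. The use of $B_{r/4}$ instead of $B_{r/2}$ is a cosmetic difference.

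One step in your argument does not quite work as stated: you invoke Lemma \ref{Cacciopoli}(ii) to upgrade $G(\cdot,A_r)$ from $W_0^{1,p}(\Omega)$, $p<\frac{d}{d-1}$, to $W^{1,2}$ near $\partial\Omega$, but the hypothesis of Cacciopoli (ii) is precisely that the function already lie in $W^{1,2}(\Omega)$, so that appeal is circular. (The same issue infects your subsequent appeal to Proposition \ref{HolderOnTheBoundary}, whose hypothesis is again $u\in W^{1,2}(\Omega)$.) The intended fact is true, but the clean way to establish it is the one used in Lemma \ref{ToMaximalBoundForG} — proved later in the paper, and not invoked here by the author either: apply boundary Cacciopoli to the $W_0^{1,2}(\Omega)$ approximants $G_n$ of $G(\cdot,A_r)$ from Lemma \ref{LimitGreenConstruction}, use Carleson's estimate (Lemma \ref{CarlesonEstimate}) to get uniform $W^{1,2}$ bounds in boundary cylinders away from $A_r(q)$, and pass to the weak limit; since each $G_n\in W_0^{1,2}(\Omega)$, this also yields the zero trace of $v$ on $\partial\Omega$ in the $W^{1,2}$ sense, which is what the maximum principle on $D$ actually requires. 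With that substitution your argument is complete and, in fact, more rigorous than what appears in the paper.
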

\begin{proof}
If $x$ is on the boundary of $B_{r/2}(A_r(q))$, the pointwise estimates on Green's function (in theorem \ref{GoodGreenFunctionEstimates}) we obtain that
\[
r^{d-2}G(x,A_r(q))\leq Cr^{d-2}|x-A_r(q)|^{2-d}\leq C,\]
and, from lemma \ref{BelowForHarmonic},
\[
\omega^x(\Delta_r(q))\geq C,
\]
which implies that $r^{d-2}G(x,A_r(q))\leq C\omega^x(\Delta_r(q))$ for $x$ on the boundary of $B_{c_0r}(A_r(q))$. On the other hand, $G$ vanishes on $\partial\Omega$, so the same estimate holds for $x\in\partial\Omega$. Since $r^{d-2}G(x,A_r(q))-C\omega^x(\Delta_r(q))$ is a solution of the equation $Lu=0$ in $\Omega\setminus B_{r/2}(A_r(q))$, the estimate follows from the maximum principle.
\end{proof}
For the reverse inequality, we show the next lemma.

\begin{lemma}\label{GFromBelow}
If $\Omega$ is a Lipschitz domain, then there exists a constant $C$ which depends on $d,\lambda,\|b\|_{\infty},\diam(\Omega)$ and the Lipschitz constant of $\Omega$, such that, if $q\in\partial\Omega$ and $r<r_{\Omega}$, then, for all $x\notin B_{2r}(q)$,
\[
\omega^x(\Delta_r(q))\leq C r^{d-2}G(x,A_r(q)).
\]
\end{lemma}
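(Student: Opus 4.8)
Set $u(x):=\omega^x(\Delta_r(q))$ and $v(x):=r^{d-2}G(x,A_r(q))$. The plan is to compare these two functions by the maximum principle. Both are nonnegative $L$-solutions in $\Omega\setminus\{A_r(q)\}$ (indeed $u$ is one in all of $\Omega$, by the construction of harmonic measure and proposition \ref{HarmonicMeasureProperties}), and both vanish continuously on $\partial\Omega\setminus\overline{\Delta_r(q)}$: for $v$ this is immediate since $G(\cdot,A_r(q))$ vanishes on all of $\partial\Omega$ (theorem \ref{GoodGreenFunctionEstimates}), and for $u$ it follows from Hölder continuity up to the boundary for solutions vanishing on a boundary portion (proposition \ref{HolderOnTheBoundary}). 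Since the corkscrew point satisfies $|A_r(q)-q|\lesssim r$, we may assume $A_r(q)\in B_{2r}(q)$ (otherwise one enlarges $2r$ to a fixed multiple of $r$, at the cost of a Harnack-chain factor via lemma \ref{ChainInequality}); then $v$ is an $L$-solution on $\Omega\setminus\overline{B_{2r}(q)}$, and on the portion of $\partial(\Omega\setminus\overline{B_{2r}(q)})$ lying in $\partial\Omega$ one has $u\equiv 0\le v$ because $\Delta_r(q)\subseteq B_{2r}(q)$. Hence, applying theorem \ref{MaximumPrinciple} to $u-Cv$ in $\Omega\setminus\overline{B_{2r}(q)}$, the lemma reduces to the pointwise bound $u(x)\le C\,v(x)$ for all $x\in\Sigma:=\partial B_{2r}(q)\cap\Omega$.

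I would first dispose of the \emph{interior} points of $\Sigma$, those $x\in\Sigma$ with $\delta(x)\ge r/4$. For such $x$, $u(x)\le\omega^x(\partial\Omega)=1$, while $r/2\le|x-A_r(q)|\lesssim r$ and $\delta(A_r(q))\gtrsim r$; so there is a Harnack chain of bounded length (depending only on the Lipschitz character) joining $x$ to a point $z_0$ with $|z_0-A_r(q)|=\tfrac18\delta(A_r(q))$ that stays outside $B_{\frac1{16}\delta(A_r(q))}(A_r(q))$, and applying Harnack's inequality along it (lemma \ref{ChainInequality}) to the positive solution $G(\cdot,A_r(q))$, together with the standard interior lower bound $G(z_0,A_r(q))\ge c\,\delta(A_r(q))^{2-d}\gtrsim r^{2-d}$ near the pole (as in \cite{Gruter}), yields $v(x)=r^{d-2}G(x,A_r(q))\ge c>0$. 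Thus $u(x)\le C\,v(x)$ on this part of $\Sigma$.

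The remaining case — \emph{near-boundary} points $x\in\Sigma$ with $\delta(x)<r/4$ — is the technical heart, and I expect it to be the main obstacle. For such $x$, with $p\in\partial\Omega$ a closest boundary point (so $|p-q|\in(7r/4,9r/4)$ and $p$ is at distance $\gtrsim r$ from $\overline{\Delta_r(q)}$), both $u$ and $v$ vanish continuously on $\Delta_{r/2}(p)$, so proving $u(x)\le C\,v(x)$ amounts to comparing the boundary behavior at $p$ of two nonnegative solutions vanishing on the common boundary piece $\Delta_{r/2}(p)$ — a one-sided boundary-Harnack phenomenon. The Carleson estimate (lemma \ref{CarlesonEstimate}) supplies the upper half, $u(x)\le C\,u(A_{r/4}(p))$ with $A_{r/4}(p)$ an interior point of the type handled above, hence $u(x)\le C\,v(A_{r/4}(p))$; the matching control of $v$ near $p$ — which is what forces the estimate to hold and does not follow from Carleson alone — is to be obtained by combining the pointwise bounds on $G$ (theorem \ref{GoodGreenFunctionEstimates}), the scale-$\rho$ lower bounds lemma \ref{BelowForHarmonic} and lemma \ref{GFromAbove} relating $G(\cdot,A_\rho(p))$ to $\omega^\cdot(\Delta_\rho(p))$ for all $\rho\lesssim r$, and Harnack chains, iterated over a dyadic family of Carleson boxes at $p$ shrinking toward $x$ as in \cite{JerisonKenigBoundary} (see also \cite{KenigCBMS}) to keep the loss bounded. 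Once $u\le C\,v$ on $\Sigma$ is in hand, the maximum-principle reduction of the first paragraph finishes the proof, and all constants produced depend only on $d$, $\lambda$, $\|b\|_\infty$, $\diam(\Omega)$, and the Lipschitz character of $\Omega$.
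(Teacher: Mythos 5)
Your maximum-principle reduction to the sphere $\Sigma=\partial B_{2r}(q)\cap\Omega$ is clean, and the interior part of $\Sigma$ is handled correctly by Harnack chains plus the near-pole lower bound $G(z_0,A_r(q))\gtrsim r^{2-d}$. But the near-boundary part of $\Sigma$, which you rightly flag as the heart of the matter, is not actually proved — and the ingredients you cite cannot close it. Carleson's estimate gives you $u(x)\le C\,u(A_{r/4}(p))\le C\,v(A_{r/4}(p))$, so what you still need is a matching \emph{lower} bound on $v(x)=r^{d-2}G(x,A_r(q))$ at near-boundary points $x$ in terms of $v$ at the corkscrew point $A_{r/4}(p)$. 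That is a one-sided boundary Harnack inequality for $G(\cdot,A_r(q))$. Lemma~\ref{GFromAbove} runs in the wrong direction (it bounds $G$ above by $\omega$), Lemma~\ref{BelowForHarmonic} only gives a lower bound on $\omega$ at points near the pole, and the comparison principle (Lemma~\ref{ComparisonPrinciple}) is derived in the paper \emph{after} this lemma, from Proposition~\ref{GreenRelation}, which itself uses the present lemma. So the dyadic-iteration step you sketch is either circular or has to import an independent boundary Harnack estimate that you have not established. As stated, this is a genuine gap.

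The paper avoids the comparison on $\Sigma$ altogether. It takes a smooth bump $\psi$ with $\chi_{\Delta_r(q)}\le\psi\le\chi_{\Delta_{\gamma r}(q)}$, lets $u$ solve $Lu=0$ with boundary data $\psi$, and uses the Green representation of $u-\psi\in W_0^{1,2}(\Omega)$ to write, for $y\notin B_{2r}(q)$,
\[
\omega^y(\Delta_r(q))\le u(y)=-\int_{T_{\beta r}(q)}\Big(A\nabla\psi\cdot\nabla_xG^t(x,y)+b\cdot\nabla\psi\,G^t(x,y)\Big)\,dx.
\]
It then bounds the right-hand side by Cauchy--Schwarz, Cacciopoli's inequality for $G^t(\cdot,y)$ on the Carleson box $T_{\gamma r}(q)$, and Carleson's estimate (Lemma~\ref{CarlesonEstimate}) to obtain $C r^{d-2}G^t(A_r(q),y)=Cr^{d-2}G(y,A_r(q))$ directly. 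This sidesteps any pointwise comparison of $u$ and $v$ near the boundary and is the step your proposal is missing. If you want to keep your maximum-principle framing, you would need to supply an independent proof of the boundary lower bound on $G(\cdot,A_r(q))$ on the near-boundary part of $\Sigma$; the representation-formula estimate above is exactly such a proof and is simpler to carry out than the iteration you outline.
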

\begin{proof}
Fix $1<\alpha<\beta<\gamma<2$. Let $\psi$ be a smooth cutoff function, which is supported in $T_{\beta r}(q)$, is equal to $1$ in $T_{\alpha r}(q)$, and satisfies the bounds $0\leq\psi\leq 1$, and $|\nabla\psi|\leq C/r$. Consider now the classical solution $u$ to $Lu=0$, with boundary data $\psi$. Since then $\chi_{\Delta_r(q)}\leq u$ on $\partial\Omega$, from the maximum principle we obtain
\[
\omega^y(\Delta_r(q))\leq u(y).
\]
Now, $u-\psi\in W_0^{1,2}(\Omega)$, and, from the fact that $u$ is a solution we get that
\[
u(y)=-\int_{\Omega}A(x)\nabla\psi(x)\nabla_xG^t(x,y)+b(x)\nabla\psi(x)G^t(x,y)\,dx,\,\,\,\,{\rm a.e.}\,\,y\in\Omega\setminus B_{2r}(q).
\]
Therefore, for any $y$ satisfying this equality, we obtain that
\begin{align*}
\omega^y(\Delta_r(q))&\leq u(y)\leq\frac{C}{r}\int_{T_{\beta r}(q)}|\nabla_xG^t(x,y)|\,dx+\int_{T_{\beta r}(q)}|b(x)\nabla\psi(x)G^t(x,y)|\,dx\\
&\leq \frac{C}{r}r^{d/2}\left(\int_{T_{\beta r}(q)}|\nabla_xG^t(x,y)|\,dx\right)^{1/2}+Cr^{d-1}\|b\|_{\infty}G^t(A_r(q),y)\\
&\leq Cr^{d/2-1}\left(\frac{C}{r^2}\int_{T_{\gamma r}(q)}|G^t(x,y)|^2\,dx\right)^{1/2}+Cr^{d-1}\|b\|_{\infty}G^t(A_r(q),y)\\
&\leq Cr^{d-2}G^t(A_r(q),y)
\end{align*}
for some constant $C$ depending on $d,\lambda,\|b\|_{\infty},\diam(\Omega)$ and the Lipschitz constant of $\Omega$, where we also used Cacciopoli's inequality and the estimate in lemma \ref{CarlesonEstimate}. Then, proposition \ref{SymmetryWithAdjoint} shows that
\[
\omega^y(\Delta_r(q))\leq Cr^{d-2}G(y,A_r(q))
\]
for almost all $y\in\Omega\setminus B_{2r}(q)$. Since the functions involved are continuous, we obtain the inequality for all $y\in\Omega\setminus B_{2r}(q)$.
\end{proof}

As a corollary of lemmas \ref{GFromAbove} and \ref{GFromBelow}, we obtain the following comparison.

\begin{prop}\label{GreenRelation}
If $\Omega$ is a Lipschitz domain, then for all $0<r<r_{\Omega}$ and $q\in\partial\Omega$,
\[
\omega^x(\Delta_r(q))\simeq r^{d-2}G(x,A_r(q)),\,\,\forall x\in\Omega\setminus B_{2r}(q),
\]
with $C$ being a good constant. In particular, $\omega^x$ is a doubling measure on $\partial\Omega$; that is, for every $x\in\Omega$, there exists $C=C_x>0$ which also depends on $d,\lambda,\|b\|_{\infty},\diam(\Omega)$ and the Lipschitz constant of $\Omega$ such that, for every $q\in\partial\Omega$ and $r>0$,
\[
\omega^x(\Delta_{2r}(q))\leq C\omega^x(\Delta_r(q)).
\]
\end{prop}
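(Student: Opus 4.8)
The comparison estimate is essentially a repackaging of the two preceding lemmas. Fix $0<r<r_\Omega$ and $q\in\partial\Omega$, and let $x\in\Omega\setminus B_{2r}(q)$. By the choice of the corkscrew point we have $B_{r/2}(A_r(q))\subseteq B_{2r}(q)$, so $x\notin B_{r/2}(A_r(q))$ and Lemma \ref{GFromAbove} gives $r^{d-2}G(x,A_r(q))\leq C\omega^x(\Delta_r(q))$, while Lemma \ref{GFromBelow} gives $\omega^x(\Delta_r(q))\leq Cr^{d-2}G(x,A_r(q))$, both with good constants. This is exactly $\omega^x(\Delta_r(q))\simeq r^{d-2}G(x,A_r(q))$ for all $x\in\Omega\setminus B_{2r}(q)$.

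For the doubling property, fix $x\in\Omega$, $q\in\partial\Omega$ and $r>0$. If $2r\geq 10r_\Omega$ the inequality is trivial: $\omega^x(\Delta_{2r}(q))\leq\omega^x(\partial\Omega)=1$, while $\Delta_r(q)$ contains a surface ball of radius comparable to $r_\Omega$ and hence, by the lower bound established in the last paragraph below, $\omega^x(\Delta_r(q))$ is bounded below by a positive constant depending on $x$. So assume $2r<10r_\Omega$, and split according to whether $|x-q|\geq C_0r$ or $|x-q|<C_0r$, where $C_0$ is a good constant fixed below.

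\emph{Case $|x-q|\geq C_0r$.} Taking $C_0$ large, $x$ lies outside $B_{4r}(q)$, hence the comparison estimate applies at scales $r$ and $2r$:
\[
\omega^x(\Delta_{2r}(q))\simeq (2r)^{d-2}G(x,A_{2r}(q)),\qquad \omega^x(\Delta_r(q))\simeq r^{d-2}G(x,A_r(q)).
\]
Now $y\mapsto G(x,y)=G^t(y,x)$ (Proposition \ref{SymmetryWithAdjoint}) is a positive solution of $L^tu=0$ in $\Omega\setminus\{x\}$; since $|x-q|\geq C_0r$ with $C_0$ large, it is a positive solution of $L^tu=0$ on a neighbourhood of $q$ large enough to contain any Harnack chain joining $A_r(q)$ and $A_{2r}(q)$. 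These points satisfy $\delta(A_r(q)),\delta(A_{2r}(q))\gtrsim r$ and $|A_r(q)-A_{2r}(q)|\lesssim r$, so Lemma \ref{ChainInequality} yields $G(x,A_r(q))\simeq G(x,A_{2r}(q))$ with a good constant. Combining the two displays with this, $\omega^x(\Delta_{2r}(q))\leq C\,\omega^x(\Delta_r(q))$ with $C$ a good constant.

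\emph{Case $|x-q|<C_0r$.} This forces $r>\delta(x)/C_0=:r_0>0$, so it suffices to treat $r\in(r_0,5r_\Omega)$, and since $\omega^x(\Delta_{2r}(q))\leq 1$ it is enough to produce a constant $c_x>0$ with $\omega^x(\Delta_r(q))\geq c_x$ for all such $r$ and all $q\in\partial\Omega$. First, $\omega^x$ charges every surface ball: by Lemma \ref{BelowForHarmonic} and a Harnack chain, $\omega^{x_1}(\Delta_s(p))>0$ for a fixed reference point $x_1$ and all $p,s$, and by Proposition \ref{HarmonicMeasureProperties}(i) $\omega^x$ and $\omega^{x_1}$ are mutually absolutely continuous, so $\omega^x(\Delta_s(p))>0$ as well. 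Cover $\partial\Omega$ by finitely many surface balls $\Delta_\rho(p_i)$ with $\rho$ a sufficiently small multiple of $r_0$; by the comparability of surface balls in a Lipschitz domain, every $\Delta_r(q)$ with $r>r_0$ contains one of the $\Delta_\rho(p_i)$, hence $\omega^x(\Delta_r(q))\geq\min_i\omega^x(\Delta_\rho(p_i))=:c_x>0$. Taking $C_x=\max(C,c_x^{-1})$ finishes the proof. The routine parts — combining the lemmas and counting exponents through the Harnack chain — are immediate; the one genuinely delicate point is precisely this last case, where the Green's function comparison is unavailable for $x$ near $q$ and one must instead use the mutual absolute continuity from Proposition \ref{HarmonicMeasureProperties} together with the compactness of $\partial\Omega$, which is what makes the doubling constant depend on $x$.
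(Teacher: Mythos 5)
The comparison estimate part is correct and is exactly what the paper does: combine Lemma \ref{GFromAbove} and Lemma \ref{GFromBelow}, noting that $\Omega\setminus B_{2r}(q)\subseteq\Omega\setminus B_{r/2}(A_r(q))$. The doubling part is also in the spirit of the paper's terse remark (comparison plus Harnack chain when $x$ is uniformly far from $\partial\Omega$ at scale $r$, plus a separate argument for larger $r$), and your lower-bound argument via a finite cover and the positivity of $\omega^x$ on every surface ball is a reasonable way to finish.

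However, there is a gap in the case split. Your ``non-trivial'' range is $2r<10r_\Omega$, but Lemmas \ref{GFromAbove} and \ref{GFromBelow} (hence the comparison estimate) are only available for scales strictly below $r_\Omega$. In Case 1 you apply the comparison at scales $r$ \emph{and} $2r$, which requires $2r<r_\Omega$, not merely $2r<10r_\Omega$. So for $r_\Omega/2\leq r<5r_\Omega$ and $|x-q|\geq C_0r$, the displayed equivalences $\omega^x(\Delta_{2r}(q))\simeq (2r)^{d-2}G(x,A_{2r}(q))$ and $\omega^x(\Delta_r(q))\simeq r^{d-2}G(x,A_r(q))$ are not justified by anything you have cited, and the Harnack-chain step has nothing to attach to. The fix is within your own proof: move the ``trivial'' threshold down to $2r\geq r_\Omega$, and observe that in the range $r\geq r_\Omega/2$ the lower bound $\omega^x(\Delta_r(q))\geq c_x$ from your Case 2 argument still applies (indeed here the lower bound on $r$ is $r_\Omega/2$, independent of $x$, so one only needs the cover radius $\rho$ to be small compared to both $\delta(x)$ and $r_\Omega$). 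With that adjustment the proof closes. A secondary, cosmetic point: Lemma \ref{ChainInequality} is stated for solutions in all of $\Omega$, whereas $y\mapsto G(x,y)$ is only a solution away from $x$; you correctly note that the chain stays away from $x$ when $C_0$ is large, so the underlying Harnack argument applies, but strictly speaking you are invoking the argument behind Lemma \ref{ChainInequality} rather than the lemma verbatim.
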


To obtain the fact that $\omega^x$ is doubling, we use the Harnack inequality for $r>0$ sufficiently small, such that $x\in\Omega\setminus B_{2r}(q)$ for every $q\in\partial\Omega$.

The previous connection between the harmonic measure and Green's function leads to the next lemma.
\begin{lemma}[Comparison Principle]\label{ComparisonPrinciple}
Let $u,v$ be positive solutions of $Lu=0$ in $\Omega$, which vanish continuously on $\Delta_{2r}(q)$, and $r<r_{\Omega}$. Then,
\[
\forall x\in T_r(q),\quad c_1^{-1}\frac{u(A_r(q))}{v(A_r(q))}\leq\frac{u(x)}{v(x)}\leq c_1\frac{u(A_r(q))}{v(A_r(q))}.
\]
\end{lemma}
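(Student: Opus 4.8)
The plan is to prove the Comparison Principle (boundary Harnack) by the standard three-ingredient argument: the Carleson estimate (Lemma~\ref{CarlesonEstimate}), the two-sided comparison of harmonic measure with Green's function (Proposition~\ref{GreenRelation}), and the Harnack chain inequality (Lemma~\ref{ChainInequality}). The key observation is that all of these have already been established for the operator $L$ in the present setting, so the classical NTA proof of \cite{JerisonKenigBoundary} goes through verbatim. By symmetry it suffices to prove one of the two inequalities, say $u(x)/v(x)\le c_1\,u(A_r(q))/v(A_r(q))$ for $x\in T_r(q)$; the other follows by interchanging $u$ and $v$.

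First I would normalize. After rescaling $u$ and $v$ by positive constants we may assume $u(A_r(q))=v(A_r(q))=1$, so the goal becomes $u(x)\le c_1 v(x)$ on $T_r(q)$. The upper bound on $u$ is the Carleson estimate: since $u$ is a nonnegative solution of $Lu=0$ vanishing continuously on $\Delta_{2r}(q)$, Lemma~\ref{CarlesonEstimate} gives $u(x)\le C\,u(A_r(q))=C$ for all $x\in T_r(q)$. For the lower bound on $v$, the plan is to compare $v$ with harmonic measure. Using Proposition~\ref{GreenRelation} applied at an interior point, together with the maximum principle, one shows that $v$ dominates a multiple of $\omega^{x}(\Delta_{cr}(q))$-type quantities on $T_r(q)$; concretely, pick a point $A=A_{r}(q)$ and a slightly smaller surface ball, and use that $r^{d-2}G(\cdot,A)$ is comparable to $\omega^{\cdot}(\Delta_r(q))$ away from $B_{2r}(q)$, while $G(\cdot,A)$ is itself a positive solution vanishing on a piece of the boundary. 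The mechanism: $v$ vanishes on $\Delta_{2r}(q)$ and $v(A_r(q))=1$, so by the maximum principle and Proposition~\ref{GreenRelation}, $v(x)\ge c\, r^{d-2}G(x,A_{r}(q))\ge c\,\omega^{x}(\Delta_{r/2}(q))$ for $x\in T_{r}(q)\setminus B_{r/4}(A_r(q))$, and on the remaining Harnack ball $B_{r/4}(A_r(q))$ one uses Harnack directly to get $v(x)\ge c$. Combining, $v(x)\ge c\,\omega^{x}(\Delta_{cr}(q))$ throughout $T_{cr}(q)$.

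Then I would close the loop: since $\omega^{x}$ is a probability measure, $\omega^{x}(\Delta_{cr}(q))$ cannot be too small on $T_{cr}(q)$ unless $x$ is near the boundary, and the exact same argument applied to $u$ (which also vanishes on $\Delta_{2r}(q)$ and has $u(A_r(q))=1$) combined with Lemma~\ref{GFromAbove} gives $u(x)\le C\,r^{d-2}G(x,A_r(q))\cdot(\text{const}) \le C\,\omega^{x}(\Delta_{2r}(q))\le C\,\omega^{x}(\Delta_{cr}(q))$ by the doubling property in Proposition~\ref{GreenRelation}. Dividing the bound $u(x)\le C\,\omega^{x}(\Delta_{cr}(q))$ by $v(x)\ge c\,\omega^{x}(\Delta_{cr}(q))$ yields $u(x)/v(x)\le c_1$ on $T_{cr}(q)$; a routine covering/Harnack-chain argument then upgrades $T_{cr}(q)$ to $T_r(q)$ (at the cost of enlarging the constant, still a good constant depending only on $d,\lambda,\|b\|_\infty,\diam(\Omega)$ and the Lipschitz character). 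Swapping $u\leftrightarrow v$ gives the lower inequality, completing the proof.

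The main obstacle I anticipate is organizing the interaction between the Carleson estimate and the harmonic-measure comparison so that both $u$ and $v$ are controlled by \emph{the same} harmonic-measure quantity $\omega^{x}(\Delta_{cr}(q))$ at the same scale; this requires carefully choosing the radii ($r$ vs $cr$ vs $2r$) and invoking the doubling property from Proposition~\ref{GreenRelation} to reconcile them. A secondary technical point is passing from the estimate on the shrunken region $T_{cr}(q)$ to all of $T_r(q)$: near the "top" of the cylinder the two solutions are comparable by an interior Harnack chain connecting any such point to $A_r(q)$ with a bounded number of steps (bounded in terms of the Lipschitz character), which is exactly what Lemma~\ref{ChainInequality} provides, so this step is routine but must be stated.
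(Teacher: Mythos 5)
Your high-level plan (normalize at $A_r(q)$, Carleson for the upper bound on $u$, Harnack plus maximum principle for the lower bound on $v$, divide) matches the structure of the paper's proof, but there is a genuine gap in the step that gives the upper bound on $u$, and it stems from routing the comparison through the \emph{global} harmonic measure $\omega^x$ of $\Omega$ (and Green's function $G(\cdot,A_r(q))$) instead of the harmonic measure $\omega_T^x$ of the sub-domain $T=T_r(q)$, which is what the paper uses.

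The lower bound on $v$ that you sketch is fine: $v$ is a positive solution in $\Omega$ vanishing on $\Delta_{2r}(q)$, so $v\ge 0 = r^{d-2}G(\cdot,A_r(q))$ on $\partial\Omega$ and $v\gtrsim 1 \gtrsim r^{d-2}G(\cdot,A_r(q))$ on a Harnack sphere about $A_r(q)$, and the maximum principle in $\Omega\setminus B_{cr}(A_r(q))$ gives $v\ge c\,r^{d-2}G(\cdot,A_r(q))$ there. But the companion upper bound $u(x)\le C\,r^{d-2}G(x,A_r(q))$ that you assert ``by the exact same argument combined with Lemma~\ref{GFromAbove}'' does not follow and is in fact false in this setting. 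The maximum principle comparison would require $u\le C\,r^{d-2}G(\cdot,A_r(q))$ on the boundary of the comparison domain. In $\Omega\setminus B_{cr}(A_r(q))$ this fails on $\partial\Omega\setminus\Delta_{2r}(q)$, where $u$ is strictly positive (it is only assumed to vanish on $\Delta_{2r}(q)$) while $G(\cdot,A_r(q))$ vanishes identically on $\partial\Omega$. Restricting to $T_r(q)\setminus B_{cr}(A_r(q))$ does not save it either: on the lateral part of $\partial T_r(q)$ inside $\Omega$, Carleson gives $u\simeq 1$ but $G(\cdot,A_r(q))$ decays to $0$ as one approaches $\partial\Omega$, so there is no constant for which $u\le C\,r^{d-2}G(\cdot,A_r(q))$ holds there. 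Indeed ``the exact same argument'' (maximum principle against $G$) applied to $u$ would only yield $u\ge c\,r^{d-2}G(\cdot,A_r(q))$, i.e.\ the wrong direction. There is also a secondary obstruction even if one could get an upper bound by $G$: the two-sided comparison $\omega^x(\Delta_r(q))\simeq r^{d-2}G(x,A_r(q))$ in Proposition~\ref{GreenRelation} and Lemma~\ref{GFromBelow} is proved only for $x\notin B_{2r}(q)$, which excludes exactly the region $T_r(q)$ where you need to apply it; likewise the doubling constant of $\omega^x$ in Proposition~\ref{GreenRelation} degenerates as $x$ approaches $\partial\Omega$.

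The fix, and the route the paper takes, is to use the harmonic measure $\omega_T^x$ of the truncated domain $T=T_r(q)$ as the intermediate object rather than $\omega^x$ or $G(\cdot,A_r(q))$. The function $\omega_T^x(\partial T\setminus\partial\Omega)$ vanishes on $\Delta_r(q)$ and equals $1$ on the interior boundary $\partial T\setminus\partial\Omega$; since Carleson gives $u\le Cu(A_r(q))$ on all of $\partial T\setminus\partial\Omega$, the maximum principle in $T$ immediately gives $u(x)\le C\,u(A_r(q))\,\omega_T^x(\partial T\setminus\partial\Omega)$. For the lower bound, Harnack gives $v\ge c\,v(A_r(q))$ on the part $L_1$ of $\partial T\setminus\partial\Omega$ that is at distance $\gtrsim r$ from $\partial\Omega$, and the maximum principle then yields $v(x)\ge c\,v(A_r(q))\,\omega_T^x(L_1)$; since $L_1$ is a surface ball of $\partial T$ of radius comparable to $r$, doubling of $\omega_T^x$ (from Proposition~\ref{GreenRelation} applied to $T$) gives $\omega_T^x(\partial T\setminus\partial\Omega)\le C\,\omega_T^x(L_1)$, and the two estimates combine. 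This is the idea your proposal is missing.
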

\begin{proof}
For $r<r_{\Omega}$, consider the Lipschitz domain $T=T_r(q)$. Let $x\in T$, and consider the following partition of $\partial T\setminus\partial\Omega$: set
\begin{align*}
L_1=\{x\in\partial T\setminus\partial\Omega\bm{|}\delta(x)>cr\}\\
L_2=\{x\in\partial T\setminus\partial\Omega\bm{|}\delta(x)\leq cr\},
\end{align*}
where $\delta(x)$ denotes the distance from $x$ to $\partial\Omega$. Denote the harmonic measure for $T$ with respect to $x$ by $\omega^x_T$. First, $L_2$ contains a surface ball of radius comparable to $r$, therefore, from the doubling property of $\omega^x_T$ (proposition \ref{GreenRelation}), we obtain that
\[
\omega_T^x(\partial\Omega\setminus\partial T)=\omega^x_T(L_1\cup L_2)\leq C\omega^x_T(L_2)
\]
where $C$ is a good constant. Now, by lemma \ref{CarlesonEstimate}, since $u$ vanishes on $\Delta_{cr}(q)$, we obtain that $u(x)\leq Cu(A_r(q))$. Therefore, the function
\[
\frac{u(x)}{u(A_r(q))}-M\omega_T^x(\partial\Omega\setminus\partial T)
\]
is a solution in $T$, which is $0$ on $\partial\Omega$ (since $u$ vanishes there), and is nonpositive on $\partial\Omega\setminus\partial T$ (since the harmonic measure is equal to $1$ there). Therefore, we obtain that
\[
u(x)\leq C\omega_T^x(\partial\Omega\setminus\partial T)u(A_r(q))\,\,\,\forall x\in T.
\]
Also, if $x\in L_2$, since $L_2$ is about $r$-far from $\partial\Omega$, we obtain that $v(x)\geq Cv(A_r(q))$. Similarly, we obtain that
\[
v(x)\geq C\omega_T^x(L_2)v(A_r(q))\,\,\forall x\in T,
\]
which shows the second statement. By interchanging the roles of $u$ and $v$, we also obtain the first statement.
\end{proof}

\subsection{Maximal functions}
Fix $x_0\in\Omega$, and let $\omega=\omega^{x_0}$. For a Borel measure $\nu$ on $\partial\Omega$, we define the Hardy-Littlewood maximal function of $\nu$ with respect to $\omega$,
\[
M_{\omega}\nu(q)=\sup_{\Delta\ni q}\frac{1}{\omega(\Delta)}\int_{\Delta}d|\nu|=\sup_{\Delta\ni q}\frac{|\nu|(\Delta)}{\omega(\Delta)}.
\]
In particular, for a function $f\in L^1(\partial\Omega)$, we define
\[
M_{\omega}f(q)=\sup_{\Delta\ni q}\frac{1}{\omega(\Delta)}\int_{\Delta}|f|d\omega.
\]
From the doubling property of $\omega$, the usual estimates for the Hardy-Littlewood maximal function hold; that is, $M_{\omega}$ is weakly $(1,1)$ bounded, and strongly $(p,p)$ bounded, for $p\in(1,\infty]$ (see for example \cite{SteinHarmonic}).

Our goal is to express any solution $u$ of the equation $Lu=0$ in $\Omega$ as an integral of its boundary values, integrated with respect to the measure $\omega$. For this purpose, we define the kernel function $K(x,q)$, which is the Radon-Nikodym derivative of $\omega^x$ with repect to $\omega$ at the point $q\in\partial\Omega$; or, equivalently,
\begin{equation}\label{eq:KFormula}
K(x,q)=\lim_{r\to 0}\frac{\omega^x(\Delta_r(q))}{\omega(\Delta_r(q))}.
\end{equation}
This kernel function exists, and also $K(x,\cdot)\in L^1(\partial\Omega,d\omega)$, since $\omega^x$ is absolutely continuous with respect to $\omega$, which follows from proposition \ref{HarmonicMeasureProperties}. Then, for all $f\in L^1(\partial\Omega,d\omega^x)$,
\[
u(x)=\int_{\partial\Omega}f(q)d\omega^x(q)=\int_{\partial\Omega}f(q)K(x,q)d\omega(q).
\]

To obtain the relation between the maximal functions defined above and the nontangential maximal function, we will need estimates on the kernel $K$. For this purpose, we first prove the Carleson-Hunt-Wheeden lemma (lemma 4.11 in \cite{JerisonKenigBoundary}), which will follow from the comparison principle.

\begin{lemma}\label{OmegaComparison}
Suppose that $r<r_{\Omega}$, and let $\Delta=\Delta(q_0,r)$, $\Delta'=\Delta(q,s)\subseteq\Delta(q_0,r/2)$. Then, there exists $C=C(M)$ such that, for all $x\in\Omega\setminus B(q_0,2r)$, we have that
\[
C\omega^{A_r(q_0)}(\Delta')\leq\frac{\omega^x(\Delta')}{\omega^x(\Delta)}\leq C^{-1}\omega^{A_r(q_0)}(\Delta').
\]
\end{lemma}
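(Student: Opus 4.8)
The plan is to reduce the claimed two-sided estimate to the Comparison Principle (lemma \ref{ComparisonPrinciple}) applied to two suitable positive solutions in the surface ball $\Delta=\Delta(q_0,r)$. Fix $q_0\in\partial\Omega$ and $r<r_\Omega$, and abbreviate $A=A_r(q_0)$. Consider the two functions of $x\in\Omega$ given by $u(x)=\omega^x(\Delta')$ and $v(x)=\omega^x(\Delta)$. By proposition \ref{HarmonicMeasureProperties}(ii), both $u$ and $v$ are solutions of $Lu=0$ in $\Omega$; since $\Delta'\subseteq\Delta(q_0,r/2)$, both $u$ and $v$ vanish continuously on the portion $\partial\Omega\setminus\overline{\Delta}$ of the boundary, and in particular on $\Delta_{2s}(p)$ whenever $\Delta_{2s}(p)\subseteq \partial\Omega\setminus\overline\Delta$; more to the point, they vanish on the set $\Delta_{?}(q)$ for any surface ball around a point $q$ at distance $\gtrsim r$ from $q_0$. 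The key observation is that the region $\Omega\setminus B(q_0,2r)$, where $x$ is allowed to lie, sits "across" a surface ball on which both $u$ and $v$ vanish, so the Comparison Principle is applicable there.

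Concretely, first I would fix a point $q_1\in\partial\Omega$ with $|q_1-q_0|$ comparable to $r$ (for instance at distance roughly $3r/2$), chosen so that $\Delta_{2\rho}(q_1)\subseteq\partial\Omega\setminus\overline{\Delta(q_0,r)}$ for a radius $\rho$ comparable to $r$. Then $u$ and $v$ are positive solutions of $Lu=0$ in $\Omega$ vanishing continuously on $\Delta_{2\rho}(q_1)$, so lemma \ref{ComparisonPrinciple} gives, for all $x\in T_\rho(q_1)$,
\[
c_1^{-1}\frac{u(A_\rho(q_1))}{v(A_\rho(q_1))}\le\frac{u(x)}{v(x)}\le c_1\frac{u(A_\rho(q_1))}{v(A_\rho(q_1))}.
\]
Next I would use a Harnack chain (lemma \ref{ChainInequality}) to compare the values at $A_\rho(q_1)$ with the values at $A=A_r(q_0)$: since both these corkscrew points are at distance $\gtrsim r$ from $\partial\Omega$ and at mutual distance $\lesssim r$, we get $u(A_\rho(q_1))\simeq u(A)$ and $v(A_\rho(q_1))\simeq v(A)$ with constants depending only on the allowed parameters, hence $u(x)/v(x)\simeq u(A)/v(A)$ for $x\in T_\rho(q_1)$. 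Recalling $v(x)=\omega^x(\Delta)$ and $u(x)=\omega^x(\Delta')$, and noting $v(A)=\omega^{A_r(q_0)}(\Delta)\simeq 1$ by lemma \ref{BelowForHarmonic} (applied with the doubling property, proposition \ref{GreenRelation}, to pass from $\Delta_r(q_0)$ to $\Delta(q_0,r)$), this reads
\[
\frac{\omega^x(\Delta')}{\omega^x(\Delta)}\simeq u(A)= \omega^{A_r(q_0)}(\Delta')
\]
for $x$ in the Whitney-type region $T_\rho(q_1)$. Finally, to reach all $x\in\Omega\setminus B(q_0,2r)$, I would observe that both sides of the desired inequality, as functions of $x$, are (up to the fixed constant $\omega^{A_r(q_0)}(\Delta')$) the ratio of two solutions vanishing on $\partial\Omega\setminus\overline\Delta$, so a further Harnack chain argument connecting an arbitrary $x\in\Omega\setminus B(q_0,2r)$ to a point of $T_\rho(q_1)$ through $\Omega$ — the chain staying a definite distance from $\Delta$ — propagates the comparison; alternatively one invokes the Comparison Principle once more directly on the domain $\Omega\setminus B(q_0,2r)$ with the vanishing taking place on $\partial\Omega\cap\partial(\Omega\setminus B(q_0,2r))$.

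The main obstacle I anticipate is the bookkeeping of the geometry: verifying that one can pick the auxiliary point $q_1$ and radius $\rho\simeq r$ so that simultaneously (a) $\Delta_{2\rho}(q_1)$ is disjoint from $\overline{\Delta(q_0,r)}$ so that $u,v$ genuinely vanish there, (b) the region $T_\rho(q_1)$ is nonempty and contains corkscrew points comparable to $A_r(q_0)$, and (c) every $x\in\Omega\setminus B(q_0,2r)$ can be joined to $T_\rho(q_1)$ by a Harnack chain of bounded length whose links avoid a neighborhood of $\Delta$, all with constants depending only on the Lipschitz character of $\Omega$, $d$, $\lambda$, $\|b\|_\infty$ and $\diam(\Omega)$. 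This is routine for Lipschitz (indeed NTA) domains but requires care; the analytic inputs — lemmas \ref{ChainInequality}, \ref{CarlesonEstimate}, \ref{BelowForHarmonic}, \ref{ComparisonPrinciple} and proposition \ref{GreenRelation} — are all already in hand, so no new estimate is needed.
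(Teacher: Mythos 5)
Your idea — apply the Comparison Principle to $u(x)=\omega^x(\Delta')$ and $v(x)=\omega^x(\Delta)$ as positive solutions vanishing on $\partial\Omega\setminus\overline{\Delta}$ — is natural, and the local part of your argument is correct: for $x$ in the single cylinder portion $T_\rho(q_1)$ with $\rho\simeq r$ you do get $\omega^x(\Delta')/\omega^x(\Delta)\simeq\omega^{A_r(q_0)}(\Delta')$. But the step that you describe as "bookkeeping" is in fact where the proof lives, and your proposed extension fails to give an $r$-independent constant. The Harnack chain from $T_\rho(q_1)$ to an arbitrary $x\in\Omega\setminus B(q_0,2r)$ must start at a corkscrew point at distance $\sim r$ from $\partial\Omega$; when $|x-q_0|$ is comparable to $\diam(\Omega)$ (or when $x$ lies deep in the interior), lemma \ref{ChainInequality} requires a chain of length $k\sim\log(\diam(\Omega)/r)$, and applying Harnack separately to $u$ and $v$ only preserves the ratio $u/v$ up to a factor $C^{2k}=(\diam(\Omega)/r)^{2\log C}$, which blows up as $r\to 0$. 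Since this lemma is invoked in lemma \ref{BoundsOnK} with $r$ replaced by $2^jr$ at all dyadic scales, an $r$-dependent constant is useless. Your "alternative" — applying the Comparison Principle directly on $\Omega\setminus B(q_0,2r)$ — is not licensed by lemma \ref{ComparisonPrinciple}, which is stated only locally in a fixed cylinder $T_r(q)$; asserting a scale-invariant global exterior boundary Harnack principle is precisely what is to be proved, not something one can patch together from naive Harnack chains.

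The paper's proof circumvents this by first converting the statement into one about Green's functions via proposition \ref{GreenRelation}, and then applying the Comparison Principle to $u(y)=G(A_r(q_0),y)$ and $v(y)=G(x,y)$ as functions of the \emph{second} variable $y$, in the single region $T_{c_0 r}(q_0)$ at the fixed scale $r$ near $q_0$. The arbitrary point $x$ appears only as a pole, so it never has to be reached by a chain; the only Harnack step needed is $v(A_{c_0r}(q_0))\simeq v(A_r(q_0))$, a comparison of values at two corkscrew points at mutual distance $\lesssim r$, both at distance $\gtrsim r$ from $\partial\Omega$ and from $x$, which has a bounded, $r$-independent constant. This reformulation through proposition \ref{GreenRelation} is the essential device missing from your argument.
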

\begin{proof}
Note that, from proposition \ref{GreenRelation}, we have to show that (since $x$ is far from $q_0$ and $q$),
\[
G(A_r(q_0),A_s(q))\simeq r^{2-d}\frac{G(x,A_s(q))}{G(x,A_r(q_0))},
\]
or, equivalently,
\[
\frac{G(A_r(q_0),A_s(q))}{G(x,A_s(q))}\simeq\frac{r^{2-d}}{G(x,A_r(q_0))}.
\]
For this purpose, set $u(y)=G(A_r(q_0),y)$ and $v(y)=G(x,y)$. Since $|A_r(q_0)-q_0|>c_0r$ and $|x-q_0|>2r$, this implies that $u$ and $v$ are solutions to $Lu=0$ in $T_{c_0r}(q_0)$, which also vanish on $\Delta_{c_0r}(q_0)$. Therefore, from the comparison principle, we obtain that
\[
\frac{G(A_r(q_0),A_s(q))}{G(x,A_s(q))}=\frac{u(A_s(q))}{v(A_s(q)}\simeq\frac{u(A_r(q_0))}{v(A_r(q_0))}=\frac{G(A_r(q_0),A_{c_0r}(q_0))}{G(x,A_{c_0r}(q))}.
\]
First, the distance from $A_r(q_0)$ to $A_{c_0r}(q_0)$ is comparable to $r$, and is also comparable to the distances of $A_r(q_0)$ and $A_{c_0r}(q_0)$ to $\partial\Omega$, therefore, from the pointwise bounds on Green's function, we obtain that
\[
G(A_r(q_0),A_{c_0r}(q_0))\simeq |A_r(q_0)-A_{c_0r}(q_0)|^{2-d}\simeq r^{2-d},
\]
therefore it only suffices to show that
\[
G(x,A_{c_0r}(q_0))\simeq G(x,A_r(q_0))\Leftrightarrow v(A_{c_0r}(q_0))\simeq v(A_r(q_0)).
\]
But, this is a corollary of lemma \ref{ChainInequality}, since $v$ is also a solution in $\Omega\setminus B_r(x)$.
\end{proof}
The previous lemma leads to the pointwise bound of the kernel $K$.
\begin{lemma}\label{BoundsOnK}
Let $q_0\in\partial\Omega$, and $A=A_r(q_0)$. Define also $\Delta_j=\Delta(q_0,2^jr)$, and $R_j=\Delta_j\setminus\Delta_{j-1}$. Then,
\[
\sup\{K(A,q)|q\in R_j\}\leq\frac{C2^{-\alpha j}}{\omega(\Delta_j)},
\]
where $C=C(\Omega)$, and $\alpha=\alpha(\Omega)$.
\end{lemma}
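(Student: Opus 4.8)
The plan is to reduce the pointwise kernel estimate to two standard ingredients: the Carleson--Hunt--Wheeden comparison of Lemma \ref{OmegaComparison}, and a geometric decay estimate for the harmonic measure $\omega^{A}$ of surface balls that are far from $q_0$.

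Fix $q\in R_j$, so $2^{j-1}r\le|q-q_0|<2^jr$; write $\omega=\omega^{x_0}$, $A=A_r(q_0)$, and recall the corkscrew properties $|A-q_0|\simeq r$, $\delta(A)\simeq r$. I would apply Lemma \ref{OmegaComparison} \emph{centered at $q$} at the scale $\rho=2^{j-3}r$: once $j$ exceeds a threshold depending only on the Lipschitz character (so that $B(q,2\rho)$ avoids both $A$ and the fixed pole $x_0$), it gives, for every small $s$,
\[
\frac{\omega^A(\Delta_s(q))}{\omega^A(\Delta_\rho(q))}\simeq\omega^{A_\rho(q)}(\Delta_s(q))\simeq\frac{\omega(\Delta_s(q))}{\omega(\Delta_\rho(q))},
\]
with constants depending only on the Lipschitz character. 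Dividing and letting $s\to0$ in the defining limit \eqref{eq:KFormula} yields $K(A,q)\simeq\omega^A(\Delta_\rho(q))/\omega(\Delta_\rho(q))$, reducing everything to bounding these two quantities.

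For the numerator, every $p\in\Delta_\rho(q)$ satisfies $|p-q_0|\ge|q-q_0|-\rho\ge2^{j-1}r-2^{j-3}r>2^{j-2}r$, so $\Delta_\rho(q)\subseteq\partial\Omega\setminus\Delta_{j-2}$, and it suffices to prove the geometric decay $\omega^{A_r(q_0)}\big(\partial\Omega\setminus\Delta(q_0,2^{k}r)\big)\le C2^{-\alpha k}$ with $k=j-2$. For this I would apply the boundary H\"older estimate (Proposition \ref{HolderOnTheBoundary}) to $u(x)=\omega^x(\partial\Omega\setminus\Delta(q_0,2^{k}r))$, which by Proposition \ref{HarmonicMeasureProperties}(ii) is a solution of $Lu=0$ with $0\le u\le1$ vanishing on $\Delta(q_0,2^{k}r)$: with $R=2^{k-1}r$ and $s\simeq r$ chosen so that $A_r(q_0)\in T_s(q_0)$ (again valid once $j$ is large), and using $u(q_0)=0$, one gets $u(A_r(q_0))\le\osc_{T_s(q_0)}u\le C(s/R)^{\alpha}\le C'2^{-\alpha j}$, where $\alpha$ is the boundary H\"older exponent. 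For the denominator, since $\Delta_j\subseteq\Delta(q,2^{j+1}r)$ and $\Delta_\rho(q)\subseteq\Delta_{j+1}$ are surface balls of comparable radius, the doubling property of $\omega$ from Proposition \ref{GreenRelation} gives $\omega(\Delta_\rho(q))\simeq\omega(\Delta_j)$. Combining the three estimates gives $K(A,q)\le C2^{-\alpha j}/\omega(\Delta_j)$, uniformly in $q\in R_j$. The finitely many small values of $j$ (those below the thresholds above, and the range where $2^jr\gtrsim r_\Omega$ so that $\Delta_j$ saturates) are treated directly by the same comparison argument together with $\omega^A(\Delta_\rho(q))\le1$ and $2^{-\alpha j}\simeq1$, after adjusting the constant.

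The only real obstacle is bookkeeping: pinning down exactly which $j$ are large enough for $B(q,2\rho)$ to avoid $A$ and $x_0$ and for $A_r(q_0)$ to lie in $T_s(q_0)$ at scale $s\simeq r\le R$, and checking that every comparison constant ($\omega^A$ versus $\omega$, the doubling of $\omega$, and the H\"older exponent/constant) is uniform in both $j$ and $q\in R_j$; the conceptual content is entirely supplied by Lemma \ref{OmegaComparison} and Propositions \ref{HolderOnTheBoundary} and \ref{GreenRelation}.
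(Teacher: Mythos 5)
Your proposal is correct, and it follows a genuinely different route from the paper's. The paper applies Lemma \ref{OmegaComparison} centered at $q_0$ at scale $2^{j}r$, which gives $\omega^{A_{2^jr}(q_0)}(\Delta_s(q))\simeq\omega(\Delta_s(q))/\omega(\Delta_j)$; it then relates the actual pole $A=A_r(q_0)$ to the large-scale corkscrew $A_{2^{j-1}r}(q_0)$ by applying the boundary H\"older/Carleson estimate to $u(x)=\omega^x(\Delta_s(q))$, and only divides by $\omega(\Delta_s(q))$ and sends $s\to 0$ at the very end. You instead apply the comparison lemma centered at the point $q$ itself at scale $\rho=2^{j-3}r$, instantiated for \emph{both} poles $A$ and $x_0$, which immediately collapses the small scale and gives $K(A,q)\simeq\omega^{A}(\Delta_\rho(q))/\omega(\Delta_\rho(q))$; the decay is then extracted not from the kernel ratio at scale $s$ but from the clean large-scale estimate $\omega^{A}(\partial\Omega\setminus\Delta_{j-2})\le C2^{-\alpha j}$, and the denominator is handled by a doubling step the paper does not need. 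Both arguments rest on the same three pillars (the Carleson--Hunt--Wheeden comparison, the boundary H\"older estimate with Carleson, and the doubling of $\omega$), and the boundary-case bookkeeping (the range of $j$ where $\rho<r_\Omega$, the finitely many saturated scales) is handled identically. Your version is slightly cleaner conceptually because the $s\to 0$ limit is dispatched in the first step, at the cost of using \ref{OmegaComparison} twice and one extra application of doubling; the one spot to be careful about is that $u(q_0)=0$ must be read as ``$u$ vanishes continuously at $q_0$'', which follows from the H\"older estimate itself applied at a scale between $\delta(A)$ and $R$, so that the passage from $\osc_{T_s(q_0)}u$ to $\sup_{T_s(q_0)}u$ is justified.
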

\begin{proof}
Suppose first that $j$ is such that $2^{j+1}r<\min\{2r_0,|x-q_0|\}=M_0$.

Consider $q\in R_j$, and let $s>0$. Then, if $s$ is sufficiently small, we have that $\Delta_s(q)\subseteq R_j\subseteq\Delta_j$. Therefore, from lemma \ref{OmegaComparison}, if $A_j=A_{2^jr}(q_0)$, we obtain that
\[
\frac{\omega^x(\Delta_s(q))}{\omega^x(\Delta_j)}\simeq\omega^{A_j}(\Delta_s(q)).
\]
Now, since $\Delta_s(q)\cap\Delta_j=\emptyset$, the function $u(x)=\omega^x(\Delta_s(q))$ is a positive solution in $T_{2^{j-1}r}(q_0)$ which vanishes continuously on $\Delta_j$, hence, from proposition \ref{HolderOnTheBoundary} and lemma \ref{CarlesonEstimate},
\[
\omega^A(\Delta_s(q))=u(A)\leq Cu(A_{j-1})\left(\frac{|A_{j-1}-A|}{2^{j-1}r}\right)^{\beta}\leq C\frac{\omega^x(\Delta_s(q))}{\omega^x(\Delta_j)}2^{-\beta j},
\]
from the similarity estimate above. This shows that, for sufficiently small $s$,
\[
\frac{\omega^A(\Delta_s(q))}{\omega^x(\Delta_s(q))}\leq\frac{C2^{-\beta j}}{\omega(\Delta_j)},
\]
and the result follows by taking the limit as $s\to 0$.

For the rest of the $j$, since we have finitely many $j$ such that $R_j\neq\emptyset$ and $2^{j+1}r\geq M_0$, it is enough to show that
\[
\sup\{K(A,q)|q\in\partial\Omega\setminus\Delta_{M_0/2}(q_0)\}\leq C.
\]
For this purpose, consider a ball $\Delta_s(q)$, with $\Delta_s(q)\subseteq\Omega\setminus\Delta_{M_0/2}(q_0)$. Then, similarly to the above,
\[
\omega^A(\Delta_s(q))=u(A)\leq Cu(A_{M_0/4}(q_0))\leq C u(x)=\omega^x(\Delta_s(q)),
\]
where we also used Harnack's inequality. The result now follows again by taking the limit as $s\to 0$.
\end{proof}

The last lemma leads to the following theorem, which relates the maximal functions $M_{\omega}f$ and $u^*$.

\begin{thm}\label{MaximalBounding}
Let $\nu$ be a finite Borel measure on $\partial\Omega$ and set
\[
u(x)=\int_{\partial\Omega}K(x,q)d\nu(q).
\]
Then, $u^*\leq CM_{\omega}\nu$. In addition, if $\nu$ is positive, we have that $M_{\omega}\nu\leq Cu^*$.
\end{thm}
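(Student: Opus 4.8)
The plan is to prove the two inequalities separately, both exploiting the pointwise bounds on the kernel $K$ from Lemma~\ref{BoundsOnK} together with the dyadic decomposition of $\partial\Omega$ around a boundary point. Fix $q_0\in\partial\Omega$ and a point $x\in\Gamma_i(q_0)$, and let $r=\delta(x)$, so that $x$ is comparable (in the Harnack sense) to the corkscrew point $A=A_r(q_0)$: by Lemma~\ref{ChainInequality} we have $K(x,q)\simeq K(A,q)$ for all $q\in\partial\Omega$, with constants depending only on the aperture of the cones and the good constants. Hence it suffices to bound $u(A)=\int_{\partial\Omega}K(A,q)\,d\nu(q)$ from above by $CM_\omega\nu(q_0)$ and, when $\nu\geq 0$, from below by $cM_\omega\nu(q_0)$. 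This reduction is exactly where the regularity of the cone family $\{\Gamma(q)\}$ enters.

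\textbf{The upper bound.} First I would decompose $\partial\Omega=\bigcup_{j\geq 0}R_j$ with $R_j=\Delta_j\setminus\Delta_{j-1}$, $\Delta_j=\Delta(q_0,2^j r)$ (and $R_0=\Delta_0$). Using Lemma~\ref{BoundsOnK}, $\sup_{q\in R_j}K(A,q)\leq C2^{-\alpha j}/\omega(\Delta_j)$, so
\[
u(A)=\sum_{j\geq 0}\int_{R_j}K(A,q)\,d\nu(q)\leq\sum_{j\geq 0}\frac{C2^{-\alpha j}}{\omega(\Delta_j)}\,|\nu|(\Delta_j)\leq\sum_{j\geq 0}C2^{-\alpha j}\,M_\omega\nu(q_0),
\]
since $|\nu|(\Delta_j)/\omega(\Delta_j)\leq M_\omega\nu(q_0)$ by definition of the maximal function (each $\Delta_j$ contains $q_0$). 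The geometric series converges because $\alpha>0$, giving $u(A)\leq CM_\omega\nu(q_0)$; the $j=0$ term requires only the crude bound $K(A,q)\leq C/\omega(\Delta_0)$, which also follows from Lemma~\ref{BoundsOnK}. Taking the supremum over $x\in\Gamma_i(q_0)$ and then over $q_0$ yields $u^*\leq CM_\omega\nu$. (For $x$ far from the boundary, where no small $r=\delta(x)$ exists, one instead compares directly to the fixed pole $x_0$ via Harnack, which only improves the estimate.)

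\textbf{The lower bound for positive $\nu$.} Now assume $\nu\geq 0$, so $u\geq 0$. Fix $q_0$ and a surface ball $\Delta=\Delta_r(q_0)$ with $r<r_\Omega$; I want $\nu(\Delta)/\omega(\Delta)\leq Cu^*(q_0)$, and then take the supremum over $\Delta$. The idea is to test $u$ at the corkscrew point $A=A_r(q_0)\in\Gamma_i(q_0)$ (after enlarging the aperture of the cones if necessary, which is harmless since the statement is for a fixed regular family): then $u^*(q_0)\geq u(A)=\int_{\partial\Omega}K(A,q)\,d\nu(q)\geq\int_{\Delta}K(A,q)\,d\nu(q)$. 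So it remains to show $K(A,q)\gtrsim 1/\omega(\Delta)$ for $\omega$-a.e.\ $q\in\Delta$, equivalently $\omega^A(\Delta'(q))\gtrsim\omega(\Delta'(q))/\omega(\Delta)$ for small surface balls $\Delta'(q)\ni q$ inside $\Delta$. This follows from Lemma~\ref{OmegaComparison} applied with the pole $x=x_0$ (the center of $\omega$): it gives $\omega^{x_0}(\Delta'(q))/\omega^{x_0}(\Delta)\simeq\omega^{A}(\Delta'(q))$, i.e.\ exactly $K(A,q)\simeq 1/\omega(\Delta)$ on $\Delta$, with a constant depending only on $M$ — provided $x_0\notin B(q_0,2r)$, which holds for $r$ small, and for large $r$ one again uses Harnack to pass from $x_0$ to $A_r(q_0)$. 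Hence $u(A)\geq c\,\nu(\Delta)/\omega(\Delta)$, and taking the supremum over $\Delta$ gives $M_\omega\nu(q_0)\leq Cu^*(q_0)$.

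\textbf{Main obstacle.} The routine part is the geometric-series bookkeeping in the upper bound; the delicate point is the lower bound, specifically making sure that the corkscrew point $A_r(q_0)$ genuinely lies in the nontangential cone $\Gamma_i(q_0)$ of the given regular family, and that the two-sided comparison $K(A_r(q_0),q)\simeq 1/\omega(\Delta_r(q_0))$ on $\Delta_r(q_0)$ holds uniformly in $r$. This is handled by Lemma~\ref{OmegaComparison} together with the Harnack chain Lemma~\ref{ChainInequality} to absorb the discrepancy between $x_0$ and the corkscrew points, and by the fact that the definition of a regular cone family built from the coordinate cylinders places $A_r(q)$ inside $\overline{\Gamma(q)}$ for $r$ comparable to $\delta(A_r(q))$ — so only constants, not the structure of the argument, depend on the aperture.
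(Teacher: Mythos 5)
Your proof is correct and follows essentially the same route as the paper's: dyadic decomposition of $\partial\Omega$ around the boundary point, Lemma~\ref{BoundsOnK} for the tail terms and a geometric series for the upper bound, and Lemma~\ref{OmegaComparison} to get $K\simeq 1/\omega(\Delta)$ on the innermost ball for the lower bound. The only cosmetic differences are that you set $r=\delta(x)$ rather than $r=|x-p|$ (comparable for nontangential $x$) and you make the Harnack transfer from $x$ to the corkscrew point $A_r(q_0)$ explicit, whereas the paper applies the kernel bounds directly at $x$ and leaves that comparison implicit.
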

\begin{proof}
Let $p\in\partial\Omega$, and $x\in\Gamma_{\alpha}(q)$. Set $r=|x-p|$, $\Delta_j=\Delta_{2^jr}(p)$, and $R_j=\Delta_j\setminus\Delta_{j-1}$. We then write
\[
u(x)=\int_{\partial\Omega}K(x,q)d\nu(q)=\int_{\Delta_{r/2}(p)}K(x,q)d\nu(q)+\sum_{j=0}^{\infty}\int_{R_j}K(x,q)d\nu(q).
\]
For the summands, note that, for fixed $q\in R_j$, from lemma \ref{BoundsOnK}, we obtain that
\[
\left|\int_{\Delta_j\setminus\Delta_{j-1}}K(x,q)d\nu(q)\right|\leq\frac{C2^{-\beta j}}{\omega(\Delta_j)}\int_{R_j}|d\nu|\leq C\frac{|\nu|(\Delta_j)}{\omega(\Delta_j)}\leq C 2^{-\beta j} M_{\omega}(p),
\]
therefore the sum is dominated by $CM_{\omega}(p)$. For the first integral, note that for $q\in\Delta_r(p)$, lemma \ref{OmegaComparison} and \eqref{eq:KFormula} show that
\[
K(x,q)\simeq\frac{1}{\Delta_r(p)},
\]
which concludes the first claim.

For the second claim, note that if $\nu$ is a positive measure, then for any $r>0$
\[
u(x)\geq\int_{\Delta_r(p)}K(x,q)\,d\nu(q)\geq\frac{C}{\omega(\Delta_r(p))}\int_{\Delta_r(p)}\,d\nu(q),
\]
which completes the proof.
\end{proof}
\section{The Dirichlet problem for $L$}
In this chapter we turn our attention to the Dirichlet problem for the equation $Lu=0$, with boundary data in $L^p(\partial\Omega)$. We will follow the method outlined in \cite{KenigCBMS}, and we will use the estimates in the previous chapter in order to obtain solvability.

\subsection{Formulation, and the weight property}
 
\begin{dfn}
Let $\Omega$ be a Lipschitz domain, and $p\in(1,\infty)$. We say that $D_p$ for $Lu=0$ is solvable in $\Omega$, if there exists $C>0$ such that, for every $f\in C(\partial\Omega)$, the solution $u:\Omega\to\mathbb R$ of $Lu=0$ in $\Omega$ with boundary data $f$ satisfies the estimate $\|u^*\|_{L^p(\partial\Omega)}\leq C\|f\|_{L^p(\partial\Omega)}$.
\end{dfn}

Alternatively, we could have defined solvability for $D_p$ such that the nontangential boundary values of the solution lie in $L^p(\partial\Omega)$, and the bound on the nontangential maximal function holds. However, our definition above will imply this property after a density argument, as we will show later (proposition \ref{DirichletSolvability}).

In the next theorem (which is analogous to theorem 1.7.3 in \cite{KenigCBMS}) we show that the Dirichlet problem is solvable if and only if the harmonic measure kernel satisfies a weight property. For this purpose, recall the space $B_p$, defined in definition \ref{Bp}.

\begin{thm}\label{DirichletToWeightsRelation}
Let $\Omega\subseteq\mathbb R^d$ be a Lipschitz domain, $A\in M_{\lambda,\mu}(\Omega)$, and $b\in L^{\infty}(\Omega)$. Suppose also that the classical Dirichlet problem is solvable in $\Omega$, and that $\omega<<\sigma$. Then $D_p$ is solvable in $\Omega$ if and only if the kernel $k=\frac{d\omega}{d\sigma}$ is in $B_{p'}(\partial\Omega)$. In this case, the constant in $D_p$ is comparable to the $B_{p'}$ norm of $k$, with the comparability constants depending only on $d,p,\lambda,\mu,\|b\|_{\infty}$, the Lipschitz constant of $\Omega$, and $\diam(\Omega)$.
\end{thm}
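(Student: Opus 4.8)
The plan is to follow the classical scheme of Kenig (Theorem 1.7.3 in \cite{KenigCBMS}), adapting each step to accommodate the drift term via the harmonic-measure estimates already proved in Chapter 6. Throughout, write $\omega=\omega^{x_0}$ for a fixed interior point $x_0$, and $k=d\omega/d\sigma$. The key structural facts I would rely on are: the representation $u(x)=\int_{\partial\Omega}f(q)K(x,q)\,d\omega(q)$; the pointwise comparison $\omega^x(\Delta_r(q))\simeq r^{d-2}G(x,A_r(q))$ (Proposition \ref{GreenRelation}); the maximal-function bound $u^*\leq CM_\omega\nu$ and its reverse for positive $\nu$ (Theorem \ref{MaximalBounding}); the doubling property of $\omega$; and the self-improvement of $B_p$ weights (Proposition \ref{BpProperty}).

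\textbf{Sufficiency ($k\in B_{p'}$ implies $D_p$).} Given $f\in C(\partial\Omega)$, set $\nu=f\,d\sigma$ and use Theorem \ref{MaximalBounding} to get $u^*(q)\leq CM_\omega\nu(q)$, where $M_\omega\nu(q)=\sup_{\Delta\ni q}\omega(\Delta)^{-1}\int_\Delta|f|\,d\sigma$. The point is that $M_\omega\nu$ is controlled by a weighted maximal function: using Hölder's inequality on each surface ball, $\frac{1}{\omega(\Delta)}\int_\Delta|f|\,d\sigma\leq\frac{\sigma(\Delta)}{\omega(\Delta)}\big(\fint_\Delta|f|^p\,d\sigma\big)^{1/p}$, and the $B_{p'}$ condition on $k$ combined with the reverse-Hölder self-improvement gives, after the standard computation, $\|M_\omega\nu\|_{L^p(\partial\Omega,d\sigma)}\leq C\|f\|_{L^p(\partial\Omega,d\sigma)}$; this is precisely the statement that $k\in B_{p'}$ is equivalent to boundedness of the relevant maximal operator from $L^p(d\sigma)$ to $L^p(d\sigma)$, an $A_\infty$/Muckenhoupt-duality fact that holds verbatim here since $\omega$ is doubling. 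Then $\|u^*\|_{L^p(d\sigma)}\leq C\|M_\omega\nu\|_{L^p(d\sigma)}\leq C\|f\|_{L^p(d\sigma)}$, which is solvability of $D_p$, with the $D_p$-constant bounded by the $B_{p'}$-norm of $k$.

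\textbf{Necessity ($D_p$ implies $k\in B_{p'}$).} Fix a surface ball $\Delta=\Delta_r(q_0)$ with $r<r_\Omega$, and let $A=A_r(q_0)$ be the corkscrew point. Apply the $D_p$ estimate to suitable test data concentrated near $\Delta$: choosing $f$ a normalized bump adapted to $\Delta$ and evaluating the solution at $A$ recovers, via the representation formula and Lemma \ref{OmegaComparison} (which gives $K(A,\cdot)\simeq\omega(\Delta)^{-1}$ on $\Delta$), a lower bound of the form $\omega(\Delta)^{-1}\int_\Delta f\,d\sigma\lesssim u(A)\lesssim u^*(q)$ for all $q$ in a fixed ball comparable to $\Delta$. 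Testing $D_p$ against $f=\mathrm{sgn}$-type approximations and using duality $L^p$–$L^{p'}$ on $\Delta$ converts this into the reverse-Hölder inequality $\big(\fint_\Delta k^{p'}d\sigma\big)^{1/p'}\leq C\fint_\Delta k\,d\sigma$; equivalently, one shows directly that for any Borel $E\subseteq\Delta$, $\big(\omega(E)/\omega(\Delta)\big)\leq C\big(\sigma(E)/\sigma(\Delta)\big)^{1/p'}$, which is the standard reformulation of $k\in B_{p'}$. The comparability of constants is tracked through Proposition \ref{GreenRelation} and Theorem \ref{MaximalBounding}, whose constants are good constants depending also on $p$.

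\textbf{Main obstacle.} The genuinely delicate point is the necessity direction: producing, from the single scalar inequality $\|u^*\|_p\leq C\|f\|_p$, the full reverse-Hölder inequality for $k$ uniformly over all surface balls. This requires carefully choosing the test functions $f$ (localized to $\Delta$, with controlled $L^p$ norm, and designed so that $\int_\Delta f\,d\sigma$ captures $\omega(E)$ for an arbitrary subset $E$), and then exploiting that $u^*(q)\gtrsim u(A)$ on a full ball around $q_0$ of comparable size so that the left-hand side $\|u^*\|_{L^p(\Delta)}^p\gtrsim \sigma(\Delta)\,u(A)^p$. The doubling property of $\omega$ and Lemma \ref{OmegaComparison} are what make $u(A)$ a faithful proxy for $\omega(E)/\omega(\Delta)$; everything else is the standard $A_\infty$-weight bookkeeping, which goes through because $\omega$ is doubling and absolutely continuous with respect to $\sigma$ by hypothesis. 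I would present the necessity argument in the ``$E\subseteq\Delta$'' formulation to avoid sign issues, then invoke the known equivalence with membership in $B_{p'}$.
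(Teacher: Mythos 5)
Your proposal follows the same overall strategy as the paper (Theorem~\ref{MaximalBounding} for both directions, plus the self-improvement of $B_{p'}$), but there are two concrete errors that matter.

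\textbf{Sufficiency: wrong measure $\nu$.} You set $\nu=f\,d\sigma$, but Theorem~\ref{MaximalBounding} controls the nontangential maximal function of $u(x)=\int_{\partial\Omega}K(x,q)\,d\nu(q)$, and the solution of the Dirichlet problem with data $f$ is $u(x)=\int f\,d\omega^x=\int fK(x,\cdot)\,d\omega$. So the correct choice is $d\nu=f\,d\omega$; with $\nu=f\,d\sigma$ you are bounding the maximal function of a different function. This is not merely notational: once you correct $\nu$, the quantity to bound is
\[
M_\omega\nu(q)=\sup_{\Delta\ni q}\frac{1}{\omega(\Delta)}\int_\Delta|f|\,d\omega=\sup_{\Delta\ni q}\frac{1}{\omega(\Delta)}\int_\Delta|f|\,k\,d\sigma,
\]
and the weight $k$ now appears inside, which is what lets $B_{p'}$ do any work. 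Your displayed H\"older estimate $\frac{1}{\omega(\Delta)}\int_\Delta|f|\,d\sigma\le\frac{\sigma(\Delta)}{\omega(\Delta)}(\fint_\Delta|f|^p)^{1/p}$ never sees $k$ and the factor $\sigma(\Delta)/\omega(\Delta)$ is uncontrolled. The paper's computation applies H\"older to $\int_\Delta|f|k\,d\sigma$ with the \emph{self-improved} exponents $s$ and $p'+\e$ (so that $s<p$ and later $p/s>1$ for Hardy--Littlewood); this is the step you are gesturing at with ``the standard computation'' but the self-improvement is precisely what makes the Hardy--Littlewood step legal, so it cannot be left implicit.

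\textbf{Necessity: wrong exponent and wrong ``reformulation.''} The set-condition you state, $\omega(E)/\omega(\Delta)\le C(\sigma(E)/\sigma(\Delta))^{1/p'}$, has the wrong exponent. From $k\in B_{p'}$, H\"older with $(p',p)$ gives $\omega(E)/\omega(\Delta)\le C(\sigma(E)/\sigma(\Delta))^{1/p}$ (note $(p')'=p$), and testing $D_p$ on $f=\chi_E$ gives exactly the $1/p$ exponent as well. More importantly, the set-condition only encodes a \emph{weak-type} $L^{p'}$ estimate on $k$ restricted to $\Delta$, not the strong reverse H\"older inequality; getting from there back to $B_{p'}$ requires a Gehring-type bootstrap and you lose the sharp exponent. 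The paper sidesteps this entirely by testing against \emph{arbitrary} positive continuous $f$ supported on $\Delta$ with $\|f\|_{L^p(\Delta)}\le1$: combining Theorem~\ref{MaximalBounding} (the reverse inequality $M_\omega\nu\le Cu^*$ for positive $\nu$) with the $D_p$ estimate gives $\int_\Delta kf\,d\sigma\le C\omega(\Delta)|\Delta|^{-1/p}$, and then $L^p$--$L^{p'}$ duality over such $f$ produces $\|k\|_{L^{p'}(\Delta)}\le C\omega(\Delta)|\Delta|^{-1/p}$ directly, which rearranges to $(\fint_\Delta k^{p'})^{1/p'}\le C\fint_\Delta k$. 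You should replace the indicator-only test and the cited ``equivalence'' with this direct duality argument.
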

\begin{proof}
Suppose that $k\in B_{p'}(\partial\Omega)$, and let $f\in C(\partial\Omega)$, and $u$ be the solution of $Lu=0$ with boundary data $f$. From the self improving property of $B_{p'}$ functions (proposition \ref{BpProperty}), there exists a constant $\e>0$, depending only on $d,p'$ and the $B_p'$ constant of $k$, such that $k\in B_{p'+\e}$. Therefore, if $\Delta$ is a surface ball and $s<p$ is the conjugate exponent to $p'+\e$, 
\begin{align*}
\frac{1}{\omega(\Delta)}\int_{\Delta}|f|\,d\omega&=\frac{1}{\omega(\Delta)}\int_{\Delta}|f|k\,d\sigma\leq\frac{1}{\omega(\Delta)}\left(\int_{\Delta}|f|^s\,d\sigma\right)^{1/s}\left(\int_{\Delta}k^{p'+\e}\,d\sigma\right)^{\frac{1}{p'+\e}}\\
&\leq C\left(\frac{1}{\sigma(\Delta)}\int_{\Delta}|f|^s\,d\sigma\right)^{1/s}\leq C(M_{\sigma}(|f|^s))^{1/s}.
\end{align*}
Therefore, $M_{\omega}f\leq C(M_{\sigma}(|f|^s))^{1/s}$. So, from the Hardy-Littlewood maximal theorem, since $p/s>1$, we obtain
\begin{align*}
\int_{\partial\Omega}|M_{\omega}f|^p\,d\sigma & \leq C\int_{\partial\Omega}\left(M_{\sigma}(|f|^s)\right)^{p/s}\,d\sigma\\
&\leq C\int_{\partial\Omega}\left(|f|^s\right)^{p/s}\,d\sigma=C\int_{\partial\Omega}|f|^p\,d\sigma.
\end{align*}
But, from theorem \ref{MaximalBounding}, applied to $\nu=f\omega$, we have that $u^*$ is bounded pointwise by $CM_{\omega}f$, therefore
\[
\int_{\Omega}|u^*|^p\,d\sigma\leq C\int_{\partial\Omega}|f|^p\,d\sigma,
\]
where $C$ depends on $s$ and the constant appearing in the $B_{p'}$ property of $k$. Therefore, $D_p$ is solvable in $\Omega$.

Conversely, suppose that $D_p$ is solvable in $\Omega$. Let $\Delta\subseteq\partial\Omega$ be a surface ball, and consider a positive and continuous function $f:\partial\Omega\to\mathbb R$, which is supported on $\Delta$, with $\|f\|_{L^p(\Delta)}\leq 1$. Then, for any $q\in\Delta$,
\[
\frac{1}{\omega(\Delta)}\int_{\Delta}kf\,d\sigma=\frac{1}{\omega(\Delta)}\int_{\Delta}f\,d\omega\leq M_{\omega} f(q).
\]
Raise this relation to the $p$ power, and integrate it for $q\in\Delta$: then, if $u$ is the solution to $Lu=0$ in $\Omega$ with boundary data $f$,
\begin{align*}
\left(\frac{1}{\omega(\Delta)}\int_{\Delta}kf\,d\sigma\right)^p&\leq\frac{1}{|\Delta|}\int_{\Delta}|M_{\omega} f(q)|^p\,d\sigma(q)\\
&\leq\frac{C}{|\Delta|}\int_{\partial\Omega}|u^*|^p\,d\sigma\leq\frac{C}{|\Delta|}\int_{\partial\Omega}|f|^p\,d\sigma\leq\frac{1}{|\Delta|},
\end{align*}
from theorem \ref{MaximalBounding} (since $f\geq 0$, so the measure $\nu=f\omega$ is positive) and the fact that $D_p$ is solvable in $\Omega$. Therefore,
\[
\int_{\Delta}kf\,d\sigma\leq\frac{C\omega(\Delta)}{|\Delta|^{1/p}},
\]
hence, by duality,
\[
\left(\int_{\Delta}k^{p'}\,d\sigma\right)^{1/p'}\leq\frac{C\omega(\Delta)}{|\Delta|^{1/p'}}\Rightarrow\left(\frac{1}{|\Delta|}\int_{\Delta}k^{p'}\,d\sigma\right)^{1/p'}\leq\frac{C\omega(\Delta)}{|\Delta|}=\frac{C}{|\Delta|}\int_{\Delta}k\,d\sigma,
\]
therefore $k\in B_{p'}$, with constant $C$ only depending on the constant appearing in $D_p$.
\end{proof}

The monotonicity property of $B_{p'}$ weights leads to the following corollary.

\begin{cor}\label{Range}
If $D_p$ is solvable in $\Omega$ for some $p\in(1,\infty)$, then there exists $\e>0$ such that $D_q$ is solvable in $\Omega$ for all $q\in(p-\e,\infty)$.
\end{cor}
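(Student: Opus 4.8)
The plan is to deduce this from the weight characterization of solvability in Theorem \ref{DirichletToWeightsRelation} together with the self-improvement of Gehring classes in Proposition \ref{BpProperty}.

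First I would apply Theorem \ref{DirichletToWeightsRelation}: since the classical Dirichlet problem is solvable in $\Omega$ (Theorem \ref{WeakSolvability}) and $\omega\ll\sigma$, solvability of $D_p$ is equivalent to $k=\frac{d\omega}{d\sigma}\in B_{p'}(\partial\Omega)$, where $p'$ is the conjugate exponent of $p$. So the hypothesis gives $k\in B_{p'}(\partial\Omega)$, and Proposition \ref{BpProperty} produces a $\delta>0$, depending only on $d,p$ and the $B_{p'}$ constant of $k$, with $k\in B_{p'+\delta}(\partial\Omega)$.

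The second ingredient is the monotonicity of the classes $B_t(\partial\Omega)$ in $t$: if $g\in B_t(\partial\Omega)$ and $1\le s\le t$, then Jensen's inequality for the probability average $\fint_{\Delta_r(q)}\,d\sigma$ gives $\big(\fint_{\Delta_r(q)}|g|^s\,d\sigma\big)^{1/s}\le\big(\fint_{\Delta_r(q)}|g|^t\,d\sigma\big)^{1/t}\le C\fint_{\Delta_r(q)}|g|\,d\sigma$, so $g\in B_s(\partial\Omega)$ with the same constant. Applied with $g=k$ and $t=p'+\delta$, this shows $k\in B_s(\partial\Omega)$ for every $s\in\big(1,p'+\delta\big]$.

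It remains to convert the interval $\big(1,p'+\delta\big]$ of admissible values $s=q'$ into a range of $q$. Since $t\mapsto t'=t/(t-1)$ is continuous at $t=p$, choose $\e>0$ small enough that $p-\e>1$ and $(p-\e)'<p'+\delta$; then for every $q\in(p-\e,\infty)$ its conjugate exponent satisfies $1<q'<(p-\e)'<p'+\delta$, so $k\in B_{q'}(\partial\Omega)$ by the previous step, and a final application of Theorem \ref{DirichletToWeightsRelation} gives that $D_q$ is solvable in $\Omega$. The argument is essentially bookkeeping; the only points deserving attention are that conjugation reverses order (so the improvement is one-sided in $q$) and the verification of the hypothesis $\omega\ll\sigma$ needed for Theorem \ref{DirichletToWeightsRelation}, which follows from solvability of $D_p$ by approximating $\chi_K$ for compact $K\subseteq\partial\Omega$ with $\sigma(K)=0$ by continuous functions with small $L^p(\partial\Omega,\sigma)$ norm.
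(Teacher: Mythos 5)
Your argument is correct and is essentially the argument the paper has in mind; the paper's own justification is just the one-line remark that the corollary follows from the self-improving/monotonicity properties of the $B_{p'}$ classes via Theorem \ref{DirichletToWeightsRelation}, which is exactly what you spell out. The only caveat worth flagging: for your sketch that $D_p$-solvability yields $\omega\ll\sigma$ you need a Harnack chain to carry the smallness of $u_n$ near $\partial\Omega$ (where the nontangential maximal bound applies) back to the fixed pole $x_0$, since $x_0$ generally does not lie in any truncated cone $\Gamma(q)$; with that step the argument is complete.
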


\subsection{Solvability of the Dirichlet problem}
In this section, we turn to solvability of the Dirichlet problem, for symmetric matrices $A$. The results above show that, in order to show solvability for the range $(1,2+\e)$, it is enough to show that the harmonic measure kernel is in $B_2(\partial\Omega)$. This will be done first in smooth domains, and we will pass to Lipschitz domains using an approximation argument.

For the next lemma, we will assume that the ball $B$ that appears in lemma \ref{InnerRadius} is centered at $0$.

\begin{lemma}\label{B2Lemma}
Let $\Omega\subseteq\mathbb R^d$ be a bounded $C^{\infty}$ domain, and suppose that $A$ is smooth, $A\in M_{\lambda,\mu}^s(\Omega)$, and $b\in \Lip(\Omega)$. Then $k(q)=\frac{d\omega^0}{d\sigma}\in L^2(\partial\Omega)$, and, for all $r<r_{\Omega},s_{\Omega}$ and $q\in\partial\Omega$,
\[
\left(\frac{1}{\sigma(\Delta_r(q))}\int_{\Delta_r(q)}k^2\,d\sigma\right)^{\frac{1}{2}}\leq\frac{C}{\sigma(\Delta_r(q))}\int_{\Delta_r(q)}k\,d\sigma,
\]
where $C$ is a good constant that also depends on the $\Dr_{p_d}$-norm of $b$, and $s_{\Omega}$ appears in lemma \ref{InnerRadius}. (Here, $p_d=2$ for $d=3$, and $p_d=d/2$ for $d\geq 4$.)
\end{lemma}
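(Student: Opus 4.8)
The plan is to establish the reverse Hölder inequality for $k$ by relating $k$ to Green's function via the representation formula (Proposition \ref{HarmonicRepresentation}), and then applying the Rellich estimate for the adjoint operator $L^t$ (Proposition \ref{LocalRellichForAdjoint}) — this is why the symmetry of $A$ and the size assumption on $\dive b$ enter. First I would recall that since $\Omega$ is smooth, $A$ is smooth and $b$ is Lipschitz, Proposition \ref{HarmonicRepresentation} gives $d\omega^0(q) = -\partial^q_{\nu_{A^t}}G(0,q)\,d\sigma(q)$, so that $k(q) = -\partial^q_{\nu_{A^t}}G(0,q) = -\langle A^t(q)\nabla_q G(0,q),\nu(q)\rangle$. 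By the symmetry relation $G(0,q) = G^t(q,0)$ (Proposition \ref{SymmetryWithAdjoint}), writing $v(q) = G^t(q,0)$, the function $v$ is a solution of $L^t v = 0$ in $\Omega \setminus \{0\}$, vanishing on all of $\partial\Omega$; and $k = -\langle A^t\nabla v,\nu\rangle = -\partial_\nu v$ in the notation of Proposition \ref{LocalRellichForAdjoint}. Fix $q_0 \in \partial\Omega$ and $r < r_\Omega, s_\Omega$. Since $0$ lies inside the ball $B$ of Lemma \ref{InnerRadius} and $r < s_\Omega$, the pole $0$ is at distance comparable to $1$ (bounded below by a good constant) from $T_{2r}(q_0)$, so $v$ is a genuine $W^{2,2}$-solution near $\Delta_{2r}(q_0)$ and the interior estimates of Section 3 apply to it on $T_{2r}(q_0)$.

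The core of the argument is then the Rellich estimate. Applying Proposition \ref{LocalRellichForAdjoint} to $v$ at $q_0$ with radius $r$ gives
\[
\int_{\Delta_r(q_0)} |\partial_\nu v|^2\,d\sigma \leq C\int_{T_{2r}(q_0)} |\nabla_T v|^2 + C\int_{T_{2r}(q_0)} |\dive b|\,|v||\nabla v| + \frac{C}{r}\int_{T_{2r}(q_0)} |\nabla v|^2.
\]
Since $v$ vanishes on $\Delta_{2r}(q_0)$, the tangential gradient $\nabla_T v$ vanishes there as well, so the first term on the right is zero. For the last term I would use the Caccioppoli estimate (Lemma \ref{CacciopoliForAdjoint}, second part, applied to the nonnegative solution $v$ vanishing on $\Delta_{4r}(q_0)$, after a covering argument) to bound $\frac{1}{r}\int_{T_{2r}(q_0)}|\nabla v|^2$ by $\frac{1}{r^3}\int_{T_{4r}(q_0)} v^2$, and then use interior/boundary Hölder decay of $v$ (Proposition \ref{HolderOnTheBoundary}) together with the fact that $v(A_r(q_0)) \simeq r^{2-d}G(0,A_r(q_0)) \simeq r^{2-d}\omega^0(\Delta_r(q_0))$ (Proposition \ref{GreenRelation}) and the comparison principle (Lemma \ref{ComparisonPrinciple}/Lemma \ref{CarlesonEstimate}) to conclude that $\sup_{T_{2r}(q_0)} v \leq C r^{2-d}\omega^0(\Delta_r(q_0))$. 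This yields
\[
\frac{1}{r}\int_{T_{2r}(q_0)} |\nabla v|^2 \leq \frac{C}{r^{3}}\,r^{d}\,\big(r^{2-d}\omega^0(\Delta_r(q_0))\big)^2 = C\, r^{1-d}\,\omega^0(\Delta_r(q_0))^2 \simeq \frac{C}{\sigma(\Delta_r(q_0))}\,\omega^0(\Delta_r(q_0))^2,
\]
using $\sigma(\Delta_r(q_0)) \simeq r^{d-1}$. Dividing by $\sigma(\Delta_r(q_0))$ and recalling $\omega^0(\Delta_r(q_0)) = \int_{\Delta_r(q_0)} k\,d\sigma$, the contribution of this term to $\fint_{\Delta_r}k^2$ is exactly $\big(\fint_{\Delta_r} k\,d\sigma\big)^2$, which is the desired shape.

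The main obstacle is the middle term $\int_{T_{2r}(q_0)} |\dive b|\,|v||\nabla v|$, and this is precisely where the hypothesis $b \in \Dr_{p_d}$ is used. I would estimate $|v| \leq C r^{2-d}\omega^0(\Delta_r)$ as above, absorb $|\nabla v|$ via Cauchy–Schwarz into a small multiple of $\int_{T_{2r}} |\nabla v|^2$ (handled as before) plus a term of the form $\big(\int_{T_{2r}} |\dive b|^2 |v|^2\big)$, and then bound $\int_{T_{2r}(q_0)} |\dive b|^2$ using Hölder's inequality with exponent $p_d/( \cdot)$: for $d \geq 4$, $\int_{T_{2r}}|\dive b|^2 \leq \|\dive b\|_{L^{d/2}}^2 |T_{2r}|^{1 - 4/d} \leq C\|\dive b\|_{p_d}^2 r^{d-4}$, and for $d = 3$ one uses $p_d = 2$ directly with $\int_{T_{2r}}|\dive b|^2 \leq \|\dive b\|_2^2$ and the improved decay of $v$ (choosing the Hölder exponent $\beta$ in Proposition \ref{HolderOnTheBoundary} large enough, which costs a power of $r$ that compensates). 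In all cases the resulting bound is again $\leq C\,r^{1-d}\omega^0(\Delta_r(q_0))^2$ up to the good constant now also depending on $\|\dive b\|_{p_d}$, i.e. of the same form as the other terms. Combining, $\int_{\Delta_r(q_0)} k^2\,d\sigma \leq \frac{C}{\sigma(\Delta_r(q_0))}\big(\int_{\Delta_r(q_0)} k\,d\sigma\big)^2$, which is the claimed reverse Hölder inequality; the $L^2(\partial\Omega)$ membership of $k$ then follows by covering $\partial\Omega$ by finitely many such surface balls. The delicate point throughout is tracking that every constant produced is a \emph{good constant} (depending only on $d,\lambda,\mu,\|b\|_\infty,\diam\Omega$, the Lipschitz character) together with the extra dependence on $\|\dive b\|_{p_d}$, and in particular that none of the estimates secretly use derivatives of $b$ beyond its distributional divergence — this is what makes the later passage to non-smooth $b$ and non-symmetric $A$ possible.
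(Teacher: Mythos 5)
Your proposal follows the same overall strategy as the paper's proof: identify $k = -\partial_\nu G^t(\cdot,0)$ via Proposition~\ref{HarmonicRepresentation}, apply the adjoint Rellich estimate (Proposition~\ref{LocalRellichForAdjoint}) on $T_{2r}(q_0)$ where $G^t$ solves $L^t G^t = 0$ because $0$ stays far from $T_{2r}(q_0)$, kill the tangential term, control the Caccioppoli term and the $\dive b$ term by Carleson's estimate together with $\sup_{T_{2r}} G^t \lesssim G^t(A_r(q_0))$, and convert to $\omega^0(\Delta_r(q_0))$ through Lemma~\ref{GFromAbove}. The decompositions differ only cosmetically (Cauchy with $\delta$ versus pulling out $\sup G^t$ first and applying Cauchy--Schwarz), and both land on the same $r^{1-d}\,\omega^0(\Delta_r(q_0))^2$ bound.

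The one point that needs correcting is your handling of $d=3$. You suggest invoking ``improved decay of $v$ by choosing the H\"older exponent $\beta$ in Proposition~\ref{HolderOnTheBoundary} large enough.'' That exponent comes from De Giorgi--Nash--Moser and is fixed by $d$, $\lambda$ and $\|b\|_\infty$; it cannot be dialed up. Fortunately no such maneuver is needed: after Caccioppoli the $\dive b$ term contributes a factor $r^{d/2-1}\|\dive b\|_{L^2(T_{2r}(q_0))}(G^t(A_r(q_0)))^2$, and for $d=3$ this is $r^{1/2}\|\dive b\|_{L^2}\,(G^t(A_r))^2 \le C(G^t(A_r))^2 = C r^{d-3}(G^t(A_r))^2$, using only that $r< r_\Omega$ is bounded and that $\|\dive b\|_{2}=\|\dive b\|_{p_3}$ is finite. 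That is, the exponent already has a spare $r^{1/2}$ in $d=3$ rather than a deficit to make up. The separate H\"older step with exponent $p=d/4$ is only needed in $d\ge 4$, exactly as you describe there.
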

\begin{proof}
Set $G^t(x)=G^t(x,0)$ to be Green's function for the equation $L^tu=0$, with pole at $0$. Then, from proposition \ref{HarmonicRepresentation}, we obtain that for all $q\in\partial\Omega$,
\[
d\omega(q)=-\partial_{\nu}G^t(q)\,d\sigma(q)\Rightarrow k(q)=-\partial_{\nu}G^t(q).
\]
Consider now $r>0$, with $r<r_{\Omega},s_{\Omega}$. Consider also the ball $B$ that appears in lemma \ref{InnerRadius}, which we assume it is centered st $0$. Then, $T_{2r}(q)\cap B=\emptyset$, therefore $G^t$ is a solution of $L^tG^t=0$ in $T_{2r}(q)$. From theorem 8.12 in \cite{Gilbarg}, $G^t\in W^{2,2}(\Omega\setminus B)$, hence the Rellich estimate for the adjoint equation (proposition \ref{LocalRellichForAdjoint}), is applicable. Note that $G^t$ vanishes on $\partial\Omega$, therefore $\nabla_TG^t\equiv 0$; hence, using Carleson's estimate (lemma \ref{CarlesonEstimate}) we obtain that
\begin{align*}
\int_{\Delta_r(q)}|\partial_{\nu}G^t|^2\,d\sigma&\leq 
\frac{C}{r}\int_{T_{2r}(q)}|\nabla G^t|^2+C\int_{T_{2r}(q)}|\dive b||G^t\cdot\nabla G^t|\\
&\leq\frac{C}{r}\int_{T_{2r}(q)}|\nabla G^t|^2+CG^t(A_r(q))\int_{T_{2r}(q)}|\dive b||\nabla G^t|\\
&\leq\frac{C}{r}\int_{T_{2r}(q)}|\nabla G^t|^2+CG^t(A_r(q))\left(\int_{T_{2r}(q)}|\dive b|^2\right)^{\frac{1}{2}}\left(\int_{T_{2r}(q)}|\nabla G^t|^2\right)^{\frac{1}{2}}
\end{align*}
since $G^t$ vanishes on $\partial\Omega$.  Now, from the boundary Cacciopoli inequality for the adjoint equation (lemma \ref{CacciopoliForAdjoint}) and lemma \ref{CarlesonEstimate}, we obtain that
\begin{align*}
\int_{\Delta_r(q)}\left|\partial_{\nu}G^t\right|^2\,d\sigma&\leq\frac{C}{r^3}\int_{T_{4r}(q)}|G^t|^2+CG^t(A_r(q))\left(\int_{T_{2r}(q)}|\dive b|^2\right)^{\frac{1}{2}}\left(\frac{C}{r^2}\int_{T_{4r}(q)}|G^t|^2\right)^{\frac{1}{2}}\\
&\leq\left(Cr^{d-3}+Cr^{d/2-1}\|\dive b\|_{L^2(T_{2r}(q))}\right)\left(G^t(A_r(q))\right)^2.
\end{align*}
We now consider the two cases $d=3$ and $d\geq 4$ separately.

If $d=3$, then we have shown that
\begin{align*}
\int_{\Delta_r(q)}\left|\partial_{\nu}G^t\right|^2\,d\sigma&\leq C\left(1+r^{1/2}\|\dive b\|_{L^2(T_{2r}(q))}\right)\left(G^t(A_r(q))\right)^2\leq C\left(G^t(A_r(q))\right)^2\\
&=Cr^{d-3}\left(G^t(A_r(q))\right)^2,
\end{align*}
where $C$ depends on the Lipschitz character of $\Omega$ (since $r<r_{\Omega}, s_{\Omega}$) and the $L^2$ norm of $\dive b$.

If, now, $d\geq 4$, then we apply H{\"o}lder's inequality for the exponent $p=d/4$, to obtain that
\[
\int_{T_{2r}(q)}|\dive b|^2\leq\left(\int_{T_{2r(q)}}|\dive b|^{d/2}\right)^{4/d}|T_{2r}(q)|^{1-4/d}\leq C\|\dive b\|_{d/2}^2r^{d-4},
\]
which implies that
\[
\int_{\Delta_r(q)}|\partial_{\nu}G^t|^2\,d\sigma\leq Cr^{d-3}\left(G^t(A_r(q))\right)^2=Cr^{d-3}\left(G(0,A_r(q))\right)^2,
\]
where $C$ also depends on the $d/2$-norm of $\dive b$. Therefore, in all cases, we have shown that
\[
\int_{\Delta_r(q)}|\partial_{\nu}G^t|^2\,d\sigma\leq Cr^{d-3}\left(G(0,A_r(q))\right)^2.
\]
But, from lemma \ref{GFromAbove}, the last quantity is bounded by
\[
Cr^{d-3}\left(G(0,A_r(q))\right)^2\leq Cr^{d-3}\left(r^{2-d}\omega(\Delta_r(q))\right)^2=Cr^{1-d}\omega(\Delta_r(q)).
\]
This finally shows that
\[
\int_{\Delta_r(q)}k^2\leq Cr^{1-d}(\omega(\Delta_r(q)))^2,
\]
which concludes the proof.
\end{proof}

For arbitrary Lipschitz domains and $b\in\Dr_{p_d}(\Omega)$, we will approximate with smooth domains to finally obtain that $k\in B_2(\partial\Omega)$, with the constants being good constants.

Denote by $L_{A,b}$ the operator $-\dive(A\nabla u)+b\nabla u$; then, the main approximation argument to pass to Lipschitz domains is contained in the next lemma.

\begin{lemma}\label{WeakMeasureConvergence}
Let $\Omega\subseteq\mathbb R^d$ be a Lipschitz domain, and consider the approximation scheme $\Omega_j\uparrow\Omega$ from theorem \ref{ApproximationScheme}, with $0\in\Omega_j$ for all $j$. Suppose that the following hold.
\begin{enumerate}[i)]
\item $A\in M_{\lambda,\mu}(\Omega)$, $A_j\in M_{\lambda,\mu}(\Omega_j)$, and $\chi_{\Omega_j}(A_j-A)\to 0$, almost everywhere in $\Omega$.
\item $\|b\|_{L^{\infty}(\Omega)},\|b_j\|_{L^{\infty}(\Omega_j)}\leq M$, and $\chi_{\Omega_j}(b_j-b)\to 0$, almost everywhere in $\Omega$.
\end{enumerate}
Let also $k$, $k_j$ denote the harmonic measure with respect to $0$ for $L=L_{A,b}$ in $\Omega$ and $L_j=L_{A_j,b_j}$ in $\Omega_j$, respectively. Then, for all $f\in C(\mathbb R^d)$,
\[
\int_{\partial\Omega_j}k_jf\,d\sigma_j\xrightarrow[j\to\infty]{}\int_{\partial\Omega}kf\,d\sigma.
\]
\end{lemma}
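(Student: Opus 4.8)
The plan is to realize $\int_{\partial\Omega_j}k_jf\,d\sigma_j$ and $\int_{\partial\Omega}kf\,d\sigma$ as values at $0$ of solutions to the classical Dirichlet problem, and to show these values converge. Concretely, let $u$ be the solution of $L_{A,b}u=0$ in $\Omega$ with boundary data $f|_{\partial\Omega}$ (from Theorem \ref{WeakSolvability}), and for each $j$ let $u_j$ be the solution of $L_{A_j,b_j}u_j=0$ in $\Omega_j$ with boundary data $f|_{\partial\Omega_j}$. By the definition of harmonic measure and of the kernels $k,k_j=\frac{d\omega^0}{d\sigma}$, we have
\[
\int_{\partial\Omega_j}k_jf\,d\sigma_j=\int_{\partial\Omega_j}f\,d\omega_j^0=u_j(0),\qquad \int_{\partial\Omega}kf\,d\sigma=\int_{\partial\Omega}f\,d\omega^0=u(0).
\]
So the statement is equivalent to $u_j(0)\to u(0)$, and the whole proof is a stability/convergence argument for solutions of the Dirichlet problem under simultaneous perturbation of the domain and the coefficients.

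First I would extend all the data to a common large ball $B_0\supseteq\overline{\Omega}$ (all $\Omega_j\subseteq\Omega$, so this is automatic) and record uniform bounds: by the maximum principle (Theorem \ref{MaximumPrinciple}) $\|u_j\|_{L^\infty(\Omega_j)}\le\|f\|_{L^\infty(\mathbb R^d)}$, uniformly in $j$. Extend each $u_j$ by, say, the harmonic-measure solution of the same boundary data — or more simply, just work on compact subsets: fix a compact $K\subseteq\Omega$; since $\Omega_j\uparrow\Omega$, for $j$ large $K\subseteq\Omega_j$, and the $u_j$ are uniformly bounded solutions of uniformly elliptic equations with $\|b_j\|_\infty\le M$ on $K$. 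Then the interior Caccioppoli inequality (Lemma \ref{Cacciopoli}) gives a uniform $W^{1,2}$ bound on a slightly larger compact set, and the equicontinuity result (Proposition \ref{Equicontinuity}) gives that $(u_j)$ is equicontinuous on $K$. Exhausting $\Omega$ by compacts and using a diagonal argument, we extract a subsequence $(u_{j_k})$ converging locally uniformly (and weakly in $W^{1,2}_{\loc}$) to some $v\in W^{1,2}_{\loc}(\Omega)$. Passing to the limit in the weak formulation $\int A_{j}\nabla u_{j}\nabla\phi+b_{j}\nabla u_{j}\cdot\phi=0$ for $\phi\in C_c^\infty(\Omega)$ — here using hypotheses (i), (ii), namely $\chi_{\Omega_j}(A_j-A)\to0$ and $\chi_{\Omega_j}(b_j-b)\to0$ a.e.\ together with the uniform bounds and dominated convergence — shows $L_{A,b}v=0$ in $\Omega$.

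The main obstacle is identifying the boundary values of $v$: we must show $v$ extends continuously to $\overline\Omega$ with $v=f$ on $\partial\Omega$, so that $v=u$ by uniqueness in Theorem \ref{WeakSolvability}, and then $u_{j_k}(0)=v(0)+o(1)\to u(0)$, and since every subsequence has a further subsequence with this limit, the full sequence converges. For the boundary behavior I would use a barrier argument uniform in $j$: at a point $q\in\partial\Omega$, the Lipschitz (hence Wiener-regular, exterior-cone) structure of $\Omega$ gives a supersolution-type barrier $w_q$ for $L_{A_j,b_j}$ on $T_r(q)\cap\Omega_j$ — quantitatively this is exactly the content of the boundary Hölder estimate, Proposition \ref{HolderOnTheBoundary}, applied to $u_j\mp f$ which satisfies $L_{A_j,b_j}(u_j\mp f)=\mp L_{A_j,b_j}f\in W^{-1,p}$ with norm controlled by $\|f\|_{\Lip}$ (so here one first approximates $f$ by Lipschitz functions and uses the maximum principle to reduce). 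This yields a modulus of continuity for $u_j$ at $\partial\Omega$ that is uniform in $j$ and depends only on the Lipschitz character, $\lambda,\mu,M$ and $\diam\Omega$; combined with $\sup_{q\in\partial\Omega}|q-\Lambda_j(q)|\to0$ and $f\in C(\mathbb R^d)$, this forces $v\to f$ at $\partial\Omega$. The one delicate point is that $L_{A_j,b_j}f$ is not literally zero, so the cleanest route is: approximate $f$ uniformly by $f^{(m)}\in\Lip(\mathbb R^d)$, note $\|u_j-u_j^{(m)}\|_\infty\le\|f-f^{(m)}\|_\infty$ by the maximum principle (and similarly for $u$), prove the convergence $u_j^{(m)}(0)\to u^{(m)}(0)$ first (where the uniform boundary barrier works via Proposition \ref{HolderOnTheBoundary} with $F=-L_{A_j,b_j}f^{(m)}$ bounded in $W^{-1,p}$ uniformly in $j$ since $f^{(m)}$ is Lipschitz), and then let $m\to\infty$ with a $3\varepsilon$ argument. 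This completes the proof.
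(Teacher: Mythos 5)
Your proposal is correct but takes a genuinely different route from the paper. The paper's key trick is to set the boundary data of $u_j$ on $\partial\Omega_j$ equal to $u$ (the solution in $\Omega$), \emph{not} to $f$. Since $u$ is Lipschitz on $\overline{\Omega_j}$ (interior gradient estimates), the difference $v_j=u_j-u$ lies in $W_0^{1,2}(\Omega_j)$ and solves $L_jv_j=-\dive\bigl((A_j-A)\nabla u\bigr)+(b_j-b)\nabla u$; Proposition \ref{GoodBoundOnSolutions} then yields $\|v_j\|_{W_0^{1,2}(\Omega_j)}\to 0$ directly, and $v_j(0)\to 0$ via interior equicontinuity. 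There is no boundary identification problem at all, because by construction $u_j$ and $u$ agree on $\partial\Omega_j$, and the passage from $\int k_ju\,d\sigma_j$ to $\int k_jf\,d\sigma_j$ is a soft argument using that $u-f\in C(\overline\Omega)$ vanishes on $\partial\Omega$ and $k_j$ is a probability measure. Your route instead uses $f$ itself as boundary data for $u_j$, so $u_j(0)=\int k_jf\,d\sigma_j$ immediately, but you must then identify the boundary values of the locally uniform limit $v$, which forces you to prove a boundary modulus of continuity that is uniform across the family $\Omega_j$; this is where Proposition \ref{HolderOnTheBoundary}, the uniformity of the Lipschitz character (and of $r_{\Omega_j}$, $\diam\Omega_j$) guaranteed by Theorem \ref{ApproximationScheme}, and the preliminary reduction to Lipschitz data $f^{(m)}$ all enter. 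Both arguments work; yours trades the (highly nontrivial) quantitative global $W^{1,2}$ bound of Proposition \ref{GoodBoundOnSolutions} for a uniform De Giorgi--Nash--Moser boundary estimate plus a compactness/diagonal extraction, and is arguably more robust (in particular, your $3\varepsilon$ reduction to Lipschitz $f^{(m)}$ neatly sidesteps the fact that for merely continuous $f$ the solution $u$ need not satisfy $\nabla u\in L^2(\Omega)$, which the paper's dominated-convergence step tacitly uses). The paper's argument is shorter once Proposition \ref{GoodBoundOnSolutions} is in hand, because it avoids all boundary regularity on the varying domains.
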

\begin{proof}
Let $u\in W^{1,2}_{\loc}(\Omega)\cap C(\overline{\Omega})$ be the solution of $Lu=0$ in $\Omega$ with boundary values $f$ on $\partial\Omega$, which exists from theorem \ref{WeakSolvability}. Then, corollary \ref{LocalBoundOnGradientOfSolutions} shows that $u$ is Lipschitz in $\overline{\Omega_j}$. Let now $u_j\in W^{1,2}(\Omega_j)\cap C^{\alpha}(\overline{\Omega_j})$ be the solution of $L_ju=0$ in $\Omega_j$, with boundary values $u$ on $\partial\Omega_j$, which exists from proposition \ref{WeakSolvabilityForLipschitz}. We now define
\[
v_j=u_j-u\in W_0^{1,2}(\Omega_j)\cap C(\overline{\Omega_j}),\quad f_j=(A_j-A)\nabla u\in L^2(\Omega_j),\quad g_j=(b_j-b)\nabla u\in L^2(\Omega_j),
\]
where $f_j,g_j\in L^2(\Omega_j)$ since $u\in W^{1,2}_{\loc}(\Omega)$, hence $\nabla u\in L^2(\Omega_j)$. Then, almost everywhere convergence in (i), (ii) and the dominated convergence theorem show that
\[
\|f_j\|_{L^2(\Omega_j)}^2=\int_{\Omega}\chi_{\Omega_j}|A_j-A||\nabla u|^2\xrightarrow[j\to\infty]{}0,\quad\|g_j\|_{L^2(\Omega_j)}^2\xrightarrow[j\to\infty]{}0.
\]
We also compute
\[
L_jv_j=-L_ju=-\dive(A_j\nabla u)+b_j\nabla u=-\dive(f_j)+g_j,
\]
therefore $v_j$ solves the equation $L_jv_j=F\in W^{-1,2}(\Omega_j)$, with
\[
F_j\phi=\int_{\Omega_j}f_j\nabla\phi+g_j\cdot\phi
\]
for all $\phi\in W_0^{1,2}(\Omega_j)$. Then, for all such $\phi$,
\[
|F_j\phi|\leq\int_{\Omega_j}|f_j||\nabla\phi|+|g_j||\phi|\leq\left(\|f_j\|_{L^2(\Omega_j)}+\|g_j\|_{L^2(\Omega_j)}\right)\|\phi\|_{W_0^{1,2}(\Omega_j)},
\]
therefore $\|F_j\|_{W^{-1,2}(\Omega_j)}\leq\|f_j\|_{L^2(\Omega_j)}+C\|g_j\|_{L^2(\Omega_j)}$. Hence, proposition \ref{GoodBoundOnSolutions} shows that, for a good constant $C$,
\begin{equation}\label{eq:V_j}
\|v_j\|_{W_0^{1,2}(\Omega_j)}\leq C\|F_j\|_{W^{-1,2}(\Omega_j)}\leq C\|f_j\|_{L^2(\Omega_j)}+C\|g_j\|_{L^2(\Omega_j)}\xrightarrow[j\to\infty]{}0.
\end{equation}
Set now $\delta=\delta(x,\partial\Omega)/2$, then for large $j\in\mathbb N$ we obtain that $B_{\delta}=B_{\delta}(0)\subseteq\Omega_j$. Consider now a subsequence $(v_{n_j})$; then, \eqref{eq:V_j} shows that a subsequence of $(v_{m_{n_j}})$ converges to $0$ almost everywhere in $B_{\delta}$. Also, for $x\in B_{\delta}$, the maximum principle shows that
\[
|v_j(x)|\leq|u(x)|+|u_j(x)|\leq 2\|f\|_{L^{\infty}(\partial\Omega)},
\]
hence the $(v_j)$ are uniformly bounded in $B_{\delta}$. Therefore, equicontinuity of solutions (proposition \ref{Equicontinuity}) shows that there exists a further subsequence $(v_{l_{{m_{n_j}}}})$ that converges uniformly to a continuous function in $B_{\delta}$. Since $(v_{m_{n_j}})$ converges to $0$ almost everywhere in $B_{\delta}$, we obtain that $(v_{l_{{m_{n_j}}}}(0))$ converges to $0$ as $j\to\infty$. Therefore, any subsequence of $(v_j(0))$ has a subsequence that converges to $0$; this shows that $v_j(0)\to 0$. 

The definition of harmonic measure now shows that
\begin{equation}\label{eq:FirstHarmonicConvergence}
\int_{\partial\Omega_j}k_ju\,d\sigma_j=u_j(0)=u(0)+v_j(0)\xrightarrow[j\to\infty]{}u(0)=\int_{\partial\Omega}kf\,d\sigma_j.
\end{equation}
Since now $u\in C(\overline{\Omega})$ and $f\in C(\mathbb R^d)$, $u-f$ is uniformly continuous in $\Omega$. Since also $u-f$ is equal to $0$ on $\partial\Omega$, given $\e>0$, there exists $\delta>0$ such that, if $x\in\overline{\Omega}$ with $\delta(x)<\delta$, then $|u(x)-f(x)|<\e$. Also, from theorem \ref{ApproximationScheme},
\[
\sup_{q\in\partial\Omega}\left|\Lambda_j(q)-q\right|\to 0,
\]
therefore there exists $j_0\in\mathbb N$ such that, for all $j\geq j_0$, ${\rm dist}(\partial\Omega_j,\partial\Omega)<\delta$. Hence, for $j\geq j_0$, we compute
\[
\left|\int_{\partial\Omega_j}k_ju\,d\sigma_j-\int_{\partial\Omega_j}k_jf\,d\sigma_j\right|\leq\int_{\partial\Omega_j}k_j|u-f|\,d\sigma_j\leq\sup_{x\in\partial\Omega_j}|u(x)-f(x)|\leq\e,
\]
since $k_j$ is a probability measure on $\partial\Omega_j$. Combining with \eqref{eq:FirstHarmonicConvergence}, we obtain that
\[
\lim_{j\to\infty}\int_{\partial\Omega_j}k_jf\,d\sigma_j=\int_{\partial\Omega}kf\,d\sigma,
\]
which completes the proof.
\end{proof}

Using the previous approximation lemma, we will show that the harmonic measure kernel belongs to $B_2$ for any Lipschitz domain. In order to obtain Lipschitz approximations to drifts $b\in\Dr_p(\Omega)$, we will have to ensure that the divergence of the approximations belongs to $L^p$; for this purpose, we will carefully mollify $b$, so that the mollification ``matches" the rate in which $\Omega_j$ approximates $\Omega$.

\begin{prop}\label{GeneralB2Lemma}
Let $\Omega\subseteq\mathbb R^d$ be a bounded Lipschitz domain with $0\in\Omega$, and $B_{\beta}(0)\subseteq\Omega$. Suppose also that $A\in M_{\lambda,\mu}^s(\Omega)$, and $b\in \Dr_{p_d}(\Omega)$. Then $k(q)=\frac{d\omega^0}{d\sigma}(q)\in L^2(\partial\Omega)$, and 
\[
\left(\frac{1}{|\Delta_r(q)|}\int_{\Delta_r(q)}k^2\,d\sigma\right)^{\frac{1}{2}}\leq\frac{C}{|\Delta_r(q)|}\int_{\Delta_r(q)}k\,d\sigma
\]
for all surface balls $\Delta_r(q)$, where $C$ is a good constant that also depends on the $\Dr_{p_d}$-norm of $b$, and $p_d$ appears in lemma \ref{B2Lemma}.
\end{prop}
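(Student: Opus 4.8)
The plan is to reduce the statement to Lemma \ref{B2Lemma} through the approximation scheme $\Omega_j\uparrow\Omega$ of Theorem \ref{ApproximationScheme} together with a mollification of the coefficients whose scale is matched to the exhaustion. Since each $\Omega_j$ is a smooth domain with $\overline{\Omega_j}$ compactly contained in $\Omega$, for $j$ large I would fix $\varepsilon_j\in\big(0,\operatorname{dist}(\overline{\Omega_j},\partial\Omega)\big)$ with $\varepsilon_j\to0$ and set $A_j=A*\psi_{\varepsilon_j}$ and $b_j=b*\psi_{\varepsilon_j}$ on $\Omega_j$, where $\psi_{\varepsilon_j}$ is a standard mollifier. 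Averaging preserves symmetry and the ellipticity bounds and does not increase the Lipschitz constant, so $A_j\in M_{\lambda,\mu}^s(\Omega_j)$ is smooth; moreover $b_j\in\Lip(\Omega_j)$, $\|b_j\|_{L^\infty(\Omega_j)}\le\|b\|_{L^\infty(\Omega)}$, and since $\dive b_j=(\dive b)*\psi_{\varepsilon_j}$ on $\Omega_j$, Young's inequality gives the crucial uniform bound $\|\dive b_j\|_{L^{p_d}(\Omega_j)}\le\|\dive b\|_{L^{p_d}(\Omega)}$. Finally $\chi_{\Omega_j}(A_j-A)\to0$ and $\chi_{\Omega_j}(b_j-b)\to0$ at every Lebesgue point, hence almost everywhere in $\Omega$, so the hypotheses of Lemma \ref{WeakMeasureConvergence} are met; here the ``matching'' of the mollification scale to the exhaustion is precisely the requirement $\varepsilon_j<\operatorname{dist}(\overline{\Omega_j},\partial\Omega)$.

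Let $k_j$ be the harmonic measure density at $0$ for $L_{A_j,b_j}$ in $\Omega_j$. The Lipschitz characters of the $\Omega_j$ are uniformly bounded (Theorem \ref{ApproximationScheme}), their diameters are comparable to $\diam\Omega$, and the relevant parameters $r_{\Omega_j},s_{\Omega_j}$ are bounded below uniformly in $j$ (Lemmas \ref{r_Omega} and \ref{InnerRadius}); for $j$ large $B_{\beta/2}(0)\subseteq\Omega_j$, which serves as the inner ball at $0$. Therefore Lemma \ref{B2Lemma}, applied on each $\Omega_j$, furnishes a single good constant $C$ — depending in addition on $\|\dive b\|_{L^{p_d}(\Omega)}$ but not on $j$ — such that
\[
\Big(\frac{1}{\sigma_j(\Delta)}\int_{\Delta}k_j^2\,d\sigma_j\Big)^{1/2}\le\frac{C}{\sigma_j(\Delta)}\int_{\Delta}k_j\,d\sigma_j=\frac{C\,\omega_j(\Delta)}{\sigma_j(\Delta)}
\]
for every surface ball $\Delta\subseteq\partial\Omega_j$ of radius less than $r_{\Omega_j},s_{\Omega_j}$. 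Equivalently, by duality, $\int_{\partial\Omega_j}k_j g\,d\sigma_j\le C\,\sigma_j(\Delta)^{-1/2}\,\omega_j(\Delta)\,\|g\|_{L^2(\Delta,d\sigma_j)}$ for every $g\ge0$ with $\operatorname{supp}g\subseteq\Delta$.

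Now fix a surface ball $\Delta_r(q)\subseteq\partial\Omega$ with $r<r_\Omega$. It suffices to prove
\[
\int_{\partial\Omega}f\,d\omega\le C\,\sigma(\Delta_r(q))^{-1/2}\,\omega(\Delta_r(q))\,\|f\|_{L^2(\partial\Omega,d\sigma)}
\]
for every nonnegative $f\in C(\mathbb R^d)$ with $\operatorname{supp}f\cap\partial\Omega\subseteq\Delta_r(q)$: this says that $\omega$, restricted to $\Delta_r(q)$, is a bounded functional on $L^2(\Delta_r(q),d\sigma)$, hence $\omega\ll\sigma$ there with density in $L^2$; covering $\partial\Omega$ by finitely many such balls gives $k=d\omega/d\sigma\in L^2(\partial\Omega)$, and taking the supremum over $f$ recovers exactly the asserted reverse Hölder inequality, while the case $r\ge r_\Omega$ then follows at once from $k\in L^2(\partial\Omega)$ and $\omega(\partial\Omega)=1$. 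To prove the displayed bound I would argue that for $j$ large there is $q_j\in\partial\Omega_j$ with $\operatorname{supp}f\cap\partial\Omega_j\subseteq\Delta_{2r}^j(q_j)$, so the duality bound on $\Omega_j$ gives
\[
\int_{\partial\Omega_j}k_jf\,d\sigma_j\le C\,\sigma_j\big(\Delta_{2r}^j(q_j)\big)^{-1/2}\,\omega_j\big(\Delta_{2r}^j(q_j)\big)\,\|f\|_{L^2(\partial\Omega_j,d\sigma_j)},
\]
and then pass to the limit termwise: the left side tends to $\int_{\partial\Omega}f\,d\omega$ by Lemma \ref{WeakMeasureConvergence}; $\sigma_j(\Delta_{2r}^j(q_j))\simeq r^{d-1}\simeq\sigma(\Delta_r(q))$ uniformly; $\|f\|_{L^2(\partial\Omega_j,d\sigma_j)}^2=\int_{\partial\Omega}(f^2\circ\Lambda_j)\,\tau_j\,d\sigma\to\int_{\partial\Omega}f^2\,d\sigma$ using $\Lambda_j\to\mathrm{id}$ uniformly and $\tau_j\to1$ in $L^1$ (Theorem \ref{ApproximationScheme}); and, choosing $\phi\in C(\mathbb R^d)$ with $0\le\phi\le1$, $\phi\equiv1$ on $B_{2r}(q)$ and $\operatorname{supp}\phi\subseteq B_{4r}(q)$, one gets $\omega_j(\Delta_{2r}^j(q_j))\le\int_{\partial\Omega_j}\phi k_j\,d\sigma_j\to\int_{\partial\Omega}\phi\,d\omega\le\omega(\Delta_{4r}(q))\le C\,\omega(\Delta_r(q))$ by the doubling property of $\omega$ (Proposition \ref{GreenRelation}). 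Combining these four facts yields the displayed inequality.

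I expect the only real difficulties to be bookkeeping ones: ensuring the constant coming out of Lemma \ref{B2Lemma} is genuinely independent of $j$, which is why the bound $\|\dive b_j\|_{L^{p_d}}\le\|\dive b\|_{L^{p_d}}$ must be built directly into the mollification (rather than obtained after an extension to $\mathbb R^d$, where controlling the divergence would be delicate); and handling the correspondence between surface balls of $\partial\Omega_j$ and of $\partial\Omega$, so that the doubling constant and the factor $\|f\|_{L^2(\partial\Omega_j)}$ behave correctly in the limit. No new PDE estimate is needed, since the Rellich input is already packaged inside Lemma \ref{B2Lemma}.
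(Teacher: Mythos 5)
Your proposal is correct and follows essentially the same route as the paper's proof: mollify the coefficients at a scale strictly smaller than the distance from $\overline{\Omega_j}$ to $\partial\Omega$ so that $\|\dive b_j\|_{L^{p_d}(\Omega_j)}\leq\|\dive b\|_{L^{p_d}(\Omega)}$, apply Lemma~\ref{B2Lemma} on each smooth $\Omega_j$ with a constant uniform in $j$, and pass to the limit with Lemma~\ref{WeakMeasureConvergence} together with the doubling property of $\omega$. The paper extends $b$ by zero before convolving, but since (as in your argument) the test functions $\phi\in C_c^\infty(\Omega_j)$ translated by $y\in\operatorname{supp}\psi_{\varepsilon_j}$ stay in $C_c^\infty(\Omega)$, the computation of $\dive b_j$ never meets the boundary singular part, so the two presentations produce the identical bound.
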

\begin{proof}
First, extend $b$ by $0$ outside $\Omega$. We will construct a mollification of $b$: consider a smooth function $\psi$ which is positive, supported in $B(0,1)$, and has integral $1$. Let $\delta_j>0$ be the distance from $\partial\Omega_j$ to $\partial\Omega$, and consider $m_j\in\mathbb N$ such that $1/m_j<\delta_j$. Set $\phi_m(x)=m^d\phi(mx)$ for $m\in\mathbb N$, and define
\[
b_j(x)=b*\psi_{m_j}(x)=\int_{\mathbb R^d}b(x-y)\psi_{m_j}(y)\,dy.
\]
First, every $b_j$ is in $C^{\infty}(\mathbb R^d)$. In addition, for all $\phi\in C_c^{\infty}(\Omega_j)$,
\begin{align*}
\int_{\Omega_j}b_j\nabla\phi=&\int_{\Omega_j}\int_{\Omega_j}b(x-y)\psi_{m_j}(y)\nabla\phi(x)\,dydx=\int_{\Omega_j}\left(\int_{\Omega_j}b(x-y)\nabla\phi(x)\,dy\right)\psi_{m_j}(y)dx\\
=&\int_{\Omega_j}\left(\int_{\Omega_j-y}b(z)\nabla\phi(z+y)\,dz\right)\psi_{m_j}(y)dy.
\end{align*}
Now, since $\psi_{m_j}$ is supported in $B_{1/m_j}(0)$, we obtain that $\Omega_j-y\subseteq\Omega$ and $\phi(x+y)\in C_c^{\infty}(\Omega)$. Therefore, since $b\in\Dr_{p_d}(\Omega)$, the inner integral above can be written as
\[
\left|\int_{\Omega_j-y}b(z)\nabla\phi(z+y)\,dx\right|=\left|\int_{\Omega}b(z)\nabla\phi(z+y)\,dx\right|\leq\|\dive b\|_{L^{p_d}(\Omega)}\|\phi\|_{L^{\frac{p_d}{p_d-1}}(\Omega)},
\]
therefore
\begin{align*}
\left|\int_{\Omega_j}b_j\nabla\phi\right|&=\left|\int_{\Omega_j}\left(\int_{\Omega_j-y}b(x)\nabla\phi(x+y)\,dx\right)\psi_{m_j}(y)dy\right|\\
&\leq\int_{\Omega_j}\left|\int_{\Omega_j-y}b(x)\nabla\phi(x+y)\,dx\right|\psi_{m_j}(y)dy\\
&\leq\|\dive b\|_{L^{p_d}(\Omega)}\|\phi\|_{L^{\frac{p_d}{p_d-1}}(\Omega)} \int_{\Omega_j}\psi_{m_j}(y)dy\\
&=\|\dive b\|_{L^{p_d}(\Omega)}\|\phi\|_{L^{\frac{p_d}{p_d-1}}(\Omega)},
\end{align*}
since the integral of $\phi_j$ in $B_{1/m_j}(0)$ is equal to $1$. Since the previous estimate holds for all $\phi\in C_c^{\infty}(\Omega_j)$, we obtain that $\|\dive b_j\|_{L^{p_d}(\Omega_j)}\leq \|\dive b\|_{L^{p_d}(\Omega)}.$ Note also that $\|b_j\|_{\infty}\leq\|b\|_{\infty}$, therefore
\[
\|b_j\|_{\Dr_{p_d}(\Omega_j)}\leq\|\dive b\|_{\Dr_{p_d}(\Omega)}.
\]
Consider now the same mollification for $A$; that is, set $A_j(x)=A*\psi_{m_j}(x)$. Then the coefficients $A_j,b_j$ satisfy the hypotheses of lemma \ref{WeakMeasureConvergence}, therefore
\[
\int_{\partial\Omega_j}k_jf\,d\sigma_j\xrightarrow[j\to\infty]{}\int_{\partial\Omega}kf\,d\sigma,
\]
in the notation of the same lemma.

We will first show the inequality for $r<r_{\Omega},s_{\Omega}$. For this purpose, let $q\in\partial\Omega$, and consider the cylinder $Z(q,r)$. Consider also a positive function $f_0\in {\rm Lip}(\Delta_r(q))$, and extend it to a function $f\in{\rm Lip}(\mathbb R^d)$, which is supported in $Z(q,3r/2)$ and also satisfies the inequality
\[
\int_{\partial\Omega}|f|^2\leq 2\int_{\Delta_r(q)}|f_0|^2.
\]
For $q\in\Delta_r(q)$, let $q_j\in\partial\Omega_j$ be the point on $\partial\Omega_j$ that lies above $q$, in the direction of the axis of $Z(q,r)$. Then, if we set $\Delta_r^j(q_j)=Z(q,r)\cap\partial\Omega_j$, the support properties of $f$, the Cauchy-Schwartz inequality and lemma \ref{B2Lemma} (which is applicable, since $A_j\in M^s_{\lambda,\mu}(\Omega_j)$ and $b_j\in\Dr_{p_d}(\Omega_j)$) show that
\begin{align*}
\int_{\partial\Omega_j}k_jfd\sigma_j&=\int_{\Delta_{2r}^j(q_j)}k_jfd\sigma_j\leq\left(\int_{\Delta_{2r}^j(q_j)}k_j^2\,d\sigma_j\right)^{1/2}\left(\int_{\Delta_{2r}^j(q_j)}f^2d\sigma_j\right)^{1/2}\\
&\leq\frac{C}{\sigma_j^{1/2}(\Delta_{2r}^j(q_j))}\int_{\Delta_{2r}^j(q_j)}k_j\,d\sigma_j\cdot\left(\int_{\Delta_{2r}^j(q_j)}f^2d\sigma_j\right)^{1/2}\\
&\leq\frac{C}{\sigma^{1/2}(\Delta_r(q))}\int_{\Delta_{2r}^j(q_j)}k_j\,d\sigma_j\cdot\left(\int_{\Delta_{2r}^j(q_j)}f^2d\sigma_j\right)^{1/2},
\end{align*}
where we used that $\Delta_{2r}^j(q_j)$ is about equal to $r^{d-1}$, where $C$ is a good constant, which also depends on the $\Dr_p(\Omega)$ norm of $b$. Hence, letting $j\to\infty$ and applying lemma \ref{WeakMeasureConvergence}, we obtain that
\begin{align*}
\int_{\partial\Omega}kfd\sigma&\leq\frac{C}{\sigma^{1/2}(\Delta_r(q))}\limsup_{j\to\infty}\int_{\Delta_{2r}^j(q_j)}k_j\,d\sigma_j\left(\int_{\Delta_{2r}^j(q_j)}f^2d\sigma_j\right)^{1/2}\\
&\leq\frac{C}{\sigma^{1/2}(\Delta_r(q))}\limsup_{j\to\infty}\int_{\Delta_{2r}^j(q_j)}k_j\,d\sigma_j\cdot\left(\int_{\partial\Omega}f^2d\sigma\right)^{1/2},
\end{align*}
since $f$ is continuous on $\mathbb R^d$. From our choice of $f$, we obtain that
\[
\int_{\Delta_r(q)}kf_0\,d\sigma\leq \frac{C}{\sigma^{1/2}(\Delta_r(q))}\limsup_{j\to\infty}\int_{\Delta_{2r}^j(q_j)}k_j\,d\sigma_j\cdot\left(\int_{\Delta_r(q)}f_0^2d\sigma\right)^{1/2},
\]
and since this inequality holds for all $f_0\in{\rm Lip}(\Delta_r(q))$, we obtain that
\begin{equation}\label{eq:ForLimsup}
\left(\int_{\Delta_r(q)}k^2\,d\sigma\right)^{1/2}\leq\frac{C}{\sigma^{1/2}(\Delta_r(q))}\limsup_{j\to\infty}\int_{\Delta_{2r}^j(q_j)}k_j\,d\sigma_j.
\end{equation}
To treat the last term, let $g$ be a continuous function which is supported in $Z(q,3r)$ and it is equal to $1$ in $Z(q,2r)$. We then apply lemma \ref{WeakMeasureConvergence}, to obtain that
\begin{align*}
\limsup_{j\to\infty}\int_{\Delta_{2r}^j(q_j)}k_j\,d\sigma_j&\leq\limsup_{j\to\infty}\int_{\partial\Omega_j}k_jg\,d\sigma_j=\int_{\partial\Omega}kg\,d\sigma\\
&\leq\int_{\Delta_{3r}(q)}k\,d\sigma=\omega(\Delta_{3r}(q))\leq C\omega(\Delta_r(q)),
\end{align*}
from the doubling property of $\omega$. Plugging the last inequality in \eqref{eq:ForLimsup}, we obtain that, for $r<r_{\Omega},s_{\Omega}$,
\[
\left(\int_{\Delta_r(q)}k^2\,d\sigma\right)^{1/2}\leq\frac{C\omega(\Delta_r(q))}{\sigma^{1/2}(\Delta_r(q))}=C\sigma^{1/2}(\Delta_r(q))\fint_{\Delta_r(q)}k\,d\sigma.
\]
The last inequality shows that $k\in L^2(\partial\Omega)$. In addition, if $r\geq r_{\Omega}$, we have that
\[
\int_{\Delta_r(q)}k^2d\sigma\leq\|k\|_2^2\leq\frac{\|k\|_2^2|\partial\Omega|}{\omega^2(\Delta_r(q))}\frac{\omega^2(\Delta_r(q))}{|\Delta_r(q)|}\leq\frac{\|k\|_2^2|\partial\Omega|}{\omega^2(\Delta_{r_{\Omega}}(q))}\frac{\omega^2(\Delta_r(q))}{|\Delta_r(q)|}
\]
and we use that $1=\omega(\partial\Omega)\leq c\omega(\Delta_{r_{\Omega}}(q))$, from the doubling property of $\omega$. A similar inequality holds if $r>s_{\Omega}$, therefore $k\in B_2(\partial\Omega)$.
\end{proof}

As a corollary of the fact $k\in B_2(\partial\Omega)$, we obtain the next theorem on solvability of the Dirichlet problem.

\begin{thm}\label{GeneralDirichlet}
Let $\Omega\subseteq\mathbb R^d$ be a bounded Lipschitz domain, and set $p_d=2$ for $d=3$, and $p_d=d/2$ for $d\geq 4$. Suppose that $A\in M_{\lambda,\mu}^s(\Omega)$ and $b\in\Dr_{p_d}(\Omega)$. Then there exists $\e>0$ such that $D_p$ is uniquely solvable in $\Omega$ for all $p\in(2-\e,\infty)$, with the constant being a good constant that also depends on $p$ and $\|b\|_{\Dr_{p_d}}$.
\end{thm}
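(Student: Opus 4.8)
The plan is to obtain the endpoint $D_2$ by combining the reverse Hölder estimate for the harmonic measure kernel already proved in Proposition~\ref{GeneralB2Lemma} with the characterization of $D_p$-solvability in terms of Gehring weights in Theorem~\ref{DirichletToWeightsRelation}, and then to open up the range of exponents using the self-improvement of $B_p$ classes. First I would normalize the domain: by Lemma~\ref{InnerRadius} there is a ball compactly contained in $\Omega$, so after a translation we may assume $0\in\Omega$ and $B_\beta(0)\subseteq\Omega$ for some $\beta>0$ controlled by $\diam(\Omega)$ and the Lipschitz character. Since both the assertion of $D_p$-solvability and all the quantities on which the constant is allowed to depend are translation invariant, this normalization is harmless.

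With this reduction the hypotheses of Proposition~\ref{GeneralB2Lemma} hold verbatim ($A\in M_{\lambda,\mu}^s(\Omega)$, $b\in\Dr_{p_d}(\Omega)$), so that $\omega^0\ll\sigma$, the density $k=\frac{d\omega^0}{d\sigma}$ lies in $L^2(\partial\Omega)$, and
\[
\left(\fint_{\Delta_r(q)}k^2\,d\sigma\right)^{1/2}\leq C\fint_{\Delta_r(q)}k\,d\sigma
\]
for every surface ball $\Delta_r(q)$, with $C$ a good constant that also depends on $\|b\|_{\Dr_{p_d}}$; that is, $k\in B_2(\partial\Omega)$ with controlled constant. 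Now the classical Dirichlet problem is solvable in $\Omega$ by Theorem~\ref{WeakSolvability} and $\omega^0\ll\sigma$, so Theorem~\ref{DirichletToWeightsRelation} applied with $p=2$ (note $p'=2$) yields that $D_2$ is solvable in $\Omega$, with constant comparable to $\|k\|_{B_2(\partial\Omega)}$, hence a good constant depending in addition on $\|b\|_{\Dr_{p_d}}$.

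To reach all $p\in(2-\e,\infty)$ I would invoke the self-improvement of Gehring weights, Proposition~\ref{BpProperty}: from $k\in B_2(\partial\Omega)$ we get $k\in B_{2+\e_0}(\partial\Omega)$ for some $\e_0>0$ depending only on $d$ and the $B_2$ constant of $k$. Since $B_{q}(\partial\Omega)\subseteq B_{s}(\partial\Omega)$ whenever $s\leq q$ (power mean inequality), it follows that $k\in B_{p'}(\partial\Omega)$ for every $p$ with $p'\leq 2+\e_0$, i.e. for every $p>2-\e$ with $\e=\e_0/(1+\e_0)>0$; a second application of Theorem~\ref{DirichletToWeightsRelation} then gives solvability of $D_p$ for all such $p$, with constant comparable to $\|k\|_{B_{p'}(\partial\Omega)}$ and hence a good constant that also depends on $p$ and $\|b\|_{\Dr_{p_d}}$. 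This step is exactly Corollary~\ref{Range}. Uniqueness is then a consequence of the maximum principle: running the smooth exhaustion $\Omega_j\uparrow\Omega$ of Theorem~\ref{ApproximationScheme}, applying Theorem~\ref{MaximumPrinciple} on each $\Omega_j$, and using the $L^p$ bound on $u^*$ to pass to the limit, one sees that a weak solution of $Lu=0$ with vanishing boundary data and $u^*\in L^p(\partial\Omega)$ must vanish identically, so two solutions with the same data coincide.

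The point to stress is that the analytic core — the Rellich identity for $L^t$, the pointwise bounds on Green's function, and the careful mollification of $b$ matched to the rate at which $\Omega_j\to\Omega$ so that $\|\dive b_j\|_{p_d}$ stays controlled — has already been carried out in Proposition~\ref{GeneralB2Lemma}, which is precisely where the size hypothesis $\dive b\in L^{p_d}$ genuinely enters. What remains here is assembly: checking the hypotheses of the cited results and tracking that the resulting constants are exactly the ones listed in the statement. I therefore expect no real obstacle in this final step; the only mildly delicate bookkeeping is the translation normalization and verifying that the $B_2\to B_{p'}$ passage produces a threshold $\e$ independent of everything but $d$ and the good constants.
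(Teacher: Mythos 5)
Your argument for solvability is essentially identical to the paper's own proof: apply Proposition~\ref{GeneralB2Lemma} to place $k=\frac{d\omega^0}{d\sigma}$ in $B_2(\partial\Omega)$ with a good constant, use the self-improvement and downward monotonicity of Gehring weights via Proposition~\ref{BpProperty} (encapsulated in Corollary~\ref{Range}), and then read off the range $p\in(2-\e,\infty)$ from Theorem~\ref{DirichletToWeightsRelation}. The preliminary translation so that $0\in\Omega$ and $B_\beta(0)\subseteq\Omega$ is a correct and harmless normalization; everything up to and including the range of exponents is sound.

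Your uniqueness sketch, however, does not go through as described. For a fixed interior point $x$, applying Theorem~\ref{MaximumPrinciple} on each $\Omega_j$ only gives $|u(x)|\le\sup_{\partial\Omega_j}|u|$, and there is no reason for $\sup_{\partial\Omega_j}|u|\to 0$: almost-everywhere nontangential convergence of $u$ to $0$ together with $u^*\in L^p(\partial\Omega)$ does not yield uniform smallness of $u$ on $\partial\Omega_j=\Lambda_j(\partial\Omega)$. The paper proves uniqueness by an entirely different mechanism, in Proposition~\ref{DirichletUniqueness}: one writes $u(y)$ via the Green's function representation as an integral over a collar $R_\e$ near the boundary, decomposes $R_\e$ through the coordinate cylinders, estimates the resulting boundary terms by $M_{\omega^y}$ paired against $u_\e^*$, and then crucially uses the $B_{p'}$ weight property of the harmonic-measure kernel (equivalently, $D_p$-solvability) to put $M_{\omega^y}\in L^{p'}(\partial\Omega)$ and let $\e\to 0$ by dominated convergence. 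That argument cannot be replaced by a bare maximum-principle passage to the limit. You should either cite Propositions~\ref{DirichletUniqueness} and~\ref{DirichletSolvability} for the ``uniquely solvable'' part, or reproduce the Green's-function argument; the shortcut you propose has a genuine gap.
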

\begin{proof}
From proposition \ref{GeneralB2Lemma}, the harmonic measure kernel $k=\frac{d\omega^0}{d\sigma}$ belongs to $B_2(\partial\Omega)$, with its norm being a good constant that also depends on $\|b\|_{\Dr_{p_d}}$. Proposition \ref{BpProperty} then shows that $k\in B_q(\partial\Omega)$ for any $q\in(1,2+\delta)$, where $\delta$ is a good constant. Hence, theorem \ref{DirichletToWeightsRelation} shows that the Dirichlet problem $D_p$ for the equation $Lu=0$ in $\Omega$ is solvable, for any $p\in(2-\e,\infty)$, where $2-\e$ is the conjugate exponent to $2+\delta$, with bounds being good constants that also depend on $p$ and $\|b\|_{\Dr_{p_d}}$.
\end{proof}

Note that, in the theorem above, $A$ has to be symmetric. Later on, we will be able to drop this assumption.

\subsection{Existence and uniqueness for data in $L^p$}
When $D_p$ is solvable in $\Omega$, we can show existence and uniqueness of solutions for the Dirichlet problem with data in $L^p(\partial\Omega)$. We will first need a lemma.

\begin{lemma}
Suppose that $D$ is a Lipschitz domain above the graph of a Lipschitz function $\phi:B\to\mathbb R$ with Lipschitz constant $M$, where $B\subseteq\mathbb R^{d-1}$ is a ball; that is,
\[
D=\left\{(x_0,t)\in\mathbb R^d\Big{|}t>\phi(x_0)\right\}.
\]
Consider the set 
\[
D_{\e}=\left\{x\in D\Big{|}\frac{\e}{2}\leq\delta(x)\leq\e\right\}.
\]
Then, there exists $C=C_M>0$ such that, for all $\e>0$, if $x=(x_0,t)\in D_{\e}$, then 
\[
\frac{\e}{2}\leq t-\phi(x_0)\leq C\e.
\]
\end{lemma}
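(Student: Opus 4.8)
The statement is a purely metric fact about the distance function to the graph of $\phi$, and the plan is to establish the two inequalities separately, each by an elementary estimate; I do not expect a genuine obstacle here.

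For the lower bound I would simply observe that $(x_0,\phi(x_0))\in\partial D$, and since $x=(x_0,t)\in D$ we have $t>\phi(x_0)$, so the Euclidean distance from $x$ to this particular boundary point equals $t-\phi(x_0)$. Hence $\delta(x)\le t-\phi(x_0)$, and combining this with $\delta(x)\ge\e/2$ (which holds because $x\in D_\e$) immediately gives $\e/2\le t-\phi(x_0)$.

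For the upper bound the argument is as follows. Since $\delta(x)\le\e$, for each $\eta>0$ there is a point $y\in\partial D$ with $|x-y|<\delta(x)+\eta\le\e+\eta$; because $\partial D$ is precisely the graph of $\phi$ (by the description $D=\{(x_0,t):t>\phi(x_0)\}$), we may write $y=(y_0,\phi(y_0))$ for some $y_0\in\mathbb R^{d-1}$. Then $|x_0-y_0|\le|x-y|\le\e+\eta$ and $t-\phi(y_0)\le|x-y|\le\e+\eta$, so using the Lipschitz bound $|\phi(y_0)-\phi(x_0)|\le M|y_0-x_0|$ one obtains
\[
t-\phi(x_0)=\bigl(t-\phi(y_0)\bigr)+\bigl(\phi(y_0)-\phi(x_0)\bigr)\le(\e+\eta)+M(\e+\eta)=(1+M)(\e+\eta).
\]
Letting $\eta\to0$ yields $t-\phi(x_0)\le(1+M)\,\e$, so the constant $C=1+M$ works and depends only on $M$, as required.

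The only step that warrants any care is the identification of the approximately-nearest boundary point as a graph point $(y_0,\phi(y_0))$; this is immediate from the fact that $\partial D$ equals the graph of $\phi$, after which everything reduces to the triangle inequality and the Lipschitz estimate for $\phi$.
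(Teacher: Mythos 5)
Your proof is correct and follows essentially the same route as the paper's: both derive the lower bound by testing the boundary point $(x_0,\phi(x_0))$, and both derive the upper bound by comparing with a nearest (or approximately nearest, in your $\eta$-variant) boundary graph point $(y_0,\phi(y_0))$ and applying the Lipschitz bound for $\phi$, landing on the same constant $C=M+1$. The only cosmetic difference is that the paper asserts the distance is achieved at a graph point, whereas you pass through $\eta$-approximate minimizers and take a limit; this is a harmless technicality and does not change the argument.
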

\begin{proof}
Let $x=(x_0,t)\in D_{\e}$. Then,
\[
\frac{\e}{2}\leq\delta(x)\leq\delta(x,(x_0,\phi(x_0))=t-\phi(x_0),
\]	which shows the first inequality. For the second, suppose that the distance $\delta(x)$ is achieved at a point $(y_0,\phi(y_0))$, for some $y_0\in B$. Then $|y_0-x_0|^2+|\phi(y_0)-t|^2=\delta^2(x)\leq\e^2$, which implies that $|y_0-x_0|\leq \e$ and $|\phi(y_0)-t|\leq\e$. Therefore,
\[
t-\phi(x_0)\leq|t-\phi(y_0)|+|\phi(y_0)-\phi(x_0)|\leq\e+M|y_0-x_0|\leq(M+1)\e,
\]
so we can take $C=M+1$.
\end{proof}
We will also need a version of the Cacciopoli inequality over sets that are not necessarily balls.
\begin{lemma}\label{ModifiedCacciopoli}
Let $D$ be a Lipschitz domain above a domain $U\subseteq\mathbb R^{d-1}$, of the graph of a Lipschitz function $\psi$; that is,
\[
D=\left\{(x_0,t)\in\mathbb R^{d-1}\times\mathbb R\Big{|}t>\phi(x_0)\right\},
\]
and consider a cube $Q\subseteq\mathbb R^{d-1}$, with side length $\e$, such that its triple $3Q$ is subset of $U$. Let also $c_0>1$. Then, if $u:D\to\mathbb R$ is a solution of $Lu=0$, there exists $C=C(M)$ such that
\[
\int_Q\int_{\psi(x_0)+\e/2}^{\psi(x_0)+c_0\e}|\nabla u(x_0,t)|^2\,dx_0dt\leq\frac{C}{\e^2}\int_{3Q}\int_{\psi(x_0)+\e/4}^{\psi(x_0)+3c_0\e}|u(x)|^2\,dx_0dt.
\]
\end{lemma}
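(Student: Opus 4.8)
The plan is to reduce the statement to the standard interior Caccioppoli inequality (Lemma \ref{Cacciopoli}) by covering the curvilinear box in question with a controlled number of ordinary Euclidean balls. The key geometric observation is that for a point $x=(x_0,t)$ in the slab $\{\psi(x_0)+\e/2\le t\le\psi(x_0)+c_0\e\}$ sitting above $Q$, the Euclidean distance $\delta(x)=\delta(x,\partial D)$ is comparable to $\e$: indeed $\delta(x)\le t-\psi(x_0)\le c_0\e$, and a lower bound $\delta(x)\ge c(M)\e$ follows because, near the graph of a Lipschitz function with constant $M$, the vertical distance $t-\psi(x_0)$ and the true distance $\delta(x)$ differ by at most a factor depending only on $M$ (this is essentially the content of the first of the two preceding lemmas). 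Hence every such $x$ lies in a ball $B_{r}(x)$ with $r\simeq_M\e$ whose double $B_{2r}(x)$ is still compactly contained in $D$ — in fact contained in the slightly larger curvilinear box $\{3Q\}\times\{\psi(x_0)+\e/4\le t\le\psi(x_0)+3c_0\e\}$, provided the comparability constants are chosen correctly and $3Q\subseteq U$.

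Concretely, I would fix $r=c_1(M)\e$ with $c_1$ small enough that: (i) for every $x$ in the inner box, $B_{2r}(x)$ is contained in the outer box and compactly supported in $D$; and (ii) a family of balls $\{B_r(x_k)\}$ with centers $x_k$ in the inner box covers the inner box, with the overlap of the doubles $\{B_{2r}(x_k)\}$ bounded by a dimensional constant. The number of balls $N$ needed is $\le C(d,M)$ because the inner box has measure $\simeq_M\e^{d}$ and each ball has measure $\simeq \e^d$; a standard Vitali/lattice argument produces such a cover with bounded overlap. For each $k$, Lemma \ref{Cacciopoli}(i) gives
\[
\int_{B_r(x_k)}|\nabla u|^2\le C\Bigl(1+\frac{1}{r^2}\Bigr)\int_{B_{2r}(x_k)}u^2\le \frac{C}{\e^2}\int_{B_{2r}(x_k)}u^2,
\]
where I absorbed the $1$ into $1/\e^2$ using $\e\le\diam(U)$ (or, if one prefers the scale-free statement, one simply keeps the $(1+\e^{-2})$ and notes $\e$ is bounded by the diameter of the relevant patch; since the claimed inequality has $C/\e^2$ on the right, this is consistent once $\e$ is bounded — and if $\e$ is not a priori bounded the statement is vacuous or trivially adjusted). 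Summing over $k$ and using the bounded overlap of the doubles yields
\[
\int_Q\int_{\psi(x_0)+\e/2}^{\psi(x_0)+c_0\e}|\nabla u|^2
\le\sum_{k=1}^N\int_{B_r(x_k)}|\nabla u|^2
\le\frac{C}{\e^2}\sum_{k=1}^N\int_{B_{2r}(x_k)}u^2
\le\frac{C}{\e^2}\int_{3Q}\int_{\psi(x_0)+\e/4}^{\psi(x_0)+3c_0\e}|u|^2,
\]
which is the desired estimate with $C=C(M)$ (also absorbing the dependence on $c_0$ and $d$).

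The main obstacle is purely bookkeeping: verifying that the chosen radius $r=c_1(M)\e$ simultaneously makes the doubled balls fit inside the outer curvilinear box and stay inside $D$, uniformly in $x_0\in Q$ and in $\e$. This requires using the Lipschitz bound on $\psi$ to control how much the graph can rise or fall over a horizontal displacement of size $2r$: over $3Q$ the graph oscillates by at most $M\cdot\diam(3Q)\simeq M\e$, so the vertical ``thickness'' of the outer box ($\simeq (3c_0-\tfrac14)\e$) dominates the oscillation once $c_1$ is small relative to $1/(M+c_0)$. One must also check the lower bound $\delta(x)\ge c(M)\e$ carefully, since this is what guarantees $B_{2r}(x)\subseteq D$ rather than merely $B_{2r}(x)\subseteq$ (a slab above the graph); this is exactly where the first preliminary lemma (relating $t-\psi(x_0)$ to $\delta(x)$) is invoked, applied on the enlarged domain over $3Q$. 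Everything else — the Vitali cover with bounded overlap, the sum, the application of Lemma \ref{Cacciopoli} — is routine.
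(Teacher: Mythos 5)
Your proof is correct, but it takes a genuinely different route from the paper's. The paper constructs a single cutoff function $\phi_0$ adapted to the curvilinear geometry by mollifying the indicator of an intermediate box $U_2 = \{x_0\in 2Q,\ \e/3\le t-\psi(x_0)\le 2c_0\e\}$; one checks $\phi_0\equiv 1$ on the inner box, $\phi_0$ supported in the outer box, $|\nabla\phi_0|\le C/\e$, and then runs the Caccioppoli test-function argument (test with $u\phi_0^2$) directly in the slab. You instead reuse Lemma~\ref{Cacciopoli} as a black box, covering the inner box by $O_{d,M,c_0}(1)$ Euclidean balls of radius $\simeq_M\e$ whose doubles stay inside the outer box and inside $D$, and sum using bounded overlap. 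Both work. The paper's approach avoids the covering/overlap bookkeeping and gives the estimate in one stroke, but one has to verify the mollification produces a correct cutoff (the margin between $U_1$, $U_2$, $U_3$ in the graph-adapted direction is $\simeq\e/(1+M)$, so the mollifier radius must really be $c(M)\e$ rather than a fixed fraction of $\e$; you would want to flag this when choosing the mollifier width). Your approach externalizes all the PDE content into the already-proven Lemma~\ref{Cacciopoli} and reduces everything to elementary geometry of Lipschitz graphs, which is arguably more transparent but slightly longer.

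One small correction to your own write-up: the preceding lemma in the paper shows that if $\delta(x)\in[\e/2,\e]$ then $t-\phi(x_0)\le C_M\e$, i.e.\ it bounds the vertical gap by the true distance. What you actually need is the reverse implication, namely $\delta(x)\ge (t-\psi(x_0))/\sqrt{1+M^2}$, so that $t-\psi(x_0)\ge\e/2$ forces $\delta(x)\gtrsim_M\e$ and hence $B_{2r}(x)\subseteq D$ for $r=c_1(M)\e$. This is an elementary computation (minimize $s^2+(a-Ms)^2$ over $s\ge 0$), not literally the cited lemma, and you should include that one-line estimate rather than attribute it to the earlier lemma. With that fix the proof is complete.
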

\begin{proof} Let 
\[U_i=\left\{(x_0,t)\in\mathbb R^n\bm{\Big|} x_0\in iQ, \psi(x_0)+\frac{\e}{i+1}\leq t\leq\psi(x_0)+\frac{ic_0}{\e}\right\},\,i=1,2,3,
\]
and consider a smooth cutoff function $\phi$ which is supported on $B_{\e/100}(0)$, is nonnegative, and has integral $1$. Consider the convolution $\phi_0=\chi_{U_2}*\phi$. Then $\phi_0$ is smooth, it is equal to $1$ in $U_1$, it is supported in $U_3$, and also $|\nabla\phi_0|\leq C/\e$. The proof is now similar to the proof of lemma \ref{Cacciopoli}.
\end{proof}

The next proposition guarantees uniqueness for the Dirichlet problem with data in $L^p$, whenever $D_p$ is solvable.

\begin{prop}\label{DirichletUniqueness}
Let $\Omega$ be a Lipschitz domain, $A\in M_{\lambda}^s(\Omega)$ and $b\in L^{\infty}(\Omega)$. Suppose that $u\in W^{1,2}_{\loc}(\Omega)$ is a solution to the equation $-\dive(A\nabla u)+b\nabla u=0$ in $\Omega$, such that $u\to 0$ on $\partial\Omega$ nontangentially, almost everywhere with respect to the surface measure. Let also $p\in(1,\infty)$, and suppose that  $\|u^*\|_{L^p(\partial\Omega)}<\infty$. If $D_p$ is solvable in $\Omega$, then $u\equiv 0$ in $\Omega$.
\end{prop}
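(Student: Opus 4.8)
The plan is to reduce to proving $u(x_0)=0$ for an arbitrary fixed $x_0\in\Omega$, and to obtain this by comparing the boundary values of $u$ on an interior Lipschitz exhaustion of $\Omega$ against harmonic measure, exploiting that $D_p$-solvability makes $\omega^{x_0}$ absolutely continuous with an $L^{p'}$ density.

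First I would set up the exhaustion. Since $u\in W^{1,2}_{\loc}(\Omega)$ solves $Lu=0$, it is continuous in $\Omega$ (Proposition \ref{HolderInside}). Using the defining Lipschitz graphs of $\partial\Omega$ I would build Lipschitz domains $\Omega_\e\subseteq\Omega$ by raising each graph $\phi$ to $\phi+\e$ (glued by a partition of unity over the coordinate cylinders, so the Lipschitz character of $\Omega_\e$ is controlled uniformly in $\e$). By the geometric lemma preceding Lemma \ref{ModifiedCacciopoli}, the point $q^\e\in\partial\Omega_\e$ lying vertically above $q\in\partial\Omega$ satisfies $\delta(q^\e)\simeq\e$ and $q^\e\in\Gamma_i(q)$. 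Since $\Omega_\e\Subset\Omega$, $u$ is continuous on $\overline{\Omega_\e}$ and solves $Lu=0$ there, so by uniqueness of the classical Dirichlet problem (Theorem \ref{WeakSolvability}) $u$ is the weak solution in $\Omega_\e$ with its own trace; hence, for $\e$ small enough that $x_0\in\Omega_\e$,
\[
u(x_0)=\int_{\partial\Omega_\e}u\,d\omega^{x_0}_{\Omega_\e},
\]
where $\omega^{x_0}_{\Omega_\e}$ is the $L$-harmonic measure of $\Omega_\e$ at $x_0$.

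Next I would split this integral. Fix $\eta>0$. Since $u\to0$ nontangentially $\sigma$-a.e. and $u^*\in L^p(\partial\Omega)$, Egorov's theorem on $(\partial\Omega,\sigma)$ together with inner regularity produces a compact $F\subseteq\partial\Omega$ with $\sigma(\partial\Omega\setminus F)<\eta$ such that $\sup_{q\in F}\sup\{|u(x)|:x\in\Gamma_i(q),\ \delta(x)\le t\}\to0$ as $t\to0$; combined with $\delta(q^\e)\lesssim\e$ this gives $\e_F:=\sup_{q\in F}|u(q^\e)|\to0$. Writing $F^\e=\{q^\e:q\in F\}$, the first piece $\int_{F^\e}u\,d\omega^{x_0}_{\Omega_\e}$ is bounded by $\e_F\,\omega^{x_0}_{\Omega_\e}(\partial\Omega_\e)=\e_F\to0$. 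On the second piece, $q^\e\in\Gamma_i(q)$ gives $|u(q^\e)|\le u^*(q)$, so it is at most $\int_{\partial\Omega\setminus F}u^*\,d\mu_\e$, where $\mu_\e$ is the push-forward of $\omega^{x_0}_{\Omega_\e}$ to $\partial\Omega$ under $q^\e\mapsto q$. The key claim is then: for every Borel $E\subseteq\partial\Omega$, $\limsup_{\e\to0}\mu_\e(E)\le C\,\omega^{x_0}(E)$ with $C$ a good constant. To see this, reduce to $E$ compact, take a continuous $g$ on $\partial\Omega$ equal to $1$ on the $2\e$-neighborhood of $E$ and supported in its $3\e$-neighborhood, and let $W$ solve $LW=0$ in $\Omega$ with data $g$. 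Since $\delta(q^\e)\simeq\e$ and $q^\e$ lies within $\simeq\e$ of $E$, a Harnack chain (Lemma \ref{ChainInequality}) together with Lemma \ref{BelowForHarmonic} gives $W(q^\e)\ge c>0$ for all $q\in E$; hence $W/c\ge\chi_{E^\e}$ on $\partial\Omega_\e$, and the maximum principle in $\Omega_\e$ (comparing with the solution for $\chi_{E^\e}$, cf.\ Proposition \ref{HarmonicMeasureProperties}) yields $\mu_\e(E)=\omega^{x_0}_{\Omega_\e}(E^\e)\le W(x_0)/c$. Letting $\e\to0$ the support of $g$ shrinks to $E$, so $W(x_0)\to\omega^{x_0}(E)$, which proves the claim. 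Now $D_p$-solvability forces $k:=d\omega^{x_0}/d\sigma\in B_{p'}(\partial\Omega)\subseteq L^{p'}$ (Theorem \ref{DirichletToWeightsRelation}), whence $u^*\in L^1(\partial\Omega,\omega^{x_0})$ by H\"older. Combining this with the claim through a routine limiting argument (layer-cake representation, or Vitali--Carath\'eodory approximation of $u^*\chi_{\partial\Omega\setminus F}$ by continuous functions and the maximum principle) gives $\limsup_{\e\to0}\int_{\partial\Omega\setminus F}u^*\,d\mu_\e\le C\int_{\partial\Omega\setminus F}u^*\,d\omega^{x_0}$, which tends to $0$ as $\eta\to0$ since $\omega^{x_0}(\partial\Omega\setminus F)\to0$. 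Therefore $|u(x_0)|$ is bounded by a quantity $\to0$ as $\eta\to0$, so $u(x_0)=0$, and $x_0$ was arbitrary.

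The main obstacle is the uniform harmonic-measure comparison $\limsup_\e\mu_\e(E)\lesssim\omega^{x_0}(E)$: $D_p$-solvability is only assumed for $\Omega$ and does not automatically descend to the subdomains $\Omega_\e$, so one cannot simply invoke a reverse-H\"older estimate for $\omega_{\Omega_\e}$; instead the comparison must be forced through the maximum principle using that $\delta(q^\e)\simeq\e$ (the geometric lemma) together with the lower bound for harmonic measure on surface balls (Lemma \ref{BelowForHarmonic}). The modified Caccioppoli inequality (Lemma \ref{ModifiedCacciopoli}) enters wherever the argument needs to pass from the $L^p$ bound on $u^*$ to pointwise control of $u$ (and of $\nabla u$) on the boundary slabs $\{c\e\le\delta\le\e\}$, in particular in verifying the interior estimates on $\Omega_\e$ used above. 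A secondary technical point to watch is the partition-of-unity gluing near the edges of the coordinate cylinders, so that $\Omega_\e$ stays genuinely Lipschitz with $\e$-independent character and $q^\e$ remains nontangential to $q$.
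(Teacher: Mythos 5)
Your overall strategy -- represent $u(x_0)$ by the $L$-harmonic measure of an interior Lipschitz exhaustion $\Omega_\e$, split $\partial\Omega_\e$ along an Egorov set $F$, and control the bad part via a comparison of the push-forward measures $\mu_\e$ with $\omega^{x_0}$ -- is genuinely different from the paper's proof, which uses a Green's-function representation $u(y)=\int_{R_\e}(\cdots)$ over an annular collar $R_\e$ and bounds the three resulting terms pointwise by $C\,M_{\omega^y}(q)\,u^*_\e(q)$, a product of a \emph{fixed} $L^{p'}(\sigma)$ weight and a function tending to $0$ $\sigma$-a.e.\ dominated by $u^*$; the final step in the paper is then a plain dominated-convergence argument against the single measure $\sigma$. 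Your idea of pushing the comparison through Lemma \ref{BelowForHarmonic} and a Harnack chain to get $\mu_\e(E)\leq C\,\omega^{x_0}(E^{3\e})$ for compact $E$ is correct and well explained, and the Egorov splitting is sound.

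However, there is a genuine gap in the passage from that set-level comparison to the integral inequality
\[
\limsup_{\e\to0}\int_{\partial\Omega\setminus F}u^*\,d\mu_\e\leq C\int_{\partial\Omega\setminus F}u^*\,d\omega^{x_0},
\]
and the tools you suggest do not close it. First, the claim you state for Borel $E$ is not what your argument proves: the fattened sets $E^{3\e}$ decrease to $\overline{E}$, so what you get (even after extending to Borel $E$ at fixed $\e$ by inner regularity of $\mu_\e$) is $\limsup_\e\mu_\e(E)\leq C\,\omega^{x_0}(\overline{E})$, which for the open set $\partial\Omega\setminus F$ involves the closure and is not automatically controlled by $\eta$. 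Second, and more seriously, even a correct Borel-set inequality would not yield the integral bound: since the measures $\mu_\e$ change with $\e$, dominated convergence does not apply, and what is really needed is a uniform-integrability statement for $u^*$ against the family $\{\mu_\e\}$, which the set comparison does not supply. In fact your Vitali--Carath\'eodory suggestion points the wrong way: that theorem produces a \emph{lower} semicontinuous majorant, and against a weak-$*$ convergent family $\mu_\e\to\omega^{x_0}$ a lower semicontinuous test function gives $\liminf_\e\int\phi\,d\mu_\e\geq\int\phi\,d\omega^{x_0}$ (Portmanteau), i.e.\ a lower bound rather than the needed upper bound. A layer-cake attempt runs into the same obstruction, because Fatou only gives $\liminf$ inequalities, not $\limsup$. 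One cannot fix this by replacing $u^*$ with a fattened maximal function either, since points $x\in\Gamma_i(q')$ with $\delta(x)\ll\e$ and $|q'-q|\simeq\e$ escape any fixed widened cone $\tilde\Gamma(q)$. Unless you can establish a uniform density bound for $\mu_\e$ in some $L^{q}$, $q>1$ (which you explicitly -- and correctly -- decline to assume, since $D_p$-solvability is only hypothesised for $\Omega$ itself and $A$ is only assumed bounded and symmetric here), this step cannot be called routine, and as sketched the argument does not go through.
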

\begin{proof}
Let $y\in\Omega$. For $\e>0$, set
\[
\Omega_{\e}=\{x\in\Omega|\delta(x)\leq\e\},\,\,R_{\e}=\Omega_{2\e}\setminus \Omega_{\e}.
\]
Consider a smooth cutoff $\phi_{\e}$ which is $1$ outside $\Omega_{\e}$, $0$ in $\Omega_{2\e}$, and $|\nabla\phi_{\e}|\leq C/\e$. Suppose that $\e$ is small enough, such that $G_y^t$ is a solution of $L^tG_y^t=0$ in $T_{10\e}(q)$, for any $q\in\partial\Omega$. Then, we obtain that
\[
u(y)=u(y)\phi_{\e}(y)=\int_{\Omega}A^t\nabla G_y^t\nabla(u\phi_{\e})+b\nabla(u\phi_{\e})G_y^t,
\]
hence
\begin{align*}
u(y)&=\int_{\Omega}A^t\nabla G_y^t\nabla\phi_{\e}\cdot u+A^t\nabla G_y^t\nabla u\cdot\phi_{\e}+b\nabla u\cdot\phi_{\e}G_y^t+b\nabla\phi_{\e}\cdot uG_y^t\\
&=\int_{\Omega}A^t\nabla G_y^t\nabla\phi_{\e}\cdot u+A^t\nabla(G_y^t\phi_{\e})\nabla u+b\nabla u\cdot\phi_{\e}G_y^t+b\nabla\phi_{\e}\cdot uG_y^t-A^t\nabla\phi_{\e}\nabla u\cdot G_y^t\,dx\\
&=\int_{R_{\e}}A^t\nabla G_y^t\nabla\phi_{\e}\cdot u\,dx+\int_{R_{\e}}b\nabla\phi_{\e}\cdot uG_y^t\,dx-\int_{R_{\e}}A^t\nabla\phi_{\e}\nabla u\cdot G_y^t\,dx=I_1+I_2+I_3,
\end{align*}
since $u$ is a solution in $\Omega$, and from the support properties of $\phi_{\e}$.
	
Consider now the covering of $\partial\Omega$ by coordinate cylinders $Z_i$, for $i=1,\dots N$. Then, if $\e$ is small enough, we obtain that
\[
R_{\e}\subseteq\bigcup_{i=1}^NZ_i\Rightarrow R_{\e}\subseteq\bigcup_{i=1}^N(Z_i\cap R_{\e}).
\]
Fix $i$, and suppose that $Z_i$ has basis $B_i$, where $B_i\subseteq\mathbb R^{d-1}$ is a ball. Consider a cube $Q$ with side length $l$, such that $B_i\subseteq Q\subseteq \sqrt{d}B_i$, and split $Q$ in $2^{ad}$ dyadic subcubes $Q_j$, with side length $l/a$, such that $l/a\leq \tilde{c}\e$, where $\tilde{c}$ will be chosen later. Suppose also that $\e$ is small enough, such that $\e<s_{\Omega}$, where $s_{\Omega}$ appears in lemma \ref{InnerRadius}, and set $P_j$ to be the part of $\partial\Omega$ that lies above $Q_j$. Then, using lemma \ref{ModifiedCacciopoli}, we can write every $x\in Z_i\cap R_{\e}$ in the form $(x_0,s)$, with $x_0\in Q$ and $s\in(\psi(x_0)+c_1\e,\psi(x_0)+c_2\e)$, where $\psi$ is a Lipschitz function. Therefore,
\begin{align*}
\int_{Z_i\cap R_{\e}}\left|A^t\nabla G_y^t\nabla\phi_{\e}\cdot u\right|\,dx&\leq\frac{C}{\e}\int_Q\int_{\psi(x_0)+c_1\e}^{\psi(x_0)+c_2\e}|\nabla G_y^t(x_0,s)||u(x_0,s)|\chi_{R_{\e}}(x_0,s)\,dsdx_0\\
&\leq\frac{C}{\e}\sum_j\int_{Q_j}\int_{\psi(x_0)+c_1\e}^{\psi(x_0)+c_2\e}|\nabla G_y^t(x_0,s)||u(x_0,s)|\chi_{R_{\e}}(x_0,s)\,dsdx_0.
\end{align*}
We now bound the last integral using the Cauchy-Schwartz inequality. First, note that, using Cacciopoli's inequality (lemma \ref{ModifiedCacciopoli}),
\begin{multline*}
\int_{Q_j}\int_{\psi(x_0)+c_1\e}^{\psi(x_0)+c_2\e}|\nabla G_y^t(x_0,s)|^2\,dsdx_0\leq\frac{C}{\e^2}\int_{3Q_j}\int_{\psi(x_0)+c_0\e}^{\psi(x_0)+c_4\e}|G_y^t(x_0,s)|^2\,dsdx_0\\
\leq\frac{C}{\e}\sup\{G_y^t(x_0,s)|x_0\in 3Q_j, \psi(x_0)+c_0\e\leq s\leq \psi(x_0)+c_3\e\}^2|3Q_j|.
\end{multline*}
So, applying lemmas \ref{CarlesonEstimate} and \ref{GFromAbove}, the quantity above is bounded by
\[
\frac{C}{\e}\omega^y(\Delta_{2\e}(q))^2\e^{4-2d}|Q_j|\leq C\omega^y(\Delta_{2\e}(q))^2\e^{2-d},
\]
for all $q\in P_j$.
	
We now choose $\tilde{c}$ sufficiently small, such that, for any point $y_0\in Q_j$ and for each $(x_0,t)$ with $\delta(x_0,t)\geq\e/2$, $x_0\in Q_j$, and $\psi(x_0)+c_1\e\leq t\leq\psi(x_0)+c_2\e$, we have that $(x_0,t)$ belongs to the cone $\Gamma(y_0,\psi(y_0))$, defined in the introduction; this choice of $\tilde{c}$ will only depend on the apertures of the cones $\Gamma$, and the constants $c_1,c_2$. Hence, for any $y_0\in Q_j$, we see that
\begin{align*}
\int_{Q_j}\int_{\psi(x_0)+c_1\e}^{\psi(x_0)+c_2\e}|u(x_0,s)|^2\chi_{R_{\e}}(x_0,s)\,dsdx_0&\leq\int_{Q_j}\int_{\psi(x_0)+c_1\e}^{\psi(x_0)+c_2\e}|u^*_{\e}(y_0,\psi(y_0))|^2\,dsdx_0\\&\leq C\e^d|u^*_{\e}(y_0,\psi(y_0))|^2,
\end{align*}
where $u^*_{\e}$ is defined as
\[
u^*_{\e}(q)=\sup\{|u(x)|\bm{|}x\in \Gamma(q), |x-q|\leq C\e\},
\]
Therefore, we finally get that
\begin{multline*}
\int_{Q_j}\int_{\psi(x_0)+c_1\e}^{\psi(x_0)+c_2\e}|\nabla G_y^t(x_0,s)||u(x_0,s)|\chi_{R_{\e}}(x_0,s)\,dsdx_0\\
\leq C\e\omega^y(\Delta_{2\e}(y_0,\psi(y_0)))u^*_{\e}(y_0,\psi(y_0))
\end{multline*}
for all $y_0\in Q_j$. Now, we integrate this relation in $Q_j$ and we change variables, to obtain that 
\begin{multline*}
\int_{Q_j}\int_{\psi(x_0)+c_1\e}^{\psi(x_0)+c_2\e}|\nabla G_y^t(x_0,s)||u(x_0,s)|\chi_{R_{\e}}(x_0,s)\,dsdx_0\\
\leq C\e^{2-d}\int_{P_j}\omega^y(\Delta_{2\e}(q))u^*_{\e}(q)\,d\sigma(q).
\end{multline*}
Hence, we obtain that
\[
\int_{Z_i\cap R_{\e}}\left|A^t\nabla G_y\nabla\phi_{\e} u\right|\,dx\leq C\sum_j\e^{1-d}\int_{P_j}\omega^y(\Delta_{2\e}(q))u^*_{\e}(q)\,d\sigma(q).
\]
Now, using the fact that the surface measure of $\Delta_{2\e}(q)$ is comparable to $\e^{d-1}$, we obtain that
\begin{align*}
\int_{Z_i\cap R_{\e}}\left|A^t\nabla G_y\nabla\phi_{\e} u\right|\,dx&\leq C\sum_j\int_{P_j}\frac{\omega^y(\Delta_{2\e}(q))}{\sigma(\Delta_{2\e}(q))}u^*(q)\,d\sigma(q)\\
&\leq C\sum_j\int_{P_j}M_{\omega^y}(q)u^*_{\e}(q)\,d\sigma(q)=C\int_{U_i\cap\partial\Omega}M_{\omega^y}(q)u^*_{\e}(q)\,d\sigma(q)
\end{align*}
where $M_{\omega^y}$ is the maximal function of $\omega^y$. Since $D_p$ is solvable, by theorem \ref{DirichletToWeightsRelation} we obtain that the kernel $\frac{d\omega^y}{d\sigma}$ is an $L^{p'}(\partial\Omega)$ function, therefore, from the bounds on the maximal function, we obtain that 
\[
\int_{Z\cap R_{\e}}\left|A^t\nabla G_y\nabla\phi_{\e} u\right|\,dx\leq C\left(\int_{U\cap\partial\Omega}|u^*_{\e}(q)|^p\,d\sigma(q)\right)^{1/p}.
\]
But $u\to 0$ on $\partial\Omega$, nontangentially, almost everywhere, therefore the dominated convergence theorem shows that 
\[
\int_{Z_i\cap R_{\e}}\left|\nabla G_y\nabla\phi_{\e} u\right|\,dx\xrightarrow[\e\to 0] {}0.
\]
Adding those integrals over the cylinders $Z_i$ (since we have $N$ of them), we obtain that $I_1\to 0$.
	
We now turn to $I_2$, and we write, as above
\[
\int_{Z_i\cap R_{\e}}|b\nabla\phi_{\e}\cdot uG_y^t\,dx|\leq\frac{C}{\e}\sum_j\int_{Q_j}\int_{\psi(x_0)+c_1\e}^{\psi(x_0)+c_2\e}|G_y^t(x_0,s)||u(x_0,s)|\chi_{R_{\e}}(x_0,s)\,dsdx_0.
\]
We then use the Cauchy-Schwartz inequality, and we bound
\[
\int_{Q_j}\int_{\psi(x_0)+c_1\e}^{\psi(x_0)+c_2\e}|G_y^t(x_0,s)|^2\,dsdx_0\leq C\omega^y(\Delta_{2\e}(q))^2\e^{3-d},
\]
for every $y_0\in Q_j$, from lemmas \ref{CarlesonEstimate} and \ref{GFromAbove}. In addition,
\[
\int_{Q_j}\int_{\psi(x_0)+c_1\e}^{\psi(x_0)+c_2\e}|u(x_0,s)|^2\,dsdx_0\leq C\e^d|u^*_{\e}(y_0,\psi(y_0))|^2,
\]
therefore, if we multiply the two estimates above and we integrate over $Q_j$, we obtain that
\[
\int_{Q_j}\int_{\psi(x_0)+c_1\e}^{\psi(x_0)+c_2\e}|G_y^t(x_0,s)||u(x_0,s)|\chi_{R_{\e}}(x_0,s)\,dsdx_0\leq C\e^{\frac{3}{2}-d}\int_{P_j}\omega^y(\Delta_{2\e}(q))u^*_{\e}(q)\,d\sigma(q).
\]
This shows that
\[
\int_{Z_i\cap R_{\e}}|b\nabla\phi_{\e}\cdot uG_y^t\,dx|\leq C\sqrt{\e}\sum_j\e^{1-d}\int_{P_j}\omega^y(\Delta_{2\e}(q))u^*_{\e}(q)\,d\sigma(q),
\]
which goes to $0$, as above; hence $I_2\to 0$.
	
Finally, for $I_3$, 
\[
\int_{Z_i\cap R_{\e}}|A^t\nabla\phi_{\e}\nabla u\cdot G_y^t\,dx|\leq\frac{C}{\e}\sum_j\int_{Q_j}\int_{\psi(x_0)+c_1\e}^{\psi(x_0)+c_2\e}|G_y^t(x_0,s)||\nabla u(x_0,s)|\chi_{R_{\e}}(x_0,s)\,dsdx_0,
\]
and, for all $y_0\in Q_j$,
\[
\int_{Q_j}\int_{\psi(x_0)+c_1\e}^{\psi(x_0)+c_2\e}|G_y^t(x_0,s)|^2\,dsdx_0\leq C\omega^y(\Delta_{2\e}(y_0,\psi(y_0)))^2\e^{3-d},
\]
while
\begin{align*}
\int_{Q_j}\int_{\psi(x_0)+c_1\e}^{\psi(x_0)+c_2\e}|\nabla u(x_0,s)|^2\,dsdx_0&\leq\frac{C}{\e^2}\int_{3Q_j}\int_{\psi(x_0)+c_0\e}^{\psi(x_0)+c_4\e}|u(x_0,s)|^2\,dsdx_0\\
&\leq C\e^{d-2}|u^*_{\e}(y_0,\psi(y_0))|^2.
\end{align*}
Therefore, as above,
\[
\int_{Z_i\cap R_{\e}}|A^t\nabla\phi_{\e}\nabla u\cdot G_y^t\,dx|\leq C\sum_j\int_{P_j}M_{\omega^y}(q)u^*_{\e}(q)\,d\sigma(q),
\]
which goes to $0$ as $\e\to 0$. Therefore $I_3\to 0$ as well, which shows that $u(y)=0$. This completes the proof.
\end{proof}

The next proposition shows existence of solutions with boundary data in $L^p(\partial\Omega)$.

\begin{prop}\label{DirichletSolvability}
Suppose that $\Omega$ is a bounded Lipschitz domain, and let $A\in M_{\lambda}(\Omega)$, $b\in L^{\infty}(\Omega)$. Assume also that that $D_p$ is solvable in $\Omega$, with constant $C$. Then, for every $f\in L^p(\partial\Omega)$, there exists a unique $u\in W^{1,2}_{\loc}(\Omega)$ which satisfies the following:
\begin{enumerate}[i)]
\item $u$ is a weak solution to $Lu=0$ in $\Omega$.
\item $u$ converges to $f$ nontangentially, for almost every point $q\in\partial\Omega$ with respect to the surface measure: that is,
\[
\lim_{\substack{x\in\Gamma(q)\\x\to q}}u(x)=f(q),\,\,\sigma-{\rm a.e.}\,\,q\in\partial\Omega
\]
\item $\|u^*\|_{L^p(\Omega)}<\infty$.
\end{enumerate}
Then, this solution $u$ will satisfy the inequality $\|u^*\|_{L^p(\Omega)}\leq C\|f\|_{L^p(\Omega)}$.
\end{prop}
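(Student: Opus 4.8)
The plan is to realize $u$ as a locally uniform limit of classical solutions with continuous data, to carry the $L^p$-bound to the limit via Fatou's lemma, and to obtain the nontangential boundary behaviour by a density argument; uniqueness will be immediate from Proposition \ref{DirichletUniqueness}. First I would pick $f_n\in C(\partial\Omega)$ with $f_n\to f$ in $L^p(\partial\Omega)$, and let $u_n\in W^{1,2}_{\loc}(\Omega)\cap C(\overline\Omega)$ be the weak solution with data $f_n$ provided by Theorem \ref{WeakSolvability}. Solvability of $D_p$ gives $\|u_n^*\|_{L^p(\partial\Omega)}\le C\|f_n\|_{L^p(\partial\Omega)}$, and applied to $f_n-f_m$ it gives $\|(u_n-u_m)^*\|_{L^p(\partial\Omega)}\le C\|f_n-f_m\|_{L^p(\partial\Omega)}$. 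For a compact $K\subseteq\Omega$ every $x\in K$ has $\delta(x)\ge d_K>0$, and hence lies in $\Gamma_i(q)$ for all $q$ in a surface ball $\Delta_x$ whose $\sigma$-measure is bounded below by some $c_K>0$ depending only on $K$ and the geometry of $\Omega$; thus $|u_n(x)-u_m(x)|^p\le c_K^{-1}\int_{\Delta_x}\big((u_n-u_m)^*\big)^p\,d\sigma\le c_K^{-1}C^p\|f_n-f_m\|_{L^p(\partial\Omega)}^p$. So $(u_n)$ is uniformly Cauchy on compact subsets of $\Omega$ and converges locally uniformly to some $u$; Caccioppoli's inequality (Lemma \ref{Cacciopoli}) bounds $(u_n)$ in $W^{1,2}$ on compacta, so $u\in W^{1,2}_{\loc}(\Omega)$ and, as in the proof of Theorem \ref{WeakSolvability}, $u$ is a weak solution of $Lu=0$, giving (i). The same surface-ball estimate shows the limit is independent of the chosen sequence $(f_n)$, so $f\mapsto u$ is well defined, and it is linear because the classical Dirichlet problem is.

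Next, for (iii): for each $q\in\partial\Omega$ and $x\in\Gamma_i(q)$ we have $|u(x)|=\lim_n|u_n(x)|\le\liminf_n u_n^*(q)$, hence $u^*(q)\le\liminf_n u_n^*(q)$, and Fatou's lemma yields $\|u^*\|_{L^p(\partial\Omega)}\le\liminf_n\|u_n^*\|_{L^p(\partial\Omega)}\le C\|f\|_{L^p(\partial\Omega)}$, which is both (iii) and the claimed bound.

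For the nontangential convergence (ii) I would fix $\varepsilon>0$, pick $g\in C(\partial\Omega)$ with $\|f-g\|_{L^p(\partial\Omega)}<\varepsilon$, and let $u_g\in C(\overline\Omega)$ be the classical solution with data $g$, so $u_g=g$ on $\partial\Omega$. By linearity and well-definedness, $u-u_g$ is the solution attached to the datum $f-g$, so the estimate just proved gives $\|(u-u_g)^*\|_{L^p(\partial\Omega)}\le C\varepsilon$. Setting $N(q)=\limsup_{\Gamma_i(q)\ni x\to q}|u(x)-f(q)|$ we have
\[
N(q)\le\sup_{x\in\Gamma_i(q)}|u(x)-u_g(x)|+\limsup_{\Gamma_i(q)\ni x\to q}|u_g(x)-g(q)|+|g(q)-f(q)|,
\]
where the middle term vanishes because $u_g$ is continuous on $\overline\Omega$ with $u_g(q)=g(q)$, and the first term is at most $(u-u_g)^*(q)$; hence $N(q)\le(u-u_g)^*(q)+|g(q)-f(q)|$ and $\|N\|_{L^p(\partial\Omega)}\le(C+1)\varepsilon$. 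Letting $\varepsilon\to0$ gives $N=0$ $\sigma$-a.e., which is (ii). Finally, for uniqueness: if $u$ and $v$ both satisfy (i)--(iii), then $w=u-v$ is a weak solution of $Lw=0$ that tends to $0$ nontangentially $\sigma$-a.e. and has $w^*\le u^*+v^*\in L^p(\partial\Omega)$, so Proposition \ref{DirichletUniqueness} forces $w\equiv0$.

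The one delicate point is the density step for (ii): it is essential that $(u-u_g)^*$, rather than just $u^*+u_g^*$, control $N$, and this requires $u-u_g$ to be genuinely the solution associated to $f-g$ — which is why the construction must be shown to be well defined and linear before running the density argument. The remaining ingredients are routine: Caccioppoli bounds, interior control of solutions via their nontangential maximal functions on surface balls, and Fatou's lemma.
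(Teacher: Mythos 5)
Your overall strategy is the same as the paper's---approximate $f$ by continuous data, use $D_p$ solvability to control the solutions $u_n$, pass to a locally uniform limit, carry the $L^p$-bound through by Fatou, and get nontangential convergence by a density argument, with uniqueness from Proposition \ref{DirichletUniqueness}. Your route to the locally uniform limit via Cauchyness of $(u_n)$ is in fact a little cleaner than the paper's (which passes through uniform boundedness, equicontinuity from Proposition \ref{Equicontinuity}, and a diagonal/subsequence argument), and your (ii) argument using $N(q)\le(u-u_g)^*(q)+|g(q)-f(q)|$ and the $L^p$ bound is a standard reformulation of the measure-theoretic step in the paper.

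However, there is a genuine gap in the interior estimate. You assert that every $x\in K$ lies in $\Gamma_i(q)$ for all $q$ in a surface ball $\Delta_x$ with $\sigma(\Delta_x)\ge c_K>0$. In this paper the cones $\Gamma(q)$ are \emph{doubly truncated}: by construction $\Gamma(q)_i\subseteq\Omega\cap Z^*$ for a coordinate cylinder $Z^*$ of bounded size, so a point of $\Omega$ far from $\partial\Omega$ (e.g.\ the center of $\Omega$ when $\diam(\Omega)$ is large relative to $r_\Omega$) lies in \emph{no} cone $\Gamma_i(q)$, and the claimed $c_K$ is zero. Consequently your surface-ball bound $|u_n(x)-u_m(x)|^p\le c_K^{-1}\int_{\Delta_x}((u_n-u_m)^*)^p\,d\sigma$ is unjustified for general compacta, and since the well-definedness and linearity of $f\mapsto u$ in your (ii) argument are also derived from this same estimate, the gap propagates into that step as well.

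The fix is exactly the ingredient the paper uses. Work with the approximating domains $\Omega_j\uparrow\Omega$ of Theorem \ref{ApproximationScheme}: every point of $\partial\Omega_j$ is of the form $\Lambda_j(q)$ with $\Lambda_j(q)\in\Gamma_i(p)$ for all $p$ in a surface ball of $\sigma$-measure $\gtrsim r_j^{d-1}>0$, so your argument \emph{does} bound $|u_n-u_m|$ on $\partial\Omega_j$ by $C_j\|f_n-f_m\|_{L^p(\partial\Omega)}$; the maximum principle (Theorem \ref{MaximumPrinciple}) then propagates this bound to all of $\Omega_j$. With that one-line repair your proof is correct and marginally streamlines the paper's, replacing the equicontinuity-plus-diagonal extraction with a direct Cauchy argument.
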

\begin{proof}
Uniqueness follows from proposition \ref{DirichletUniqueness}.
	
For existence, let $f\in L^p(\partial\Omega)$, and consider a sequence $f_n\in C(\partial\Omega)$ with $f_n\to f$ in $L^p(\partial\Omega)$. Consider also the solutions $u_n$ to the Dirichlet problem with boundary data $f_n$. From solvability of $D_p$, we obtain that
\[
\|u_n^*\|_{L^p(\partial\Omega)}\leq C\|f_n\|_{L^p(\partial\Omega)}.
\]
Consider now the approximation scheme $\Omega_j\uparrow\Omega$, from theorem \ref{ApproximationScheme}, and fix $j\in\mathbb N$. Let also $q\in\partial\Omega$. Since $\Lambda_j(q)\in\Gamma(q)$ from the same theorem and $j$ is fixed, we obtain that $\Lambda_j(q)\in\Gamma(p)$ for all $p\in\Delta_{r_j}(q)$, where $r_j$ is sufficiently small. This will show that
\[
|u_n(\Lambda_j(q))|\leq|u_n^*(p)|\Rightarrow |u_n(\Lambda_j(q))|\leq\fint_{\Delta_{r_j}(q)}|u_n^*|\,d\sigma\leq\frac{\|f_n\|_{L^p(\partial\Omega)}}{r_j}.
\]
It follows that, for any $j\in\mathbb N$, $(u_n)$ is a uniformly bounded sequence of solutions in $\Omega_j$. Therefore, from equicontinuity of solutions (proposition \ref{Equicontinuity}) and also applying a diagonal argument, $(u_n)$ converges pointwise to a solution $u$ in $\Omega$, uniformly in compact subsets of $\Omega$.
	
Let now $q\in\partial\Omega$, and $x\in\Gamma(q)$. Then, for all $n\in\mathbb N$,
\[
|u(x)-u_n(x)|=\liminf_{m\to\infty}|u_m(x)-u_n(x)|\leq\liminf_{m\to\infty}(u_m-u_n)^*(q),
\]
which implies that $(u-u_n)^*\leq\liminf_{m\to\infty}(u_m-u_n)^*$. We now integrate and apply Fatou's lemma, to obtain
\[
\|(u-u_n)^*\|_p\leq\lim_{m\to\infty}\|(u_m-u_n)^*\|_p\leq C\lim_{m\to\infty}\|f_m-f_n\|_p=C\|f-f_n\|_p,
\] 
where we also used that $D_p$ is solvable in $\Omega$. Therefore, $\|(u-u_n)^*\|_p\to 0$ as $n\to\infty$.
	
We now compute, for $x\in\Gamma(q)$ and $n\in\mathbb N$,
\begin{align*}
\limsup_{x\to q}u(x)-f(q)&\leq\limsup_{x\to q}(u(x)-u_n(x))+\limsup_{x\to q}u_n(x)-f(q)\\
&=\limsup_{x\to q}(u(x)-u_n(x))+f_n(q)-f(q)\leq (u-u_n)^*(q)+f_n(q)-f(q).
\end{align*}
Therefore, for any $\e>0$,
\begin{multline*}
\sigma\left(\left\{q\in\Gamma(q)\Big{|}\limsup_{x\to q}u(x)-f(q)>\e\right\}\right)\\
\leq\sigma\left(\left\{q\in\Gamma(q)\Big{|}(u-u_n)^*(q)+f_n(q)-f(q)>\e\right\}\right),
\end{multline*}
for all $n\in\mathbb N$. But, $(u-u_n)^*+f_n-f$ converges to $0$ almost everywhere, so
\[
\limsup_{x\to q}u(x)-f(q)\leq\e
\]
for almost all $q\in\partial\Omega$. This shows that $\limsup_{x\to q}u(x)\leq f(q)$ for almost all $q\in\partial\Omega$. A similar procedure shows that $\liminf_{x\to q}u(x)\geq f(q)$ for almost all $q\in\partial\Omega$, therefore $u$ converges nontangentially to $f$, almost everywhere.
	
Finally,
\[
\|u^*\|_p\leq\|(u-u_n)^*\|_p+\|u_n^*\|_p\leq C\|f-f_n\|_p+C\|f_n\|_p\xrightarrow[n\to\infty]{}C\|f\|_p,
\]
which shows that $u$ is the required solution.
\end{proof}
\section{The Regularity problem for $L$}
In this chapter we turn our attention to the Regularity problem for the equation $Lu=0$ in a Lipschitz domain $\Omega$. Rather than obtaining our results for smooth data on the boundary and using approximations, as in the case of the Dirichlet problem in the previous chapter, we will show solvability for the Regularity problem providing a formula for the solutions which will hold for data in $W^{1,2}(\partial\Omega)$. This will be achieved using the method of \emph{layer potentials}.

\subsection{Formulation and uniqueness}
We begin with the formulation of the Regularity problem. Recall that the space $W^{1,p}(\partial\Omega)$ is defined in definition \ref{W1pPartialOmega}.

\begin{dfn}
Let $\Omega$ be a Lipschitz domain, and $p\in(1,\infty)$. We say that the Regularity problem $R_p$ is solvable in $\Omega$, if there exists $C>0$ such that, for every $f\in W^{1,p}(\partial\Omega)$, there exists a solution $u\in W^{1,2}_{\loc}(\Omega)$ to the Dirichlet problem
\[
\left\{\begin{array}{c l}
Lu=0,&{\rm in}\,\,\Omega,\\
u=f,&{\rm on}\,\,\partial\Omega,
\end{array}\right.
\]
which also satisfies the estimate
\[
\|(\nabla u)^*\|_{L^p(\partial\Omega)}\leq C\|\nabla f\|_{L^p(\partial\Omega)},
\]
and $u=f$ on the boundary is interpreted in the nontangential, almost everywhere sense.
\end{dfn}

Note that only the gradient of $f$ appears on the right hand side. This is to be expected since constants are solutions to the equation.

The next proposition relates the nontangential maximal functions of a function and its gradient. This will be the basic estimate that shows uniqueness for the Regularity problem.

Recall first that, if $u$ is a function in $\Omega$, then for $q\in\partial\Omega$ we define 
\[
u_{\e}^*(q)=\sup\left\{|u(x)|\Big{|}x\in\Gamma(q),\,|x-q|<C\e\right\},
\]

for some constant $C$.

\begin{prop}\label{UByNablaU}
Let $\Omega$ be a Lipschitz domain, and let $\e>0$. Suppose that $u\in C^1_{{\rm loc}}(\Omega)$ and $u$ converges nontangentially, almost everywhere to a function $f\in L^p(\partial\Omega)$ on the boundary, for some $p\in[1,\infty)$. Then, for all $\e>0$, and almost all $q\in\partial\Omega$,
\[
u_{\e}^*\leq C\e(\nabla u)^*+|f|.
\]
\end{prop}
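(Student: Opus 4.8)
The idea is to estimate $|u(x)|$ for a point $x \in \Gamma(q)$ with $|x-q| < C\varepsilon$ by travelling from $x$ inward along the cone (or some admissible path) toward the boundary, where $u$ has nontangential limit $f(q)$ for almost every $q$. First I would fix $q \in \partial\Omega$ at which the nontangential limit exists, so that $\lim_{y \in \Gamma(q), y \to q} u(y) = f(q)$; this happens for $\sigma$-a.e.\ $q$ by hypothesis. Then, given $x \in \Gamma(q)$ with $|x-q| < C\varepsilon$, I would choose a sequence of points $y_k \in \Gamma(q)$ with $y_k \to q$, and write, using that $u \in C^1_{\mathrm{loc}}(\Omega)$ and the fundamental theorem of calculus along a polygonal path inside the cone from $x$ to $y_k$,
\[
u(x) = u(y_k) + \int_{\gamma_{x,y_k}} \nabla u \cdot d\ell,
\]
where $\gamma_{x,y_k}$ is a path joining $x$ to $y_k$ that stays inside $\Gamma(q)$ (possibly slightly enlarged, but still an admissible cone by the regularity of the family $\{\Gamma(q)\}$). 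Letting $k \to \infty$ gives $u(x) = f(q) + \int_{\gamma} \nabla u \cdot d\ell$ where $\gamma$ runs from $x$ down to $q$ inside the cone.

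The next step is to bound the line integral. Along $\gamma$, every point $y$ lies in $\Gamma(q)$ and satisfies $|y - q| \le |x - q| < C\varepsilon$ (for a suitably chosen path whose length is comparable to $|x-q|$; this is where the geometry of the cone is used, since in a circular cone the straight segment from $x$ to $q$ stays in $\overline{\Gamma(q)}$ after a harmless widening). Hence $|\nabla u(y)| \le (\nabla u)^*(q)$ for every $y$ on $\gamma$, and the length of $\gamma$ is at most a constant multiple of $|x-q| < C\varepsilon$. Therefore
\[
\left| \int_\gamma \nabla u \cdot d\ell \right| \le \mathrm{length}(\gamma) \cdot \sup_{y \in \gamma} |\nabla u(y)| \le C\varepsilon\, (\nabla u)^*(q).
\]
Combining, $|u(x)| \le |f(q)| + C\varepsilon (\nabla u)^*(q)$, and taking the supremum over all such $x$ yields $u_\varepsilon^*(q) \le C\varepsilon (\nabla u)^*(q) + |f(q)|$ for a.e.\ $q$.

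\textbf{Main obstacle.} The delicate point is the path construction: one must connect an arbitrary $x \in \Gamma(q)$ near $q$ to $q$ itself by a rectifiable curve lying in $\Omega$, with length controlled by $|x-q|$, along which the nontangential maximal function $(\nabla u)^*(q)$ dominates $|\nabla u|$. For a straight circular cone this is immediate — the segment $[x,q]$ lies in $\overline{\Gamma(q)}\setminus\{q\}$ up to the endpoint — but one has to be slightly careful near the vertex $q$ and with the choice of aperture, using the nesting $\alpha \subseteq \overline\beta \setminus \{0\} \subseteq \gamma$ from the definition of a regular family of cones to ensure the path stays in the cone defining $(\nabla u)^*$. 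A minor additional point is justifying the limiting argument $u(y_k) \to f(q)$: this is exactly the nontangential convergence hypothesis, valid for a.e.\ $q$, and the $C^1_{\mathrm{loc}}$ assumption makes the fundamental theorem of calculus on each compact sub-path legitimate. Everything else is a routine estimate.
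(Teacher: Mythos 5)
Your proposal is correct and is essentially the same argument as the paper's: both integrate $\nabla u$ along the segment from $x$ toward $q$ inside the cone, bound $|\nabla u|$ pointwise by $(\nabla u)^*(q)$, and use the nontangential limit $u\to f(q)$ to close the estimate. The only cosmetic difference is that the paper avoids the limiting path integral by applying the mean value theorem on the dyadic segments $[x_m,x_{m+1}]$ with $x_m=q+2^{1-m}(x-q)$ and telescoping, whereas you invoke the fundamental theorem of calculus on compact sub-paths $\gamma_{x,y_k}$ and let $y_k\to q$; both handle the vertex issue legitimately.
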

\begin{proof}
Consider first $q\in\partial\Omega$ such that $f(q)$ is finite and $u$ converges nontangentially to $f(q)$ at $q$, and suppose that $x\in\Gamma(q)$ and $|x-q|<C\e$. Since $\Gamma(q)$ is a cone, the line segment $[x,q)$ is a subset of $\Gamma(q)$. Consider now the sequence of points
\[
x_m=q+2^{1-m}(x-q),
\]
then all those points lie in $[x,q)$, and $x_1=x$.

Let now $m\in\mathbb N$. Since $u$ is continuously differentiable in a neighborhood of the line segment from $x_m$ to $x_{m+1}$, there exists $z_m$ in this line segment such that
\[
|u(x_m)-u(x_{m+1})|\leq |\nabla u(z_m)||x_m-x_{m+1}|.
\]
Since $z_m$ lies in the line segment from $x$ to $q$, we obtain that $z_m\in\Gamma(q)$, therefore
\[
|u(x_m)-u(x_{m+1})|\leq (\nabla u)^*(q)|x_m-x_{m+1}|.
\]
In addition, $x_m\to q$, and $x_m\in\Gamma(q)$, therefore $u(x_m)\to f(q)$. This shows that
\begin{align*}
|u(x)-f(q)|&\leq\sum_{m\in\mathbb N}|u(x_m)-u(x_{m+1})|\leq\sum_{m\in\mathbb N}(\nabla u)^*(q)|x_m-x_{m+1}|\\
&=(\nabla u)^*(q)\sum_{m\in\mathbb N}2^{-m}|x-q|=C(\nabla u)^*(q)|x-q|\\
&\leq C\e(\nabla u)^*(q),
\end{align*}
since $|x-q|<C\e$. This inequality shows that
\[
|u(x)|\leq C\e(\nabla u)^*+|f(q)|
\]
for all $x\in\Gamma(q)$, with $|x-q|<C\e$. We now consider the supremum for those $x$ to obtain the desired inequality.
\end{proof}

We now show uniqueness for the Regularity problem; in order to do this, we will follow an argument similar to the proof of proposition \ref{DirichletUniqueness}. A standard way of showing this uniqueness would involve using proposition \ref{DirichletUniqueness} directly, since from proposition \ref{UByNablaU} the norm of $(\nabla u)^*$ is stronger than the norm of $u^*$. However, proposition \ref{DirichletUniqueness} shows uniqueness after assuming that the Dirichlet problem is solvable in $\Omega$, which we have obtained after assuming a condition on $\dive b$, as in theorem \ref{GeneralDirichlet}. Since we want to treat the Regularity problem only assuming that $b$ is bounded, the previous argument will not work, which justifies the need for the next proposition.

\begin{prop}\label{UniquenessForRegularity}
Suppose that $\Omega$ is a bounded Lipschitz domain, $A\in M_{\lambda}(\Omega)$, $b\in L^{\infty}(\Omega)$. Let $u\in W_{\loc}^{1,2}(\Omega)$ be a solution to $Lu=0$ in $\Omega$, with $(\nabla u)^*\in L^1(\partial\Omega)$ and $u\to 0$ nontangentially, almost everywhere. Then, $u\equiv 0$.
\end{prop}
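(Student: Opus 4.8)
The plan is to mimic the argument in the proof of Proposition~\ref{DirichletUniqueness}, replacing the single layer representation via $G_y^t$ by a pointwise representation that only requires $b\in L^\infty(\Omega)$ and a smallness input from $(\nabla u)^*\in L^1(\partial\Omega)$. Fix $y\in\Omega$, set $\Omega_\e=\{x\in\Omega\mid\delta(x)\le\e\}$, $R_\e=\Omega_{2\e}\setminus\Omega_\e$, and take a cutoff $\phi_\e$ equal to $1$ outside $\Omega_\e$, equal to $0$ on $\Omega_{2\e}$, with $|\nabla\phi_\e|\le C/\e$. Using $G_y^t(\cdot)=G^t(\cdot,y)$, which is a solution of $L^tu=0$ in $T_{10\e}(q)$ for small $\e$, write $u(y)=u(y)\phi_\e(y)=\alpha^t(G_y^t,u\phi_\e)$ and expand the derivatives exactly as in Proposition~\ref{DirichletUniqueness} to get $u(y)=I_1+I_2+I_3$, where
\begin{align*}
I_1&=\int_{R_\e}A^t\nabla G_y^t\nabla\phi_\e\cdot u\,dx,\quad
I_2=\int_{R_\e}b\nabla\phi_\e\cdot uG_y^t\,dx,\\
I_3&=-\int_{R_\e}A^t\nabla\phi_\e\nabla u\cdot G_y^t\,dx.
\end{align*}
All three integrals are supported in the thin shell $R_\e$.

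The key point is that here we cannot use the bound $\int_{\partial\Omega}M_{\omega^y}u_\e^*$ (which relied on solvability of $D_p$); instead I estimate each $I_k$ directly by Cauchy--Schwarz over a dyadic decomposition of the coordinate cylinders into cubes $Q_j$ of side $\sim\tilde c\,\e$, using the modified Caccioppoli inequality (Lemma~\ref{ModifiedCacciopoli}), the Carleson estimate (Lemma~\ref{CarlesonEstimate}) and the pointwise bound $G_y^t(x)\le C|x-y|^{2-d}$, to obtain
\[
|I_1|+|I_2|+|I_3|\le C\int_{\partial\Omega}\Big(\e\,(\nabla u)_\e^*(q)+u_\e^*(q)\Big)\frac{\e^{1-d}\cdot(\text{shell volume factor})}{1}\,d\sigma(q),
\]
and, more carefully, $|I_1|,|I_3|\le C\int_{\partial\Omega}(\nabla u)_\e^*(q)\,d\sigma(q)$ up to $G^t(A_\e(q),y)$ factors which are bounded for $y$ fixed away from $\partial\Omega$, while $|I_2|\le C\sqrt\e\int_{\partial\Omega}(\nabla u)_\e^*\,d\sigma$; for the $u$-terms appearing in $I_1$ one invokes Proposition~\ref{UByNablaU}, $u_\e^*\le C\e(\nabla u)^*+|f|$ with $f\equiv 0$, so those terms are also controlled by $\e(\nabla u)_\e^*$. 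Since $(\nabla u)^*\in L^1(\partial\Omega)$, each of $(\nabla u)_\e^*$ is dominated by the integrable function $(\nabla u)^*$; as $\e\to0$, $(\nabla u)_\e^*(q)\to 0$ for a.e.\ $q$ (because $u\to 0$ nontangentially forces $\nabla u$ to be evaluated closer and closer to the boundary, and one uses that the relevant portion of the cone shrinks), so by dominated convergence $I_1,I_2,I_3\to0$. Hence $u(y)=0$, and since $y\in\Omega$ was arbitrary, $u\equiv 0$.

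The main obstacle is the last limiting step: unlike in Proposition~\ref{DirichletUniqueness}, where $u_\e^*\to0$ a.e.\ follows immediately from nontangential convergence of $u$ to $0$, here I need that the truncated maximal functions $(\nabla u)_\e^*$ (over the part of the cone within distance $C\e$ of $q$) tend to $0$ a.e. This does \emph{not} follow from $(\nabla u)^*\in L^1$ alone; the correct replacement is to bound, on each shell, the $u$-factor by $u_\e^*\le C\e(\nabla u)^*(q)$ via Proposition~\ref{UByNablaU} (so the $\e$ weight is what produces the decay), and to absorb the genuinely dangerous term $I_3$ — which carries $\nabla u$ and only a $1/\e$ from $\nabla\phi_\e$ against $G_y^t$ and the shell — by noting that after Caccioppoli and Carleson it is $\le C\int_{P_j}M_{\omega^y}(q)\,u_\e^*(q)\,d\sigma\le C\e\int_{\partial\Omega}M_{\omega^y}(q)(\nabla u)^*(q)\,d\sigma$, where $M_{\omega^y}\in L^\infty$ near $\partial\Omega$ for $y$ fixed by the doubling/Harnack bounds of Proposition~\ref{GreenRelation}. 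With those reductions in place every bound is $O(\e)$ times an $L^1$ quantity, the dominated convergence theorem applies, and the conclusion follows. I would double-check the exact powers of $\e$ in each shell estimate, since that is where an error would hide, but the structure above is the one I expect to work.
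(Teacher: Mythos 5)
Your framework---the shell cutoff $\phi_\varepsilon$, the decomposition $u(y)=I_1+I_2+I_3$, and the use of Proposition~\ref{UByNablaU} with $f\equiv 0$ to convert $u_\varepsilon^*$ into $\varepsilon(\nabla u)^*$---matches the paper. But there is a real gap in how you handle the Green's function factors, and your attempted repair is incorrect. The pointwise bound $G_y^t(x)\le C|x-y|^{2-d}$ and the Carleson estimate only yield that $G_y^t$ is \emph{bounded} on the shell $R_\varepsilon$ for fixed $y$; neither sees that $G_y^t$ vanishes on $\partial\Omega$, so neither produces any decay in $\varepsilon$. And the claim that $M_{\omega^y}\in L^\infty$ near $\partial\Omega$ is false: $M_{\omega^y}$ is the Hardy--Littlewood maximal function of the density $d\omega^y/d\sigma$, which lies in $L^{p'}(\partial\Omega)$ only when $D_p$ is solvable (Theorem~\ref{DirichletToWeightsRelation}); doubling and Harnack do not upgrade this to $L^\infty$. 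Leaning on that bound quietly reinstates the very hypothesis ($D_p$ solvability, hence a $\dive b$ condition) that Proposition~\ref{UniquenessForRegularity} is designed to do without. As you yourself note, without a genuine $\varepsilon$-gain on $I_3$ (and $I_1$), $(\nabla u)^*\in L^1$ alone does not make the shell integrals vanish.

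The missing ingredient is Proposition~\ref{HolderOnTheBoundary}: since $G_y^t$ is a bounded $W^{1,2}$ solution of $L^t u=0$ near $\partial\Omega$ that vanishes on the boundary, it satisfies
$\sup_{\delta(x)<c\varepsilon}|G_y^t(x)|\le C\varepsilon^\alpha$
for some $\alpha\in(0,1)$ and a constant depending on $y$ but not $\varepsilon$. Feeding this into the shell estimates gives $\varepsilon^{\alpha-1}$ for $I_1$ (after Caccioppoli on $\nabla G_y^t$), and $\varepsilon^{\alpha-1}$ for $I_2$ and $I_3$ directly; combining with the extra $\varepsilon$ from $u_\varepsilon^*\le C\varepsilon(\nabla u)^*$ (for $I_1$, $I_2$) or from the change of variables on $\int_{R_\varepsilon}|\nabla u|$ (for $I_3$) produces an overall factor $\varepsilon^\alpha$ multiplying $\int_{\partial\Omega}|(\nabla u)^*|\,d\sigma$, which tends to $0$. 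No harmonic measure estimate is needed. In short: replace your Carleson/$|x-y|^{2-d}$ and $M_{\omega^y}$ steps with the boundary H\"older decay of $G_y^t$, and the rest of your outline goes through.
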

\begin{proof}
Let $\e>0$, such that $\e<s_{\Omega}$, where $s_{\Omega}$ is defined in lemma \ref{InnerRadius}. Set also
\[
\Omega_{\e}=\{x\in\Omega|\delta(x)\leq\e\},\quad R_{\e}=\Omega_{2\e}\setminus \Omega_{\e}.
\]
Consider a smooth cutoff $\phi_{\e}$ which is $1$ outside $\Omega_{\e}$, $0$ in $\Omega_{2\e}$, and $|\nabla\phi_{\e}|\leq C/\e$. We then proceed as in proposition \ref{DirichletUniqueness}, to obtain that
\[
u(y)=\int_{R_{\e}}A\nabla G_y^t\nabla\phi_{\e}\cdot u+\int_{R_{\e}}b\nabla \phi_{\e}\cdot uG_y^t-\int_{R_{\e}}A\nabla u\nabla\phi_{\e}\cdot G_y^t=I_1+I_2+I_3.
\]
Consider now the cylinders $Z_i$ for $i=1,\dots N$, the sets $B_i,Q,Q_j$ and $P_j$, and the function $\psi$ that appear in the proof of theorem \ref{DirichletUniqueness}. We then estimate, for $I_1$,
\begin{align*}
\int_{Z_i\cap R_{\e}}\left|A\nabla G_y^t\nabla\phi_{\e}\cdot u\right|\,dx&\leq\frac{C}{\e}\int_Q\int_{\psi(x_0)+c_1\e}^{\psi(x_0)+c_2\e}|\nabla G_y^t(x_0,s)||u(x_0,s)|\chi_{R_{\e}}(x_0,s)\,dsdx_0\\
&\leq\frac{C}{\e}\sum_j\int_{Q_j}\int_{\psi(x_0)+c_1\e}^{\psi(x_0)+c_2\e}|\nabla G_y^t(x_0,s)||u(x_0,s)|\chi_{R_{\e}}(x_0,s)\,dsdx_0.
\end{align*}
We now apply the Cauchy-Schwartz inequality to bound the integral. First, we obtain that
\begin{align*}
\int_{Q_j}\int_{\psi(x_0)+c_1\e}^{\psi(x_0)+c_2\e}|\nabla G_y^t(x_0,s)|^2\,dsdx_0&\leq\frac{C}{\e^2}\int_{3Q_j}\int_{\psi(x_0)+c_0\e}^{\psi(x_0)+c_3\e}|G_y^t(x_0,s)|^2\,dsdx_0\\
&\leq\frac{C\e^d}{\e^2}\sup_{\delta(x)<c_4\e}|G_y^t(x)|^2\leq C\e^{d+2\alpha-2},
\end{align*}
for some $\alpha\in(0,1)$, where we also used proposition \ref{HolderOnTheBoundary}. Moreover, for any $y_0\in Q_j$, we see that
\begin{align*}
\int_{Q_j}\int_{\psi(x_0)+c_1\e}^{\psi(x_0)+c_2\e}|u(x_0,s)|^2\chi_{R_{\e}}(x_0,s)\,dsdx_0&\leq\int_{Q_j}\int_{\psi(x_0)+c_1\e}^{\psi(x_0)+c_2\e}|u^*_{\e}(y_0,\psi(y_0))|^2\,dsdx_0\\&\leq C\e^d|u^*_{\e}(y_0,\psi(y_0))|^2,
\end{align*}
therefore, after multiplying the last two inequalities, we obtain that
\[
\int_{Q_j}\int_{\psi(x_0)+c_1\e}^{\psi(x_0)+c_2\e}|\nabla G_y^t(x_0,s)||u(x_0,s)|\chi_{R_{\e}}(x_0,s)\,dsdx_0\leq C\e^{d+\alpha-1}|u^*_{\e}(y_0,\psi(y_0))|,
\]
for all $y_0\in Q_j$. Now, we integrate this relation in $Q_j$ and we change variables, to obtain that 
\[
\int_{Q_j}\int_{\psi(x_0)+c_1\e}^{\psi(x_0)+c_2\e}|\nabla G_y^t(x_0,s)||u(x_0,s)|\chi_{R_{\e}}(x_0,s)\,dsdx_0\leq C\e^{\alpha}\int_{P_j}|u^*_{\e}|\,d\sigma.
\]
Hence, we have shown that
\begin{align*}
\int_{Z_i\cap R_{\e}}\left|A\nabla G_y\nabla\phi_{\e} u\right|&\leq C\sum_j\e^{\alpha-1}\int_{P_j}|u_{\e}^*|\,d\sigma\leq C\e^{\alpha-1}\int_{\partial\Omega}|u_{\e}^*|\,d\sigma\\
&\leq C\e^{\alpha}\int_{\partial\Omega}|(\nabla u)^*|\,d\sigma\xrightarrow[\e\to 0] {}0,
\end{align*}
where we also used proposition \ref{UByNablaU}. Adding those integrals over the cylinders $Z_i$ (since we have $N$ of them), we obtain that $I_1\to 0$.

We now turn to $I_2$. From proposition \ref{HolderOnTheBoundary} we obtain that
\begin{align*}
\left|\int_{Z_i\cap R_{\e}}b\nabla \phi_{\e}\cdot uG_y^t\right|&\leq C\|b\|_{\infty}\e^{\alpha-1}\int_{R_{\e}}|u|\\
&\leq C\|b\|_{\infty}\e^{\alpha-1}\sum_j\int_{Q_j}\int_{\psi(x_0)+c_1\e}^{\psi(x_0)+c_2\e}|u(x_0,s)|\,dsdx_0.
\end{align*}
But, as above, for any $y_0\in Q_j$,
\begin{align*}
\int_{Q_j}\int_{\psi(x_0)+c_1\e}^{\psi(x_0)+c_2\e}|u(x_0,s)|\,dsdx_0&\leq \int_{Q_j}\int_{\psi(x_0)+c_1\e}^{\psi(x_0)+c_2\e}|u^*_{\e}(y_0,\phi(y_0))|\,dsdx_0\\
&\leq C\e^d|u^*_{\e}(y_0,\phi(y_0))|,
\end{align*}
and after integrating on $Q_j$ and summing for $j$, we obtain that
\[
\left|\int_{Z_i\cap R_{\e}}b\nabla \phi_{\e}\cdot uG_y^t\right|\leq C\e^{\alpha}\int_{\partial\Omega}|u_{\e}^*|\,d\sigma.
\]
Since $(\nabla u)^*\in L^1(\partial\Omega)$, proposition \ref{UByNablaU} shows that $u^*\in L^1(\partial\Omega)$, therefore the last integral above converges to $0$, as $\e\to 0$. Therefore, $I_2\to 0$ as well.

Finally, we turn to $I_3$. As in the case for $I_2$, we estimate
\[
\left|\int_{Z_i\cap R_{\e}}A\nabla u\nabla\phi_{\e}\cdot G_y^t\right|\leq C\e^{\alpha-1}\int_{R_{\e}}|\nabla u|\leq C\e^{\alpha-1}\sum_j\int_{Q_j}\int_{\psi(x_0)+c_1\e}^{\psi(x_0)+c_2\e}|\nabla u(x_0,s)|\,dsdx_0.
\]
Then, for any $y_0\in Q_j$,
\begin{align*}
\int_{Q_j}\int_{\psi(x_0)+c_1\e}^{\psi(x_0)+c_2\e}|\nabla u(x_0,s)|\,dsdx_0&\leq \int_{Q_j}\int_{\psi(x_0)+c_1\e}^{\psi(x_0)+c_2\e}|(\nabla u)^*(y_0,\phi(y_0))|\,dsdx_0\\
&\leq C\e^d|(\nabla u)^*(y_0,\phi(y_0))|,
\end{align*}
and after integrating on $Q_j$ and summing for $j$, we obtain that
\[
\left|\int_{Z_i\cap R_{\e}}A\nabla u\nabla\phi_{\e}\cdot G_y^t\right|\leq C\e^{\alpha}\int_{\partial\Omega}|(\nabla u)^*|\,d\sigma.
\]
Since $(\nabla u)^*\in L^1(\partial\Omega)$, letting $\e\to 0$ shows that $I_3\to 0$ as well. This finishes the proof.
\end{proof}

\subsection{Singular integrals}
In the following, we will turn our attention to symmetric matrices $A$. 

Given a Lipschitz domain $\Omega$, we will assume that $0\in\Omega$ and $\diam(\Omega)<1/40$, and we will also set $B$ to be the unit ball in $\mathbb R^d$; we will denote this class of domains by $\mathcal{D}$.

Given a matrix $A\in M_{\lambda,\mu}(\Omega)$, we will extend $A$ periodically as in lemma \ref{AExtension}. Moreover, given a function $b\in L^{\infty}(\Omega)$, we will extend it by $0$ outside $\Omega$; if $b\in\Lip(\Omega)$, we will extend it as in lemma \ref{bExtension}. Finally, we set $G$ to be Green's function for the equation
\[
\tilde{L}u=-\dive(A\nabla u)+b\nabla u=0
\]
in $B$.

For $p,q\in\Omega$ with $p\neq q$, we define the kernel
\[
k(p,q)=\nabla_pG(p,q),
\]
where differentiation takes place with respect to the $p$ variable. We also define
\[
T_*f(p)=\sup_{\e>0}\left|\int_{|p-q|>\e}k(p,q)f(q)\,d\sigma(q)\right|,
\]
and, if $e_i$ is the unit vector in the $x_i$ direction, we set
\[
T_if(p)=\lim_{\e\to 0}\int_{|p-q|>\e}\left<k(p,q),e_i\right>f(q)\,d\sigma(q).
\]
The first operator is the maximal truncation operator, while the second is a singular integral operator; the fact that they define bounded operators will be shown in the next propositions.

\begin{prop}\label{MaximalTruncationBound}
Let $\Omega\in\mathcal{D}$, $A\in M_{\lambda,\mu}(\Omega)$ and $b\in L^{\infty}(\Omega)$. Then, the operator $T_*$ is bounded from $L^2(\partial\Omega)$ to $L^2(\partial\Omega)$, and its norm is bounded by a good constant.
\end{prop}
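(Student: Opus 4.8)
The plan is to reduce the bound for $T_*$ to the corresponding bound for the maximal truncation operator associated with the \emph{drift-free} operator $L_0 = -\dive(A\nabla\cdot)$, which is classical and known (from \cite{KenigShen}; see also \cite{MitreaTaylor}, \cite{MitreaTaylorBook}). Write $G_0$ for Green's function of $L_0$ in $B$, and $k_0(p,q) = \nabla_p G_0(p,q)$; let $T_*^0$ be the maximal truncation singular integral with kernel $k_0$. The boundedness of $T_*^0$ on $L^2(\partial\Omega)$ with norm controlled by $d,\lambda,\mu$ and the Lipschitz character is part of the established theory of layer potentials for symmetric Lipschitz-continuous coefficients. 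So the essential new content is to control the difference kernel $k(p,q) - k_0(p,q)$, and the tool for that is Proposition \ref{ContinuityArgumentForB} (applied with $b_1 = b$, $b_2 = 0$), which gives the pointwise bound
\[
|k(p,q) - k_0(p,q)| = |\nabla_p G_b(p,q) - \nabla_p G_0(p,q)| \le C\|b\|_{L^{2d}(B)}\,|p-q|^{3/2-d},
\]
with $C$ a good constant; and since $B$ is a fixed unit ball, $\|b\|_{L^{2d}(B)} \le C\|b\|_\infty$.

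First I would split $T_* f(p) \le T_*^0 f(p) + Sf(p)$, where
\[
Sf(p) = \sup_{\e>0}\left|\int_{|p-q|>\e}\bigl(k(p,q)-k_0(p,q)\bigr)f(q)\,d\sigma(q)\right| \le \int_{\partial\Omega} \frac{C\|b\|_\infty}{|p-q|^{d-3/2}}\,|f(q)|\,d\sigma(q).
\]
The kernel of the majorizing operator has a locally integrable singularity on the $(d-1)$-dimensional boundary: the exponent $d - 3/2$ is strictly less than $d-1$, so $\int_{\partial\Omega}|p-q|^{-(d-3/2)}\,d\sigma(q)$ is finite, uniformly in $p$, since $\partial\Omega$ is bounded with a Lipschitz (hence Ahlfors-regular) surface measure. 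Thus the majorizing operator has a bounded, uniformly integrable kernel in both variables (by symmetry of the estimate $|p-q|^{-(d-3/2)}$ in $p$ and $q$), and Schur's test immediately gives that $S$ is bounded on $L^2(\partial\Omega)$ with norm $\le C\|b\|_\infty$, a good constant. Combining with the classical bound for $T_*^0$ yields the claim.

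The one technical point that needs care — and which I expect to be the main obstacle — is the justification that Proposition \ref{ContinuityArgumentForB} is applicable here, i.e.\ that the kernel $k_0(p,q) = \nabla_p G_0(p,q)$ is precisely the kernel of the drift-free single-layer potential whose maximal truncation is known to be $L^2$-bounded, and that the periodic extension of $A$ used to define $G$ on $B$ is consistent with the extension used for $G_0$. One must check that $G_0$ enjoys the same pointwise gradient bound $|\nabla_p G_0(p,q)| \le C|p-q|^{1-d}$ (which follows from Proposition \ref{GreenDerivativeBounds} with $b=0$) so that the truncated integrals defining $T_*$ and $T_*^0$ are absolutely convergent for, say, $f \in L^2(\partial\Omega) \cap L^\infty(\partial\Omega)$ and then extend by density. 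A subsidiary point is that the a.e.\ and $L^p$ boundedness statements in the cited references are stated for the operator whose kernel arises from the fundamental solution / Green's function of $L_0$ in the relevant geometry; since $\diam(\Omega) < 1/40$ and $B$ is the fixed unit ball, the domain $\Omega$ is comfortably in the interior of $B$, all poles occur at interior points of $B$, and the restriction of $k_0$ to $\partial\Omega \times \partial\Omega$ is exactly the Calderón–Zygmund kernel treated in \cite{KenigShen}. Once this identification is in place, the proof is just the splitting above plus Schur's test.
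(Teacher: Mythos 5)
Your decomposition and the use of Proposition \ref{ContinuityArgumentForB} to control the difference $\nabla_p G(p,q) - \nabla_p G_0(p,q)$ by $C\|b\|_\infty|p-q|^{3/2-d}$, followed by Schur's test, is exactly the paper's strategy, and your Schur-test bookkeeping (exponent $d-3/2 < d-1$, symmetric kernel) is correct. However, there is a genuine gap in the step where you invoke ``the established theory of layer potentials'': you assert that the maximal truncation operator $T_*^0$ with kernel $\nabla_p G_0(p,q)$, where $G_0$ is Green's function of $-\dive(A\nabla\cdot)$ \emph{in the ball $B$}, is already known to be $L^2(\partial\Omega)$-bounded, and later you claim that this restricted $k_0$ ``is exactly the Calder\'on--Zygmund kernel treated in \cite{KenigShen}.'' That identification is false. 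The kernel treated in \cite{KenigShen} is $\nabla_p\Gamma(p,q)$, where $\Gamma$ is the \emph{fundamental solution} of $-\dive(A\nabla\cdot)$ on all of $\mathbb{R}^d$ for the periodically extended $A$, not the Dirichlet Green's function in a ball. The two differ by $\nabla_x\bigl(G_0(x,q)-\Gamma(x,q)\bigr)$, and one must actually say something to dispose of this term. The paper does so with a further splitting $\nabla_pG_0 - \nabla_p\Gamma = k_2(p,q)$: for fixed $q\in\partial\Omega$, the function $u(x)=G_0(x,q)-\Gamma(x,q)$ is a genuine solution of $-\dive(A\nabla u)=0$ in all of $B$ (since both distributional sources are $\delta_q$, they cancel), so after bootstrapping regularity (Lemma \ref{LowRegularityEstimate} and the Lipschitz-coefficient gradient estimate from \cite{KenigShen}) one gets $|k_2(p,q)|\le C$ for a good constant $C$, uniformly on $\partial\Omega\times\partial\Omega$; the bounded kernel is trivially $L^2$-bounded. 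Your proposal, as written, cannot compile without this intermediate step: it is not a deficiency in your style but a missing lemma whose absence would leave the claimed boundedness of $T_*^0$ unproved. The fix is cheap but necessary, and you did flag this as the point needing care; the resolution is the $k_2$ argument above rather than the assertion that $G_0$ and $\Gamma$ can be used interchangeably.
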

\begin{proof}
Consider the periodic extension $A_p$ of $A$ in $\mathbb R^d$, as in lemma \ref{PeriodicAExtension} (which we still denote by $A$) extend $b$ by $0$ outside $\Omega$, and set $G_0$ be Green's function for the equation $-\dive(A\nabla u)=0$ in $B$. Let also $\Gamma(x,y)$ to be the fundamental solution of the equation $-\dive(A\nabla u)=0$ in \[
\tilde{T}_*f(p)=\sup_{\delta>0}\left|\int_{|p-q|>\delta}\nabla_p\Gamma(p,q)\cdot f(q)\,d\sigma(q)\right|
\]
is bounded from $L^2(\partial\Omega)$ to itself, with the bound being a good constant.
	
We will now interpolate the maximal truncation operator with the analogous operator with kernel $\nabla_pG_0(p,q)$ to obtain boundedness: we write
\begin{align*}
\nabla_pG(p,q)&=\left(\nabla_pG(p,q)-\nabla_pG_0(p,q)\right)+\left(\nabla_pG_0(p,q)-\nabla_p\Gamma(p,q)\right)+\nabla_p\Gamma(p,q)\\
&=k_1(p,q)+k_2(p,q)+\nabla_p\Gamma(p,q).
\end{align*}	
After fixing $\delta>0$, multiplying with $f$ and integrating, we estimate
\begin{equation}\label{eq:Fork1A}
\int_{|p-q|>\delta}|k_1(p,q)f(q)|\,d\sigma(q)\leq C\int_{|p-q|>\delta}|p-q|^{3/2-d}|f(q)|\,d\sigma(q),
\end{equation}
from proposition \ref{ContinuityArgumentForB}. For $k_2$, we fix $q\in\partial\Omega$ and we set
\[
u(x)=G_0(x,q)-\Gamma(x,q),
\]
for $x\in B$. Then, the regularity properties of Green's function show that $u\in W^{1,\frac{d}{2(d-1)}}(B)$, and for every $\phi\in C_c^{\infty}(B)$,
\begin{align*}
\int_BA\nabla u(x)\nabla\phi(x)\,dx&=\int_BA\nabla_xG_0(x,q)\nabla\phi(x)\,dx-\int_BA\nabla_x\Gamma(x,q)\nabla\phi(x)\,dx\\
&=\phi(q)-\phi(q)=0,
\end{align*}
since $G_0$ is Green's function for $-\dive(A\nabla u)=0$ in $B$, and $\Gamma(x,y)$ is the fundamental solution of the equation $-\dive(A\nabla u)=0$ in $\mathbb R^d$. This shows that $u$ is a $W^{1,\frac{d}{2(d-1)}}(B)$ solution to $-\dive(A\nabla u)=0$ in $B$, therefore lemma \ref{LowRegularityEstimate} shows that $u$ is a $W^{1,2}(B/2)$ solution to $-\dive(A\nabla u)=0$ in $B/2$. Hence, estimate 2.2 in \cite{KenigShen} shows that the function
\[
\nabla u(x)=\nabla_xG_0(x,q)-\nabla_x\Gamma(x,q)
\]
is bounded in $B/4$, hence $k_2$ is bounded, with the bound being a good constant. Therefore
\[
\int_{|p-q|>\delta}|k_2(p,q)f(q)|\,d\sigma(q)\leq C\int_{|p-q|>\delta}|f(q)|\,d\sigma(q)
\]
for a good constant $C$.
	
We now add the last estimate with \eqref{eq:Fork1A} and we use the definition of $\tilde{T}_*$, to obtain that, for any $\delta>0$,
\[
\left|\int_{|p-q|>\delta}\nabla_pG(p,q)\cdot f(q)\,d\sigma(q)\right|\leq C\int_{|p-q|>\delta}\left(|p-q|^{3/2-d}+1\right)|f(q)|\,d\sigma(q)+\tilde{T}_*f(p),
\]
hence
\[
T_*f(p)\leq C\int_{|p-q|>\delta}\left(|p-q|^{3/2-d}+1\right)|f(q)|\,d\sigma(q)+\tilde{T}_*f(p).
\]
The fact that the kernel $|p-q|^{3/2-d}+1$ is integrable on $\partial\Omega$, together with boundedness of $\tilde{T}_*$, complete the proof.
\end{proof}

We also treat the operators $T_i$.

\begin{prop}\label{T_iBoundedness}
Let $\Omega\in\mathcal{D}$, $A\in M_{\lambda,\mu}(\Omega)$ and $b\in L^{\infty}(\Omega)$. Then, the operators $T_i$ are bounded from $L^2(\partial\Omega)$ to $L^2(\partial\Omega)$, and their norms are bounded by a good constant.
\end{prop}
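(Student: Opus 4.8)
The plan is to mimic the proof of Proposition \ref{MaximalTruncationBound} almost verbatim, replacing the maximal truncation operator by the linear singular integrals $T_i$. First I would extend $A$ to a $1$-periodic uniformly elliptic matrix on $\mathbb R^d$ via Lemma \ref{PeriodicAExtension}, extend $b$ by $0$ outside $\Omega$, let $G_0$ denote Green's function for $-\dive(A\nabla u)=0$ in $B$, and let $\Gamma(x,y)$ be the fundamental solution of $-\dive(A\nabla u)=0$ in $\mathbb R^d$. The key external input---exactly the one already invoked (in maximal form) for $\tilde T_*$ in the previous proposition---is that the constant/periodic-coefficient singular integral operator
\[
\tilde T_if(p)=\lim_{\e\to 0}\int_{|p-q|>\e}\left<\nabla_p\Gamma(p,q),e_i\right>f(q)\,d\sigma(q)
\]
has principal values existing $\sigma$-almost everywhere on $\partial\Omega$ and is bounded from $L^2(\partial\Omega)$ to $L^2(\partial\Omega)$ with norm a good constant; this is the Calder\'on--Zygmund / layer potential theory on Lipschitz domains developed in \cite{KenigShen} (see also \cite{MitreaTaylor}, \cite{MitreaTaylorBook}).

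Next I would use the same kernel decomposition as in Proposition \ref{MaximalTruncationBound}, namely
\[
\nabla_pG(p,q)=k_1(p,q)+k_2(p,q)+\nabla_p\Gamma(p,q),
\]
with $k_1=\nabla_pG-\nabla_pG_0$ and $k_2=\nabla_pG_0-\nabla_p\Gamma$. By Proposition \ref{ContinuityArgumentForB} (applied with $b_1=b$, $b_2\equiv 0$) one has $|k_1(p,q)|\leq C|p-q|^{3/2-d}$, while the argument in the proof of Proposition \ref{MaximalTruncationBound}---setting $u(x)=G_0(x,q)-\Gamma(x,q)$, which solves $-\dive(A\nabla u)=0$ in $B$, upgrading its regularity via Lemma \ref{LowRegularityEstimate} and using estimate 2.2 in \cite{KenigShen}---shows that $k_2$ is bounded by a good constant on $\partial\Omega$, since $\diam\Omega<1/40$ forces $\Omega$ to be compactly contained in $B$. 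Hence the error kernel $\kappa(p,q):=|k_1(p,q)|+|k_2(p,q)|\leq C\big(|p-q|^{3/2-d}+1\big)$ is integrable on $\partial\Omega$, with $\int_{\partial\Omega}\kappa(p,q)\,d\sigma(q)\leq C$ uniformly in $p$ (and symmetrically in $q$), because $\partial\Omega$ is Ahlfors regular and $d-3/2<d-1$.

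Then I would write $T_if=S_if+\tilde T_if$, where $S_if(p)=\int_{\partial\Omega}\left<k_1(p,q)+k_2(p,q),e_i\right>f(q)\,d\sigma(q)$. The integral defining $S_i$ is absolutely convergent by the kernel bound, so in particular the truncated integrals $\int_{|p-q|>\e}$ converge as $\e\to 0$ and the limit defining $T_if$ exists $\sigma$-a.e.\ (the $\Gamma$-part already having a.e.\ principal values). A Schur test with the bound on $\kappa$ gives $\|S_if\|_{L^2(\partial\Omega)}\leq C\|f\|_{L^2(\partial\Omega)}$ with $C$ a good constant, and combining with the boundedness of $\tilde T_i$ yields $\|T_if\|_{L^2(\partial\Omega)}\leq C\|f\|_{L^2(\partial\Omega)}$. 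Finally I would check that all constants are good: by Lemma \ref{PeriodicAExtension} the periodic extension of $A$ has ellipticity $\lambda$ and Lipschitz norm controlled by $d,\lambda,\mu$, so every constant above depends only on the admissible quantities.

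The only real obstacle is the imported black box---the $L^2$-boundedness and a.e.\ existence of principal values for $\tilde T_i$, i.e.\ the singular integral of $\nabla_p\Gamma$ along a Lipschitz graph---but since this is precisely the ingredient already used (in maximal-truncation form) in Proposition \ref{MaximalTruncationBound}, it is available here, and the remaining steps are the routine error-term bookkeeping described above.
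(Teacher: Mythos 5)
Your proof is correct and follows essentially the same strategy as the paper's own (which simply points to the $\tilde T_*$ argument): decompose $\nabla_pG = k_1 + k_2 + \nabla_p\Gamma$, use Proposition~\ref{ContinuityArgumentForB} and interior estimates for $G_0 - \Gamma$ to make the error kernel integrable, and import the $L^2$-boundedness and a.e.\ existence of the principal value for the constant/periodic-coefficient operator from \cite{KenigShen} (theorem~3.1, operator $T_A^1$). The only cosmetic difference is that you bound $\tilde T_i$ directly where the paper invokes boundedness of $T_*$ and then dominates $|T_i f|\le T_* f$; the two are interchangeable here.
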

\begin{proof}
The proof follows using the almost everywhere existence of the operator $T_A^1(f)$ in theorem 3.1 in \cite{KenigShen}, an argument similar to the proof of proposition \ref{MaximalTruncationBound}, and the boundedness of the operators $T_*$ shown in the same proposition.
\end{proof}

\subsection{Layer potentials}
In this section we will apply the results of the previous section to obtain the basic properties of the single layer potential. We will always assume that $\Omega\in\mathcal{D}$.

\begin{dfn}
If $f\in L^2(\partial\Omega)$, we define the \emph{single layer potential} of $f$ to be
\[
\mathcal{S}_{\pm}f:B\to\mathbb R,\,\,\,\mathcal{S}_{\pm}f(x)=\int_{\partial\Omega}G(x,q)f(q)\,d\sigma(q),
\]
where differentiation takes place with respect to $q$.
\end{dfn}
We will write $\mathcal{S}_+f(x)$ for $x\in\Omega$ and $\mathcal{S}_-f(x)$ for $x\in B\setminus\overline{\Omega}$. Note that, from the pointwise bounds on Green's function and its derivatives, the integrals in those definitions are absolutely convergent for $x\in B\setminus\partial\Omega$, since we are integrating far from $x$.

We now define, for $f\in L^2(\partial\Omega)$,
\[
\mathcal{S}f(p)=\int_{\partial\Omega}G(p,q)f(q)\,d\sigma(q).
\]
Since $G(p,q)\leq C|p-q|^{2-d}$, $G$ is integrable on $\partial\Omega$, therefore $\mathcal{S}$ is a bounded operator from $L^2(\partial\Omega)$ to $L^2(\partial\Omega)$. In fact, as the next lemma shows, $\mathcal{S}$ maps $L^2(\partial\Omega)$ to $W^{1,2}(\partial\Omega)$.

\begin{lemma}\label{SBoundedness}
Let $\Omega\in\mathcal{D}$ and $A\in M_{\lambda,\mu}(B)$, $b\in L^{\infty}(B)$. Then $S$ is bounded from $L^2(\partial\Omega)$ to $W^{1,2}(\partial\Omega)$, with
\[
\nabla_T\mathcal{S}f(p)=\lim_{\e\to 0}\int_{|p-q|>\e}\nabla_T^pG(p,q)f(q)\,d\sigma(q),
\]
in every coordinate cylinder $(Z,\phi)$ on the boundary of $\Omega$. Moreover, the norm of $S$ from $L^2(\partial\Omega)$ to $W^{1,2}(\partial\Omega)$ is a good constant.
\end{lemma}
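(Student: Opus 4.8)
The plan is to establish boundedness of $\mathcal{S}$ from $L^2(\partial\Omega)$ to $W^{1,2}(\partial\Omega)$ by first treating the $L^2\to L^2$ mapping (which is immediate from the pointwise bound $G(p,q)\leq C|p-q|^{2-d}$ and the fact that $|p-q|^{2-d}$ is integrable on $\partial\Omega$ with a bound depending only on $d$, $\diam(\Omega)$ and the Lipschitz character), and then controlling the tangential gradient. For the gradient, the key point is to justify differentiating under the integral sign in the tangential directions and to identify the resulting operator. Fix a coordinate cylinder $(Z,\phi)$ and work in local coordinates there; the tangential derivative $\nabla_T\mathcal{S}f$ should be given by the principal-value singular integral with kernel $\nabla_T^pG(p,q)$, and the operator $f\mapsto \mathrm{p.v.}\int \nabla_T^pG(p,q)f(q)\,d\sigma(q)$ is, componentwise, a linear combination of the operators $T_i$ (projected onto the tangent plane), which are bounded on $L^2(\partial\Omega)$ with good constant by Proposition \ref{T_iBoundedness}.

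The steps I would carry out are the following. First, I would show $\|\mathcal{S}f\|_{L^2(\partial\Omega)}\leq C\|f\|_{L^2(\partial\Omega)}$ directly from Schur's test applied to the kernel $G(p,q)$, using $G(p,q)\le C|p-q|^{2-d}$ and $\sup_p\int_{\partial\Omega}|p-q|^{2-d}\,d\sigma(q)\le C\diam(\Omega)$. Second, I would take $f\in L^2(\partial\Omega)$ and approximate it by $C(\partial\Omega)$ (or Lipschitz) functions $f_k$; for such nice $f_k$ one can differentiate under the integral away from the singularity and then pass to the principal value, obtaining that $\mathcal{S}f_k \in W^{1,2}(\partial\Omega)$ with tangential gradient equal to $\mathrm{p.v.}\int \nabla_T^pG(p,q)f_k(q)\,d\sigma(q)$. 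Third, I would recognize this principal-value integral as (locally, in the cylinder $(Z,\phi)$) built out of the operators $T_i$ from the previous section: writing $\nabla_T^pG = \nabla_pG - \langle \nabla_pG,\nu\rangle\nu$ in local coordinates, each component is a combination of $T_i f$ and a lower-order term, so Proposition \ref{T_iBoundedness} gives $\|\nabla_T\mathcal{S}f_k\|_{L^2(\partial\Omega)}\le C\|f_k\|_{L^2(\partial\Omega)}$ with $C$ a good constant. Fourth, I would pass to the limit: since $\mathcal{S}f_k\to\mathcal{S}f$ in $L^2(\partial\Omega)$ and $\nabla_T\mathcal{S}f_k$ is Cauchy in $L^2(\partial\Omega)$ (again by Proposition \ref{T_iBoundedness} applied to $f_k-f_\ell$), the limit is the weak tangential gradient of $\mathcal{S}f$, so $\mathcal{S}f\in W^{1,2}(\partial\Omega)$ with the stated formula and the stated norm bound.

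The main obstacle I expect is the second step: rigorously justifying that for nice data the tangential derivative of $\mathcal{S}f$ exists and equals the principal-value singular integral with kernel $\nabla_T^pG(p,q)$. This requires the convergence theory for the truncated integrals, i.e. that the limit $\lim_{\e\to 0}\int_{|p-q|>\e}\langle\nabla_T^pG(p,q),e_i\rangle f(q)\,d\sigma(q)$ exists for a.e.\ $p$ and defines the tangential derivative; this is where the pointwise bounds on $\nabla_xG$ and its H\"older continuity (Propositions \ref{GreenDerivativeBounds} and \ref{HolderContinuityOfGreen}) from Section~5.3 enter, to split the kernel into a (bounded, integrable) error plus the constant-coefficient fundamental-solution kernel whose singular-integral theory is classical (as used in the proof of Proposition \ref{MaximalTruncationBound} via \cite{KenigShen}). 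Once this identification is in place, everything else is soft functional analysis (density and closedness of the weak gradient) combined with the already-established $L^2$ boundedness of $T_i$, so the final norm bound is a good constant as claimed.
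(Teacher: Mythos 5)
Your high-level plan — split $\mathcal{S}$ into a known singular operator plus a milder correction, identify the tangential derivative with a principal-value integral, then invoke the $L^2$-boundedness of the truncation operators — is the same plan the paper follows, and the final deductions (Schur's test for the $L^2\to L^2$ part, Propositions~\ref{MaximalTruncationBound} and \ref{T_iBoundedness} for the gradient bound) match. However, the citations you give for the key decomposition are wrong, and this is a substantive gap. Propositions~\ref{GreenDerivativeBounds} and \ref{HolderContinuityOfGreen} give the pointwise size and H\"older continuity of $\nabla_x G(x,y)$ for a \emph{single} Green's function; they do not compare $G$ with the drift-free Green's function $G_0$. What the argument actually needs is Proposition~\ref{ContinuityArgumentForB}, which gives $|G(p,q)-G_0(p,q)|\leq C|p-q|^{5/2-d}$ and $|\nabla_pG(p,q)-\nabla_pG_0(p,q)|\leq C|p-q|^{3/2-d}$; this milder singularity (integrable on $\partial\Omega$, but not bounded as you write) is what lets the difference $\mathcal{S}f-\mathcal{S}_0f$ be handled without any principal value, and it is what makes the boundary term around $B_\e(q_0)$ vanish in the integration by parts.

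The other gap is that ``differentiate under the integral away from the singularity and then pass to the principal value'' is not a step that goes through directly for the full kernel $G$, even for Lipschitz $f_k$; the kernel $\nabla_T^pG$ is only conditionally integrable and there is no obvious a priori reason the derivative of $\mathcal{S}f$ is given by the symmetric principal value. The paper sidesteps this by computing the distributional tangential derivative of $\mathcal{S}f-\mathcal{S}_0f$ directly against test functions $h\in C_c^\infty(Z\cap\mathbb R^{d-1})$ in a coordinate cylinder: one integrates by parts over $\{|y-q_0|>\e\}$ for each fixed $q$, the boundary contribution is $O(\e^{1/2})$ because $g_q=G-G_0$ has the milder singularity, then dominated convergence and Fubini in $q$ give the weak-derivative formula. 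After that, $\nabla_T\mathcal{S}_0f$ is the classical principal value (via Kenig--Shen), and the two pieces add to the stated p.v.\ formula for $\nabla_T\mathcal{S}f$. Your approximation-by-$f_k$ step is therefore unnecessary and also insufficient, since the formula is not any easier to establish for Lipschitz $f_k$ than for $f\in L^2$; the work goes into the integration by parts and the kernel difference estimate, not the regularity of the density.
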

\begin{proof}
For $f\in L^2(\partial\Omega)$, the pointwise bounds on $G$ show that
\begin{align*}
|\mathcal{S}f(p)|&=\left|\int_{\partial\Omega}G(p,q)f(q)\,d\sigma(q)\right|\leq\int_{\partial\Omega}|p-q|^{2-d}|f(q)|\,d\sigma(q)\\
&\leq\left(\int_{\partial\Omega}|p-q|^{2-d}\,d\sigma(q)\right)^{1/2}\left(\int_{\partial\Omega}|p-q|^{2-d}|f(q)|^2\,d\sigma(q)\right)^{1/2}
\end{align*}
The first integral is absolutely convergent and uniformly bounded with respect to $p$, with bounds depending only on $\Omega$. Therefore,
\[
\int_{\partial\Omega}|\mathcal{S}f|^2\leq C\int_{\partial\Omega}\int_{\partial\Omega}|p-q|^{2-d}|f(q)|^2\,d\sigma(q)d\sigma(p)\leq C\int_{\partial\Omega}|f|^2.
\]
For the second part, note that the singular integral bounds in propositions \ref{MaximalTruncationBound} and \ref{T_iBoundedness} will imply the theorem, once the formula for the tangential derivative is established. For this reason, consider a coordinate cylinder $(Z,\phi)$, where $Z$ has basis $B_Z\subseteq\mathbb R^{d-1}$, and suppose that $f$ is supported in $Z\cap\partial\Omega$. Let $h\in C_c^{\infty}(Z\cap\mathbb R^{d-1})$, and set $G_0$ to be Green's function for the equation $-\dive(A\nabla u)=0$ in $B$. If $\mathcal{S}_0$ is the single layer potential for this equation, we have that
\[
\nabla_T\mathcal{S}_0f(p)=\lim_{\e\to 0}\int_{|p-q|>\e}\nabla_T^pG_0(p,q)f(q)\,d\sigma(q),
\]
using an argument similar to the proof of proposition \ref{MaximalTruncationBound}. Therefore, it suffices to find the formula for the tangential partial derivative of the difference $\mathcal{S}f-\mathcal{S}_0f$.

Set $g_q(x)=G(x,q)-G_0(x,q)$ for $q\in Z\cap\partial\Omega$, and fix $p\in Z\cap\partial\Omega$. For any $q\in Z\cap\partial\Omega$, let $q_0\in B_Z$ be the point on the basis of $B_Z$ that lies right under $q$. Define also
\[
\Phi:B_Z\to Z\cap\partial\Omega,\,\,\Phi(y)=(y,\phi(y)),
\]
then $\Phi$ is Lipschitz, with a Lipschitz constant $C_M$ depending on $M$, the Lipschitz constant of $\partial\Omega$.
Then, for $\e>0$, we integrate by parts in $B_Z\setminus B_{\e}(q_0)$ to obtain that
\begin{multline*}
\int_{y:|y-q_0|>\e}g_q(y,\phi(y))\partial_ih(y)\,dy=\\
-\int_{\partial(B_{\e}(q_0))}g_q(y,\phi(y))h(y)\nu_i(y)\,d\sigma_{n-1}(y)-\int_{y:|y-q_0|>\e}\partial_ig_q(y,\phi(y))h(y)\,dy,
\end{multline*}
where $\partial(B_{\e}(q_0))$ is the $d-2$ dimensional boundary of $B_{\e}(q_0)$. We now apply proposition \ref{ContinuityArgumentForB}, and we note that $|\Phi(y)-q|\geq |y-q_0|$, to obtain that
\begin{align*}
\int_{\partial(B_{\e}(q_0))}\left|g_q(y,\phi(y))h(y)\nu_i(y)\right|\,d\sigma_{d-1}(y)&\leq C\int_{\partial(B_{\e}(q_0))}|\Phi(y)-q|^{5/2-d}|h(y)|\,d\sigma_{d-1}(y)\\
&\leq C_M\int_{\partial(B_{\e}(q_0))}|y-q_0|^{5/2-d}|h(y)|\,d\sigma_{d-1}(y)\\
&=C\e^{5/2-d}\int_{\partial(B_{\e}(q_0))}|h(y)|\,d\sigma_{d-1}(y)\\
&\leq C\e^{5/2-d}\sigma_{d-1}\left(\partial(B_{\e}(q_0))\right)\|h\|_{\infty}\\
&\leq C\e^{5/2-d}\e^{d-2}\|h\|_{\infty},
\end{align*}
which goes to $0$ as $\e\to 0$. This shows that
\begin{equation}\label{eq:I1I2}
\int_{y:|y-q_0|>\e}g_q(y,\phi(y))\partial_ih(y)\,dy+\int_{y:|y-q_0|>\e}\partial_ig_q(y,\phi(y))h(y)\,dy=I_{\e}^1(q)+I_{\e}^2(q)\xrightarrow[\e\to 0]{}0,
\end{equation}
for all $q\in Z\cap\partial\Omega$.

From the pointwise bounds on Green's function we now obtain that
\[
I_{\e}^1(q)\xrightarrow[\e\to 0]{}\int_{\mathbb R^{d-1}}g_q(y,\phi(y))\partial_ih(y)\,dy.
\]
Moreover,
\[
|I_{\e}^1(q)|\leq C\int_{y:|y-q_0|>\e}|\Phi(y)-q|^{5/2-d}|\partial_ih(y)|\,dy\leq C\int_{Z\cap\partial\Omega}|q'-q|^{5/2-d}\,d\sigma(q'),
\]
and the last function is bounded with respect to $q$. Therefore, the dominated convergence theorem shows that
\begin{multline*}
\int_{Z\cap\partial\Omega}\int_{y:|y-q_0|>\e}g_q(y,\phi(y))\partial_ih(y)\cdot f(q)\,dyd\sigma(q)\xrightarrow[\e\to 0]{}\\\int_{Z\cap\partial\Omega}\int_{\mathbb R^{d-1}}g_q(y,\phi(y))\partial_ih(y)\cdot f(q)\,dyd\sigma(q).
\end{multline*}
Applying a similar procedure to $I_{\e}^2(q)$ and using \eqref{eq:I1I2}, we obtain that
\[
\int_{\partial\Omega}\int_{\mathbb R^{d-1}}g_q(\Phi(y))\partial_ih(y)f(q)\,dyd\sigma(q)=-\int_{\partial\Omega}\int_{\mathbb R^{d-1}}\partial_ig_q(\Phi(y))h(y)f(q)\,dyd\sigma(q).
\]
But, from Fubini's theorem, note that the integral on the left is equal to
\[
\int_{\mathbb R^{d-1}}\int_{\partial\Omega}g_q(\Phi(y))f(q)\partial_ih(y)\,d\sigma(q)dy=\int_{\mathbb R^{d-1}}\left(\mathcal{S}f(\Phi(y))-\mathcal{S}_0f(\Phi(y))\right)\partial_ih(y)\,dy,
\]
therefore we obtain that
\[
\int_{\mathbb R^{d-1}}\left(\mathcal{S}f(\Phi(y))-\mathcal{S}_0f(\Phi(y))\right)\partial_ih(y)\,dy=\int_{\mathbb R^{d-1}}\left(\int_{\partial\Omega}\partial_ig_q(\Phi(y))f(q)\,d\sigma(q)\right)h(y)\,dy
\]
From definition \ref{W1pPartialOmega}, this shows that the difference $\mathcal{S}f-\mathcal{S}_0f$ is tangentially differentiable on $\partial\Omega$, with
\[
\nabla_T\left(\mathcal{S}f(p)-\mathcal{S}_0f(p)\right)=\int_{\partial\Omega}\nabla_T^p\left(G(p,q)-G_0(p,q)\right)f(q)\,d\sigma(q).
\]
Finally, we combine with the formula for $\nabla_T\mathcal{S}_0f(p)$, and the proof is complete.
\end{proof}

The single layer potential is the solution to the Dirichlet problem with data $\mathcal{S}f$ on $\partial\Omega$; this is shown in the next proposition.

\begin{prop}\label{SingleLayerInequalities}
Let $\Omega\in\mathcal{D}$, $A\in M_{\lambda,\mu}(B)$ and $b\in L^{\infty}(B)$. If $f\in L^2(\partial\Omega)$, then $\mathcal{S}_+f\in W_{\loc}^{1,2}(\Omega)$ is the solution to the Dirichlet problem $D_2$ for the equation $Lu=0$ in $\Omega$, with boundary values $\mathcal{S}f$ on $\partial\Omega$. Similarly, $\mathcal{S}_-f$ is the solution to $D_2$ in $B\setminus\overline{\Omega}$, and has boundary values $\mathcal{S}f\cdot\chi_{\partial\Omega}$ on $\partial(B\setminus\Omega)$. In addition,
\[
\|(\nabla\mathcal{S}_{\pm}f)^*\|_{L^2(\partial\Omega)}\leq C\|f\|_{L^2(\partial\Omega)},
\]
where $C$ is a good constant.
\end{prop}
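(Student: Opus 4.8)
The statement packages three claims: (i) $\mathcal{S}_+f$ solves $Lu=0$ in $\Omega$ with boundary data $\mathcal{S}f$ in the $D_2$ sense, (ii) $\mathcal{S}_-f$ solves $Lu=0$ in $B\setminus\overline\Omega$ with boundary data $\mathcal{S}f\cdot\chi_{\partial\Omega}$, and (iii) the nontangential maximal function estimate $\|(\nabla\mathcal{S}_\pm f)^*\|_{L^2(\partial\Omega)}\le C\|f\|_{L^2(\partial\Omega)}$. The overall strategy is to first establish these facts for the layer potential $\mathcal{S}_0$ associated to the drift-free operator $-\dive(A\nabla\cdot)=0$ (where they are known — from the Verchota/Kenig--Shen theory of layer potentials for symmetric Lipschitz-coefficient operators, cited as \cite{KenigShen}, \cite{MitreaTaylor}, \cite{MitreaTaylorBook}), and then transfer them to $\mathcal{S}$ by controlling the difference $\mathcal{S}f-\mathcal{S}_0f$ using the pointwise bound $|\nabla_xG(x,q)-\nabla_xG_0(x,q)|\le C|x-q|^{3/2-d}$ from Proposition \ref{ContinuityArgumentForB} (together with the size bound $|G(x,q)-G_0(x,q)|\le C|x-q|^{5/2-d}$).

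First I would verify that $\mathcal{S}_+f$ is a weak solution of $Lu=0$ in $\Omega$. Fix $x\in\Omega$ at positive distance from $\partial\Omega$; since $G(\cdot,q)$ is a solution of $Lu=0$ away from its pole (and $q\in\partial\Omega$ is at positive distance from $x$), one can differentiate under the integral sign — justified by the pointwise bounds on $G$ and $\nabla_xG$ from Theorem \ref{GoodGreenFunctionEstimates} and Proposition \ref{GreenDerivativeBounds}, which are locally uniform in $x$ — and conclude $L(\mathcal{S}_+f)=0$ pointwise, hence weakly; local $W^{1,2}$ membership follows from Corollary \ref{GreenContinuity} applied on compact subsets, or directly from the local gradient bounds. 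The analogous argument in $B\setminus\overline\Omega$ gives the corresponding statement for $\mathcal{S}_-f$. For the nontangential maximal function bound (iii): write $\nabla\mathcal{S}_{\pm}f=\nabla\mathcal{S}_{0,\pm}f+\nabla(\mathcal{S}-\mathcal{S}_0)_{\pm}f$. For the drift-free term, $\|(\nabla\mathcal{S}_{0,\pm}f)^*\|_{L^2(\partial\Omega)}\le C\|f\|_{L^2(\partial\Omega)}$ is exactly the classical layer potential estimate of \cite{KenigShen} (the periodic extension of $A$ from Lemma \ref{PeriodicAExtension} puts us precisely in that setting on the ball $B$). For the difference term, the kernel $\nabla_x(G-G_0)(x,q)$ is controlled by $C|x-q|^{3/2-d}$, which is \emph{not} singular enough to be a Calderón--Zygmund kernel on the $(d-1)$-dimensional surface $\partial\Omega$; so the nontangential maximal function of $\nabla(\mathcal{S}-\mathcal{S}_0)_\pm f$ is dominated pointwise by
\[
\sup_{x\in\Gamma(p)}\int_{\partial\Omega}|x-q|^{3/2-d}|f(q)|\,d\sigma(q)\le C\int_{\partial\Omega}|p-q|^{3/2-d}|f(q)|\,d\sigma(q),
\]
using that $|x-q|\simeq|p-q|$ for $x\in\Gamma(p)$. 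Since $|p-q|^{3/2-d}$ is an integrable (indeed $L^2$-bounded by Schur's test, because the exponent $d-3/2<d-1$) kernel on $\partial\Omega$, this operator maps $L^2(\partial\Omega)\to L^2(\partial\Omega)$ with good constant. Summing the two contributions gives (iii).

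It remains to identify the boundary values: $\mathcal{S}_+f$ and $\mathcal{S}_-f$ should converge nontangentially, almost everywhere, to $\mathcal{S}f$ (respectively $\mathcal{S}f\cdot\chi_{\partial\Omega}$). Here I would again split off $\mathcal{S}_0$, whose boundary values are $\mathcal{S}_0f$ by the classical theory. For the difference $(\mathcal{S}-\mathcal{S}_0)_\pm f$, the kernel $G(x,q)-G_0(x,q)$ obeys $|G(x,q)-G_0(x,q)|\le C|x-q|^{5/2-d}$ (Proposition \ref{ContinuityArgumentForB}), which has a \emph{locally integrable} trace on $\partial\Omega$ and is continuous up to $\partial\Omega$ in the $x$ variable away from the diagonal, so $(\mathcal{S}-\mathcal{S}_0)_+f$ and $(\mathcal{S}-\mathcal{S}_0)_-f$ extend continuously across $\partial\Omega$ with common boundary value $\int_{\partial\Omega}(G(p,q)-G_0(p,q))f(q)\,d\sigma(q)=(\mathcal{S}-\mathcal{S}_0)f(p)$ — there is no jump because the kernel is not singular enough to produce one. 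Adding back $\mathcal{S}_0f$ yields $\mathcal{S}f$ as the nontangential limit from inside $\Omega$, and $\mathcal{S}f\cdot\chi_{\partial\Omega}$ from $B\setminus\overline\Omega$ (the characteristic function accounting for the fact that $\partial(B\setminus\Omega)=\partial\Omega\cup\partial B$ and $f$ is supported on $\partial\Omega$). Since $\|(\nabla\mathcal{S}_\pm f)^*\|_{L^2}<\infty$, the function $\mathcal{S}_\pm f$ together with its boundary data $\mathcal{S}f$ satisfies the defining properties of the $D_2$ solution, and uniqueness from Proposition \ref{DirichletUniqueness} (applicable since $A$ is symmetric and $b$ bounded, once $D_2$ solvability is available; or, more directly here, one simply checks the three defining conditions) pins down $\mathcal{S}_+f$ as \emph{the} solution.

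\textbf{Main obstacle.} The genuinely delicate point is the identification of the boundary values and, relatedly, the justification that the nontangential maximal function of $\nabla(\mathcal{S}-\mathcal{S}_0)_\pm f$ is dominated by the stated integral operator uniformly over the cone $\Gamma(p)$ — this requires knowing that the gradient estimate $|\nabla_x(G-G_0)(x,q)|\le C|x-q|^{3/2-d}$ holds not just for $x,q\in\Omega$ but for $x$ ranging over $\Gamma_i(p)\cup\Gamma_e(p)$ inside the ambient ball $B$, which is where the periodic extension and the fact that $\Omega\in\mathcal D$ (so $\overline\Omega\subseteq B$) are used. Everything else is a reduction to the established drift-free layer potential theory plus elementary Schur-type estimates on the sub-Calderón--Zygmund kernels $|p-q|^{3/2-d}$ and $|p-q|^{5/2-d}$, which are harmless on $\partial\Omega$.
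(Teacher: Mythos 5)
Your proposal is correct and, for the nontangential maximal function estimate (part (iii)), uses exactly the paper's decomposition $\nabla\mathcal{S}_\pm f=\nabla\mathcal{S}_{0,\pm}f+\nabla(\mathcal{S}-\mathcal{S}_0)_\pm f$, bounding the second piece via the kernel estimate $|\nabla_x(G-G_0)(x,q)|\le C|x-q|^{3/2-d}$ from Proposition~\ref{ContinuityArgumentForB} and appealing to the drift-free theory for the first. Where you genuinely diverge is in identifying the boundary values: the paper argues \emph{directly} that $\mathcal{S}_+f(x)\to\mathcal{S}f(p)$ nontangentially a.e., by splitting $|\mathcal{S}_+f(x)-\mathcal{S}f(p)|$ into integrals over $\partial\Omega\setminus\Delta_{1/n}(p)$ and $\Delta_{1/n}(p)$ and invoking the Lipschitz continuity estimate $|G(x_1,y)-G(x_2,y)|\le C(|x_1-y|^{1-d}+|x_2-y|^{1-d})|x_1-x_2|$ of Proposition~\ref{HolderContinuityOfGreenAsIs} on the far piece, together with the pointwise size bound on the near piece. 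You instead push the same $\mathcal{S}_0$-reduction through: $\mathcal{S}_{0,\pm}f$ is continuous across $\partial\Omega$ by classical layer potential theory, and $(\mathcal{S}-\mathcal{S}_0)_\pm f$ is continuous across $\partial\Omega$ because its kernel $G-G_0$ is weakly singular (order $|x-q|^{5/2-d}$) and continuous in $x$ up to the boundary, so dominated convergence applies pointwise for a.e.~$p$. Both routes are sound; your reduction has the advantage of a single uniform mechanism (split off $\mathcal{S}_0$, then the difference is harmless) for both the boundary value claim and the maximal function bound, at the cost of leaning somewhat more heavily on the black-boxed $\mathcal{S}_0$ theory, whereas the paper's direct Lipschitz estimate for the boundary values is self-contained and does not require knowing the boundary behavior of $\mathcal{S}_0$. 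Two small points to tighten: (a) your parenthetical about applying Proposition~\ref{DirichletUniqueness} ``since $A$ is symmetric'' is a red herring --- the proposition statement only assumes $A\in M_{\lambda,\mu}(B)$ (not symmetric), and the paper's proof does not invoke uniqueness at all, it simply verifies that $\mathcal{S}_\pm f$ is \emph{a} solution with the stated maximal function bound; (b) the proof should also note that $(\mathcal{S}_\pm f)^*\in L^2(\partial\Omega)$, which the paper obtains by combining the gradient maximal bound with Proposition~\ref{UByNablaU} and the $L^2\to W^{1,2}$ boundedness of $\mathcal{S}$ from Lemma~\ref{SBoundedness}.
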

\begin{proof}
We will compute the weak derivatives of $\mathcal{S}_+f$: if $\phi\in C_c^{\infty}(\Omega)$, then
\[
\int_{\Omega}\mathcal{S}_+f(x)\partial_i\phi(x)\,dx=\int_{\partial\Omega}\left(\int_{\Omega}G(x,q)\partial_i\phi(x)\,dx\right)f(q)\,d\sigma(q),
\]
from Fubini's theorem, since $\partial_i\phi$ is supported at a positive distance from $\partial\Omega$. So, from differentiability of $G(\cdot,q)$ and Fubini's theorem, we obtain that
\[
\int_{\partial\Omega}\left(\int_{\Omega}G(x,q)\partial_i\phi(x)\,dx\right)f(q)\,d\sigma(q)=-\int_{\partial\Omega}\left(\int_{\Omega}\partial_iG(x,q)\phi(x)\,dx\right)f(q)\,d\sigma(q),
\]
which shows that
\[
\nabla\mathcal{S}_+f(x)=\int_{\partial\Omega}\nabla_xG(x,q)f(q)\,d\sigma(q).
\]
With a similar procedure, we can show that $\nabla\mathcal{S}_+f\in L^2_{{\rm loc}}(\Omega)$. Now, let $\phi\in C_c^{\infty}(\Omega)$. We then have
\begin{align*}
\alpha(\mathcal{S}_+f,\phi)&=\int_{\Omega}A\nabla\mathcal{S}_+f\nabla\phi+b\nabla\mathcal{S}_+f\cdot\phi\\
&=-\int_{\Omega}\int_{\partial\Omega}\left(A(x)\nabla_xG(x,q)\nabla\phi(x)+b(x)\nabla G(x,q)\cdot\phi(x)\right)f(q)\,d\sigma(q)dx\\
&=\int_{\partial\Omega}\left(\int_{\Omega}A(x)\nabla_xG(x,q)\nabla\phi(x)+b(x)\nabla G(x,q)\cdot\phi(x)\,dx\right)f(q)\,d\sigma(q),
\end{align*}
from Fubini's theorem, since the integrand is supported on a strictly positive distance from $\partial\Omega$, hence the integral converges absolutely. But, the inner integral is equal to $\phi(q)=0$, so $\mathcal{S}_+f$ is indeed a solution in $\Omega$.

We now turn to the boundary values of $\mathcal{S}_+f$. Let $A_f\subseteq\partial\Omega$ be the set of $p\in\partial\Omega$ such that $\mathcal{S}f(p)$ is finite. Fix $p\in A_f$, and let $x\in\Gamma(p)$. Let also $n\in\mathbb N$, and suppose that $|x-p|<\frac{1}{2n}$. We then bound $|\mathcal{S}_+f(x)-\mathcal{S}f(p)|$ by
\[
\int_{\Omega\setminus\Delta_{1/n}(p)}|G(x,q)-G(p,q)||f(q)|\,d\sigma(q)+\int_{\Delta_{1/n}(p)}|G(x,q)-G(p,q)||f(q)|\,d\sigma(q)=I_1+I_2.
\]
To bound $I_1$: if $|p-q|\geq 1/n$, then
\[
|x-q|\geq|p-q|-|x-p|\geq\frac{|p-q|}{2}+\frac{1}{2n}-|x-p|\geq\frac{|p-q|}{2},
\]
and, if we use Lipschitz continuity of Green's function (proposition \ref{HolderContinuityOfGreenAsIs}), we obtain that 
\begin{align*}
I_1&\leq C|x-p|\int_{\Omega\setminus\Delta_{1/n}(p)}\left(|x-q|^{1-d}+|p-q|^{1-d}\right)|f(q)|\,d\sigma(q)\\
&\leq C|x-p|\int_{\Omega\setminus\Delta_{1/n}(p)}\left(\left(\frac{|p-q|}{2}\right)^{1-d}+|p-q|^{1-d}\right)|f(q)|\,d\sigma(q)\\
&\leq C|x-p|\int_{\Omega\setminus\Delta_{1/n}(p)}|p-q|^{1-d}|f(q)|\,d\sigma(q)\\
&\leq C|x-p|n^{1-d}\|f\|_2^2.
\end{align*}
For $I_2$, note first that since $x\in\Gamma(q)$, we have that $|x-p|\leq C\delta(x)$, therefore
\begin{equation}\label{eq:Gamma0}
q\in\partial\Omega\Rightarrow |p-q|\leq|x-q|+|x-p|\leq|x-q|+C\delta(x)\leq C|x-q|.
\end{equation}
Using the pointwise bounds on $G$, we then obtain
\begin{align*}
I_2&\leq\int_{\Delta_{1/n}(p)}\left(|G(x,q)|+|G(p,q)|\right)|f(q)|\,d\sigma(q)\\
&\leq C\int_{\Delta_{1/n}(p)}\left(|x-q|^{2-d}+|p-q|^{2-d}\right)|f(q)|\,d\sigma(q)\\
&\leq C\int_{\Delta_{1/n}(p)}|p-q|^{2-d}|f(q)|\,d\sigma(q)\leq C\int_{\partial\Omega}G(p,q)|f(q)\chi_{\Delta_{1/n}(q)}|\,d\sigma(q).
\end{align*}
Combining the estimates for $I_1$ and $I_2$, we finally obtain that, if $p\in A_f$, $|x-p|<\frac{1}{2n}$ and $x\in\Gamma(p)$, then
\begin{equation}\label{eq:SBound}
|\mathcal{S}_+f(x)-\mathcal{S}f(p)|\leq C|x-p|n^{1-d}\|f\|_2^2+C\int_{\partial\Omega}G(p,q)|f(q)\chi_{\Delta_{1/n}(q)}|\,d\sigma(q).
\end{equation}
Since the kernel $G(p,q)$ in integrable on $\partial\Omega$ and $f\chi_{\Delta_{1/n}}\xrightarrow[n\to\infty]{\|\cdot\|_2}0$, we obtain that
\[
\int_{\partial\Omega}G(p,q)|f(q)\chi_{\Delta_{1/n}(q)}|\,d\sigma(q)\xrightarrow[n\to\infty]{\|\cdot\|_2}0,
\] 
therefore, for a subsequence of the $n$, we obtain pointwise convergence to $0$ for all $p\in B_f$, where the set $B_f\subseteq A_f$ has full measure.

Let now $p\in B_f$ and $\e>0$. Then there exists $N\in\mathbb N$ such that
\[
\left|\int_{\partial\Omega}G(p,q)|f(q)\chi_{\Delta_{1/N}(q)}|\,d\sigma(q)\right|<\frac{\e}{2}.
\]
Then, for all $x\in\Gamma(p)$ with $|x-p|<1/2N$ and
\[
C|x-p|N^{1-d}\|f\|_2^2<\frac{\e}{2},
\]
we obtain from estimate \eqref{eq:SBound} that $|\mathcal{S}_+f(x)-\mathcal{S}f(p)|<\e$. Hence, $\mathcal{S}_+f(x)\to\mathcal{S}f(p)$ nontangentially, almost everywhere on $\partial\Omega$.

We now show the boundedness of the nontangential maximal function of the gradient. For this purpose, let $G_0$ be Green's function for the equation $-\dive(A\nabla u)=0$ in $B$, and let $\mathcal{S}_+^0$ be the single layer potential for the same equation in $\Omega$. Let also $p\in\partial\Omega$ and $x\in\Gamma(p)$. We then write
\begin{align*}
|\nabla\mathcal{S}_+f(x)|&\leq|\nabla\mathcal{S}_+f(x)-\nabla\mathcal{S}_+^0f(x)|+|\nabla\mathcal{S}_+^0f(x)|\\
&\leq\int_{\partial\Omega}\left|\nabla_xG(x,q)-\nabla_xG_0(x,q)\right||f(q)|\,d\sigma(q)+|\nabla\mathcal{S}_+^0f(x)|\\
&\leq C\int_{\partial\Omega}|x-q|^{3/2-d}|f(q)|\,d\sigma(q)+|\nabla\mathcal{S}_+^0f(x)|\\
&\leq C\int_{\partial\Omega}|p-q|^{3/2-d}|f(q)|\,d\sigma(q)+|\nabla\mathcal{S}_+^0f(x)|,
\end{align*}
where we also used proposition \ref{ContinuityArgumentForB} and the fact that $x\in\Gamma(p)$. This shows that
\[
\left(\nabla\mathcal{S}_+f\right)^*(p)\leq C\int_{\partial\Omega}|p-q|^{3/2-d}|f(q)|\,d\sigma(q)+\left(\nabla\mathcal{S}_+^0f\right)^*(p),
\]
for all $p\in\partial\Omega$. Similarly to the proof of proposition \ref{MaximalTruncationBound}, the operator $(\nabla\mathcal{S}_+^0f)^*$ is bounded from $L^2(\partial\Omega)$ to $L^2(\partial\Omega)$; hence, after integrating over $\partial\Omega$, we obtain that
\[
\int_{\partial\Omega}\left(\nabla\mathcal{S}_+f\right)^*\,d\sigma\leq C\int_{\partial\Omega}|f|^2\,d\sigma,
\]
as we wanted.

Finally, we show that $(\mathcal{S}_+f)^*, (\mathcal{S}_-f)^*|_{\partial\Omega}\in L^2(\partial\Omega)$: for this purpose, note that proposition \ref{UByNablaU} shows that
\[
(\mathcal{S}_+f)^*\leq C(\nabla\mathcal{S}_+f)^*+|\mathcal{S}f|,
\]
and boundedness follows after integrating on $\partial\Omega$, using the bound on the nontangential maximal function of the gradient, and lemma \ref{SBoundedness}. A similar argument works for $(\mathcal{S}_-f)^*$. Finally, to show that $(\mathcal{S}_-f)^*|_{\partial B}\in L^2(\partial B)$ we use th., which completes the proof.
\end{proof}

\subsection{Behavior on the boundary, and the jump relations}
The goal of this section is to study how the derivative of the single layer potential $\nabla\mathcal{S}_+f(x)$ in $\Omega$ behaves as $x\to p\in\partial\Omega$. We start with the tangential derivative first.

The next proposition shows that, in fact, the tangential derivatives of $\mathcal{S}_+f,\mathcal{S}_-f$ are nontangentially continuous on $\partial\Omega$.

\begin{prop}\label{DerivativeIsNonTangentiallyContinuous}
Let $\Omega\in\mathcal{D}$, $A\in M_{\lambda,\mu}(B)$ and $b\in L^{\infty}(B)$. Then, for any $f\in L^2(\partial\Omega)$,
\[
\nabla\mathcal{S}_+f(x)\cdot T(p)\xrightarrow[x\to p]{}\nabla_T\mathcal{S}f(p),
\]
non-tangentially, almost everywhere on $\partial\Omega$.
\end{prop}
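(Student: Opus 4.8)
The plan is to reduce the statement to two facts: first, that the tangential-derivative formula for $\mathcal{S}f$ from Lemma \ref{SBoundedness} is given by a singular integral operator that converges nontangentially, and second, that the ``interior'' gradient $\nabla\mathcal{S}_+f(x)$ converges nontangentially to the corresponding principal-value singular integral. As in the proof of Proposition \ref{SingleLayerInequalities}, I would split off the constant-coefficient (driftless) part: write $G = (G - G_0) + G_0$, where $G_0$ is Green's function for $-\dive(A\nabla\cdot)=0$ in $B$, so that $\mathcal{S}_+f = (\mathcal{S}_+f - \mathcal{S}_+^0 f) + \mathcal{S}_+^0 f$. For the driftless piece $\mathcal{S}_+^0 f$, the nontangential continuity of the tangential derivative on $\partial\Omega$ is exactly the classical result for single layer potentials of second-order divergence-form operators with Lipschitz coefficients, which follows from \cite{KenigShen} (and \cite{MitreaTaylor}, \cite{MitreaTaylorBook}); I would invoke that directly. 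So the whole problem reduces to the difference $w := \mathcal{S}_+f - \mathcal{S}_+^0 f$, whose gradient kernel is $\nabla_x(G(x,q)-G_0(x,q))$, which by Proposition \ref{ContinuityArgumentForB} satisfies $|\nabla_x(G-G_0)(x,q)| \leq C|x-q|^{3/2-d}$.

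The key point is that this difference kernel has a \emph{weakly singular} (integrable on $\partial\Omega$) bound of order $|x-q|^{3/2-d}$, which is better than the borderline $|x-q|^{1-d}$ of a true Calderón--Zygmund kernel. First I would establish, for $p$ in the full-measure set $B_f \subseteq \partial\Omega$ where $\mathcal{S}f(p)$ is finite and the relevant density arguments from Proposition \ref{SingleLayerInequalities} apply, that
\[
\int_{\partial\Omega}|p-q|^{3/2-d}|f(q)|\,d\sigma(q) < \infty,
\]
by Cauchy--Schwarz against the uniformly bounded integral $\int_{\partial\Omega}|p-q|^{3/2-d}\,d\sigma(q)$. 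Then, given $x\in\Gamma(p)$ with $|x-p|$ small, I would split $\nabla w(x) = \int_{\partial\Omega}\nabla_x(G-G_0)(x,q)f(q)\,d\sigma(q)$ into the integral over $\Delta_\rho(p)$ and its complement, where $\rho = |x-p|^{1/2}$ (or any gauge tending to $0$ slower than $|x-p|$). On the far part $\partial\Omega\setminus\Delta_\rho(p)$, Proposition \ref{HolderContinuityOfGreenForAdjointOnAdjoint}-style Hölder continuity of $\nabla_x(G-G_0)$ in $x$ — more precisely the estimate from Proposition \ref{ContinuityArgumentForB} combined with interior gradient Hölder bounds (Proposition \ref{DerivativeRegularity} applied to $G-G_0$, which solves $Lu=0$ away from $q$ after a mollification) — gives that $\nabla_x(G-G_0)(x,q)\to\nabla_p(G-G_0)(p,q)$ uniformly for $q$ outside $\Delta_\rho(p)$, with a dominating function $C|p-q|^{3/2-d}|f(q)|$; the dominated convergence theorem then handles this term. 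On the near part $\Delta_\rho(p)$, using $|p-q|\leq C|x-q|$ for $x\in\Gamma(p)$ (as in \eqref{eq:Gamma0}) together with the bound $|\nabla_x(G-G_0)(x,q)|\leq C|x-q|^{3/2-d}\leq C|p-q|^{3/2-d}$, both $\int_{\Delta_\rho(p)}\nabla_x(G-G_0)(x,q)f(q)\,d\sigma$ and $\int_{\Delta_\rho(p)}\nabla_p(G-G_0)(p,q)f(q)\,d\sigma$ are bounded by $C\int_{\Delta_\rho(p)}|p-q|^{3/2-d}|f(q)|\,d\sigma(q)$, which tends to $0$ as $\rho\to 0$ by absolute continuity of the integral. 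Hence $\nabla w(x)\to \nabla_p w(p)$ (all components) nontangentially a.e., and in particular $\nabla w(x)\cdot T(p)$ converges to the tangential-derivative piece.

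Adding the two contributions and matching $\nabla_T\mathcal{S}f(p) = \nabla_T(\mathcal{S}f - \mathcal{S}_0f)(p) + \nabla_T\mathcal{S}_0f(p)$ via the formula in Lemma \ref{SBoundedness} finishes the proof: the driftless part gives the nontangential limit of $\nabla\mathcal{S}_+^0 f \cdot T$, and the difference part was just shown to converge to the corresponding $\nabla_p(G-G_0)$ integral, which by Lemma \ref{SBoundedness} (its proof identifies $\nabla_T(\mathcal{S}f - \mathcal{S}_0 f)(p)$ with exactly the convergent — in fact absolutely convergent — integral of $\nabla_T^p(G-G_0)(p,q)f(q)$) equals $\nabla_T(\mathcal{S}f-\mathcal{S}_0f)(p)$. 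I expect the main obstacle to be the careful justification that $\nabla_x(G-G_0)(x,q)$ — a mixed object that is not itself literally a solution of a single nice PDE — is Hölder continuous in $x$ uniformly down to $\partial\Omega$ with the stated power, so that the far-part dominated convergence is legitimate; this requires combining the continuity-method estimate of Proposition \ref{ContinuityArgumentForB} with the interior estimates of Proposition \ref{DerivativeRegularity} applied to each of $G$, $G_0$ separately and then subtracting, being careful about where the respective poles sit. The near-part estimate, by contrast, is routine once the $3/2-d$ bound is in hand.
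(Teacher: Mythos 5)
Your proposal is correct, and it arrives at the same conclusion as the paper via a slightly different organization of the same underlying estimates. Both arguments begin by splitting off the driftless layer potential $\mathcal{S}_+^0f$ (handled by \cite{KenigShen}) and then work with the weakly singular difference kernel $\nabla_x(G-G_0)$, bounded by $C|x-q|^{3/2-d}$ via Proposition \ref{ContinuityArgumentForB}. Where you differ from the paper is in how the convergence of the remainder $w=\mathcal{S}_+f-\mathcal{S}_+^0f$ is closed out. The paper runs an explicit two-scale argument: on the far zone $|p-q|\geq\delta$ it uses the H\"older estimate $|\nabla_xG(x,q)-\nabla_xG(p,q)|\leq C|x-p|^{\alpha}|p-q|^{1-d-\alpha}$ from Proposition \ref{HolderContinuityOfGreen} (and its $G^0$ analogue), giving a far bound $C|x-p|^{\alpha}\delta^{1-d-\alpha}\|f\|_1$; on the near zone $|p-q|<\delta$ it trades the $|p-q|^{3/2-d}$ bound for $\delta^{1/4}|p-q|^{5/4-d}$ and introduces a bounded operator $R$ with kernel $|p-q|^{5/4-d}$, so that the near part is $\leq\delta^{1/4}R|f|(p)$ and the full-measure set where $R|f|(p)<\infty$ becomes the good set. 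One then picks $\delta$ small then $|x-p|$ small. You instead invoke dominated convergence, which is legitimate and in fact requires less: once you know that for each fixed $q\neq p$ the map $x\mapsto\nabla_x(G-G_0)(x,q)$ is continuous at $p$ (which follows from Proposition \ref{DerivativeRegularity} applied to $G(\cdot,q)$ and $G_0(\cdot,q)$ separately, each of which solves its own homogeneous equation away from $q$), and once you have the uniform-in-$x$ domination $|\nabla_x(G-G_0)(x,q)|\leq C|x-q|^{3/2-d}\leq C'|p-q|^{3/2-d}$ on $\Gamma(p)$ together with integrability of $|p-q|^{3/2-d}|f(q)|$ for a.e.~$p$, DCT applied to the whole integral over $\partial\Omega$ gives the result directly.

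Two small remarks that would streamline your write-up. First, the moving gauge $\rho=|x-p|^{1/2}$ and the far/near decomposition are unnecessary for a DCT argument: once you have pointwise convergence for a.e.~$q$ and an $x$-independent $L^1$ majorant, DCT applies at once over all of $\partial\Omega$; the exceptional set $\{q=p\}$ has zero surface measure and needs no special treatment. Second, the ``main obstacle'' you flag --- uniform H\"older continuity of $\nabla_x(G-G_0)(\cdot,q)$ in $x$ down to the boundary --- is a red herring for the DCT route. You never need a quantitative modulus: plain pointwise continuity of $\nabla_xG(\cdot,q)$ and $\nabla_xG_0(\cdot,q)$ at $p$, which is immediate from Proposition \ref{DerivativeRegularity} since each of $G(\cdot,q)$ and $G_0(\cdot,q)$ solves its respective homogeneous equation in a neighborhood of $p$ (a neighborhood that stays inside $B$ because $\partial\Omega$ is at a fixed positive distance from $\partial B$), together with the majorant from Proposition \ref{ContinuityArgumentForB}, is enough. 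The H\"older rate of Proposition \ref{HolderContinuityOfGreen} is genuinely used in the paper only because the paper chose to make the $\e$--$\delta$ bookkeeping explicit rather than appeal to DCT.
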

\begin{proof}
Consider the operator $R:L^2(\partial\Omega)\to L^2(\partial\Omega)$, with
\[
Rf(p)=\int_{\Omega}|p-q|^{5/4-d}f(q)\,d\sigma(q),
\]
then $R$ is bounded. Therefore, $|Rf|<\infty$ almost everywhere on $\partial\Omega$.

Let $\delta_0>0$. We begin by writing
\begin{multline}\label{eq:S_+}
\nabla\mathcal{S}_+f(x)\cdot T(p)-\int_{|p-q|>\delta_0}\nabla_TG(p,q)f(q)\,d\sigma(q)=\\
\int_{|p-q|\geq\delta_0}\left(\nabla_xG(x,q)-\nabla_xG(p,q)\right)T(p)\cdot f(q)\,d\sigma(q)+\int_{|p-q|<\delta_0}\nabla_xG(x,q)T(p)\cdot f(q)\,d\sigma(q),
\end{multline}
where $T(p)$ is a tangential vector on $\partial\Omega$ at $p$. Note then that, if $x\in\Gamma(p)$, then
\[
|x-q|\geq\delta(x)\geq C|x-p|,
\]
therefore the last term is bounded by
\begin{align*}
\left|\int_{|p-q|<\delta_0}\nabla_xG(x,q)T(p)\cdot f(q)\,d\sigma(q)\right|&\leq C\int_{|p-q|<\delta_0}|x-q|^{1-d}|f(q)|\,d\sigma(q)\\
&\leq C|x-p|^{1-d}\int_{|p-q|<\delta_0}|f(q)|\,d\sigma(q)\\
&\leq C|x-p|^{1-d}\delta_0^{(d-1)/2}\|f\|_{L^2(\partial\Omega)}.
\end{align*}
The last relation and \eqref{eq:S_+} show that for almost all $p\in\partial\Omega$, and any $x\in\Gamma(p)$,
\begin{equation}\label{eq:BetterS_+}
\nabla\mathcal{S}_+f(x)\cdot T(p)-\nabla_T\mathcal{S}f(p)=
\lim_{\delta_0\to 0}\int_{|p-q|\geq\delta_0}\left(\nabla_xG(x,q)-\nabla_xG(p,q)\right)T(p)\cdot f(q)\,d\sigma(q).
\end{equation}

Similarly, if $\mathcal{S}_+^0$ is the single layer potential for the equation $-\dive(A\nabla u)=0$ in $\Omega$, and $G_0$ is Green's function for the same equation in $B$, we obtain that, for almost every $p\in\partial\Omega$, and every $x\in\Gamma(p)$,
\begin{equation}\label{eq:S_+^0}
\nabla\mathcal{S}_+^0f(x)\cdot T(p)-\nabla_T\mathcal{S}^0f(p)=
\lim_{\delta_0\to 0}\int_{|p-q|\geq\delta_0}\left(\nabla_xG^0(x,q)-\nabla_xG^0(p,q)\right)T(p)\cdot f(q)\,d\sigma(q).
\end{equation}
If we now subtract \eqref{eq:S_+^0} from \eqref{eq:S_+}, we obtain that
\begin{multline}\label{eq:S_+-S_+^0}
\left|\nabla(\mathcal{S}_+f-\mathcal{S}_+^0f)(x)\cdot T(p)-\nabla_T(\mathcal{S}f-\mathcal{S}_0f)(p)\right|\\
\leq\lim_{\delta_0\to 0}\int_{|p-q|\geq\delta_0}\left|\nabla_xG(x,q)-\nabla_xG(p,q)+\nabla_xG^0(x,q)-\nabla_xG^0(p,q)\right||f(q)|\,d\sigma(q)\\
\leq\int_{\partial\Omega}\left|\nabla_xG(x,q)-\nabla_xG(p,q)+\nabla_xG^0(x,q)-\nabla_xG^0(p,q)\right||f(q)|\,d\sigma(q).
\end{multline}
Let now $\delta>0$. Note that, if $x\in\Gamma(p)$, then
\[
|p-q|\leq|x-p|+|x-q|\leq C\delta(x)+|x-q|\leq C|x-q|,
\]
therefore, proposition \ref{HolderContinuityOfGreen} shows that
\begin{multline}\label{eq:ForNablaG}
\left|\int_{|p-q|\geq\delta}\left(\nabla_xG(x,q)-\nabla_xG(p,q)\right)T(p)\cdot f(q)\,d\sigma(q)\right|\leq\\
C|x-p|^{\alpha}\int_{|p-q|\geq\delta}|p-q|^{1-d-\alpha}|f(q)|\,d\sigma(q)\leq C|x-p|^{\alpha}\delta^{1-d-\alpha} \int_{|p-q|\geq\delta}|f(q)|\,d\sigma(q).
\end{multline}
Similarly, we obtain the same estimate with $G^0$ in the place of $G$.

Finally, we apply proposition \ref{ContinuityArgumentForB} to obtain that
\begin{multline}\label{eq:ForTheDifferenceOfNablaG}
\left|\int_{|p-q|<\delta}\left(\nabla_xG(x,q)-\nabla_xG(p,q)-\nabla_xG^0(x,q)+\nabla_xG^0(p,q)\right)T(p)\cdot f(q)\,d\sigma(q)\right|\leq\\
\int_{|p-q|<\delta}\left(|x-q|^{3/2-d}+|p-q|^{3/2-d}\right)|f(q)|\,d\sigma(q)\leq\\
\int_{|p-q|<\delta}|p-q|^{3/2-d}|f(q)|\,d\sigma(q),
\end{multline}
since $x\in\Gamma(p)$. Then, for the last term, we estimate
\[
\int_{|p-q|<\delta}|p-q|^{3/2-d}|f(q)|\,d\sigma(q)\leq\delta^{1/4}\int_{\partial\Omega}|p-q|^{3/2-d}|f(q)|\,d\sigma(q)\leq\delta^{1/4}R|f|(p).
\]
We now plug this estimate in \eqref{eq:ForTheDifferenceOfNablaG}, and we substitute in \eqref{eq:S_+-S_+^0}, together with \eqref{eq:ForNablaG} and its analog for $G^0$: then, we obtain that, for almost all $p\in\partial\Omega$, and almost all $x\in\Gamma(p)$,
\[
\left|\nabla(\mathcal{S}_+f-\mathcal{S}_+^0f)(x)\cdot T(p)-\nabla_T(\mathcal{S}f-\mathcal{S}^0f)(p)\right|\leq
C|x-p|^{\alpha}\delta^{1-d-\alpha}\int_{\partial\Omega}|f|\,d\sigma+\delta^{1/4}R|f|(p).
\]
Let now $A$ be the set of $p\in\partial\Omega$ for which $|Rf(p)|<\infty$, and consider any $p\in A$. Let $\e>0$, then there exists $\delta>0$, such that $\delta^{1/4}R|f|(p)<\e$. Therefore, if $x\in\Gamma(p)$ is such that
\[
C|x-p|^{\alpha}\delta^{1-d-\alpha}\int_{\partial\Omega}|f|\,d\sigma<\e,
\]
we obtain that 
\[
\left|\nabla(\mathcal{S}_+f-\mathcal{S}_+^0f)(x)\cdot T(p)-\nabla_T(\mathcal{S}f-\mathcal{S}^0f)(p)\right|\leq 2\e,
\]
which shows that
\[
\nabla(\mathcal{S}_+f-\mathcal{S}_+^0f)(x)\cdot T(p)\xrightarrow[x\to p]{}\nabla_T(\mathcal{S}f-\mathcal{S}^0f)(p),
\]
nontangentially, almost everywhere. Since now, from theorem 4.4 in \cite{KenigShen} and an argument similar to proposition \ref{MaximalTruncationBound}, $\nabla_T\mathcal{S}_+^0f(x)$ converges to $\nabla_T\mathcal{S}^0f(p)$ nontangentially, almost everywhere, we obtain
\[
\nabla\mathcal{S}_+f(x)\cdot T(p)\xrightarrow[x\to p]{}\nabla_T\mathcal{S}f(p),
\]
nontangentially, almost everywhere, which completes the proof.
\end{proof}

The convergence shown above leads to the following corollary, which will be useful in an approximation argument later.

\begin{cor}\label{LimsupEstimateOnNorm}
Let $\Omega\in\mathcal{D}$, $A\in M_{\lambda,\mu}(B)$ and $b\in L^{\infty}(B)$. Then, for all $f\in L^2(\partial\Omega)$,
\[
\limsup_{j\to\infty}\int_{\partial\Omega_j}|\nabla_{T_j}\mathcal{S}_+f|^2\,d\sigma_j\leq \|\nabla_T\mathcal{S}f\|_{L^2(\partial\Omega)}^2,
\]
where $\Omega_j\uparrow\Omega$ is the approximation scheme in theorem \ref{ApproximationScheme} and $\nabla_{T_j}$ denotes the tangential gradient on $\partial\Omega_j$. The analogous inequality also holds for $S_-f$, using the domains $\Omega_j'\downarrow\Omega$.
\end{cor}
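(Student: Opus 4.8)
The plan is to transfer the nontangential convergence established in Proposition \ref{DerivativeIsNonTangentiallyContinuous} to the approximating surfaces $\partial\Omega_j$ via the change-of-variables apparatus in Theorem \ref{ApproximationScheme}, and then apply Fatou's lemma. First I would fix $f\in L^2(\partial\Omega)$ and recall from Proposition \ref{SingleLayerInequalities} that $(\nabla\mathcal{S}_+f)^*\in L^2(\partial\Omega)$, so that $\nabla\mathcal{S}_+f$ has a well-defined nontangential limit at $\sigma$-a.e.\ point of $\partial\Omega$, and from Proposition \ref{DerivativeIsNonTangentiallyContinuous} that this limit, dotted with the tangent directions, equals $\nabla_T\mathcal{S}f(p)$ for $\sigma$-a.e.\ $p$. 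Since $\Lambda_j(q)\in\Gamma_i(q)$ for all $j$ and $|q-\Lambda_j(q)|\to 0$ uniformly (part i) of Theorem \ref{ApproximationScheme}), for $\sigma$-a.e.\ $q$ the points $\Lambda_j(q)$ approach $q$ nontangentially, hence
\[
\nabla\mathcal{S}_+f(\Lambda_j(q))\xrightarrow[j\to\infty]{}\nabla^{\mathrm{n.t.}}\mathcal{S}_+f(q)
\]
pointwise a.e., and in particular $\nabla\mathcal{S}_+f(\Lambda_j(q))$ dotted with a tangent vector to $\partial\Omega_j$ at $\Lambda_j(q)$ converges to $\nabla_T\mathcal{S}f(q)$ dotted with the corresponding tangent vector, using part iv) of Theorem \ref{ApproximationScheme} (the tangent vectors to $\Omega_j$ at $\Lambda_j(q)$ converge a.e.\ to those of $\Omega$ at $q$).

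The next step is to rewrite the integral over $\partial\Omega_j$ as an integral over $\partial\Omega$ using the Jacobians $\tau_j$ from part iii) of Theorem \ref{ApproximationScheme}: since $\int_E\tau_j\,d\sigma=\int_{\Lambda_j(E)}d\sigma_j$, we have
\[
\int_{\partial\Omega_j}|\nabla_{T_j}\mathcal{S}_+f|^2\,d\sigma_j=\int_{\partial\Omega}\big|\nabla_{T_j}\mathcal{S}_+f(\Lambda_j(q))\big|^2\,\tau_j(q)\,d\sigma(q),
\]
where $\nabla_{T_j}\mathcal{S}_+f(\Lambda_j(q))$ denotes the projection of $\nabla\mathcal{S}_+f(\Lambda_j(q))$ onto the tangent plane of $\partial\Omega_j$ at $\Lambda_j(q)$. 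By the preceding paragraph the integrand $|\nabla_{T_j}\mathcal{S}_+f(\Lambda_j(q))|^2\tau_j(q)$ converges a.e.\ to $|\nabla_T\mathcal{S}f(q)|^2$ (using $\tau_j\to 1$ a.e.). To pass the limit superior inside the integral I would invoke Fatou's lemma, which requires a suitable domination. Here the natural dominating function is $\big((\nabla\mathcal{S}_+f)^*(q)\big)^2\sup_j\tau_j(q)$; since the $\tau_j$ are bounded above uniformly in $j$ (part iii)), this is dominated by $C\big((\nabla\mathcal{S}_+f)^*\big)^2\in L^1(\partial\Omega)$ by Proposition \ref{SingleLayerInequalities}, and $|\nabla_{T_j}\mathcal{S}_+f(\Lambda_j(q))|\le|\nabla\mathcal{S}_+f(\Lambda_j(q))|\le(\nabla\mathcal{S}_+f)^*(q)$ since $\Lambda_j(q)\in\Gamma_i(q)$. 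Thus the reverse Fatou lemma (Fatou applied to $C((\nabla\mathcal{S}_+f)^*)^2\sup_j\tau_j$ minus the integrands) gives
\[
\limsup_{j\to\infty}\int_{\partial\Omega_j}|\nabla_{T_j}\mathcal{S}_+f|^2\,d\sigma_j\le\int_{\partial\Omega}|\nabla_T\mathcal{S}f|^2\,d\sigma=\|\nabla_T\mathcal{S}f\|_{L^2(\partial\Omega)}^2.
\]

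Finally, for the exterior statement with $\mathcal{S}_-f$ and $\Omega_j'\downarrow\Omega$, the argument is identical: one uses the analogue of Theorem \ref{ApproximationScheme} for the domains $\Omega_j'$ described at the end of the Lipschitz-domains subsection, the fact that $\nabla\mathcal{S}_-f$ has $L^2$ nontangential maximal function and converges nontangentially a.e.\ to $\nabla_T\mathcal{S}f$ on $\partial\Omega$ (also from Propositions \ref{SingleLayerInequalities} and \ref{DerivativeIsNonTangentiallyContinuous}, exterior version), and the same Jacobian/reverse-Fatou machinery. I expect the main obstacle to be bookkeeping the geometry carefully enough to justify that $|\nabla_{T_j}\mathcal{S}_+f(\Lambda_j(q))|$ is genuinely controlled by $(\nabla\mathcal{S}_+f)^*(q)$ uniformly in $j$ — i.e.\ that $\Lambda_j(q)$ stays inside the fixed cone $\Gamma_i(q)$ (guaranteed by part i)) and that projecting onto the varying tangent planes does not increase the norm — together with confirming the a.e.\ convergence of the tangential projections, which rests on the a.e.\ convergence of the normals in part iv). The analytic content is light; the care is entirely in the limiting geometry supplied by Theorem \ref{ApproximationScheme}.
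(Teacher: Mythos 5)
Your proof is correct and follows essentially the same route as the paper: pull back to $\partial\Omega$ via the homeomorphisms $\Lambda_j$ and Jacobians $\tau_j$, use the nontangential convergence from Proposition \ref{DerivativeIsNonTangentiallyContinuous} together with the tangent-vector convergence in Theorem \ref{ApproximationScheme}, dominate by the $L^2$ nontangential maximal function, and pass to the limit. The only cosmetic difference is that you invoke a reverse-Fatou argument where the paper simply cites the dominated convergence theorem; since the integrand converges a.e.\ with an $L^1$ dominating function, dominated convergence already gives the result directly, so the two formulations are interchangeable.
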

\begin{proof}
Recall the approximation scheme $\Omega_j\uparrow\Omega$, from theorem \ref{ApproximationScheme}. From the same theorem, we have that
\[
T_j\circ\Lambda_j\to T,
\]
where the $T_j$ are locally defined tangent vectors, and convergence takes place in $L^2$ and almost everywhere. Using proposition \ref{DerivativeIsNonTangentiallyContinuous}, the dominated convergence theorem, and the fact that $(\nabla\mathcal{S}_+f)^*\in L^2(\partial\Omega)$ from proposition \ref{SingleLayerInequalities}, we obtain that
\[
\limsup_{j\to\infty}\int_{\partial\Omega}|\nabla_{T_j}\mathcal{S}_+f\circ\Lambda_j|^2\,d\sigma\leq\int_{\partial\Omega}|\nabla_T\mathcal{S}f|^2\,d\sigma.
\]
Since $\tau_j\to 1$ from theorem \ref{ApproximationScheme}, after changing variables we obtain that
\[
\limsup_{j\to\infty}\int_{\partial\Omega_j}|\nabla_{T_j}\mathcal{S}_+f|^2\,d\sigma_j\leq\int_{\partial\Omega}|\nabla_T\mathcal{S}f|^2\,d\sigma,
\]
which completes the proof.
\end{proof}

The next property we show is discontinuity of the normal derivative of the single layer potential across the boundary of a domain $\Omega$. In the calculations that follow, we will need to assume that $b$ is Lipschitz in $\Omega$, therefore we will consider an extension of $b$ in $B$ using lemma \ref{bExtension}.

\begin{lemma}\label{ContinuousInside}
Let $\Omega\in\mathcal{D}$, let $A\in M_{\lambda,\mu}(B)$, $b\in\Lip(B)$ and consider a Lipschitz function $F:\overline{B}\to\mathbb R$ which vanishes on $\partial B$. Then, for $x\in\Omega$,
\[
\int_{\partial\Omega}\partial_{\nu}^qG^t(x,q)\cdot F(q)\,d\sigma(q)=-\int_{B\setminus\Omega}A\nabla G_x\nabla F+b\nabla G_x\cdot F,
\]
while, for $x\in B\setminus\overline{\Omega}$,
\[
\int_{\partial\Omega}\partial_{\nu}^qG^t(x,q)\cdot F(q)\,d\sigma(q)=\int_{\Omega}A\nabla G_x\nabla F+b\nabla G_x\cdot F,
\]
where $\partial_{\nu}^q$ denotes the conormal derivative with respect to $q$ on $\partial\Omega$, associated to $L$, and $G_x(\cdot)=G^t(x,\cdot)$.
\end{lemma}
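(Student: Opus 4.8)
The plan is to prove the two identities by integrating by parts against $G_x = G^t(x,\cdot)$, using the fact that $G^t(x,\cdot)$ is a weak solution of $L^t u = 0$ away from $x$ (equivalently $L^t G_x = \delta_x$ globally in $B$), and carefully tracking the boundary contribution on $\partial\Omega$ and the vanishing contribution on $\partial B$. First I would fix $x \in \Omega$ and let $F$ be the given Lipschitz function on $\overline{B}$ with $F|_{\partial B} = 0$. Since $F$ vanishes on $\partial B$ and is Lipschitz, $F \in W_0^{1,\infty}(B) \subseteq W_0^{1,2}(B)$; meanwhile $G_x \in W_0^{1,p}(B)$ for $p \in [1,\tfrac{d}{d-1})$ by Theorem \ref{GoodGreenFunctionEstimates}, and by the gradient bounds (Proposition \ref{GreenDerivativeBounds}, and its adjoint analog) $\nabla G_x$ is bounded away from $x$, hence $G_x$ is in $W^{1,2}$ locally away from $x$. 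The key structural fact is the defining property of $G_x = G^t(x,\cdot)$: for all $\phi \in C_c^\infty(B)$,
\[
\int_B A^t(q)\nabla_q G^t(q,x)\nabla\phi(q) + b(q)\nabla\phi(q)\cdot G^t(q,x)\,dq = \phi(x),
\]
and by a density/approximation argument this extends to test functions that are merely Lipschitz and vanish near $\partial B$, and then (since $G_x \in W_0^{1,p}(B)$ and $F$ vanishes on $\partial B$) to $F$ itself — this is exactly the kind of approximation used in the proof of Proposition \ref{HarmonicRepresentation}.

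Next I would split the integral $\int_B = \int_\Omega + \int_{B\setminus\Omega}$ and integrate by parts separately on each Lipschitz subdomain. On $\Omega$, since $L^t G_x = 0$ in $\Omega$ (as $x \in \Omega$... wait, $x \in \Omega$ so the pole is inside $\Omega$) — here I must be careful: for $x \in \Omega$ the pole lies in $\Omega$, so I cannot integrate by parts naively on $\Omega$. The right move is to isolate the singularity: excise a small ball $B_\varepsilon(x)$, integrate by parts on $\Omega \setminus B_\varepsilon(x)$ where $G_x$ is a genuine (smooth enough) solution, and note that the $\partial B_\varepsilon(x)$ boundary terms reconstruct $F(x)$ in the limit — but actually this is cleaner to organize by using the global identity above directly rather than re-deriving it. So the cleaner route: apply the extended defining identity with test function $F$ to get $\int_B (A^t \nabla G_x \nabla F + b\nabla F \cdot G_x) = F(x)$ — no wait, $F(x)$ need not be what we want. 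Let me instead apply it with test function $F \cdot \chi$-type cutoffs. The actual cleanest approach: on $B \setminus \overline{\Omega}$, $G_x$ solves $L^t u = 0$ (the pole is not there), so integrating by parts on that Lipschitz domain (whose boundary is $\partial\Omega \cup \partial B$, with $F = 0$ on $\partial B$) gives
\[
\int_{B\setminus\Omega} A^t\nabla G_x \nabla F + b\nabla F\cdot G_x = -\int_{\partial\Omega} \langle A^t(q)\nabla G_x(q),\nu(q)\rangle F(q)\,d\sigma(q) = -\int_{\partial\Omega}\partial_\nu^q G^t(x,q) F(q)\,d\sigma(q),
\]
with the sign determined by the outward normal of $B\setminus\Omega$ pointing into $\Omega$ on $\partial\Omega$; note $\partial_\nu^q$ uses $\nu$ = outer normal to $\Omega$. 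Subtracting this from the global identity $\int_B(\cdots) = \int_\Omega(\cdots) + \int_{B\setminus\Omega}(\cdots)$ and using that the global integral equals... here I'd use that $F(x)$ with $x\in\Omega$ appears — actually $\int_B(A^t\nabla G_x\nabla F + b\nabla F\cdot G_x) = F(x)$ by the defining property extended to $F$. Hmm, but the claimed identity has no $F(x)$ term. Resolution: for $x \in \Omega$ one should write $\int_\Omega A^t\nabla G_x\nabla F + b\nabla F\cdot G_x = F(x) + \int_{\partial\Omega}\partial_\nu^q G^t(x,q)F(q)\,d\sigma$ (Green's formula on $\Omega$ with the interior pole producing $F(x)$), and this is NOT what the lemma claims — so the lemma's first identity must be using that the global $\int_B$ integral equals $F(x)$, and subtracting the $\Omega$-Green-formula gives precisely $\int_{\partial\Omega}\partial_\nu^q G^t(x,q)F(q)\,d\sigma = -\int_{B\setminus\Omega}(A^t\nabla G_x\nabla F + b\nabla F\cdot G_x)$. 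For $x \in B\setminus\overline\Omega$ the pole is outside $\Omega$, so Green's formula on $\Omega$ has no $F(x)$ term and directly yields the second identity. So the structure is: (1) establish the global weak identity extended to $F$; (2) Green's formula on $\Omega$ producing $F(x)$ or not, depending on where $x$ is; (3) combine.

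Concretely I would: establish that the defining identity for $G^t(\cdot,x)$ extends from $C_c^\infty(B)$ to Lipschitz functions vanishing on $\partial B$ (density of $C_c^\infty(B)$ in $W_0^{1,2}(B)$, boundedness of both bilinear-form terms using $G_x \in W_0^{1,p}\cap L^\infty_{loc}$ away from $x$ and the $L^{\frac{d}{d-2}}_*$ bound, plus $F, \nabla F \in L^\infty$); conclude $\int_B A^t\nabla G_x\nabla F + b\nabla F\cdot G_x = F(x)$ when $x$ is... no: the defining identity gives $\langle \alpha^t(G_x,\phi)\rangle = \phi(x)$ i.e. the integral equals $\phi(x)$ for ALL $x\in B$ and all such test functions, regardless — so $\int_B(\cdots) = F(x)$ always. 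Then on $\Omega$: integrate by parts. When $x\in\Omega$, excise $B_\varepsilon(x)$, use that $G_x$ solves $L^t u=0$ classically/weakly on $\Omega\setminus B_\varepsilon(x)$ (Corollary \ref{GreenContinuity} / interior regularity for $L^t$), the $\partial B_\varepsilon(x)$ term $\to F(x)$ as $\varepsilon\to 0$ (standard, using $\nabla G_x \sim |q-x|^{1-d}$ and $F$ continuous), yielding $\int_\Omega(\cdots) = F(x) + \int_{\partial\Omega}\partial_\nu^q G^t(x,q)F(q)\,d\sigma$. Then $\int_{B\setminus\Omega}(\cdots) = F(x) - \int_\Omega(\cdots) = -\int_{\partial\Omega}\partial_\nu^q G^t(x,q)F(q)\,d\sigma$, which is the first identity. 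When $x\in B\setminus\overline\Omega$: $G_x$ solves $L^t u = 0$ on all of $\Omega$, so plain Green's formula gives $\int_\Omega(\cdots) = \int_{\partial\Omega}\partial_\nu^q G^t(x,q)F(q)\,d\sigma$ (normal pointing out of $\Omega$, consistent sign), which is the second identity. The main obstacle is the justification of integration by parts on the Lipschitz subdomains with the borderline integrability of $\nabla G_x$ (only $L^p$ for $p < d/(d-1)$ near the pole, though it is bounded away from it) — this requires either an approximation of $G_x$ by the truncated/mollified Green functions $G_n$ from Lemma \ref{LimitGreenConstruction} together with the uniform bounds established there, or a careful excision argument controlling the $\partial B_\varepsilon(x)$ flux; in either case one must also verify $G_x \in W^{2,2}$ or at least that the conormal trace $\partial_\nu^q G^t(x,q)$ on $\partial\Omega$ is well-defined, which follows since $G_x$ is a solution of $L^t u = 0$ near $\partial\Omega$ (pole is interior to $B$, away from $\partial\Omega$ for $x$ not on $\partial\Omega$) and hence is smooth there by elliptic regularity when $A,b$ are smooth, or one works with the weak conormal derivative. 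I would handle the general (non-smooth) $A,b$ case by the same mollification/approximation passage used repeatedly in Chapter 5, noting the relevant bounds are uniform.
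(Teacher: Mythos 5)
Your proposal contains a slot-confusion error that derails the argument. You fix $x$ and set $G_x(q)=G^t(x,q)$; by the symmetry relation of Proposition~\ref{SymmetryWithAdjoint}, $G^t(x,q)=G(q,x)$, so $G_x$ is Green's function \emph{for $L$} with pole at $x$, as a function of its first variable. Hence $G_x$ solves $Lu=0$ (i.e., $-\dive(A\nabla G_x)+b\nabla G_x=0$) away from $x$, not $L^tu=0$ as you repeatedly assert. Correspondingly, the defining identity you write down,
\[
\int_B A^t(q)\nabla_q G^t(q,x)\nabla\phi(q) + b(q)\nabla\phi(q)\cdot G^t(q,x)\,dq = \phi(x),
\]
is indeed true, but it governs $q\mapsto G^t(q,x)$ (first slot), which is a different function from $G_x=G^t(x,\cdot)$ unless the operator is self-adjoint; applying it to $G_x$ is not legitimate. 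The error then propagates: you integrate by parts the bilinear form $A^t\nabla G_x\nabla F+b\nabla F\cdot G_x$ (the $L^t$ form), whereas the lemma's integrand is $A\nabla G_x\nabla F+b\nabla G_x\cdot F$ (the $L$ form); you identify the conormal with $\langle A^t\nabla G_x,\nu\rangle$, while the lemma's $\partial_\nu^q$ is the conormal associated to $L$, namely $\langle A\nabla G_x,\nu\rangle$; and even under your (incorrect) assumption that $G_x$ solves $L^tu=0$, your integration-by-parts identity drops the $\langle b,\nu\rangle G_x F$ boundary term that the $L^t$ conormal produces. None of these produce the claimed identities.

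The remedy is exactly the observation you narrowly miss: since $G_x=G(\cdot,x)$, it is a classical solution of $Lu=0$ on the relevant subdomain away from the pole (theorems 8.8 and 8.12 of \cite{Gilbarg} give $G_x\in W^{2,2}$ and enough regularity there), so $\dive(A\nabla G_x)=b\nabla G_x$ pointwise and the divergence theorem applied to $F\cdot A\nabla G_x$ on $B\setminus\Omega$ (when $x\in\Omega$) or on $\Omega$ (when $x\in B\setminus\overline\Omega$) gives the result directly, using $F\equiv 0$ on $\partial B$ to kill that boundary contribution and orientation of normals to produce the sign. This is the paper's proof and is far shorter than the excision-plus-global-weak-identity route you sketch; the various approximation and mollification steps you anticipate are not needed because the Lipschitz regularity of $A$ and $b$ already makes $G_x$ a strong solution where the integration by parts occurs.
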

\begin{proof}
Suppose first that $x\in\Omega$. Then, from proposition \ref{SymmetryWithAdjoint} and theorems 8.8 and 8.12 in \cite{Gilbarg}, $G_x(\cdot)=G(\cdot,x)$ and $G_x$ is a classical solution of $Lu=0$ in $B\setminus\Omega$; that is,
\[
\dive(A\nabla G_x)=b\nabla G_x,
\]
almost everywhere in $B$, away from $x$, and $G_x\in W^{2,2}(B\setminus B_{\e}(x))$, for all small $\e>0$. Therefore, since $F\equiv 0$ on $\partial B$, and from proposition \ref{DerivativeRegularity} the derivative of $G_x$ is continuous away from $x$ and the boundary of $B$,
\begin{align*}
\int_{\partial\Omega}\partial_{\nu}G_x\cdot F\,d\sigma&=-\int_{\partial(B\setminus\Omega)}\partial_{\nu}G_x\cdot F\,d\sigma=-\int_{B\setminus\Omega}\dive(F\cdot A\nabla G_x)\\
&=-\int_{B\setminus\Omega}A\nabla G_x\nabla F+\dive(A\nabla G_x)\cdot F=-\int_{B\setminus\Omega}A\nabla G_x\nabla F+b\nabla G_x\cdot F,
\end{align*}
because $G_x$ is a classical solution of $Lu=0$ $B\setminus\Omega$. Now, if $x\in B\setminus\overline{\Omega}$, then $G_x$ is a classical solution of $Lu=0$ in $\Omega$, therefore
\[
\int_{\partial\Omega}\partial_{\nu}G_x\cdot F\,d\sigma=\int_{\Omega}\dive(F\cdot A\nabla G_x)=\int_{\Omega}A\nabla G_x\nabla F+b\nabla G_x\cdot F,
\]
which concludes the proof.
\end{proof}

\begin{lemma}\label{IntegratesTo1}
Let $B$ be a ball, and let $A\in M_{\lambda,\mu}(B)$, and $b\in\Lip(B)$.  Then, for all $p\in B$,
\[
\lim_{\e\to 0}\int_{\partial B_{\e}(p)}\partial_{\nu}^qG^t(p,q)\,d\sigma_{B_{\e}}(q)=-1,
\]
where $\partial_{\nu}$ is the conormal derivative associated with $L$.
\end{lemma}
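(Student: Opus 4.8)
## Proof plan for Lemma \ref{IntegratesTo1}

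The plan is to reduce the statement to a classical computation for the fundamental solution of a constant-coefficient operator, using the fact that $\partial_\nu^q G^t(p,q)$ differs from the conormal derivative of the fundamental solution of $L_0 = -\dive(A(p)\nabla\cdot)$ by an error term that is integrable against $\sigma_{B_\e}$ and vanishes in the limit. First I would freeze the coefficients at $p$: write $\Gamma_p(q)$ for the fundamental solution of the constant-coefficient operator $-\dive(A(p)\nabla\cdot)$, normalized so that $-\dive(A(p)\nabla\Gamma_p)=\delta_p$. A direct computation (as in \cite{Gruter}, or the classical Green's identity on $\partial B_\e(p)$) gives
\[
\lim_{\e\to 0}\int_{\partial B_\e(p)}\langle A(p)\nabla_q\Gamma_p(q),\nu(q)\rangle\,d\sigma_{B_\e}(q) = -1,
\]
where $\nu$ is the outer unit normal to $B_\e(p)$, simply because $\int_{\partial B_\e(p)}\langle A(p)\nabla \Gamma_p,\nu\rangle\,d\sigma = -\int_{B_\e(p)}\dive(A(p)\nabla\Gamma_p) = -1$ by the divergence theorem (interpreted in the distributional sense, or by excising a smaller ball). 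Here I should be careful that the conormal derivative $\partial_\nu^q$ associated with $L$ on $\partial B_\e(p)$ means $\langle A(q)\nabla_q(\cdot),\nu(q)\rangle$, but replacing $A(q)$ by $A(p)$ costs $|A(q)-A(p)|\le \mu|q-p|=\mu\e$, which multiplied by $|\nabla\Gamma_p(q)|\le C\e^{1-d}$ and integrated over $\partial B_\e(p)$ (surface measure $\sim \e^{d-1}$) gives an $O(\e)$ error, hence negligible.

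The key step is then to control the difference $w(q) := G^t(p,q) - \Gamma_p(q)$ near $q=p$. By the symmetry relation (proposition \ref{SymmetryWithAdjoint}), $G^t(p,q)=G(q,p)$, and $G(\cdot,p)$ solves $L u = 0$ in a punctured ball around $p$; the difference of $G(\cdot,p)$ and $\Gamma_p(\cdot)$ should solve an equation whose right-hand side is less singular. Concretely, I would use the pointwise estimates on the gradient of Green's function (proposition \ref{GreenDerivativeBounds}) together with the standard fact — following the Grüter–Widman scheme in \cite{Gruter}, and available here because $A$ is Lipschitz and $b\in\Lip(B)$ — that $|\nabla_q w(q)| \le C|q-p|^{2-d+\alpha}$ for some $\alpha\in(0,1)$, i.e. the gradient of the difference is better than the gradient of either term by a positive power of $|q-p|$. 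Given such a bound,
\[
\left|\int_{\partial B_\e(p)}\langle A(q)\nabla_q w(q),\nu(q)\rangle\,d\sigma_{B_\e}(q)\right| \le C\,\e^{2-d+\alpha}\cdot\e^{d-1} = C\e^{1+\alpha}\xrightarrow[\e\to 0]{}0.
\]
Adding this to the constant-coefficient computation and the frozen-coefficient error estimate from the previous paragraph yields $\lim_{\e\to 0}\int_{\partial B_\e(p)}\partial_\nu^q G^t(p,q)\,d\sigma_{B_\e}(q) = -1$.

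The main obstacle I anticipate is making rigorous the estimate $|\nabla_q(G^t(p,\cdot)-\Gamma_p)|\le C|q-p|^{2-d+\alpha}$, since the difference is not literally a solution of a nice equation: $\Gamma_p$ is the fundamental solution of the frozen operator, not of $L$, so $L(G(\cdot,p)-\Gamma_p)$ picks up the terms $\dive((A(q)-A(p))\nabla\Gamma_p)$ and $b\nabla\Gamma_p$, which are of order $|q-p|^{1-d}$ and $|q-p|^{1-d}$ respectively — not obviously integrable enough. The cleanest route is probably to avoid this subtraction and instead argue directly: since $G(\cdot,p)$ is a solution of $Lu=0$ away from $p$ and $G(q,p)\le C|q-p|^{2-d}$, one has $|\nabla_q G(q,p)|\le C|q-p|^{1-d}$ by corollary \ref{LocalBoundOnGradientOfSolutions} applied on dyadic annuli. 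Then the limit $\int_{\partial B_\e(p)}\partial_\nu^q G(q,p)\,d\sigma_{B_\e}$ is computed by writing, for $0<\e<\e'$ fixed, Green's identity $\int_{B_{\e'}(p)\setminus B_\e(p)}(\text{stuff})$ and using the defining weak formulation of $G$ together with approximation by $C_c^\infty$ cutoffs: testing the equation $\alpha(G(\cdot,p),\phi)=\phi(p)$ against cutoffs $\phi$ that are $1$ near $p$ and supported in $B_{\e'}(p)$, integrating by parts, and extracting the boundary term on $\partial B_\e(p)$ as the remaining contribution as $\e\to 0$. The lower-order and variable-coefficient terms contribute $O(\e)$ by the gradient bound and the Lipschitz bound on $A$, exactly as above, and what survives is $-\phi(p) = -1$. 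I would present it this way, with the frozen-coefficient fundamental solution used only as a bookkeeping device (or not at all), and flag the $O(\e^{1+\alpha})$ versus $O(\e)$ error accounting as the one place requiring care.
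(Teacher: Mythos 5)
Your second ``direct'' route (the final paragraph) is in substance the paper's proof, and your instinct to abandon the frozen-coefficient comparison is good judgment. The paper's argument is organized slightly differently but rests on exactly the same two ingredients you identify: the weak formulation $\alpha(G_p,\phi)=\phi(p)$ tested against a cutoff near $p$, and the $O(\e)$ integrability of $b\nabla G_p$ near $p$ coming from the pointwise gradient bound on Green's function. Concretely, the paper applies the divergence theorem on $B\setminus B_\e(p)$ to relate $\int_{\partial B_\e(p)}\partial_\nu G_p$ to $\int_{\partial B}\partial_\nu G_p-\int_B b\nabla G_p$, and then evaluates $\int_{\partial B}\partial_\nu G_p$ separately via a cutoff $\psi=1-\phi$ and the identity $\alpha(G_p,\phi)=1$; the two copies of $\int_B b\nabla G_p$ cancel at the end. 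Your version collapses these steps into one by taking $\phi\in C_c^\infty(B)$ with $\phi\equiv 1$ on $B_\e(p)$ from the start, integrating by parts on $B\setminus B_\e(p)$, and observing that the bulk terms $\int_{B\setminus B_\e(p)}b\nabla G_p\,\phi$ cancel and what remains is $-\int_{\partial B_\e(p)}\partial_\nu G_p\,d\sigma+\int_{B_\e(p)}b\nabla G_p=1$; since the last integral is $O(\e)$, the limit is $-1$. That is a genuine simplification of the same idea, not a different method. One small imprecision: in this direct version there is no ``variable-coefficient'' error and no need for the Lipschitz bound $|A(q)-A(p)|\le\mu\e$; the conormal derivative is taken with $A(q)$ throughout, and the only error term is $\int_{B_\e(p)}b\nabla G_p$. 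The Lipschitz continuity of $A$ enters only indirectly, through the proof of the gradient estimate $|\nabla_q G(q,p)|\le C|q-p|^{1-d}$.

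Regarding your first paragraph: you are right that the frozen-coefficient comparison is blocked. The bound $|\nabla_q(G^t(p,\cdot)-\Gamma_p)|\lesssim|q-p|^{2-d+\alpha}$ is not established anywhere in the paper, and the naive computation of $L(G(\cdot,p)-\Gamma_p)$ produces source terms of order $|q-p|^{1-d}$ from both $\dive((A(q)-A(p))\nabla\Gamma_p)$ and $b\nabla\Gamma_p$, which is not immediately enough to upgrade the gradient of the difference. The paper sidesteps this entirely, and so does your final version.
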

\begin{proof}
Let $\e>0$ such that $B_{\e}(p)\subseteq B$, and consider the domain $U_{\e}=B\setminus B_{\e}(p)$. Set also $G_p(\cdot)=G^t(p,\cdot)$. As in the proof of lemma \ref{ContinuousInside}, $G_p$ is a classical solution of the equation
\[
\dive(A\nabla G_p)=b\nabla G_p
\]
away from $p$, therefore
\[
\int_{U_{\e}}\dive(A\nabla G_p)=\int_{U_{\e}}b\nabla G_p\xrightarrow[\e\to 0]{}\int_Bb\nabla G_p,
\]
since $b\nabla G_p$ is integrable in $B$. We then integrate by parts, to obtain
\[
\int_{U_{\e}}\dive(A\nabla G_p)=\int_{\partial U_{\e}}\partial_{\nu}G_p\,d\sigma_{U_{\e}}=\int_{\partial B}\partial_{\nu}G_p\,d\sigma_B-\int_{\partial B_{\e}(p)}\partial_{\nu}G_p\,d\sigma_{B_{\e}(p)},
\]
therefore
\begin{equation}\label{eq:NoI(p)Needed}
\int_{\partial B_{\e}(p)}\partial_{\nu}G_p\,d\sigma_{B_{\e}(p)}\xrightarrow[\e\to 0]{}\int_{\partial B}\partial_{\nu}G_p\,d\sigma_B-\int_Bb\nabla G_p.
\end{equation}
Let now $\phi\in C_c^{\infty}(B)$ be a smooth cutoff which is equal to $1$ in a small ball $B_p$ that is centered at $p$, and vanishes outside a ball $B_p'$ that contains $B_p$. Then $\psi=1-\phi$ is identically $1$ outside $B_p'$ and identically $0$ inside $B_p$, therefore the divergence theorem shows that
\begin{align*}
\int_{\partial B}\partial_{\nu}G_p\,d\sigma_B&=\int_{\partial B}\partial_{\nu}G_p\cdot\psi\,d\sigma_B=\int_{\partial B}\left<A\nabla G_p\cdot\psi,\nu\right>\,d\sigma_B=\int_{B\setminus B_p}\dive(A\nabla G_p\cdot\psi)\\
&=\int_{B\setminus B_p}A\nabla G_p\nabla\psi+\dive(A\nabla G_p)\cdot\psi=\int_{B\setminus B_p}A\nabla G_p\nabla\psi+b\nabla G_p\cdot\psi\\
&=\int_BA\nabla G_p\nabla\psi+b\nabla G_p\cdot\psi=\int_BA\nabla G_p\nabla(1-\phi)+b\nabla G_p\cdot(1-\phi)\\
&=\int_Bb\nabla G_p-\int_BA\nabla G_p\nabla\phi+b\nabla G_p\cdot\phi=\int_Bb\nabla G_p-\phi(p)\\
&=\int_Bb\nabla G_p-1,
\end{align*}
where we also used that $G_p$ is a classical solution of $Lu=0$ away from $p$. We then plug this into \eqref{eq:NoI(p)Needed} to conclude the proof.
\end{proof}

We now define, for a Lipschitz function $f:\partial\Omega\to\mathbb R$ and $p\in\partial\Omega$,
\[
\mathcal{K}f(p)=\lim_{\e\to 0}\int_{|p-q|>\e}\partial_{\nu}^qG^t(p,q)F(q)\,d\sigma(q).
\]
The fact that this limit exists is shown in the next lemma.

\begin{lemma}\label{JumpFor1}
Let $\Omega\in\mathcal{D}$, let $A\in M_{\lambda,\mu}(B)$, $b\in\Lip(B)$, and consider a Lipschitz function $F:\overline{B}\to\mathbb R$ which vanishes on $\partial B$. Then, for almost all $p\in\partial\Omega$,
\begin{align*}
\mathcal{K}f(p)&=\frac{1}{2}F(p)-\int_{B\setminus\Omega}A\nabla G_p\nabla F+b\nabla G_p\cdot F\\
&=-\frac{1}{2}F(p)+\int_{\Omega}A\nabla G_p\nabla F+b\nabla G_p\cdot F.
\end{align*}
\end{lemma}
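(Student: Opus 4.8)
The plan is to establish the jump relation for the normal derivative of the single layer potential by a localization argument near a fixed boundary point $p$, splitting the surface measure into a small piece $\Delta_\e(p)$ and its complement, and then using Lemma \ref{ContinuousInside} together with Lemma \ref{IntegratesTo1} to produce the $\pm\frac12 F(p)$ jump. Concretely, fix $p\in\partial\Omega$ a Lebesgue point of $F$ with respect to $\sigma$, and let $\delta>0$ be small. Write $B_\delta=B_\delta(p)$ and split $B\setminus\Omega$ (say, to compute the first expression) as $(B\setminus\Omega)\setminus B_\delta$ and $(B\setminus\Omega)\cap B_\delta$. On the part away from $B_\delta$, the integrand $A\nabla G_p\nabla F + b\nabla G_p\cdot F$ is integrable uniformly because $G_p(\cdot) = G^t(p,\cdot) = G(\cdot,p)$ has the pointwise bounds $|G_p(q)|\le C|p-q|^{2-d}$, $|\nabla G_p(q)|\le C|p-q|^{1-d}$ (Propositions \ref{GreenFunctionEstimatesForAdjoint}, \ref{GreenDerivativeBounds}, Theorem \ref{GoodGreenFunctionEstimates}), so the singularity $|p-q|^{1-d}$ is integrable in $\mathbb{R}^d$; this piece will be shown to match $\int_{|p-q|>\delta}\partial_\nu^q G^t(p,q) F(q)\,d\sigma(q)$ up to an error $o(1)$ as $\delta\to 0$ by applying the divergence theorem (Lemma \ref{ContinuousInside} applied on the truncated domain $(B\setminus\Omega)\setminus B_\delta$, whose boundary consists of $\partial\Omega\setminus\Delta_\delta(p)$, a portion of $\partial B_\delta$, and $\partial B$ where $F$ vanishes).

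The key local computation is on $\partial B_\delta(p)\cap(B\setminus\Omega)$: here I would use that $F$ is Lipschitz, so $F(q) = F(p) + O(\delta)$ on $\partial B_\delta$, and hence
\[
\int_{\partial B_\delta(p)\cap(B\setminus\Omega)} \partial_\nu^q G_p(q)\,F(q)\,d\sigma_{B_\delta}(q) = F(p)\int_{\partial B_\delta(p)\cap(B\setminus\Omega)}\partial_\nu^q G_p(q)\,d\sigma_{B_\delta}(q) + O\!\left(\delta\cdot\delta^{1-d}\cdot\delta^{d-1}\right).
\]
Since $p\in\partial\Omega$ and $\partial\Omega$ is Lipschitz, the sphere $\partial B_\delta(p)$ is split by $\partial\Omega$ into two pieces, and the fraction of $\partial B_\delta(p)$ lying in $B\setminus\Omega$ tends to $\frac12$ as $\delta\to 0$ (at a.e.\ $p$, where $\partial\Omega$ has a tangent plane — this is where ``almost all $p$'' enters). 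Combined with Lemma \ref{IntegratesTo1}, which says $\int_{\partial B_\delta(p)}\partial_\nu^q G^t(p,q)\,d\sigma_{B_\delta}(q)\to -1$, and with the fact that $\partial_\nu^q G_p$ restricted to $\partial B_\delta(p)$ is ``comparable to'' a constant multiple of the surface measure density in the relevant averaged sense (one uses that the fundamental-solution part of $G_p$ near $p$ has radially symmetric leading behavior, so the contribution is distributed evenly over the sphere up to lower order), the piece over $\partial B_\delta(p)\cap(B\setminus\Omega)$ converges to $-\tfrac12 F(p)$; the sign and the combination with the divergence-theorem identity from Lemma \ref{ContinuousInside} produce $\mathcal{K}f(p) = \frac12 F(p) - \int_{B\setminus\Omega}(A\nabla G_p\nabla F + b\nabla G_p\cdot F)$. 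The second formula follows identically by instead truncating $\Omega$ near $p$ and using the second identity in Lemma \ref{ContinuousInside}, with the outward normal reversed, giving $-\frac12 F(p) + \int_\Omega(A\nabla G_p\nabla F + b\nabla G_p\cdot F)$; consistency of the two is exactly Lemma \ref{IntegratesTo1} (the two half-sphere contributions sum to $-1$), which also confirms that the principal value limit $\mathcal{K}f(p)$ exists.

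The main obstacle I anticipate is making rigorous the claim that the singular part of $\int_{\partial B_\delta(p)}\partial_\nu^q G_p\,F\,d\sigma_{B_\delta}$ splits evenly between the interior and exterior half-spheres — i.e.\ that the non-radially-symmetric corrections to $G_p$ near its pole (coming from the variable, merely Lipschitz matrix $A$ and the drift $b$) do not contribute to the jump. The clean way around this is to subtract off the parametrix / fundamental solution $\Gamma(\cdot,p)$ for the frozen-coefficient operator $-\dive(A(p)\nabla\cdot)$: write $G_p = \Gamma(\cdot,p) + R_p$ where $R_p$ has a strictly weaker singularity (by the estimates in Section 5, e.g.\ Propositions \ref{HolderContinuityOfGreen}, \ref{ContinuityArgumentForB}, one gets $|\nabla R_p(q)| \le C|p-q|^{1-d+\gamma}$ for some $\gamma>0$, hence the $R_p$-contribution over $\partial B_\delta$ is $O(\delta^{\gamma})\to 0$), and for $\Gamma$ the conormal derivative $\partial_\nu^q\Gamma(p,q)$ on $\partial B_\delta(p)$ is an explicit homogeneous-degree-$(1-d)$ kernel whose integral over any half-sphere through the center is exactly $-\frac12$ by the ellipticity-normalized divergence theorem. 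Once that reduction is in place, the remaining steps are routine: dominated convergence on the regular part, the Lebesgue-point property of $F$, and bookkeeping of the boundary pieces in the divergence theorem.
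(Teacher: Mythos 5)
Your overall architecture matches the paper's proof closely: both localize near a fixed $p$ where the tangent plane exists, split the boundary integral over $\partial\Omega\setminus\Delta_\e(p)$ using the divergence theorem on a truncated domain (Lemma \ref{ContinuousInside}), peel off $F(p)$ using Lipschitz continuity of $F$ and the $|p-q|^{1-d}$ bound on $\partial_\nu G_p$, and reduce everything to the limit of $\int_{\partial B_\e(p)\cap\Omega^c}\partial_\nu G_p$. The place where you genuinely diverge from the paper is in how that last hemisphere integral yields $-\tfrac12$. The paper's device is a half-sum/half-difference decomposition:
\[
\int_{\partial^1_\e}\partial_\nu G_p=\tfrac12\Big(\int_{\partial^1_\e}+\int_{\partial^2_\e}\Big)+\tfrac12\Big(\int_{\partial^1_\e}-\int_{\partial^2_\e}\Big),
\]
where $\partial^1_\e=\Omega^c\cap\partial B_\e(p)$ and $\partial^2_\e=\Omega\cap\partial B_\e(p)$; the sum is $\int_{\partial B_\e}\partial_\nu G_p\to-1$ by Lemma \ref{IntegratesTo1}, and the difference is argued to vanish via the thin-strip observation (the Folland-style estimate that $\partial^1_\e$ and the reflected $\partial^2_\e$ differ by a set lying in $\{|y\cdot\nu(p)|\le C\e^2\}$, whose contribution is $O(\e)$). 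You instead subtract the frozen-coefficient parametrix $\Gamma(\cdot,p)$, evaluate its hemisphere integral explicitly via central (antipodal) symmetry of the ellipsoidal kernel, and bound the remainder $R_p=G_p-\Gamma(\cdot,p)$ by the gain of $|p-q|^{1-d+\gamma}$.

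Both routes work. The paper's version is slicker in that it never isolates a parametrix, but it leaves implicit exactly the point your decomposition makes explicit: the difference $\int_{\partial^1_\e}-\int_{\partial^2_\e}$ vanishing in the limit ultimately rests on the near-antipodal symmetry of $\partial_\nu G_p$ about the pole, since the strip estimate only controls the mismatch between $\partial^1_\e$ and the reflected image of $\partial^2_\e$, not the mismatch of the kernel values on antipodal points. Your approach surfaces this by writing $G_p=\Gamma(\cdot,p)+R_p$ and using the exact antipodal symmetry of $\nabla\Gamma(\cdot,p)$. The tradeoff: you need the estimate $|\nabla R_p(q)|\lesssim|p-q|^{1-d+\gamma}$, which the paper establishes in pieces (Proposition \ref{ContinuityArgumentForB} gives the gain for the drift, and the argument in Proposition \ref{MaximalTruncationBound} shows $\nabla G_0-\nabla\Gamma$ is bounded), but for the frozen-versus-variable-$A$ part you would still need to cite or supply the standard freezing-coefficients estimate, which the paper doesn't state in that exact form. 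Also, you should verify the second formula by the same truncation on the $\Omega$-side rather than appealing to ``consistency''; the paper derives it by adding the two representations of $F(p)=\int_B A\nabla G_p\nabla F+b\nabla G_p\cdot F$ over $\Omega$ and $B\setminus\Omega$, which is cleaner than re-running the hemisphere argument.
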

\begin{proof}
Let $V_{\e}=\Omega\cup B_{\e}(p)$. We also define
\[
\partial^1_{\e}=\Omega^c\cap\partial(B_{\e}(p)),\,\,\partial^2_{\e}=\Omega\cap\partial(B_{\e}(p)),
\]
and we write
\begin{align*}
\int_{\partial\Omega\setminus\Delta_{\e}(p)}\partial_{\nu}G_p\cdot F\,d\sigma&=\int_{\partial V_{\e}}\partial_{\nu}G_p\cdot F\,d\sigma-\int_{\partial^1_{\e}}\partial_{\nu}G_p\cdot F\,d\sigma\\
&=\int_{\partial B}\partial_{\nu}G_p\cdot F\,d\sigma-\int_{\partial(B\setminus V_{\e})}\partial_{\nu}G_p\cdot F\,d\sigma-\int_{\partial^1_{\e}}\partial_{\nu}G_p\cdot F\,d\sigma\\
&=0-I_1-I_2,
\end{align*}
since $F$ vanishes on $\partial B$.

We now treat $I_1$. As in the proof of lemma \ref{ContinuousInside}, using the divergence theorem we compute
\[
I_1=\int_{\partial(B\setminus V_{\e})}\partial_{\nu}G_p\cdot F\,d\sigma=\int_{B\setminus V_{\e}}\dive(F\cdot A\nabla G_p)=\int_{B\setminus V_{\e}}A\nabla G_p\nabla F+b\nabla G_p\cdot F,
\]
Then, since the terms $A\nabla G_p\nabla F,b\nabla G_p$ are integrable in $B\setminus\Omega$, we obtain that
\[
I_1\to\int_{B\setminus\Omega}A\nabla G_p\nabla F+b\nabla G_p\cdot F.
\]
For $I_2$, we write
\[
I_2=\int_{\partial^1_{\e}}\partial_{\nu}G_p\cdot F\,d\sigma=\int_{\partial^1_{\e}}\partial_{\nu}G_p\cdot (F-F(p))\,d\sigma+F(p)\int_{\partial^1_{\e}}\partial_{\nu}G_p\,d\sigma=I_3+I_4.
\]
From Lipschitz continuity of $F$ and the pointwise bounds on the gradient of $G$, we obtain that
\[
|I_3|\leq C\int_{\partial^1_{\e}}|p-p'|^{1-d}|F(p')-F(p)|\,d\sigma(p')\leq C\e^{2-d}\sigma_{d-1}(\partial B_{\e}(p))\xrightarrow[\e\to 0]{}0.
\]
For $I_4$, for almost all $p\in\partial\Omega$ there exists a well defined tangent plane to $\partial\Omega$ at $p$. For those $p$, the symmetric difference between $\partial_{\e}^1$ and $\partial_{\e}^2$ is contained in a strip
\[
A_{\e}(p)=\{y\in B_{\e}(p)\big{|}|y\cdot\nu(p)|\leq C\e^2\},
\]
(as in \cite[p.~125]{Folland}), and if we combine with the pointwise bounds for the gradient of $G$, we obtain that
\[
\int_{\partial_{\e}^1}\partial_{\nu}G_p-\int_{\partial_{\e}^2}\partial_{\nu}G_p\xrightarrow[\e\to 0]{}0.
\]
Using lemma \ref{IntegratesTo1}, we then obtain that
\begin{align*}
\int_{\partial^1_{\e}}\partial_{\nu}G_p\,d\sigma&=\frac{1}{2}\left(\int_{\partial^1_{\e}}\partial_{\nu}G_p\,d\sigma+\int_{\partial^2_{\e}}\partial_{\nu}G_p\,d\sigma\right)+\frac{1}{2}\left(\int_{\partial^1_{\e}}\partial_{\nu}G_p\,d\sigma-\int_{\partial^2_{\e}}\partial_{\nu}G_p\,d\sigma\right)\\
&=\frac{1}{2}\int_{B_{\e}(p)}\partial_{\nu}G_p\,d\sigma+\frac{1}{2}\left(\int_{\partial^1_{\e}}\partial_{\nu}G_p\,d\sigma-\int_{\partial^2_{\e}}\partial_{\nu}G_p\,d\sigma\right)\xrightarrow[\e\to 0]{}-\frac{1}{2},
\end{align*}
therefore $I_2\to-\frac{1}{2}F(p)$. Therefore,
\[
\mathcal{K}F(p)=\lim_{\e\to 0}(-I_1-I_2)=-\frac{I(p)}{2}F(p)-\int_{B\setminus\Omega}A\nabla G_p\nabla F+b\nabla G_p\cdot F.
\]
For the second representation, using the first representation in this lemma, we write
\begin{align*}
F(p)&=\int_BA\nabla G_p\nabla F+b\nabla G_p\cdot F\\
&=\int_{\Omega}A\nabla G_p\nabla F+b\nabla G_p\cdot F+\int_{B\setminus\Omega}A\nabla G_p\nabla F+b\nabla G_p\cdot F\\
&=\int_{\Omega}A\nabla G_p\nabla F+b\nabla G_p\cdot F+\frac{1}{2}F(p)-\mathcal{K}F(p),
\end{align*}
which concludes the proof after rearranging the terms.
\end{proof}

We are now led to the following convergence lemma. 

\begin{lemma}\label{JumpRelations}
Let $\Omega\in\mathcal{D}$, $A\in M_{\lambda,\mu}(B)$ and $b\in \Lip(B)$, and consider two Lipschitz functions $F,H:\overline{B}\to\mathbb R$ with $F,H\equiv 0$ on $\partial B$. Then, for all $j\in\mathbb N$,
\[
\int_{\partial\Omega_j}\partial_{\nu_j}\mathcal{S}_+F(p_j)\cdot H(p_j)\,d\sigma_j(p_j)\xrightarrow[j\to\infty]{}\int_{\partial\Omega}\left(\frac{1}{2}F(q)H(q)+F(q)\cdot \mathcal{K}H(q)\right)\,d\sigma(q),
\]
and also
\[
\int_{\partial\Omega_j'}\partial_{\nu_j'}S_-F(p_j')\cdot H(p_j')\,d\sigma_j'(p_j')\xrightarrow[j\to\infty]{}\int_{\partial\Omega}\left(-\frac{1}{2}F(q)H(q)+F(q)\cdot\mathcal{K}H(q)\right)\,d\sigma(q),
\]
where $\Omega_j,\Omega_j'$ are defined in theorem \ref{ApproximationScheme} and the comments right after it, and $\nu_j$, $\nu_j'$ are the unit outer normals on $\Omega_j$, $\Omega_j'$, respectively.
\end{lemma}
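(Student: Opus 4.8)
The strategy is to use Fubini's theorem to move the divergence-theorem identities from Lemma \ref{ContinuousInside} and Lemma \ref{JumpFor1} from the single-variable Green's function to the single layer potential, and then to pass to the limit along the approximating domains using the nontangential convergence established for $\nabla\mathcal{S}_{\pm}F$. First I would fix $j$ and, working on $\partial\Omega_j$, write
\[
\int_{\partial\Omega_j}\partial_{\nu_j}\mathcal{S}_+F(p_j)H(p_j)\,d\sigma_j(p_j)=\int_{\partial\Omega_j}\left<A(p_j)\nabla\mathcal{S}_+F(p_j),\nu_j(p_j)\right>H(p_j)\,d\sigma_j(p_j).
\]
Since $\Omega_j$ is smooth and $\mathcal{S}_+F$ is a solution of $LU=0$ in $\Omega$ which is $C^1$ up to $\partial\Omega_j$ (from Proposition \ref{SingleLayerInequalities} together with Corollary \ref{LocalBoundOnGradientOfSolutions} and Proposition \ref{DerivativeRegularity}), the divergence theorem on $\Omega_j$ gives
\[
\int_{\partial\Omega_j}\partial_{\nu_j}\mathcal{S}_+F\cdot H\,d\sigma_j=\int_{\Omega_j}A\nabla\mathcal{S}_+F\nabla\tilde H+b\nabla\mathcal{S}_+F\cdot\tilde H,
\]
where $\tilde H$ is the fixed Lipschitz extension of $H$ to $\overline B$ (vanishing on $\partial B$). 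Here I use that $\dive(A\nabla\mathcal{S}_+F)=b\nabla\mathcal{S}_+F$ in $\Omega$, so the bulk term from the divergence theorem is exactly the drift contribution — no pointwise singularity enters since $\mathcal{S}_+F$ is a genuine solution in all of $\Omega$.

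Next I would let $j\to\infty$. The region $\Omega_j$ increases to $\Omega$; the integrand $A\nabla\mathcal{S}_+F\nabla\tilde H+b\nabla\mathcal{S}_+F\cdot\tilde H$ is in $L^1_{\loc}(\Omega)$, and near $\partial\Omega$ it is controlled by $(\nabla\mathcal{S}_+F)^*\cdot\|\nabla\tilde H\|_\infty$, which is integrable over the boundary strip because $(\nabla\mathcal{S}_+F)^*\in L^2(\partial\Omega)\subseteq L^1(\partial\Omega)$ by Proposition \ref{SingleLayerInequalities}; a standard slicing/coarea argument turns this into an $L^1(\Omega)$ domination. Thus by dominated convergence
\[
\int_{\partial\Omega_j}\partial_{\nu_j}\mathcal{S}_+F\cdot H\,d\sigma_j\xrightarrow[j\to\infty]{}\int_{\Omega}A\nabla\mathcal{S}_+F\nabla\tilde H+b\nabla\mathcal{S}_+F\cdot\tilde H.
\]
Now I unfold $\mathcal{S}_+F(x)=\int_{\partial\Omega}G(x,q)F(q)\,d\sigma(q)$, use $\nabla\mathcal{S}_+F(x)=\int_{\partial\Omega}\nabla_xG(x,q)F(q)\,d\sigma(q)$ from Proposition \ref{SingleLayerInequalities}, and apply Fubini (justified by the pointwise bounds $|\nabla_xG(x,q)|\le C|x-q|^{1-d}$ of Proposition \ref{GreenDerivativeBounds}, which give an integrable majorant on $\Omega\times\partial\Omega$):
\[
\int_{\Omega}A\nabla\mathcal{S}_+F\nabla\tilde H+b\nabla\mathcal{S}_+F\cdot\tilde H=\int_{\partial\Omega}\left(\int_{\Omega}A(x)\nabla_xG(x,q)\nabla\tilde H(x)+b(x)\nabla_xG(x,q)\tilde H(x)\,dx\right)F(q)\,d\sigma(q).
\]
By Proposition \ref{SymmetryWithAdjoint}, $\nabla_xG(x,q)=\nabla_xG^t_{\cdot}$ in the sense that $G(x,q)=G^t(q,x)$, so the inner integral is precisely $\int_\Omega A\nabla G_q\nabla\tilde H+b\nabla G_q\cdot\tilde H$ with $G_q(\cdot)=G^t(q,\cdot)$, which Lemma \ref{JumpFor1} identifies with $\tfrac12\tilde H(q)+\mathcal{K}H(q)=\tfrac12 H(q)+\mathcal{K}H(q)$ for a.e.\ $q\in\partial\Omega$. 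This yields the first claimed limit. The exterior statement is handled identically, replacing $\Omega_j$ by $\Omega_j'\downarrow\Omega$, using that $\mathcal{S}_-F$ solves $LU=0$ in $B\setminus\overline\Omega$ and the exterior identity in Lemma \ref{ContinuousInside}, which introduces the sign change and produces $-\tfrac12 H(q)+\mathcal{K}H(q)$.

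\textbf{Main obstacle.} The delicate point is not the algebra but the two limit interchanges near the boundary: justifying that $\int_{\Omega_j}(\cdots)\to\int_\Omega(\cdots)$ requires an $L^1(\Omega)$ dominating function built from $(\nabla\mathcal{S}_+F)^*$, and this in turn rests on the fact (Proposition \ref{SingleLayerInequalities}) that the nontangential maximal function of $\nabla\mathcal{S}_+F$ lies in $L^2(\partial\Omega)$; I would need to make the slicing argument that converts a boundary $L^1$ bound on the maximal function into an $L^1$ bound on the strip $\{\delta(x)<t\}$ and let $t\to0$. One also has to check that $\partial_{\nu_j}\mathcal{S}_+F$ defined via the conormal $\left<A\nabla\mathcal{S}_+F,\nu_j\right>$ is exactly what the divergence theorem on $\Omega_j$ produces — this is where smoothness of $\Omega_j$ and $C^1$ regularity of $\mathcal{S}_+F$ up to $\partial\Omega_j$ (both already available) are used. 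Everything else is a routine application of Fubini with the pointwise Green's function bounds and the symmetry relation.
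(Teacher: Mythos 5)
Your argument is correct, but it inverts the order of operations relative to the paper's proof, and this changes which technical tools do the heavy lifting. You apply the divergence theorem to $\mathcal{S}_+F$ itself on the smooth subdomain $\Omega_j$, pass to the limit $j\to\infty$ via a dominated convergence argument requiring $\nabla\mathcal{S}_+F\in L^1(\Omega)$ (which you obtain from $(\nabla\mathcal{S}_+F)^*\in L^2(\partial\Omega)$ by a slicing argument), and only then apply Fubini on $\Omega\times\partial\Omega$ to unfold the layer potential and invoke Lemma \ref{JumpFor1}. The paper instead first unfolds $\partial_{\nu_j}\mathcal{S}_+F(p_j)=\int_{\partial\Omega}\partial_{\nu_j}^{p_j}G(p_j,q)F(q)\,d\sigma(q)$ and applies Fubini on $\partial\Omega_j\times\partial\Omega$ for fixed $j$ (trivially justified since $|p_j-q|$ is bounded below), then invokes the second identity of Lemma \ref{ContinuousInside} for each $q\notin\overline{\Omega_j}$ to convert the inner boundary integral into $\int_{\Omega_j}A\nabla G_q\nabla H+b\nabla G_q\cdot H$, and finally passes to the limit in $j$ by dominated convergence on this iterated integral, using Lemma \ref{JumpFor1} at the very end. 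The paper's ordering entirely avoids the maximal-function slicing argument and the Fubini interchange on the domain $\Omega\times\partial\Omega$ near the diagonal; your ordering is more geometrically transparent (the divergence theorem is applied once to the actual solution $\mathcal{S}_+F$ rather than integrated over the family $G_q$), but pays for it with a more delicate dominated convergence step. Both routes reach the same endpoint via Lemma \ref{JumpFor1}, and you have correctly identified the slicing step as the technical crux of your version.
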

\begin{proof}
Let $I_j$ be the first integral above. From the formula for $\partial_{\nu_j}\mathcal{S}_+F(p_j)$, we first have that
\[
I_j=\int_{\partial\Omega_j}\left(\int_{\partial\Omega}\partial_{\nu_j}^{p_j}G(p_j,q)F(q)\,d\sigma(q)\right)H(p_j)\,d\sigma_j(p_j).
\]
Now, for $j$ fixed, since $|p_j-q|$ is bounded below by some positive number, the integral above is absolutely convergent, so we can apply Fubini's theorem to obtain that
\[
I_j=\int_{\partial\Omega}\left(\int_{\partial\Omega_j}\partial_{\nu_j}^{p_j}G(p_j,q)H(p_j)\,d\sigma_j(p_j)\right)F(q)\,d\sigma(q),
\]
where differentiation takes place with respect to the $p_j$ variable of $G$. We now apply the second representation in lemma \ref{ContinuousInside} for fixed $j$, for the domain $\Omega_j$ and for $G$. Since $q\notin\overline{\Omega_j}$, we obtain that
\[
I_j=\int_{\partial\Omega}\left(\int_{\Omega_j}A\nabla G_q\nabla H+b\nabla G_q\cdot H\right)F(q)\,d\sigma(q),
\]
where $G_q=G(\cdot,q)$. By letting $j\to\infty$, the dominated convergence theorem shows that
\begin{align*}
I_j&\xrightarrow[j\to\infty]{}\int_{\partial\Omega}\left(\int_{\Omega}A\nabla G_q\nabla H+b\nabla G_q\cdot H\right)F(q)\,d\sigma(q)\\
&=\int_{\partial\Omega}\frac{1}{2}FH\,d\sigma(q)+\int_{\partial\Omega}F(q)\left(-\frac{1}{2}H(q)+\int_{\Omega}A\nabla G_q\nabla H+b\nabla G_q\cdot H\right)\,d\sigma(q)
\end{align*}
and, since $q\in\partial\Omega$, the second equality in lemma \ref{JumpFor1} shows that
\[
I_j\to\int_{\partial\Omega}\left(\frac{1}{2}FH+F\cdot\mathcal{K}H\right)\,d\sigma(q).
\]
Set now $I_j'$ to be the second integral. As above, and since now $q\in\Omega_j'$, we obtain from the first representation in lemma \ref{ContinuousInside} that
\[
I_j'=-\int_{\partial\Omega}\left(\int_{B\setminus\Omega_j'}A\nabla G_q\nabla H+b\nabla G_q\cdot H\right)F(q)\,d\sigma(q).
\]
We then apply the dominated convergence theorem to obtain
\begin{align*}
I_j'&\xrightarrow[j\to\infty]{}-\int_{\partial\Omega}F(q)\left(\int_{B\setminus\Omega}A\nabla G_q\nabla H+b\nabla G_q\cdot H\right)\,d\sigma(q)\\
&=\int_{\partial\Omega}-\frac{1}{2}FH\,d\sigma+\int_{\partial\Omega}F(q)\left(\frac{1}{2}H(q)-\int_{B\setminus\Omega}A\nabla G_q\nabla H+b\nabla G_q\cdot H\right)\,d\sigma(q)\\
&=\int_{\partial\Omega}\left(-\frac{1}{2}FH+F\cdot\mathcal{K}
H\right)\,d\sigma(q),
\end{align*}
where we used the first equality in lemma \ref{JumpFor1}.
\end{proof}

As a consequence of the previous lemma, we obtain the next important relation.

\begin{cor}[Jump Relation]\label{JumpRelation}
Let $\Omega\in\mathcal{D}$, $A\in M_{\lambda,\mu}(B)$ and $b\in\Lip(B)$. Then
\[
\int_{\partial\Omega_j}\partial_{\nu_j}\mathcal{S}_+F(p_j)\cdot H(p_j)\,d\sigma_j(p_j)-\int_{\partial\Omega_j'}\partial_{\nu_j'}\mathcal{S}_-F(p_j')\cdot H(p_j')\,d\sigma_j'(p_j')\xrightarrow[j\to\infty]{}\int_{\partial\Omega}FH\,d\sigma,
\]
for all $F,H:\overline{B}\to\mathbb R$ which are Lipschitz continuous and vanish on $\partial B$, where $\nu_j$, $\nu_j'$ are the unit outer normals on $\Omega_j$, $\Omega_j'$, respectively.
\end{cor}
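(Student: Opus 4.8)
The plan is to obtain the Jump Relation by simply subtracting the two convergence statements established in Lemma~\ref{JumpRelations}. Indeed, that lemma gives, for Lipschitz $F,H:\overline{B}\to\mathbb R$ vanishing on $\partial B$, the two limits
\[
\int_{\partial\Omega_j}\partial_{\nu_j}\mathcal{S}_+F\cdot H\,d\sigma_j\xrightarrow[j\to\infty]{}\int_{\partial\Omega}\left(\tfrac12 FH+F\cdot\mathcal{K}H\right)\,d\sigma,
\]
\[
\int_{\partial\Omega_j'}\partial_{\nu_j'}\mathcal{S}_-F\cdot H\,d\sigma_j'\xrightarrow[j\to\infty]{}\int_{\partial\Omega}\left(-\tfrac12 FH+F\cdot\mathcal{K}H\right)\,d\sigma.
\]
Subtracting, the two $\int_{\partial\Omega}F\cdot\mathcal{K}H\,d\sigma$ terms cancel, the two $\pm\tfrac12\int FH$ terms add up to $\int_{\partial\Omega}FH\,d\sigma$, and the difference of the left-hand sides converges to $\int_{\partial\Omega}FH\,d\sigma$. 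This is the claimed statement. The only thing to check is that both sequences on the left actually converge (not merely that their difference does), which is exactly what Lemma~\ref{JumpRelations} provides, so the subtraction of limits is legitimate.

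First I would note that the hypotheses of Corollary~\ref{JumpRelation} ($\Omega\in\mathcal{D}$, $A\in M_{\lambda,\mu}(B)$, $b\in\Lip(B)$) are precisely those of Lemma~\ref{JumpRelations}, so the lemma applies verbatim to the functions $F,H$ fixed in the statement of the corollary. Then I would invoke both displayed limits of the lemma, and perform the elementary algebra above. Since convergent sequences can be subtracted term by term, I get
\[
\int_{\partial\Omega_j}\partial_{\nu_j}\mathcal{S}_+F\cdot H\,d\sigma_j-\int_{\partial\Omega_j'}\partial_{\nu_j'}\mathcal{S}_-F\cdot H\,d\sigma_j'\xrightarrow[j\to\infty]{}\int_{\partial\Omega}\left(\tfrac12 FH+F\cdot\mathcal{K}H\right)\,d\sigma-\int_{\partial\Omega}\left(-\tfrac12 FH+F\cdot\mathcal{K}H\right)\,d\sigma=\int_{\partial\Omega}FH\,d\sigma,
\]
which is the desired conclusion.

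Honestly, there is no real obstacle here: the corollary is a one-line consequence of the lemma, the hard analytic work (Fubini interchanges, the integration-by-parts identities of Lemmas~\ref{ContinuousInside}, \ref{IntegratesTo1}, \ref{JumpFor1}, and the dominated-convergence passages across the approximating domains $\Omega_j\uparrow\Omega$, $\Omega_j'\downarrow\Omega$) having already been carried out in the proof of Lemma~\ref{JumpRelations}. If anything deserves a remark, it is just to make explicit that the two quantities $\int_{\partial\Omega_j}\partial_{\nu_j}\mathcal{S}_+F\cdot H\,d\sigma_j$ and $\int_{\partial\Omega_j'}\partial_{\nu_j'}\mathcal{S}_-F\cdot H\,d\sigma_j'$ each have their own limit, so that their difference converges to the difference of the limits; this is immediate from Lemma~\ref{JumpRelations} and requires no additional estimate. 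The proof is therefore complete.
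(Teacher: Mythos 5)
Your proof is correct and is essentially identical to the paper's own one-line argument, which likewise subtracts the second limit in Lemma~\ref{JumpRelations} from the first. The only elaboration you add — that both sequences converge individually so the subtraction of limits is legitimate — is a sound and harmless remark, but no new idea is involved.
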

\begin{proof}
To obtain this convergence, we subtract the second line in lemma \ref{JumpRelations} from the first.
\end{proof}

\subsection{Invertibility of the single layer potential}
We will now turn our attention to the global Rellich estimates, which will lead to invertibility of the single layer potential operator. Since we will apply the Rellich estimates, we will need to assume that $A$ is symmetric.

\begin{lemma}\label{SolidBound}
Let $\Omega$ be a Lipschitz domain, $A\in M_{\lambda,\mu}^s(\Omega)$ and $b\in L^{\infty}(\Omega)$. Suppose that $u$ is a $W^{1,2}_{\loc}(\Omega)$ solution to the equation $Lu=-\dive(A\nabla u)+b\nabla u=0$ with $(\nabla u)^*\in L^2(\partial\Omega)$, and $\nabla_Tu$ has non-tangential limits almost everywhere on $\partial\Omega$. Then,
\[
\int_{\Omega}|\nabla u|^2\leq C\int_{\partial\Omega}|\nabla_Tu|^2\,d\sigma,
\]
where $C$ is a good constant.
\end{lemma}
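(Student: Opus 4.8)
The plan is to reduce the global interior estimate to the already-established local Rellich estimate (Proposition \ref{LocalRellich}) by a covering argument, the point being that the local estimate controls a boundary integral of $|\partial_\nu u|^2$ by a tangential part plus an \emph{interior} solid integral, and here we want to go the other way. First I would fix a large ball and reduce, using the approximation scheme $\Omega_j\uparrow\Omega$ from Theorem \ref{ApproximationScheme}: it suffices to prove $\int_{\Omega_j}|\nabla u|^2\le C\int_{\partial\Omega_j}|\nabla_{T_j}u|^2\,d\sigma_j$ with $C$ a good constant uniform in $j$, and then let $j\to\infty$, using $(\nabla u)^*\in L^2(\partial\Omega)$, nontangential convergence of $\nabla_T u$, the convergence $T_j\circ\Lambda_j\to T$ and $\tau_j\to 1$ from Theorem \ref{ApproximationScheme}, and dominated convergence on the left via monotone convergence $\chi_{\Omega_j}\uparrow\chi_\Omega$. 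So I will assume from now on that $\Omega$ is smooth and $u\in C^1(\overline\Omega)\cap W^{2,2}_{\loc}(\Omega)$, which is the hypothesis under which the Rellich identity \eqref{SecondRellichIdentity} was derived.

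Next I would run the Rellich computation globally rather than with the cutoff $\theta_0$. Using the vector field $h$ from part (v) of Theorem \ref{ApproximationScheme} (which satisfies $\langle h,\nu\rangle\ge C>0$ on $\partial\Omega$) in place of the fixed direction $e_d$, one gets the divergence identity
\[
\dive(\langle A\nabla u,\nabla u\rangle h)-2\dive((h\cdot\nabla u)A\nabla u)=\langle (h\cdot\nabla)A\,\nabla u,\nabla u\rangle+\langle A\nabla u,\nabla u\rangle\dive h-2(h\cdot\nabla u)\,b\nabla u-2(\nabla u)^tDh\,A\nabla u,
\]
using symmetry of $A$ and $\dive(A\nabla u)=b\nabla u$. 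Integrating over $\Omega$ and applying the divergence theorem, the boundary term is $\int_{\partial\Omega}\big(\langle A\nabla u,\nabla u\rangle\langle h,\nu\rangle-2(h\cdot\nabla u)\langle A\nabla u,\nu\rangle\big)\,d\sigma$. Exactly as in the decomposition following \eqref{eq:FirstRellichIdentity}, I split $\nabla u=\nabla_T u+\partial^0_\nu u\cdot\nu$ and use symmetry of $A$ to rewrite this boundary integrand as $\langle A\nabla_T u,\nabla_T u\rangle\langle h,\nu\rangle-\langle A\nu,\nu\rangle|\partial^0_\nu u|^2\langle h,\nu\rangle-2\langle A\nabla u,\nu\rangle\langle\nabla_T u,h\rangle$. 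Since $\langle h,\nu\rangle\ge C>0$ and $\langle A\nu,\nu\rangle\ge\lambda$, the $|\partial^0_\nu u|^2$ term has a good sign, so moving it to the left and discarding it gives
\[
\int_\Omega\langle A\nabla u,\nabla u\rangle\,\dive h\le C\int_{\partial\Omega}\big(|\nabla_T u|^2+|\nabla_T u||\partial_\nu u|\big)\,d\sigma+\int_\Omega\big(|\nabla A|+\|b\|_\infty+|Dh|\big)|\nabla u|^2 .
\]
The key algebraic choice: take $h$ so that in addition $\dive h\ge c_0>0$ on $\overline{\Omega}$ (a constant vector field won't do, but one can use $h(x)=x-x_0$ for a suitable base point, scaled, or add such a radial field to the field from Theorem \ref{ApproximationScheme}; $\dive(x-x_0)=d$). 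Then the left side dominates $c_0\lambda\int_\Omega|\nabla u|^2$.

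The remaining obstacle, and the main one, is absorbing the solid term $C\int_\Omega(\mu+\|b\|_\infty)|\nabla u|^2$ on the right: it has the same form as the left side and is not small. This is handled exactly as in the $L^2$ interior estimates earlier (cf.\ Lemma \ref{L2BoundOnGradient} and the Cacciopoli machinery): one does not absorb it directly but instead estimates $\int_\Omega|\nabla u|^2$ by $\int_\Omega u^2$ plus boundary data, or equivalently covers $\Omega$ by a Whitney-type decomposition and plays the solid integral over a smaller region against a larger one, iterating/summing a geometric series as in the proof of Lemma \ref{BoundOnGreenGradient}; alternatively, since $u$ has $(\nabla u)^*\in L^2(\partial\Omega)$, one can subtract from $u$ its boundary ``average'' and use that $u$ restricted to $\partial\Omega$ lies in $W^{1,2}(\partial\Omega)$ with $\int_{\partial\Omega}|\nabla_T u|^2$ controlling, via the solvability already in hand or a direct Poincaré-type bound on $T_r(q)$, the relevant $L^2$ norm of $u$ over $\Omega$. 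Concretely I would first prove $\int_\Omega|\nabla u|^2\le C\int_{\partial\Omega}|\nabla_T u|^2+|\partial_\nu u|^2\,d\sigma$ by the above plus a crude absorption using that on a fixed-size collar near $\partial\Omega$ Cacciopoli trades $|\nabla u|^2$ for $u^2$, and $u^2$ near the boundary is controlled by the nontangential maximal function hence by $\int_{\partial\Omega}(|\nabla_T u|^2+|\partial_\nu u|^2)$ via Proposition \ref{UByNablaU}; then one handles the $\int_{\partial\Omega}|\partial_\nu u|^2$ contribution by Cauchy--Schwarz, $\int|\nabla_T u||\partial_\nu u|\le\delta\int|\partial_\nu u|^2+\delta^{-1}\int|\nabla_T u|^2$, and absorbs $\delta\int_{\partial\Omega}|\partial_\nu u|^2$ using the bound $\int_{\partial\Omega}|\partial_\nu u|^2\le C\int_\Omega|\nabla u|^2$-type reverse estimate that follows from the interior Cacciopoli applied in the collar. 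Tracking constants through this bootstrap so that the final $C$ depends only on $d,\lambda,\mu,\|b\|_\infty$, the diameter and the Lipschitz character — i.e.\ is a good constant — is the delicate bookkeeping part, and it is where I would spend the most care.
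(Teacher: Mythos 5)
Your overall plan — reduce to the smooth domains $\Omega_j$ and run a Rellich-type identity with a vector field $h$ replacing the fixed direction $e_d$ — is reasonable, but there are two genuine gaps, and the second one is fatal to the approach as stated.

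First, the vector field. You need $h$ with \emph{both} $\left<h,\nu\right>\geq C>0$ on $\partial\Omega$ and $\dive h\geq c_0>0$ on $\overline\Omega$. The field of Theorem \ref{ApproximationScheme}(v) provides only the first property, and a radial field $x-x_0$ provides the second only if $\Omega$ is starshaped with respect to $x_0$, which a Lipschitz domain need not be. The suggestion to add the two does not survive scrutiny: scaling $\e h_1+(x-x_0)$ keeps the divergence positive but destroys the boundary lower bound, while $h_1+\e(x-x_0)$ keeps the boundary lower bound but gives no control on the divergence, since $\dive h_1$ has no sign. So the ``good sign'' you invoke on the left after applying the divergence theorem is not actually secured.

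Second, and more seriously, the solid term. After the Rellich identity you are left with a term of the form $C_1\int_\Omega|\nabla u|^2$ on the right with $C_1\sim\mu+\|b\|_\infty+\|Dh\|_\infty$, competing against $c_0\lambda\int_\Omega|\nabla u|^2$ on the left. There is no smallness here: $C_1$ is simply a good constant, not a small one, so direct absorption is impossible, and you acknowledge this. The alternatives you sketch do not close the gap. Trading $|\nabla u|^2$ for $u^2$ via Cacciopoli just moves the problem, and the proposed control of $\int_\Omega u^2$ ``via the solvability already in hand'' is circular at this point in the paper — Lemma \ref{SolidBound} is precisely what feeds into the global Rellich estimate and then into solvability, so you cannot use $R_2$ solvability to prove it. The Whitney-decomposition/geometric-series idea from the Green's function lemma \ref{BoundOnGreenGradient} relies on the explicit decay $|y-x|^{2-d}$ of Green's function near the pole, which has no analog here, and in the bootstrap you describe at the end the inequality $\int_{\partial\Omega}|\partial_\nu u|^2\leq C\int_\Omega|\nabla u|^2$ fed back in still leaves a not-small multiple of $\int_\Omega|\nabla u|^2$ on the right.

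The paper's proof sidesteps both obstructions entirely. It writes $u_j=u-c_j$ (so $u_j$ has mean zero on $\partial\Omega_j$), solves $-\dive(A\nabla v_j)=0$ in $\Omega_j$ with $v_j=u_j$ on $\partial\Omega_j$, and observes that $w_j=u_j-v_j\in W_0^{1,2}(\Omega_j)$ solves $Lw_j=-b\nabla v_j$; Proposition \ref{GoodBoundOnSolutions} then controls $\|\nabla w_j\|_{L^2}$ by $\|b\|_\infty\|\nabla v_j\|_{L^2}$. This reduces everything to the drift-free, symmetric operator $-\dive(A\nabla)$, for which the Rellich property (and hence $\int_{\partial\Omega_j}|\partial_{\nu_j}v_j|^2\leq C\int_{\partial\Omega_j}|\nabla_{T_j}v_j|^2$) is already available from \cite{KenigShen}, with a good constant. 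No vector field with a special divergence, and no absorption of a non-small solid term, is ever needed; the drift is handled purely as an $L^2$ right-hand side. You should adopt this reduction; your Rellich-with-vector-field computation is the right idea for the \emph{local} estimate (Proposition \ref{LocalRellich}), not for the global solid bound.
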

\begin{proof}
Consider the approximation scheme $\Omega_j\uparrow\Omega$ that appears in theorem \ref{ApproximationScheme} and fix $j$. After subtracting a constant $c_j$, we obtain that the average of $u_j=u-c_j$ over $\partial\Omega_j$ is equal to $0$. Consider the operator
\[
L_0=-\dive(A\nabla),
\]
and let $v_j$ be a solution to $L_0v_j=0$ in $\Omega_j$, with $v_j=u_j$ on $\partial\Omega_j$, and set $w_j=u_j-v_j$. We then compute
\[
-\dive(A\nabla w_j)+b\nabla w_j=-\dive(A\nabla u_j)+\dive(A\nabla v_j)+b\nabla u-b\nabla v_j=-b\nabla v_j,
\]
since $u_j$ solves the equation $Lu=0$. Note that $u_j$ is continuously differentiable in the interior of $\Omega$ (from proposition \ref{DerivativeRegularity}); hence $v_j\in W^{1,2}(\Omega_j)$ from proposition \ref{WeakSolvabilityForLipschitz}. Since $v_j=u_j$ on $\partial\Omega_j$, we obtain that $w_j\in W_0^{1,2}(\Omega_j)$, therefore proposition \ref{GoodBoundOnSolutions} shows that
\[
\|\nabla w_j\|_{L^2(\Omega_j)}\leq C\|b\nabla v_j\|_{L^2(\Omega_j)}\leq C\|b\|_{\infty}\|\nabla v_j\|_{L^2(\Omega_j)},
\]
where $C$ is a good constant, hence
\begin{equation}\label{eq:UByV}
\|\nabla u\|_{L^2(\Omega_j)}=\|\nabla u_j\|_{L^2(\Omega_j)}\leq C\|\nabla v_j\|_{L^2(\Omega_j)}+C\|\nabla w_j\|_{L^2(\Omega_j)}\leq C\|\nabla v_j\|_{L^2(\Omega_j)},
\end{equation}
where $C$ is a good constant. Since now $v_j$ solves the equation $L_0v_j=0$ in $\Omega_j$ and $v_j=u_j$ on $\partial\Omega_j$, we compute
\[
\lambda\int_{\Omega_j}|\nabla v_j|^2\leq\int_{\Omega_j}A\nabla v_j\nabla v_j=\int_{\partial\Omega_j}\partial_{\nu_j}v_j\cdot v_j\leq\int_{\partial\Omega_j}|u_j|^2\,d\sigma_j+\int_{\partial\Omega_j}|\partial_{\nu_j}v_j|^2\,d\sigma_j.
\]
Therefore, since $u_j$ has average $0$ over $\partial\Omega_j$, using Poincare's inequality on $\partial\Omega_j$ and plugging in \eqref{eq:UByV} we obtain that
\begin{equation}\label{eq:LambdaNablaV}
\int_{\Omega_j}|\nabla u|^2\leq C\int_{\Omega_j}|\nabla v_j|^2\leq C\int_{\partial\Omega_j}|\nabla_{T_j}u|^2\,d\sigma_j+C\int_{\partial\Omega_j}|\partial_{\nu_j}v_j|^2\,d\sigma_j,
\end{equation}
where $C$ depends on $\lambda$ and the Lipschitz character of $\Omega_j$.

To treat the last term, note that from \cite{KenigShen}, the Rellich property holds for the operator $-\dive(A\nabla)$ in $\Omega_j$ with a good constant $C$, since $A$ is symmetric. Therefore, if we consider the homeomorphisms $\Lambda_j:\partial\Omega\to\partial\Omega_j$ that appear in theorem \ref{ApproximationScheme}, we obtain that
\begin{align*}
\int_{\partial\Omega_j}|\partial_{\nu_j}v_j|^2\,d\sigma_j&\leq C\int_{\partial\Omega_j}|\nabla_{T_j} v_j|^2\,d\sigma_j=C\int_{\partial\Omega_j}|\nabla_{T_j}u_j|^2\,d\sigma_j=C\int_{\partial\Omega_j}|\nabla u\cdot T_j|^2\,d\sigma_j\\
&=C\int_{\partial\Omega}|\nabla u(\Lambda_j(q))\cdot T_j(\Lambda_j(q))|^2\tau_j(q)\,d\sigma(q),
\end{align*}
since $v_j=u_j$ on $\partial\Omega_j$, and we are considering the tangential derivatives on $\partial\Omega_j$. Recall now that $\Lambda_j(q)\in\Gamma(q)$ for large $j$, and for all $q\in\partial\Omega$,
\[
\Lambda_j(q)\xrightarrow[j\to\infty]{}q,\quad T_j(\Lambda_j(q))\xrightarrow[j\to\infty]{}T(q),\quad\tau_j(q)\xrightarrow[j\to\infty]{}1.
\]
Since $(\nabla u)^*\in L^2(\partial\Omega)$ and $\nabla u$ has nontangential limits almost everywhere, the dominated convergence theorem shows that
\begin{equation}\label{eq:TangentialLimit}
\int_{\partial\Omega_j}|\nabla u(\Lambda_j(q))\cdot T_j(\Lambda_j(q))|^2\tau_j(q)\,d\sigma(q)\xrightarrow[j\to\infty]{}\int_{\partial\Omega}|\nabla_Tu|^2\,d\sigma,
\end{equation}
which finally shows that
\[
\limsup_{j\to\infty}\int_{\partial\Omega_j}|\partial_{\nu_j}v_j|^2\,d\sigma_j\leq C\int_{\partial\Omega}|\nabla_Tu|^2\,d\sigma.
\]
Plugging in \eqref{eq:LambdaNablaV} and letting $j\to\infty$, we obtain that
\[
\int_{\Omega}|\nabla u|^2\leq\limsup_{j\to\infty}\int_{\Omega_j}|\nabla u|^2\leq C\limsup_{j\to\infty}\int_{\partial\Omega_j}|\nabla_{T_j}u|^2\,d\sigma_j+C\int_{\partial\Omega}|\nabla_Tu|^2\,d\sigma.
\]
We then use \eqref{eq:TangentialLimit} to complete the proof.
\end{proof}

We are now in position to show the global Rellich estimate.

\begin{prop}[Global Rellich Estimate]\label{GlobalRellich}
Let $\Omega$ be a smooth domain, $A\in M_{\lambda,\mu}^s(\Omega)$ and $b\in L^{\infty}(\Omega)$. Suppose that $u$ is a $C^1(\overline{\Omega})$ solution of $Lu=0$ in $\Omega$. Then,
\[
\int_{\partial\Omega}|\partial_{\nu}u|^2\,d\sigma\leq C\int_{\partial\Omega}|\nabla_Tu|^2\,d\sigma,
\]
where $C$ is a good constant.
\end{prop}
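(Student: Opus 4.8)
The plan is to localize using the first local Rellich estimate (Proposition~\ref{LocalRellich}), sum over a finite cover of $\partial\Omega$, and then absorb the resulting interior term using Lemma~\ref{SolidBound}.

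First I would observe that a $C^1(\overline\Omega)$ solution of $Lu=0$ automatically lies in $W^{2,2}_{\loc}(\Omega)$: since $A$ is Lipschitz and $b$ bounded, the equation reads $-\dive(A\nabla u)=-b\nabla u$ with right-hand side in $L^2_{\loc}(\Omega)$, so standard interior elliptic regularity applies. Hence $u$ meets the hypotheses of Proposition~\ref{LocalRellich}, and for every $q\in\partial\Omega$ and $r<r_\Omega$,
\[
\int_{\Delta_r(q)}|\partial_{\nu}u|^2\,d\sigma\leq C\int_{\Delta_{2r}(q)}|\nabla_Tu|^2\,d\sigma+\frac{C}{r}\int_{T_{2r}(q)}|\nabla u|^2,
\]
with $C$ a good constant (using Lemma~\ref{r_Omega} to control $r_\Omega$ above and below).

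Next I would fix $r$ as a small good multiple of $r_\Omega$ so that each $T_{2r}(q)$ is a subset of $\Omega$, cover $\partial\Omega$ by surface balls $\Delta_r(q_i)$, $i=1,\dots,N$, with $N$ and the overlap of the $T_{2r}(q_i)$ controlled by the Lipschitz character, and sum the local estimates. Using $\sum_i\chi_{\Delta_{2r}(q_i)}\le C$ on $\partial\Omega$ and $\sum_i\chi_{T_{2r}(q_i)}\le C$ on $\Omega$ yields
\[
\int_{\partial\Omega}|\partial_{\nu}u|^2\,d\sigma\leq C\int_{\partial\Omega}|\nabla_Tu|^2\,d\sigma+\frac{C}{r}\int_{\Omega}|\nabla u|^2.
\]
Finally, since $u\in C^1(\overline\Omega)$ and $\Omega$ is smooth, $(\nabla u)^*$ is bounded, hence in $L^2(\partial\Omega)$, and $\nabla_Tu$ has nontangential limits everywhere on $\partial\Omega$; thus Lemma~\ref{SolidBound} gives $\int_\Omega|\nabla u|^2\le C\int_{\partial\Omega}|\nabla_Tu|^2\,d\sigma$ with a good constant. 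Substituting this into the previous display and collecting constants (recalling $r\simeq r_\Omega$ is a good constant) finishes the proof.

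The argument is essentially bookkeeping, the real content residing in Proposition~\ref{LocalRellich} and Lemma~\ref{SolidBound}. The point requiring the most care is uniformity: one must verify that the number $N$ of cover elements, their overlap multiplicities, and the fixed ratio $r/r_\Omega$ depend only on the Lipschitz character of $\Omega$, and that smoothness of $\Omega$ enters only qualitatively (to make sense of $C^1(\overline\Omega)$ and of the boundary values of $\nabla u$), so that the constant $C$ does not deteriorate as the smooth domain is allowed to approach a merely Lipschitz one — this last point is exactly what makes the estimate usable later for general Lipschitz domains via approximation.
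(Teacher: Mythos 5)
Your proposal is correct and follows the paper's own route: apply the local Rellich estimate (Proposition~\ref{LocalRellich}), aggregate over $\partial\Omega$, and absorb the solid term $\int_\Omega|\nabla u|^2$ by Lemma~\ref{SolidBound}. The only cosmetic difference is that you sum over a finite cover of $\partial\Omega$ with bounded overlap, whereas the paper integrates the local estimate over $q\in\partial\Omega$ and invokes Fubini (with the common factor $\sigma(\Delta_{r_0}(q))\simeq r_0^{d-1}$ cancelling); these are interchangeable, and your remarks about uniformity in the Lipschitz character are exactly what makes either aggregation legitimate.
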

\begin{proof}
Fix $q\in\partial\Omega$ and set $r_0=r_{\Omega}/2$, where $r_{\Omega}$ is defined in section 1.1. Note that the local Rellich estimate (proposition \ref{LocalRellich}) is applicable, from theorem 8.12 in \cite{Gilbarg}; that is,
\[
\int_{\Delta_{r_0}(q)}\left|\partial_{\nu}u\right|^2\,d\sigma\leq C\int_{\Delta_{2r_0}(q)}|\nabla_Tu|^2\,d\sigma+
\frac{C}{r_0}\int_{T_{2r_0}(q)}|\nabla u|^2.
\]
We now integrate for $q\in\partial\Omega$ and we use Fubini's theorem to obtain that
\begin{align*}
\int_{\partial\Omega}\left|\partial_{\nu}u\right|^2\,d\sigma&\leq C\int_{\partial\Omega}|\nabla_Tu|^2\,d\sigma+
\frac{C}{r_0}\int_{\Omega}|\nabla u|^2\\
&\leq C\int_{\partial\Omega}|\nabla_Tu|^2\,d\sigma+
\frac{C}{r_0}\int_{\partial\Omega}|\nabla_Tu|^2\,d\sigma\\
&\leq C\int_{\partial\Omega}|\nabla_Tu|^2\,d\sigma,
\end{align*}
where we used lemma \ref{SolidBound} in the last step.
\end{proof}

The global Rellich estimate leads us to the next bound for the single layer potential operator on the boundary of a Lipschitz domain.

\begin{prop}\label{InverseInequalityForGoodB}
Suppose that $\Omega\in\mathcal{D}$, $A\in M_{\lambda,\mu}^s(B)$ and $b\in\Lip(B)$. Then, for every $f\in L^2(\partial\Omega)$,
\[
\|f\|_{L^2(\partial\Omega)}\leq C\|\nabla_T\mathcal{S}f\|_{L^2(\partial\Omega)},
\]
where $C$ is a good constant.
\end{prop}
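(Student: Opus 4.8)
The proof rests on combining the jump relation from corollary \ref{JumpRelation} with the global Rellich estimate from proposition \ref{GlobalRellich}, applied simultaneously on the interior approximating domains $\Omega_j\uparrow\Omega$ and the exterior approximating domains $\Omega_j'\downarrow\Omega$. The plan is as follows. Fix $f\in L^2(\partial\Omega)$; by density (the single layer potential is continuous in $f$, by lemma \ref{SBoundedness}), it suffices to treat $f$ which are restrictions of Lipschitz functions on $\overline{B}$ vanishing on $\partial B$ (extend $f$ off $\partial\Omega$ and cut off near $\partial B$). Then $\mathcal{S}_+f$ is a $C^1(\overline{\Omega_j})$ solution of $Lu=0$ in each smooth domain $\Omega_j$, by proposition \ref{SingleLayerInequalities} together with corollary \ref{GreenContinuity}, so the global Rellich estimate gives
\[
\int_{\partial\Omega_j}|\partial_{\nu_j}\mathcal{S}_+f|^2\,d\sigma_j\leq C\int_{\partial\Omega_j}|\nabla_{T_j}\mathcal{S}_+f|^2\,d\sigma_j,
\]
with $C$ a good constant \emph{uniform in $j$} (since $\|\nabla\phi_j\|_\infty\leq\|\nabla\phi\|_\infty$, all $\Omega_j$ share a Lipschitz character). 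The same holds for $\mathcal{S}_-f$ on each $\Omega_j'$.

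Next I would take $F=H=f$ in the jump relation (corollary \ref{JumpRelation}):
\[
\int_{\partial\Omega_j}\partial_{\nu_j}\mathcal{S}_+f\cdot f\,d\sigma_j-\int_{\partial\Omega_j'}\partial_{\nu_j'}\mathcal{S}_-f\cdot f\,d\sigma_j'\xrightarrow[j\to\infty]{}\int_{\partial\Omega}f^2\,d\sigma.
\]
By Cauchy--Schwarz, the left side is bounded in absolute value by
\[
\Big(\int_{\partial\Omega_j}|\partial_{\nu_j}\mathcal{S}_+f|^2d\sigma_j\Big)^{1/2}\Big(\int_{\partial\Omega_j}f^2d\sigma_j\Big)^{1/2}
+\Big(\int_{\partial\Omega_j'}|\partial_{\nu_j'}\mathcal{S}_-f|^2d\sigma_j'\Big)^{1/2}\Big(\int_{\partial\Omega_j'}f^2d\sigma_j'\Big)^{1/2}.
\]
The $\partial_\nu$-integrals are controlled by the tangential integrals via the uniform Rellich estimates above; those in turn are controlled by $\|\nabla_T\mathcal{S}f\|_{L^2(\partial\Omega)}^2$ in the limit, using corollary \ref{LimsupEstimateOnNorm} (and its exterior analogue). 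Finally, $\int_{\partial\Omega_j}f^2\,d\sigma_j\to\int_{\partial\Omega}f^2\,d\sigma$ because $f$ is Lipschitz on $\overline B$ and $\tau_j\to1$, $\Lambda_j(q)\to q$; similarly for $\Omega_j'$. Passing to the limit and writing $\|f\|_{L^2(\partial\Omega)}=a$, $\|\nabla_T\mathcal{S}f\|_{L^2(\partial\Omega)}=b$, one obtains $a^2\leq C\,b\,a$, hence $a\leq C b$, which is the claim; the density argument then removes the Lipschitz restriction.

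The main obstacle is the careful bookkeeping of the approximation schemes and making sure all the convergences hold with the right uniformity: one must check that $\mathcal{S}_+f$ restricted to $\overline{\Omega_j}$ is genuinely $C^1$ up to the boundary (so proposition \ref{GlobalRellich} applies — here one uses that $\Omega_j$ is smooth, $b$ is Lipschitz on $B$, and the interior $C^1$-regularity of solutions, with $\mathcal{S}f$ itself Lipschitz on $\partial\Omega_j$ so that it serves as admissible boundary data), that the Rellich constant is uniform in $j$ (depends only on the common Lipschitz character, $\lambda$, $\mu$, $\|b\|_\infty$, $\diam(\Omega)$), and that corollary \ref{LimsupEstimateOnNorm} is applicable to bound $\limsup_j\int_{\partial\Omega_j}|\nabla_{T_j}\mathcal{S}_+f|^2$ and its exterior counterpart. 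A secondary point is that the exterior domains $\Omega_j'=B\setminus U_j$ decrease to $\Omega$, so one needs the analogue of corollary \ref{LimsupEstimateOnNorm} for $\mathcal{S}_-f$ stated there, together with the fact that $\nabla_T\mathcal{S}f$ is the \emph{same} boundary object seen from inside and outside (both equal $\lim_{\e\to0}\int_{|p-q|>\e}\nabla_T^pG(p,q)f(q)\,d\sigma(q)$, by lemma \ref{SBoundedness} and proposition \ref{DerivativeIsNonTangentiallyContinuous}). Once these ingredients are in place the estimate $a^2\le Cab$ is immediate.
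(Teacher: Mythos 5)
Your argument is correct and coincides with the paper's proof: extend $f$ to a Lipschitz $F$ on $\overline{B}$ vanishing on $\partial B$, take $H=F$ in the jump relation (corollary \ref{JumpRelation}), apply Cauchy--Schwarz, control the conormal derivatives on $\partial\Omega_j$ and $\partial\Omega_j'$ by the global Rellich estimate (proposition \ref{GlobalRellich}) with constants uniform in $j$, pass to the limit via corollary \ref{LimsupEstimateOnNorm} and its exterior analogue, cancel $\|f\|_{L^2(\partial\Omega)}$, and close by density using lemma \ref{SBoundedness}. The only point you make more explicit than the paper is the uniformity in $j$ of the Rellich constant (via $\|\nabla\phi_j\|_\infty\le\|\nabla\phi\|_\infty$), which is indeed needed and correctly justified.
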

\begin{proof}
Suppose first that $f$ is Lipschitz, and consider a Lipschitz extension $F:\overline{B}\to\mathbb R$ of $f$, which vanishes on $\partial B$. Set $u_+=\mathcal{S}_+f$, and $u_-=\mathcal{S}_-f$, then the jump relation (corollary \ref{JumpRelation}) with $H=F$ shows that
\[
\int_{\partial\Omega_j}\partial_{\nu_j}u_+\cdot F\,d\sigma_j-\int_{\partial\Omega_j'}\partial_{\nu_j'}u_-\cdot F\,d\sigma_j'\xrightarrow[j\to\infty]{}\int_{\partial\Omega}F^2\,d\sigma.
\]
Since now $F$ is continuous in $\Omega$, the Cauchy-Schwartz inequality shows that
\begin{align*}
\|F\|_{L^2(\partial\Omega)}^2&\leq\limsup_{j\to\infty}\left(\|\partial_{\nu_j}u_+\|_{L^2(\partial\Omega_j)}\|F\|_{L^2(\partial\Omega_j)}+\|\partial_{\nu_j'}u_-\|_{L^2(\partial\Omega_j')}\|F\|_{L^2(\partial\Omega_j')}\right)\\
&=\|F\|_{L^2(\partial\Omega)}\limsup_{j\to\infty}\left(\|\partial_{\nu_j}u_+\|_{L^2(\partial\Omega_j)}+\|\partial_{\nu_j'}u_-\|_{L^2(\partial\Omega_j')}\right),
\end{align*}
therefore we obtain 
\begin{equation}\label{eq:Sum}
\|F\|_{L^2(\partial\Omega)}^2\leq 2\limsup_{j\to\infty}\left(\|\partial_{\nu_j}u_+\|_{L^2(\partial\Omega_j)}^2+\|\partial_{\nu_j'}u_-\|_{L^2(\partial\Omega_j')}^2\right).
\end{equation}
From proposition \ref{SingleLayerInequalities} and \ref{DerivativeRegularity}, $u_+$ is a $C^1$ solution in $\overline{\Omega_j}$. Note also that, from the global Rellich estimate (proposition \ref{GlobalRellich}), we obtain that
\[
\|\partial_{\nu_j}u_+\|_{L^2(\partial\Omega_j)}^2\leq C\|\nabla_Tu_+\|_{L^2(\partial\Omega_j)}^2,
\]
where $C_j$ is a good constant for $\Omega_j$. We now apply corollary \ref{LimsupEstimateOnNorm} to obtain that
\[
\limsup_{j\to\infty}\|\partial_{\nu_j}u_+\|_{L^2(\partial\Omega_j)}^2\leq C\|\nabla_T\mathcal{S}f\|_{L^2(\partial\Omega)}^2.
\]
A similar process shows that
\[
\limsup_{j\to\infty}\|\partial_{\nu_j'}u_-\|_{L^2(\partial\Omega_j')}^2\leq C\|\nabla_T\mathcal{S}f\|_{L^2(\partial\Omega)}^2.
\]
Adding those inequalities and plugging in \eqref{eq:Sum}, we finally obtain that
\[
\|f\|_{L^2(\partial\Omega)}^2\leq 2C\|\nabla_T\mathcal{S}f\|_{L^2(\partial\Omega)}^2,
\]
which shows the desired inequality for Lipschitz functions $f:\partial\Omega\to\mathbb R$.

To obtain the estimate for $f\in L^2(\partial\Omega)$, we use the fact that $\Lip(\partial\Omega)$ is dense in $L^2(\partial\Omega)$ and $\mathcal{S}:L^2(\partial\Omega)\to W^{1,2}(\partial\Omega)$ is continuous (from lemma \ref{SBoundedness}) to conclude the proof.
\end{proof}

We now pass to bounded drifts.

\begin{prop}\label{InverseInequality}
Suppose that $\Omega\in\mathcal{D}$, $A\in M_{\lambda,\mu}^s(B)$ and $b\in L^{\infty}(B)$. Then, for every $f\in L^2(\partial\Omega)$,
\[
\|f\|_{L^2(\partial\Omega)}\leq C\|\nabla_T\mathcal{S}f\|_{L^2(\partial\Omega)},
\]
where $C$ is a good constant.
\end{prop}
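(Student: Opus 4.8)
The plan is to deduce Proposition~\ref{InverseInequality} from the already-established Lipschitz-drift case (Proposition~\ref{InverseInequalityForGoodB}) by a mollification argument, using the continuity estimates of Section~5.7 to control how the single layer potential depends on the drift. First I would fix $b\in L^\infty(B)$ and choose a mollification $(b_n)$ with $b_n\in\Lip(B)$, $\|b_n\|_\infty\le\|b\|_\infty$, and $b_n\to b$ in $L^{2d}(B)$; this is possible since $B$ is bounded, so $L^\infty(B)\subseteq L^{2d}(B)$ and standard mollifiers converge in $L^{2d}$. Denote by $\mathcal S$ the single layer potential for $L_{A,b}$ and by $\mathcal S_n$ the single layer potential for $L_{A,b_n}$, all associated to Green's functions on the fixed ball $B$. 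For each $n$, Proposition~\ref{InverseInequalityForGoodB} applies (since $A\in M^s_{\lambda,\mu}(B)$ and $b_n\in\Lip(B)$), giving
\[
\|f\|_{L^2(\partial\Omega)}\le C\|\nabla_T\mathcal S_nf\|_{L^2(\partial\Omega)},
\]
where $C$ is a good constant \emph{independent of $n$}, because the good constant in Proposition~\ref{InverseInequalityForGoodB} depends only on $d,\lambda,\mu,\|b_n\|_\infty\le\|b\|_\infty$, $\diam(\Omega)$ and the Lipschitz character of $\Omega$ — not on the Lipschitz norm of $b_n$.

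The key step is then to pass to the limit $n\to\infty$ in the right-hand side, i.e. to show $\nabla_T\mathcal S_nf\to\nabla_T\mathcal Sf$ in $L^2(\partial\Omega)$ (or at least that $\|\nabla_T\mathcal Sf\|_{L^2(\partial\Omega)}\ge\limsup_n\|\nabla_T\mathcal S_nf\|_{L^2(\partial\Omega)}$ up to an error that vanishes). By Lemma~\ref{SBoundedness}, in each coordinate cylinder the tangential gradient of $\mathcal S_nf$ minus that of $\mathcal Sf$ is represented, in the principal-value sense, by the singular integral with kernel $\nabla_T^p\bigl(G_{b_n}(p,q)-G_b(p,q)\bigr)$. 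Here I would invoke Proposition~\ref{ContinuityArgumentForB}: for $x\ne y$ in $B$,
\[
|\nabla_xG_{b_n}(x,y)-\nabla_xG_b(x,y)|\le C\|b_n-b\|_{L^{2d}(B)}\,|x-y|^{3/2-d},
\]
with $C$ a good constant uniform in $n$. Since the kernel $|p-q|^{3/2-d}$ is integrable on $\partial\Omega$ (as $3/2-d>1-d$, i.e. $3/2-d+(d-1)=1/2>0$, so it is locally integrable with respect to the $(d-1)$-dimensional surface measure), a Schur-test / Young-type estimate on $\partial\Omega$ gives
\[
\|\nabla_T\mathcal S_nf-\nabla_T\mathcal Sf\|_{L^2(\partial\Omega)}\le C\|b_n-b\|_{L^{2d}(B)}\,\|f\|_{L^2(\partial\Omega)}\xrightarrow[n\to\infty]{}0.
\]
Care is needed at the principal-value level: the difference kernel $\nabla_p(G_{b_n}-G_b)$ is only weakly singular (exponent $3/2-d$ rather than $1-d$), so the limit integral exists as an absolutely convergent integral and no cancellation is required; the passage from the truncated integrals to the limit is then routine by dominated convergence, exactly as in the proof of Lemma~\ref{SBoundedness}.

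Combining the two displays, for every $f\in L^2(\partial\Omega)$ and every $n$,
\[
\|f\|_{L^2(\partial\Omega)}\le C\|\nabla_T\mathcal S_nf\|_{L^2(\partial\Omega)}\le C\|\nabla_T\mathcal Sf\|_{L^2(\partial\Omega)}+C\|b_n-b\|_{L^{2d}(B)}\|f\|_{L^2(\partial\Omega)},
\]
and letting $n\to\infty$ absorbs the last term, yielding $\|f\|_{L^2(\partial\Omega)}\le C\|\nabla_T\mathcal Sf\|_{L^2(\partial\Omega)}$ with a good constant. I expect the main obstacle to be the bookkeeping needed to justify that the principal-value formula for $\nabla_T\mathcal S_nf$ from Lemma~\ref{SBoundedness} can be subtracted cleanly from that for $\nabla_T\mathcal Sf$ and that the resulting difference is genuinely given by the weakly singular integral with the $|p-q|^{3/2-d}$ bound — in particular checking that the off-diagonal truncation errors (the $\partial(B_\e(q_0))$ boundary terms in that proof) vanish uniformly in $n$, which they do because the bound in Proposition~\ref{ContinuityArgumentForB} is uniform in $n$. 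Everything else is a direct quotation of results already proved in the excerpt.
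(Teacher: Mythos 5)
Your proposal follows the same route as the paper: mollify $b$ to Lipschitz drifts $b_n$, invoke Proposition~\ref{InverseInequalityForGoodB} with a constant uniform in $n$, use Proposition~\ref{ContinuityArgumentForB} together with the integrability of $|p-q|^{3/2-d}$ on $\partial\Omega$ to get $\nabla_T\mathcal S_nf\to\nabla_T\mathcal Sf$ in $L^2(\partial\Omega)$, and pass to the limit. Your extra remarks on the weakly singular difference kernel and the truncation errors are a correct amplification of what the paper leaves implicit, but the argument is the same.
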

\begin{proof}
Let $(b_n)$ be a mollification of $b$, and let $\mathcal{S}_n$ be the single layer potential operator for the equation
\[
-\dive(A\nabla u)+b_n\nabla u=0
\]
in $\Omega$. Let also $G_n$ be Green's function for the same equation in $B$. Then, for any fixed $n\in\mathbb N$ and $p\in\partial\Omega$,
\begin{align*}
\left|\nabla_T\mathcal{S}_nf(p)-\nabla_T\mathcal{S}f(p)\right|&\leq\int_{\partial\Omega}|\nabla_pG_n(p,q)-\nabla_pG(p,q)||f(q)|\,d\sigma(q)\\
&\leq C\|b_n-b\|_{L^{2d}(B)}\int_{\partial\Omega}|p-q|^{3/2-d}|f(q)|\,d\sigma(q),
\end{align*}
from proposition \ref{ContinuityArgumentForB}. Since the kernel $|p-q|^{3/2-d}$ is integrable on $\partial\Omega$ and
\[
\|b_n-b\|_{L^{2d}(B)}\xrightarrow[n\to\infty]{} 0,
\]
we obtain that
\[
\nabla_T\mathcal{S}_nf\xrightarrow[n\to\infty]{L^2(\partial\Omega)}\nabla_T\mathcal{S}f.
\]
Since $b_n\in\Lip(B)$, we apply proposition \ref{InverseInequalityForGoodB} to obtain that
\[
\|f\|_{L^2(\partial\Omega)}\leq C\|\nabla_T\mathcal{S}_nf\|_{L^2(\partial\Omega)},
\]
where $C$ is a good constant. We then let $n\to\infty$ to conclude the proof.
\end{proof}

The last proposition, together with the continuity method, show invertibility of the single layer potential on the boundary for symmetric matrices $A$.

\begin{thm}\label{InvertibilityOfS}
Let $\Omega$ be a Lipschitz domain $\Omega$, $A\in M_{\lambda,\mu}^s(\Omega)$ and $b\in L^{\infty}(\Omega)$. Then, the operator $\mathcal{S}:L^2(\partial\Omega)\to W^{1,2}(\partial\Omega)$ is invertible, with
\[
\|\mathcal{S}^{-1}f\|_{L^2(\partial\Omega)}\leq C\|\nabla_Tf\|_{L^2(\partial\Omega)},
\]
and $C$ being a good constant.
\end{thm}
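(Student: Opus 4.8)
The plan is to deduce Theorem~\ref{InvertibilityOfS} from the estimate in Proposition~\ref{InverseInequality} by a standard continuity argument along the family of operators $\mathcal{S}^{(t)}$ associated with the matrices $A_t=(1-t)\lambda I+tA$ for $t\in[0,1]$, and by reducing the general Lipschitz domain to a domain in $\mathcal{D}$ via dilation and translation. First I would observe that scaling reduces to $\Omega\in\mathcal{D}$: if $\Phi(x)=\rho(x-x_0)$ maps $\Omega$ to a domain of diameter less than $1/40$ containing $0$, then $\mathcal S$ for $(A,b)$ on $\Omega$ is conjugate (up to explicit powers of $\rho$ in the surface measures and in the $W^{1,2}(\partial\Omega)$ versus $L^2(\partial\Omega)$ norms) to the single layer potential for the pulled-back coefficients $\tilde A(y)=A(\Phi^{-1}y)$, $\tilde b(y)=\rho^{-1}b(\Phi^{-1}y)$, which still lie in $M^s_{\lambda,\mu'}$ and $L^\infty$ with controlled constants; the Lipschitz character is unchanged. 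So it suffices to treat $\Omega\in\mathcal{D}$.

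For $\Omega\in\mathcal{D}$, I would set up the continuity method. For each $t\in[0,1]$ we have $A_t\in M^s_{\lambda,\mu}(B)$ (the ellipticity constant is still $\lambda$ since $A_t$ is a convex combination of two $\lambda$-elliptic symmetric matrices, and the Lipschitz norm is bounded by $\mu$), so the operator $\mathcal{S}^{(t)}:L^2(\partial\Omega)\to W^{1,2}(\partial\Omega)$ is well-defined and bounded with norm a good constant, by Lemma~\ref{SBoundedness}. By Proposition~\ref{InverseInequality} applied to $A_t$, we get the uniform lower bound
\[
\|f\|_{L^2(\partial\Omega)}\le C\|\nabla_T\mathcal{S}^{(t)}f\|_{L^2(\partial\Omega)}\le C\|\mathcal{S}^{(t)}f\|_{W^{1,2}(\partial\Omega)},
\]
with $C$ a good constant independent of $t$. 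This shows each $\mathcal{S}^{(t)}$ is injective with closed range. Next I would show $t\mapsto\mathcal{S}^{(t)}$ is continuous in operator norm from $L^2(\partial\Omega)$ to $W^{1,2}(\partial\Omega)$: this follows because $\|\mathcal{S}^{(t)}f-\mathcal{S}^{(s)}f\|_{W^{1,2}(\partial\Omega)}$ is controlled, via the formula for $\nabla_T\mathcal S$ in Lemma~\ref{SBoundedness} and the kernel $G(p,q)$ itself, by $\int_{\partial\Omega}(|p-q|^{3/2-d}+1)|f(q)|\,d\sigma(q)$ times $\|A_t-A_s\|_{L^{2d}(B)}=|t-s|\,\|A-\lambda I\|_{L^{2d}(B)}$, using the difference estimates of Proposition~\ref{ContinuityArgumentForB} (with the roles of $b$ there replaced by the variation in $A$; more carefully, one runs the same integration-by-parts identity \eqref{eq:Diff} with $A_t,A_s$ in place of $b_1,b_2$, which is exactly the argument behind Proposition~\ref{ContinuityArgumentForB}) and the fact that $|p-q|^{3/2-d}+1$ is integrable on $\partial\Omega$. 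With these two ingredients, the standard continuity method applies: the set of $t$ for which $\mathcal{S}^{(t)}$ is invertible (onto $W^{1,2}(\partial\Omega)$) is open (by the uniform lower bound plus norm continuity, a perturbation of an invertible operator by a small-norm operator stays invertible, with uniformly bounded inverse) and closed in $[0,1]$, hence equals $[0,1]$ once we know it is nonempty.

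For the base point $t=0$ the operator $\mathcal{S}^{(0)}$ is the single layer potential for $-\dive(\lambda\nabla u)+b\nabla u=0$; this still has a drift, so it is not literally the classical case. To get a genuine anchor I would instead run the continuity argument in two stages, or equivalently along the two-parameter family $A_t=(1-t)\lambda I+tA$ together with $b_s=sb$, $s\in[0,1]$: the corner $(t,s)=(0,0)$ gives $-\dive(\lambda\nabla u)=0$, whose single layer potential is invertible $L^2(\partial\Omega)\to W^{1,2}(\partial\Omega)$ by the classical theory of \cite{Verchota} (and \cite{KenigShen} for variable symmetric $A$), which is already invoked throughout this section. The continuity-in-$s$ step uses Proposition~\ref{InverseInequality} with fixed $A_t$ and the difference estimate of Proposition~\ref{ContinuityArgumentForB} literally (variation of $b$), and the continuity-in-$t$ step is as above. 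Composing, $\mathcal{S}^{(1)}=\mathcal{S}$ for $(A,b)$ is invertible, and the inverse bound $\|\mathcal{S}^{-1}f\|_{L^2(\partial\Omega)}\le C\|\nabla_Tf\|_{L^2(\partial\Omega)}$ is exactly the uniform lower bound of Proposition~\ref{InverseInequality} read backwards (note the right-hand side involves only $\nabla_Tf$, consistent with constants being annihilated). Undoing the dilation restores the general Lipschitz domain, with $C$ a good constant.

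The main obstacle I anticipate is bookkeeping rather than conceptual: making precise that the uniform lower bound of Proposition~\ref{InverseInequality} holds with one good constant for the \emph{whole} family $\{A_t\}$ (which is immediate since $A_t$ ranges over $M^s_{\lambda,\mu}(B)$ with fixed $\lambda,\mu$), and verifying the operator-norm continuity $t\mapsto\mathcal{S}^{(t)}$ into $W^{1,2}(\partial\Omega)$ — the latter requires knowing that $\nabla_T\mathcal{S}^{(t)}f-\nabla_T\mathcal{S}^{(s)}f$ equals $\int_{\partial\Omega}\nabla_T^p(G_t(p,q)-G_s(p,q))f(q)\,d\sigma(q)$ and then applying an $A$-variation analogue of Proposition~\ref{ContinuityArgumentForB}. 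One should also be slightly careful that Proposition~\ref{InverseInequality} is stated for $\Omega\in\mathcal D$ and $A\in M^s_{\lambda,\mu}(B)$, $b\in L^\infty(B)$, which is exactly the hypothesis available at each $t$, so no gap arises there.
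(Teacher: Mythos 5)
Your approach shares the paper's overall structure (reduce to $\Omega\in\mathcal{D}$, use Proposition~\ref{InverseInequality} for the uniform lower bound, run a continuity method), but the path you choose introduces a genuine gap at the $A$-variation step. The paper varies \emph{only} $b$: it takes the family $L_tu=-\dive(A\nabla u)+tb\nabla u$ for $t\in[0,1]$, keeping the full variable symmetric Lipschitz matrix $A$ fixed throughout, and anchors at $t=0$ by citing remark~6.9 in \cite{KenigShen} for invertibility of the drift-free single layer potential with general $A\in M^s_{\lambda,\mu}$. This way the only kernel comparison ever needed is the one already proven in Proposition~\ref{ContinuityArgumentForB}, which gives an absolutely integrable difference kernel $|p-q|^{3/2-d}$ on $\partial\Omega$.

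Your proposal to additionally deform $A_t=(1-t)\lambda I+tA$ is where the problem lies. The ``$A$-variation analogue of Proposition~\ref{ContinuityArgumentForB}'' that you gesture at is not a routine modification. The integration-by-parts identity \eqref{eq:Diff} for varying $b$ reads
\[
G_{b_1}(x,y)-G_{b_2}(x,y)=\int_B(b_1-b_2)\,\nabla_zG_{b_1}(z,y)\,G^t_{b_2}(z,x)\,dz,
\]
so the integrand carries one gradient factor $|z-y|^{1-d}$ and one undifferentiated Green's function $|z-x|^{2-d}$, which (via Lemma~\ref{ForContinuity} with $p_1=1,p_2=2$) yields the stated $|x-y|^{5/2-d}$ bound. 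The corresponding identity for varying $A$ produces the kernel $(A_1-A_2)\nabla_zG_{A_1}(z,y)\nabla_zG^t_{A_2}(z,x)$, i.e.\ \emph{two} gradient factors $|z-y|^{1-d}|z-x|^{1-d}$. Running Lemma~\ref{ForContinuity} with $p_1=p_2=1$ gives at best $|G_{A_1}(x,y)-G_{A_2}(x,y)|\lesssim\|A_1-A_2\|_{L^{2d}}|x-y|^{3/2-d}$, and differentiating once more in $x$ pushes the gradient-difference bound to $|x-y|^{1/2-d}$. Since $1/2-d<1-d$, the kernel $|p-q|^{1/2-d}$ is \emph{not} integrable on the $(d-1)$-dimensional surface $\partial\Omega$, so the Schur-type argument used in the paper (and implicit in your sketch) fails to show operator-norm continuity of $t\mapsto\mathcal{S}^{(t)}$ into $W^{1,2}(\partial\Omega)$. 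One would need either a sharper variation estimate or genuine Calder\'on--Zygmund machinery to close this, neither of which is available from the lemmas you cite. The fix is simply to do what the paper does: do not deform $A$ at all, use \cite{KenigShen} for the base point $\mathcal{S}_0$ with the actual $A$ and no drift, and only continuously turn on $b$.
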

\begin{proof}
After a dilation and a translation, we can assume that $\Omega\in\mathcal{D}$.

We will use the continuity method: consider the family of equations
\[
L_tu=-\dive(A\nabla u)+tb\nabla u=0,
\]
for $t\in[0,1]$, and the family of operators $\mathcal{S}_t:L^2(\partial\Omega)\to W^{1,2}(\partial\Omega)$, with
\[
\mathcal{S}_tf(p)=\int_{\partial\Omega}G_t(p,q)f(q)\,d\sigma(q),
\]
where $G_t$ is Green's function for the equation $L_tu=0$ in a large ball containing $\Omega$. We now show that the map $t\mapsto \mathcal{S}_t$, is continuous in the $W^{1,2}(\partial\Omega)$ norm, by showing that, for any $t_1,t_2\in [0,1]$ and any $f\in L^2(\partial\Omega)$,
\[
\|\mathcal{S}_{t_1}f-\mathcal{S}_{t_2}f\|_{W^{1,2}(\partial\Omega)}\leq C|t_1-t_2|\|f\|_{L^2(\partial\Omega)}.
\]
For this purpose, we first use the estimate in proposition \ref{ContinuityArgumentForB}, to obtain that, for $p\in\partial\Omega$,
\begin{align*}
|\mathcal{S}_{t_1}f(p)-\mathcal{S}_{t_2}f(p)|&=\left|\int_{\partial\Omega}(G_{t_1}(p,q)-G_{t_2}(p,q))f(q)\,d\sigma(q)\right|\\
&\leq C|t_1-t_2|\int_{\partial\Omega}|p-q|^{\frac{5}{2}-d}|f(q)|\,d\sigma(q)\\
&\leq C|t_1-t_2|\left(\int_{\partial\Omega}|p-q|^{\frac{5}{2}-d}|f(q)|^2\,d\sigma(q)\right)^\frac{1}{2}\left(\int_{\partial\Omega}|p-q|^{\frac{5}{2}-d}\,d\sigma(q)\right)^\frac{1}{2}\\
&\leq C|t_1-t_2|\left(\int_{\partial\Omega}|p-q|^{\frac{5}{2}-d}|f(q)|^2\,d\sigma(q)\right)^{1/2},
\end{align*}
since $|p-q|^{5/2-d}$ is integrable on $\partial\Omega$. So, after squaring and integrating for $p\in\partial\Omega$, we obtain that
\[
\|\mathcal{S}_{t_1}f-\mathcal{S}_{t_2}f\|_{L^2(\partial\Omega)}\leq C|t_1-t_2|\|f\|_{L^2(\partial\Omega)}.
\]
For the tangential gradient of the difference $\mathcal{S}_{t_1}f-\mathcal{S}_{t_2}f$, we apply the second estimate in proposition \ref{ContinuityArgumentForB} and a procedure similar to above, to obtain that
\[
|\nabla_T\mathcal{S}_{t_1}f(p)-\nabla_T\mathcal{S}_{t_2}f(p)|\leq C|t_1-t_2|\left(\int_{\partial\Omega}|p-q|^{3/2-d}|f(q)|^2\,d\sigma(q)\right)^{1/2},
\]
which shows continuity of $t\mapsto \mathcal{S}_t$. Note now that the operator $\mathcal{S}_0$ in invertible, from remark $6.9$ in \cite{KenigShen}. The continuity method now shows that $\mathcal{S}:L^2(\partial\Omega)\to W^{1,2}(\partial\Omega)$ is invertible. We then use the estimate in proposition \ref{InverseInequality} to bound the norm of the inverse, which completes the proof.
\end{proof}

We are now led to solvability of the $R_2$ Regularity problem in Lipschitz domains. 

\begin{thm}\label{R2Solvability}
Let $\Omega$ be a Lipschitz domain, $A\in M_{\lambda,\mu}^s(\Omega)$, and $b\in L^{\infty}(\Omega)$. Then the Regularity problem $R_2$ is uniquely solvable in $\Omega$, with constants depending on $d,\lambda,\mu,\|b\|_{\infty}$, the Lipschitz character of $\Omega$ and the diameter of $\Omega$. Moreover, the solution admits the representation
\[
u(x)=\mathcal{S}_+(\mathcal{S}^{-1}f)(x)=\int_{\partial\Omega}G(x,q)\mathcal{S}^{-1}f(q)\,d\sigma(q).
\]
\end{thm}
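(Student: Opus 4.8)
The plan is to assemble solvability of $R_2$ from the pieces already established: invertibility of $\mathcal{S}$ (theorem \ref{InvertibilityOfS}), the mapping properties of $\mathcal{S}_+$ (proposition \ref{SingleLayerInequalities}), and uniqueness (proposition \ref{UniquenessForRegularity}). After a dilation and translation, we may assume $\Omega\in\mathcal{D}$, so that all the layer-potential machinery applies. Given $f\in W^{1,2}(\partial\Omega)$, the candidate solution is $u=\mathcal{S}_+(\mathcal{S}^{-1}f)$; we must verify it solves $Lu=0$, attains $f$ nontangentially, and satisfies the required maximal function bound, and then argue uniqueness.

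First I would set $g=\mathcal{S}^{-1}f\in L^2(\partial\Omega)$, which exists by theorem \ref{InvertibilityOfS} and satisfies $\|g\|_{L^2(\partial\Omega)}\leq C\|\nabla_Tf\|_{L^2(\partial\Omega)}$. By proposition \ref{SingleLayerInequalities}, $\mathcal{S}_+g\in W^{1,2}_{\loc}(\Omega)$ is a weak solution of $Lu=0$ in $\Omega$, its nontangential boundary values equal $\mathcal{S}g$ almost everywhere, and $\|(\nabla\mathcal{S}_+g)^*\|_{L^2(\partial\Omega)}\leq C\|g\|_{L^2(\partial\Omega)}$. But $\mathcal{S}g=\mathcal{S}(\mathcal{S}^{-1}f)=f$ as an element of $W^{1,2}(\partial\Omega)$, so $u$ has nontangential boundary values $f$ almost everywhere on $\partial\Omega$. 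Combining the two displayed estimates,
\[
\|(\nabla u)^*\|_{L^2(\partial\Omega)}\leq C\|g\|_{L^2(\partial\Omega)}\leq C\|\nabla_Tf\|_{L^2(\partial\Omega)}=C\|\nabla f\|_{L^2(\partial\Omega)},
\]
with $C$ a good constant. This establishes existence together with the quantitative bound, and the representation formula is immediate from the definition of $\mathcal{S}_+$.

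For uniqueness, suppose $u_1,u_2$ are two solutions with data $f$ satisfying $\|(\nabla u_i)^*\|_{L^2(\partial\Omega)}<\infty$ and $u_i\to f$ nontangentially almost everywhere. Then $w=u_1-u_2\in W^{1,2}_{\loc}(\Omega)$ solves $Lw=0$, has $(\nabla w)^*\in L^2(\partial\Omega)\subseteq L^1(\partial\Omega)$ (since $\partial\Omega$ has finite measure), and $w\to 0$ nontangentially almost everywhere. Proposition \ref{UniquenessForRegularity} then gives $w\equiv 0$, so $u_1=u_2$. I do not anticipate a serious obstacle here: the hard analytic work — the singular integral bounds, the jump relations, the global Rellich estimate, and the resulting invertibility of $\mathcal{S}$ — has all been done in the preceding sections, and the remaining task is genuinely just bookkeeping to chain the implications together. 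The one point requiring a little care is making sure the identity $\mathcal{S}(\mathcal{S}^{-1}f)=f$ is interpreted correctly in $W^{1,2}(\partial\Omega)$ (so that the tangential gradients match and not merely the $L^2$ traces), which is exactly what theorem \ref{InvertibilityOfS} provides since $\mathcal{S}$ is invertible as a map into $W^{1,2}(\partial\Omega)$.
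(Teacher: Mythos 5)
Your proposal is correct and follows essentially the same path as the paper's own proof: invert $\mathcal{S}$ via theorem \ref{InvertibilityOfS} to get $g=\mathcal{S}^{-1}f$, take $u=\mathcal{S}_+g$, read off from proposition \ref{SingleLayerInequalities} that $u$ solves $Lu=0$ with boundary values $\mathcal{S}g=f$ and $\|(\nabla u)^*\|_{L^2}\leq C\|g\|_{L^2}$, and invoke proposition \ref{UniquenessForRegularity} for uniqueness. If anything, your write-up is slightly more careful than the paper's: you use the sharper bound $\|\mathcal{S}^{-1}f\|_{L^2(\partial\Omega)}\leq C\|\nabla_Tf\|_{L^2(\partial\Omega)}$ from theorem \ref{InvertibilityOfS} (which is exactly what the definition of $R_2$ asks for, with only $\nabla f$ on the right), whereas the paper's proof loosely writes $\|f\|_{W^{1,2}}$, and you spell out the uniqueness argument for $w=u_1-u_2$ rather than just citing the proposition.
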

\begin{proof}
Let $f\in W^{1,2}(\Omega)$. From theorem \ref{InvertibilityOfS}, the operator $\mathcal{S}:L^2(\partial\Omega)\to W^{1,2}(\partial\Omega)$ is invertible, therefore we can consider the function $g=\mathcal{S}^{-1}f\in L^2(\partial\Omega)$. Set also $u=\mathcal{S}_+(\mathcal{S}^{-1}f)$. From proposition \ref{SingleLayerInequalities}, $u$ solves the Dirichlet problem in $\Omega$, with data $\mathcal{S}(\mathcal{S}^{-1}f)=f$. Moreover, from the same proposition,
\[
\|(\nabla u)^*\|_{L^2(\Omega)}=\|(\nabla\mathcal{S}_+(\mathcal{S}^{-1}f))^*\|_{L^2(\Omega)}\leq C\|\mathcal{S}^{-1}f\|_{L^2(\Omega)}\leq C\|f\|_{W^{1,2}(\Omega)},
\]
where $C$ is a good constant, from theorem \ref{InvertibilityOfS}. This shows that $\mathcal{S}_+(\mathcal{S}^{-1}f)$ solves the Regularity problem with data $f$ on $\partial\Omega$. Uniqueness now follows from proposition \ref{UniquenessForRegularity}.
\end{proof}

We will be able to drop the symmetry assumption on $A$ later, in theorem \ref{RegularityForNonSymmetric}.

\section{The Dirichlet problem for $L^{\lowercase{t}}$}
This chapter will focus on solvability of the Dirichlet problem for the equation $L^tu=0$. We will rely on the results of the previous chapter, and we will use the adjoint of the single layer potential operator in order to establish existence.

\subsection{Uniqueness}
In order to show uniqueness, we will need a nontangential maximal bound on the derivatives of Green's function. To show this, we first show the next lemma.

\begin{lemma}\label{ToMaximalBoundForG}
Let $\Omega$ be a Lipschitz domain, $A\in M_{\lambda}(\Omega)$ and $b\in L^{\infty}(\Omega)$. Let also $y\in\Omega$, and $q\in\partial\Omega, r>0$ fixed, such that $y\notin T_{5r}(q)$. If $G_y$ denotes Green's function for $L$ in $\Omega$ with pole at $y$ from theorem \ref{GoodGreenFunctionEstimates}, then $G_y\in W^{1,2}(T_{2r}(q))$.
\end{lemma}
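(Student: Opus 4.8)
The plan is to establish the $W^{1,2}$-bound on $G_y$ over the boundary cylinder $T_{2r}(q)$ by combining the pointwise bounds on $G_y$ from theorem \ref{GoodGreenFunctionEstimates} with the boundary Cacciopoli inequality from lemma \ref{Cacciopoli}, part (ii). The starting observation is that, since $y\notin T_{5r}(q)$, the function $G_y$ is a nonnegative solution of $Lu=0$ in the enlarged cylinder portion $T_{4r}(q)$ (poles occur only at $y$), and $G_y$ vanishes continuously on $\Delta_{4r}(q)\subseteq\partial\Omega$; here one uses the boundary regularity (proposition \ref{HolderOnTheBoundary}) to justify that $G_y$ vanishes on the boundary portion in the appropriate $W^{1,2}$ sense, together with the fact that $G_y\in W_0^{1,p}(\Omega)$ for $p<\tfrac{d}{d-1}$.

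First I would fix a boundary point and apply the boundary Cacciopoli estimate of lemma \ref{Cacciopoli}(ii) to $G_y$ on $\Delta_{2r}(q)$, which gives
\[
\int_{T_{2r}(q)}|\nabla G_y|^2\leq C\left(1+\frac{1}{r^2}\right)\int_{T_{4r}(q)}G_y^2.
\]
The right-hand side is now controlled purely by the pointwise size bound $G_y(z)\leq C|z-y|^{2-d}$ from theorem \ref{GoodGreenFunctionEstimates}(iii): since $y\notin T_{5r}(q)$, for all $z\in T_{4r}(q)$ we have $|z-y|\gtrsim r$ (the distance from $y$ to $T_{4r}(q)$ is bounded below by a multiple of $r$ because $y$ lies outside $T_{5r}(q)$ and the cylinders are comparable), hence $G_y(z)\leq C r^{2-d}$ on $T_{4r}(q)$. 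Therefore $\int_{T_{4r}(q)}G_y^2\leq C r^{2(2-d)}|T_{4r}(q)|\leq C r^{2(2-d)+d}=Cr^{4-d}$, and plugging back in yields $\int_{T_{2r}(q)}|\nabla G_y|^2\leq C(1+r^{-2})r^{4-d}<\infty$. Combined with $G_y\in L^2(T_{2r}(q))$ (again from the pointwise bound), this gives $G_y\in W^{1,2}(T_{2r}(q))$.

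The main technical obstacle is the justification that lemma \ref{Cacciopoli}(ii) actually applies: that lemma is stated for $u\in W^{1,2}(\Omega)$ that vanishes on $\Delta_{2r}(q)$, whereas a priori we only know $G_y\in W_0^{1,p}(\Omega)$ for $p<\tfrac{d}{d-1}<2$. The way around this is a bootstrapping/approximation argument: work first with the approximating functions $G_n$ from lemma \ref{LimitGreenConstruction}, which genuinely lie in $W_0^{1,2}(\Omega)$, apply the boundary Cacciopoli estimate to each $G_n$ (using that $G_n$ is a nonnegative solution in $T_{4r}(q)$ vanishing on $\Delta_{4r}(q)$ for $n$ large, and that the pointwise bounds on $G_n$ are uniform in $n$ by the domination $G_n\leq\tilde G_n$ argument of lemma \ref{BoundOnGreenGradient}), obtain a uniform $W^{1,2}(T_{2r}(q))$-bound, and then pass to the limit using weak compactness and the a.e. convergence $G_{k_n}\to G_y$ already established in lemma \ref{LimitGreenConstruction}. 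The lower semicontinuity of the $W^{1,2}$-norm under weak convergence then transfers the bound to $G_y$ itself, completing the proof.
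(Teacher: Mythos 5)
Your proposal is correct, and both proofs share the same overall skeleton: reduce to the approximating functions $G_n\in W_0^{1,2}(\Omega)$ from lemma \ref{LimitGreenConstruction}, apply the boundary Cacciopoli estimate of lemma \ref{Cacciopoli}(ii) to get a uniform $W^{1,2}(T_{2r}(q))$ bound, and pass to the weak/a.e.\ limit to deduce $G_y\in W^{1,2}(T_{2r}(q))$. The genuine difference is how the uniform $L^\infty$ control on the $G_n$ over $T_{4r}(q)$ is established. The paper invokes Carleson's estimate (lemma \ref{CarlesonEstimate}), $G_n(x)\leq CG_n(A_{4r}(q))$ on $T_{4r}(q)$, and then controls the single value $G_n(A_{4r}(q))$ using the uniform boundedness of $(G_n)$ on compact subsets from lemma \ref{LimitGreenConstruction}(v); note that (v) only applies at interior points, which is why Carleson's estimate is needed to propagate the bound up to $\partial\Omega$. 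You instead go directly through the pointwise Green's function estimate $G(z,y)\leq C|z-y|^{2-d}$ of theorem \ref{GoodGreenFunctionEstimates}(iii), transported uniformly to the $G_n$ via the domination $G_n\leq\tilde G_n$ from lemma \ref{BoundOnGreenGradient} (where $\tilde G_n$ are the approximations in the enlarged ball, so lemma \ref{LimitGreenConstruction}(v) applies on compact sets reaching $\partial\Omega$), together with the geometric observation that $y\notin T_{5r}(q)$ forces $|z-y|\gtrsim r$ on $T_{4r}(q)$. Both are valid; your route is more quantitatively explicit and yields the cleaner estimate $\int_{T_{2r}(q)}|\nabla G_y|^2\lesssim (1+r^{-2})r^{4-d}$, while the paper's route via Carleson's estimate sidesteps the need to extend the operator and compare Green's functions in the larger ball.
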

\begin{proof}
We will assume that $b$ is Lipschitz; the case of bounded $b$ can be shown by using a mollification argument.
	
Consider the functions $G_n$ that are constructed in lemma \ref{LimitGreenConstruction}, and consider $n\in\mathbb N$ large, such that $G_n$ is a solution of the equation $LG_n=0$ in $T_{4r}(q)$. Since $G_n\in W^{1,2}(T_{4r}(q))$, proposition \ref{HolderOnTheBoundary} shows that $G_n$ is continuous in $W^{1,2}(T_{4r}(q))$.

Note now that, from Carleson's estimate (lemma \ref{CarlesonEstimate})
\[
G_n(x)\leq CG_n(A_{4r}(q))
\]
for all $x\in T_{4r}(q)$. But, (v) in lemma \ref{LimitGreenConstruction} shows that $(G_n)$ is equicontinuous in a small neighborhood of $A_{4r}(q)$, therefore there exists $C>0$ such that, for a subsequence,
\[
G_{k_n}(A_{4r}(q))\leq C
\]
for all $n\in\mathbb N$. This shows that $G_{k_n}$ is uniformly bounded in $T_{4r}(q)$, hence, Cacciopoli's estimate in $T_{4r}(q)$ (lemma \ref{Cacciopoli}) shows that $(G_{k_n})$ is uniformly bounded in $W^{1,2}(T_{2r}(q))$, therefore a subsequence converges to some $g\in W^{1,2}(T_{2r}(q))$ almost everywhere in $T_{2r}(q)$. But, again from (v) in lemma \ref{LimitGreenConstruction}, there exists a subsequence of $G_{k_n}$ that converges to $G_y$ in every compact subset of $\Omega\setminus B_0$. Hence $G_y\in W^{1,2}(T_{2r}(q))$, which completes the proof.
\end{proof}

\begin{lemma}\label{R2BoundOnGreen}
Let $\Omega$ be a Lipschitz domain, $A\in M_{\lambda,\mu}^s(\Omega)$, and $b\in L^{\infty}(\Omega)$. Let also $y\in\Omega$, and $G_y$ be Green's function for the equation $Lu=0$ in $\Omega$, centered at $y$. Let also $B_0\subseteq\Omega$ be a ball which is compactly supported in $\Omega$ and is centered at $y$. Then, for $\e>0$ sufficiently small,
\[
\int_{\partial\Omega}\left|(\nabla G_y)^*_{\e}\right|^2\,d\sigma<\infty,
\]
where $(\nabla G_y)^*_{\e}$ is the nontangential maximal function in $\Omega\setminus B_0$, and where the supremum in the nontangential maximal function is taken $\e$-close to the boundary.
\end{lemma}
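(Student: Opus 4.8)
The plan is to reduce the boundary estimate for $(\nabla G_y)^*_\e$ to the already-established pointwise bounds on $\nabla_x G(x,y)$ away from the pole (Proposition \ref{GreenDerivativeBounds}) together with a careful decomposition of $\partial\Omega$ into surface balls at the scale $\e$. Fix $y\in\Omega$, let $B_0$ be a small ball around $y$ compactly contained in $\Omega$, and let $d_0=\operatorname{dist}(B_0,\partial\Omega)>0$. Choose $\e$ small enough (say $\e < d_0/100$) that for every $q\in\partial\Omega$ the truncated cone $\Gamma_i(q)\cap\{|x-q|<C\e\}$ stays at distance $\geq d_0/2$ from $y$, so $G(\cdot,y)$ is a genuine solution of $Lu=0$ (no pole) in a neighborhood of that region. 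The first step is to cover $\partial\Omega$ by finitely many surface balls $\Delta_{r_\Omega}(q_i)$ as in the definition of the Lipschitz character, so that everything reduces to a local estimate inside one coordinate cylinder, where $\partial\Omega$ is the graph of a Lipschitz function and the cones have a uniform aperture.

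The main step is the pointwise bound on $(\nabla G_y)^*_\e(q)$. For $q\in\partial\Omega$ and $x\in\Gamma_i(q)$ with $|x-q|<C\e$, I would split into two regimes according to whether $|x-q|$ is comparable to $\e$ or much smaller. In either case, since $x\in\Gamma_i(q)$ we have $\delta(x)\geq c|x-q|$, so $x$ sits in a ball $B_{\delta(x)/2}(x)$ whose double avoids both $\partial\Omega$ and (by the choice of $\e$) the pole $y$; hence $G(\cdot,y)$ solves $Lu=0$ there. First I would apply Proposition \ref{GreenDerivativeBounds} if $x$ is far enough from $y$ in the ``bulk'' sense; but more robustly, I would use Corollary \ref{LocalBoundOnGradientOfSolutions} on the ball $B_{\delta(x)/2}(x)$ to get
\[
|\nabla G_y(x)| \leq \frac{C}{\delta(x)}\left(\fint_{B_{\delta(x)}(x)}|G_y|^2\right)^{1/2},
\]
and then bound $G_y$ on $B_{\delta(x)}(x)$ by Carleson's estimate (Lemma \ref{CarlesonEstimate}), since $G_y\geq0$ vanishes on $\Delta_{2t}(q')$ for the appropriate nearby boundary point $q'$ and scale $t\sim\delta(x)$: $\sup_{B_{\delta(x)}(x)}G_y\leq C\, G_y(A_t(q'))$. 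Combining with the Green/harmonic-measure comparison (Proposition \ref{GreenRelation}), $t^{d-2}G_y(A_t(q'))\simeq \omega^y(\Delta_t(q'))$, and the doubling property of $\omega^y$, I expect to arrive at
\[
(\nabla G_y)^*_\e(q) \leq C\sup_{0<t<C\e}\frac{\omega^y(\Delta_t(q))}{\sigma(\Delta_t(q))} \cdot \frac{1}{t} \cdot t = C\, M_{\sigma}\!\big(k_y\big)(q) \quad\text{(schematically)},
\]
where $k_y=d\omega^y/d\sigma$; the precise bookkeeping of powers of $t$ needs $\delta(x)^{d-2}\cdot \delta(x)^{1-d}\cdot\delta(x)^{(d-1)/2}$-type counting against $\sigma(\Delta_t)\sim t^{d-1}$, and one must check it closes with exponent giving an $L^2$-summable majorant. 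Since $k_y\in B_2(\partial\Omega)\subseteq L^2(\partial\Omega)$ — this is exactly where the symmetry of $A$ enters, via Proposition \ref{GeneralB2Lemma} (note $\Dr_{p_d}$ is not assumed here, but for a \emph{fixed} interior pole $y$ the kernel $k_y$ is bounded by a multiple of $k=k_0$ up to Harnack constants, hence lies in $L^2$) — and the Hardy–Littlewood maximal operator $M_\sigma$ is bounded on $L^2(\partial\Omega)$, integrating the pointwise bound over $\partial\Omega$ gives the claim.

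The hard part will be making the scale-by-scale estimate rigorous without circular reasoning: I must use only $G_y\in W^{1,2}(T_{2r}(q))$ near the boundary, which is supplied by Lemma \ref{ToMaximalBoundForG}, and the \emph{interior} derivative bounds, avoiding any appeal to a global $(\nabla G_y)^*$ bound that is what we are proving. A cleaner route, which I would pursue if the maximal-function argument gets delicate, is to mimic the single-layer argument: write $G_y$ near $\partial\Omega$ using the representation via $G^t_y$ and $\nabla_T$-data (which is $0$ since $G_y$ vanishes on the boundary) plus a solid term controlled by the local Rellich estimate (Proposition \ref{LocalRellich}, applicable since $A\in M^s_{\lambda,\mu}$ and $G_y\in W^{1,2}$ near $\partial\Omega$ with $L^tG_y^{\,?}$... actually $G_y$ solves $Lu=0$), and then the nontangential maximal function of $\nabla G_y$ is dominated by a Carleson-type integral of $|p-q|^{1-d}$ against the measure $\omega^y$, whose $L^2$ norm is finite by $k_y\in L^2$ and the weak-type bounds for such fractional integrals against doubling measures. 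Either way, the crux is the passage from the pointwise gradient bound to an $L^2(\partial\Omega)$ statement, and that passage rests entirely on $k_y\in L^2(\partial\Omega)$, i.e.\ on the $B_2$ property of harmonic measure and the symmetry of $A$.
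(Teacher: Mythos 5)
Your proposal relies, at its crux, on the membership $k_y = d\omega^y/d\sigma \in L^2(\partial\Omega)$, and you derive this from the $B_2$ property of the harmonic measure kernel via Proposition \ref{GeneralB2Lemma}. But that proposition requires $b\in\Dr_{p_d}(\Omega)$, while the lemma at hand only assumes $b\in L^{\infty}(\Omega)$. Your proposed workaround -- comparing $k_y$ to $k_0$ via Harnack chains -- does not close the gap: neither $k_y$ nor $k_0$ is known to lie in $L^2(\partial\Omega)$ without the divergence condition, and the Harnack comparison only transfers the property between interior poles, it does not supply it. The same objection applies to your ``cleaner route'' at the end, since its final appeal is again to $k_y\in L^2$. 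So as written the argument needs a hypothesis not present in the lemma; indeed, $k_y \in L^2(\partial\Omega)$ is essentially equivalent (by Theorem \ref{DirichletToWeightsRelation}) to solvability of $D_2$ for $L$, which the paper only proves under $\Dr_{p_d}$.

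The paper's proof of this lemma is more economical and avoids harmonic measure entirely. Since $G_y$ is a $W_0^{1,\frac{d}{2(d-1)}}$ solution of $Lu=0$ in $\Omega\setminus (B_0/2)$, it is continuously differentiable near $\partial B_0$, and by Lemma \ref{ToMaximalBoundForG} it lies in $W^{1,2}$ near $\partial\Omega$; altogether $G_y\in W^{1,2}(\Omega\setminus B_0)$. Now $\Omega\setminus B_0$ is itself a Lipschitz domain with $A\in M^s_{\lambda,\mu}$ and $b\in L^{\infty}$, so Theorem \ref{R2Solvability} applies there: let $u$ solve $R_2$ for $L$ in $\Omega\setminus B_0$ with boundary data $0$ on $\partial\Omega$ and $G_y$ on $\partial B_0$. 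Then $u-G_y\in W_0^{1,2}(\Omega\setminus B_0)$ solves $Lu=0$, so $u\equiv G_y$ by the maximum principle, and the $R_2$ estimate $\|(\nabla u)^*\|_{L^2(\partial(\Omega\setminus B_0))}<\infty$ gives the desired bound on $(\nabla G_y)^*_{\e}$ over $\partial\Omega$. Note that this proof in fact \emph{establishes} $k_y\in L^2(\partial\Omega)$ for $b\in L^\infty$ as a nontrivial by-product, which is precisely the fact your argument tried to assume. If you want to salvage your route, you would have to first prove this via $R_2$ -- at which point you have already reproduced the paper's argument -- so the detour through Carleson, harmonic measure comparison, and $M_\sigma$ does not actually simplify anything.
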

\begin{proof}
From theorem \ref{GoodGreenFunctionEstimates}, $G_y$ is a $W_0^{1,\frac{d}{2(d-1)}}(\Omega\setminus B_0/2)$ solution of the equation $Lu=0$ in $\Omega\setminus B_0/2$, were $B_0/2$ is the half ball of $B_0$. Therefore, corollary \ref{GreenContinuity} shows that $G_y$ is continuously differentiable close to the boundary of $B_0$.

Consider now the solution $u$ to the Regularity problem $R_2$ for $L$ in $\Omega\setminus B_0$, with $u|_{\partial\Omega}\equiv 0$ and $u|_{\partial B_0}=G_y$, which exists from theorem \ref{R2Solvability}. Note also that $u\in W^{1,2}(\Omega\setminus B_0)$, and also $G_y\in W^{1,2}(\Omega\setminus B_0)$, which follows from lemma \ref{ToMaximalBoundForG} and the fact that $G_y$ is continuously differentiable in the interior of $\Omega\setminus\{y\}$. Therefore, $u-G_y$ is a $W_0^{1,2}(\Omega\setminus B_0)$ solution to $Lu=0$, hence $u\equiv G_y$ in $\Omega\setminus B_0$. But, for $\e>0$ sufficiently small, since $u$ solves the Regularity problem,
\[
\int_{\partial\Omega}|(\nabla u)^*_{\e}|^2\,d\sigma<\infty,
\]
which completes the proof.
\end{proof}

The next proposition shows uniqueness for the Dirichlet problem for the equation $L^tu=0$.

\begin{prop}\label{UniquenessForDirichletForAdjoint}
Let $\Omega$ be a Lipschitz domain, $A\in M_{\lambda,\mu}(\Omega)$, and $b\in L^{\infty}(\Omega)$. Suppose that $u:\Omega\to\mathbb R$ is a weak solution to the equation $L^tu=0$ in $\Omega$, with $u^*\in L^2(\partial\Omega)$ and $u\to 0$ non-tangentially, almost everywhere on $\partial\Omega$. Then $u\equiv 0$.
\end{prop}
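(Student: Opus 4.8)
The plan is to run the cut-off/representation argument of Propositions~\ref{DirichletUniqueness} and~\ref{UniquenessForRegularity}, but with the roles of $L$ and $L^t$ interchanged: instead of pairing $u$ against Green's function for the adjoint, I pair it against Green's function $G_y(\cdot)=G(\cdot,y)$ for the operator $L$ itself, with pole at a fixed $y\in\Omega$. Concretely, fix $y$, let $B_0\subseteq\Omega$ be a small ball about $y$, and for small $\varepsilon>0$ (in particular $\varepsilon<s_\Omega$, and small enough that $G_y$ is a genuine $W^{1,2}$ solution of $LG_y=0$ in a $5\varepsilon$-collar of $\partial\Omega$ --- this is Lemma~\ref{ToMaximalBoundForG}) take the usual smooth cut-off $\phi_\varepsilon$ that equals $1$ on $\{\delta>2\varepsilon\}$, equals $0$ on $\{\delta\le\varepsilon\}$, and satisfies $|\nabla\phi_\varepsilon|\le C/\varepsilon$; note $\phi_\varepsilon\equiv 1$ near $y$. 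Testing the defining identity of $G_y$ against $u\phi_\varepsilon$ (which is compactly supported in $\Omega$, hence admissible; the manipulations near the pole $y$ are handled exactly as in Propositions~\ref{DirichletUniqueness}--\ref{UniquenessForRegularity}, using that $G_y$ is $C^1$ away from $y$ and $\partial\Omega$), then using that $u$ solves $L^tu=0$ tested against $G_y\phi_\varepsilon$ together with the identity $\langle A^t\nabla u,\nabla G_y\rangle=\langle A\nabla G_y,\nabla u\rangle$ to cancel the bulk term, one arrives at
\[
u(y)=\int_{R_\varepsilon}\langle A\nabla G_y,\nabla\phi_\varepsilon\rangle\,u-\int_{R_\varepsilon}\langle A^t\nabla u,\nabla\phi_\varepsilon\rangle\,G_y-\int_{R_\varepsilon}(b\cdot\nabla\phi_\varepsilon)\,u\,G_y=:I_1+I_2+I_3,
\]
where $R_\varepsilon=\{\varepsilon<\delta<2\varepsilon\}$, precisely the structure of those propositions.

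The second step is to estimate the three collar integrals. I would use the collar decomposition from the proofs of Propositions~\ref{DirichletUniqueness} and~\ref{UniquenessForRegularity} (coordinate cylinders, dyadic cubes $Q_j$, graph pieces $P_j$, and the modified Caccioppoli inequality Lemma~\ref{ModifiedCacciopoli}), which gives for any $w$ on $\Omega$ the slicing bound $\|w\|_{L^2(R_\varepsilon)}^2\lesssim\varepsilon\,\|w^*_{C\varepsilon}\|_{L^2(\partial\Omega)}^2$, with $w^*_{C\varepsilon}$ the truncated nontangential maximal function. Three ingredients then feed in: (i) since $u\to 0$ nontangentially a.e.\ and $u^*_{C\varepsilon}\le u^*\in L^2(\partial\Omega)$, dominated convergence gives $\|u^*_{C\varepsilon}\|_{L^2(\partial\Omega)}\to 0$ as $\varepsilon\to 0$; (ii) by Lemma~\ref{R2BoundOnGreen}, $(\nabla G_y)^*_\varepsilon\in L^2(\partial\Omega)$, whence $\|\nabla G_y\|_{L^2(R_\varepsilon)}\lesssim\varepsilon^{1/2}$, and since $G_y$ vanishes on $\partial\Omega$, Proposition~\ref{UByNablaU} applied to $G_y$ gives $|G_y(x)|\le C\varepsilon\,(\nabla G_y)^*_\varepsilon(q)$ for $x\in\Gamma(q)\cap R_\varepsilon$, hence $\|G_y\|_{L^2(R_\varepsilon)}\lesssim\varepsilon^{3/2}$; (iii) because $u$ solves $L^tu=0$, the boundary Caccioppoli inequality for the adjoint equation (Lemma~\ref{CacciopoliForAdjoint}, in the form of Lemma~\ref{ModifiedCacciopoli} over the collar pieces) gives $\|\nabla u\|_{L^2(R_\varepsilon)}\lesssim\varepsilon^{-1}\|u\|_{L^2(\widetilde R_\varepsilon)}\lesssim\varepsilon^{-1/2}\|u^*_{C\varepsilon}\|_{L^2(\partial\Omega)}$ over a slightly fattened collar.

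Putting these together, Cauchy--Schwarz and $|\nabla\phi_\varepsilon|\le C/\varepsilon$ give
\[
|I_1|\lesssim\tfrac1\varepsilon\|\nabla G_y\|_{L^2(R_\varepsilon)}\|u\|_{L^2(R_\varepsilon)}\lesssim\|u^*_{C\varepsilon}\|_{L^2(\partial\Omega)},\qquad |I_3|\lesssim\tfrac1\varepsilon\|u\|_{L^2(R_\varepsilon)}\|G_y\|_{L^2(R_\varepsilon)}\lesssim\varepsilon\,\|u^*_{C\varepsilon}\|_{L^2(\partial\Omega)},
\]
while
\[
|I_2|\lesssim\tfrac1\varepsilon\|\nabla u\|_{L^2(R_\varepsilon)}\|G_y\|_{L^2(R_\varepsilon)}\lesssim\varepsilon^{-1}\cdot\varepsilon^{-1/2}\|u^*_{C\varepsilon}\|_{L^2(\partial\Omega)}\cdot\varepsilon^{3/2}=\|u^*_{C\varepsilon}\|_{L^2(\partial\Omega)},
\]
so all three tend to $0$ as $\varepsilon\to 0$ by (i), forcing $u(y)=0$; since $y\in\Omega$ was arbitrary, $u\equiv 0$. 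The main obstacle is the term $I_2$: the hypotheses give no nontangential control whatsoever on $\nabla u$, so one cannot proceed as in Proposition~\ref{UniquenessForRegularity} (where $(\nabla u)^*\in L^1$ was assumed). The resolution is the Caccioppoli trade $\nabla u\leadsto u$ of ingredient~(iii), which closes only because $G_y$ carries the extra factor $\varepsilon$ from ingredient~(ii), and that gain is exactly the content of Lemma~\ref{R2BoundOnGreen} --- hence, behind the scenes, of solvability of $R_2$ for $L$ (this is where the symmetry of $A$, needed for Lemma~\ref{R2BoundOnGreen}, enters). A secondary technical point is making the representation formula fully rigorous near the pole $y$; this is done as in Propositions~\ref{DirichletUniqueness}--\ref{UniquenessForRegularity}, or, if preferred, by first carrying out the computation in $\Omega\setminus B_0$ and then letting $B_0$ shrink to $y$.
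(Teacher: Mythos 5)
Your proposal is correct and follows essentially the same route as the paper: pair $u$ against $G_y=G(\cdot,y)$ for the operator $L$ via the cut-off $\phi_\varepsilon$, arrive at the three collar terms $I_1,I_2,I_3$, trade $\nabla u$ for $u$ by the adjoint Caccioppoli inequality, and recover the needed gain from the facts that $(\nabla G_y)^*_\varepsilon\in L^2(\partial\Omega)$ (Lemma~\ref{R2BoundOnGreen}, hence $R_2$ for $L$) and that $G_y$ vanishes on $\partial\Omega$ so Proposition~\ref{UByNablaU} supplies the extra factor of $\varepsilon$. The only difference from the paper is presentational --- you organize the estimates as global $L^2$ bounds over the collar $R_\varepsilon$ rather than cube-by-cube over the $Q_j$, but the ingredients and exponent count are identical.
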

\begin{proof}
The proof is similar to the argument in proposition \ref{DirichletSolvability}. Fix $y$ in $\Omega$, write $G_y(x)$ for $G(x,y)$, and for $\e>0$ recall the definitions of $\Omega_{\e}$, $R_{\e}$ and $\phi_{\e}$ from proposition \ref{DirichletSolvability}. Then we obtain that, for $\e$ small,
\[
u(y)=u(y)\phi_{\e}(y)=\int_{\Omega}A\nabla G_y\nabla(u\phi_{\e})+b\nabla G_y\cdot u\phi_{\e},
\]
which implies that
\[
u(y)=\int_{R_{\e}}A\nabla G_y\nabla\phi_{\e}\cdot u-\int_{R_{\e}}A\nabla\phi_{\e}\nabla u\cdot G_y-b\nabla\phi_{\e}\cdot G_yu=I_1+I_2+I_3.
\]
since $u$ is a solution of $L^tu=0$ in $\Omega$, and from the support properties of $\phi_{\e}$.

Recall now the definitions of $Z_i$, $P_j$, $Q_j$ and $\psi$ from theorem \ref{DirichletSolvability}. We then write
\[
|I_1|\leq\frac{C}{\e}\sum_j\int_{Q_j}\int_{\psi(x_0)+c_1\e}^{\psi(x_0)+c_2\e}|\nabla G_y(x_0,s)||u(x_0,s)|\,dsdx_0.
\]
Then, note that for each one of the summands, if $x_0\in Q_j$ and $s\in(\psi(x_0)+c_1\e,\psi(x_0)+c_2\e)$, then $(x_0,s)\in\Gamma(q)$ for all $q\in P_j$. Therefore,
\[
\int_{Q_j}\int_{\psi(x_0)+c_1\e}^{\psi(x_0)+c_2\e}|\nabla G_y(x_0,s)||u(x_0,s)|\,dsdx_0\leq C\e^d(\nabla G_y)^*(q)u_{\e}^*(q),
\]
and, after integrating on $P_j$, we obtain that
\[
\int_{Q_j}\int_{\psi(x_0)+c_1\e}^{\psi(x_0)+c_2\e}|\nabla G_y(x_0,s)||u(x_0,s)|\,dsdx_0\leq C\e\int_{P_j}(\nabla G_y)_{\e}^*u_{\e}^*\,d\sigma.
\]
Therefore, the sum above is bounded by
\[
\frac{C}{\e}\sum_jC\e\int_{P_j}(\nabla G_y)^*u_{\e}^*\,d\sigma=C\int_{U_i\cap\partial\Omega}(\nabla G_y)_{\e}^*u_{\e}^*\,d\sigma.
\]
For $\e$ sufficiently small, the term
\[
\int_{\partial\Omega}\left|(\nabla G_y)^*_{\e}\right|^2\,d\sigma
\]
is uniformly bounded, from lemma \ref{R2BoundOnGreen}. Therefore, the Cauchy-Schwartz inequality and the dominated convergence theorem show that the last term goes to $0$ as $\e\to 0$. Hence, adding those integrals for $i=1,\dots N$, we obtain that $I_1\to 0$.

For $I_2$, we first estimate
\[
\int_{Z_i\cap R_{\e}}|A\nabla\phi_{\e}\nabla u\cdot G_y|\leq\frac{C}{\e}\sum_j\int_{Q_j}\int_{\psi(x_0)+c_1\e}^{\psi(x_0)+c_2\e}|\nabla u(x_0,s)||G_y(x_0,s)|\,dsdx_0.
\]
For each one of the summands, we apply the Cauchy-Schwartz inequality. For the term with the gradient of $u$, we apply the Cacciopoli inequality, to obtain
\begin{align*}
\int_{Q_j}\int_{\psi(x_0)+c_1\e}^{\psi(x_0)+c_2\e}|\nabla u(x_0,s)|^2\,dx_0ds&\leq\frac{C}{\e^2}\int_{2Q_j}\int_{\psi(x_0)+c_0\e}^{\psi(x_0)+c_3\e}|u(x_0,s)|^2\,dx_0ds\\
&\leq\frac{C}{\e^2}\int_{2Q_j}\int_{\psi(x_0)+c_0\e}^{\psi(x_0)+c_3\e}|u^*_{\e}(q)|^2\,dx_0ds=C\e^{d-2}|u^*_{\e}(q)|^2,
\end{align*}
for each $q\in P_j$. Moreover, for every $q\in P_j$,
\[
\int_{Q_j}\int_{\psi(x_0)+c_1\e}^{\psi(x_0)+c_2\e}|G_y(x_0,s)|^2\,dx_0ds\leq C\e^d|(G_y)_{\e}^*|^2(q)\leq C\e^{d+2}\left|(\nabla G_y)^*_{\e}(q)\right|^2,
\]
where we also used proposition \ref{UByNablaU}. Therefore the Cauchy-Schwartz inequality shows that
\[
\int_{Q_j}\int_{\psi(x_0)+c_1\e}^{\psi(x_0)+c_2\e}|\nabla u(x_0,s)||G_y(x_0,s)|\,dsdx_0\leq C\e^du^*_{\e}(q)(\nabla G_y)^*_{\e}(q),
\]
for all $q\in P_j$, and after integrating on $P_j$, we obtain that
\[
\int_{Q_j}\int_{\psi(x_0)+c_1\e}^{\psi(x_0)+c_2\e}|\nabla u(x_0,s)||G_y(x_0,s)|\,dsdx_0\leq C\e\int_{P_j}u^*_{\e}(\nabla G_y)^*_{\e}\,d\sigma.
\]
Returning to $I_2$, we estimate
\[
\int_{Z_i\cap R_{\e}}|A\nabla u\nabla\phi\cdot G_y|\leq\frac{C}{\e}\sum_jC\e\int_{P_j}u^*_{\e}(\nabla G_y)^*_{\e}\,d\sigma=\int_{U_i\cap Z_i}u^*_{\e}(\nabla G_y)^*_{\e}\,d\sigma,
\]
and the last term goes to $0$ as $\e\to 0$, with an argument similar to the case of $I_1$. Adding for $i=1,\cdots N$ shows that $I_2\to 0$ as well.

Finally, for $I_3$, we write
\[
\int_{Z_i\cap R_{\e}}|b\nabla\phi_{\e}\cdot G_yu|\leq\frac{C}{\e}\sum_j\int_{Q_j}\int_{\psi(x_0)+c_1\e}^{\psi(x_0)+c_2\e}\left|G_y(x_0,s)u(x_0,s)\right|\,dx_0ds,
\]
and for each one of the summands, for all $q\in P_j$,
\[
\int_{Q_j}\int_{\psi(x_0)+c_1\e}^{\psi(x_0)+c_2\e}\left|G_y(x_0,s)u(x_0,s)\right|\,dx_0ds\leq\e^{d-1}(G_y)^*_{\e}(q)u^*_{\e}(q).
\]
Therefore, after integrating over $P_j$ and summing for $j$, we obtain that
\[
\int_{Z_i\cap R_{\e}}|b\nabla\phi\cdot G_yu|\leq C\sum_j\int_{P_j}(G_y)^*_{\e}u^*_{\e}\,d\sigma\leq C\e\int_{U_i\cap Z_i}u^*_{\e}(\nabla G_y)^*_{\e}\,d\sigma,
\]
and the last term goes to $0$ as $\e\to 0$. This finishes the proof.
\end{proof}

\subsection{Singular integrals}
We will now turn our attention to integral operators that will be central to establishing existence for the Dirichlet problem for $L^t$. The setting will be as in the case of the Regularity problem for $L$: we will assume that $\Omega$ is a subset of a ball $B$, and we will extend the coefficients $A$ and $b$ in $\Omega$. We will also set $G^t(y,x)$ to be Green's function for the operator $L^tu=-\dive(A\nabla u)-\dive(bu)$ in $B$.

Note that, from proposition \ref{SymmetryWithAdjoint}, $G^t(y,x)=G(x,y)$, where $G$ is Green's function for the operator $Lu=-\dive(A\nabla u)+b\nabla u$ in $B$.

The first operator we will consider is the \emph{maximal truncation operator}
\[
T^*f(p)=\sup_{\delta>0}\left|\int_{|p-q|>\delta}\nabla_T^qG(q,p)\cdot f(q)\,d\sigma(q)\right|,
\]
for $f\in L^2(\partial\Omega)$ and $p\in\partial\Omega$. An important property of $T^*$ is the fact that it is bounded from $L^2$ to $L^2$, as the next proposition shows.

\begin{prop}\label{MaximalTruncationBoundForAdjoint}
Let $\Omega$ be a Lipschitz domain, $A\in M_{\lambda,\mu}(\Omega)$ and $b\in L^{\infty}(\Omega)$. Then the operator $T^*$ is bounded from $L^2(\partial\Omega)$ to $L^2(\partial\Omega)$, and its norm is bounded by a good constant.
\end{prop}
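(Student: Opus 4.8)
The plan is to mimic the proof of Proposition~\ref{MaximalTruncationBound}, which established exactly this boundedness for the operator $T_*$ built from $\nabla_p G(p,q)$ (differentiation in the first variable). Here the kernel is $\nabla_T^q G(q,p)$, i.e.\ a tangential derivative of Green's function in its \emph{first} variable evaluated at the running point $q$, with the pole $p$; by Proposition~\ref{SymmetryWithAdjoint} this equals $\nabla_y^q G^t(q,p)$, a derivative of Green's function for $L^t$ in its first variable. So the geometry is the same as before: near the diagonal we compare with a reference operator and estimate the error, and away from the diagonal the kernel is integrable. Concretely, first I would reduce to a periodic, global situation exactly as in Proposition~\ref{MaximalTruncationBound}: extend $A$ to be $1$-periodic via Lemma~\ref{PeriodicAExtension}, extend $b$ by zero, and introduce $G_0$, Green's function for the drift-free operator $-\dive(A\nabla u)=0$ in $B$, together with the fundamental solution $\Gamma$ of $-\dive(A\nabla u)=0$ on $\mathbb{R}^d$ (or its adjoint version; since the drift-free operator with symmetric reference coefficients is self-adjoint, the adjoint fundamental solution is covered by the same theory).

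The key steps: (i) Quote the drift-free result, namely that the maximal truncation operator with kernel $\nabla_T^q\Gamma(q,p)$ (equivalently $\nabla_T^q G_0(q,p)$) is bounded $L^2(\partial\Omega)\to L^2(\partial\Omega)$ with a good constant — this is the Calder\'on–Zygmund theory on Lipschitz graphs used already in Propositions~\ref{MaximalTruncationBound} and \ref{T_iBoundedness}, applied with the roles of the two variables interchanged, which is legitimate since the symmetric drift-free operator is self-adjoint. (ii) Decompose
\[
\nabla_y^q G^t(q,p)=\bigl(\nabla_y^q G^t(q,p)-\nabla_y^q G^t_0(q,p)\bigr)+\bigl(\nabla_y^q G^t_0(q,p)-\nabla_y^q \Gamma^t(q,p)\bigr)+\nabla_y^q\Gamma^t(q,p),
\]
writing this as $k_1+k_2+\nabla_y^q\Gamma^t$. (iii) Bound $k_1$ by $C|p-q|^{3/2-d}$ using Proposition~\ref{ContinuityArgumentForBAdjoint} (the $C^\alpha$ continuity-argument estimate for the adjoint; note this requires $b\in C^\alpha$, so to get the bounded-$b$ case I would first prove the $C^\alpha$ case and then pass to $b\in L^\infty$ by a mollification argument, using Proposition~\ref{ContinuityArgumentForB}-type convergence of kernels, exactly as in Proposition~\ref{InverseInequality}). (iv) Bound $k_2$: fix $p$, set $u(y)=G^t_0(y,p)-\Gamma^t(y,p)$; this is a $W^{1,\frac{d}{2(d-1)}}$ solution of the drift-free equation in $B$, hence by Lemma~\ref{LowRegularityEstimate} it is $W^{1,2}$ locally and by estimate~2.2 in \cite{KenigShen} its gradient is bounded near $p$ at small scales, so $|k_2(q,p)|\le C$ with a good constant. (v) Combine: for every $\delta>0$,
\[
\Bigl|\int_{|p-q|>\delta}\nabla_y^q G^t(q,p)f(q)\,d\sigma(q)\Bigr|\le C\int_{\partial\Omega}\bigl(|p-q|^{3/2-d}+1\bigr)|f(q)|\,d\sigma(q)+\tilde T^*f(p),
\]
so $T^*f(p)$ is dominated by a fixed (integrable-kernel) operator applied to $|f|$ plus the bounded reference maximal operator $\tilde T^*f$; since the kernel $|p-q|^{3/2-d}+1$ is in $L^1(\partial\Omega)$ uniformly in $p$, Schur's test finishes the $L^2$ bound with a good constant.

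The main obstacle I expect is step (iv)/(iii): making sure the continuity-argument estimates and the $\nabla G_0$ vs $\nabla\Gamma$ comparison are available \emph{with a derivative on the first (running) variable} and for the adjoint Green's function, and that all the constants remain ``good constants'' (not depending on the $C^\alpha$ norm of $b$) after the mollification limit. The $C^\alpha$ hypothesis is genuinely needed at the level of Proposition~\ref{GreenDerivativeBoundsForAdjoint} and Proposition~\ref{HolderContinuityOfGreenForAdjoint} to even make sense of $\nabla_y G^t$ pointwise; the passage to $b\in L^\infty$ then has to go through approximating $b$ by mollifications $b_n\in\Lip$, using Proposition~\ref{ContinuityArgumentForBAdjoint} to control $\|\nabla_y^q G^t_{b_n}(q,p)-\nabla_y^q G^t_{b_m}(q,p)\|$ via $\|b_n-b_m\|_{C^\alpha}$ — but this is exactly where one must be careful, since $\|b_n\|_{C^\alpha}$ blows up. The fix, as in Proposition~\ref{InverseInequality}, is that one only needs the \emph{difference} estimates to pass the $L^2$ bound (whose constant is good) to the limit, applying the already-proven inequality $\|T^*_{b_n}f\|_2\le C\|f\|_2$ with $C$ good and then letting $n\to\infty$ after establishing $\nabla_T^q G^t_{b_n}(q,p)\to\nabla_T^q G^t(q,p)$ in the appropriate sense; the integrable-kernel domination makes this limit routine. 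Modulo this bookkeeping, the argument is a direct transcription of Proposition~\ref{MaximalTruncationBound} with the variables swapped.
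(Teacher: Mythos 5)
The kernel of $T^*$ is $\nabla_T^q G(q,p)$, a tangential derivative of $G$ in its \emph{first} argument at the running point $q$; since $G(q,p)=G^t(p,q)$ by Proposition~\ref{SymmetryWithAdjoint}, this is the same as a derivative of $G^t$ in its \emph{second} argument, $\nabla^q_T G^t(p,q)$ — not, as you write, $\nabla_y^q G^t(q,p)$, a first-variable derivative of $G^t$. This order mistake matters: once you believe the kernel is a first-variable derivative of $G^t$, you reach for Proposition~\ref{ContinuityArgumentForBAdjoint} to bound $k_1$, which requires $b\in C^\alpha$, and then you have to build a mollification argument to get back to $b\in L^\infty$. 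But the correct reading is that the kernel is a first-variable derivative of $G$, so the paper bounds $k_1$ directly with Proposition~\ref{ContinuityArgumentForB}, which only needs $b\in L^\infty$. No $C^\alpha$ assumption and no mollification are needed anywhere.

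The mollification patch you sketch at the end does not actually close the gap you created. You observe that $\|b_n\|_{C^\alpha}$ blows up and then claim that, as in Proposition~\ref{InverseInequality}, it suffices that the already-proven inequality $\|T^*_{b_n}f\|_2\le C\|f\|_2$ has a good constant. But in \emph{your} argument the $k_1$ bound for $T^*_{b_n}$ comes from Proposition~\ref{ContinuityArgumentForBAdjoint}, whose constant depends on $\|b_n\|_{C^\alpha}$; so the $L^2$ bound for $T^*_{b_n}$ does not have a uniform constant over $n$, and the limiting argument produces no bound at all for $T^*_b$. This is different from Proposition~\ref{InverseInequality}, where the Rellich-type inequality used at each stage already had a good constant before passing to the limit. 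Once the kernel is correctly identified as $\nabla^q_T G(q,p)$ (a first-variable derivative of $G$), all the needed ingredients — Proposition~\ref{GreenDerivativeBounds} for pointwise bounds, Proposition~\ref{ContinuityArgumentForB} for the $k_1$ comparison, and the regularity of $G_0(\cdot,p)-\Gamma(\cdot,p)$ for $k_2$ — hold for $b\in L^\infty$, and your steps (iv) and (v) go through verbatim. (As a side note, the reference operators $G_0$, $\Gamma$ should be for $-\dive(A\nabla\cdot)$ so that Proposition~\ref{ContinuityArgumentForB}, which compares Green's functions with the \emph{same} matrix $A$, applies to $k_1$; the appearance of $A^t$ in the paper at this point is at best misleading. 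Also, $A$ is not assumed symmetric in this proposition, so "the symmetric drift-free operator is self-adjoint" is not available as a justification in step (i); what one uses is that \cite{KenigShen} treats the truncation operators with the derivative on either variable of the fundamental solution, for general $A\in M_{\lambda,\mu}$.)
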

\begin{proof}
We will mimic the proof of proposition \ref{MaximalTruncationBound}.
	
Without loss of generality, we will assume that $0\in\Omega$ and $\diam(\Omega)<1/40$. Then $B\subseteq B_{1/4}(0)$. Let now $G_0$ be Green's function for the equation $-\dive(A^t\nabla u)=0$ in $B$. Consider also the periodic extension $A_p$ of $A$ in $\mathbb R^d$, as in lemma \ref{PeriodicAExtension}, and set $\Gamma(x,y)$ to be the fundamental solution of the equation $-\dive(A^t\nabla u)=0$ in \[
\tilde{T}^*f(p)=\sup_{\delta>0}\left|\int_{|p-q|>\delta}\nabla^q\Gamma(q,p)\cdot f(q)\,d\sigma(q)\right|
\]
is bounded from $L^2(\partial\Omega)$ to itself, with the bound being a good constant.

We now write
\begin{align*}
\nabla_T^qG(q,p)&=\left(\nabla_T^qG(q,p)-\nabla_T^qG_0(q,p)\right)+\left(\nabla_T^qG_0(q,p)-\nabla_T^q\Gamma(q,p)\right)+\nabla_T^q\Gamma(q,p)\\
&=k_1(q,p)+k_2(q,p)+\nabla_T^q\Gamma(q,p).
\end{align*}

After fixing $\delta>0$, multiplying with $f$ and integrating, we estimate
\begin{equation}\label{eq:Fork1}
\int_{|p-q|>\delta}|k_1(q,p)f(q)|\,d\sigma(q)\leq C\int_{|p-q|>\delta}|p-q|^{3/2-d}|f(q)|\,d\sigma(q),
\end{equation}
from proposition \ref{ContinuityArgumentForB}. For $k_2$, we fix $p\in\partial\Omega$ and we set
\[
u(x)=G_0(x,p)-\Gamma(x,p),
\]
for $x\in B$. Then, the regularity properties of Green's function show that $u\in W^{1,\frac{d}{2(d-1)}}(B)$, and for every $\phi\in C_c^{\infty}(B)$,
\begin{align*}
\int_{B}A^t\nabla u(x)\nabla\phi(x)\,dx&=\int_{B}A^t\nabla_xG_0(x,p)\nabla\phi(x)\,dx-\int_{B}A^t\nabla_x\Gamma(x,p)\nabla\phi(x)\,dx\\
&=\phi(p)-\phi(p)=0,
\end{align*}
since $G_0$ is Green's function for $-\dive(A^tu)=0$ in $B$, and $\Gamma(x,y)$ is the fundamental solution of the equation $-\dive(A^t\nabla u)=0$ in $\mathbb R^d$. This shows that $u$ is a $W^{1,\frac{d}{2(d-1)}}(B)$ solution to $-\dive(A\nabla u)=0$ in $B$, therefore lemma \ref{LowRegularityEstimate} shows that $u$ is a $W^{1,2}(B/2)$ solution to $-\dive(A\nabla u)=0$ in $B/2$. Hence, estimate 2.2 in \cite{KenigShen} shows that the function
\[
\nabla u(x)=\nabla_xG_0(x,p)-\nabla_x\Gamma(x,p)
\]
is bounded in $B/4$, hence $k_2$ is bounded, with the bound being a good constant. Therefore
\[
\int_{|p-q|>\delta}|k_2(q,p)f(q)|\,d\sigma(q)\leq C\int_{|p-q|>\delta}|f(q)|\,d\sigma(q)
\]
for a good constant $C$.

We now add the last estimate with \eqref{eq:Fork1} and we use the definition of $\tilde{T}^*$, to obtain that, for any $\delta>0$,
\[
\left|\int_{|p-q|>\delta}\nabla_T^qG(q,p)\cdot f(q)\,d\sigma(q)\right|\leq C\int_{|p-q|>\delta}\left(|p-q|^{3/2-d}+1\right)|f(q)|\,d\sigma(q)+\tilde{T}^*f(p),
\]
hence
\[
T^*f(p)\leq C\int_{|p-q|>\delta}\left(|p-q|^{3/2-d}+1\right)|f(q)|\,d\sigma(q)+\tilde{T}^*f(p).
\]
The fact that the kernel $|p-q|^{3/2-d}+1$ is integrable on $\partial\Omega$, together with boundedness of $\tilde{T}^*$. complete the proof.
\end{proof}

The second operator we will be interested in will be the operator
\begin{equation}\label{eq:TDefinition}
Th(p)=\lim_{\e\to 0}\int_{|p-q|>\e}\nabla_T^pG(p,q)\cdot\nabla_Th(q)\,d\sigma(q),
\end{equation}
for $h\in W^{1,2}(\partial\Omega)$.

\begin{prop}\label{SingularBoundForAdjoint}
Let $\Omega$ be a Lipschitz domain. Then the limit in the definition of $T$ in \eqref{eq:TDefinition} exists in $L^2(\partial\Omega)$ and almost everywhere, hence $Th\in L^2(\partial\Omega)$.
\end{prop}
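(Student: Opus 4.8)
The plan is to reduce the claim to the already-established boundedness of the singular integral operator $\nabla_T\mathcal{S}$ from the Regularity chapter, via an integration-by-parts identity on the boundary. First I would observe that for smooth $h$ and fixed $p$, the kernel $\nabla_T^pG(p,q)$, viewed as a function of $q$, is precisely $\nabla_T^p$ of the kernel of the single layer potential $\mathcal{S}$; and by proposition \ref{SymmetryWithAdjoint}, $G(p,q)=G^t(q,p)$, so $\nabla_T^pG(p,q)=\nabla_T^pG^t(q,p)$, which ties the object directly to the adjoint kernel appearing in proposition \ref{MaximalTruncationBoundForAdjoint}. The idea is then to write, using the tangential integration-by-parts rule from definition \ref{W1pPartialOmega} (valid on $\partial\Omega$ away from the singularity, inside each coordinate cylinder),
\[
\int_{|p-q|>\e}\nabla_T^pG(p,q)\cdot\nabla_Th(q)\,d\sigma(q)=-\int_{|p-q|>\e}\operatorname{div}_T\!\big(\nabla_T^pG(p,q)\big)h(q)\,d\sigma(q)+\text{boundary terms on }\{|p-q|=\e\}.
\]
The boundary terms on the sphere $\{|p-q|=\e\}$ vanish as $\e\to 0$: using the pointwise gradient bound $|\nabla_xG(x,y)|\le C|x-y|^{1-d}$ from proposition \ref{GreenDerivativeBounds} (applied in $p$) together with $|h|\le\|h\|_\infty$, these terms are bounded by $C\e^{1-d}\cdot\e^{d-2}\|h\|_\infty\to 0$, exactly as in the analogous computations in lemma \ref{SBoundedness} and lemma \ref{JumpFor1}.

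Next I would recognize that $\operatorname{div}_T(\nabla_T^pG(p,\cdot))$, as a kernel in the $q$-variable acting on $h$, is (up to lower order terms and the jump contributions from the singularity) comparable to the kernel of $\nabla_T\mathcal{S}$ acting on $h$: indeed the operator $h\mapsto\int\nabla_T^pG(p,q)\cdot\nabla_Th(q)$ is, by the symmetry relation, formally the adjoint of the operator $f\mapsto\nabla_T\mathcal{S}f$ composed with $\nabla_T$. More precisely, the cleanest route is to peel off the frozen-coefficient model operator: write $G=G_0+(G-G_0)$ where $G_0$ is Green's function for $-\operatorname{div}(A\nabla\cdot)=0$ in $B$, so that $\nabla_T^pG(p,q)=\nabla_T^pG_0(p,q)+k_1(p,q)$ with $|k_1(p,q)|\le C|p-q|^{3/2-d}$ by proposition \ref{ContinuityArgumentForB}. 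The error term contributes a kernel $|p-q|^{3/2-d}$ applied to $\nabla_Th$, which is a weakly singular (integrable) kernel on $\partial\Omega$ and hence bounded on $L^2(\partial\Omega)$, with a limit existing in $L^2$ and a.e. by dominated convergence. For the model term $\int\nabla_T^pG_0(p,q)\cdot\nabla_Th(q)$, I would invoke the corresponding $L^2$ boundedness and a.e./$L^2$ convergence for the drift-free operator, which is established in the Kenig–Shen theory cited throughout (theorems 3.1 and 4.4 in \cite{KenigShen}), exactly as was done for $\mathcal{S}_0$ in the proofs of propositions \ref{MaximalTruncationBound} and \ref{T_iBoundedness} and in proposition \ref{DerivativeIsNonTangentiallyContinuous}.

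With those pieces in place the proof assembles: the truncated integrals $\int_{|p-q|>\e}\nabla_T^pG(p,q)\cdot\nabla_Th(q)\,d\sigma(q)$ are dominated pointwise by $T^*(\nabla_Th)(p)+C\int|p-q|^{3/2-d}|\nabla_Th(q)|\,d\sigma(q)$, where $T^*$ is the maximal truncation operator shown bounded on $L^2$ in proposition \ref{MaximalTruncationBoundForAdjoint}; this gives the uniform $L^2$ domination needed to pass to the limit, and the a.e. convergence follows from the a.e. convergence for the model operator plus the a.e. finiteness of the weakly singular piece. Hence $Th$ exists in $L^2(\partial\Omega)$ and a.e., with $\|Th\|_{L^2(\partial\Omega)}\le C\|\nabla_Th\|_{L^2(\partial\Omega)}$, $C$ a good constant. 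The main obstacle I anticipate is justifying the tangential integration by parts cleanly across the coordinate-cylinder patches and controlling the $\{|p-q|=\e\}$ boundary term uniformly in $p$ — this requires the pointwise gradient estimate of proposition \ref{GreenDerivativeBounds} in the $p$-variable and a localization argument with a partition of unity subordinate to the cylinders $Z_j$, much as in lemma \ref{SBoundedness}; once the reduction to $T^*(\nabla_Th)$ plus an integrable kernel is achieved, the rest is routine.
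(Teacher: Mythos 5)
Your core decomposition --- write $\nabla_T^pG(p,q)=\nabla_T^pG_0(p,q)+k_1(p,q)$ with $|k_1|\le C|p-q|^{3/2-d}$ via proposition \ref{ContinuityArgumentForB}, treat the error as an integrable kernel acting on $\nabla_Th\in L^2(\partial\Omega)$, and invoke the Kenig--Shen theory for the drift-free model --- is exactly the paper's route: the paper's proof is a one-liner pointing to proposition \ref{T_iBoundedness}, which performs this same decomposition, relying on theorem 3.1 of \cite{KenigShen}. However, both the preamble and the final assembly contain concrete errors.

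The integration-by-parts preamble should be dropped entirely. Transferring $\nabla_T^q$ off $h$ produces the mixed second-derivative kernel $\operatorname{div}_T^q\nabla_T^pG(p,q)\sim|p-q|^{-d}$, which is strictly beyond Calder\'on--Zygmund criticality on the $(d-1)$-dimensional surface $\partial\Omega$; worse, your boundary-term estimate is arithmetically wrong: $\e^{1-d}\cdot\e^{d-2}=\e^{-1}\to\infty$, not $0$. The integration by parts in lemma \ref{SBoundedness} succeeds only because the paper first subtracts $G_0$, so the kernel being differentiated decays like $|p-q|^{5/2-d}$ and the boundary piece is $\sim\e^{1/2}$. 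Here no transfer of derivatives is needed: $T$ already acts on the $L^2$ datum $\nabla_Th$, so you can go directly to the kernel decomposition.

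In the assembly you invoke $T^*$ from proposition \ref{MaximalTruncationBoundForAdjoint}, whose kernel is $\nabla_T^qG(q,p)$ (first-variable gradient, evaluated at $(q,p)$), while the kernel of $T$ is $\nabla_T^pG(p,q)$ (first-variable gradient, evaluated at $(p,q)$). These are transposes of one another, formal adjoints, and neither dominates the other pointwise. The correct dominating operator is $T_*$ from proposition \ref{MaximalTruncationBound}: its kernel $\nabla_pG(p,q)$ has the same orientation, and since $P(p)=I-\nu(p)\nu(p)^T$ depends only on $p$ one can pull the tangential projection out of the truncated integral and obtain $\sup_\e|T^{(\e)}h(p)|\le C\sum_iT_*\big((\nabla_Th)_i\big)(p)$, which lies in $L^2(\partial\Omega)$ by proposition \ref{MaximalTruncationBound}. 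With that substitution the decomposition plus theorem 3.1 in \cite{KenigShen} yields a.e. and $L^2$ convergence exactly as in proposition \ref{T_iBoundedness}.
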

\begin{proof}
Since $\nabla_Th\in L^2(\partial\Omega)$ for $h\in W^{1,2}(\partial\Omega)$, the proof is similar to the proof of proposition \ref{T_iBoundedness} using theorem 3.1 in \cite{KenigShen} and boundedness of the operator $T_A^2$ that appears there.
\end{proof}

\subsection{Existence}
In order to show existence for the Dirichlet problem for the equation $L^tu=0$, we will consider the adjoint of the single layer potential for the equation $Lu=0$. Our first observation is that, from  theorem \ref{InvertibilityOfS}, we obtain the following proposition.

\begin{prop}\label{AdjointInvertibility}
Let $\Omega$ be a Lipschitz domain, $A\in M_{\lambda,\mu}^s(\Omega)$ and $b\in L^{\infty}(\Omega)$. Set
\[
\mathcal{S}^*:W^{-1,2}(\partial\Omega)\to L^2(\partial\Omega)
\]
to be the adjoint operator to $\mathcal{S}:L^2(\partial\Omega)\to W^{1,2}(\partial\Omega)$, from theorem \ref{InvertibilityOfS}. Then this operator is invertible, and its norm is bounded by a good constant.
\end{prop}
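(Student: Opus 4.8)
The proposition asserts that $\mathcal{S}^*:W^{-1,2}(\partial\Omega)\to L^2(\partial\Omega)$, the adjoint of $\mathcal{S}:L^2(\partial\Omega)\to W^{1,2}(\partial\Omega)$, is invertible with norm bounded by a good constant. The plan is to deduce this purely formally from theorem \ref{InvertibilityOfS}, using only the elementary functional-analytic fact that the Banach-space adjoint of an invertible bounded operator between reflexive spaces is again invertible, together with the identity $(\mathcal{S}^{-1})^* = (\mathcal{S}^*)^{-1}$.

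First I would recall that $\mathcal{S}:L^2(\partial\Omega)\to W^{1,2}(\partial\Omega)$ is a bounded linear bijection by theorem \ref{InvertibilityOfS}, and that both $L^2(\partial\Omega)$ and $W^{1,2}(\partial\Omega)$ are reflexive Banach spaces (the latter by lemma \ref{Riesz} applied with $p=2$, or directly since $W^{1,2}(\partial\Omega)$ is a Hilbert space). Since $\mathcal{S}$ is invertible, $\mathcal{S}^{-1}:W^{1,2}(\partial\Omega)\to L^2(\partial\Omega)$ is bounded by the open mapping theorem, with $\|\mathcal{S}^{-1}\|$ a good constant by the estimate in theorem \ref{InvertibilityOfS}. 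Taking adjoints, $\mathcal{S}^*:W^{1,2}(\partial\Omega)^*\to L^2(\partial\Omega)^*$; after identifying $W^{1,2}(\partial\Omega)^* = W^{-1,2}(\partial\Omega)$ and $L^2(\partial\Omega)^* = L^2(\partial\Omega)$, this is exactly the operator in the statement. Then $(\mathcal{S}^{-1})^*:L^2(\partial\Omega)\to W^{-1,2}(\partial\Omega)$ is bounded, and from $\mathcal{S}^{-1}\mathcal{S} = \mathrm{id}_{L^2}$ and $\mathcal{S}\mathcal{S}^{-1} = \mathrm{id}_{W^{1,2}}$ we obtain, by the contravariance of the adjoint, $\mathcal{S}^*(\mathcal{S}^{-1})^* = \mathrm{id}_{L^2}$ and $(\mathcal{S}^{-1})^*\mathcal{S}^* = \mathrm{id}_{W^{-1,2}}$. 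Hence $\mathcal{S}^*$ is invertible with $(\mathcal{S}^*)^{-1} = (\mathcal{S}^{-1})^*$.

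For the norm bound, I would use $\|\mathcal{S}^*\| = \|\mathcal{S}\|$ and $\|(\mathcal{S}^*)^{-1}\| = \|(\mathcal{S}^{-1})^*\| = \|\mathcal{S}^{-1}\|$, both of which are good constants: $\|\mathcal{S}\|$ is a good constant by lemma \ref{SBoundedness}, and $\|\mathcal{S}^{-1}\|$ is a good constant by theorem \ref{InvertibilityOfS}. Since the statement only asks that the norm of $\mathcal{S}^*$ (equivalently of $(\mathcal{S}^*)^{-1}$, which is what matters for the later existence argument) be bounded by a good constant, this completes the argument.

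There is essentially no serious obstacle here — the content is entirely in theorem \ref{InvertibilityOfS}, and the remaining work is the standard duality bookkeeping. The only point requiring a line of care is the identification of $W^{1,2}(\partial\Omega)^*$ with $W^{-1,2}(\partial\Omega)$ (so that the abstractly-defined adjoint is the concrete operator one wants) and the corresponding self-duality $L^2(\partial\Omega)^* \cong L^2(\partial\Omega)$; both have been set up in the preliminaries, so invoking them suffices.
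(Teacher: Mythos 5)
Your argument is correct and is exactly what the paper intends: the paper offers no separate proof, merely prefacing the proposition with the remark that it is "obtained from theorem \ref{InvertibilityOfS}," and your duality bookkeeping (adjoint of an invertible bounded operator is invertible, with $(\mathcal{S}^*)^{-1}=(\mathcal{S}^{-1})^*$ and matching norms) fills in that implicit step in the standard way. Note only that the identification $W^{1,2}(\partial\Omega)^*=W^{-1,2}(\partial\Omega)$ is not an identification to be justified but the paper's definition of $W^{-1,2}(\partial\Omega)$, so that part of your argument can be trimmed.
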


Note that, at this point, we need to assume that $A$ is symmetric: recall that we have proved theorem \ref{InvertibilityOfS} using the Rellich estimates, which in turn we have showed assuming that $A$ is symmetric.

We turn to finding the formula for the adjoint: for this purpose, let $f\in L^2(\partial\Omega)$ and $H\in W^{-1,2}(\Omega)$. Then, lemma \ref{Riesz} shows that there exists a unique $h\in W^{1,2}(\partial\Omega)$, such that $H=R_2h$: that is,
\[
\left<\mathcal{S}f,H\right>_{W^{1,2}(\partial\Omega)}=\int_{\partial\Omega}\mathcal{S}f\cdot h\,d\sigma+\int_{\partial\Omega}\nabla_T\mathcal{S}f\cdot\nabla_Th\,d\sigma.
\]
For the first integral, we compute
\begin{align*}
\int_{\partial\Omega}\mathcal{S}f\cdot h\,d\sigma&=\int_{\partial\Omega}\left(\int_{\partial\Omega}G(p,q)f(q)\,d\sigma(q)\right)\cdot h(p)\,d\sigma(p)\\
&=\int_{\partial\Omega}\left(\int_{\partial\Omega}G(p,q)h(p)\,d\sigma(p)\right)\cdot f(q)\,d\sigma(q),
\end{align*}
from Fubini's theorem, since the double integral converges absolutely from the pointwise bounds for $G$. For the second integral, proposition \ref{MaximalTruncationBoundForAdjoint} shows that the dominated convergence theorem is applicable, hence
\begin{align*}
\int_{\partial\Omega}\nabla_T\mathcal{S}f\cdot\nabla_Tg\,d\sigma&=\int_{\partial\Omega}\left(\lim_{\e\to 0}\int_{|p-q|>\e}\nabla_T^pG(p,q)f(q)\,d\sigma(q)\right)\cdot\nabla_Th(p)\,d\sigma(p)\\
&=\lim_{\e\to 0}\int_{\partial\Omega}\left(\int_{|p-q|>\e}\nabla_T^pG(p,q)f(q)\,d\sigma(q)\right)\cdot\nabla_Th(p)\,d\sigma(p)\\
&=\lim_{\e\to 0}\int_{\partial\Omega}\left(\int_{|p-q|>\e}\nabla_T^pG(p,q)\cdot\nabla_Th(p)\,d\sigma(p)\right)\cdot f(q)\,d\sigma(q)\\
&=\int_{\partial\Omega}\left(\lim_{\e\to 0}\int_{|p-q|>\e}\nabla_T^pG(p,q)\cdot\nabla_Th(p)\,d\sigma(p)\right)\cdot f(q)\,d\sigma(q),
\end{align*}
where we used Fubini's theorem for the third equality, since for fixed $\e$ the inner integral is absolutely convergent, and the dominated convergence theorem for the last equality. Therefore, we finally obtain that
\[
\mathcal{S}^*H(q)=\int_{\partial\Omega}G(p,q)h(p)\,d\sigma(p)+\lim_{\e\to 0}\int_{|p-q|>\e}\nabla_T^pG(p,q)\cdot\nabla_Th(p)\,d\sigma(p),
\]
therefore using proposition \ref{SymmetryWithAdjoint}, we are led to the following lemma.

\begin{lemma}\label{FirstRepresentationOfS*}
Let $\Omega$ be a Lipschitz domain, $A\in M_{\lambda,\mu}^s(\Omega)$ and $b\in L^{\infty}(\Omega)$. Then, for any $H\in W^{-1,2}(\partial\Omega)$ with $H=R_2h$ in the sense of lemma \ref{Riesz},
\[
\mathcal{S}^*H(p)=\int_{\partial\Omega}G^t(p,q)h(q)\,d\sigma(q)+\lim_{\e\to 0}\int_{|p-q|>\e}\nabla_T^qG^t(p,q)\cdot\nabla_Th(q)\,d\sigma(q).
\]
\end{lemma}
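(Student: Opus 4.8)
The plan is to compute $\mathcal{S}^*H$ directly from the definition of the adjoint operator, pair it against an arbitrary $f\in L^2(\partial\Omega)$, and use Fubini's theorem together with the dominated convergence theorem (justified by the maximal truncation bound in proposition \ref{MaximalTruncationBoundForAdjoint}) to move all integrations onto the variable of $f$; the resulting kernel expression, rewritten via the symmetry relation $G(p,q)=G^t(q,p)$ of proposition \ref{SymmetryWithAdjoint}, is exactly the claimed formula. Most of this computation is already carried out in the paragraph preceding the statement, so the proof is essentially a matter of recording that computation cleanly and then applying proposition \ref{SymmetryWithAdjoint}.

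First I would fix $H\in W^{-1,2}(\partial\Omega)$ and invoke lemma \ref{Riesz} to write $H=R_2h$ for the unique $h\in W^{1,2}(\partial\Omega)$, so that for every $g\in W^{1,2}(\partial\Omega)$ one has $\langle g,H\rangle=\int_{\partial\Omega}g\cdot h\,d\sigma+\int_{\partial\Omega}\nabla_Tg\cdot\nabla_Th\,d\sigma$. Then for $f\in L^2(\partial\Omega)$, using $g=\mathcal{S}f$ and the defining property of the adjoint $\int_{\partial\Omega}\mathcal{S}^*H\cdot f\,d\sigma=\langle\mathcal{S}f,H\rangle_{W^{1,2}(\partial\Omega)}$, I would split into the two integrals as in the preamble. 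For the first integral, the pointwise bound $G(p,q)\le C|p-q|^{2-d}$ from theorem \ref{GoodGreenFunctionEstimates} gives absolute convergence of the double integral, so Fubini applies and yields $\int_{\partial\Omega}\big(\int_{\partial\Omega}G(p,q)h(p)\,d\sigma(p)\big)f(q)\,d\sigma(q)$. For the second integral, I would first pass the tangential derivative inside using lemma \ref{SBoundedness} to represent $\nabla_T\mathcal{S}f(p)$ as the principal value $\lim_{\e\to0}\int_{|p-q|>\e}\nabla_T^pG(p,q)f(q)\,d\sigma(q)$, then apply the dominated convergence theorem (with the dominating function provided by $T^*f\in L^2(\partial\Omega)$ from proposition \ref{MaximalTruncationBoundForAdjoint}, noting $|\nabla_Th|\in L^2$) to pull the limit outside, then Fubini on the truncated (absolutely convergent) integral to swap $p$ and $q$, and finally the dominated convergence theorem once more to reinstate the limit, arriving at $\int_{\partial\Omega}\big(\lim_{\e\to0}\int_{|p-q|>\e}\nabla_T^pG(p,q)\cdot\nabla_Th(p)\,d\sigma(p)\big)f(q)\,d\sigma(q)$.

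Since $f\in L^2(\partial\Omega)$ was arbitrary, comparing the two sides identifies
\[
\mathcal{S}^*H(q)=\int_{\partial\Omega}G(p,q)h(p)\,d\sigma(p)+\lim_{\e\to0}\int_{|p-q|>\e}\nabla_T^pG(p,q)\cdot\nabla_Th(p)\,d\sigma(p)
\]
for a.e.\ $q\in\partial\Omega$. The final step is purely cosmetic: apply proposition \ref{SymmetryWithAdjoint}, which gives $G(p,q)=G^t(q,p)$ and, after relabeling the dummy variable $p\leftrightarrow q$ and noting that the tangential gradient in the $p$-slot of $G$ becomes the tangential gradient in the second slot of $G^t$, rewrite the expression in the stated form with kernels $G^t(p,q)$ and $\nabla_T^qG^t(p,q)$.

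The only genuinely delicate point is the triple interchange of limit, integral, and limit in handling the second term: one must be careful that the principal-value kernel is not absolutely integrable, so Fubini can only be applied after truncation, and the passage to the limit before and after the swap must each be justified by a dominating function. This is exactly where proposition \ref{MaximalTruncationBoundForAdjoint} (and, symmetrically, the boundedness of the maximal truncation operator for $L$ from proposition \ref{MaximalTruncationBound}) is essential; everything else is bookkeeping. I would also remark that the existence of the principal-value limit defining the second kernel is guaranteed a.e.\ by proposition \ref{SingularBoundForAdjoint}, so the right-hand side of the asserted identity is well-defined.
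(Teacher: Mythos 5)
Your proposal is correct and matches the paper's proof verbatim: the paper carries out exactly this Fubini/dominated-convergence computation in the paragraph preceding the lemma and then relabels the kernels via proposition \ref{SymmetryWithAdjoint}. One minor attribution slip: for the first dominated-convergence step (pulling $\lim_{\varepsilon\to 0}$ out of the $p$-integral) the dominating function is $T_*f(p)\,|\nabla_Th(p)|$ controlled by proposition \ref{MaximalTruncationBound}, whereas it is $T^*(\nabla_Th)(q)\,|f(q)|$ from proposition \ref{MaximalTruncationBoundForAdjoint} that governs the final step of reinstating the limit inside the $q$-integral --- you correctly flag both propositions in your closing remark but swap their roles in the in-line justification.
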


We will also need to compute the formula for $\mathcal{S}^*H$ when $H$ is of the special form $H=E_2h$ for $h\in L^2(\partial\Omega)$, where $E_2:L^2(\partial\Omega)\to W^{-1,2}(\partial\Omega)$ is the embedding that appears right before lemma \ref{L2DensityInRd}.

\begin{lemma}\label{SecondRepresentationOfS*}
Let $\Omega$ be a Lipschitz domain, $A\in M_{\lambda,\mu}^s(\Omega)$ and $b\in L^{\infty}(\Omega)$. Then, if $H\in W^{-1,2}(\partial\Omega)$ is of the form $H=E_2h$ for some $h\in L^2(\partial\Omega)$,
\[
\mathcal{S^*}H(p)=\int_{\partial\Omega}G^t(p,q)h(q)\,d\sigma(q).
\]
\end{lemma}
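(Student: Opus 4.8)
The plan is to start from the general formula for $\mathcal{S}^*H$ in Lemma \ref{FirstRepresentationOfS*} and specialize it to $H = E_2h$, so the main task is to identify which element $\tilde h\in W^{1,2}(\partial\Omega)$ represents $E_2h$ in the sense of Lemma \ref{Riesz}, and then to show that the singular-integral term drops out. Recall that $E_2h$ acts on $g\in W^{1,2}(\partial\Omega)$ by $(E_2h)g=\int_{\partial\Omega}hg\,d\sigma$, while $R_2\tilde h$ acts by $(R_2\tilde h)g=\int_{\partial\Omega}(\tilde h\, g+\nabla_T\tilde h\cdot\nabla_Tg)\,d\sigma$. So I would need to produce $\tilde h$ with $\int_{\partial\Omega}(\tilde h\,g+\nabla_T\tilde h\cdot\nabla_Tg)\,d\sigma=\int_{\partial\Omega}hg\,d\sigma$ for all $g$; this is exactly the statement $H=R_2\tilde h$ guaranteed by Lemma \ref{Riesz} applied to the functional $H=E_2h\in W^{-1,2}(\partial\Omega)$. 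The point, however, is not to solve for $\tilde h$ explicitly but to plug $H=R_2\tilde h$ into the formula of Lemma \ref{FirstRepresentationOfS*}, which gives
\[
\mathcal{S}^*H(p)=\int_{\partial\Omega}G^t(p,q)\tilde h(q)\,d\sigma(q)+\lim_{\e\to 0}\int_{|p-q|>\e}\nabla_T^qG^t(p,q)\cdot\nabla_T\tilde h(q)\,d\sigma(q),
\]
and then to rewrite the whole right-hand side back in terms of $h$ rather than $\tilde h$.

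The cleanest route is probably to revisit the computation that produced Lemma \ref{FirstRepresentationOfS*} rather than to manipulate its output. There, for $f\in L^2(\partial\Omega)$ and $H\in W^{-1,2}(\partial\Omega)$, one evaluates $\langle \mathcal{S}f,H\rangle$ and, after Fubini and a limiting argument using Proposition \ref{MaximalTruncationBoundForAdjoint}, reads off the formula for $\mathcal{S}^*H$. When $H=E_2h$ the pairing $\langle \mathcal{S}f,E_2h\rangle$ is simply $\int_{\partial\Omega}\mathcal{S}f\cdot h\,d\sigma$, \emph{without} the tangential-gradient term $\int_{\partial\Omega}\nabla_T\mathcal{S}f\cdot\nabla_Th\,d\sigma$ that appears for a general $H=R_2 h$. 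Carrying out the same Fubini step on $\int_{\partial\Omega}\mathcal{S}f\cdot h\,d\sigma=\int_{\partial\Omega}\big(\int_{\partial\Omega}G(p,q)f(q)\,d\sigma(q)\big)h(p)\,d\sigma(p)$, which is legitimate since the double integral converges absolutely by the pointwise bound $G(p,q)\le C|p-q|^{2-d}$ from Theorem \ref{GoodGreenFunctionEstimates}, yields
\[
\langle \mathcal{S}f,E_2h\rangle=\int_{\partial\Omega}\Big(\int_{\partial\Omega}G(p,q)h(p)\,d\sigma(p)\Big)f(q)\,d\sigma(q),
\]
so $\mathcal{S}^*(E_2h)(q)=\int_{\partial\Omega}G(p,q)h(p)\,d\sigma(p)$. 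Finally, applying the symmetry relation $G(p,q)=G^t(q,p)$ from Proposition \ref{SymmetryWithAdjoint} and relabeling gives exactly $\mathcal{S}^*H(p)=\int_{\partial\Omega}G^t(p,q)h(q)\,d\sigma(q)$, which is the claimed identity.

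I would present it in that order: (i) note $\langle\mathcal{S}f,H\rangle_{W^{1,2}}=(E_2h)(\mathcal{S}f)=\int_{\partial\Omega}\mathcal{S}f\cdot h\,d\sigma$ by the definition of $E_2$ and of the adjoint pairing; (ii) insert the definition $\mathcal{S}f(p)=\int_{\partial\Omega}G(p,q)f(q)\,d\sigma(q)$ and apply Fubini, justified by the integrable kernel bound from Theorem \ref{GoodGreenFunctionEstimates}; (iii) read off $\mathcal{S}^*H(q)=\int_{\partial\Omega}G(p,q)h(p)\,d\sigma(p)$ from the identity holding for all $f\in L^2(\partial\Omega)$; (iv) apply Proposition \ref{SymmetryWithAdjoint} to convert $G$ to $G^t$. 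The main — and essentially only — subtlety is bookkeeping about how $E_2$ sits inside $W^{-1,2}(\partial\Omega)$: one must be careful that the $W^{1,2}$ inner-product pairing used to define $\mathcal{S}^*$ is compatible with the $W^{-1,2}$--$W^{1,2}$ duality used to define $E_2$, i.e. that $(E_2 h)(g)=\int hg\,d\sigma$ and not $\int(hg+\nabla_T h\cdot\nabla_T g)$; once that is pinned down the computation is routine and strictly simpler than the one behind Lemma \ref{FirstRepresentationOfS*}, since the gradient term never enters.
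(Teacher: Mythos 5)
Your proposal is correct and, after you set aside the initial detour through Lemma \ref{FirstRepresentationOfS*}, the route you actually follow — evaluating $\langle\mathcal{S}f,E_2h\rangle$ directly via the definition of $E_2$, applying Fubini with the kernel bound from Theorem \ref{GoodGreenFunctionEstimates}, and converting $G$ to $G^t$ with Proposition \ref{SymmetryWithAdjoint} — is exactly the paper's proof. The "subtlety" you flag about the pairing is real but is precisely what the paper's notation $\langle\mathcal{S}f,H\rangle_{W^{1,2}(\partial\Omega)}$ already encodes as the $W^{-1,2}$–$W^{1,2}$ duality, so you handle it the same way the paper does.
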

\begin{proof}
Let $f\in L^2(\partial\Omega)$. The definition of $E_2$ shows that
\begin{align*}
\left<\mathcal{S}f,E_2h\right>_{W^{1,2}(\partial\Omega)}&=\int_{\partial\Omega}\mathcal{S}f\cdot h\,d\sigma=\int_{\partial\Omega}\int_{\partial\Omega}G(p,q)f(q)h(p)\,d\sigma(q)d\sigma(p)\\
&=\int_{\partial\Omega}\left(\int_{\partial\Omega}G(p,q)h(p)\,d\sigma(p)\right)f(q)d\sigma(q)\\
&=\left<\int_{\partial\Omega}G^t(\cdot,p)h(p)\,d\sigma(p),f\right>_{L^2(\partial\Omega)},
\end{align*}
from Fubini's theorem, since the kernel $G(p,q)$ is integrable on $\partial\Omega$. This completes the proof.
\end{proof}

We will now extend the operator $\mathcal{S}^*$ inside $\Omega$. In order to do this, note that for any fixed $x\in\Omega$, the function
\[
y\mapsto G^t(x,y)=G_x(y)
\]
solves the equation $Lu=0$ away from $x$, therefore it is continuously differentiable near $\partial\Omega$ from proposition \ref{DerivativeRegularity}. Hence, restricted on $\partial\Omega$, $G_x\in W^{1,2}(\partial\Omega)$. Therefore, for $F\in W^{-1,2}(\partial\Omega)$, we can define
\[
\mathcal{S}^*_+F(x)=F(G_x).
\]
We are then led to the following expressions.

\begin{lemma}\label{RepresentationOfS*_+}
Let $\Omega$ be a Lipschitz domain, $A\in M_{\lambda,\mu}^s(\Omega)$ and $b\in L^{\infty}(\Omega)$. If $H\in W^{-1,2}(\partial\Omega)$ can be represented as $H=R_2h$ for $h\in W^{1,2}(\partial\Omega)$, in the sense of lemma \ref{Riesz}, then
\[
\mathcal{S}^*_+H(x)=\int_{\partial\Omega}\left(G^t(x,q)h(q)+\nabla_T^qG^t(x,q)\cdot\nabla_Th(q)\right)\,d\sigma(q).
\]
In the special case where $H=E_2f$ for some $f\in L^2(\partial\Omega)$, then
\[
\mathcal{S}^*_+H(x)=\int_{\partial\Omega}G^t(x,q)f(q)\,d\sigma(q).
\]
\end{lemma}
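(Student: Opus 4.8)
The plan is to argue directly from the definition $\mathcal{S}^*_+F(x)=F(G_x)$, where $G_x$ denotes the restriction to $\partial\Omega$ of $y\mapsto G^t(x,y)$. The first step is the preliminary observation, already made in the paragraph preceding the statement, that $G_x$ is a legitimate element of $W^{1,2}(\partial\Omega)$: by proposition \ref{SymmetryWithAdjoint} we have $G^t(x,y)=G(y,x)$, so $y\mapsto G^t(x,y)$ is a solution of $Lu=0$ in a neighborhood of $\partial\Omega$ (the pole is at $x\in\Omega$, away from the boundary), hence continuously differentiable there by proposition \ref{DerivativeRegularity}; since $\partial\Omega$ is compact, the tangential gradient of the restriction is bounded, so $G_x\in W^{1,2}(\partial\Omega)$ and the pairing $F(G_x)$ is well defined for every $F\in W^{-1,2}(\partial\Omega)$. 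Along the way one records the elementary identification $\nabla_T(G_x)(q)=\nabla_T^qG^t(x,q)$, i.e.\ the tangential gradient of the restriction coincides with the restriction of the full gradient (in the sense of definition \ref{W1pPartialOmega}); this is the only point that requires a sentence of justification, and it follows from $C^1$-regularity of $G^t(x,\cdot)$ near $\partial\Omega$.

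For the first formula, suppose $H=R_2h$ with $h\in W^{1,2}(\partial\Omega)$. By the defining property of $R_2$ in lemma \ref{Riesz}, for every $g\in W^{1,2}(\partial\Omega)$ one has $H(g)=\int_{\partial\Omega}\big(h\,g+\nabla_Th\cdot\nabla_Tg\big)\,d\sigma$. Applying this with $g=G_x$ and using the identification of the tangential gradient above gives
\[
\mathcal{S}^*_+H(x)=H(G_x)=\int_{\partial\Omega}\big(G^t(x,q)h(q)+\nabla_T^qG^t(x,q)\cdot\nabla_Th(q)\big)\,d\sigma(q),
\]
which is exactly the claimed expression.

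For the second formula, suppose $H=E_2f$ with $f\in L^2(\partial\Omega)$, where $E_2$ is the canonical embedding $L^2(\partial\Omega)\to W^{-1,2}(\partial\Omega)$ introduced before lemma \ref{L2DensityInRd}, so that $H(g)=\int_{\partial\Omega}fg\,d\sigma$ for $g\in W^{1,2}(\partial\Omega)$. Evaluating at $g=G_x$ yields
\[
\mathcal{S}^*_+H(x)=H(G_x)=\int_{\partial\Omega}G^t(x,q)f(q)\,d\sigma(q),
\]
as desired; this is the interior analog of lemma \ref{SecondRepresentationOfS*}, and consistency with it is automatic since both come from the same pairing.

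There is no genuine analytic obstacle here: the statement is essentially an unwinding of the definitions of $\mathcal{S}^*_+$, $R_2$, and $E_2$. The only care needed is the (routine) verification that $G_x|_{\partial\Omega}\in W^{1,2}(\partial\Omega)$ with tangential gradient equal to the restriction of $\nabla_yG^t(x,\cdot)$, which rests on the interior gradient bounds of proposition \ref{DerivativeRegularity} together with compactness of $\partial\Omega$.
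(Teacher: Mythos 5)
Your proof is correct and is exactly the argument the paper intends, though the paper states its proof only by reference (``follows from the definition of $\mathcal{S}^*_+$ and a proof analogous to lemma \ref{FirstRepresentationOfS*}/\ref{SecondRepresentationOfS*}''). You unwind the definition $\mathcal{S}^*_+F(x)=F(G_x)$, use the Riesz representation from lemma \ref{Riesz} to write $H=R_2h$ as $Hg=\int(hg+\nabla_Th\cdot\nabla_Tg)\,d\sigma$, evaluate at $g=G_x$, and identify $\nabla_T(G_x|_{\partial\Omega})$ with the tangential component of $\nabla_y G^t(x,\cdot)$ on $\partial\Omega$ using the $C^1$-regularity of $G^t(x,\cdot)$ near the boundary (proposition \ref{DerivativeRegularity} after the symmetry relation). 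The one thing worth noting is that you correctly sidestep the extra machinery in lemma \ref{FirstRepresentationOfS*}: there, $p$ sits on $\partial\Omega$, so the tangential-gradient term is a principal-value singular integral requiring Fubini plus dominated convergence and the bound from proposition \ref{MaximalTruncationBoundForAdjoint}; here $x$ is interior, the kernel $q\mapsto\nabla_T^qG^t(x,q)$ is bounded on $\partial\Omega$, and the pairing evaluates directly with no limit. Your observation that the only nonroutine point is the identification of $\nabla_T(G_x|_{\partial\Omega})$ with the restricted gradient (in the sense of definition \ref{W1pPartialOmega}) is exactly right, and your justification of it is adequate.
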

\begin{proof}
The first representation follows from the definition of $\mathcal{S}^*_+$ and a proof analogous to the proof of lemma \ref{FirstRepresentationOfS*}. The second representation follows from an analogue of the proof of lemma \ref{SecondRepresentationOfS*}.
\end{proof}

The basic properties of the operator $\mathcal{S}_+^*$ are now demonstrated in the next proposition.

\begin{prop}\label{SingleLayerAdjoint}
Let $\Omega$ be a Lipschitz domain, $A\in M_{\lambda,\mu}^s(\Omega)$, $b\in L^{\infty}(\Omega)$. For every $F\in W^{-1,2}(\partial\Omega)$, $u=\mathcal{S}^*_+F$ is a $W_{\loc}^{1,2}(\Omega)$ solution of the equation $L^tu=0$ in $\Omega$. Moreover, $u$ converges to $\mathcal{S}^*F$ on $\partial\Omega$, nontangentially, almost everywhere, and also
\[
\|u^*\|_{L^2(\partial\Omega)}\leq C\|F\|_{W^{-1,2}(\partial\Omega)},
\]
where $C$ is a good constant.
\end{prop}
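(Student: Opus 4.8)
The plan is to treat the three assertions — that $u=\mathcal{S}^*_+F$ solves $L^tu=0$ in $\Omega$, that it has nontangential limit $\mathcal{S}^*F$ on $\partial\Omega$, and that $\|u^*\|_{L^2(\partial\Omega)}\le C\|F\|_{W^{-1,2}(\partial\Omega)}$ — by first establishing everything for the dense subclass of functionals $F=E_2f$ with $f\in L^2(\partial\Omega)$ (using lemma \ref{DensityOfL2}), where $\mathcal{S}^*_+F$ has the explicit integral representation from lemma \ref{RepresentationOfS*_+}, namely $\mathcal{S}^*_+F(x)=\int_{\partial\Omega}G^t(x,q)f(q)\,d\sigma(q)$, and then passing to general $F$ by a limiting argument once the maximal-function bound is in hand.

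\textbf{Step 1 (solution property).} For $F=E_2f$, the argument mirrors the proof of proposition \ref{SingleLayerInequalities}: differentiate under the integral sign (justified because for $\phi\in C_c^\infty(\Omega)$ the support of $\phi$ is a positive distance from $\partial\Omega$, so $y\mapsto G^t(y,q)$ is continuously differentiable there by proposition \ref{DerivativeRegularity}, and the integrals converge absolutely by the pointwise bounds of theorem \ref{GoodGreenFunctionEstimatesForAdjoint}), apply Fubini's theorem, and use the defining property $\alpha^t(G^t(\cdot,q),\phi)=\phi(q)$ together with $\phi(q)=0$ for $q\in\partial\Omega$. This gives $\alpha^t(u,\phi)=0$, i.e. $u$ is a weak solution of $L^tu=0$; local $W^{1,2}$ regularity follows as in proposition \ref{SingleLayerInequalities}. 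For general $F\in W^{-1,2}(\partial\Omega)$, one approximates $F$ by $E_2f_n$ in $W^{-1,2}(\partial\Omega)$, notes $\mathcal{S}^*_+F(x)=F(G_x)$ depends continuously on $F$ for fixed $x\in\Omega$ (since $G_x\in W^{1,2}(\partial\Omega)$), and passes to the limit in the weak formulation using interior estimates (Caccioppoli plus Harnack/the continuity-of-gradient bounds) to get locally uniform convergence of $\mathcal{S}^*_+(E_2f_n)$ to $\mathcal{S}^*_+F$, so the solution property is preserved.

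\textbf{Step 2 (maximal bound and boundary values).} This is the main obstacle. For $F=E_2f$, I would write $u=\mathcal{S}^*_+F$ as $\int_{\partial\Omega}G^t(x,q)f(q)\,d\sigma(q)$ and split, exactly as in the proof of proposition \ref{SingleLayerInequalities}, into an integral over $\partial\Omega\setminus\Delta_{1/n}(p)$ — controlled using the Hölder/Lipschitz continuity of $G^t$ in its \emph{first} variable, which here is proposition \ref{HolderContinuityOfGreenAsIsForAdjoint} (note the adjoint variable of $G^t$ is the solution variable of $L$, so the relevant estimates are the ``$G^t$'' ones) — and a local piece over $\Delta_{1/n}(p)$ bounded by $C\int_{\partial\Omega}G^t(p,q)|f(q)\chi_{\Delta_{1/n}(q)}|\,d\sigma(q)$, which tends to $0$ in $L^2$ as $n\to\infty$ because the kernel $G^t(p,q)\le C|p-q|^{2-d}$ is integrable on $\partial\Omega$. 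This yields nontangential convergence of $u$ to $\mathcal{S}^*F=\int_{\partial\Omega}G^t(\cdot,q)f(q)\,d\sigma(q)$ a.e., and the bound $u^*(p)\le C\int_{\partial\Omega}|p-q|^{2-d}|f(q)|\,d\sigma(q)$, whose $L^2(\partial\Omega)$ norm is $\le C\|f\|_{L^2(\partial\Omega)}$ by Schur's test (the kernel $|p-q|^{2-d}$ has uniformly bounded $L^1$ rows and columns on $\partial\Omega$). Since $\|E_2f\|_{W^{-1,2}(\partial\Omega)}\le C\|f\|_{L^2(\partial\Omega)}$ this is weaker than the claimed estimate, so for the final statement I must instead bound $\|u^*\|_{L^2}$ by $\|F\|_{W^{-1,2}(\partial\Omega)}$ directly: here I would use the invertibility of $\mathcal{S}^*:W^{-1,2}(\partial\Omega)\to L^2(\partial\Omega)$ (proposition \ref{AdjointInvertibility}) to write $F=(\mathcal{S}^*)^{-1}(\mathcal{S}^*F)$ and then relate $u^*$ to $\mathcal{S}^*F$, or — more robustly — establish the bound first for $F=R_2h$ using lemma \ref{RepresentationOfS*_+} together with the boundedness of the maximal truncation operator $T^*$ (proposition \ref{MaximalTruncationBoundForAdjoint}) and of the singular operator $T$ (proposition \ref{SingularBoundForAdjoint}) to control the $\nabla_T^qG^t\cdot\nabla_Th$ term, obtaining $u^*(p)\le C\big(\int_{\partial\Omega}|p-q|^{2-d}|h(q)|\,d\sigma+ (\text{maximal truncation of }\nabla_Th)(p)\big)$ and hence $\|u^*\|_{L^2}\le C\|h\|_{W^{1,2}(\partial\Omega)}=C\|R_2h\|_{W^{-1,2}(\partial\Omega)}$ by lemma \ref{Riesz}.

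\textbf{Step 3 (passage to the limit).} Finally, since every $R_2h$ ranges over all of $W^{-1,2}(\partial\Omega)$ (lemma \ref{Riesz}), the estimate $\|(\mathcal{S}^*_+F)^*\|_{L^2(\partial\Omega)}\le C\|F\|_{W^{-1,2}(\partial\Omega)}$ holds for all $F$, and by density and this uniform bound the nontangential convergence $\mathcal{S}^*_+F\to\mathcal{S}^*F$ a.e. extends from the dense class to all $F\in W^{-1,2}(\partial\Omega)$ via a standard Fatou/diagonal argument (as in the proof of proposition \ref{DirichletSolvability}): if $F_n=E_2f_n\to F$ in $W^{-1,2}(\partial\Omega)$, then $(\mathcal{S}^*_+F-\mathcal{S}^*_+F_n)^*\le \liminf_m(\mathcal{S}^*_+F_m-\mathcal{S}^*_+F_n)^*$ pointwise, integrate and use the maximal bound to get $\|(\mathcal{S}^*_+F-\mathcal{S}^*_+F_n)^*\|_{L^2}\to 0$, then compare nontangential limsups with $\mathcal{S}^*F_n\to\mathcal{S}^*F$ in $L^2(\partial\Omega)$ (continuity of $\mathcal{S}^*$). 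The hard part throughout is keeping track of which variable of $G^t$ is being differentiated so that the correct pointwise and Hölder estimates (the $G^t$-versions: propositions \ref{GreenDerivativeBoundsForAdjoint}, \ref{HolderContinuityOfGreenForAdjoint}, \ref{HolderContinuityOfGreenAsIsForAdjoint}) are invoked, and verifying that the dominated-convergence interchanges used in deriving the representation formulas (already set up in lemmas \ref{FirstRepresentationOfS*}–\ref{RepresentationOfS*_+}) carry over to the pointwise-in-$x$ estimates needed for the maximal function.
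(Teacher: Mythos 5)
Your proposal lands on essentially the same route as the paper. For the maximal bound, the paper also writes $F=R_2f$ via lemma~\ref{Riesz} and uses the representation in lemma~\ref{RepresentationOfS*_+}, decomposing the second integral into a local piece bounded by the Hardy--Littlewood maximal function of $\nabla_T f$ and a non-local piece controlled by the maximal truncation operator $T^*$ plus the H\"older estimate on the mixed derivative of Green's function (proposition~\ref{HolderContinuityOfGreenForAdjointOnAdjoint}); and for nontangential convergence it establishes the limit first for $F$ in the dense subspace $E_2(L^2(\partial\Omega))$, then transfers it to all $F\in W^{-1,2}(\partial\Omega)$ by the maximal-function bound and a Chebyshev/density argument, exactly as you sketch in Step 3.

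Two small remarks. First, your alternative suggestion for the maximal bound---to use invertibility of $\mathcal{S}^*$ (proposition~\ref{AdjointInvertibility}) and ``relate $u^*$ to $\mathcal{S}^*F$''---is not a workable shortcut: invertibility controls only the boundary operator $\mathcal{S}^*$, and the whole point of the estimate is precisely to relate the interior extension's nontangential maximal function to the $W^{-1,2}$ norm, which invertibility does not do on its own. Your second option (the $R_2h$ route, which you yourself flag as ``more robust'') is the one that actually works and is what the paper does. Second, for the solution property the paper argues directly from the full representation in lemma~\ref{RepresentationOfS*_+} valid for all $F$---using lemma~\ref{MixedGreenIsASolution} to justify that $x\mapsto\nabla_T^q G^t(x,q)$ solves $L^t u=0$ away from $q$---rather than by density from $E_2 f_n$ followed by a compactness/equicontinuity passage; your route is also valid but slightly more involved, and you would need to verify locally uniform convergence carefully. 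You also drop the Hardy--Littlewood term $M(\nabla_T f)$ in the stated pointwise bound for $u^*$; it should appear alongside $T^*(\nabla_T f)$ and the convolution with $|p-q|^{2-d}$, though this does not affect the final $L^2$ estimate.
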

\begin{proof}
We mimic the proof of theorem \ref{SingleLayerInequalities}. The proof that $u\in W^{1,2}_{\loc}(\Omega)$ and that $u$ solves $L^tu=0$ in $\Omega$ is identical, after noting that the functions
\[
x\mapsto G^t(x,q),\quad x\mapsto\nabla^qG^t(x,q)\cdot T(q)
\]
are solutions of $L^tu=0$ in $\Omega$ away from $q$, for any $q\in\partial\Omega$, from lemma \ref{MixedGreenIsASolution}, and where $T(q)$ is any tangential vector to $q$ at $\partial\Omega$.

To show the bound on the non-tangential maximal function, we write $F=R_2f$ for some $f\in W^{1,2}(\partial\Omega)$, in the sense of lemma \ref{Riesz}. Let $p\in\partial\Omega$ and $x\in \Gamma(p)$. Set $r=|x-p|$. We then write, using lemma \ref{RepresentationOfS*_+},
\[
u(x)=\int_{\partial\Omega}G^t(x,q)f(q)\,d\sigma(q)+\int_{\partial\Omega}\nabla_T^qG^t(x,q)\cdot\nabla_Tf(q)\,d\sigma(q)=I_1+I_2.
\]
To bound $I_1$, note that $x\in\Gamma(p)$, therefore $|x-q|\geq C|p-q|$. Hence, the pointwise bounds on $G$ show that
\[
|I_1|\leq\int_{\partial\Omega}|G^t(x,q)f(q)|\,d\sigma(q)\leq C\int_{\partial\Omega}|x-q|^{2-d}|f(q)|\,d\sigma(q)\leq C\int_{\partial\Omega}|p-q|^{2-d}|f(q)|\,d\sigma(q).
\]
To bound $I_2$ we set $r=|x-p|$, and we write
\begin{align*}
|I_2|&\leq \int_{|p-q|\leq r}\left|\nabla_T^qG(q,x)\cdot\nabla_Tf(q)\right|\,d\sigma(q)+\left|\int_{|p-q|>r}\nabla_T^qG(q,x)\cdot\nabla_Tf(q)\,d\sigma(q)\right|\\
&=I_3+|I_4|.
\end{align*}
For $I_3$, note that $|x-q|\geq C|x-p|=Cr$ for any $q\in\partial\Omega$, hence the pointwise bounds on $\nabla G$ show that
\[
I_3\leq C\int_{|p-q|\leq r}|x-q|^{1-d}\left|\nabla_Tf(q)\right|\,d\sigma(q)\leq\frac{C}{r^{d-1}}\int_{\Delta_{cr}(p)}\left|\nabla_Tf(q)\right|\,d\sigma(q)\leq CM(\nabla_Tf)(p),
\]
where $M$ is the Hardy-Littlewood maximal operator on $\partial\Omega$.

For $I_4$, we use H{\"o}lder continuity of the derivative of Green's function in the adjoint variable from proposition \ref{HolderContinuityOfGreenForAdjointOnAdjoint}, and the definition of the maximal truncation operator, and we estimate
\begin{align*}
|I_4|&\leq \left|\int_{|p-q|>r}\left(\nabla_T^qG(q,p)+(\nabla_T^qG(q,x)-\nabla_T^qG(q,p))\right)\cdot\nabla_Tf(q)\,d\sigma(q)\right|\\
&\leq T^*(\nabla_Tf)(p)+\int_{|p-q|>r}\left|\nabla_T^qG(q,x)-\nabla_T^qG(q,p)\right||\nabla_Tf(q)|\,d\sigma(q)\\
&\leq T^*(\nabla_Tf)(p)+C|x-p|^{\alpha}\int_{|p-q|>r}\left(|x-q|^{1-d-\alpha}+|p-q|^{1-d-\alpha}\right)|\nabla_Tf(q)|\,d\sigma(q)\\
&\leq T^*(\nabla_Tf)(p)+C|x-p|^{\alpha}\int_{|p-q|>r}|p-q|^{1-d-\alpha}|\nabla_Tf(q)|\,d\sigma(q)\\
&=T^*(\nabla_Tf)(p)+Cr^{\alpha}I_5,
\end{align*}
since $|x-q|\geq C|p-q|$. Finally, to bound $I_5$, we write
\begin{align*}
I_5&\leq\sum_{k=0}^{\infty}\int_{2^kr<|p-q|\leq 2^{k+1}r}|p-q|^{1-d-\alpha}|\nabla_Tf(q)|\,d\sigma(q)\\
&\leq \sum_{k=0}^{\infty}\int_{2^kr<|p-q|\leq 2^{k+1}r}\left(2^kr\right)^{1-d-\alpha}|\nabla_Tf(q)|\,d\sigma(q)\\
&\leq C\sum_{k=0}^{\infty}\left(2^kr\right)^{1-d-\alpha}\left(2^{k+1}r\right)^{d-1}\fint_{|p-q|\leq2^{k+1}r}|\nabla_Tf(q)|\,d\sigma(q)\\
&\leq C\sum_{k=0}^{\infty}2^{-k\alpha}2^{d-1}r^{-\alpha}M(\nabla_Tf)(p)=\frac{C\cdot 2^{d-1}}{1-2^{-\alpha}}r^{-\alpha}M(\nabla_Tf)(p)\\
&\leq Cr^{-\alpha}M(\nabla_Tf)(p),
\end{align*}
since $-\alpha<0$, which implies that the series converges; moreover, $C$ is a good constant. Combining the bounds for $I_i$, $i=1,\dots 5$, we finally obtain that
\[
|u(x)|\leq C\int_{\partial\Omega}|p-q|^{2-d}|f(q)|\,d\sigma(q)+CM(\nabla_Tf)(p)+T^*(\nabla_Tf)(p),
\]
for any $x\in\Gamma(p)$, hence
\[
u^*(p)\leq C\int_{\partial\Omega}|p-q|^{2-d}|f(q)|\,d\sigma(q)+CM(\nabla_Tf)(p)+T^*(\nabla_Tf)(p).
\]
Since now the kernel $|p-q|^{2-d}$ is integrable on $\partial\Omega$, $M$ is bounded from $L^2(\partial\Omega)$ to itself, and applying proposition \ref{MaximalTruncationBoundForAdjoint}, we obtain the estimate
\[
\|u^*\|_{L^2(\partial\Omega)}\leq C\|f\|_{L^2(\partial\Omega)}+C\|\nabla_Tf\|_{L^2(\partial\Omega)}\leq C\|F\|_{W^{-1,2}(\partial\Omega)},
\]
which completes the estimate on the non-tangential maximal fuction.

Finally, we turn to non-tangential, almost everywhere convergence on the boundary. We claim that, from the bound on the nontangential maximal function we have just shown, it is enough to show that
\begin{equation}\label{eq:InDense}
\mathcal{S}^*_+F(x)\xrightarrow[x\to p]{}\mathcal{S}^*F(p),
\end{equation}
nontangentially, almost everywhere, for $F\in V$, where $V$ is a dense subset of $W^{-1,2}(\partial\Omega)$. Suppose that this is the case; then let $F\in W^{-1,2}(\partial\Omega)$ and set $u(x)=\mathcal{S}^*_+F(x)$. Consider also $F_n\in V$ such that $F_n\to F$ in $W^{-1,2}(\partial\Omega)$, and set $u_n(x)=\mathcal{S}^*_+F_n(x)$. Set also $A_n$ to be the set of $p\in\partial\Omega$ such that 
\[
u_n(x)\xrightarrow[x\to p]{}\mathcal{S}^*F_n(p),
\]
and $A=\bigcup_nA_n$; then $A_n$ has full measure on $\partial\Omega$, hence $A$ has full measure on $\partial\Omega$ as well.

Fix now $n\in\mathbb N$ and $p\in A$. Then, since
\[
u_n(x)\xrightarrow[x\to p]{}S^*F_n(p)
\]
for $x\in\Gamma(p)$, we compute
\begin{align*}
\limsup_{x\to p}u(x)-\mathcal{S}^*F(p)&\leq\limsup_{x\to p}\left(u(x)-u_n(x)\right)+\limsup_{x\to p}u_n(x)-\mathcal{S}^*F(p)\\
&=\limsup_{x\to p}\left(u(x)-u_n(x)\right)+\mathcal{S}^*F_n(p)-\mathcal{S}^*F(p)\\
&\leq \sup_{x\in\Gamma(p)}\left|u(x)-u_n(x)\right|+\mathcal{S}^*(F_n-F)(p)\\
&\leq (u-u_n)^*(p)+\mathcal{S}^*(F_n-F)(p),
\end{align*}
for every $n\in\mathbb N$. Hence, for any $\e>0$,
\[
\left\{p\in A\Big{|}\limsup_{x\to p}u(x)-\mathcal{S}^*F(p)>\e\right\}\subseteq\left\{p\in A\Big{|}(u-u_n)^*(p)+\mathcal{S}^*(F_n-F)(p)>\e\right\},
\]
hence Chebyshev's inequality shows that
\begin{multline*}
\sigma\left(\left\{p\in A\Big{|}\limsup_{x\to p}u(x)-\mathcal{S}^*F(p)>\e\right\}\right)\leq\frac{1}{\e}\int_{\partial\Omega}\left((u-u_n)^*+\mathcal{S}^*(F_n-F)\right)\,d\sigma\\
\leq\frac{C}{\e}\left(\|(u-u_n)^*\|_{L^2(\partial\Omega)}+\|\mathcal{S}^*(F_n-F)\|_{L^2(\partial\Omega)}\right),
\end{multline*}
for any $n\in\mathbb N$. But, from the choice of the $F_n$ and boundedness of $\mathcal{S}^*$,
\[
\|\mathcal{S}^*(F_n-F)\|_{L^2(\partial\Omega)}\xrightarrow[n\to\infty]{}0,
\]
and also, from the bound on the nontangential maximal function shown above, we obtain
\[
\|(u-u_n)^*\|_{L^2(\partial\Omega)}=\|(\mathcal{S}^*_+(F_n-F))^*\|_{L^2(\partial\Omega)}\leq C\|F_n-F\|_{W^{-1,2}(\partial\Omega)}\xrightarrow[n\to\infty]{}0.
\]
This shows that, for any $\e>0$,
\[
\sigma\left(\left\{p\in A\Big{|}\limsup_{x\to p}u(x)-\mathcal{S}^*F(p)>\e\right\}\right)=0,
\]
hence
\[
\limsup_{x\to p}u(x)\leq\mathcal{S}^*F(p)
\]
for almost every $p\in A$. A similar process shows that
\[
\liminf_{x\to p}u(x)\geq\mathcal{S}^*F(p)
\]
for almost every $p\in A$, therefore $u$ converges to $\mathcal{S}^*F$ nontangentially, almost everywhere.

To conclude the proof, it remains to show that \eqref{eq:InDense} holds for $F$ in a dense subset $V$ of $W^{-1,2}(\partial\Omega)$. This subset will be the set $E_2(L^2(\partial\Omega))$, which is dense in $W^{-1,2}(\partial\Omega)$ from lemma \ref{DensityOfL2}, and where $E_2:L^2(\partial\Omega)\to W^{-1,2}(\partial\Omega)$ is the canonical embedding. We now let $f\in L^2(\partial\Omega)$, and we note that, from lemma \ref{RepresentationOfS*_+},
\[
\mathcal{S}^*_+(E_2f)(x)=\int_{\partial\Omega}G(q,x)f(q)\,d\sigma(q),
\]
and, from lemma \ref{SecondRepresentationOfS*},
\[
\mathcal{S}^*(E_2f)(p)=\int_{\partial\Omega}G(q,p)f(q)\,d\sigma(q).
\]
In order now to show that
\[
\int_{\partial\Omega}G(q,x)f(q)\,d\sigma(q)\xrightarrow[x\to p]{}\int_{\partial\Omega}G(q,p)f(q)\,d\sigma(q)
\]
nontangentially, almost everywhere, we follow the proof of the analogous result in proposition \ref{SingleLayerInequalities}: instead of Lipschitz continuity of Green's function from proposition \ref{HolderContinuityOfGreenAsIs}, we use H{\"o}lder continuity of Green's function in the adjoint variable, which holds from proposition \ref{HolderInside}, since $G^t$ solves the adjoint equation away from the pole. This completes the proof.
\end{proof}

We can now show existence for $D_2$, for the equation $L^tu=0$, when $A$ is symmetric.

\begin{prop}\label{GeneralDirichletForAdjoint}
Let $\Omega$ be a Lipschitz domain, $A\in M_{\lambda,\mu}^s(\Omega)$ and $b\in L^{\infty}(\Omega)$. Then the Dirichlet problem $D_2$ for the equation $L^tu=0$ is uniquely solvable in $\Omega$, with constants depending only on $d,\lambda,\mu,\|b\|_{\infty}$ and the Lipschitz character of $\Omega$. Moreover, the solution admits the representation
\[
u(x)=\mathcal{S}^*_+\left((\mathcal{S}^*)^{-1}f\right)(x).
\]
\end{prop}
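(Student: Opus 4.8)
The plan is to obtain the proposition by directly assembling the mapping properties of the adjoint single layer potential established earlier in this chapter, so the proof will be short. Uniqueness first: if $u_1$ and $u_2$ are two solutions of $D_2$ with the same boundary data $f$, then their difference $u = u_1 - u_2$ is a weak solution of $L^tu = 0$ in $\Omega$ with $u^* \in L^2(\partial\Omega)$ and $u \to 0$ nontangentially, almost everywhere on $\partial\Omega$; Proposition \ref{UniquenessForDirichletForAdjoint} then forces $u \equiv 0$. This step already uses the full hypothesis that $A$ is Lipschitz.

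For existence I would fix $f \in L^2(\partial\Omega)$ and invoke Proposition \ref{AdjointInvertibility}, which says that $\mathcal{S}^* : W^{-1,2}(\partial\Omega) \to L^2(\partial\Omega)$ is invertible with the norm of the inverse a good constant. Setting $F = (\mathcal{S}^*)^{-1}f \in W^{-1,2}(\partial\Omega)$ and $u = \mathcal{S}^*_+ F$, Proposition \ref{SingleLayerAdjoint} supplies everything needed at once: $u \in W^{1,2}_{\loc}(\Omega)$ solves $L^tu = 0$ in $\Omega$; $u$ converges nontangentially, almost everywhere, to $\mathcal{S}^* F = \mathcal{S}^*\bigl((\mathcal{S}^*)^{-1}f\bigr) = f$; and
\[
\|u^*\|_{L^2(\partial\Omega)} \le C\,\|F\|_{W^{-1,2}(\partial\Omega)} = C\,\|(\mathcal{S}^*)^{-1}f\|_{W^{-1,2}(\partial\Omega)} \le C\,\|f\|_{L^2(\partial\Omega)},
\]
where the last inequality uses the bound on $(\mathcal{S}^*)^{-1}$ from Proposition \ref{AdjointInvertibility}. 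Since each constant appearing here is a good constant, the resulting solvability constant depends only on $d,\lambda,\mu,\|b\|_\infty$ and the Lipschitz character of $\Omega$, and $u = \mathcal{S}^*_+\bigl((\mathcal{S}^*)^{-1}f\bigr)$ is precisely the asserted representation.

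I do not expect a genuine obstacle at this point: the two substantive ingredients — invertibility of $\mathcal{S}^*$ (which rests on Theorem \ref{InvertibilityOfS}, hence on the Rellich estimates, which is exactly why symmetry of $A$ is imposed) and the solution, trace, and maximal-function properties of $\mathcal{S}^*_+$ in Proposition \ref{SingleLayerAdjoint} — are already in place. The only mild care needed is bookkeeping: one should make sure the working notion of "$D_2$ solvable for $L^t$" asks precisely for a weak solution with $L^2$-bounded nontangential maximal function attaining $f$ nontangentially almost everywhere, so that Proposition \ref{UniquenessForDirichletForAdjoint} applies verbatim to the difference of two solutions, and one should track that the displayed estimate indeed yields the solvability constant. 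A density-then-approximation route through continuous data, as used for $D_p$ for $L$, is also available but unnecessary here, since Proposition \ref{SingleLayerAdjoint} already handles arbitrary $F \in W^{-1,2}(\partial\Omega)$ and $\mathcal{S}^*$ is onto $L^2(\partial\Omega)$.
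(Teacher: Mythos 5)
Your proof matches the paper's almost verbatim: uniqueness from Proposition \ref{UniquenessForDirichletForAdjoint}, existence by setting $F=(\mathcal{S}^*)^{-1}f$ via Proposition \ref{AdjointInvertibility} and applying Proposition \ref{SingleLayerAdjoint} to $u=\mathcal{S}^*_+F$, with the identical chain of norm estimates. No gaps, and the bookkeeping remarks you add are sound but not needed beyond what the cited propositions already deliver.
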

\begin{proof}
Uniqueness follows from proposition \ref{UniquenessForDirichletForAdjoint}. For existence, let $f\in L^2(\partial\Omega)$. From proposition \ref{AdjointInvertibility}, the operator
\[
\mathcal{S}^*:W^{-1,2}(\partial\Omega)\to L^2(\partial\Omega)
\]
is invertible, therefore we can consider the element $F=(\mathcal{S}^*)^{-1}f\in W^{-1,2}(\partial\Omega)$. Set now $u=\mathcal{S}^*_+F$. Then proposition \ref{SingleLayerAdjoint} shows that $u$ is a solution to $D_2$, with boundary values $\mathcal{S^*}F=f$, and also
\[
\|u^*\|_{L^2(\partial\Omega)}\leq C\|F\|_{W^{-1,2}(\partial\Omega)}=\|(\mathcal{S}^*)^{-1}f\|_{W^{-1,2}(\partial\Omega)}\leq C\|f\|_{L^2(\partial\Omega)},
\]
where we also used proposition \ref{AdjointInvertibility}. This completes the proof.
\end{proof}

We will be able to drop the symmetry assumption on $A$ later, in theorem \ref{DirichletForNonSymmetricForAdjoint}.
\section{The Regularity problem for $L^{\lowercase{t}}$}
We now turn to solvability of the Regularity problem for the equation $L^tu=0$. We will mainly follow the method of chapter 8, but since constants are not necessarily solutions to the equation $L^tu=0$, a couple of modifications need to be made.

\subsection{Formulation and uniqueness}
We begin with the formulation of $R_p$.

\begin{dfn}
Let $\Omega$ be a Lipschitz domain, and $p\in(1,\infty)$. We say that the Regularity problem $R_p$ for the equation $L^tu=0$ in $\Omega$ is solvable, if there exists $C>0$ such that, for every $f\in W^{1,p}(\Omega)$, there exists a solution $u\in W^{1,2}_{\loc}(\Omega)$ to the Dirichlet problem
\[
\left\{\begin{array}{c l}
L^tu=0,&{\rm in}\,\,\Omega\\
u=f,&{\rm on}\,\,\partial\Omega,
\end{array}\right.
\]
such that
\[
\|\nabla u\|_{L^p(\partial\Omega)}\leq C\|f\|_{W^{1,p}(\partial\Omega)},
\]
and $u=f$ on the boundary is interpreted in the nontangential, almost everywhere sense.
\end{dfn}

We now turn to uniqueness for the Regularity problem for $L^tu=0$.

\begin{thm}\label{UniquenessForRegularityForAdjoint}
Suppose that $\Omega$ is a bounded Lipschitz domain, $A\in M_{\lambda,\mu}(\Omega)$ and $b\in L^{\infty}(\Omega)$. Let $u\in W^{1,2}_{\loc}(\Omega)$ be a solution to $L^tu=0$ in $\Omega$, with $(\nabla u)^*\in L^2(\partial\Omega)$ and $u\to 0$ nontangentially, almost everywhere. Then, $u\equiv 0$.
\end{thm}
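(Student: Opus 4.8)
The plan is to mimic the proof of uniqueness for the Dirichlet problem for $L^t$ (proposition \ref{UniquenessForDirichletForAdjoint}), which in turn follows the structure of proposition \ref{DirichletSolvability}, but with the roles of the various estimates adjusted to exploit the stronger hypothesis $(\nabla u)^*\in L^2(\partial\Omega)$. Fix $y\in\Omega$ and write $G_y(x)=G(x,y)$ for Green's function for $L$ with pole at $y$. For $\e>0$ small set $\Omega_{\e}=\{x\in\Omega\,:\,\delta(x)\leq\e\}$, $R_{\e}=\Omega_{2\e}\setminus\Omega_{\e}$, and take a smooth cutoff $\phi_{\e}$ equal to $1$ outside $\Omega_{\e}$, equal to $0$ on $\Omega_{2\e}$, with $|\nabla\phi_{\e}|\leq C/\e$. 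Testing the equation $L^tu=0$ against (an approximation of) $G_y\phi_{\e}$, and using that $G_y$ solves $Lu=0$ away from $y$ so that $\int_\Omega A\nabla G_y\nabla(u\phi_\e)+b\nabla G_y\cdot u\phi_\e = u(y)\phi_\e(y)=u(y)$ for $\e$ small, one obtains a decomposition
\[
u(y)=\int_{R_{\e}}A\nabla G_y\nabla\phi_{\e}\cdot u-\int_{R_{\e}}A\nabla\phi_{\e}\nabla u\cdot G_y-\int_{R_{\e}}b\nabla\phi_{\e}\cdot G_y u=I_1+I_2+I_3.
\]
The goal is to show each $I_j\to 0$ as $\e\to 0$.

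For the bookkeeping I would introduce, exactly as in the proof of proposition \ref{DirichletSolvability} / theorem \ref{DirichletUniqueness}, the coordinate cylinders $Z_i$, $i=1,\dots,N$, the ball bases $B_i$, the cubes $Q$, $Q_j$, the boundary pieces $P_j\subseteq\partial\Omega$ lying above $Q_j$, and the Lipschitz graph function $\psi$, so that points of $Z_i\cap R_{\e}$ are written $(x_0,s)$ with $x_0\in Q_j$ and $s\in(\psi(x_0)+c_1\e,\psi(x_0)+c_2\e)$. The key point is the choice of $\tilde c$ small enough that such $(x_0,s)$ lies in the cone $\Gamma(q)$ for every $q\in P_j$, so that both $|u(x_0,s)|\leq u^*_{\e}(q)$ and $|\nabla u(x_0,s)|\leq(\nabla u)^*(q)$, with $u^*_\e$ the truncated maximal function. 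To estimate $I_1$ I apply Cauchy--Schwarz on each cube: bound $\int_{Q_j}\int|\nabla G_y|^2$ via the modified Cacciopoli inequality (lemma \ref{ModifiedCacciopoli}) by $C\e^{-2}\int_{3Q_j}\int|G_y|^2$, then use the pointwise bounds on $G_y$ from theorem \ref{GoodGreenFunctionEstimates} together with Carleson's estimate (lemma \ref{CarlesonEstimate}) and lemma \ref{GFromAbove} to control $\sup|G_y|$ on that region by $C\omega^y(\Delta_{2\e}(q))\e^{2-d}$, and bound $\int_{Q_j}\int|u|^2\leq C\e^d|u^*_{\e}(q)|^2$; multiplying, integrating over $Q_j$, summing over $j$ and over $i$, the factors of $\e$ collapse and one is left with $C\int_{\partial\Omega}M_{\omega^y}(q)\,u^*_{\e}(q)\,d\sigma$ (or $C\e^\alpha\int|(\nabla u)^*|$ if one instead uses proposition \ref{HolderOnTheBoundary} for $G_y$). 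Then proposition \ref{UByNablaU} gives $u^*_{\e}\leq C\e(\nabla u)^* + |f| = C\e(\nabla u)^*$ since $f=0$, so $u^*_\e\in L^2$, and $u^*_\e\to 0$ pointwise a.e., hence dominated convergence kills $I_1$. For $I_2$ and $I_3$ the same machinery applies with $\nabla u$ in place of $u$ where appropriate: for $I_2$ one pairs the Cacciopoli bound $\int_{Q_j}\int|\nabla u|^2\leq C\e^{-2}\int_{2Q_j}\int|u|^2\leq C\e^{d-2}|u^*_\e|^2$ against the $G_y$ bound, and for $I_3$ one uses $\int_{Q_j}\int|G_y u|\leq C\e^{d-1}(G_y)^*_\e u^*_\e$ together with proposition \ref{UByNablaU} applied to $G_y$; in all cases the integrand converges to $0$ in $L^1(\partial\Omega)$ because $(\nabla u)^*\in L^2(\partial\Omega)$ and, via lemma \ref{R2BoundOnGreen}, $(\nabla G_y)^*_\e\in L^2(\partial\Omega)$ uniformly for small $\e$.

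The main obstacle is the same one that makes lemma \ref{R2BoundOnGreen} necessary: one needs a genuine $L^2(\partial\Omega)$ bound on the truncated nontangential maximal function $(\nabla G_y)^*_\e$ of the Green function near the boundary, since $G_y$ only a priori lies in $W^{1,p}_0$ for $p<d/(d-1)$. This is supplied by lemma \ref{R2BoundOnGreen}, whose proof uses solvability of $R_2$ for $L$ (theorem \ref{R2Solvability}) in the domain $\Omega\setminus B_0$ --- note this is exactly why the argument works for the Regularity problem for $L^t$ without any hypothesis on $\dive b$, in contrast with the Dirichlet case. A secondary technical point is the justification of the integration-by-parts identity producing the $I_1+I_2+I_3$ decomposition: $G_y$ is only in $W^{1,p}_0$ globally, so one argues by approximating $G_y$ by the mollified Green functions $G_n\in W^{1,2}_0(\Omega)$ from lemma \ref{LimitGreenConstruction} (and, if needed, mollifying $b$ and passing to the limit as in lemma \ref{MixedGreenIsASolution}), using that $G_y$ is continuously differentiable away from $y$ and near $\partial\Omega$ by corollary \ref{GreenContinuity}. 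Once all three terms are shown to vanish, we conclude $u(y)=0$ for every $y\in\Omega$, i.e. $u\equiv 0$.
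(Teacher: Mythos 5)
Your decomposition $u(y)=I_1+I_2+I_3$ is exactly the paper's, and you correctly identify the general structure (test the pole identity for $G_y$ against $u\phi_\e$, use that $u$ solves $L^t u=0$). However, the mechanism you ultimately lean on to make the three integrals vanish is the wrong one and creates a genuine gap. You invoke lemma \ref{R2BoundOnGreen} to get $(\nabla G_y)^*_\e\in L^2(\partial\Omega)$ uniformly for small $\e$ and then Cauchy--Schwarz against $u^*_\e\in L^2$, but lemma \ref{R2BoundOnGreen} is stated and proved only for \emph{symmetric} $A\in M_{\lambda,\mu}^s(\Omega)$ (it depends on theorem \ref{R2Solvability}, which uses the Rellich estimate, hence symmetry). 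The theorem you are proving is stated for general $A\in M_{\lambda,\mu}(\Omega)$, so this route either fails or requires an additional detour through the symmetrization of lemma \ref{NonSymmetricEquations} that you have not supplied. The same objection applies to your first proposed route for $I_1$ via $\sup|G_y|\lesssim\omega^y(\Delta_{2\e})\e^{2-d}$ and the maximal operator $M_{\omega^y}$: that machinery comes from the Dirichlet uniqueness proof (proposition \ref{DirichletUniqueness}) and, to conclude, needs $k^y=d\omega^y/d\sigma\in L^{p'}(\partial\Omega)$, i.e. $D_p$ solvability, which here would require the $\dive b\in L^{p_d}$ hypothesis that you do not have.

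The paper's actual argument is the one you mention only parenthetically and then abandon: follow the proof of proposition \ref{UniquenessForRegularity} verbatim. The single estimate that makes everything collapse is the boundary H\"older decay of the Green function from proposition \ref{HolderOnTheBoundary}. Since $G_y$ solves $Lu=0$ near $\partial\Omega$ and vanishes there, one has $\sup_{\delta(x)<c\e}|G_y(x)|\leq C\e^{\alpha}$ for some $\alpha\in(0,1)$, and feeding this into the modified Cacciopoli bound for $\nabla G_y$ gives, after cancellation of the powers of $\e$ against $|\nabla\phi_\e|\lesssim\e^{-1}$ and $|Q_j|\sim\e^{d-1}$, an overall factor $\e^{\alpha}$ multiplying $\int_{\partial\Omega}|(\nabla u)^*|\,d\sigma$ (or $\int u^*_\e$, which proposition \ref{UByNablaU} reduces to $(\nabla u)^*$). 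This works for arbitrary elliptic $A$ with $b\in L^\infty$, uses no Rellich estimate, no $R_2$ solvability, and no weight property of harmonic measure, and in fact proves the stronger statement with $(\nabla u)^*\in L^1(\partial\Omega)$, exactly as proposition \ref{UniquenessForRegularity} does for $L$. Your ``main obstacle'' paragraph is diagnosing the obstacle of a different theorem: lemma \ref{R2BoundOnGreen} is needed in the proof of \emph{Dirichlet} uniqueness for $L^t$ (proposition \ref{UniquenessForDirichletForAdjoint}), where only $u^*\in L^2$ is available and one must put the derivative bound onto $G_y$ instead of $u$; here you have $(\nabla u)^*$ already, so it is superfluous.
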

\begin{proof}
We begin by computing, as in proposition \ref{DirichletUniqueness},
\[
u(y)=u(y)\phi_{\e}(y)=\int_{\Omega}A\nabla G_y\nabla(u\phi_{\e})+b\nabla G_y\cdot u\phi_{\e},
\]
where $G_y(x)=G(x,y)$ is Green's function for the equation $Lu=0$ in $\Omega$, therefore
\begin{align*}
u(y)&=\int_{\Omega}A\nabla G_y\nabla\phi_{\e}\cdot u+A\nabla G_y\nabla u\cdot\phi_{\e}+b\nabla(G_y\phi_{\e})\cdot u-b\nabla\phi_{\e}\cdot uG_y\\
&=\int_{\Omega}A\nabla G_y\nabla\phi_{\e}\cdot u+A\nabla(G_y\phi_{\e})\nabla u+b\nabla(G_y\phi_{\e})\cdot u-b\nabla\phi_{\e}\cdot uG_y-A\nabla\phi_{\e}\nabla u\cdot G_y\,dx\\
&=\int_{R_{\e}}A\nabla G_y\nabla\phi_{\e}\cdot u\,dx-\int_{R_{\e}}b\nabla\phi_{\e}\cdot uG_y\,dx-\int_{R_{\e}}A\nabla\phi_{\e}\nabla u\cdot G_y\,dx=I_1+I_2+I_3,
\end{align*}
since $u$ is a solution of $L^tu=0$, in $\Omega$, and from the support properties of $\phi_{\e}$. We then bound the last terms exactly as in the proof of proposition \ref{UniquenessForRegularity} to show that they go to 0 as $\e\to 0$.
\end{proof}

\subsection{Layer potentials}
We will now assume that $\Omega\in\mathcal{D}$, as in chapter $8$; that is, $0\in\Omega$ and $\diam(\Omega)<1/40$. We will also set $B$ to be the unit ball in $\mathbb R^d$.

Given $A\in M_{\lambda,\mu}(\Omega)$, we will extend $A$ periodically as in lemma \ref{AExtension}, and similarly for $b$, depending whether $b\in\Lip(\Omega)$ or $b\in L^{\infty}(\Omega)$. We will then set $G^t$ to be Green's function for the equation $L^tu=-\dive(A\nabla u)-\dive(bu)=0$ in $B$.

For $f\in L^2(\partial\Omega)$, let $\mathcal{S}_+^t$ be the operator
\[
\mathcal{S}^t_+f(x)=\int_{\partial\Omega}G^t(x,q)f(q)\,d\sigma(q),
\]
for $x\in\Omega$. If $x\notin\overline{\Omega}$ we denote this operator by $\mathcal{S}^t_-$, and for $x\in\partial\Omega$, we denote it by $\mathcal{S}^t$; this will be called the single layer potential operator for the adjoint equation $L^tu=0$ in $\Omega$.

In order to consider differentiability properties of $\mathcal{S}^t$ on $\partial\Omega$, we will assume that $b$ is H{\"o}lder continuous. This, together with proposition \ref{GreenDerivativeBoundsForAdjoint} will show that we obtain bounds on the derivative of Green's function in the adjoint variable that are similar to the bounds for the derivative of Green's function. Hence, proceeding as in lemma \ref{SBoundedness}, we can show the next proposition.

\begin{prop}
Let $\Omega\in\mathcal{D}$, $A\in M_{\lambda,\mu}(B)$ and $b\in C^{\alpha}(B)$. The operator $\mathcal{S}^t$ maps $L^2(\partial\Omega)$ to $W^{1,2}(\partial\Omega)$, with
\[
\nabla_T\mathcal{S}^tf(p)=\lim_{\e\to 0}\int_{|p-q|>\e}G^t(p,q)f(q)\,d\sigma(q),
\]
and its norm is bounded above by a good constant that also depends on $\|b\|_{C^{\alpha}(\Omega)}$.
\end{prop}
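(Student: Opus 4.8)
The plan is to mimic the proof of Lemma \ref{SBoundedness}, comparing $\mathcal{S}^t$ with the single layer potential $\mathcal{S}_0^t$ for the drift-free adjoint operator $-\dive(A^t\nabla\cdot)$ and treating the difference directly. The $L^2(\partial\Omega)\to L^2(\partial\Omega)$ bound for $\mathcal{S}^t$ itself is immediate: by Theorem \ref{GoodGreenFunctionEstimatesForAdjoint}, $G^t(p,q)\le C|p-q|^{2-d}$, and since $|p-q|^{2-d}$ is integrable on $\partial\Omega$ with a bound depending only on the Lipschitz character of $\Omega$, the Cauchy--Schwarz argument of Lemma \ref{SBoundedness} gives $\|\mathcal{S}^tf\|_{L^2(\partial\Omega)}\le C\|f\|_{L^2(\partial\Omega)}$ verbatim.

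For the tangential gradient, let $G_0^t$ be Green's function for $-\dive(A^t\nabla u)=0$ in $B$ and $\mathcal{S}_0^t$ the corresponding single layer potential. Since $A^t\in M_{\lambda,C\mu}(B)$ is again uniformly elliptic and Lipschitz, the results of \cite{KenigShen} apply to $\mathcal{S}_0^t$: in particular $\mathcal{S}_0^t$ maps $L^2(\partial\Omega)$ to $W^{1,2}(\partial\Omega)$ with
\[
\nabla_T\mathcal{S}_0^tf(p)=\lim_{\e\to 0}\int_{|p-q|>\e}\nabla_T^pG_0^t(p,q)f(q)\,d\sigma(q),
\]
and a norm bound that is a good constant. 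It therefore suffices to find the tangential derivative of the difference $\mathcal{S}^tf-\mathcal{S}_0^tf$, whose kernel is $g_q(x)=G^t(x,q)-G_0^t(x,q)=G_b(q,x)-G_0(q,x)$ by Proposition \ref{SymmetryWithAdjoint}. Applying Proposition \ref{ContinuityArgumentForB} with $b_2=0$ (after using the symmetry relation) gives $|g_q(x)|\le C\|b\|_{C^{\alpha}}|x-q|^{5/2-d}$, and Proposition \ref{ContinuityArgumentForBAdjoint} gives $|\nabla_xg_q(x)|\le C\|b\|_{C^{\alpha}}|x-q|^{3/2-d}$.

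With these two pointwise estimates in hand, I would run the integration-by-parts argument of Lemma \ref{SBoundedness} in each coordinate cylinder $(Z,\phi)$: for $f$ supported in $Z\cap\partial\Omega$ and $h\in C_c^{\infty}(Z\cap\mathbb R^{d-1})$, integrate by parts in $B_Z\setminus B_{\e}(q_0)$; the boundary term over $\partial(B_{\e}(q_0))$ is $O(\e^{5/2-d}\cdot\e^{d-2})=O(\e^{1/2})\to 0$ by the bound on $g_q$, while the two remaining integrals converge as $\e\to 0$ by the bound on $\nabla_xg_q$ together with the local integrability of $|p-q|^{5/2-d}$ and $|p-q|^{3/2-d}$ on $\partial\Omega$. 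After integrating in $q$ against $f$ and using Fubini and dominated convergence (justified by the same bounds), this shows $\mathcal{S}^tf-\mathcal{S}_0^tf$ is tangentially differentiable in the sense of Definition \ref{W1pPartialOmega}, with
\[
\nabla_T\left(\mathcal{S}^tf(p)-\mathcal{S}_0^tf(p)\right)=\int_{\partial\Omega}\nabla_T^p\left(G^t(p,q)-G_0^t(p,q)\right)f(q)\,d\sigma(q),
\]
the integral being absolutely convergent; combined with the formula for $\nabla_T\mathcal{S}_0^tf$ this yields the stated formula (the limit being needed only for the drift-free part). Finally, since the difference kernel is dominated by $C\|b\|_{C^{\alpha}}|p-q|^{3/2-d}$, an integrable kernel on $\partial\Omega$, the associated operator is $L^2$-bounded with norm a good constant depending on $\|b\|_{C^{\alpha}}$; adding the Kenig--Shen bound for $\nabla_T\mathcal{S}_0^t$ completes the estimate.

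The main obstacle I anticipate is the bookkeeping in the integration-by-parts/limit step: one must verify carefully that the $\e\to0$ limits can be moved inside the $q$-integral and that the resulting object genuinely is a weak tangential derivative in the sense of Definition \ref{W1pPartialOmega}, exactly as in Lemma \ref{SBoundedness}. This is precisely where the pointwise decay estimates of Propositions \ref{ContinuityArgumentForB} and \ref{ContinuityArgumentForBAdjoint} are essential --- and where the H\"older hypothesis on $b$ enters, since these estimates (in particular the one for $\nabla_yG^t$, which relies on the continuity theory for $\nabla u$ when $L^tu=0$) replace the Lipschitz-continuity-of-Green's-function input used in the $L$ case.
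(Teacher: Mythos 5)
Your proposal is correct and takes essentially the same approach as the paper, which merely says "proceeding as in lemma \ref{SBoundedness}" and flags the use of proposition \ref{GreenDerivativeBoundsForAdjoint}; you have spelled out that sketch, correctly substituting the symmetry relation of proposition \ref{SymmetryWithAdjoint} together with propositions \ref{ContinuityArgumentForB} and \ref{ContinuityArgumentForBAdjoint} (with $b_2=0$) for the pointwise kernel-difference bounds, and correctly pinpointing proposition \ref{ContinuityArgumentForBAdjoint} as the step that forces the H\"older hypothesis on $b$.
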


We can also show the next proposition.

\begin{prop}\label{SingleLayerInequalitiesForAdjoint}
Let $\Omega\in\mathcal{D}$, $A\in M_{\lambda,\mu}(B)$ and $b\in C^{\alpha}(B)$. If $f\in L^2(\partial\Omega)$, then $\mathcal{S}^t_+f\in W^{1,2}_{\loc}(\Omega)$ is a solution to the Dirichlet problem $D_2$ for the equation $L^tu=0$ in $\Omega$, with boundary values $\mathcal{S}^tf$ on $\partial\Omega$. Similarly, $\mathcal{S}^t_-f$ is the solution to $D_2$ in $B\setminus\overline{\Omega}$, and has boundary values $\mathcal{S}^tf\cdot\chi_{\partial\Omega}$ on $\partial(B\setminus\Omega)$. In addition,
\[
\|(\nabla\mathcal{S}^t_{\pm}f)^*\|_{L^2(\partial\Omega)}\leq C\|f\|_{L^2(\partial\Omega)},
\]
where $C$ is a good constant.
\end{prop}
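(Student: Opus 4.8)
The plan is to follow the proof of Proposition~\ref{SingleLayerInequalities} line by line, substituting each Green's-function estimate for $L$ by its adjoint counterpart; the hypothesis $b\in C^\alpha(B)$ is precisely what makes the relevant bounds on $\nabla G^t$ available, namely Propositions~\ref{GreenDerivativeBoundsForAdjoint}, \ref{HolderContinuityOfGreenAsIsForAdjoint} and \ref{ContinuityArgumentForBAdjoint}. Recall that in $\mathcal{S}^t_+f(x)=\int_{\partial\Omega}G^t(x,q)f(q)\,d\sigma(q)$ the first slot $x$ is the evaluation point while the pole $q$ lies on $\partial\Omega$; since the pole stays off $\overline\Omega$ (resp.\ off $B\setminus\overline\Omega$), the function $x\mapsto G^t(x,q)$ solves $L^tu=0$ there, and by (the proof of) Proposition~\ref{GreenDerivativeBoundsForAdjoint} it is continuously differentiable away from $q$ with $|\nabla_xG^t(x,q)|\le C|x-q|^{1-d}$. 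First I would compute the weak gradient of $\mathcal{S}^t_+f$ on $\Omega$: for $\phi\in C_c^\infty(\Omega)$, Fubini's theorem applies (the integrand is supported at positive distance from $\partial\Omega$, so the double integral is absolutely convergent), giving
\[
\nabla\mathcal{S}^t_+f(x)=\int_{\partial\Omega}\nabla_xG^t(x,q)f(q)\,d\sigma(q),
\]
and the pointwise bound above shows $\nabla\mathcal{S}^t_+f\in L^2_{\loc}(\Omega)$. Substituting the formulas for $\mathcal{S}^t_+f$ and $\nabla\mathcal{S}^t_+f$ into $\alpha^t(\mathcal{S}^t_+f,\phi)$ and applying Fubini once more, the inner integral becomes $\int_\Omega A^t(x)\nabla_xG^t(x,q)\nabla\phi(x)+b(x)\nabla\phi(x)\cdot G^t(x,q)\,dx=\phi(q)=0$ since $q\in\partial\Omega$ and $\phi$ is compactly supported in $\Omega$; hence $\mathcal{S}^t_+f$ solves $L^tu=0$ in $\Omega$, and the identical computation on $B\setminus\overline\Omega$ handles $\mathcal{S}^t_-f$.

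Next I would identify the boundary traces. Mimicking the $I_1+I_2$ splitting in the proof of Proposition~\ref{SingleLayerInequalities}, fix $p\in\partial\Omega$ with $\mathcal{S}^tf(p)$ finite and $x\in\Gamma_i(p)$ with $|x-p|<\tfrac1{2n}$; one bounds $|\mathcal{S}^t_+f(x)-\mathcal{S}^tf(p)|$ by a contribution over $\partial\Omega\setminus\Delta_{1/n}(p)$, controlled by $C|x-p|\,n^{1-d}\|f\|_{2}$ using the Lipschitz continuity of $G^t$ in the evaluation variable (Proposition~\ref{HolderContinuityOfGreenAsIsForAdjoint}) together with $|x-q|\ge c|p-q|$, plus a contribution over $\Delta_{1/n}(p)$, controlled by $C\int_{\partial\Omega}G^t(p,q)\,|f(q)|\,\chi_{\Delta_{1/n}(p)}(q)\,d\sigma(q)$ using the pointwise bound $G^t(p,q)\le C|p-q|^{2-d}$. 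Since $|p-q|^{2-d}$ is integrable on $\partial\Omega$ and the truncations of $f$ to $\Delta_{1/n}(p)$ tend to $0$ in $L^2(\partial\Omega)$, the second contribution tends to $0$ along a subsequence of $n$, giving $\mathcal{S}^t_+f(x)\to\mathcal{S}^tf(p)$ nontangentially for a.e.\ $p\in\partial\Omega$. The same estimate from outside $\Omega$, combined with the fact that $G^t(\cdot,q)$ vanishes continuously on $\partial B$ (Proposition~\ref{HolderOnTheBoundary}), so that $\mathcal{S}^t_-f(x)\to0$ as $x\to\partial B$ by dominated convergence (there $x$ stays far from $\partial\Omega$), shows that $\mathcal{S}^t_-f$ has boundary data $\mathcal{S}^tf\cdot\chi_{\partial\Omega}$ on $\partial(B\setminus\Omega)$.

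For the nontangential maximal estimate I would compare with the drift-free operator. Let $G^t_0$ be Green's function for $-\dive(A^t\nabla\cdot)=0$ in $B$ and $\mathcal{S}^{t,0}_\pm$ the corresponding single layer potential; since $A^t$ is uniformly elliptic and Lipschitz, the theory for drift-free operators (as in \cite{KenigShen}, applied as in the proof of Proposition~\ref{MaximalTruncationBound}) gives $\|(\nabla\mathcal{S}^{t,0}_\pm f)^*\|_{L^2(\partial\Omega)}\le C\|f\|_{L^2(\partial\Omega)}$. Writing $\nabla\mathcal{S}^t_+f=\nabla(\mathcal{S}^t_+-\mathcal{S}^{t,0}_+)f+\nabla\mathcal{S}^{t,0}_+f$ and invoking Proposition~\ref{ContinuityArgumentForBAdjoint} with $b_1=b$, $b_2=0$ to get $|\nabla_xG^t(x,q)-\nabla_xG^t_0(x,q)|\le C\|b\|_{C^\alpha}\,|x-q|^{3/2-d}$, then using $|x-q|\ge c|p-q|$ for $x\in\Gamma_i(p)$, one obtains
\[
(\nabla\mathcal{S}^t_+f)^*(p)\le C\int_{\partial\Omega}|p-q|^{3/2-d}|f(q)|\,d\sigma(q)+(\nabla\mathcal{S}^{t,0}_+f)^*(p);
\]
the first term is an integral operator with integrable kernel, hence bounded on $L^2(\partial\Omega)$, and this yields the claimed bound, and likewise for $\mathcal{S}^t_-f$. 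Finally, to check $(\mathcal{S}^t_\pm f)^*|_{\partial\Omega}\in L^2(\partial\Omega)$ (so that $\mathcal{S}^t_\pm f$ genuinely solve $D_2$), I would apply Proposition~\ref{UByNablaU}, which gives $(\mathcal{S}^t_+f)^*_\e\le C\e(\nabla\mathcal{S}^t_+f)^*+|\mathcal{S}^tf|$, and note that on the bounded, doubly truncated cones the part of $\mathcal{S}^t_+f$ that is not $\e$-close to $\partial\Omega$ lies in a fixed compact subset of $\Omega$, where $\mathcal{S}^t_+f$ is bounded by $C\|f\|_{L^2(\partial\Omega)}$ via Cauchy--Schwarz and the pointwise bound on $G^t$; integrating over $\partial\Omega$ and using the $L^2\to W^{1,2}$ boundedness of $\mathcal{S}^t$ finishes this point.

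The main difficulty here is organizational rather than conceptual: one must apply each adjoint Green's-function estimate in the correct variable — differentiation and the boundary limit are always taken in the evaluation variable of $G^t(\cdot,q)$ while the pole $q$ sits on $\partial\Omega$, which is exactly the configuration of Propositions~\ref{GreenDerivativeBoundsForAdjoint}, \ref{HolderContinuityOfGreenAsIsForAdjoint} and \ref{ContinuityArgumentForBAdjoint} — and one must keep track of the fact that it is the Hölder norm of $b$, not merely its supremum, that enters every bound involving $\nabla G^t$. A secondary technical point that must be handled cleanly is the repeated use of Fubini's theorem and of differentiation under the integral sign; each instance is legitimate because the singularity of $G^t$ is always kept off the relevant domain of integration, so the pointwise bounds on $G^t$ and $\nabla_xG^t$ supply the needed integrable majorants.
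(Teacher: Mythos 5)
Your proposal is correct and follows exactly the route the paper indicates: it is the proof of Proposition~\ref{SingleLayerInequalities} transcribed verbatim, with Propositions~\ref{HolderContinuityOfGreenAsIsForAdjoint}, \ref{GreenDerivativeBoundsForAdjoint} and \ref{ContinuityArgumentForBAdjoint} substituted for their $L$-analogues, which is precisely what the paper's (very terse) proof of Proposition~\ref{SingleLayerInequalitiesForAdjoint} prescribes. You correctly track that in $G^t(x,q)$ the pole is $q\in\partial\Omega$ and differentiation is in the first slot $x$, so the adjoint estimates apply in the right variable, and that $\|b\|_{C^{\alpha}}$ (rather than $\|b\|_{\infty}$) enters wherever $\nabla G^t$ appears.
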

\begin{proof}
The proof is identical to the proof of proposition \ref{SingleLayerInequalities}, for the fact that $\mathcal{S}^t_+f$ and $\mathcal{S}^t_-f$ are solutions. For the boundary values, instead of Lipschitz continuity of Green's function in proposition \ref{HolderContinuityOfGreenAsIs}, we use the analogous result for the adjoint of Green's function, from proposition \ref{HolderContinuityOfGreenAsIsForAdjoint}. Moreover, for the boundedness of the nontangential maximal function, we use proposition \ref{ContinuityArgumentForBAdjoint} instead of proposition \ref{ContinuityArgumentForB}, which completes the proof.
\end{proof}

We now proceed to studying the behavior of the single layer potential on the boundary, and the jump relations.

\begin{prop}\label{LimsupEstimateOnNormForAdjoint}
Let $\Omega\in\mathcal{D}$, $A\in M_{\lambda,\mu}(B)$ and $b\in C^{\alpha}(B)$. Then, for any $f\in L^2(\partial\Omega)$,
\[
\nabla\mathcal{S}^t_+f(x)\cdot T(p)\xrightarrow[x\to p]{}\nabla_T\mathcal{S}^tf(p),
\]
non-tangentially, almost everywhere on $\partial\Omega$. Therefore, if $\Omega_j\uparrow\Omega$ is the approximation scheme in theorem \ref{ApproximationScheme}, we obtain that
\[
\limsup_{j\to\infty}\int_{\partial\Omega_j}\left(|\mathcal{S}_+^tf|^2+|\nabla_{T_j}\mathcal{S}_+^tf|^2\right)\,d\sigma_j\leq \|\mathcal{S}^tf\|_{W^{1,2}(\partial\Omega)}^2,
\]
and similarly for $\mathcal{S}_-^tf$.
\end{prop}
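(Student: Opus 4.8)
Looking at this statement, it is the direct analog of Proposition \ref{DerivativeIsNonTangentiallyContinuous} together with Corollary \ref{LimsupEstimateOnNorm}, transplanted to the operator $L^t$. The plan is to reuse the entire machinery of Chapter 8 with the single substitution of $G^t$ for $G$ and the corresponding replacement of the estimates for $\nabla_x G$ and $\nabla_x G - \nabla_x G_0$ by the estimates for the adjoint variable, which have been established in Section 5.4 (Proposition \ref{HolderContinuityOfGreenForAdjoint}, Proposition \ref{GreenDerivativeBoundsForAdjoint}) and Section 5.6 (Proposition \ref{ContinuityArgumentForBAdjoint}).

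First I would prove the nontangential convergence $\nabla\mathcal{S}^t_+f(x)\cdot T(p)\to\nabla_T\mathcal{S}^tf(p)$. Following the proof of Proposition \ref{DerivativeIsNonTangentiallyContinuous}, I would introduce the auxiliary operator $Rf(p)=\int_{\partial\Omega}|p-q|^{5/4-d}f(q)\,d\sigma(q)$, which is bounded on $L^2(\partial\Omega)$, hence finite $\sigma$-a.e. Then, writing $\mathcal{S}^t_0$ and $G^t_0$ for the single layer potential and Green's function of the drift-free equation $-\dive(A^t\nabla u)=0$ in $B$, I would decompose
\[
\nabla\mathcal{S}^t_+f(x)\cdot T(p)-\nabla_T\mathcal{S}^tf(p) = \bigl(\nabla(\mathcal{S}^t_+f-\mathcal{S}^t_{0,+}f)(x)\cdot T(p)-\nabla_T(\mathcal{S}^tf-\mathcal{S}^t_0f)(p)\bigr)+\bigl(\nabla\mathcal{S}^t_{0,+}f(x)\cdot T(p)-\nabla_T\mathcal{S}^t_0f(p)\bigr).
\]
The second difference tends to zero nontangentially a.e.\ by the known drift-free theory (theorem 4.4 in \cite{KenigShen}, exactly as cited at the end of the proof of Proposition \ref{DerivativeIsNonTangentiallyContinuous}). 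For the first difference, I would split the integral over $\{|p-q|\ge\delta\}$ and $\{|p-q|<\delta\}$: on the far region one uses the Hölder continuity of $\nabla_x G^t$ and $\nabla_x G^t_0$ in the adjoint variable — here I would invoke the analog of Proposition \ref{HolderContinuityOfGreen} for $G^t$ (which follows from Proposition \ref{DerivativeRegularityForAdjoint} by the same argument, and is essentially Proposition \ref{GreenDerivativeBoundsForAdjoint}'s companion), exploiting $|p-q|\le C|x-q|$ for $x\in\Gamma(p)$ — to bound it by $C|x-p|^\alpha\delta^{1-d-\alpha}\|f\|_1$; on the near region, Proposition \ref{ContinuityArgumentForBAdjoint} gives the kernel bound $|x-q|^{3/2-d}+|p-q|^{3/2-d}$, controlled by $\delta^{1/4}R|f|(p)$. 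Fixing $p$ with $R|f|(p)<\infty$, choosing $\delta$ small and then $x$ close to $p$, one concludes the convergence.

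Then, for the $\limsup$ inequality, I would recall the approximation scheme $\Omega_j\uparrow\Omega$ from Theorem \ref{ApproximationScheme}: $T_j\circ\Lambda_j\to T$ and $\tau_j\to 1$ in $L^2$ and a.e. Since $(\nabla\mathcal{S}^t_+f)^*\in L^2(\partial\Omega)$ by Proposition \ref{SingleLayerInequalitiesForAdjoint} and $(\mathcal{S}^t_+f)^*\in L^2(\partial\Omega)$ as well, the pointwise convergence $\nabla\mathcal{S}^t_+f(\Lambda_j(q))\cdot T_j(\Lambda_j(q))\to\nabla_T\mathcal{S}^tf(q)$ just proved, combined with the a.e.\ convergence $\mathcal{S}^t_+f(\Lambda_j(q))\to\mathcal{S}^tf(q)$ (which holds by the boundary-value part of Proposition \ref{SingleLayerInequalitiesForAdjoint}), lets me apply the dominated convergence theorem with the $L^2$ majorants $(\mathcal{S}^t_+f)^*$ and $(\nabla\mathcal{S}^t_+f)^*$. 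After the change of variables moving the integral from $\partial\Omega$ back to $\partial\Omega_j$ via $\tau_j$, this yields
\[
\limsup_{j\to\infty}\int_{\partial\Omega_j}\bigl(|\mathcal{S}^t_+f|^2+|\nabla_{T_j}\mathcal{S}^t_+f|^2\bigr)\,d\sigma_j\le\int_{\partial\Omega}\bigl(|\mathcal{S}^tf|^2+|\nabla_T\mathcal{S}^tf|^2\bigr)\,d\sigma=\|\mathcal{S}^tf\|_{W^{1,2}(\partial\Omega)}^2,
\]
with the identical argument for $\mathcal{S}^t_-f$ using $\Omega_j'\downarrow\Omega$.

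The main obstacle is purely one of bookkeeping: one must verify that every estimate for $G$ used in the proof of Proposition \ref{DerivativeIsNonTangentiallyContinuous} has an exact counterpart for $G^t$ in the adjoint variable with a good constant (now additionally depending on $\|b\|_{C^\alpha}$), namely the pointwise gradient bound, the Hölder continuity of the gradient, and the continuity-in-$b$ estimate — all of which are in place precisely because $b$ is assumed Hölder continuous (this is where the $C^\alpha$ hypothesis, absent in Chapter 8, becomes essential). Once these substitutions are made, no genuinely new difficulty arises; the inclusion of the $|\mathcal{S}^t_+f|^2$ term on the left (which had no analog in Corollary \ref{LimsupEstimateOnNorm}, since there only the tangential gradient mattered) is handled trivially by the same dominated convergence step using that $(\mathcal{S}^t_+f)^*\in L^2(\partial\Omega)$ and that $\mathcal{S}^t_+f\to\mathcal{S}^tf$ nontangentially a.e.
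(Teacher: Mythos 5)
Your proposal is correct and follows essentially the same route as the paper's proof, which is literally a one-line reference: ``The proof is identical to the proof of proposition \ref{DerivativeIsNonTangentiallyContinuous}, using the analogous estimates for the adjoint of Green's function. For the second part, we proceed as in corollary \ref{LimsupEstimateOnNorm}.'' You have correctly identified the needed adjoint-variable estimates (Propositions \ref{GreenDerivativeBoundsForAdjoint}, \ref{HolderContinuityOfGreenForAdjoint}, \ref{ContinuityArgumentForBAdjoint}), correctly located where the $C^\alpha$ hypothesis on $b$ is used, and correctly handled the extra $|\mathcal{S}^t_+f|^2$ term by the boundary-value part of Proposition \ref{SingleLayerInequalitiesForAdjoint}.
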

\begin{proof}
The proof is identical to the proof of proposition \ref{DerivativeIsNonTangentiallyContinuous}, using the analogous estimates for the adjoint of Green's function. For the second part, we proceed as in corollary \ref{LimsupEstimateOnNorm}.
\end{proof}

We now pass to the discontinuity of the normal derivative of the single layer potential across the boundary of $\Omega$.

\begin{lemma}\label{ContinuousInsideForAdjoint}
Let $\Omega\in\mathcal{D}$, let $A\in M_{\lambda,\mu}(B)$, $b\in\Lip(\Omega)$ and consider a Lipschitz function $F:\overline{B}\to\mathbb R$ with $F\equiv 0$ on $\partial B$. Then, for $x\in\Omega$,
\[
\int_{\partial\Omega}\partial_{\nu}^qG(x,q)\cdot F(q)\,d\sigma(q)=-\int_{B\setminus\Omega}A^t\nabla G_x^t\nabla F+b\nabla F\cdot G_x^t,
\]
while, for $x\in B\setminus\overline{\Omega}$,
\[
\int_{\partial\Omega}\partial_{\nu}^qG(x,q)\cdot F(q)\,d\sigma(q)=\int_{\Omega}A^t\nabla G_x^t\nabla F+b\nabla F\cdot G_x^t,
\]
where $\partial_{\nu}^q$ denotes the conormal derivative with respect to $q$ on $\partial\Omega$, associated to $L^t$, and $G_x^t(\cdot)=G(x,\cdot)$.
\end{lemma}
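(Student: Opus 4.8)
The plan is to mimic the proof of Lemma \ref{ContinuousInside}, which treats the analogous statement for $L$ instead of $L^t$. The key identity to exploit is the symmetry relation $G^t(x,q)=G(q,x)$ from Proposition \ref{SymmetryWithAdjoint}, so that $G_x^t(\cdot)=G(x,\cdot)$; more precisely, for fixed $x$, the function $q\mapsto G_x^t(q)=G^t(x,q)$ solves the adjoint equation $L^t u=0$ in $B$ away from $x$. Since we are assuming $b\in\Lip(B)$ and $A\in M_{\lambda,\mu}(B)$, and $B$ is smooth, the regularity theory (Theorems 8.8 and 8.12 in \cite{Gilbarg}, together with Corollary \ref{BoundedDerivativeForAdjoint} or Proposition \ref{DerivativeRegularityForAdjoint} for $C^1$ regularity of $\nabla G_x^t$ away from the pole) shows that $G_x^t$ is a classical solution of
\[
-\dive(A^t\nabla G_x^t)-\dive(bG_x^t)=0
\]
pointwise away from $x$, and $G_x^t\in W^{2,2}$ on compact subsets of $B\setminus(\{x\}\cup\partial B)$.

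First I would handle the case $x\in\Omega$. Here $x\notin B\setminus\Omega$, so $G_x^t$ is a classical solution of the adjoint equation on $B\setminus\Omega$ with $\nabla G_x^t$ continuous there (and up to $\partial\Omega$, since $\partial\Omega$ is a positive distance from $x$), while $F\equiv0$ on $\partial B$. Using that the outer boundary of $B\setminus\Omega$ splits as $\partial B\cup\partial\Omega$ with opposite orientations on $\partial\Omega$ relative to $\Omega$, I would integrate by parts:
\[
\int_{\partial\Omega}\partial_\nu^q G_x^t\cdot F\,d\sigma=-\int_{\partial(B\setminus\Omega)}\langle A^t\nabla G_x^t,\nu\rangle F\,d\sigma-\int_{\partial(B\setminus\Omega)}\langle bG_x^t,\nu\rangle F\,d\sigma,
\]
where the conormal derivative $\partial_\nu^q$ associated to $L^t$ is $\langle A^t\nabla G_x^t,\nu\rangle+\langle b,\nu\rangle G_x^t$ with $\nu$ the outer normal to $\Omega$; the $\partial B$ contributions vanish because $F\equiv 0$ there. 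Applying the divergence theorem to the vector field $F\cdot(A^t\nabla G_x^t+bG_x^t)$ on $B\setminus\Omega$ and using $\dive(A^t\nabla G_x^t+bG_x^t)=0$ there gives
\[
\int_{\partial\Omega}\partial_\nu^q G_x^t\cdot F\,d\sigma=-\int_{B\setminus\Omega}A^t\nabla G_x^t\nabla F+b\nabla F\cdot G_x^t,
\]
which is the first claimed identity (after identifying $G_x^t(\cdot)=G(x,\cdot)$ via Proposition \ref{SymmetryWithAdjoint}). The case $x\in B\setminus\overline\Omega$ is symmetric: now $G_x^t$ is a classical solution of the adjoint equation in $\Omega$, $F$ need not vanish on $\partial\Omega$, and the same divergence-theorem computation on $\Omega$ (whose boundary is just $\partial\Omega$, oriented outward) yields
\[
\int_{\partial\Omega}\partial_\nu^q G_x^t\cdot F\,d\sigma=\int_{\Omega}A^t\nabla G_x^t\nabla F+b\nabla F\cdot G_x^t.
\]

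The main obstacle I anticipate is the same one that arises in Lemma \ref{ContinuousInside}: justifying the boundary integration by parts despite $G_x^t$ having a singularity at $x$. This is handled by the location of $x$ — in each case $x$ lies in the \emph{open} region complementary to the one over which we integrate, so the singularity is at a positive distance from the region of integration, and $\nabla G_x^t$ is genuinely continuous (indeed $C^1$) up to the relevant boundary by the interior gradient estimates for the adjoint equation with Lipschitz $b$ (Proposition \ref{DerivativeRegularityForAdjoint}, Corollary \ref{BoundedDerivativeForAdjoint}). A minor technical point is that $\partial\Omega$ is only Lipschitz, but the divergence theorem holds on Lipschitz domains for $W^{1,1}$ vector fields, and $F\cdot(A^t\nabla G_x^t+bG_x^t)$ is Lipschitz there since $F$ is Lipschitz, $A$ is Lipschitz, $b$ is Lipschitz, and $G_x^t$ together with $\nabla G_x^t$ is bounded on the closure of the region. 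So the argument is essentially a transcription of the proof of Lemma \ref{ContinuousInside} with $L$ replaced by $L^t$, $A\nabla$ replaced by $A^t\nabla$, and the drift term $b\nabla G_x\cdot F$ replaced by its adjoint form $b\nabla F\cdot G_x^t$, reflecting that $\dive(bG_x^t)$ rather than $b\nabla G_x$ appears in the adjoint equation.
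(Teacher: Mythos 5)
Your proof is correct and takes the same route as the paper: invoke the symmetry $G(x,\cdot)=G^t(\cdot,x)$ and classical regularity of $G^t_x$ away from the pole, observe that $x$ sits in the open region complementary to the region of integration so $\nabla G^t_x$ is genuinely continuous up to the relevant boundary, then apply the divergence theorem to $F\cdot(A^t\nabla G^t_x+bG^t_x)$ on $B\setminus\Omega$ (resp.\ $\Omega$) and use $\dive(A^t\nabla G^t_x+bG^t_x)=0$ away from $x$ together with $F\equiv 0$ on $\partial B$. One thing worth noting: you are explicit that the conormal associated to $L^t$ is the \emph{full} flux $\langle A^t\nabla u,\nu\rangle+\langle b,\nu\rangle u$, and you apply the divergence theorem once to the complete vector field $F(A^t\nabla G^t_x+bG^t_x)$; the paper's displayed chain instead writes $\dive(F\cdot A^t\nabla G^t_x)$ and then passes from $\dive(A^t\nabla G^t_x)F$ to $b\nabla F\cdot G^t_x$ under the integral, which as written elides the drift-flux boundary contribution on $\partial\Omega$ — that contribution is precisely the $\langle b,\nu\rangle G^t_x$ part of the conormal and is needed for the chain to close. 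Your version makes this bookkeeping transparent, so it is the cleaner write-up of the same argument.
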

\begin{proof}
Suppose first that $x\in\Omega$. Then, from proposition \ref{SymmetryWithAdjoint} and theorems 8.8 and 8.12 in \cite{Gilbarg}, $G_x^t(\cdot)=G^t(\cdot,x)$ and $G_x^t$ is a classical solution of $L^tu=0$ in $B\setminus\Omega$; that is,
\[
\dive(A^t\nabla G_x)=-\dive(bG_x),
\]
almost everywhere in $B$, away from $x$. Therefore, since $F\equiv 0$ on $\partial B$,
\begin{align*}
\int_{\partial\Omega}\partial_{\nu}G_x^t\cdot F\,d\sigma&=-\int_{\partial(B\setminus\Omega)}\partial_{\nu}G_x^t\cdot F\,d\sigma=-\int_{B\setminus\Omega}\dive(F\cdot A^t\nabla G_x^t)\\
&=-\int_{B\setminus\Omega}A^t\nabla G_x^t\nabla F+\dive(A^t\nabla G_x^t)\cdot F\\
&=-\int_{B\setminus\Omega}A^t\nabla G_x^t\nabla F+b\nabla F\cdot G_x^t,
\end{align*}
because $G_x^t$ is a classical solution of $L^tu=0$ $B\setminus\Omega$. Now, if $x\in B\setminus\overline{\Omega}$, then $G_x^t$ is a classical solution of $L^tu=0$ in $\Omega$, therefore
\[
\int_{\partial\Omega}\partial_{\nu}G_x^t\cdot F\,d\sigma=\int_{\Omega}\dive(F\cdot A^t\nabla G_x^t)=\int_{\Omega}A^t\nabla G_x^t\nabla F+b\nabla F\cdot G_x^t,
\]
which concludes the proof.
\end{proof}

\begin{lemma}\label{IntegratesTo1ForAdjoint}
Let $B$ be a ball, and let $A\in M_{\lambda,\mu}(B)$, and $b\in\Lip(B)$. Then, for all $p\in B$,
\[
\lim_{\e\to 0}\int_{\partial B_{\e}(p)}\partial_{\nu}^qG(p,q)\,d\sigma_{B_{\e}}(q)=-1,
\]
where $\partial_{\nu}$ is the conormal derivative associated with $L^t$.
\end{lemma}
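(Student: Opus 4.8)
The plan is to mirror exactly the proof of Lemma \ref{IntegratesTo1}, which established the analogous identity for $G^t(p,\cdot)$ and the conormal derivative associated with $L$. Here we want the identity for $G(p,\cdot)$ and the conormal derivative associated with $L^t$. The starting point is the observation, already used in Lemma \ref{ContinuousInsideForAdjoint}, that by Proposition \ref{SymmetryWithAdjoint} and theorems 8.8 and 8.12 in \cite{Gilbarg} the function $G_p^t(\cdot):=G(p,\cdot)=G^t(\cdot,p)$ is a classical solution of $L^tu=0$ away from $p$; that is, $\dive(A^t\nabla G_p^t)=-\dive(bG_p^t)$ pointwise in $B$ away from $p$, and $G_p^t\in W^{2,2}(B\setminus B_\e(p))$ for every small $\e>0$.

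First I would fix $\e>0$ with $B_\e(p)\subseteq B$, set $U_\e=B\setminus B_\e(p)$, and integrate the equation $\dive(A^t\nabla G_p^t)=-\dive(bG_p^t)$ over $U_\e$. On the left, the divergence theorem gives the boundary integral $\int_{\partial B}\partial_{\nu}G_p^t\,d\sigma_B-\int_{\partial B_\e(p)}\partial_{\nu}G_p^t\,d\sigma_{B_\e(p)}$, where now $\partial_\nu$ is the conormal derivative associated with $L^t$, i.e.\ $\partial_\nu v=\langle A^t\nabla v,\nu\rangle$. On the right, $-\int_{U_\e}\dive(bG_p^t)$; since $bG_p^t$ is integrable in $B$ (from the pointwise bound $G_p^t(q)\le C|p-q|^{2-d}$ and $b\in L^\infty$), and again by the divergence theorem this equals $-\int_{\partial B}\langle bG_p^t,\nu\rangle\,d\sigma_B+\int_{\partial B_\e(p)}\langle bG_p^t,\nu\rangle\,d\sigma_{B_\e(p)}$; the small-sphere term is $O(\e\cdot\e^{2-d}\cdot\e^{d-1})=O(\e^{2})\to0$. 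Rearranging, $\int_{\partial B_\e(p)}\partial_\nu G_p^t\,d\sigma_{B_\e(p)}\to\int_{\partial B}\partial_\nu G_p^t\,d\sigma_B+\int_{\partial B}\langle bG_p^t,\nu\rangle\,d\sigma_B$ as $\e\to0$; note $G_p^t$ vanishes on $\partial B$ since it is Green's function for $B$, so the second boundary term is actually $0$ and the first is handled next.

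Next I would evaluate $\int_{\partial B}\partial_\nu G_p^t\,d\sigma_B$ by inserting a cutoff $\phi\in C_c^\infty(B)$ equal to $1$ near $p$ and supported in a small ball $B_p'$, writing $\psi=1-\phi$ (so $\psi\equiv1$ near $\partial B$, $\psi\equiv0$ near $p$), and applying the divergence theorem to $A^t\nabla G_p^t\cdot\psi$ over $B\setminus B_p$ where $B_p\Subset B_p'$ is a ball around $p$ on which $\psi\equiv0$. This yields $\int_{\partial B}\partial_\nu G_p^t\,d\sigma_B=\int_{B}A^t\nabla G_p^t\nabla\psi+\dive(A^t\nabla G_p^t)\psi=\int_B A^t\nabla G_p^t\nabla\psi-\dive(bG_p^t)\psi$. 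Since $\psi=1-\phi$, the $\int_B\dive(bG_p^t)$ piece and a symmetric $\int_B A^t\nabla G_p^t\nabla1$ piece cancel in the right way, and what remains is $-\big(\int_B A^t\nabla G_p^t\nabla\phi+\int_B\langle bG_p^t,\nabla\phi\rangle\big)=-\alpha^t(G_p^t,\phi)=-\phi(p)=-1$, using the defining property of $G^t(\cdot,p)$ from Theorem \ref{GoodGreenFunctionEstimatesForAdjoint}(ii) — valid because $\phi\in C_c^\infty(B)$. Combining, the limit equals $-1$, as claimed.

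The main obstacle, and the only place requiring genuine care rather than bookkeeping, is the integrability/cutoff juggling near the pole: one must verify that $bG_p^t$ (and $bG_p^t$ paired with $\nabla\phi$) is integrable so that the various applications of the divergence theorem and dominated convergence are legitimate, and that the cancellation between the $\dive(bG_p^t)$ and the trivial $A^t\nabla G_p^t\nabla 1$ contributions is arranged so that the final answer is exactly $-\alpha^t(G_p^t,\phi)$. The parallel computation in Lemma \ref{IntegratesTo1} shows precisely how to orchestrate this, and the only substantive change is that the roles of $L$ and $L^t$ (hence of the conormal derivative and of where the drift $b$ sits in the bilinear form) are interchanged, together with the symmetry relation $G(p,\cdot)=G^t(\cdot,p)$ which lets us treat $G(p,\cdot)$ as a genuine (classical, away from $p$) solution of the adjoint equation.
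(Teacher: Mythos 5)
Your proposal is correct, but the final step takes a genuinely different route from the paper. Both proofs reduce, via the divergence theorem on $U_\e=B\setminus B_\e(p)$ together with the small‑sphere estimate $\big|\int_{\partial B_\e(p)}\langle bG_p^t,\nu\rangle\,d\sigma_\e\big|\le C\|b\|_\infty\e^{2-d}\cdot\e^{d-1}\to 0$ (note this is $O(\e)$, not $O(\e^2)$ as you wrote, but it still vanishes), to showing that the limit equals $\int_{\partial B}\partial_\nu G_p^t\,d\sigma_B$. At that point the paper simply invokes Proposition \ref{HarmonicRepresentation}: since $\partial_\nu^q G(p,q)=-\frac{d\omega_B^p}{d\sigma_B}(q)$ and $\omega_B^p$ is a probability measure on $\partial B$, the integral is $-1$ immediately. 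You instead reuse the cutoff computation from Lemma \ref{IntegratesTo1}: inserting $\psi=1-\phi$ with $\phi\in C_c^\infty(B)$, $\phi\equiv 1$ near $p$, and integrating by parts twice (once on $A^t\nabla G_p^t\cdot\psi$, once more to trade $\dive(bG_p^t)\psi$ for $bG_p^t\nabla\psi$ with vanishing boundary terms because $G_p^t=0$ on $\partial B$ and $\psi=0$ on $\partial B_p$), one lands exactly on $-\alpha^t(G_p^t,\phi)=-\phi(p)=-1$. Your approach is more elementary and self‑contained — it uses only Theorem \ref{GoodGreenFunctionEstimatesForAdjoint}(ii) and the divergence theorem, with no appeal to harmonic measure — whereas the paper's approach is shorter given that Proposition \ref{HarmonicRepresentation} is already in place. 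One small presentational remark: your phrase ``the $\int_B\dive(bG_p^t)$ piece and a symmetric $\int_B A^t\nabla G_p^t\nabla 1$ piece cancel'' somewhat mischaracterizes what happens — the $\nabla 1$ term is zero outright, and the drift term is handled by a second integration by parts with no boundary contribution — but the displayed conclusion $-\alpha^t(G_p^t,\phi)=-1$ is correct.
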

\begin{proof}
Let $\e>0$, and consider the domain $U_{\e}=B\setminus B_{\e}(p)$. Set also $G_p^t(\cdot)=G(p,\cdot)$. As in lemma \ref{ContinuousInsideForAdjoint}, $G_p^t$ is a classical solution of the equation
\[
L^tu=-\dive(A^t\nabla u)-\dive(bu)=0
\]
away from $p$, therefore
\[
\int_{U_{\e}}\dive(A\nabla G_p^t)=\int_{U_{\e}}\dive(b\nabla G_p^t).
\]
For the last term, the divergence theorem shows that
\[
\int_{U_{\e}}\dive(b\nabla G_p^t)=\int_{\partial U_{\e}}G_p^t\left<b,\nu\right>\,d\sigma=-\int_{\partial B_{\e}(p)}G_p^t\left<b,\nu\right>\,d\sigma_{\e},
\]
since $G_p^t$ vanishes in $\partial B$. But, from the pointwise estimates on $G$,
\[
\left|\int_{\partial B_{\e}(p)}G_p^t\left<b,\nu\right>\,d\sigma_{\e}\right|\leq\|b\|_{\infty}\int_{\partial B_{\e}(p)}|p-q|^{2-d}\,d\sigma_{\e}(q)\leq C\|b\|_{\infty}\e^{2-d}\sigma_{\e}(\partial B_{\e}(p))\xrightarrow[\e\to 0]{}0,
\]
therefore
\[
\int_{U_{\e}}\dive(A\nabla G_p^t)\xrightarrow[\e\to 0]{}0.
\]
We now integrate by parts, to obtain
\[
\int_{U_{\e}}\dive(A\nabla G_p^t)=\int_{\partial U_{\e}}\partial_{\nu}G_p^t\,d\sigma_{U_{\e}}=\int_{\partial B}\partial_{\nu}G_p^t\,d\sigma_B-\int_{\partial B_{\e}(p)}\partial_{\nu}G_p^t\,d\sigma_{B_{\e}(p)},
\]
therefore
\[
\int_{\partial B_{\e}(p)}\partial_{\nu}G_p^t\,d\sigma_{B_{\e}(p)}\xrightarrow[\e\to 0]{}\int_{\partial B}\partial_{\nu}G_p^t\,d\sigma_B.
\]
Note now that, from proposition \ref{HarmonicRepresentation},
\[
\partial_{\nu}^qG_p^t(q)=\partial_{\nu}^qG(p,q)=-\frac{d\omega_B^p(q)}{d\sigma_B(q)},
\]
which is the harmonic measure kernel on $\partial B$. Since the harmonic measure is a probability measure, we finally obtain that
\[
\int_{\partial B_{\e}(p)}\partial_{\nu}G_p^t\,d\sigma_{B_{\e}(p)}\xrightarrow[\e\to 0]{}\int_{\partial B}\partial_{\nu}G_p^t\,d\sigma_B=-\int_{\partial B}d\omega^p=-1,
\]
and this completes the proof.
\end{proof}

We now define, for a Lipschitz function $f:\partial\Omega\to\mathbb R$ and $p\in\partial\Omega$,
\[
\mathcal{K}^tf(p)=\lim_{\e\to 0}\int_{|p-q|>\e}\partial_{\nu}^qG(p,q)F(q)\,d\sigma(q).
\]
The fact that this limit exists is shown in the next lemma.

\begin{lemma}\label{JumpFor1ForAdjoint}
Let $\Omega\in\mathcal{D}$, let $A\in M_{\lambda,\mu}(B)$, $b\in\Lip(B)$, and consider a Lipschitz function $F:\overline{B}\to\mathbb R$ with $F\equiv 0$ on $\partial B$. Then, for almost all $p\in\partial\Omega$,
\begin{align*}
\mathcal{K}f(p)&=\frac{1}{2}F(p)-\int_{B\setminus\Omega}A^t\nabla G_p^t\nabla F+b\nabla F\cdot G_p^t\\
&=-\frac{1}{2}F(p)+\int_{\Omega}A^t\nabla G_p^t\nabla F+b\nabla F\cdot G_p^t.
\end{align*}
\end{lemma}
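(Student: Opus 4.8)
The plan is to follow the proof of Lemma \ref{JumpFor1} almost verbatim, interchanging the roles of $L$ and $L^t$ and of $G$ and $G^t$, and invoking Lemmas \ref{ContinuousInsideForAdjoint} and \ref{IntegratesTo1ForAdjoint} in place of Lemmas \ref{ContinuousInside} and \ref{IntegratesTo1}. Write $G_p^t(\cdot)=G(p,\cdot)$; by Proposition \ref{SymmetryWithAdjoint} this is $G^t(\cdot,p)$, hence (as in the proof of Lemma \ref{ContinuousInsideForAdjoint}) a classical solution of $L^tu=0$ in $B$ away from $p$, with the pointwise bounds $|G_p^t(q)|\leq C|p-q|^{2-d}$ from Theorem \ref{GoodGreenFunctionEstimatesForAdjoint} and $|\nabla G_p^t(q)|\leq C|p-q|^{1-d}$ from Proposition \ref{GreenDerivativeBoundsForAdjoint} (valid since $b\in\Lip(B)\subseteq C^{\alpha}(B)$). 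Set $V_{\e}=\Omega\cup B_{\e}(p)$, $\partial^1_{\e}=\Omega^c\cap\partial(B_{\e}(p))$, $\partial^2_{\e}=\Omega\cap\partial(B_{\e}(p))$. Exactly as in Lemma \ref{JumpFor1}, using $\partial V_{\e}$ and that $F$ vanishes on $\partial B$ (so the $\partial B$ contribution drops), one writes
\[
\int_{\partial\Omega\setminus\Delta_{\e}(p)}\partial_{\nu}G_p^t\cdot F\,d\sigma=-I_1-I_2,\quad I_1=\int_{\partial(B\setminus V_{\e})}\partial_{\nu}G_p^t\cdot F\,d\sigma,\quad I_2=\int_{\partial^1_{\e}}\partial_{\nu}G_p^t\cdot F\,d\sigma.
\]

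For $I_1$, the divergence-theorem computation of Lemma \ref{ContinuousInsideForAdjoint}, applied on $B\setminus V_{\e}$ (where $G_p^t$ is genuinely smooth, the pole $p$ having been removed), gives $I_1=\int_{B\setminus V_{\e}}A^t\nabla G_p^t\nabla F+b\nabla F\cdot G_p^t$; since both integrands are absolutely integrable over $B\setminus\Omega$ by the pointwise bounds above, dominated convergence yields $I_1\to\int_{B\setminus\Omega}A^t\nabla G_p^t\nabla F+b\nabla F\cdot G_p^t$. For $I_2$, split $I_2=I_3+I_4$ with $I_3=\int_{\partial^1_{\e}}\partial_{\nu}G_p^t\cdot(F-F(p))\,d\sigma$ and $I_4=F(p)\int_{\partial^1_{\e}}\partial_{\nu}G_p^t\,d\sigma$. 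Lipschitz continuity of $F$ with the bounds on $\nabla G_p^t$ and $G_p^t$ bound $|I_3|$ by $C\e^{2-d}\sigma_{d-1}(\partial B_{\e}(p))\to 0$ (the $b$-flux part $\langle bG_p^t,\nu\rangle$ of the conormal derivative, involving only $|p-q|^{2-d}$, is treated the same way and is smaller). For $I_4$ we use that at almost every $p\in\partial\Omega$ there is a well-defined tangent plane, so the symmetric difference of $\partial^1_{\e}$ and $\partial^2_{\e}$ lies in a strip $A_{\e}(p)=\{y\in B_{\e}(p)\mid|y\cdot\nu(p)|\leq C\e^2\}$; combined with the pointwise bounds this forces $\int_{\partial^1_{\e}}\partial_{\nu}G_p^t-\int_{\partial^2_{\e}}\partial_{\nu}G_p^t\to 0$, whence
\[
\int_{\partial^1_{\e}}\partial_{\nu}G_p^t\,d\sigma=\frac{1}{2}\int_{\partial B_{\e}(p)}\partial_{\nu}G_p^t\,d\sigma+\frac{1}{2}\left(\int_{\partial^1_{\e}}\partial_{\nu}G_p^t-\int_{\partial^2_{\e}}\partial_{\nu}G_p^t\right)\xrightarrow[\e\to 0]{}-\frac{1}{2}
\]
by Lemma \ref{IntegratesTo1ForAdjoint}, so $I_4\to-\frac{1}{2}F(p)$. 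Collecting, $\mathcal{K}^tf(p)=\lim_{\e\to 0}(-I_1-I_2)=\frac{1}{2}F(p)-\int_{B\setminus\Omega}A^t\nabla G_p^t\nabla F+b\nabla F\cdot G_p^t$, which is the first representation.

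For the second representation, recall that $G_p^t=G^t(\cdot,p)$ satisfies property (ii) of Theorem \ref{GoodGreenFunctionEstimatesForAdjoint}: $\int_B A^t\nabla G_p^t\nabla\phi+b\nabla\phi\cdot G_p^t=\phi(p)$ for $\phi\in C_c^{\infty}(B)$. Since $G_p^t\in W_0^{1,q}(B)$ for $q<\frac{d}{d-1}$ and, $B$ being smooth, any Lipschitz $F$ vanishing on $\partial B$ lies in $W_0^{1,q'}(B)$, this identity extends to $\phi=F$ by density. Splitting $\int_B=\int_{\Omega}+\int_{B\setminus\Omega}$ and substituting the first representation of $\mathcal{K}^tf(p)$ gives $F(p)=\int_{\Omega}A^t\nabla G_p^t\nabla F+b\nabla F\cdot G_p^t+\frac{1}{2}F(p)-\mathcal{K}^tf(p)$, which rearranges to the claimed second representation. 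The only step demanding genuine care is the $I_4$ analysis, exactly as in Lemma \ref{JumpFor1}; the new feature here — the extra $b$-flux term $\langle bG_p^t,\nu\rangle$ in the conormal derivative associated with $L^t$ — is harmless, because $G_p^t$ carries the milder singularity $|p-q|^{2-d}$, so every estimate involving it is strictly better than the corresponding one for $\nabla G_p^t$, and the boundary computation it produces is already packaged in Lemma \ref{IntegratesTo1ForAdjoint}.
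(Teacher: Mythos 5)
Your proof follows the paper's argument essentially verbatim, using the same decomposition $I_1+I_2$, the same treatment of $I_3$, the same tangent-plane/strip argument for $I_4$ via Lemma \ref{IntegratesTo1ForAdjoint}, and the same $L^tG_p^t=\delta_p$ identity for the second representation. If anything you are a touch more careful than the paper: you explicitly address the $b$-flux term $\langle bG_p^t,\nu\rangle$ in the conormal derivative (which the paper's intermediate identity $\int_{\partial(B\setminus V_\e)}\partial_\nu G_p^t\cdot F=\int_{B\setminus V_\e}\dive(F\cdot A^t\nabla G_p^t)$ quietly absorbs), and you give the density argument justifying the use of the Lipschitz test function $F$ in the weak formulation of $L^tG_p^t=\delta_p$, which the paper leaves implicit.
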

\begin{proof}
Set $G_p^t(q)=G(p,q)$, and let $V_{\e}=\Omega\cup B_{\e}(p)$. We also define
\[
\partial^1_{\e}=\Omega^c\cap\partial(B_{\e}(p)),\,\,\partial^2_{\e}=\Omega\cap\partial(B_{\e}(p)),
\]
and we write
\begin{align*}
\int_{\partial\Omega\setminus\Delta_{\e}(p)}\partial_{\nu}G_p^t\cdot F\,d\sigma&=\int_{\partial V_{\e}}\partial_{\nu}G_p^t\cdot F\,d\sigma-\int_{\partial^1_{\e}}\partial_{\nu}G_p^t\cdot F\,d\sigma\\
&=\int_{\partial B}\partial_{\nu}G_p^t\cdot F\,d\sigma-\int_{\partial(B\setminus V_{\e})}\partial_{\nu}G_p^t\cdot F\,d\sigma-\int_{\partial^1_{\e}}\partial_{\nu}G_p^t\cdot F\,d\sigma\\
&=0-I_1-I_2,
\end{align*}
since $F$ vanishes on $\partial B$.

We now treat $I_1$. We first write
\[
I_1=\int_{\partial(B\setminus V_{\e})}\partial_{\nu}G_p^t\,d\sigma=\int_{B\setminus V_{\e}}\dive(F\cdot A^t\nabla G_p^t)=\int_{B\setminus V_{\e}}A^t\nabla G_p^t\nabla F+b\nabla F\cdot G_p^t,
\]
since $G_p$ is a solution of $L^tu=0$ away from $p$. Then, since the term $b\nabla F\cdot G_p^t$ is integrable, we obtain that
\[
I_1\to\int_{B\setminus\Omega}A^t\nabla G_p^t\nabla F+b\nabla F\cdot G_p^t.
\]

For $I_2$, we write
\[
I_2=\int_{\partial^1_{\e}}\partial_{\nu}G_p^t\cdot F\,d\sigma=\int_{\partial^1_{\e}}\partial_{\nu}G_p^t\cdot (F-F(p))\,d\sigma+F(p)\int_{\partial^1_{\e}}\partial_{\nu}G_p^t\,d\sigma=I_3+I_4.
\]
From Lipschitz continuity of $F$ and the pointwise bounds on the gradient of $G^t$, we obtain that
\[
|I_3|\leq C\int_{\partial^1_{\e}}|p-p'|^{1-d}|F(p')-F(p)|\,d\sigma(p')\leq C\e^{2-d}\sigma_{d-1}(\partial B_{\e}(p))\xrightarrow[\e\to 0]{}0.
\]
For $I_4$, for almost all $p\in\partial\Omega$ there exists a well defined tangent plane to $\partial\Omega$ at $p$. For those $p$, the symmetric difference between $\partial_{\e}^1$ and $\partial_{\e}^2$ is contained in a strip
\[
A_{\e}(p)=\{y\in B_{\e}(p)\big{|}|y\cdot\nu(p)|\leq C\e^2\},
\]
as in lemma \ref{JumpFor1}, and if we combine with the pointwise bounds for the gradient of $G^t$, we obtain that
\[
\int_{\partial_{\e}^1}\partial_{\nu}G_p-\int_{\partial_{\e}^2}\partial_{\nu}G_p\xrightarrow[\e\to 0]{}0.
\]
Using lemma \ref{IntegratesTo1ForAdjoint}, we then obtain that
\begin{align*}
\int_{\partial^1_{\e}}\partial_{\nu}G_p^t\,d\sigma&=\frac{1}{2}\left(\int_{\partial^1_{\e}}\partial_{\nu}G_p^t\,d\sigma+\int_{\partial^2_{\e}}\partial_{\nu}G_p^t\,d\sigma\right)+\frac{1}{2}\left(\int_{\partial^1_{\e}}\partial_{\nu}G_p^t\,d\sigma-\int_{\partial^1_{\e}}\partial_{\nu}G_p^t\,d\sigma\right)\\
&=\frac{1}{2}\int_{B_{\e}(p)}\partial_{\nu_A}G_p^t\,d\sigma+\frac{1}{2}\left(\int_{\partial^1_{\e}}\partial_{\nu_A}G_p^t\,d\sigma-\int_{\partial^1_{\e}}\partial_{\nu}G_p^t\,d\sigma\right)\xrightarrow[\e\to 0]{}-\frac{1}{2},
\end{align*}
therefore $I_2\to-\frac{1}{2}F(p)$. Therefore,
\[
\mathcal{K}^tF(p)=\lim_{\e\to 0}(-I_1-I_2)=\frac{1}{2}F(p)-\int_{B\setminus\Omega}A^t\nabla G_p^t\nabla F+b\nabla F\cdot G_p^t.
\]
For the second representation, using the fact that $L^tG_p^t=\delta_p$ and the first representation in this lemma, we write
\begin{align*}
F(p)&=\int_BA^t\nabla G_p^t\nabla F+b\nabla F\cdot G_p^t\\
&=\int_{\Omega}A^t\nabla G_p^t\nabla F+b\nabla F\cdot G_p^t+\int_{B\setminus\Omega}A^t\nabla G_p^t\nabla F+b\nabla F\cdot G_p^t\\
&=\int_{\Omega}A^t\nabla G_p^t\nabla F+b\nabla F\cdot G_p^t+\frac{1}{2}F(p)-\mathcal{K}^tF(p),
\end{align*}
which concludes the proof after rearranging the terms.
\end{proof}

We are now led to the following convergence lemma.

\begin{lemma}\label{JumpRelationsForAdjoint}
Let $\Omega\in\mathcal{D}$, let $A\in M_{\lambda,\mu}(B)$ and $b\in\Lip(B)$, and consider two Lipschitz functions $F,H:\overline{B}\to\mathbb R$ with $F,H\equiv 0$ on $\partial B$. Then, for all $j\in\mathbb N$,
\[
\int_{\partial\Omega_j}\partial_{\nu_j}\mathcal{S}_+^tF(p_j)\cdot H(p_j)\,d\sigma_j(p_j)\xrightarrow[j\to\infty]{}\int_{\partial\Omega}\left(\frac{1}{2}F(q)H(q)+F(q)\cdot \mathcal{K}^tH(q)\right)\,d\sigma(q),
\]
and also
\[
\int_{\partial\Omega_j'}\partial_{\nu_j'}\mathcal{S}_-^tF(p_j')\cdot F(p_j')\,d\sigma_j'(p_j')\xrightarrow[j\to\infty]{}\int_{\partial\Omega}\left(\frac{1}{2}F(q)H(q)+F(q)\cdot\mathcal{K}^tH(q)\right)\,d\sigma(q),
\]
where $\nu_j$, $\nu_j'$ are the unit outer normals on $\Omega_j$, $\Omega_j'$, respectively.
\end{lemma}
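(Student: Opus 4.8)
The plan is to repeat the proof of Lemma \ref{JumpRelations} almost verbatim, replacing the single layer potential $\mathcal{S}_+$ and Green's function $G$ by their adjoint counterparts $\mathcal{S}^t_+$, $G^t$, and invoking the adjoint versions of the auxiliary results established above: Lemma \ref{ContinuousInsideForAdjoint} in place of Lemma \ref{ContinuousInside}, and Lemma \ref{JumpFor1ForAdjoint} in place of Lemma \ref{JumpFor1}. The symmetry relation $G^t(z,q)=G(q,z)$ from proposition \ref{SymmetryWithAdjoint} is what makes the bookkeeping line up.

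For the first limit, write $I_j$ for the left-hand side. For fixed $j$ the boundary $\partial\Omega_j$ is at positive distance from $\partial\Omega$, so $\mathcal{S}^t_+F$ is a solution of $L^tu=0$ in a neighborhood of $\partial\Omega_j$ and hence is $C^1$ there by proposition \ref{DerivativeRegularityForAdjoint} (recall $b\in\Lip(B)\subseteq C^\alpha$); differentiating under the integral sign,
\[
\partial_{\nu_j}\mathcal{S}^t_+F(p_j)=\int_{\partial\Omega}\partial_{\nu_j}^{p_j}G^t(p_j,q)F(q)\,d\sigma(q),
\]
where the conormal is that associated with $L^t$ and the differentiation is in $p_j$. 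Since $|p_j-q|$ is bounded below for fixed $j$, the resulting double integral converges absolutely, so Fubini's theorem gives $I_j=\int_{\partial\Omega}\big(\int_{\partial\Omega_j}\partial_{\nu_j}^{p_j}G^t(p_j,q)H(p_j)\,d\sigma_j(p_j)\big)F(q)\,d\sigma(q)$. Now fix $q\in\partial\Omega$; since $\overline{\Omega_j}\subseteq\Omega$ we have $q\notin\overline{\Omega_j}$, so, writing $G^t(p_j,q)=G(q,p_j)$ and using the second representation in Lemma \ref{ContinuousInsideForAdjoint} with the domain $\Omega_j$ and the point $q$,
\[
\int_{\partial\Omega_j}\partial_{\nu_j}^{p_j}G^t(p_j,q)H(p_j)\,d\sigma_j(p_j)=\int_{\Omega_j}A^t\nabla G_q^t\nabla H+b\nabla H\cdot G_q^t,\qquad G_q^t(\cdot)=G^t(\cdot,q).
\]
Letting $j\to\infty$, $\chi_{\Omega_j}\uparrow\chi_\Omega$, and the integrand is dominated using the pointwise bound $G_q^t(x)\le C|q-x|^{2-d}$ and the uniform $W_0^{1,p}$ bound on $G_q^t$ from theorem \ref{GoodGreenFunctionEstimatesForAdjoint} together with the boundedness of $H,\nabla H,b$; the dominated convergence theorem (in both variables, exactly as in the proof of Lemma \ref{JumpRelations}) gives $I_j\to\int_{\partial\Omega}\big(\int_{\Omega}A^t\nabla G_q^t\nabla H+b\nabla H\cdot G_q^t\big)F(q)\,d\sigma(q)$. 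By the second equality in Lemma \ref{JumpFor1ForAdjoint} (applied with test function $H$ and pole $q$) the inner integral is $\tfrac12H(q)+\mathcal{K}^tH(q)$, which is the first claimed limit.

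For the second limit the argument is entirely parallel: $\mathcal{S}^t_-F$ is a $C^1$ solution of $L^tu=0$ near $\partial\Omega_j'$, Fubini applies for fixed $j$, and now, since $\Omega_j'\downarrow\Omega$ with $\overline\Omega$ in the interior of $\Omega_j'$, one has $q\in\Omega_j'$ for $q\in\partial\Omega$; so one uses the \emph{first} representation in Lemma \ref{ContinuousInsideForAdjoint} with the domain $\Omega_j'$ and the point $q$, which expresses the inner integral as $-\int_{B\setminus\Omega_j'}A^t\nabla G_q^t\nabla H+b\nabla H\cdot G_q^t$, and then passes to the limit ($\chi_{B\setminus\Omega_j'}\downarrow\chi_{B\setminus\Omega}$, same domination) and applies the first equality in Lemma \ref{JumpFor1ForAdjoint}, which rewrites $\int_{B\setminus\Omega}A^t\nabla G_q^t\nabla H+b\nabla H\cdot G_q^t=\tfrac12H(q)-\mathcal{K}^tH(q)$; this gives the remaining convergence. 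The only steps needing care are not genuinely hard: one must (a) correctly decide which of the two representations in Lemma \ref{ContinuousInsideForAdjoint} applies — the second when the pole $q$ lies outside the approximating domain $\Omega_j$, the first when it lies inside $\Omega_j'$ — and pair it with the matching equality in Lemma \ref{JumpFor1ForAdjoint}, and (b) justify the two dominated-convergence passages, which follows from the pointwise and $L^p$ bounds on $G^t$ and $\nabla G^t$ from chapter 5 together with the smoothness of the Lipschitz data $F,H$, exactly as in the non-adjoint case.
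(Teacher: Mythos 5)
Your proof is correct and takes essentially the same route as the paper: Fubini applied to the conormal-derivative formula for $\partial_{\nu_j}\mathcal{S}^t_\pm F$, then the second representation of Lemma \ref{ContinuousInsideForAdjoint} when the pole $q$ lies outside $\overline{\Omega_j}$ and the first when $q\in\Omega_j'$, dominated convergence as $j\to\infty$, and the matching equality in Lemma \ref{JumpFor1ForAdjoint}. Note that your argument for the second limit actually yields $\int_{\partial\Omega}\left(-\frac{1}{2}FH+F\cdot\mathcal{K}^tH\right)\,d\sigma$ rather than the $+\frac{1}{2}$ stated (and the integrand should read $H(p_j')$, not $F(p_j')$); the lemma statement contains typos, and your proof, like the paper's, produces the sign that is consistent with Corollary \ref{JumpRelationForAdjoint}.
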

\begin{proof}
The proof is identical to the proof of lemma \ref{JumpRelations}, using lemmas \ref{ContinuousInsideForAdjoint} and \ref{JumpFor1ForAdjoint} instead of lemmas \ref{ContinuousInside} and \ref{JumpFor1}.

More specifically, let $I_j$ be the first integral above. From the formula for $\partial_{\nu_j}\mathcal{S}_+^tF(p_j)$, we first have that
\[
I_j=\int_{\partial\Omega_j}\left(\int_{\partial\Omega}\partial_{\nu_j}^{p_j}G^t(p_j,q)F(q)\,d\sigma(q)\right)H(p_j)\,d\sigma_j(p_j).
\]
Now, for $j$ fixed, since $|p_j-q|$ is bounded below by some positive number, the last integral is absolutely convergent, so we can apply Fubini's theorem to obtain that
\[
I_j=\int_{\partial\Omega}\left(\int_{\partial\Omega_j}\partial_{\nu_j}^{p_j}G^t(p_j,q)H(p_j)\,d\sigma_j(p_j)\right)F(q)\,d\sigma(q),
\]
where differentiation takes place with respect to the second variable of $G$. We now apply the second representation in lemma \ref{ContinuousInsideForAdjoint} for fixed $j$, for the domain $\Omega_j$ and for $G^t$. Since $q\notin\overline{\Omega_j}$, we obtain that
\[
I_j=\int_{\partial\Omega}\left(\int_{\Omega_j}A^t\nabla G_q^t\nabla H+b\nabla H\cdot G^t_q\right)F(q)\,d\sigma(q).
\]
By letting $j\to\infty$, the dominated convergence theorem shows that
\begin{align*}
I_j&\xrightarrow[j\to\infty]{}\int_{\partial\Omega}\left(\int_{\Omega}A^t\nabla G_q^t\nabla H+b\nabla H\cdot G_q^t\right)F(q)\,d\sigma(q)\\
&=\int_{\partial\Omega}\frac{1}{2}FH\,d\sigma(q)+\int_{\partial\Omega}F(q)\left(-\frac{1}{2}H(q)+\int_{\Omega}A^t\nabla G^t_q\nabla H+b\nabla H\cdot G_q^t\right)\,d\sigma(q)
\end{align*}
and, since $q\in\partial\Omega$, the second equality in lemma \ref{JumpFor1ForAdjoint} shows that
\[
I_j\to\int_{\partial\Omega}\left(\frac{1}{2}FH+F\cdot\mathcal{K}^tH\right)\,d\sigma(q).
\]
Set now $I_j'$ to be the second integral. As above, and since now $q\in\Omega_j'$, we obtain from the first representation in lemma \ref{ContinuousInsideForAdjoint} that
\[
I_j'=-\int_{\partial\Omega}\left(\int_{B\setminus\Omega_j'}A^t\nabla G_q^t\nabla H+b\nabla H\cdot G_q^t\right)F(q)\,d\sigma(q).
\]
We then apply the dominated convergence theorem to obtain
\begin{align*}
I_j'&\xrightarrow[j\to\infty]{}-\int_{\partial\Omega}F(q)\left(\int_{B\setminus\Omega}A^t\nabla G_q^t\nabla H+b\nabla H\cdot G_q^t\right)\,d\sigma(q)\\
&=\int_{\partial\Omega}-\frac{1}{2}FH\,d\sigma+\int_{\partial\Omega}F(q)\left(\frac{1}{2}H(q)-\int_{B\setminus\Omega}A^t\nabla G_q^t\nabla H+b\nabla H\cdot G_q^t\right)\,d\sigma(q)\\
&=\int_{\partial\Omega}\left(-\frac{1}{2}FH+F\cdot\mathcal{K}^t
H\right)\,d\sigma(q),
\end{align*}
where we used the first equality in lemma \ref{JumpFor1ForAdjoint}.
\end{proof}

As a consequence of the previous lemma, we obtain the jump relation.

\begin{cor}[Jump Relation]\label{JumpRelationForAdjoint}
Let $\Omega\in\mathcal{D}$, $A\in M_{\lambda,\mu}(B)$ and $b\in\Lip(B)$. Then
\[
\int_{\partial\Omega_j}\partial_{\nu_j}\mathcal{S}_+^tF(p_j)\cdot H(p_j)\,d\sigma_j(p_j)-\int_{\partial\Omega_j'}\partial_{\nu_j'}\mathcal{S}_-^tF(p_j')\cdot H(p_j')\,d\sigma_j'(p_j')\xrightarrow[j\to\infty]{}\int_{\partial\Omega}FH\,d\sigma,
\]
for all $F,H:\overline{B}\to\mathbb R$ which are Lipschitz continuous and vanish on $\partial B$, where $\nu_j$, $\nu_j'$ are the unit outer normals on $\Omega_j$, $\Omega_j'$, respectively.
\end{cor}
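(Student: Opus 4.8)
The plan is to derive this corollary directly from Lemma \ref{JumpRelationsForAdjoint} by subtraction, exactly as Corollary \ref{JumpRelation} was obtained from Lemma \ref{JumpRelations}. Fix Lipschitz functions $F,H:\overline{B}\to\mathbb R$ that vanish on $\partial B$. Lemma \ref{JumpRelationsForAdjoint} supplies the two convergences
\[
\int_{\partial\Omega_j}\partial_{\nu_j}\mathcal{S}_+^tF(p_j)\cdot H(p_j)\,d\sigma_j(p_j)\xrightarrow[j\to\infty]{}\int_{\partial\Omega}\left(\frac{1}{2}FH+F\cdot\mathcal{K}^tH\right)d\sigma,
\]
\[
\int_{\partial\Omega_j'}\partial_{\nu_j'}\mathcal{S}_-^tF(p_j')\cdot H(p_j')\,d\sigma_j'(p_j')\xrightarrow[j\to\infty]{}\int_{\partial\Omega}\left(-\frac{1}{2}FH+F\cdot\mathcal{K}^tH\right)d\sigma,
\]
where the sign of the jump term in the second line is the one produced in the proof of that lemma (the computation $I_2'\to-\tfrac12 F(p)$). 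Subtracting the second convergence from the first, the terms $\int_{\partial\Omega}F\cdot\mathcal{K}^tH\,d\sigma$ cancel, while the jump contributions combine to $\frac12 FH-\left(-\frac12 FH\right)=FH$, giving $\int_{\partial\Omega}FH\,d\sigma$, which is precisely the asserted limit. Since the difference of two convergent sequences converges to the difference of the limits, this is all that is required.

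The only point that needs attention is the bookkeeping of the two approximation schemes: $\Omega_j\uparrow\Omega$ from Theorem \ref{ApproximationScheme} used for $\mathcal{S}_+^t$, and $\Omega_j'\downarrow\Omega$ used for $\mathcal{S}_-^t$, together with their respective outer unit normals $\nu_j$, $\nu_j'$. These are exactly the data appearing in Lemma \ref{JumpRelationsForAdjoint}, so nothing new enters. I do not expect a genuine obstacle here: all the analytic content — the representation formulas of Lemma \ref{ContinuousInsideForAdjoint}, the limit $\int_{\partial B_{\e}(p)}\partial_{\nu}^qG(p,q)\,d\sigma_{B_{\e}}(q)\to-1$ of Lemma \ref{IntegratesTo1ForAdjoint}, and the dominated-convergence passage to the limit — has already been absorbed into Lemma \ref{JumpRelationsForAdjoint}. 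The one thing worth verifying while writing up is that the sign displayed in the second convergence of Lemma \ref{JumpRelationsForAdjoint} is indeed $-\frac12 FH$ (as its proof derives) rather than $+\frac12 FH$, because it is the cancellation — not the doubling — of the $\frac12 FH$ terms that yields the correct coefficient $1$ in front of $\int_{\partial\Omega}FH\,d\sigma$.
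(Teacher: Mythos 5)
Your proof is correct and follows the paper's own approach exactly: the corollary is obtained by subtracting the second convergence in Lemma~\ref{JumpRelationsForAdjoint} from the first, whereupon the $F\cdot\mathcal{K}^tH$ contributions cancel and the $\pm\tfrac12 FH$ terms combine to $FH$. You are also right to flag the sign: as stated, Lemma~\ref{JumpRelationsForAdjoint} writes $+\tfrac12 F(q)H(q)$ in its second display, but its own proof derives $-\tfrac12 FH$ there (in line with the non-adjoint Lemma~\ref{JumpRelations}), and the subtraction only produces the factor $1$ in front of $\int_{\partial\Omega}FH\,d\sigma$ with the corrected sign — so the statement of that lemma contains a typo that your write-up correctly works around.
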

\begin{proof}
To obtain this convergence, we subtract the second line in lemma \ref{JumpRelationsForAdjoint} from the first.
\end{proof}

\subsection{Invertibility of $\mathcal{S}^t$}
As in the case for the single layer potential for the equation $Lu=0$, we will turn our attention to the global Rellich estimates that will lead to invertibility of $\mathcal{S}^t:L^2(\partial\Omega)\to W^{1,2}(\partial\Omega)$. For the adjoint operator, though, the situation is more complicated, since we need to show control on the size of the divergence of $b$, together with control of the term $u$ in $\Omega$. For this purpose, we begin with the next lemma.

\begin{lemma}\label{ExtensionBound}
Let $\Omega$ be a Lipschitz domain, and $q\in\partial\Omega$, $r\in(0,r_{\Omega})$. Then, for any $u\in W^{1,2}(\Omega)$,
\[
\|u\|_{L^{2^*}(T_r(q))}^2\leq Cr^{-2}\|u\|_{L^2(T_r(q))}^2+C\|\nabla u\|_{L^2(T_r(q))}^2,
\]
where $C$ depends only on the Lipschitz character of $\Omega$.
\end{lemma}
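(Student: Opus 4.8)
The plan is to reduce the estimate to the scale-invariant Sobolev embedding $W^{1,2}\hookrightarrow L^{2^*}$, with $2^*=\frac{2d}{d-2}$, on a bounded Lipschitz domain of unit size. First I would record the geometric fact that, since $r<r_{\Omega}$, the cylinder $Z(q,r)$ is a dilation of $Z(q,10r_{\Omega})$, inside of which $\Omega$ agrees with the region lying above the graph of a Lipschitz function of constant $\le M$ (Section 1.3). Consequently $T_r(q)=Z(q,r)\cap\Omega$ is a bounded Lipschitz domain, and the dilation $x\mapsto r^{-1}x$ carries it to a bounded Lipschitz domain $D=D_{q,r}$ of diameter comparable to $1$ whose Lipschitz character is controlled purely in terms of $M$ and $d$; the Lipschitz constant is scale invariant, and the number and overlap of the relevant coordinate cylinders are unchanged by dilation. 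This uniformity over $q$ and $r$ is the point of the reduction.

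Next I would apply the Sobolev embedding on $D$: since $d\ge 3$ and $D$ is a bounded Lipschitz domain,
\[
\|v\|_{L^{2^*}(D)}\le C\big(\|v\|_{L^2(D)}+\|\nabla v\|_{L^2(D)}\big)\qquad\text{for all }v\in W^{1,2}(D),
\]
where $C$ depends only on the Lipschitz character of $D$, hence only on $M$ and $d$. This constant can be obtained either from a bounded extension operator $W^{1,2}(D)\to W^{1,2}(\mathbb{R}^d)$ (Stein), whose norm is controlled by the Lipschitz character, followed by the Euclidean Sobolev inequality; or by covering $\partial D$ by finitely many coordinate cylinders, flattening the boundary, and using the half-space Sobolev inequality, with all constants uniform in the family. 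Applying this to $v=\tilde u$, where $\tilde u(x)=u(rx)$ for $x\in D$, gives $\|\tilde u\|_{L^{2^*}(D)}\le C(\|\tilde u\|_{L^2(D)}+\|\nabla\tilde u\|_{L^2(D)})$.

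Finally I would undo the scaling. The change of variables $x=ry$ yields $\|\tilde u\|_{L^{2^*}(D)}=r^{1-d/2}\|u\|_{L^{2^*}(T_r(q))}$, $\|\tilde u\|_{L^2(D)}=r^{-d/2}\|u\|_{L^2(T_r(q))}$, and $\|\nabla\tilde u\|_{L^2(D)}=r^{1-d/2}\|\nabla u\|_{L^2(T_r(q))}$ (using $\nabla\tilde u(x)=r\,\nabla u(rx)$ and $d/2^{*}=d/2-1$). Substituting and dividing through by $r^{1-d/2}$ gives
\[
\|u\|_{L^{2^*}(T_r(q))}\le C\big(r^{-1}\|u\|_{L^2(T_r(q))}+\|\nabla u\|_{L^2(T_r(q))}\big),
\]
and squaring together with $(a+b)^2\le 2a^2+2b^2$ produces the claimed inequality with a constant depending only on the Lipschitz character of $\Omega$. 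The only step that is not entirely routine is the uniformity of the Sobolev constant across the family $\{D_{q,r}\}$; as noted above this is immediate once one observes that these domains all share a common Lipschitz character, which follows from the structure of the coordinate cylinders and the scale invariance of the Lipschitz constant.
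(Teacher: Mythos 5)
Your proof is correct and follows essentially the same route as the paper: rescale $T_r(q)$ to a unit-scale Lipschitz domain $T_0 = r^{-1}T_r(q)$, apply the Sobolev embedding there via Stein's extension operator (whose norm is controlled by the Lipschitz character), and undo the dilation by the change of variables $x=ry$ to recover the stated powers of $r$. The scaling computations and the final estimate match the paper's argument.
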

\begin{proof}
Set $T_0=r^{-1}T_r(q)$, and let $v(x)=u(rx)$, for $x\in T_0$. Consider also Stein's extension operator (\cite{SteinSingular}, chapter VI, section 3)
\[
\mathcal{E}:W^{1,2}(T_0)\to W^{1,2}(\mathbb R^d),
\]
which extends $W^{1,2}(T_0)$ functions to $W^{1,2}(\mathbb R^d)$ functions. Then the norm of $\mathcal{E}$ depends only on the Lipschitz character of $T_0$. Since now $\mathcal{E}v\in W^{1,2}(\mathbb R^d)$, there exists a sequence $(v_n)$ in $C_c^{\infty}(\mathbb R^d)$ with $v_n\to v$ in $W^{1,2}(\mathbb R^d)$. Then, Sobolev's inequality shows that
\begin{align*}
\|v\|_{L^{2^*}(T_0)}&\leq \|\mathcal{E}u\|_{L^{2^*}(\mathbb R^d)}\leq C_d\|\nabla\mathcal{E}v\|_{L^2(\mathbb R^d)}\leq C_d\|\mathcal{E}v\|_{W^{1,2}(\mathbb R^d)}\\
&\leq C_d\|\mathcal{E}\|\|v\|_{W^{1,2}(T_0)}\leq C\|v\|_{L^2(T_0)}+C\|\nabla v\|_{L^2(T_0)}.
\end{align*}
We now compute, for any $p\geq 1$,
\[
\int_{T_0}|v(y)|^p\,dy=\int_{T_0}|u(ry)|^p\,dy=\int_{T_r(q)}|u(x)|^pr^{-d}\,dx,
\]
and also
\[
\int_{T_0}|\nabla v(y)|^p\,dx=\int_{T_0}r^p|\nabla u(ry)|^p\,dy=\int_{T_r(q)}|u(x)|^pr^{p-d}\,dx,
\]
which shows that
\[
r^{-d/2^*}\|u\|_{L^{2^*}(T_r(q))}\leq Cr^{-d/2}\|u\|_{L^2(T_r(q))}+Cr^{1-d/2}\|\nabla u\|_{L^2(T_r(q))}.
\]
Therefore, we finally obtain that
\begin{align*}
\|u\|_{L^{2^*}(T_r(q))}&\leq Cr^{d/2^*-d/2}\|u\|_{L^2(T_r(q))}+Cr^{d/2^*+1-d/2}\|\nabla u\|_{L^2(T_r(q))}\\
&=Cr^{-1}\|u\|_{L^2(T_r(q))}+C\|\nabla u\|_{L^2(T_r(q))},
\end{align*}
which completes the proof.
\end{proof}

For the next lemma, we recall the definition of $u_r^*$, from right before proposition \ref{UByNablaU}.

\begin{lemma}\label{SolidByBoundary}
Let $\Omega$ be a Lipschitz domain, $q\in\partial\Omega$ and $r\in(0,r_{\Omega})$. Then, for any function $u$ defined in $\Omega$,
\[
\int_{T_r(q)}|u|^2\leq Cr\int_{\Delta_r(q)}|u_r^*|^2\,d\sigma.
\]
\end{lemma}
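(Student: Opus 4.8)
The plan is to work in the local coordinates attached to the coordinate cylinder $Z(q,r)$, so that $T_r(q)=\{(x_0,t):t>\phi(x_0)\}\cap Z(q,r)$ for a Lipschitz function $\phi$ with constant $M$, and $\Delta_r(q)=\{(x_0,\phi(x_0))\}\cap Z(q,r)$. For $x=(x_0,t)\in T_r(q)$ write $\bar x=(x_0,\phi(x_0))$ for its vertical projection onto $\partial\Omega$ and $s=s(x)=t-\phi(x_0)$, which is comparable to $\delta(x)$ with constants depending only on $M$. The key geometric observation, which follows from the compatibility of the coordinate cylinders with the regular family of cones $\{\Gamma(p)\}$ together with the Lipschitz graph description, is that there is a constant $c_0=c_0(M,\text{aperture})\in(0,1)$ so that $x\in\Gamma_i(p)$ for \emph{every} $p\in\Delta_{c_0 s(x)}(\bar x)$, and moreover $|x-p|\le C_0r$ for all such $p$, with $C_0=C_0(M)$. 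Taking the constant in the definition of $u^*_r$ at least $C_0$, this yields $|u(x)|\le u^*_r(p)$ for all $p\in\Delta_{c_0 s(x)}(\bar x)$; averaging and using $\sigma(\Delta_\rho)\simeq\rho^{d-1}$,
\[
|u(x)|^2\le\frac{1}{\sigma\big(\Delta_{c_0 s(x)}(\bar x)\big)}\int_{\Delta_{c_0 s(x)}(\bar x)}|u^*_r(p)|^2\,d\sigma(p)\le\frac{C}{s(x)^{d-1}}\int_{\Delta_{c_0 s(x)}(\bar x)}|u^*_r(p)|^2\,d\sigma(p).
\]

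\textbf{Integration and Fubini.} Next I would integrate this inequality over $x\in T_r(q)$ and apply Fubini's theorem, exchanging the $x$- and $p$-integrations:
\[
\int_{T_r(q)}|u|^2\,dx\le C\int_{\partial\Omega}|u^*_r(p)|^2\left(\int_{\{x\in T_r(q)\,:\,p\in\Delta_{c_0 s(x)}(\bar x)\}}s(x)^{1-d}\,dx\right)d\sigma(p).
\]
By the geometric fact, the inner region of integration is contained in $\Gamma_i(p)\cap\{|x-p|<C_0r\}$, on which $s(x)\simeq\delta(x)\simeq|x-p|$ (a ball of radius $\simeq|x-p|$ about $x$ stays in the wider cone, hence in $\Omega$). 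Therefore the inner integral is bounded by
\[
C\int_{\Gamma_i(p)\cap\{|x-p|<C_0r\}}|x-p|^{1-d}\,dx=C\int_0^{C_0r}\rho^{1-d}\,\mathcal{H}^{d-1}\big(\Gamma_i(p)\cap\partial B_\rho(p)\big)\,d\rho\le C\int_0^{C_0r}\rho^{1-d}\rho^{d-1}\,d\rho=CC_0r,
\]
since the spherical cross-section of a fixed-aperture cone at radius $\rho$ has $(d-1)$-measure at most $C\rho^{d-1}$. Moreover the inner integral vanishes unless $p$ lies within a bounded dilate of $\Delta_r(q)$, which with the constants fixed as in the setup is $\Delta_r(q)$ itself (otherwise one obtains the estimate with $\Delta_{Cr}(q)$ on the right, which is harmless). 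Combining the two displays gives $\int_{T_r(q)}|u|^2\le Cr\int_{\Delta_r(q)}|u^*_r|^2\,d\sigma$ with $C$ depending only on the Lipschitz character of $\Omega$, as claimed.

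\textbf{Main obstacle.} The only genuine work is the geometric bookkeeping in the first step: verifying the existence of $c_0$ — i.e.\ that the cone membership $x\in\Gamma_i(p)$ is stable when $p$ moves within a surface ball of radius $\simeq\delta(x)$ about $\bar x$ — and the comparability $\delta(x)\simeq|x-p|$ on $\Gamma_i(p)$, both standard from the graph description and the nesting $\alpha\subseteq\overline\beta\setminus\{0\}\subseteq\gamma$ in the definition of a regular family of cones. A secondary, purely cosmetic, issue is the behavior near the lateral sides and top of $Z(q,r)$, where $\Delta_{c_0 s(x)}(\bar x)$ could protrude; this is absorbed into the constant as indicated. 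No analytic difficulty arises, since $u$ is arbitrary and only the Fubini exchange and the elementary count $\int_0^{C_0r}\rho^{1-d}\cdot\rho^{d-1}\,d\rho=C_0r$ are used.
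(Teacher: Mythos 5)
Your argument is correct, but it is a genuinely different — and noticeably heavier — route than the one the paper takes.

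The paper's proof is a four-line computation. Parametrize $T_r(q)$ over its base $B_r\subseteq\mathbb R^{d-1}$, and for each $(x_0,t)\in T_r(q)$ compare directly to the boundary point $p=(x_0,\phi(x_0))$ \emph{vertically below} $x$. Because the regular-family definition requires $\alpha+p\subseteq\Gamma(p)$ with $\alpha$ a fixed cone whose axis is the axis of the coordinate cylinder (i.e.\ the $t$-axis), the vertical ray above $p$ lies inside $\Gamma_i(p)$, and the height of $T_r(q)$ is $\lesssim r$, so $|(x_0,t)-p|\lesssim r$. Hence $|u(x_0,t)|\le u_r^*(p)$ with no averaging at all. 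Integrating first in $t$ (height $\sim r$) and then changing variables $B_r\to\Delta_r(q)$ gives the result immediately.

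Your version replaces the single vertical comparison point by an average over the surface ball $\Delta_{c_0\delta(x)}(\bar x)$, then applies Fubini and the counting estimate $\int_0^{C_0r}\rho^{1-d}\cdot\rho^{d-1}\,d\rho\simeq r$. This is a perfectly sound pattern (it is the standard Fubini trick behind many nontangential-maximal-function estimates), and you correctly identify its one nontrivial ingredient: the aperture-stability claim that $x\in\Gamma_i(p)$ persists as $p$ ranges over a surface ball of radius $\sim\delta(x)$, which hinges on the nesting $\alpha\subseteq\overline\beta\setminus\{0\}\subseteq\gamma$. But that claim is exactly what the vertical-projection shortcut renders unnecessary; it costs you a lemma and buys nothing here. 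Two further small cosmetic points that you already flag: (i) your argument as written produces $\Delta_{Cr}(q)$ rather than $\Delta_r(q)$ on the right, which is harmless in every downstream use (lemma \ref{2*By2}, proposition \ref{GlobalRellichForAdjoint}); and (ii) the constant $C$ in the truncation $u_r^*$ must be taken at least $C_0$, again harmless. If you are going to record a proof, the paper's slicing argument is the one to keep; the averaging/Fubini version is worth knowing as a more robust template when the comparison point cannot be chosen so cheaply.
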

\begin{proof}
Let $B_r\subseteq\mathbb R^{d-1}$ be the basis of the cylinder portion $T_r(q)$, and suppose that $\Omega$ is given as the graph of the Lipschitz function $\phi$ above $B_r$. Since the height of $T_r(q)$ is comparable to $r$, we obtain that
\begin{align*}
\int_{T_r(q)}|u|^2&\leq\int_{B_r}\int_{\phi(q)}^{\phi(q)+Cr}|u(x_0,t)|^2\,dtdx_0\leq\int_{B_r}\int_{\phi(q)}^{\phi(q)+Cr}|u_r^*(x_0)|^2\,dtdx_0\\
&\leq Cr\int_{B_r}|u_r^*(x_0)|^2\,dx_0\leq Cr\int_{\Delta_r(q)}|u_r^*|^2\,d\sigma,
\end{align*}
where we perform a change a variables from $B_r$ to $\Delta_r(q)$ for the last equality.
\end{proof}

\begin{lemma}\label{2*By2}
Let $\Omega$ be a Lipschitz domain, and consider a function $u\in C^1(\overline{\Omega})$. Let $q\in\partial\Omega$ and $r\in(0,r_{\Omega})$. Then,
\[
\|u\|_{L^{2^*}(T_r)}^2\leq Cr^{-1}\int_{\Delta_r(q)}|u_r^*|^2\,d\sigma+Cr\int_{\Delta_r(q)}|(\nabla u)^*|^2\,d\sigma,
\]
where $C$ is a good constant.
\end{lemma}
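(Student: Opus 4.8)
The plan is to derive this estimate simply by chaining together the two preceding lemmas: the extension estimate (Lemma \ref{ExtensionBound}), which controls the $L^{2^*}$ norm of $u$ on $T_r(q)$ by its full $W^{1,2}$ norm there, and the solid-by-boundary estimate (Lemma \ref{SolidByBoundary}), which controls an $L^2$ norm on $T_r(q)$ by an $L^2$ norm over $\Delta_r(q)$ of a truncated nontangential maximal function.

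First, since $u\in C^1(\overline{\Omega})$ we have in particular $u\in W^{1,2}(\Omega)$, so Lemma \ref{ExtensionBound} is applicable (using $r\in(0,r_\Omega)$) and gives
\[
\|u\|_{L^{2^*}(T_r(q))}^2\leq Cr^{-2}\|u\|_{L^2(T_r(q))}^2+C\|\nabla u\|_{L^2(T_r(q))}^2,
\]
with $C$ depending only on the Lipschitz character of $\Omega$.

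Next I would apply Lemma \ref{SolidByBoundary} twice. Applied to the function $u$ itself it yields $\|u\|_{L^2(T_r(q))}^2\leq Cr\int_{\Delta_r(q)}|u_r^*|^2\,d\sigma$. Applied to the nonnegative function $|\nabla u|$ — which is legitimate, since Lemma \ref{SolidByBoundary} is stated for an arbitrary function defined on $\Omega$ — it yields $\|\nabla u\|_{L^2(T_r(q))}^2\leq Cr\int_{\Delta_r(q)}|(|\nabla u|)_r^*|^2\,d\sigma$; and since the supremum defining $(\,\cdot\,)_r^*$ is taken over a subset of the full cone $\Gamma_i(\,\cdot\,)$, we have $(|\nabla u|)_r^*\leq(\nabla u)^*$ pointwise on $\partial\Omega$, so this term is dominated by $Cr\int_{\Delta_r(q)}|(\nabla u)^*|^2\,d\sigma$.

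Finally, substituting these two bounds into the extension estimate gives
\[
\|u\|_{L^{2^*}(T_r(q))}^2\leq Cr^{-2}\cdot Cr\int_{\Delta_r(q)}|u_r^*|^2\,d\sigma+C\cdot Cr\int_{\Delta_r(q)}|(\nabla u)^*|^2\,d\sigma=Cr^{-1}\int_{\Delta_r(q)}|u_r^*|^2\,d\sigma+Cr\int_{\Delta_r(q)}|(\nabla u)^*|^2\,d\sigma,
\]
which is exactly the claimed inequality, and the constant $C$ is a good constant since it depends only on the Lipschitz character of $\Omega$. There is essentially no genuine obstacle here: the only point requiring a moment's care is checking that Lemma \ref{SolidByBoundary} may be invoked for $|\nabla u|$ and that the resulting truncated maximal function is controlled by $(\nabla u)^*$; everything else is a direct composition of the two cited estimates.
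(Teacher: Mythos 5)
Your proof is correct and coincides with the paper's, which likewise chains Lemma \ref{ExtensionBound} with two applications of Lemma \ref{SolidByBoundary}. You also spelled out a detail the paper leaves implicit, namely that applying Lemma \ref{SolidByBoundary} to $|\nabla u|$ is legitimate and that the resulting truncated maximal function is dominated pointwise by $(\nabla u)^*$.
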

\begin{proof}
We first apply lemma \ref{ExtensionBound} to $T_r(q)$, to obtain
\[
\left(\int_{T_r(q)}|u^*|^2\right)^{2/2^*}\leq Cr^{-2}\int_{T_r(q)}|u|^2+C\int_{T_r(q)}|\nabla u|^2,
\]
where $C$ is a good constant. We then apply lemma \ref{SolidByBoundary} to bound the last two integrals, which completes the proof.
\end{proof}

The next proposition shows how to control the size of $\nabla u$ in $\Omega$.

\begin{prop}\label{SolidBoundForAdjoint}
Let $\Omega\in\mathcal{D}$, $A\in M_{\lambda,\mu}^s(B)$ and $b\in\Lip(B)$. Let also $u$ be a solution to $L^tu=0$ in $\Omega$, with $(\nabla u)^*\in L^2(\partial\Omega)$, and $u$ and $\nabla_Tu$ having nontangential limits almost everywhere on $\partial\Omega$. Then,
\[
\int_{\Omega}|\nabla u|^2\leq C\int_{\partial\Omega}|u|^2+C\int_{\partial\Omega}|\nabla_Tu|^2\,d\sigma,
\]
where $C$ is a good constant.
\end{prop}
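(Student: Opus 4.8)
The plan is to mimic the proof of Lemma \ref{SolidBound}, with two modifications forced by the adjoint equation: since constants are not solutions of $L^tu=0$ we cannot normalize $u$ to have boundary average zero, so the term $\int_{\partial\Omega}|u|^2\,d\sigma$ must be kept; and the drift produces an extra \emph{solid} term that has to be controlled by boundary data. As in Lemma \ref{SolidBound} I would work with the approximation scheme $\Omega_j\uparrow\Omega$ of Theorem \ref{ApproximationScheme}, prove the estimate on each $\Omega_j$ with a good constant uniform in $j$ (the $\Omega_j$ have uniformly bounded Lipschitz character), and let $j\to\infty$.

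\textbf{Step 1 (interior $L^2$ bound).} If the right-hand side of the proposition is infinite there is nothing to prove, so assume $(\nabla u)^*\in L^2(\partial\Omega)$ and the a.e.\ nontangential limit $f=u|_{\partial\Omega}$ lies in $L^2(\partial\Omega)$. Since $A$ and $b$ are Lipschitz, $u\in C^1_{\loc}(\Omega)$ by Corollary \ref{BoundedDerivativeForAdjoint}, so Proposition \ref{UByNablaU} (applied with $\e$ of order $\diam(\Omega)$, for which $u^*_\e=u^*$) gives $u^*\le C(\nabla u)^*+|f|\in L^2(\partial\Omega)$. A routine Fubini argument over $\Omega$ (for each $x$, $|u(x)|\le u^*(q)$ for $q$ in a surface ball of radius $\sim\delta(x)$ under $x$) then yields $\int_\Omega|u|^2\le C\|u^*\|_{L^2(\partial\Omega)}^2\le C\|(\nabla u)^*\|_{L^2(\partial\Omega)}^2+C\|f\|_{L^2(\partial\Omega)}^2$. (Equivalently, one may cover a collar of $\partial\Omega$ by cylinder portions, apply Lemmas \ref{SolidByBoundary}--\ref{2*By2}, and bound the deep interior as above.)

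\textbf{Step 2 (splitting on $\Omega_j$ and Rellich for $L_0$).} Fix $j$. Since $\overline{\Omega_j}\subseteq\Omega$ is compact, $u\in C^1(\overline{\Omega_j})$, hence $u|_{\partial\Omega_j}$ is Lipschitz and $u\in W^{1,2}(\Omega_j)$. Let $v_j\in W^{1,2}(\Omega_j)\cap C^{\alpha}(\overline{\Omega_j})$ solve $-\dive(A\nabla v_j)=0$ in $\Omega_j$ with $v_j=u$ on $\partial\Omega_j$ (Proposition \ref{WeakSolvabilityForLipschitz}), and set $w_j=u-v_j\in W_0^{1,2}(\Omega_j)$. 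Since $A$ is symmetric and $L^tu=0$, i.e.\ $\dive(A\nabla u)=-\dive(bu)$, we get $-\dive(A\nabla w_j)=\dive(bu)$ in $\Omega_j$; testing with $w_j$ gives $\lambda\|\nabla w_j\|_{L^2(\Omega_j)}^2\le\|b\|_\infty\|u\|_{L^2(\Omega_j)}\|\nabla w_j\|_{L^2(\Omega_j)}$, so $\|\nabla w_j\|_{L^2(\Omega_j)}\le C\|u\|_{L^2(\Omega_j)}$ and therefore $\|\nabla u\|_{L^2(\Omega_j)}\le\|\nabla v_j\|_{L^2(\Omega_j)}+C\|u\|_{L^2(\Omega_j)}$. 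For $v_j$, integrating by parts,
\[
\lambda\|\nabla v_j\|_{L^2(\Omega_j)}^2\le\int_{\Omega_j}A\nabla v_j\nabla v_j=\int_{\partial\Omega_j}\partial_{\nu_j}v_j\cdot v_j\le\tfrac12\|u\|_{L^2(\partial\Omega_j)}^2+\tfrac12\|\partial_{\nu_j}v_j\|_{L^2(\partial\Omega_j)}^2,
\]
where $\partial_{\nu_j}$ is the conormal derivative for $L_0=-\dive(A\nabla\cdot)$ and we used $v_j=u$ on $\partial\Omega_j$; the first term on the right is the one we keep. Since $A$ is symmetric, the Rellich property holds for $-\dive(A\nabla\cdot)$ on the smooth domain $\Omega_j$ with a good constant (uniform in $j$), giving $\|\partial_{\nu_j}v_j\|_{L^2(\partial\Omega_j)}^2\le C\|\nabla_{T_j}v_j\|_{L^2(\partial\Omega_j)}^2=C\|\nabla_{T_j}u\|_{L^2(\partial\Omega_j)}^2$. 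Combining with Step 1 ($\|u\|_{L^2(\Omega_j)}\le\|u\|_{L^2(\Omega)}$),
\[
\int_{\Omega_j}|\nabla u|^2\le C\int_{\partial\Omega_j}|u|^2\,d\sigma_j+C\int_{\partial\Omega_j}|\nabla_{T_j}u|^2\,d\sigma_j+C\|(\nabla u)^*\|_{L^2(\partial\Omega)}^2+C\|f\|_{L^2(\partial\Omega)}^2 .
\]

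\textbf{Step 3 (limit).} Letting $j\to\infty$, the left side increases to $\int_\Omega|\nabla u|^2$ by monotone convergence. On the right, transporting the boundary integrals to $\partial\Omega$ via $\Lambda_j$ and using $\Lambda_j(q)\to q$, $\tau_j\to 1$, $T_j(\Lambda_j(q))\to T(q)$ from Theorem \ref{ApproximationScheme}, the a.e.\ nontangential limits of $u$ and $\nabla_Tu$, and dominated convergence with majorants $u^*,(\nabla u)^*\in L^2(\partial\Omega)$, one obtains $\int_{\partial\Omega_j}|u|^2\,d\sigma_j\to\int_{\partial\Omega}|u|^2\,d\sigma$ and $\int_{\partial\Omega_j}|\nabla_{T_j}u|^2\,d\sigma_j\to\int_{\partial\Omega}|\nabla_Tu|^2\,d\sigma$, exactly as in the proof of Lemma \ref{SolidBound}. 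Since $\|f\|_{L^2(\partial\Omega)}^2=\int_{\partial\Omega}|u|^2\,d\sigma$, all the remaining terms are absorbed, yielding the claimed bound with a good constant.

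\textbf{Main obstacle.} The genuinely new point relative to Lemma \ref{SolidBound} is Step 1: controlling $\int_\Omega|u|^2$ — equivalently the solid term produced by $w_j$ — by boundary data uniformly in $j$ \emph{without} invoking $\dive b$ (the Rellich estimate for $L^t$, which does involve $\dive b$, is deliberately avoided by routing the boundary estimate through $L_0$). This is precisely what the preliminary Lemmas \ref{ExtensionBound}--\ref{2*By2} and Proposition \ref{UByNablaU} are set up to supply; once it is in hand, the rest is a direct adaptation of the symmetric-coefficient argument.
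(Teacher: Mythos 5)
Your Step 1 introduces a term that does not appear in the conclusion, and Step 3's claim that "all the remaining terms are absorbed" is false: the quantity $\|(\nabla u)^*\|_{L^2(\partial\Omega)}^2$ is not comparable to $\int_{\partial\Omega}|u|^2+\int_{\partial\Omega}|\nabla_Tu|^2\,d\sigma$ and cannot be dropped or absorbed into those terms. So the estimate you prove is
\[
\int_{\Omega}|\nabla u|^2\leq C\int_{\partial\Omega}|u|^2\,d\sigma+C\int_{\partial\Omega}|\nabla_Tu|^2\,d\sigma+C\|(\nabla u)^*\|_{L^2(\partial\Omega)}^2,
\]
which is strictly weaker than the proposition. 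This matters: Proposition \ref{SolidBoundForAdjoint} is applied twice inside the proof of the global Rellich estimate (Proposition \ref{GlobalRellichForAdjoint}), precisely to convert the solid integral $\int_{T_{2r}(q)}|\nabla u|^2$ (which appears with a factor $C/r$) into pure boundary data. There, $\|(\nabla u)^*\|_{L^2}^2$ is only tolerated with a small prefactor $Cr$; if it also entered through Proposition \ref{SolidBoundForAdjoint}, it would come multiplied by $C/r$, and the final absorption by choosing $r$ small would fail. Thus the extra term is not a cosmetic defect — it breaks the downstream argument.

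The source of the problem is that you bound $\int_{\Omega_j}|u|^2$ directly, via $u^*\le C(\nabla u)^*+|f|$, and this is exactly what produces the forbidden term. The paper instead routes the estimate through the $L^2$-norm of the harmonic replacement $v_j$: it uses the decomposition $L^tw_j=\dive(bv_j)$ (algebraically the same $w_j$ as yours, but tested differently) so that Proposition \ref{GoodBoundOnSolutionsForAdjoint} yields $\|\nabla w_j\|_{L^2(\Omega_j)}\le C\|v_j\|_{L^2(\Omega_j)}$ rather than $C\|u\|_{L^2(\Omega_j)}$; and it then controls $\|v_j\|_{L^2(\Omega_j)}$ by Poincar\'e together with a bound on the average $a_j=\fint_{\Omega_j}v_j$, where $|a_j|^2\le C\|u\|_{W^{1,2}(\partial\Omega_j)}^2$ is obtained from the single-layer-potential representation $v_j=\mathcal{S}_{j,+}(\mathcal{S}_j^{-1}u)$ of the $L_0$-solution (pointwise kernel bounds and remark 5.8 of \cite{KenigShen}). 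This gives $\|v_j\|_{L^2(\Omega_j)}^2\le C\|\nabla v_j\|_{L^2(\Omega_j)}^2+C\|u\|_{W^{1,2}(\partial\Omega_j)}^2$, and since $\|\nabla v_j\|_{L^2(\Omega_j)}^2$ is in turn controlled by the Rellich inequality for $L_0$, no $(\nabla u)^*$ enters anywhere. Note also that the direct testing in your Step 2, $\int A\nabla w_j\nabla w_j=-\int bu\nabla w_j$, is correct but deliberately sidestepped by the paper precisely because it puts $\|u\|_{L^2(\Omega_j)}$ rather than $\|v_j\|_{L^2(\Omega_j)}$ on the right; the latter is the one amenable to the layer-potential argument, which is the missing ingredient in your write-up.
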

\begin{proof}
We mimic the proof of lemma \ref{SolidBound}: consider the approximation scheme $\Omega_j\uparrow\Omega$  as in theorem \ref{ApproximationScheme}, and fix $j$. Let $L_0$ be the operator $L_0=-\dive(A^t\nabla)$, and let $v_j$ be the solution to $L_0v_j=0$ in $\Omega_j$, with $v_j=u$ on $\partial\Omega_j$. Then, if $w_j=u-v_j$, we compute
\[
-\dive(A^t\nabla w_j)-\dive(bw_j)=-\dive(A^t\nabla u)+\dive(A^t\nabla v_j)-\dive(bu_j)+\dive(bv_j)=\dive(bv_j).
\]
Since now $w_j\in W_0^{1,2}(\Omega_j)$, proposition \ref{GoodBoundOnSolutionsForAdjoint} shows that
\[
\|\nabla w_j\|_{L^2(\Omega_j)}\leq C\|\dive(bv_j)\|_{W^{-1,2}(\Omega_j)}=C\|v_j\|_{L^2(\Omega_j)},
\]
therefore
\begin{equation}\label{eq:NablaUByNablaV}
\|\nabla u\|_{L^2(\Omega_j)}\leq C\|\nabla w_j\|_{L^2(\Omega_j)}+C\|\nabla v_j\|_{L^2(\Omega_j)}\leq C\|v_j\|_{L^2(\Omega_j)}+C\|\nabla v_j\|_{L^2(\Omega_j)}.
\end{equation}
Let $\mathcal{S}_{j,+}^t$ be the single layer potential operator for the operator $L_0$ in $\Omega_j$, which is given by integration with respect to the fundamental solution $\Gamma$ for $L_0$, as in \cite{KenigShen}. From its definition, $v_j$ solves the $R_2$ Regularity problem for $L_0$ in $\Omega_j$, with boundary data $u$; therefore, theorems 6.3 and 5.3 in \cite{KenigShen} show that
\[
v_j(x)=\mathcal{S}_{j,+}(\mathcal{S}_j^{-1}u(x))=\int_{\partial\Omega_j}\Gamma(x,q)\mathcal{S}_j^{-1}u(q)\,d\sigma_j(q).
\]
Set $a_j$ to be the average of $v_j$ in $\Omega_j$. Using estimate 2.5 in \cite{KenigShen}, we compute
\begin{align*}
|a_j|&=\left|\fint_{\Omega_j}v_j\right|=\frac{1}{|\Omega_j|}\int_{\Omega_j}\left|\int_{\partial\Omega_j}\Gamma(x,q)\mathcal{S}_j^{-1}u(q)\,d\sigma_j(q)dx\right|\\
&\leq\frac{C}{|\Omega_j|}\int_{\partial\Omega_j}\left(\int_{\Omega_j}|x-q|^{2-d}\,dx\right)|\mathcal{S}_j^{-1}u(q)|\,d\sigma_j(q)\\
&\leq\frac{C}{|\Omega_j|}\int_{\partial\Omega_j}|\mathcal{S}_j^{-1}u(q)|\,d\sigma_j(q).
\end{align*}
Then, from remark 5.8 in \cite{KenigShen}, we obtain that, for a good constant $C$,
\begin{align*}
|a_j|^2&\leq\frac{C\sigma_j(\partial\Omega_j)}{|\Omega_j|^2}\int_{\partial\Omega_j}|\mathcal{S}_j^{-1}u|^2\,d\sigma_j\leq C\|\mathcal{S}_j(\mathcal{S}_j^{-1}u)\|_{W^{1,2}(\partial\Omega_j)}^2\\
&=C\|u\|_{W^{1,2}(\partial\Omega_j)}^2=C\int_{\partial\Omega_j}|u|^2\,d\sigma_j+\int_{\partial\Omega_j}|\nabla_Tu|^2\,d\sigma_j.
\end{align*}
Therefore, Poincare's inequality in $\Omega_j$ shows that
\begin{align*}
\int_{\Omega_j}|v_j|^2&\leq C\int_{\Omega_j}|v_j-a_j|^2+C\int_{\Omega_j}|a_j|^2\\
&\leq C\int_{\Omega_j}|\nabla v_j|^2+C|a_j|^2\\
&\leq C\int_{\Omega_j}|\nabla v_j|^2+C \int_{\partial\Omega_j}|u|^2\,d\sigma_j+\int_{\partial\Omega_j}|\nabla_Tu|^2\,d\sigma_j,
\end{align*}
hence, plugging the last estimate in \eqref{eq:NablaUByNablaV}, we obtain that
\begin{equation}\label{eq:NablaUByNablaV2}
\|\nabla u\|_{L^2(\partial\Omega_j)}\leq C\int_{\Omega_j}|\nabla v_j|^2+C \int_{\partial\Omega_j}|u|^2\,d\sigma_j+\int_{\partial\Omega_j}|\nabla_Tu|^2\,d\sigma_j.
\end{equation}
We now treat the first term on the right hand side exactly as in lemma \ref{SolidBound}: we compute
\[
\lambda\int_{\Omega_j}|\nabla v_j|^2\leq\int_{\partial\Omega_j}
\partial_{\nu_j}v_j\cdot v_j\,d\sigma_j\leq C\int_{\partial\Omega_j}|u|^2+C\int_{\partial\Omega_j}|\partial_{\nu_j}v_j|^2.
\]
But, since the Rellich property holds for $v_j$ in $\Omega_j$ with a good constant $C$, after letting $j\to\infty$ we find that
\begin{align*}
\limsup_{j\to\infty}\int_{\Omega_j}|\nabla v_j|^2&\leq C\limsup_{j\to\infty}\int_{\partial\Omega_j}|u|^2\,d\sigma_j+C\limsup_{j\to\infty}\int_{\partial\Omega_j}|\partial_{\nu_j}v_j|^2\,d\sigma_j\\
&\leq C\limsup_{j\to\infty}\int_{\partial\Omega_j}|u|^2\,d\sigma_j+C\limsup_{j\to\infty}\int_{\partial\Omega_j}|\nabla_{T_j}u|^2\,d\sigma_j\\
&\leq C\int_{\partial\Omega}|u|^2\,d\sigma+C\int_{\partial\Omega}|\nabla_Tu|^2\,d\sigma.
\end{align*}
Therefore, letting $j\to\infty$ in \eqref{eq:NablaUByNablaV2}, we obtain the required estimate.
\end{proof}

We are now in position to show the global Rellich estimate for solutions to the adjoint equation.

\begin{prop}[Global Rellich Estimate]\label{GlobalRellichForAdjoint}
Let $\Omega$ be a smooth domain, $A\in M_{\lambda,\mu}^s(\Omega)$ and $b\in\Lip(\Omega)$. Let also $u$ be a $C^1(\overline{\Omega})$ solution of $L^tu=0$ in $\Omega$. Then, for any $r\in(0,r_{\Omega})$,
\[
\int_{\partial\Omega}|\partial_{\nu}u|^2\,d\sigma\leq Cr\int_{\partial\Omega}|(\nabla u)^*|^2\,d\sigma+\frac{C}{r^d}\int_{\partial\Omega}|u|^2+\frac{C}{r^d}\int_{\partial\Omega}|\nabla_Tu|^2\,d\sigma,
\]
where $C$ is a good constant that also depends on $\|\dive b\|_{L^d(\Omega)}$.
\end{prop}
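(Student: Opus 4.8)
The plan is to follow the proof of the global Rellich estimate for $L$ (Proposition~\ref{GlobalRellich}): apply the local Rellich estimate for $L^t$ (Proposition~\ref{LocalRellichForAdjoint}) at each boundary point, integrate $d\sigma(q)$ over $\partial\Omega$, use Fubini, and convert the resulting solid integrals into boundary integrals via Proposition~\ref{SolidBoundForAdjoint}. The one feature absent in the $L$ case is the term $\int_{T_{2r}(q)}|\dive b|\,|u\nabla u|$ produced by Proposition~\ref{LocalRellichForAdjoint}; handling it is exactly what the extension/trace estimates (Lemmas~\ref{ExtensionBound}--\ref{2*By2} and Proposition~\ref{UByNablaU}) are designed for, and it is the main obstacle. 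As preliminaries I would note: since $\Omega$ is smooth and $A$ is Lipschitz, interior regularity (theorem 8.8 in \cite{Gilbarg}) gives $u\in W^{2,2}_{\loc}(\Omega)$, so Proposition~\ref{LocalRellichForAdjoint} applies to $u$; and it suffices to prove the estimate for $r<r_\Omega/4$, since for $r\in[r_\Omega/4,r_\Omega)$ it follows from the case $r_0=r_\Omega/8$ (as $r_0<r$ and $r_0^{-d}\le Cr^{-d}$ with $C$ a good constant, both radii being comparable to $r_\Omega$).

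Fix $r<r_\Omega/4$, apply Proposition~\ref{LocalRellichForAdjoint} with this $r$ at every $q\in\partial\Omega$, and integrate $d\sigma(q)$. By Fubini the left side equals $\int_{\partial\Omega}|\partial_\nu u(p)|^2\,\sigma(\{q:p\in\Delta_r(q)\})\,d\sigma(p)\ge cr^{d-1}\int_{\partial\Omega}|\partial_\nu u|^2\,d\sigma$, while on the right $\int_{\partial\Omega}\!\int_{\Delta_{2r}(q)}g\,d\sigma\,d\sigma(q)\le Cr^{d-1}\int_{\partial\Omega}g\,d\sigma$ and $\int_{\partial\Omega}\!\int_{T_{2r}(q)}h\,dx\,d\sigma(q)\le Cr^{d-1}\int_{\Omega}h\,dx$. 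Dividing by $r^{d-1}$, the tangential term gives $C\int_{\partial\Omega}|\nabla_Tu|^2\le \tfrac{C}{r^d}\int_{\partial\Omega}|\nabla_Tu|^2$ (using $r<r_\Omega$), and the term $\tfrac{C}{r}\int_{T_{2r}(q)}|\nabla u|^2$ gives $\tfrac{C}{r}\int_\Omega|\nabla u|^2$, which by Proposition~\ref{SolidBoundForAdjoint} is $\le\tfrac{C}{r}\big(\int_{\partial\Omega}|u|^2+\int_{\partial\Omega}|\nabla_Tu|^2\big)\le\tfrac{C}{r^d}\big(\int_{\partial\Omega}|u|^2+\int_{\partial\Omega}|\nabla_Tu|^2\big)$.

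It remains to bound $\tfrac{1}{r^{d-1}}\int_{\partial\Omega}\!\int_{T_{2r}(q)}|\dive b|\,|u|\,|\nabla u|\,dx\,d\sigma(q)$. For fixed $q$, H\"older's inequality with the conjugate triple $d,\ 2^{*}=\tfrac{2d}{d-2},\ 2$ gives $\int_{T_{2r}(q)}|\dive b|\,|u|\,|\nabla u|\le \|\dive b\|_{L^{d}(\Omega)}\,\|u\|_{L^{2^{*}}(T_{2r}(q))}\,\|\nabla u\|_{L^{2}(T_{2r}(q))}$. I would estimate $\|u\|_{L^{2^{*}}(T_{2r}(q))}^{2}$ by Lemma~\ref{2*By2} (which rests on Lemmas~\ref{ExtensionBound} and \ref{SolidByBoundary}) and then replace $u_{2r}^{*}$ by $Cr(\nabla u)^{*}+|u|$ via Proposition~\ref{UByNablaU}, obtaining $\|u\|_{L^{2^{*}}(T_{2r}(q))}^{2}\le Cr\int_{\Delta_{2r}(q)}|(\nabla u)^{*}|^{2}+\tfrac{C}{r}\int_{\Delta_{2r}(q)}|u|^{2}$; and estimate $\|\nabla u\|_{L^{2}(T_{2r}(q))}^{2}\le\int_{\Omega}|\nabla u|^{2}\le C\int_{\partial\Omega}|u|^{2}+C\int_{\partial\Omega}|\nabla_Tu|^{2}$ by Proposition~\ref{SolidBoundForAdjoint}. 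Splitting the product of the two norms by $ab\le\tfrac12(a^{2}+b^{2})$ (no smallness of $\|\dive b\|_{L^{d}}$ is needed, as it only enters the final constant), integrating $d\sigma(q)$ over $\partial\Omega$ (Fubini sends $\int_{\partial\Omega}\!\int_{\Delta_{2r}(q)}(\cdot)\,d\sigma(q)$ to $Cr^{d-1}\int_{\partial\Omega}(\cdot)$, while the $q$-independent factor $\int_{\partial\Omega}|u|^{2}+\int_{\partial\Omega}|\nabla_Tu|^{2}$ passes out up to the constant $\sigma(\partial\Omega)$), and dividing by $r^{d-1}$, this contribution is $\le Cr\int_{\partial\Omega}|(\nabla u)^{*}|^{2}+\tfrac{C}{r^{d}}\int_{\partial\Omega}|u|^{2}+\tfrac{C}{r^{d}}\int_{\partial\Omega}|\nabla_Tu|^{2}$. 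Adding the three contributions yields the claimed inequality.

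The hard part is the $\dive b$ term: one has to arrange H\"older so that $\dive b$ is measured in the borderline space $L^{d}$, pass from the $L^{2^{*}}$ norm of $u$ on the collar $T_{2r}(q)$ to boundary quantities through the extension inequality (Lemma~\ref{ExtensionBound}) and the trace-type bounds (Lemmas~\ref{SolidByBoundary}, \ref{2*By2} and Proposition~\ref{UByNablaU}), and keep careful track of the powers of $r$; the worst loss obtained this way is $r^{-d}$, which is precisely the exponent recorded in the statement, while the favorable summand $r\int_{\partial\Omega}|(\nabla u)^{*}|^{2}$ arises from the $r$-gaining terms in Lemma~\ref{2*By2} and Proposition~\ref{UByNablaU}.
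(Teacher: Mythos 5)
Your proposal is correct and follows essentially the same route as the paper: apply the local Rellich estimate for $L^t$, use H\"older with exponents $d$, $2^*$, $2$ on the $\dive b$ term, convert the $L^{2^*}$ norm on the collar via Lemma~\ref{2*By2} and Proposition~\ref{UByNablaU}, bound the solid integral $\int_\Omega|\nabla u|^2$ via Proposition~\ref{SolidBoundForAdjoint}, and integrate over $q\in\partial\Omega$. The only (inessential) difference is ordering: the paper first assembles a pointwise-in-$q$ estimate and then integrates at the end, whereas you integrate $d\sigma(q)$ up front and rely on Fubini; your remark about reducing to $r<r_\Omega/4$ so that Lemma~\ref{2*By2} applies at radius $2r$ is a minor point of care the paper leaves implicit.
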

\begin{proof}
Let $r<r_{\Omega}$ and consider $q\in\partial\Omega$. From theorem $8.12$ in \cite{Gilbarg}, $u\in W^{2,2}(\Omega)$, therefore after applying proposition \ref{LocalRellichForAdjoint} we obtain that
\begin{align*}
\int_{\Delta_r(q)}|\partial_{\nu}u|^2\,d\sigma&\leq C\int_{\Delta_{2r}(q)}|\nabla_Tu|^2\,d\sigma+C\int_{T_{2r}(q)}|\dive b||u\nabla u|+\frac{C}{r}\int_{T_{2r}(q)}|\nabla u|^2\\
&=C\int_{\Delta_{2r}(q)}|\nabla_Tu|^2\,d\sigma+CI_1+\frac{C}{r}\int_{\partial\Omega}|u|^2+\frac{C}{r}\int_{\partial\Omega}|\nabla_Tu|^2\,d\sigma,
\end{align*}
where $C$ is a good constant, and where we also used proposition \ref{SolidBoundForAdjoint}.

To bound $I_1$, note that, from H{\"o}lder's inequality 
\begin{align*}
I_1&\leq\left(\int_{T_{2r}(q)}|\dive b|^d\right)^{1/d}\|u\|_{L^{2^*}(T_{2r}(q))}\|\nabla u\|_{L^2(T_{2r}(q))}\\
&\leq\|\dive b\|_{L^d}\left(\|u\|_{L^{2^*}(T_{2r}(q))}^2+\|\nabla u\|_{L^2(T_{2r}(q))}^2\right)\\
&\leq\|\dive b\|_{L^d}\left(\|u\|_{L^{2^*}(T_{2r}(q))}^2+\|\nabla u\|_{L^2(\Omega)}^2\right)=\|\dive b\|_{L^d}(I_3+I_4).
\end{align*}
We estimate $I_3$ using lemma \ref{2*By2}, and $I_4$ by using proposition \ref{SolidBoundForAdjoint}. Then, for a good constant $C$,
\begin{align*}
I_3+I_4\leq\frac{C}{r}\int_{\Delta_{4r}(q)}|u_r^*|^2\,d\sigma+Cr\int_{\Delta_{4r}(q)}|(\nabla u)^*|^2\,d\sigma+C\int_{\partial\Omega}|u|^2+C\int_{\partial\Omega}|\nabla_Tu|^2\,d\sigma.
\end{align*}
But, from proposition \ref{UByNablaU},
\begin{align*}
\frac{C}{r}\int_{\Delta_{4r}(q)}|u_r^*|^2\,d\sigma&\leq \frac{C}{r}\int_{\Delta_{4r}(q)}\left(Cr^2|(\nabla u)^*|^2+C|u|^2\right)\,d\sigma\\
&=Cr\int_{\Delta_{4r}(q)}|(\nabla u)^*|^2\,d\sigma+\frac{C}{r}\int_{\Delta_{4r}(q)}|u|^2\,d\sigma,
\end{align*}
which shows that
\[
I_1\leq Cr\int_{\Delta_{4r}(q)}|(\nabla u)^*|^2\,d\sigma+\left(\frac{C}{r}+C\right)\int_{\partial\Omega}|u|^2\,d\sigma+C\int_{\partial\Omega}|\nabla_Tu|^2\,d\sigma,
\]
where $C$ is a good constant that also depends on the $d$ norm of $\dive b$. Therefore, plugging into the first estimate in this proof, we obtain that
\[
\int_{\Delta_r(q)}|\partial_{\nu}u|^2\,d\sigma\leq Cr\int_{\Delta_{4r}(q)}|(\nabla u)^*|^2\,d\sigma+\frac{C}{r}\int_{\partial\Omega}|u|^2+\frac{C}{r}\int_{\partial\Omega}|\nabla_Tu|^2\,d\sigma.
\]
To finish the proof, we then integrate for $q\in\partial\Omega$.
\end{proof}

We now turn to the analog of proposition \ref{InverseInequalityForGoodB}, which will lead to invertibility of $\mathcal{S}^t$ on $\partial\Omega$.

\begin{prop}\label{InverseInequalityForAdjointForGoodB}
Suppose that $\Omega\in\mathcal{D}$, $A\in M_{\lambda,\mu}^s(B)$ and $b\in\Lip(\mathbb R^d)$. Then, for every $f\in L^2(\partial\Omega)$,
\[
\|f\|_{L^2(\partial\Omega)}\leq C\|\mathcal{S}^tf\|_{W^{1,2}(\partial\Omega)},
\]
where $C$ is a good constant that also depends on $\|b\|_{\Dr_{d,\alpha}}$.
\end{prop}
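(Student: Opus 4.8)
The plan is to follow the strategy of Proposition \ref{InverseInequalityForGoodB}, replacing the global Rellich estimate for $L$ by its counterpart for $L^t$ (Proposition \ref{GlobalRellichForAdjoint}) and absorbing the extra terms that appear there. By density of $\Lip(\partial\Omega)$ in $L^2(\partial\Omega)$ together with continuity of $\mathcal S^t\colon L^2(\partial\Omega)\to W^{1,2}(\partial\Omega)$, it suffices to treat Lipschitz $f$. Fix such an $f$, extend it to a Lipschitz $F\colon\overline B\to\mathbb R$ with $F\equiv 0$ on $\partial B$, and set $u_+=\mathcal S^t_+f$, $u_-=\mathcal S^t_-f$. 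Applying the jump relation for the adjoint (Corollary \ref{JumpRelationForAdjoint}) with $H=F$, and using that $F$ is continuous on $\overline B$ so that its $L^2$ norms over $\partial\Omega_j$ and $\partial\Omega_j'$ converge to its $L^2(\partial\Omega)$ norm, the Cauchy--Schwarz inequality gives
\[
\|f\|_{L^2(\partial\Omega)}^2=\int_{\partial\Omega}F^2\,d\sigma\le 2\limsup_{j\to\infty}\Big(\|\partial_{\nu_j}u_+\|_{L^2(\partial\Omega_j)}^2+\|\partial_{\nu_j'}u_-\|_{L^2(\partial\Omega_j')}^2\Big).
\]

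The core step is to bound each term on the right. Since $\Omega_j$ is smooth and, by Proposition \ref{SingleLayerInequalitiesForAdjoint} together with interior regularity (Proposition \ref{DerivativeRegularityForAdjoint}), $u_+$ is a $C^1(\overline{\Omega_j})$ solution of $L^tu=0$ in $\Omega_j$, the global Rellich estimate for the adjoint applies: for any admissible $r$,
\[
\int_{\partial\Omega_j}|\partial_{\nu_j}u_+|^2\,d\sigma_j\le Cr\int_{\partial\Omega_j}|(\nabla u_+)^*|^2\,d\sigma_j+\frac{C}{r^d}\int_{\partial\Omega_j}|u_+|^2\,d\sigma_j+\frac{C}{r^d}\int_{\partial\Omega_j}|\nabla_{T_j}u_+|^2\,d\sigma_j,
\]
where $C$ is a good constant also depending on $\|\dive b\|_{L^d}$, crucially independent of $r$ and $j$; by Lemma \ref{r_Omega} one fixed $r$ works for all $j$. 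Letting $j\to\infty$, the first term is controlled by combining $\|(\nabla\mathcal S^t_+f)^*\|_{L^2(\partial\Omega)}\le C\|f\|_{L^2(\partial\Omega)}$ (Proposition \ref{SingleLayerInequalitiesForAdjoint}) with $\Lambda_j(q)\in\Gamma(q)$ and dominated convergence exactly as in Corollary \ref{LimsupEstimateOnNorm}, while the last two terms are handled by Proposition \ref{LimsupEstimateOnNormForAdjoint}, which gives $\limsup_j\int_{\partial\Omega_j}(|u_+|^2+|\nabla_{T_j}u_+|^2)\,d\sigma_j\le\|\mathcal S^tf\|_{W^{1,2}(\partial\Omega)}^2$. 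Hence
\[
\limsup_{j\to\infty}\|\partial_{\nu_j}u_+\|_{L^2(\partial\Omega_j)}^2\le Cr\|f\|_{L^2(\partial\Omega)}^2+\frac{C}{r^d}\|\mathcal S^tf\|_{W^{1,2}(\partial\Omega)}^2.
\]
The same argument applied to $u_-$ on the smooth domains $B\setminus\overline{\Omega_j'}$ yields the identical bound; here one additionally picks up integrals over $\partial B$, but since $u_-=\mathcal S^t_-f$ vanishes on $\partial B$ and $\partial\Omega$ lies far from $\partial B$, the pointwise bounds on $G^t$ and $\nabla G^t$ (Proposition \ref{GreenFunctionEstimatesForAdjoint}) make $u_-$ and $\nabla u_-$ bounded by $C\|f\|_{L^2(\partial\Omega)}$ near $\partial B$, so those contributions are also of the form $Cr\|f\|_{L^2(\partial\Omega)}^2$ up to a fixed good constant.

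Combining the two bounds with the first display gives
\[
\|f\|_{L^2(\partial\Omega)}^2\le Cr\|f\|_{L^2(\partial\Omega)}^2+\frac{C}{r^d}\|\mathcal S^tf\|_{W^{1,2}(\partial\Omega)}^2
\]
for every admissible $r$, with $C$ a good constant (also depending on $\|\dive b\|_{L^d}$ and, through $\mathcal S^t$, on $\|b\|_{C^\alpha}$, i.e.\ on $\|b\|_{\Dr_{d,\alpha}}$). Choosing $r$ a good constant small enough that $Cr\le\tfrac12$, the first term on the right absorbs into the left; since $r$ is then fixed, we conclude $\|f\|_{L^2(\partial\Omega)}\le C\|\mathcal S^tf\|_{W^{1,2}(\partial\Omega)}$ for Lipschitz $f$, and then for all $f\in L^2(\partial\Omega)$ by density. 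The main obstacle, and the whole reason this proposition requires the extra hypotheses, is the presence of the term $Cr\|(\nabla u)^*\|_{L^2}^2$ in the $L^t$ global Rellich estimate: it is harmless only because its constant does not depend on $r$, which is what allows it to be absorbed after shrinking $r$. A secondary technical point is the careful passage to the limit $j\to\infty$ of $(\nabla u_\pm)^*$ and of the traces and tangential gradients onto the approximating boundaries (via $\Lambda_j(q)\in\Gamma(q)$ and dominated convergence), together with the bookkeeping of the extraneous $\partial B$ boundary terms for $u_-$.
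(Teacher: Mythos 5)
Your proposal follows essentially the same route as the paper: reduce to Lipschitz $f$ by density, apply the jump relation and Cauchy--Schwarz to reach the $\limsup$ of the conormal-derivative $L^2$ norms, invoke the global Rellich estimate for $L^t$ with a free parameter $r$, pass to the limit $j\to\infty$ via the adjoint analog of the limsup lemma together with the maximal-function bound, and absorb the $Cr\|f\|_{L^2}^2$ term by choosing $r$ small. The extra remarks you make about the $\partial B$ boundary terms in the $u_-$ case are a reasonable elaboration of what the paper summarizes as ``a similar process,'' and do not change the argument.
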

\begin{proof}
As in the proof of proposition \ref{InverseInequalityForGoodB}, suppose first that $f$ is Lipschitz, and consider a Lipschitz extension $F:\overline{B}\to\mathbb R$ of $f$, which vanishes on $\partial B$. Set $u_+=\mathcal{S}_+^tf$, and $u_-=\mathcal{S}_-^tf$, then the jump relation (corollary \ref{JumpRelationForAdjoint}) with $H=F$ shows that
\[
\int_{\partial\Omega_j}\partial_{\nu_j}u_+\cdot F\,d\sigma_j-\int_{\partial\Omega_j'}\partial_{\nu_j'}u_-\cdot F\,d\sigma_j'\xrightarrow[j\to\infty]{}\int_{\partial\Omega}F^2\,d\sigma.
\]
Since now $F$ is continuous in $\Omega$, the Cauchy-Schwartz inequality shows that
\begin{align*}
\|F\|_{L^2(\partial\Omega)}^2&\leq\limsup_{j\to\infty}\left(\|\partial_{\nu_j}u_+\|_{L^2(\partial\Omega_j)}\|F\|_{L^2(\partial\Omega_j)}+\|\partial_{\nu_j'}u_-\|_{L^2(\partial\Omega_j)}\|F\|_{L^2(\partial\Omega_j')}\right)\\
&=\|F\|_{L^2(\partial\Omega)}\limsup_{j\to\infty}\left(\|\partial_{\nu_j}u_+\|_{L^2(\partial\Omega_j)}+\|\partial_{\nu_j'}u_-\|_{L^2(\partial\Omega_j)}^2,\right)
\end{align*}
therefore we obtain 
\begin{equation}\label{eq:AdjointSum}
\|F\|_{L^2(\partial\Omega)}^2\leq 2\limsup_{j\to\infty}\left(\|\partial_{\nu_j}u_+\|_{L^2(\partial\Omega_j)}^2+\|\partial_{\nu_j'}u_-\|_{L^2(\partial\Omega_j)}^2\right).
\end{equation}
Note now that $u_+$ is a $C^1$ solution in $\overline{\Omega_j}$, from propositions \ref{SingleLayerInequalitiesForAdjoint} and \ref{DerivativeRegularityForAdjoint}. Moreover, from the global Rellich estimate (proposition \ref{GlobalRellichForAdjoint}), we obtain that, for all $r<r_{\Omega}$,
\[
\|\partial_{\nu_j}u_+\|_{L^2(\partial\Omega_j)}^2\leq Cr\|(\nabla u_+)^*\|_{L^2(\partial\Omega_j)}^2+\frac{C}{r^d}\|u_+\|_{L^2(\partial\Omega_j)}^2+\frac{C}{r^d}\|\nabla_Tu_+\|_{L^2(\partial\Omega_j)}^2,\]
where $C_j$ is a good constant for $\Omega_j$. We now let $j\to\infty$, and applying proposition \ref{LimsupEstimateOnNormForAdjoint}, we obtain that
\begin{align*}
\limsup_{j\to\infty}\|\partial_{\nu_j}u_+\|_{L^2(\partial\Omega_j)}^2&\leq Cr\|(\nabla u_+)^*\|_{L^2(\partial\Omega)}^2+\frac{C}{r^d}\|\mathcal{S}f\|_{L^2(\partial\Omega)}^2+\frac{C}{r^d}\|\nabla_T\mathcal{S}f\|_{L^2(\partial\Omega)}^2\\
&=Cr\|(\nabla \mathcal{S}_+f)^*\|_{L^2(\partial\Omega)}^2+\frac{C}{r^d}\|\mathcal{S}f\|_{L^2(\partial\Omega)}^2+\frac{C}{r^d}\|\nabla_T\mathcal{S}f\|_{L^2(\partial\Omega)}^2\\
&\leq Cr\|f\|_{L^2(\partial\Omega)}^2+\frac{C}{r^d}\|\mathcal{S}f\|_{L^2(\partial\Omega)}^2+\frac{C}{r^d}\|\nabla_T\mathcal{S}f\|_{L^2(\partial\Omega)}^2,
\end{align*}
where we also used the maximal function bound from proposition \ref{SingleLayerInequalitiesForAdjoint}. A similar process shows that
\[
\limsup_{j\to\infty}\|\partial_{\nu_j'}u_-\|_{L^2(\partial\Omega_j')}^2\leq Cr\|f\|_{L^2(\partial\Omega)}^2+\frac{C}{r^d}\|\mathcal{S}f\|_{L^2(\partial\Omega)}^2+\frac{C}{r^d}\|\nabla_T\mathcal{S}f\|_{L^2(\partial\Omega)}^2.
\]
Adding those inequalities and plugging in \eqref{eq:AdjointSum}, we finally obtain that
\[
\|f\|_{L^2(\partial\Omega)}^2\leq Cr\|f\|_{L^2(\partial\Omega)}^2+\frac{C}{r^d}\|\mathcal{S}f\|_{L^2(\partial\Omega)}^2+\frac{C}{r^d}\|\nabla_T\mathcal{S}f\|_{L^2(\partial\Omega)}^2,
\]
where $C$ is a good constant that also depends on $\|\dive b\|_d$. Choosing $r<r_{\Omega}$ with $Cr<1/2$ shows that
\[
\|f\|_{L^2(\partial\Omega)}^2\leq C\|\mathcal{S}^tf\|_{L^2(\partial\Omega)}^2+C\|\nabla_T\mathcal{S}^tf\|_{L^2(\partial\Omega)}^2=C\|\mathcal{S}^tf\|_{W^{1,2}(\partial\Omega)},
\]
where $C$ is a good constant that also depends on $\|\dive b\|_d$. This shows the desired inequality for Lipschitz functions $f:\partial\Omega\to\mathbb R$.

To obtain the estimate for $f\in L^2(\partial\Omega)$, we use the fact that ${\rm Lip}(\partial\Omega)$ is dense in $L^2(\partial\Omega)$ and $\mathcal{S}^t:L^2(\partial\Omega)\to W^{1,2}(\partial\Omega)$ is continuous.
\end{proof}

We now pass to non differentiable drifts, as in proposition \ref{InverseInequality}.

\begin{prop}\label{InverseInequalityForAdjoint}
Suppose that $\Omega\in\mathcal{D}$, $A\in M_{\lambda,\mu}^s(B)$ and $b\in \Dr_{d,\alpha}(\mathbb R^d)$. Then, for every $f\in L^2(\partial\Omega)$,
\[
\|f\|_{L^2(\partial\Omega)}\leq C\|\mathcal{S}^tf\|_{W^{1,2}(\partial\Omega)},
\]
where $C$ is a good constant that also depends on $\|b\|_{\Dr_{d,\alpha}}$.
\end{prop}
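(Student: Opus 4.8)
The plan is to repeat the passage from Lipschitz to bounded drifts carried out in Proposition \ref{InverseInequality}, but now also keeping track of the H\"older regularity and of the size of $\dive b$. Let $\psi$ be a standard mollifier and set $b_n=b*\psi_{1/n}$, a mollification performed on all of $\mathbb R^d$; here it is convenient that $b\in\Dr_{d,\alpha}(\mathbb R^d)$ is globally defined, so there is no need to match a mollification rate to a shrinking sequence of domains as in Proposition \ref{GeneralB2Lemma}. Each $b_n$ is smooth, hence in $\Lip(\mathbb R^d)$, and since mollification increases neither the sup norm, nor the $C^{0,\alpha}$ seminorm, nor the $L^d$ norm of the distributional divergence (because $\dive b_n=(\dive b)*\psi_{1/n}$), we get $\|b_n\|_{\Dr_{d,\alpha}(\mathbb R^d)}\le\|b\|_{\Dr_{d,\alpha}(\mathbb R^d)}$ for all $n$. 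Fix once and for all an exponent $\beta\in(0,\alpha)$; then in addition to $b_n\to b$ in $L^{2d}(B)$ we record the convergence $b_n\to b$ in $C^{\beta}(\mathbb R^d)$, which holds for a $C^{\alpha}$ function even though the mollifications need not converge in $C^{\alpha}$ itself.

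Let $\mathcal S^t_n$ be the single layer potential for the operator $L^t_nu=-\dive(A^t\nabla u)-\dive(b_nu)$ in $\Omega$, built from Green's function $G^t_n$ for $L^t_n$ in $B$. Since $b_n\in\Lip(\mathbb R^d)$, Proposition \ref{InverseInequalityForAdjointForGoodB} applies and gives
\[
\|f\|_{L^2(\partial\Omega)}\le C\|\mathcal S^t_nf\|_{W^{1,2}(\partial\Omega)},\qquad f\in L^2(\partial\Omega),
\]
with $C$ a good constant depending on the drift only through $\|b_n\|_{\Dr_{d,\alpha}}\le\|b\|_{\Dr_{d,\alpha}}$, hence uniform in $n$. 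It therefore suffices to show that $\mathcal S^t_nf\to\mathcal S^tf$ in $W^{1,2}(\partial\Omega)$ for each fixed $f\in L^2(\partial\Omega)$, since then $\|\mathcal S^t_nf\|_{W^{1,2}(\partial\Omega)}\to\|\mathcal S^tf\|_{W^{1,2}(\partial\Omega)}$ and we may pass to the limit in the displayed inequality.

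For the $L^2$ part, using the symmetry relation $G^t_n(p,q)=G_{b_n}(q,p)$ and $G^t(p,q)=G_b(q,p)$ (Proposition \ref{SymmetryWithAdjoint}, valid for bounded drifts by the approximation already used in the excerpt) together with the difference estimate of Proposition \ref{ContinuityArgumentForB}, we get $|G^t_n(p,q)-G^t(p,q)|\le C\|b_n-b\|_{L^{2d}(B)}|p-q|^{5/2-d}$; since the kernel $|p-q|^{5/2-d}$ is integrable over $\partial\Omega$, the associated convolution-type operator is bounded on $L^2(\partial\Omega)$, so $\|\mathcal S^t_nf-\mathcal S^tf\|_{L^2(\partial\Omega)}\le C\|b_n-b\|_{L^{2d}(B)}\|f\|_{L^2(\partial\Omega)}\to0$. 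For the tangential gradient, recall $\nabla_T\mathcal S^tf(p)=\lim_{\e\to0}\int_{|p-q|>\e}\nabla^p_TG^t(p,q)f(q)\,d\sigma(q)$, and similarly for $\mathcal S^t_n$; subtracting and applying the difference estimate of Proposition \ref{ContinuityArgumentForBAdjoint} \emph{at the exponent} $\beta$ — which is legitimate because $b,b_n\in C^{\alpha}\subseteq C^{\beta}(B)$ and the constant there is controlled by $\|b\|_{C^{\alpha}}$, uniformly in $n$ — yields the pointwise bound $|\nabla^p_TG^t_n(p,q)-\nabla^p_TG^t(p,q)|\le C\|b_n-b\|_{C^{\beta}(B)}|p-q|^{3/2-d}$. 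Integrating against $|f|$ and using the $L^2(\partial\Omega)$ boundedness of the operator with integrable kernel $|p-q|^{3/2-d}$ (exactly as in Proposition \ref{InverseInequality}), we obtain $\|\nabla_T\mathcal S^t_nf-\nabla_T\mathcal S^tf\|_{L^2(\partial\Omega)}\le C\|b_n-b\|_{C^{\beta}(B)}\|f\|_{L^2(\partial\Omega)}\to0$. Combining the two parts completes the argument.

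The main obstacle is precisely this last convergence. Since Lipschitz functions are not dense in $C^{\alpha}$, one cannot arrange $b_n\to b$ in $C^{\alpha}$, and Proposition \ref{ContinuityArgumentForBAdjoint} controls the difference of the gradients of the adjoint Green's functions by a $C^{\alpha}$ (not merely $L^{2d}$) norm of $b_1-b_2$, in contrast with the situation for $L$ in Proposition \ref{InverseInequality}. The resolution is the auxiliary exponent $\beta<\alpha$: the $C^{\alpha}$-level bounds on $G^t_n$ and its gradient (Propositions \ref{GreenDerivativeBoundsForAdjoint} and \ref{HolderContinuityOfGreenForAdjoint}) stay uniform in $n$, so Proposition \ref{ContinuityArgumentForBAdjoint} run at level $\beta$ carries a constant uniform in $n$, while the mollifications $b_n$ genuinely converge in $C^{\beta}$. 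A secondary point to verify is that the formula for $\nabla_T\mathcal S^tf$ (stated for $b\in C^{\alpha}$) and the symmetry relation $G^t(p,q)=G_b(q,p)$ (stated for $b\in\Lip$) are available for $b\in\Dr_{d,\alpha}$; both follow from the same approximation arguments already invoked in the excerpt, and in fact are re-established in the limit $n\to\infty$ along the way.
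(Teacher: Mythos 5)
Your proposal is correct and follows essentially the same route as the paper: mollify $b$ on all of $\mathbb R^d$ to get $b_n\in\Lip(\mathbb R^d)$ with uniformly controlled $\Dr_{d,\alpha}$ norm, apply Proposition \ref{InverseInequalityForAdjointForGoodB} with a constant uniform in $n$, then pass to the limit using the difference estimates for Green's functions at an auxiliary H\"older exponent $\beta<\alpha$. The paper's proof also fixes $\beta=\alpha/2$ and invokes Proposition \ref{ContinuityArgumentForBAdjoint} at level $\beta$; the only small difference is that the paper extracts a subsequence via the compact embedding $C^{\alpha}\hookrightarrow C^{\beta}$, while you obtain convergence of the full sequence directly by interpolating $\|b_n-b\|_\infty\to0$ against the uniform $C^{\alpha}$ bound, and you work with arbitrary $f\in L^2(\partial\Omega)$ from the start rather than restricting to Lipschitz $f$ and then invoking density (a restriction which is in any case superfluous, since Proposition \ref{InverseInequalityForAdjointForGoodB} already applies to all $f\in L^2$).
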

\begin{proof}
Suppose first that $f$ is Lipschitz on $\partial\Omega$, and extend it to a Lipschitz function $F$ in $\overline{\Omega}$. Consider a mollification $b_n=b*\phi_n$ of $b$, where
\[
\phi_n(x)=n^d\phi(nx),
\]
and $\phi$ is positive, it is supported in $B_1$ and has integral $1$. We then compute
\[
|b_n(x)|\leq\int_{B_{1/n}}|b(x-z)||\phi_n(z)|\,dz\leq\|b\|_{\infty}\int_{B_{1/n}}\phi_n(z)\,dz=\|b\|_{\infty},
\]
and also
\begin{align*}
|b_n(x)-b_n(y)|&\leq\int_{B_{1/n}}|b(x-z)-b(y-z)||\phi_n(z)|\,dz\leq\|b\|_{C^{0,\alpha}}|x-y|^{\alpha}\int_{B_{1/n}}\phi_n(z)\,dz\\
&=\|b\|_{C^{0,\alpha}}|x-y|^{\alpha}
\end{align*}
which shows that $\|b_n\|_{C^{\alpha}(\Omega)}\leq\|b\|_{C^{\alpha}(\Omega)}$. In addition, for any $\psi\in C_c^{\infty}(\mathbb R^d)$,
\begin{align*}
\left|\int_{\mathbb R^d}b_n\nabla\psi\right|&=\left|\int_{\mathbb R^d}\int_{B_{1/n}}b(x-z)\phi_n(z)\nabla\psi(x)\,dzdx\right|\\
&\leq\int_{B_{1/n}}\phi_n(z)\left|\int_{\mathbb R^d}b(x-z)\nabla\psi(x)\,dx\right|\,dz\\
&=\int_{B_{1/n}}\phi_n(z)\left|\int_{\mathbb R^d}b(x)\nabla\psi(x+z)\,dx\right|\,dz\\
&\leq\int_{B_{1/n}}\phi_n(z)\left|\int_{\mathbb R^d}b(x)\nabla\psi(x+z)\,dx\right|\,dz\\
&\leq\int_{B_{1/n}}\phi_n(z)\|\dive b\|_d\|\nabla\psi\|_{L^{\frac{d}{d-1}}(\mathbb R^d)}\,dz\\
&=\|\dive b\|_d\|\nabla\psi\|_{L^{\frac{d}{d-1}}(\mathbb R^d)}.
\end{align*}
therefore $b_n\in\Dr_{d,\alpha}(\mathbb R^d)$, with
\[
\|b_n\|_{\Dr_{d,\alpha}(\mathbb R^d)}\leq\|b\|_{\Dr_{d,\alpha}(\mathbb R^d)}.
\]
Let now $G_n^t$ be Green's function for the operator
\[
L_n^t=-\dive(A\nabla u)-\dive(b_nu)
\]
in $B$, and set $\mathcal{S}_n^t$ to be the single layer potential operator on $\partial\Omega$ for the same operator; that is,
\[
\mathcal{S}_n^tf(p)=\int_{\partial\Omega}G_n^t(p,q)f(q)\,d\sigma(q),
\]
for $p\in\partial\Omega$. Set also $\beta=\alpha/2$. Then, we apply the estimates in proposition \ref{ContinuityArgumentForBAdjoint} for $\beta$ instead of $\alpha$ to obtain, as in proposition \ref{InverseInequality},
\[
\|\mathcal{S}_n^tf-\mathcal{S}^tf\|_{W^{1,2}(\partial\Omega)}\leq C\|b_n-b\|_{C^{\beta}(\Omega)}\|f\|_{L^2(\partial\Omega)},
\]
where $C$ is a good constant. But, the embedding $C^{\beta}(\Omega)\hookrightarrow C^{\alpha}(\Omega)$ is compact, and also $(b_n)$ is bounded in $C^{\alpha}(\Omega)$ and $b_n\to b$ in $C(\overline{\Omega})$, therefore, for a subsequence,
\[
\|b_{k_n}-b\|_{C^{\beta}(\Omega)}\xrightarrow[n\to\infty]{} 0,
\]
which shows that
\[
\|\mathcal{S}_{k_n}^tf-\mathcal{S}^tf\|_{W^{1,2}(\partial\Omega)}\xrightarrow[n\to\infty]{} 0.
\]
Since now $b_n\in C^1(\overline{\Omega})$, we can apply proposition \ref{InverseInequalityForAdjointForGoodB} to obtain that
\[
\|f\|_{L^2(\partial\Omega)}\leq C\|\mathcal{S}_{k_n}^tf\|_{W^{1,2}(\partial\Omega)},
\]
where $C$ is a good constant that does not depend on $n$. Letting $n\to\infty$ then shows the result for Lipschitz functions $f$. To pass to all $f\in L^2(\partial\Omega)$ we use the fact that ${\rm Lip}(\partial\Omega)$ is dense in $L^2(\partial\Omega)$ and $\mathcal{S}^t:L^2(\partial\Omega)\to W^{1,2}(\partial\Omega)$ is continuous.
\end{proof}

We are then led to the next theorem.

\begin{thm}\label{InvertibilityOfSAdjoint}
Let $\Omega$ be a Lipschitz domain, $A\in M_{\lambda,\mu}^s(\Omega)$ and $b\in \Dr_{d,\alpha}(\mathbb R^d)$. Then, the operator $\mathcal{S}^t:L^2(\partial\Omega)\to W^{1,2}(\partial\Omega)$ is invertible, with
\[
\|(\mathcal{S}^t)^{-1}f\|_{L^2(\partial\Omega)}\leq C\|f\|_{W^{1,2}(\partial\Omega)},
\]
and $C$ being a good constant that also depends on $\|b\|_{\Dr_{d,\alpha}}$.
\end{thm}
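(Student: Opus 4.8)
The plan is to run the continuity method, in exact parallel with the proof of Theorem \ref{InvertibilityOfS}, but feeding in the adjoint ingredients assembled in this chapter. After a dilation and a translation we may assume $\Omega\in\mathcal{D}$, so that $G^t$, $\mathcal{S}^t$ and $\mathcal{S}^t_\pm$ for the operator $-\dive(A\nabla\cdot)-\dive(b\,\cdot)$ in the unit ball $B$ are defined as in the previous subsections. For $t\in[0,1]$ consider the family of operators
\[
L^t_t u=-\dive(A^t\nabla u)-t\dive(bu),
\]
let $G^t_{tb}$ be Green's function for $L^t_t$ in $B$ (which exists since $tb\in\Dr_{d,\alpha}(\mathbb R^d)$ with $\|tb\|_{\Dr_{d,\alpha}}\le\|b\|_{\Dr_{d,\alpha}}$), and set
\[
\mathcal{S}^t_t f(p)=\int_{\partial\Omega}G^t_{tb}(p,q)f(q)\,d\sigma(q),\qquad f\in L^2(\partial\Omega),\ p\in\partial\Omega.
\]
Each $\mathcal{S}^t_t$ maps $L^2(\partial\Omega)$ into $W^{1,2}(\partial\Omega)$ with operator norm a good constant (depending on $\|tb\|_{C^\alpha}\le\|b\|_{C^\alpha}$), by the boundedness of the single layer potential established before Proposition \ref{SingleLayerInequalitiesForAdjoint}, and $\mathcal{S}^t=\mathcal{S}^t_1$.

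The first step is to show that $t\mapsto\mathcal{S}^t_t$ is Lipschitz in the $L^2(\partial\Omega)\to W^{1,2}(\partial\Omega)$ operator norm, i.e. $\|\mathcal{S}^t_{t_1}f-\mathcal{S}^t_{t_2}f\|_{W^{1,2}(\partial\Omega)}\le C|t_1-t_2|\,\|f\|_{L^2(\partial\Omega)}$ for all $t_1,t_2\in[0,1]$. For the $L^2$ part of the norm I would use the symmetry relation $G^t_{tb}(p,q)=G_{tb}(q,p)$ (Proposition \ref{SymmetryWithAdjoint}) together with the first estimate of Proposition \ref{ContinuityArgumentForB}, giving $|G^t_{t_1b}(p,q)-G^t_{t_2b}(p,q)|\le C|t_1-t_2|\,|p-q|^{5/2-d}$; since $|p-q|^{5/2-d}$ is integrable on $\partial\Omega$, Cauchy--Schwarz yields the bound. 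For the tangential gradient part I would use the formula $\nabla_T\mathcal{S}^t_tf(p)=\lim_{\e\to0}\int_{|p-q|>\e}\nabla^p_T G^t_{tb}(p,q)f(q)\,d\sigma(q)$ together with Proposition \ref{ContinuityArgumentForBAdjoint}, which controls $|\nabla^p G^t_{t_1b}(p,q)-\nabla^p G^t_{t_2b}(p,q)|$ by $C|t_1-t_2|\,|p-q|^{3/2-d}$; again $|p-q|^{3/2-d}$ is integrable on $\partial\Omega$, so Cauchy--Schwarz finishes. In both cases the constants are uniform in $t$ because the relevant norms of $tb$ ($L^{2d}$, $L^\infty$, $C^\alpha$) are dominated by those of $b$.

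Next, the endpoint $t=0$: $\mathcal{S}^t_0$ is the single layer potential for $-\dive(A^t\nabla u)=0$, and since $A^t=A$ is symmetric this operator is invertible from $L^2(\partial\Omega)$ onto $W^{1,2}(\partial\Omega)$ by remark $6.9$ in \cite{KenigShen}. The uniform a priori bound comes from Proposition \ref{InverseInequalityForAdjoint} applied to each $tb$: because $\|tb\|_{\Dr_{d,\alpha}}\le\|b\|_{\Dr_{d,\alpha}}$, there is a single good constant $C$ (depending on $\|b\|_{\Dr_{d,\alpha}}$) with $\|f\|_{L^2(\partial\Omega)}\le C\|\mathcal{S}^t_t f\|_{W^{1,2}(\partial\Omega)}$ for all $t\in[0,1]$. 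The standard continuity argument then closes the loop: the set $S=\{t\in[0,1]:\mathcal{S}^t_t\text{ is invertible}\}$ is open (Neumann series, using invertibility at a given $t_0$ and the norm continuity just proved), nonempty ($0\in S$), and closed (if $t_n\to t$ with $t_n\in S$, the uniform bound gives $\|(\mathcal{S}^t_{t_n})^{-1}\|\le C$, hence $\mathcal{S}^t_t(\mathcal{S}^t_{t_n})^{-1}\to I$ in operator norm and $\mathcal{S}^t_t$ is onto, while it is injective by the a priori bound, so $t\in S$); by connectedness $S=[0,1]$, so $\mathcal{S}^t=\mathcal{S}^t_1$ is invertible. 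The estimate $\|(\mathcal{S}^t)^{-1}f\|_{L^2(\partial\Omega)}\le C\|f\|_{W^{1,2}(\partial\Omega)}$ is then immediate from Proposition \ref{InverseInequalityForAdjoint}.

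The point requiring care is not a deep obstacle but rather the uniformity of constants along the family $\{tb\}_{t\in[0,1]}$: one must observe that the a priori lower bound of Proposition \ref{InverseInequalityForAdjoint}, and the continuity estimates of Propositions \ref{ContinuityArgumentForB} and \ref{ContinuityArgumentForBAdjoint}, all have constants that do not degenerate, which holds because the $\Dr_{d,\alpha}$-, $C^\alpha$-, $L^\infty$- and $L^{2d}$-norms are all non-increasing under multiplication by $t\in[0,1]$. No new PDE input is needed beyond what has already been established; the work is entirely in checking these uniformities and invoking the continuity method.
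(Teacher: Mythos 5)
Your proof is correct and follows essentially the same route as the paper: the paper's own proof is a one-line reference to the proof of Theorem \ref{InvertibilityOfS}, replacing Proposition \ref{ContinuityArgumentForB} by Proposition \ref{ContinuityArgumentForBAdjoint} for the gradient part and invoking Proposition \ref{InverseInequalityForAdjoint} for the a priori bound — exactly the ingredients you assembled. Your write-up just fills in the details the paper leaves implicit (the family $\mathcal{S}^t_t$, uniformity of constants along $tb$, and the openness/closedness bookkeeping of the continuity method).
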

\begin{proof}
The proof is identical to the proof of theorem \ref{InvertibilityOfS}, but instead of the gradient estimates that appear in proposition \ref{ContinuityArgumentForB}, we use the estimate in proposition \ref{ContinuityArgumentForBAdjoint}. We then conclude using proposition \ref{InverseInequalityForAdjoint}.
\end{proof}

Using invertibility of $\mathcal{S}^t$, we can then obtain solvability of $R_2$ for the adjoint equation.

\begin{thm}\label{R2SolvabilityForAdjoint}
Let $\Omega$ be a Lipschitz domain, $A\in M_{\lambda,\mu}^s(\Omega)$, and $b\in \Dr_{d,\alpha}(\mathbb R^d)$. Then the Regularity problem $R_2$ is uniquely solvable in $\Omega$, with constants depending on $d,\lambda,\mu$, $\|b\|_{\Dr_{d,\alpha}(\Omega)}$, the Lipschitz character of $\Omega$ and the diameter of $\Omega$. Moreover, the solution admits the representation
\[
u(x)=\mathcal{S}_+^t((\mathcal{S}^t)^{-1}f)(x)=\int_{\partial\Omega}G^t(x,q)\left((\mathcal{S}^t)^{-1}f\right)(q)\,d\sigma(q).
\]
\end{thm}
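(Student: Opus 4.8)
The plan is to follow the proof of Theorem \ref{R2Solvability} essentially verbatim, replacing the single layer potential $\mathcal{S}$, $\mathcal{S}_+$ for $L$ by the adjoint single layer potentials $\mathcal{S}^t$, $\mathcal{S}_+^t$, and invoking the invertibility and mapping properties already proved in this chapter. First I would reduce to the case $\Omega\in\mathcal{D}$ by a dilation and translation, exactly as at the start of the proof of Theorem \ref{InvertibilityOfS}; since all the constants are permitted to depend on $\diam(\Omega)$ and the Lipschitz character, the general Lipschitz statement follows by scaling back.

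Now let $f\in W^{1,2}(\partial\Omega)$. By Theorem \ref{InvertibilityOfSAdjoint}, using that $A\in M_{\lambda,\mu}^s(\Omega)$ and $b\in\Dr_{d,\alpha}(\mathbb R^d)$, the operator $\mathcal{S}^t:L^2(\partial\Omega)\to W^{1,2}(\partial\Omega)$ is invertible, with $\|(\mathcal{S}^t)^{-1}f\|_{L^2(\partial\Omega)}\le C\|f\|_{W^{1,2}(\partial\Omega)}$ for a good constant $C$ that also depends on $\|b\|_{\Dr_{d,\alpha}}$. Set $g=(\mathcal{S}^t)^{-1}f\in L^2(\partial\Omega)$ and define
\[
u(x)=\mathcal{S}_+^t g(x)=\int_{\partial\Omega}G^t(x,q)\,g(q)\,d\sigma(q).
\]
By Proposition \ref{SingleLayerInequalitiesForAdjoint}, $u\in W^{1,2}_{\loc}(\Omega)$ is a solution of $L^tu=0$ in $\Omega$, it converges nontangentially almost everywhere on $\partial\Omega$ to $\mathcal{S}^t g=\mathcal{S}^t\big((\mathcal{S}^t)^{-1}f\big)=f$, and the nontangential maximal function of its gradient obeys $\|(\nabla u)^*\|_{L^2(\partial\Omega)}\le C\|g\|_{L^2(\partial\Omega)}$. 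Chaining the two estimates,
\[
\|\nabla u\|_{L^2(\partial\Omega)}\le\|(\nabla u)^*\|_{L^2(\partial\Omega)}\le C\|g\|_{L^2(\partial\Omega)}\le C\|f\|_{W^{1,2}(\partial\Omega)},
\]
so $u$ solves $R_2$ for $L^t$ with data $f$, with the required estimate and with the asserted representation $u=\mathcal{S}_+^t\big((\mathcal{S}^t)^{-1}f\big)$ holding by construction. Uniqueness is then immediate from Theorem \ref{UniquenessForRegularityForAdjoint}: if $u_1,u_2$ are two such solutions, then $u_1-u_2$ is a $W^{1,2}_{\loc}(\Omega)$ solution of $L^tu=0$ with $(\nabla(u_1-u_2))^*\in L^2(\partial\Omega)$ (hence in $L^1$) and vanishing nontangential limit, so $u_1\equiv u_2$.

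It is worth emphasizing that no new analytic work is required at this stage: every ingredient has already been established. The entire difficulty is concentrated in Theorem \ref{InvertibilityOfSAdjoint}, i.e.\ ultimately in the global Rellich estimate for $L^t$ (Proposition \ref{GlobalRellichForAdjoint}), which is precisely where the size condition $\dive b\in L^d$ built into $\Dr_{d,\alpha}$ — together with the H\"older continuity of $b$ used to obtain the differentiation and continuity properties of $\mathcal{S}^t$ (via Propositions \ref{SingleLayerInequalitiesForAdjoint} and \ref{LimsupEstimateOnNormForAdjoint}) — is genuinely exploited. The present theorem is then a pure assembly step, entirely parallel to the way Theorem \ref{R2Solvability} was deduced from Theorem \ref{InvertibilityOfS}.
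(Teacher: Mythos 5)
Your proof is correct and follows exactly the route indicated by the paper, which says the argument is ``identical to the proof of theorem \ref{R2Solvability}, after using theorems \ref{InvertibilityOfSAdjoint} and \ref{UniquenessForRegularityForAdjoint}.'' You have simply unwound that one-line indication into the explicit assembly: reduce to $\Omega\in\mathcal{D}$, apply the invertibility of $\mathcal{S}^t$, use Proposition \ref{SingleLayerInequalitiesForAdjoint} for the solution and maximal-function properties, and invoke the adjoint uniqueness theorem — all as the paper intends.
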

\begin{proof}
The proof is identical to the proof of theorem \ref{R2Solvability}, after using theorems \ref{InvertibilityOfSAdjoint} and \ref{UniquenessForRegularityForAdjoint}.
\end{proof}

We will be able to drop the symmetry assumption on $A$ later, but we will then have to assume that the derivatives of $A$ are H{\"o}lder continuous (theorem \ref{RegularityForNonSymmetricForAdjoint}).
\section{The case of non-symmetric coefficients}
We are now in position to drop the symmetry assumption on $A$, and show solvability for the Dirichlet and the Regularity problem for $A\in M_{\lambda,\mu}(\Omega)$, for the operators $L$ and $L^t$. For this purpose, we will reduce the general case to the cases treated before, as in \cite{PipherDrifts}.

We first prove the next lemma, to explain how we will transform our equations so that the matrix $A$ becomes symmetric. The crucial observation is that we obtain an equation with a drift which depends on the derivatives of $A$, but it is divergence free.

\begin{lemma}\label{NonSymmetricEquations}
Let $A\in M_{\lambda,\mu}(\Omega)$, and let $b\in L^{\infty}(\Omega)$. Define $\tilde{b}$ by the relation
\[
\tilde{b}_i=\frac{1}{2}\sum_{j=1}^d\partial_j(a_{ij}-a_{ji}),\quad i=1,\dots d.
\]
Then $\dive \tilde{b}=0$, $\|\tilde{b}\|_{\infty}\leq C\mu$, and $u$ is a solution of the equation $-\dive(A^t\nabla u)+b\nabla u=0$ in $\Omega$ if and only if $u$ solves the equation
\[
-\dive\left(A_s\nabla u\right)+(b+\tilde{b})\nabla u=0
\]
in $\Omega$, where $A_s=\frac{1}{2}(A+A^t)$ is symmetric. Moreover, $v$ is a solution of the equation $-\dive(A\nabla v)-\dive(bv)=0$ in $\Omega$ if and only if $v$ solves the equation
\[
-\dive\left(A_s\nabla v\right)-\dive\left((b+\tilde{b})v\right)=0.
\]
\end{lemma}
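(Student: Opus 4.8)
The plan is to isolate the difficulty in the antisymmetric part of the coefficient matrix. Write $A = A_s + A_a$ with $A_s = \tfrac12(A + A^t)$ symmetric and Lipschitz and $A_a = \tfrac12(A - A^t)$ antisymmetric and Lipschitz (Lipschitz constant $\le C\mu$), and set $D = A - A^t$, so $D_{ij} = a_{ij} - a_{ji}$. The whole lemma rests on one algebraic fact about an antisymmetric Lipschitz matrix field: contracted against the symmetric Hessian of a smooth function it gives zero, and consequently the second-order operator $w \mapsto -\dive(D\nabla w)$ reduces, in the weak sense, to a first-order drift term built from $\dive D$ — which, by the same antisymmetry, is itself divergence free and, up to relabeling and the factor $\tfrac12$, equal to $\tilde b$. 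Since $A$ is only Lipschitz, every statement below is read distributionally; the point is that no genuine second derivative of $A$ is ever taken, because each integration by parts moves only one derivative, which always lands on a smooth compactly supported test function or is annihilated by the antisymmetry--symmetry pairing.

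For the final statement I would fix $v \in W^{1,1}_{\loc}(\Omega)$ and $\phi \in C_c^\infty(\Omega)$ and begin from the definition of weak solution: $v$ solves $-\dive(A\nabla v) - \dive(bv) = 0$ exactly when $\int_\Omega A\nabla v\cdot\nabla\phi + \int_\Omega bv\cdot\nabla\phi = 0$. I would decompose $A\nabla v\cdot\nabla\phi = A_s\nabla v\cdot\nabla\phi + \tfrac12\sum_{i,j}(a_{ij}-a_{ji})(\partial_j v)(\partial_i\phi)$ and integrate by parts once, in the variable $x_j$, in the second sum — legitimate because $(a_{ij}-a_{ji})\partial_i\phi \in W^{1,\infty}$ has compact support and $v \in W^{1,1}_{\loc}$. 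This splits the second sum into a term built from $\partial_j(a_{ij}-a_{ji})$, which by the definition of $\tilde b$ is exactly a first-order drift term in $\tilde b$ (using $\dive\tilde b = 0$ it may be written interchangeably as $-\dive(\tilde b v)$ tested against $\phi$ or as $\tilde b\cdot\nabla v$ tested against $\phi$), plus the term $\tfrac12\sum_{i,j}(a_{ij}-a_{ji})\,v\,\partial_i\partial_j\phi$, which is pointwise zero by antisymmetry. Recombining the drift term with $\int_\Omega bv\cdot\nabla\phi$, the original identity becomes, for every $\phi$, the weak form of $-\dive(A_s\nabla v) - \dive((b+\tilde b)v) = 0$; quantifying over $\phi \in C_c^\infty(\Omega)$ gives the asserted equivalence in both directions.

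The remaining assertions are handled with the same tools. The bound $\|\tilde b\|_\infty \le C\mu$ is immediate from Rademacher's theorem ($A$ is $\mu$-Lipschitz, so $|\partial_j a_{kl}| \le \mu$ a.e.), giving $|\tilde b_i| \le d\mu$. For $\dive\tilde b = 0$ I would test against $\phi\in C_c^\infty(\Omega)$: $\langle \dive\tilde b, \phi\rangle = -\int_\Omega \tilde b\cdot\nabla\phi = -\tfrac12\int_\Omega\sum_{i,j}\partial_j(a_{ij}-a_{ji})\,\partial_i\phi$, and one more integration by parts in $x_j$ lands both derivatives on $\phi$, leaving $\tfrac12\int_\Omega\sum_{i,j}(a_{ij}-a_{ji})\,\partial_i\partial_j\phi = 0$, again by antisymmetry. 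The first equivalence of the lemma is entirely parallel: split $A^t = A_s - \tfrac12 D$ and integrate by parts once in the antisymmetric term to produce the drift $\tilde b$.

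I do not anticipate a serious obstacle: the proof is short, and the only real care needed is regularity bookkeeping — reading $\dive\tilde b = 0$ distributionally (it has no classical meaning, being formally a second derivative of a Lipschitz matrix), and checking that each integration by parts is of the admissible ``$W^{1,1}_{\loc}$ against compactly supported $W^{1,\infty}$'' type, so that the forbidden second derivatives of $A$ are never actually differentiated but always transferred onto the smooth test function or killed by the symmetry/antisymmetry pairing. The one point worth flagging, since it is exactly why the lemma is used in Chapter 11, is that $\dive(b+\tilde b) = \dive b$ and $\|b+\tilde b\|_\infty \le \|b\|_\infty + C\mu$, so that the transformed drift inherits both the boundedness and the divergence hypotheses (such as $b \in \Dr_{p_d}(\Omega)$ or $b \in \Dr_{d,\alpha}$) under which the symmetric-$A$ theorems were established.
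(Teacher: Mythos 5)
Your strategy matches the paper's — decompose $A = A_s + \tfrac12(A-A^t)$, integrate by parts once (moving the single derivative off the Sobolev function and onto the Lipschitz coefficients and the test function), kill the resulting second-derivative-of-$\phi$ term by pairing an antisymmetric tensor against a symmetric Hessian, and verify $\dive\tilde b=0$ distributionally by the same cancellation — and your attention to the regularity bookkeeping is exactly right. But the sign does not work out as you claim. Starting from $\int_\Omega A\nabla v\cdot\nabla\phi + bv\cdot\nabla\phi = 0$, the antisymmetric piece of $A$ gives, after one integration by parts in $x_j$,
\[
\frac12\int_\Omega\sum_{i,j}(a_{ij}-a_{ji})\,\partial_jv\,\partial_i\phi
= -\int_\Omega v\,\tilde b\cdot\nabla\phi,
\]
with the minus sign from the integration by parts surviving, so the transformed equation is $-\dive(A_s\nabla v)-\dive\bigl((b - \tilde b)v\bigr)=0$, not $(b+\tilde b)$. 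Your parenthetical that the drift term ``may be written interchangeably as $-\dive(\tilde b v)$ tested against $\phi$ or as $\tilde b\cdot\nabla v$ tested against $\phi$'' is where the sign drops: since $\dive\tilde b=0$ one has $\dive(\tilde b v)=\tilde b\cdot\nabla v$, so $-\dive(\tilde b v)$ and $\tilde b\cdot\nabla v$ are negatives of each other, not interchangeable.

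Following the signs honestly would have exposed a typo in the lemma as printed. What the paper's own proof establishes — and what chapter 11 actually applies (theorems \ref{DirichletForNonSymmetric}, \ref{DirichletForNonSymmetricForAdjoint}) — is the equivalence for $-\dive(A\nabla u)+b\nabla u=0$ in the drift case and for $-\dive(A^t\nabla v)-\dive(bv)=0$ in the divergence-form case, i.e.\ with $A$ and $A^t$ interchanged relative to the statement you were handed; with $A$ as written, both of your computations land on $b-\tilde b$, contradicting the claim. Starting from $A^t$ in the divergence-form case (so that the antisymmetric part is $-\tfrac12(A-A^t)$), the sign from the integration by parts flips and you correctly obtain $+\tilde b$; with that correction the rest of your argument goes through and coincides with the paper's.
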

\begin{proof}
First, in the sense of distributions, we compute
\[
2\dive \tilde{b}=\sum_{i=1}^d\partial_i\left(\sum_{j=1}^d\partial_j(a_{ij}-a_{ji})\right)=\sum_{i,j}\partial_{ij}a_{ij}-\sum_{i,j}\partial_{ij}a_{ji}=\sum_{i,j}\partial_{ji}a_{ji}-\sum_{i,j}\partial_{ij}a_{ij}=0,
\]
since $\partial_{ij}=\partial_{ji}$, which shows that $\tilde{b}$ has divergence $0$. Since $\tilde{b}$ is written as the sum of the derivatives of $A$, we also obtain that $\|\tilde{b}\|_{\infty}\leq C\mu$.

Let now $u\in W^{1,2}_{{\rm loc}}(\Omega)$, and $\phi\in C_c^{\infty}(\Omega)$. Since $\tilde{b}$ has divergence $0$, we integrate by parts to compute
\begin{align*}
\int_{\Omega}A^t\nabla u\nabla\phi+2\tilde{b}\nabla u\cdot\phi&=\int_{\Omega}A^t\nabla u\nabla\phi-2\tilde{b}\nabla\phi\cdot u\\
&=\int_{\Omega}\sum_{i,j}a_{ji}\partial_ju\cdot\partial_i\phi-\sum_{i,j}\partial_j(a_{ij}-a_{ji})\partial_i\phi\cdot u\\
&=\int_{\Omega}\sum_{i,j}a_{ji}\partial_ju\cdot\partial_i\phi+\sum_{i,j}(a_{ij}-a_{ji})(\partial_{ji}\phi\cdot u+\partial_i\phi\cdot\partial_ju)\\
&=\int_{\Omega}\sum_{i,j}a_{ij}\partial_ju\cdot\partial_i\phi+\sum_{i,j}(a_{ij}-a_{ji})\partial_{ji}\phi\cdot u=\int_{\Omega}A\nabla u\nabla\phi,
\end{align*}
since $\partial_{ij}=\partial_{ji}$. Therefore,
\[
\int_{\Omega}(A+A^t)\nabla u\nabla\phi+2(b+\tilde{b})\nabla u\cdot\phi=\int_{\Omega}2A\nabla u\nabla\phi+2b\nabla u\cdot\phi,
\]
which shows the first claim. For the second claim, a similar calculation shows that
\[
\int_{\Omega}A\nabla u\nabla\phi+2\tilde{b}\nabla\phi\cdot u=\int_{\Omega}A^t\nabla u\nabla\phi,
\]
therefore
\[
\int_{\Omega}(A+A^t)\nabla u\nabla\phi+2(b+\tilde{b})\nabla\phi\cdot u=\int_{\Omega}2A^t\nabla u\nabla\phi+2b\nabla\phi\cdot u,
\]
which completes the proof.
\end{proof}

\begin{thm}\label{DirichletForNonSymmetric}
Let $\Omega$ be a Lipschitz domain, and let $A\in M_{\lambda,\mu}(\Omega)$ and $b\in\Dr_{p_d}(\Omega)$. Then, there exists $\e>0$ such that, for $p\in(2-\e,\infty)$, the Dirichlet problem $D_p$ for the equation
\[
-\dive(A\nabla u)+b\nabla u=0
\]
is uniquely solvable in $\Omega$, with constants depending only on $d,p,\lambda,\mu,\|b\|_{\Dr_{p_d}},{\rm diam}(\Omega)$ and the Lipschitz character of $\Omega$. Here, $p_d=2$ for $d=3$, and $p_d=d/2$ for $d\geq 4$.
\end{thm}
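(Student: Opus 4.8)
The plan is to reduce the non-symmetric problem to Theorem \ref{GeneralDirichlet} via the integration-by-parts device of Lemma \ref{NonSymmetricEquations}. I would apply that lemma with $A$ replaced by $A^t$: since $(A^t)^t=A$, a function $u\in W^{1,2}_{\loc}(\Omega)$ solves $-\dive(A\nabla u)+b\nabla u=0$ in $\Omega$ if and only if it solves
\[
-\dive(A_s\nabla u)+(b-\tilde b)\nabla u=0,
\]
where $A_s=\tfrac12(A+A^t)$ and $\tilde b_i=\tfrac12\sum_{j}\partial_j(a_{ij}-a_{ji})$ is the divergence-free field from the lemma; transposing $A$ is what produces the sign $-\tilde b$ in the new drift. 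The first bookkeeping step is to verify that the transformed operator satisfies the hypotheses of Theorem \ref{GeneralDirichlet}. The matrix $A_s$ is symmetric, it is $\lambda$-elliptic because $\left<A_sy,y\right>=\left<Ay,y\right>$, and it is $\mu$-Lipschitz, so $A_s\in M_{\lambda,\mu}^s(\Omega)$. The new drift $b_s:=b-\tilde b$ is bounded with $\|b_s\|_\infty\le\|b\|_\infty+C\mu$, while $\dive b_s=\dive b-\dive\tilde b=\dive b\in L^{p_d}(\Omega)$ since $\tilde b$ has vanishing distributional divergence; hence $b_s\in\Dr_{p_d}(\Omega)$ with $\|b_s\|_{\Dr_{p_d}}\le\|b\|_{\Dr_{p_d}}+C\mu$, where $C=C(d)$.

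Next I would invoke Theorem \ref{GeneralDirichlet} for the equation $-\dive(A_s\nabla u)+b_s\nabla u=0$: there is $\e>0$, a good constant also depending on $\|b_s\|_{\Dr_{p_d}}$ (hence on $\|b\|_{\Dr_{p_d}}$ and $\mu$), such that $D_p$ for this equation is uniquely solvable for all $p\in(2-\e,\infty)$. To transfer this back to $L$, I observe that by the lemma the classes of $W^{1,2}_{\loc}$ solutions of $Lu=0$ and of the transformed equation coincide, and that the classical Dirichlet problem with continuous data is uniquely solvable for both operators by Theorem \ref{WeakSolvability} (which only requires $A\in M_\lambda(\Omega)$, resp. $A_s\in M_\lambda(\Omega)$, and bounded drift). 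Therefore the continuous-data solution operators agree, so the estimate $\|u^*\|_{L^p(\partial\Omega)}\le C\|f\|_{L^p(\partial\Omega)}$ for $L$ is precisely the one for the transformed equation; existence and the estimate for $L^p(\partial\Omega)$ data then follow from Proposition \ref{DirichletSolvability}. For uniqueness in the class of solutions converging nontangentially to $0$ with $L^p$-bounded nontangential maximal function, I would again pass through the transformed equation, whose matrix $A_s$ is symmetric, and apply Proposition \ref{DirichletUniqueness}. Since every constant arising this way is a good constant for the transformed problem, it depends only on $d,p,\lambda,\mu,\|b_s\|_{\Dr_{p_d}},\diam(\Omega)$ and the Lipschitz character of $\Omega$, and $\|b_s\|_{\Dr_{p_d}}$ is controlled by $\|b\|_{\Dr_{p_d}}$ and $\mu$, which yields exactly the claimed dependence.

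I do not expect a serious analytic obstacle here, since the substantive work is already contained in Theorem \ref{GeneralDirichlet} and Lemma \ref{NonSymmetricEquations}. The one point that genuinely matters is that the auxiliary field $\tilde b$ has zero distributional divergence, so that the size condition $\dive b\in L^{p_d}$ is preserved and is \emph{not} replaced by a condition on the second derivatives of $A$; this is precisely the observation highlighted before Lemma \ref{NonSymmetricEquations}, and it is what makes the reduction self-contained. The only thing requiring care is orientation: Lemma \ref{NonSymmetricEquations} is stated for the operator $-\dive(A^t\nabla\cdot)+b\nabla\cdot$, so to treat $L=-\dive(A\nabla\cdot)+b\nabla\cdot$ one must feed in $A^t$ and track the resulting sign of $\tilde b$, which is harmless because $\|\tilde b\|_\infty\le C\mu$ regardless of sign and $\tilde b$ is divergence-free regardless of sign.
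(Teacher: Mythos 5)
Your proposal is correct and follows essentially the same route as the paper: reduce to the symmetric case via Lemma \ref{NonSymmetricEquations}, observe that the auxiliary field $\tilde b$ is bounded by $C\mu$ and divergence-free so that the new drift still lies in $\Dr_{p_d}(\Omega)$ with controlled norm, apply Theorem \ref{GeneralDirichlet}, and transfer the conclusion back through the equivalence of solution classes. The one genuine observation you make — that Lemma \ref{NonSymmetricEquations} as \emph{stated} is phrased for $-\dive(A^t\nabla\cdot)+b\nabla\cdot$, forcing you to substitute $A\mapsto A^t$ and pick up the drift $b-\tilde b$ rather than $b+\tilde b$ — actually exposes a small inconsistency between the lemma's statement and its own proof: the computation in that proof establishes $\alpha_{A_s,\,b+\tilde b}(u,\phi)=\alpha_{A,b}(u,\phi)$ directly, so the lemma's equivalence should be stated for $-\dive(A\nabla\cdot)+b\nabla\cdot$, and the paper's proof of the present theorem implicitly uses that corrected reading. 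Either reading yields the theorem, since $\pm\tilde b$ has the same $L^\infty$ bound and vanishing distributional divergence, which you correctly flag as the reason the sign is harmless. Your extra bookkeeping — agreement of the continuous-data solution operators via Theorem \ref{WeakSolvability} and passage to $L^p$ data via Propositions \ref{DirichletSolvability} and \ref{DirichletUniqueness} — is more explicit than the paper's terse final sentence but matches its intent.
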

\begin{proof}
Consider the $\e$ that appears in theorem \ref{GeneralDirichlet}, and let $p\in(2-\e,\infty)$. Let $f\in L^p(\partial\Omega)$, and consider the symmetric matrix $A_s$ and $\tilde{b}$ that appear in lemma \ref{NonSymmetricEquations}. Since $\tilde{b}$ is divergence free, we obtain that
\[
\|b+\tilde{b}\|_{\Dr_{p_d}}\leq \|b\|_{\Dr_{p_d}}+\|\tilde{b}\|_{\infty}\leq \|b\|_{\Dr_{p_d}}+C\mu.
\]
Therefore, from theorem \ref{GeneralDirichlet}, there exists a unique solution to the equation
\[
\left\{\begin{array}{c l}
-\dive\left(A_s\nabla u\right)+(b+\tilde{b})\nabla u=0, &{\rm in}\,\,\Omega\\
u=f,& {\rm on}\,\,\partial\Omega\\
\|u^*\|_{L^p(\partial\Omega)}\leq C\|f\|_{L^p(\partial\Omega)}
\end{array}
\right..
\]
Combining with lemma \ref{NonSymmetricEquations}, we obtain that $u$ is the unique solution of the Dirichlet problem $D_p$ for the equation $-\dive(A\nabla u)+b\nabla u=0$ with $u=f$ on the boundary, which completes the proof.
\end{proof}

\begin{thm}\label{RegularityForNonSymmetric}
Let $\Omega$ be a Lipschitz domain, and let $A\in M_{\lambda,\mu}(\Omega)$ and $b\in L^{\infty}(\Omega)$. Then, the Regularity problem $R_2$ for the equation 
\[
-\dive(A\nabla u)+b\nabla u=0
\]
is uniquely solvable in $\Omega$, with constants depending only on $d,\lambda,\mu,\|b\|_{\infty},{\rm diam}(\Omega)$ and the Lipschitz character of $\Omega$.
\end{thm}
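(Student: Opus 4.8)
The plan is to reduce the non-symmetric equation to the symmetric case already settled in Theorem \ref{R2Solvability}, using the divergence-free transformation of Lemma \ref{NonSymmetricEquations}. The point is that symmetrizing $A$ costs us only a bounded, divergence-free addition to the drift, which is exactly the kind of perturbation Theorem \ref{R2Solvability} tolerates without any hypothesis on $\dive b$.

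First I would apply Lemma \ref{NonSymmetricEquations} with $A$ replaced by $A^t$ (which lies in $M_{\lambda,\mu}(\Omega)$, with $(A^t)^t=A$ and $\tfrac12(A^t+(A^t)^t)=A_s:=\tfrac12(A+A^t)$). This produces a vector field $\hat b$, given by $\hat b_i=\tfrac12\sum_j\partial_j(a_{ji}-a_{ij})$, with $\dive\hat b=0$ and $\|\hat b\|_\infty\le C\mu$ (here the distributional derivatives of $A$ are genuine $L^\infty$ functions since $A$ is Lipschitz), and the lemma gives: $u$ solves $-\dive(A\nabla u)+b\nabla u=0$ in $\Omega$ if and only if $u$ solves $-\dive(A_s\nabla u)+(b+\hat b)\nabla u=0$. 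Next I would check that $A_s\in M^s_{\lambda,\mu}(\Omega)$: it is symmetric and $\mu$-Lipschitz by construction, and $\langle A_s y,y\rangle=\langle Ay,y\rangle$ for all $y$, so it keeps the same ellipticity $\lambda$. Moreover $b+\hat b\in L^\infty(\Omega)$ with $\|b+\hat b\|_\infty\le\|b\|_\infty+C\mu$, so that any good constant for the equation $-\dive(A_s\nabla u)+(b+\hat b)\nabla u=0$ is automatically a good constant in terms of $d,\lambda,\mu,\|b\|_\infty,\diam(\Omega)$ and the Lipschitz character of $\Omega$.

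With these checks in place, given $f\in W^{1,2}(\partial\Omega)$, Theorem \ref{R2Solvability} applied to the symmetric operator $-\dive(A_s\nabla\cdot)+(b+\hat b)\nabla\cdot$ yields a solution $u\in W^{1,2}_{\loc}(\Omega)$ with $u=f$ nontangentially almost everywhere on $\partial\Omega$ and $\|(\nabla u)^*\|_{L^2(\partial\Omega)}\le C\|\nabla f\|_{L^2(\partial\Omega)}$, with $C$ a good constant. By the equivalence from Lemma \ref{NonSymmetricEquations}, this same $u$ solves $-\dive(A\nabla u)+b\nabla u=0$ with the same boundary data and the same estimate, which establishes solvability of $R_2$. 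For uniqueness I would invoke Proposition \ref{UniquenessForRegularity} directly: it already covers $L=-\dive(A\nabla\cdot)+b\nabla\cdot$ with merely $A\in M_\lambda(\Omega)$ and $b\in L^\infty(\Omega)$ (no symmetry needed there), so any $W^{1,2}_{\loc}(\Omega)$ solution with $(\nabla u)^*\in L^1(\partial\Omega)$ vanishing nontangentially a.e.\ is identically zero; subtracting two solutions with the same data gives such a $u$.

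The argument is essentially bookkeeping, so the only real subtlety — and the point worth emphasizing — is the last step of paragraph two: one must confirm that the transformed drift $b+\hat b$ still satisfies the hypotheses of Theorem \ref{R2Solvability} with constants controlled purely by the original data. This works precisely because Lemma \ref{NonSymmetricEquations} guarantees $\hat b$ is divergence-free and $\|\hat b\|_\infty\le C\mu$; in particular no control on $\dive(b+\hat b)=\dive b$ is required, which is why (unlike $D_2$ for $L$ or $R_2$ for $L^t$) the passage to non-symmetric $A$ here imposes no extra condition on $\dive b$ and no extra regularity on $A$ beyond the Lipschitz hypothesis already in force.
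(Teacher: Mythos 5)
Your proposal follows the same reduction as the paper: symmetrize $A$ via Lemma \ref{NonSymmetricEquations}, check that $A_s\in M_{\lambda,\mu}^s(\Omega)$ and that the modified drift remains bounded with norm controlled by $\|b\|_\infty+C\mu$, then invoke Theorem \ref{R2Solvability} for existence and Proposition \ref{UniquenessForRegularity} for uniqueness. The substance matches; the paper's own proof is exactly this reduction, phrased very tersely as ``similar to Theorem \ref{DirichletForNonSymmetric}.''

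One caveat about the sign of the auxiliary drift. By applying Lemma \ref{NonSymmetricEquations} to $A^t$ — to conform to the lemma's stated form, which has $A^t$ in the first equation — you obtain $\hat b=-\tilde b$ and claim the equivalence $-\dive(A\nabla u)+b\nabla u=0\iff-\dive(A_s\nabla u)+(b+\hat b)\nabla u=0$. However, the lemma's proof actually establishes the identity $\int A_s\nabla u\nabla\phi+(b+\tilde b)\nabla u\cdot\phi=\int A\nabla u\nabla\phi+b\nabla u\cdot\phi$ with $\tilde b$ as defined there, so the equivalence for $-\dive(A\nabla u)+b\nabla u=0$ holds \emph{directly} with $+\tilde b$, not $-\tilde b$; this is also how the paper uses the lemma in the proof of Theorem \ref{DirichletForNonSymmetric}. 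The $A^t$ in the lemma's statement appears to be transposed with $A$. Since $\|\hat b\|_\infty=\|\tilde b\|_\infty\le C\mu$, both are divergence free, $A_s$ is unchanged, and Theorem \ref{R2Solvability} and Proposition \ref{UniquenessForRegularity} are indifferent to the sign of the drift, this has zero impact on any of the estimates — but as written your equivalence would feed the wrong transformed drift into the symmetric solvability theorem. Once $\hat b$ is replaced by $\tilde b$ (equivalently, once the lemma is applied directly to $A$, matching what its proof actually shows), your argument goes through unchanged.
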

\begin{proof}
The proof is similar to the proof of theorem \ref{DirichletForNonSymmetric}, using theorem \ref{R2Solvability}.
\end{proof}

\begin{thm}\label{DirichletForNonSymmetricForAdjoint}
Let $\Omega$ be a Lipschitz domain, and let $A\in M_{\lambda,\mu}(\Omega)$ and $b\in L^{\infty}(\Omega)$. Then, the Dirichlet problem $D_2$ for the equation
\[
-\dive(A^t\nabla u)-\dive(bu)=0
\]
is uniquely solvable in $\Omega$, with constants depending only on $d,\lambda,\mu,\|b\|_{\infty},{\rm diam}(\Omega)$ and the Lipschitz character of $\Omega$.
\end{thm}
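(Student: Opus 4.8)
The plan is to reduce to the symmetric case already handled in proposition \ref{GeneralDirichletForAdjoint}, using the integration by parts device of lemma \ref{NonSymmetricEquations}. Since $A\in M_{\lambda,\mu}(\Omega)$, the transpose $A^t$ also lies in $M_{\lambda,\mu}(\Omega)$, so I would apply lemma \ref{NonSymmetricEquations} with $A^t$ in place of $A$. Setting $A_s=\frac12(A+A^t)\in M_{\lambda,\mu}^s(\Omega)$ and letting $\hat b$ be the drift produced by the lemma (explicitly $\hat b_i=\frac12\sum_j\partial_j(a_{ji}-a_{ij})$), the lemma gives $\dive\hat b=0$, $\|\hat b\|_\infty\le C\mu$, and the equivalence: $v\in W^{1,1}_{\loc}(\Omega)$ solves $-\dive(A^t\nabla v)-\dive(bv)=0$ in $\Omega$ if and only if it solves $-\dive(A_s\nabla v)-\dive((b+\hat b)v)=0$. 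Thus the original adjoint operator $L^t$ is replaced by a new adjoint operator with symmetric matrix $A_s$ and bounded drift $b'=b+\hat b$, where $\|b'\|_\infty\le\|b\|_\infty+C\mu$.

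Next I would invoke proposition \ref{GeneralDirichletForAdjoint} for $-\dive(A_s\nabla u)-\dive(b'u)=0$: since $A_s\in M_{\lambda,\mu}^s(\Omega)$ and $b'\in L^\infty(\Omega)$, for every $f\in L^2(\partial\Omega)$ there exists $u\in W^{1,2}_{\loc}(\Omega)$ solving this equation, converging to $f$ nontangentially almost everywhere, and satisfying $\|u^*\|_{L^2(\partial\Omega)}\le C\|f\|_{L^2(\partial\Omega)}$ with $C$ a good constant for $(A_s,b')$. By the equivalence in lemma \ref{NonSymmetricEquations}, this same $u$ solves the Dirichlet problem $D_2$ for $-\dive(A^t\nabla u)-\dive(bu)=0$ with data $f$, because the boundary values and the nontangential maximal function are statements about $u$ itself and are unaffected by the reduction. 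Uniqueness is immediate from proposition \ref{UniquenessForDirichletForAdjoint}, which is stated for $A\in M_{\lambda,\mu}(\Omega)$ with no symmetry hypothesis.

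Finally I would track the constants: the ellipticity $\lambda$, the Lipschitz constant $\mu$, the diameter and the Lipschitz character of $\Omega$ are untouched by passing from $(A^t,b)$ to $(A_s,b')$, and $\|b'\|_\infty$ is bounded in terms of $\|b\|_\infty$ and $\mu$ alone, so the constant in $D_2$ depends only on $d,\lambda,\mu,\|b\|_\infty,\diam(\Omega)$ and the Lipschitz character, as claimed. I do not expect a real obstacle here, since the analytic content lives in proposition \ref{GeneralDirichletForAdjoint} (and underneath it, the adjoint single layer potential $\mathcal{S}^*_+$ together with invertibility of $\mathcal{S}^*$); the only point requiring care is to verify that lemma \ref{NonSymmetricEquations} is applied to $A^t$ rather than $A$, and that the weak-formulation bookkeeping — in particular that $\alpha^t$-solutions transfer correctly under the integration by parts producing $\hat b$ — goes through verbatim.
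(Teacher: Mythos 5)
Your proposal is correct and follows exactly the paper's route: pass to the symmetrized matrix $A_s=\frac12(A+A^t)$ and a perturbed drift via lemma \ref{NonSymmetricEquations}, invoke proposition \ref{GeneralDirichletForAdjoint} for the symmetric case, transfer the solution back through the equivalence of weak formulations, and appeal to proposition \ref{UniquenessForDirichletForAdjoint} for uniqueness. The only cosmetic difference is whether one reads lemma \ref{NonSymmetricEquations} as applied to $A$ or to $A^t$ (the lemma's statement is slightly ambiguous on this), which merely flips the sign of the auxiliary divergence-free drift; since the argument only uses that this drift has vanishing divergence and $L^\infty$ norm bounded by $C\mu$, the conclusion and the dependence of the constants are unaffected.
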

\begin{proof}
The proof is similar to the proof of theorem \ref{DirichletForNonSymmetric}, using theorem \ref{GeneralDirichletForAdjoint}.
\end{proof}

\begin{thm}\label{RegularityForNonSymmetricForAdjoint}
Let $\Omega$ be a Lipschitz domain, and let $A\in M_{\lambda,\mu}(\Omega)$ with $A\in C^{1,\alpha}(\Omega)$, and $b\in \Dr_{d,\alpha}(\mathbb R^d)$. Then, the Regularity problem $R_2$ for the equation
\[
-\dive(A^t\nabla u)-\dive(bu)=0
\]
is uniquely solvable in $\Omega$, with constants depending only on $d,\lambda,\|A\|_{C^{1,\alpha}},\|b\|_{\Dr_{d,\alpha}},{\rm diam}(\Omega)$ and the Lipschitz character of $\Omega$.
\end{thm}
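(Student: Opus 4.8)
The plan is to reduce the non-symmetric case to the symmetric case already treated in Theorem \ref{R2SolvabilityForAdjoint}, exactly following the strategy used for Theorem \ref{DirichletForNonSymmetricForAdjoint} but being careful about the extra regularity that the Regularity problem for $L^t$ requires. First I would invoke Lemma \ref{NonSymmetricEquations}: setting $A_s=\frac{1}{2}(A+A^t)$ and defining $\tilde b_i=\frac{1}{2}\sum_j\partial_j(a_{ij}-a_{ji})$, the lemma says $\dive\tilde b=0$, $\|\tilde b\|_\infty\leq C\mu$, and that $v$ solves $-\dive(A^t\nabla v)-\dive(bv)=0$ in $\Omega$ if and only if $v$ solves
\[
-\dive(A_s\nabla v)-\dive((b+\tilde b)v)=0
\]
in $\Omega$. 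The equation in the second form has a \emph{symmetric} coefficient matrix $A_s$, which is what Theorem \ref{R2SolvabilityForAdjoint} demands.

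Next I would check that the new data $(A_s, b+\tilde b)$ satisfy the hypotheses of Theorem \ref{R2SolvabilityForAdjoint}, namely $A_s\in M_{\lambda,\mu'}^s(\Omega)$ for some good $\mu'$ and $b+\tilde b\in \Dr_{d,\alpha}(\mathbb R^d)$. Symmetry of $A_s$ is immediate; uniform ellipticity with the same constant $\lambda$ holds since $\langle A_s y,y\rangle=\langle Ay,y\rangle$; and $A_s$ is Lipschitz with norm controlled by that of $A$. For the drift: $\tilde b$ is a linear combination of first derivatives of $A$, so since $A\in C^{1,\alpha}$ we have $\nabla A\in C^\alpha$ and hence $\tilde b\in C^\alpha(\mathbb R^d)$ with $\|\tilde b\|_{C^\alpha}\leq C\|A\|_{C^{1,\alpha}}$ (here one uses the extension Lemma \ref{AExtension}, or rather a $C^{1,\alpha}$ analogue of it, to define $\tilde b$ on all of $\mathbb R^d$); moreover $\dive\tilde b=0$ in the distributional sense, so in particular $\dive\tilde b\in L^d(\mathbb R^d)$ with norm zero. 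Therefore
\[
\|b+\tilde b\|_{\Dr_{d,\alpha}(\mathbb R^d)}\leq \|b\|_{\Dr_{d,\alpha}(\mathbb R^d)}+\|\tilde b\|_{C^\alpha}+\|\dive\tilde b\|_d\leq \|b\|_{\Dr_{d,\alpha}}+C\|A\|_{C^{1,\alpha}},
\]
which is a good constant depending on the allowed quantities. Then Theorem \ref{R2SolvabilityForAdjoint} gives, for each $f\in W^{1,2}(\partial\Omega)$, a unique solution $u$ of $R_2$ for $-\dive(A_s\nabla u)-\dive((b+\tilde b)u)=0$ with $\|(\nabla u)^*\|_{L^2(\partial\Omega)}\leq C\|f\|_{W^{1,2}(\partial\Omega)}$ and $u=f$ nontangentially a.e.; by the equivalence of equations in Lemma \ref{NonSymmetricEquations}, the same $u$ is a solution of $R_2$ for the original equation $-\dive(A^t\nabla u)-\dive(bu)=0$. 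Uniqueness transfers directly through the same equivalence, using Theorem \ref{UniquenessForRegularityForAdjoint}, which requires only $A\in M_{\lambda,\mu}(\Omega)$ and $b\in L^\infty(\Omega)$ — both satisfied — so no symmetry or extra regularity is needed for uniqueness.

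Finally, for the last sentence of the statement, if $A$ is already symmetric then $\tilde b\equiv 0$, the reduction is vacuous, and Theorem \ref{R2SolvabilityForAdjoint} applies directly with only $A\in M_{\lambda,\mu}^s(\Omega)$ Lipschitz and $b\in\Dr_{d,\alpha}(\mathbb R^d)$. The main (and essentially only) obstacle is bookkeeping rather than analysis: one must make sure that passing from $A$ to $\tilde b$ genuinely lands in $C^\alpha$ rather than merely $L^\infty$ — this is precisely why the hypothesis on $A$ is upgraded from Lipschitz to $C^{1,\alpha}$ in the non-symmetric case — and that the divergence-free property of $\tilde b$ is used to keep $\|\dive(b+\tilde b)\|_d$ under control (it equals $\|\dive b\|_d$), since an uncontrolled $\dive\tilde b$ would violate the size condition on the divergence that Theorem \ref{R2SolvabilityForAdjoint} relies on through the Rellich estimate of Proposition \ref{LocalRellichForAdjoint}. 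I would also remark, as the paper does elsewhere, that the $C^{1,\alpha}$ hypothesis is not needed for the equation $Lu=0$ precisely because the corresponding reduction there produces a drift added to the \emph{gradient} term, where only $\|\tilde b\|_\infty$ matters.
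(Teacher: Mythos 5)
Your proposal matches the paper's own proof: the paper also reduces to the symmetric case via Lemma \ref{NonSymmetricEquations}, observes that $\|b+\tilde b\|_{\Dr_{d,\alpha}}\leq\|b\|_{\Dr_{d,\alpha}}+\|\tilde b\|_{C^\alpha}$ with the last term controlled by $\|A\|_{C^{1,\alpha}}$, and then invokes Theorem \ref{R2SolvabilityForAdjoint}. You spell out the hypothesis-checking (ellipticity, symmetry of $A_s$, and that $\dive\tilde b=0$ preserves the $L^d$ bound on the divergence) more explicitly than the paper, which handles this in one line, but the argument is the same.
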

\begin{proof}
The proof is similar to the proof of theorem \ref{DirichletForNonSymmetric}, using theorem \ref{R2SolvabilityForAdjoint}, since
\[
\|b+\tilde{b}\|_{\Dr_{d,\alpha}}\leq \|b\|_{\Dr_{d,\alpha}}+\|\tilde{b}\|_{C^{\alpha}},
\]
and the last term is bounded above by a constant times the $C^{1,\alpha}$ norm of $A$.
\end{proof}

\bibliographystyle{amsalpha}
\bibliography{Bibliography}


\end{document}